\tikzset{ 1
table/.style={
  matrix of math nodes,
  row sep=-\pgflinewidth,
  column sep=-\pgflinewidth,
  nodes={rectangle,text width=3em,align=center},
  text depth=1.25ex,
  text height=2.5ex,
  nodes in empty cells,
  left delimiter=[,
  right delimiter={]},
  ampersand replacement=\&
}
}
\newcommand\mymatrixbraceoffseth{0.5em}
\newcommand\mymatrixbraceoffsetv{0.2em}
\newcommand*\mymatrixbraceright[4][m]{
    \draw[mymatrixbrace] ($(#1.north west)!(#1-#3-1.south west)!(#1.south west)-(\mymatrixbraceoffseth,0)$)
        -- node[left=2pt] {#4} 
        ($(#1.north west)!(#1-#2-1.north west)!(#1.south west)-(\mymatrixbraceoffseth,0)$);
}
\newcommand*\mymatrixbracetop[4][m]{
    \draw[mymatrixbrace] ($(#1.north west)!(#1-1-#2.north west)!(#1.north east)+(0,\mymatrixbraceoffsetv)$)
        -- node[above=2pt] {#4} 
        ($(#1.north west)!(#1-1-#3.north east)!(#1.north east)+(0,\mymatrixbraceoffsetv)$);
}
\newtheorem{theorem}[subsection]{Theorem}
\newtheorem{lemma}[subsection]{Lemma}
\newtheorem{corollary}[subsection]{Corollary}
\newtheorem{conjecture}[subsection]{Conjecture}
\newtheorem{proposition}[subsection]{Proposition}
\newtheorem{definition-lemma}[subsection]{Definition-Lemma}
\newtheorem{definition-proposition}[subsection]{Definition-Proposition}
\theoremstyle{definition}
\newtheorem{definition}[subsection]{Definition}
\newtheorem{hypothesis}[subsection]{Hypothesis}
\newtheorem{assumption}[subsection]{Assumption}
\newtheorem{statement}[subsection]{Statement}
\newtheorem{example}[subsection]{Example}
\newtheorem{remark}[subsection]{Remark}
\newtheorem{notation}[subsection]{Notation}
\numberwithin{equation}{subsection}
\newcommand{\fakephantomsection}{%
  \Hy@MakeCurrentHref{\@currenvir.\the\Hy@linkcounter}
  \Hy@raisedlink{\hyper@anchorstart{\@currentHref}\hyper@anchorend}%
}
\def\calC{\mathcal{C}}
\def\calD{\mathcal{D}}
\def\calG{\mathcal{G}}
\def\calH{\mathcal{H}}
\def\calL{\mathcal{L}}
\def\calM{\mathcal{M}}
\def\calN{\mathcal{N}}
\def\calO{\mathcal{O}}
\def\calR{\mathcal{R}}
\def\calS{\mathcal{S}}
\def\calT{\mathcal{T}}
\def\calU{\mathcal{U}}
\def\calV{\mathcal{V}}
\def\calW{\mathcal{W}}
\def\calX{\mathcal{X}}
\def\calY{\mathcal{Y}}
\def\calZ{\mathcal{Z}}
\def\gothC{\mathfrak{C}}
\def\gothm{\mathfrak{m}}
\def\gothp{\mathfrak{p}}
\def\gothH{\mathfrak{H}}
\def\bfB{\mathbf{B}}
\def\AAA{\mathbb{A}}
\def\CC{\mathbb{C}}
\def\DD{\mathbb{D}}
\def\FF{\mathbb{F}}
\def\GG{\mathbb{G}}
\def\HH{\mathbb{H}}
\def\QQ{\mathbb{Q}}
\def\RR{\mathbb{R}}
\def\ZZ{\mathbb{Z}}
\def\bfe{\mathbf{e}}
\def\bff{\mathbf{f}}
\def\bfk{\mathbf{k}}
\def\bfm{\mathbf{m}}
\def\bfv{\mathbf{v}}
\def\bfD{\mathbf{D}}
\def\bfC{\mathbf{C}}
\def\bfM{\mathbf{M}}
\def\bfV{\mathbf{V}}
\def\rmH{\mathrm{H}}
\def\rmI{\mathrm{I}}
\def\rmK{\mathrm{K}}
\def\rmL{\mathrm{L}}
\def\rmM{\mathrm{M}}
\def\rmU{\mathrm{U}}
\def\rmR{\mathrm{R}}
\def\rmT{\mathrm{T}}
\def\rmY{\mathrm{Y}}
\def\rmS{\mathrm{S}}
\def\sfc{\mathsf{c}}
\def\sfw{\mathsf{w}}
\def\sfz{\mathsf{z}}
\def\tti{\mathtt{i}}
\newcommand{\et}{\mathrm{\acute et}}
\newcommand{\Proj}{\mathrm{Proj}}
\newcommand{\proj}{\mathrm{proj}}
\DeclareMathOperator{\End}{End}
\DeclareMathOperator{\Gal}{Gal}
\DeclareMathOperator{\GL}{GL}
\DeclareMathOperator{\Hom}{Hom}
\DeclareMathOperator{\rank}{rank}
\DeclareMathOperator{\NP}{NP}
\DeclareMathOperator{\Spf}{Spf}
\DeclareMathOperator{\Spm}{Spm}
\DeclareMathOperator{\Sym}{Sym}
\DeclareMathOperator{\Tor}{Tor}
\newcommand{\op}{\mathrm{op}}
\newcommand{\Dig}{\mathrm{Dig}}
\newcommand{\AL}{\mathrm{AL}}
\newcommand{\an}{\mathrm{an}}
\newcommand{\Berk}{\mathrm{Berk}}
\newcommand{\bbsigma}{\boldsymbol{\sigma}}
\newcommand{\cont}{\mathrm{cont}}
\newcommand{\cycl}{\mathrm{cycl}}
\newcommand{\crys}{\mathrm{crys}}
\newcommand{\pcrys}{\mathrm{pcrys}}
\newcommand{\Fil}{\mathrm{Fil}}
\newcommand{\id}{\mathrm{id}}
\newcommand{\Mod}{\mathrm{Mod}}
\newcommand{\new}{\mathrm{new}}
\newcommand{\Iw}{\mathrm{Iw}}
\renewcommand{\det}{\mathrm{det}}
\newcommand{\unr}{\mathrm{unr}}
\newcommand{\nord}{\mathrm{nord}}
\newcommand{\ord}{\mathrm{ord}}
\newcommand{\sgn}{\mathrm{sgn}}
\newcommand{\pr}{\mathrm{pr}}
\DeclareMathOperator{\corank}{corank}
\newcommand{\swap}{\mathrm{swap}}
\newcommand{\slp}{\mathrm{slp}}
\newcommand{\rig}{\mathrm{rig}}
\newcommand{\wt}{\mathrm{wt}}
\newcommand{\WD}{\mathrm{WD}}
\newcommand{\cl}{\mathrm{cl}}
\newcommand{\Sh}{\mathrm{Sh}}
\newcommand{\ur}{\mathrm{ur}}
\newcommand{\univ}{\mathrm{univ}}
\newcommand{\tri}{\mathrm{tri}}
\newcommand{\pro}{\mathrm{pro}}
\newcommand{\nS}{\mathrm{nS}}
\newcommand{\Vtx}{\mathrm{Vtx}}
\newcommand{\reg}{\mathrm{reg}}
\newcommand{\soc}{\mathrm{soc}}
\DeclareMathOperator{\Char}{Char}
\DeclareMathOperator{\Spc}{Spc}
\DeclareMathOperator{\Eig}{Eig}
\DeclareMathOperator{\Ind}{Ind}
\newcommand{\Matrix}[4]{{\big(\begin{smallmatrix}#1&#2\\#3& #4\end{smallmatrix} \big)}}
\newcommand{\MATRIX}[4]{{\begin{pmatrix}#1&#2\\#3& #4\end{pmatrix}}}
\renewcommand{\arraystretch}{1.2}
\begin{document}

\definecolor{zzttqq}{rgb}{0.6,0.2,0.}
\definecolor{uuuuuu}{rgb}{0.26666666666666666,0.26666666666666666,0.26666666666666666}
\definecolor{xdxdff}{rgb}{0.49019607843137253,0.49019607843137253,1.}
\definecolor{ududff}{rgb}{0.30196078431372547,0.30196078431372547,1.}
\definecolor{cqcqcq}{rgb}{0.7529411764705882,0.7529411764705882,0.7529411764705882}
 
\title{Slopes of modular forms and geometry of eigencurves}
\author{Ruochuan Liu}
\address{Ruochuan Liu, New Cornerstone Science Laboratory, School of Mathematical Sciences, Peking University, 5 Yi He Yuan Road, Haidian District, Beijing, 100871, China. }
\email{liuruochuan@math.pku.edu.cn}

\author{Nha Xuan Truong}
\address{Nha Xuan Truong, Beijing International Center for Mathematical Researches, Peking University, 5 Yi He Yuan Road, Haidian District, Beijing, 100871, China.}
\email{nxtruong@bicmr.pku.edu.cn}

\author{Liang Xiao}
\address{Liang Xiao, New Cornerstone Science Laboratory, School of Mathematical Sciences,
Peking University, 5 Yi He Yuan Road, Haidian District,
	Beijing, 100871, China.}
\email{lxiao@bicmr.pku.edu.cn}

\author{Bin Zhao}
\address{Bin Zhao, School of Mathematical Sciences, Capital Normal University, Beijing, 100048, China.}
\email{bin.zhao@cnu.edu.cn}
\date{\today}

\begin{abstract}
Under a strong genericity condition, we prove the local analogue of the ghost conjecture of Bergdall and Pollack. As applications, we deduce in this case (a) a folklore conjecture of Breuil--Buzzard--Emerton on the crystalline slopes of Kisin's crystabelline deformation spaces, (b) Gouv\^ea's $\lfloor\frac{k-1}{p+1}\rfloor$-conjecture on slopes of modular forms,  and (c)  the finiteness of irreducible components of the eigencurves.
In addition, applying combinatorial arguments by Bergdall and Pollack, and by Ren, we deduce as corollaries in the reducible and very generic case,  (d) Gouv\^ea--Mazur conjecture, (e) a variant of Gouv\^ea's conjecture on slope distributions, and (f) a refined version of Coleman--Mazur--Buzzard--Kilford spectral halo conjecture.
\end{abstract}
\thanks{R. Liu and L. Xiao are partially supported by the National Natural Science Foundation of China under agreement No. NSFC--12321001. In addition, R. Liu is partially supported by the National Natural Science Foundation of China under agreement No. NSFC--11725101, and by the New Cornerstone Foundation. 
L. Xiao is  partially supported by the National Natural Science Foundation of China under agreement NSFC--12071004 and NSFC--12231001, by the New Cornerstone Foundation, and NSF grant DMS--1502147 and DMS--1752703. B. Zhao is partially supported by
AMS-Simons Travel Grant.} \subjclass[2010]{11F33 (primary), 11F85
(secondary).} \keywords{Eigencurves, slope of $U_p$-operators,
overconvergent modular forms, completed cohomology, weight space, Gouv\^ea's conjecture, Gouv\^ea--Mazur conjecture, crystabelline deformation space}

\maketitle
\setcounter{tocdepth}{1}
\tableofcontents

\section{Introduction}

\subsection{Questions of slopes of modular forms}
Let $p$ be an odd prime number and let $N$ be a positive integer relatively prime to $p$. The central object of this paper is the  \emph{$U_p$-slopes}, that is, the $p$-adic valuations of the eigenvalues of the $U_p$-operator acting on the space of (overconvergent) modular forms of level $\Gamma_0(Np)$, or on more general spaces of overconvergent automorphic forms essentially of $\GL_2(\QQ_p)$-type. In this paper, the $p$-adic valuation is normalized so that $v_p(p) =1$.

The general study of slopes of modular forms dates back to the 1990's, when Gouv\^ea and Mazur made several profound and intriguing conjectures on these slopes, based on extensive numerical computations. These conjectures were later extended and refined by Buzzard, Calegari, and many other mathematicians; see \cite{buzzard-slope,buzzard-calegari, clay, loeffler-slope};
certain very special cases were also proved based on either the coincidence that a certain modular curve has genus $0$ (e.g. \cite{buzzard-calegari}), or the still computationally manageable $p$-adic local Langlands correspondence when the slopes are small (e.g. \cite{buzzard-gee-reduction,ghate1,ghate2,ghate3,arsovski}).
Unfortunately, despite strong numerical evidences, little theoretic progress was made towards these conjectures in the general case.

In recent breakthrough work of Bergdall and Pollack \cite{bergdall-pollack2, bergdall-pollack3, bergdall-pollack4}, they unified all historically important conjectures regarding slopes into one conjecture: the \emph{ghost conjecture}, which roughly gives a combinatorially defined ``toy model", called the \emph{ghost series}, of the characteristic power series of the $U_p$-action on the space of overconvergent modular forms. 
The purpose of this work and its prequel \cite{liu-truong-xiao-zhao} is to prove this ghost conjecture and place it within the framework of $p$-adic local Langlands conjecture.  We now state our main theorem followed by a discussion on all of its corollaries, and then conclude the introduction with a short overview of the proof.

\subsection{Statement of main theorems}
\label{S:statement of main theorems}
We fix an odd prime number $p\geq 5$ and an isomorphism $\overline \QQ_p \simeq \CC$. Let $E$ be a finite extension of $\QQ_p$ with ring of integers $\calO$ and residue field $\FF$.
Let
$\bar r: \Gal_\QQ \to \GL_2(\FF)$ be an absolutely irreducible representation.
Let $\rmS_k(\Gamma_0(Np); \psi)_{\gothm_{\bar r}} \subseteq \rmS^\dagger_k(\Gamma_0(Np); \psi)_{\gothm_{\bar r}}$ denote the space of classical and overconvergent modular forms of weight $k$, level $\Gamma_0(Np)$, and nebentypus character $\psi$ of $\FF_p^\times$, localized at the Hecke maximal ideal $\gothm_{\bar r}$ corresponding to $\bar r$, respectively. (Our convention is that the cyclotomic character has Hodge--Tate weight $-1$, and the Galois representations associated  to weight $k$ modular forms is homological, and has Hodge--Tate weights $\{1-k,0\}$. This is the dual to the Galois representation as appeared in \cite{emerton-lg}; see \S\,\ref{S:normalization} for more discussion on our choices of convention.)

It is a theorem of Coleman and Kisin  that $\rmS_k(\Gamma_0(Np); \psi)_{\gothm_{\bar r}}$ is ``almost" the subspace of $\rmS_k^\dagger(\Gamma_0(Np);\psi)_{\gothm_{\bar r}}$ spanned by $U_p$-eigenforms with slopes $\leq k-1$ (the forms of slope $k-1$ is a bit tricky and we do not discuss them in this introduction; see Proposition~\ref{P:theta and AL}(1)). Thus, to understand the slopes of the $U_p$-action on $\rmS_k(\Gamma_0(Np); \psi)_{\gothm_{\bar r}}$, it suffices to understand the slopes of the Newton polygon of the characteristic power series of the $U_p$-action on $\rmS_k^\dagger(\Gamma_0(Np); \psi)_{\gothm_{\bar r}}$.

It is a theorem of Coleman that one may interpolate the characteristic power series of the $U_p$-actions on spaces of overconvergent modular forms of all weights $k$, as follows. For $\bar \alpha \in \FF^\times$, write $\unr(\bar \alpha): \Gal_{\QQ_p} \to \FF^\times$ for the unramified character sending the geometric Frobenius to $\bar\alpha$.
Let
$\omega_1: \rmI_{\QQ_p} \twoheadrightarrow \Gal(\QQ_p(\mu_p)/\QQ_p) \cong \FF_p^\times$ denote the \emph{first fundamental character} of the inertia subgroup $\rmI_{\QQ_p}$ at $p$; so
$\det ( \bar r|_{\rmI_{\QQ_p}})=\omega_1^c$ for some $c \in \{0, \dots, p-2\}$. 
Write $\omega: \FF_p^\times \to \calO^\times$ for the Teichm\"uller character, and  put $w_k : = \exp(p(k-2))-1$ for each $k \in \ZZ$. 
Then there exists a power series $C_{\bar r}(w,t) \in \calO\llbracket w,t\rrbracket$ such that
$$
C_{\bar r}(w_k, t) = \det \big( \rmI_\infty - U_p t; \; \rmS^\dagger_k(\Gamma_0(Np); \omega^{k-1-c})_{\bar r}\big)
$$
holds for all $k\geq 2$.
\emph{The ghost conjecture aims, under a condition we specify later, to find a ``toy model" power series $G_{\bbsigma}(w,t)$ that has the same Newton polygon as $C_{\bar r}(w,t)$ for every evaluation of $w$, but only depends on the restriction $
\bar r|_{\rmI_{\QQ_p}}$.} Here and later, for a power series $C(t): = 1+ c_1t+c_2t^2+\cdots \in \calO\llbracket t\rrbracket$, the Newton polygon $\NP(C(t))$ is the lower convex hull of the points $(n, v_p(c_n))$ for all $n$. In particular, the slopes of $\NP(C_{\bar r}(w_k,-))$ are precisely the slopes of $U_p$-action on $\rmS^\dagger_k(\Gamma_0(Np);\omega^{k-1-c})_{\gothm_{\bar r}}$.

The \emph{key requirement} for the ghost conjecture is that $\bar r_p: = \bar r|_{\Gal_{\QQ_p}}$ is \emph{reducible and generic}, namely $c \equiv a+2b+1 \bmod (p-1)$ for some $a \in \{1, \dots, p-4\}$ and $b \in \{0, \dots, p-2\}$, and
\begin{itemize}
\item (reducible split case) $\bar r_p \simeq  \unr(\bar \alpha) \omega_1^{a+b+1} \oplus \unr(\bar \beta) \omega_1^a$ for some $\bar\alpha, \bar\beta \in \FF^\times$, or
\item (reducible nonsplit case) either $\bar r_p \simeq \MATRIX{\unr(\bar\alpha)\omega_1^{a+b+1}}{*\neq 0}{0}{\unr(\bar \beta)\omega_1^b}$ for some $\bar \alpha, \bar\beta \in \FF^\times$ (where the nontrivial extension $* \neq 0$ is unique up to isomorphism given the genericity condition on $a$).
\end{itemize}
We say that $\bar r_p$ is \emph{very generic} if $a \in \{2, \dots, p-5\}$.

We remark that the reducibility and genericity of $\bar r_p$ might be slightly weakened (at the expense of adjusting some key properties of ghost series defined in Definition~\ref{D:ghost series intro} below), but we have not thought carefully in this direction. We refer to \cite{bergdall-pollack4} for theoretical explanations and concrete examples. See also Remark~\ref{R:remark after statement of ghost conjecture}(2) below.

We need one more technical input to state our theorem (which we give a working definition): there exists an integer $m(\bar r)$ such that
$$ \dim\rmS_k(\Gamma_0(Np); \omega^{k-1-c})_{\gothm_{\bar r}} - \frac{2k}{p-1} m(\bar r) \textrm{ is bounded as }k \to \infty.
$$
Such $m(\bar r)$ always exists. We give precise dimension formulas in Definition-Proposition~\ref{DP:dimension of classical forms}.

For our reducible and generic $\bar r_p$ above, the (right) $\FF$-representation $\bbsigma= \sigma_{a,b} := \Sym^a \FF^{\oplus 2} \otimes \det^b$ of $\GL_2(\FF_p)$ is always a Serre weight for $\bar r_p$ (see \S\,\ref{S:normalization} for our convention on Serre weights).  We defined in \cite{liu-truong-xiao-zhao} a power series $G_{\bbsigma}(w,t)  = \sum\limits_{n \geq 0} g_n(w) t^n \in \ZZ_p[w]\llbracket t \rrbracket$  analogous to the ghost series in \cite{bergdall-pollack2}. (In \emph{loc. cit.} it was denoted by $G_{\bar r_p|_{\rmI_{\QQ_p}}}(w,t)$ but $G_{\bbsigma}(w,t)$ is a more appropriate notation; see Remark~\ref{R:remark on type sigma}(1).)  We will recall its definition in Definition~\ref{D:ghost series intro} below.

\medskip
Our main result is the following. It was essentially conjectured by Bergdall and Pollack \cite{bergdall-pollack2, bergdall-pollack3} (and is slightly adapted in the prequel \cite{liu-truong-xiao-zhao} of this series).
\begin{theorem}
[Ghost conjecture]\label{T:ghost intro}
Assume $p\geq 11$ and that $\bar r: \Gal_\QQ \to \GL_2(\FF)$ is an absolutely irreducible representation such that $\bar r_p$ is reducible and very generic (i.e. $2 \leq a \leq p-5$). Then for every $w_\star \in \gothm_{\CC_p}$,
the Newton polygon $\NP\big(C_{\bar r}(w_\star,-)\big)$ is the same as the Newton polygon $\NP\big(G_{\bbsigma}(w_\star,-)\big)$, stretched in both $x$- and $y$-directions by $m(\bar r)$ times, except possibly for the their slope zero parts.
\end{theorem}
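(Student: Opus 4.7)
The plan is to deduce the global ghost conjecture (Theorem~A) from the local analogue of the ghost conjecture proved as the main technical result of this paper, by combining it with Emerton's completed cohomology and Taylor--Wiles--Kisin patching in the framework of the $p$-adic local Langlands correspondence for $\GL_2(\QQ_p)$.

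First I would attach to $\bar r_p$ and the Serre weight $\bbsigma = \sigma_{a,b}$ a local characteristic power series $C^{\mathrm{loc}}_{\bar r_p}(w,t)$, built from the $U_p$-action on an appropriate space of locally analytic vectors in (or compactly induced representation of) the $\GL_2(\QQ_p)$-representation matched to $\bar r_p$ under $p$-adic local Langlands. The local ghost conjecture, established in the body of the paper, identifies its Newton polygon with that of the ghost series: for every $w_\star \in \gothm_{\CC_p}$,
$$
\NP\bigl(C^{\mathrm{loc}}_{\bar r_p}(w_\star,-)\bigr) \;=\; \NP\bigl(G_{\bbsigma}(w_\star,-)\bigr)
$$
away from the slope-zero part. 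This combinatorial input is the heart of the argument and is where the very generic hypothesis $2 \leq a \leq p-5$ is exploited.

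Next I would bridge local and global via patched completed cohomology. A Taylor--Wiles--Kisin patching of the localized completed cohomology $\widetilde H^1(K^p)_{\gothm_{\bar r}}$ produces a module $M_\infty$ over a local deformation ring $R_\infty$ carrying a commuting continuous $\GL_2(\QQ_p)$-action realizing $p$-adic local Langlands. In the very generic range, Pa\v skunas's classification of admissible $\GL_2(\QQ_p)$-representations shows that $M_\infty$ is essentially free of rank $m(\bar r)$ over $R_\infty$; here the integer $m(\bar r)$ records the local multiplicity of $\bar r_p$ in completed cohomology. Unpatching and passing to the overconvergent side via Emerton's Jacquet functor then identifies $\rmS_k^\dagger(\Gamma_0(Np); \omega^{k-1-c})_{\gothm_{\bar r}}$, as a space with $U_p$-action, with $m(\bar r)$ copies of the analogous local object for each $k$. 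Consequently, modulo a factor supported in slope zero,
$$
\NP\bigl(C_{\bar r}(w_\star,-)\bigr) \;=\; m(\bar r) \cdot \NP\bigl(C^{\mathrm{loc}}_{\bar r_p}(w_\star,-)\bigr),
$$
where the right side denotes stretching by $m(\bar r)$ in both coordinates; combined with the local ghost conjecture this is the statement of Theorem~A.

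The main obstacle is this second step: the patching argument must be rigid enough to transfer the entire characteristic power series of $U_p$ uniformly in the weight parameter $w_\star$, rather than merely comparing ranks or Hecke eigenvalues at classical integer weights. This requires flatness of $M_\infty$ over $R_\infty$ (available in the very generic range), compatibility between the $U_p$-action on completed cohomology and the action induced through Colmez's functor, and tight integral control on locally analytic vectors so that specialization at arbitrary $w_\star \in \gothm_{\CC_p}$ commutes with forming characteristic power series. The exclusion of the slope-zero locus is forced because $p$-adic local Langlands handles ordinary (principal series) representations separately; however, that portion of the Newton polygon is independently controlled by Hida theory and Coleman's slope-zero families, and so poses no essential difficulty.
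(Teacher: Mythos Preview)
Your approach has the right overall shape---local ghost theorem plus a local-global bridge---but the second step as you describe it does not go through, and the paper takes a genuinely different route at precisely this point. The problem is your claim that patching produces an $M_\infty$ ``essentially free of rank $m(\bar r)$'' in a sense strong enough to split the global overconvergent space into $m(\bar r)$ copies of the local object. What the local ghost theorem actually requires as input is a \emph{primitive} $\calO\llbracket\rmK_p\rrbracket$-projective augmented module, i.e.\ one whose underlying $\rmK_p$-module is a single copy of $\Proj_{\calO\llbracket\rmK_p\rrbracket}(\sigma_{a,b})$. To decompose the patched completed homology into $m(\bar r)$ such pieces you would need a mod-$p$ multiplicity-one statement comparing $S_\infty$ with $R_\infty$, and this is not available; the paper explicitly flags this obstruction (see the remark following Theorem~\ref{T:widetilde P projective}). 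Pa\v sk\=unas's results control the $\GL_2(\QQ_p)$-representation-theoretic structure but do not force the $\calO\llbracket\rmK_p\rrbracket$-module to split in the required way.

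The paper circumvents this by a two-stage bootstrap. First it applies the local ghost theorem only to the Pa\v sk\=unas module $\widetilde P^\square$ itself---which \emph{is} primitive---and, via Emerton's Jacquet functor and the known $p$-adic local Langlands for $\GL_2(\QQ_p)$, identifies the resulting eigenvariety with the trianguline deformation space $\calX_{\bar r_p}^{\square,\tri}$; this pins down the set of possible slopes on $\calX_{\bar r_p}^{\square,\tri}$ as exactly the ghost slopes. Second, for the global $\widetilde \rmH$ attached to $\bar r$, it uses density of very classical points together with global triangulation \cite{KPX,liu} to embed the global eigencurve into $\calX_{\bar r_p}^{\square,\tri}$, so every global $U_p$-slope is a ghost slope. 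The multiplicities are then determined not by any decomposition of $\widetilde \rmH$ but by a continuity argument: over the connected quasi-Stein locus $\Vtx_n^{(\varepsilon)}\subseteq\calW^{(\varepsilon)}$ where $x=n$ is a vertex of the ghost polygon, the relevant slice of the spectral curve is finite flat of constant degree, and one computes that degree at a single classical weight using the dimension formulas for $\rmS_k^\Iw$. This is what replaces your unavailable freeness-of-rank-$m(\bar r)$ input.
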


\begin{remark}\label{R:remark after statement of ghost conjecture}
\phantomsection
\begin{enumerate}
\item 
We have complete results for the slope zero part; see Theorem~\ref{T:global ghost} for details.  In fact, our Theorem~\ref{T:global ghost} is a much more general statement for the space of automorphic forms of general $\GL_2(\QQ_p)$-type.
\item 
We expect that Theorem~\ref{T:ghost intro} also holds for local representations $\bar{r}_p$'s which have exactly one Serre weight and for smaller primes $p$. More explicitly, under the above notations, we expect that Theorem~\ref{T:ghost intro} also holds for $a=1,p-4,p-3$ as well as $a=0$ and $\bar{r}_p$ is tr\`es ramifi\'e. For $a=1$, $a=p-4$ and smaller primes $p$, we explain the technical difficulties later in Remarks~\ref{R:discussion of a not 1p-4 and p 7} and \ref{R:a=1p-4}. For $a=p-3$, we are not sure whether one of the main result (\cite[Theorem~5.19]{liu-truong-xiao-zhao}) in our previous paper still holds in this case. For $a=0$ and $\bar{r}_p$ is tr\`es ramifi\'e, the formulation of the local ghost conjecture (see Thereom~\ref{T:local ghost theorem} below) need to be modified following the discussion in \cite[Section~6.2]{paskunas-BM}. We encourage interested readers to explore the possibility of extending our results to these cases.  In the other scenario when $\bar{r}_p$ is irreducible, it seems that the formulation of a reasonable ghost conjecture is already challenging. See Remark~\ref{R:possible extension of local ghost theorem}(2) below.
\item In Remark~\ref{R:reducible global representation}, we also explain how one might extend Theorem~\ref{T:ghost intro} to the case when the global representation $\bar r$ is reducible. The only difference is some additional dimension computation.
\end{enumerate}
\end{remark}
  

We quickly recall the definition of ghost series $G_{\bbsigma}(w,t) =1+ \sum\limits_{n \geq 1} g_n(w)t^n \in \ZZ_p[w]\llbracket t\rrbracket$; see Definition~\ref{D:ghost series} and the following discussion for examples and formulas.
\begin{definition}
\label{D:ghost series intro}
Take $\bar r': \Gal_\QQ \to \GL_2(\FF)$ that is absolutely irreducible and that $\bar r'|_{\Gal_{\QQ_p}}$ is reducible nonsplit and generic.  
For each $k \equiv a+2b+2 \bmod (p-1)$ and $k \geq 2$, define
$$
d_{k}^\ur: = \tfrac1{m(\bar r')} {\dim \rmS_k\big(\Gamma_0(N)\big)_{\bar r'}} \quad \textrm{and}\quad d_{k}^\Iw: = \tfrac1{m(\bar r')} {\dim \rmS_k\big(\Gamma_0(Np)\big)_{\bar r'}}.
$$
Then we have
$$
g_n(w) = \prod_{k \equiv a+2b+2 \bmod (p-1)} (w-w_k)^{m_n(k)},
$$
where the exponents $m_n(k)$ are given by the following recipe
\[
m_n(k) = \begin{cases}
\min\big\{ n -d_{k}^\ur , d_{k}^\Iw- d_{k}^\ur - n\big\} & \textrm{ if }d_{k}^\ur < n < d_{k}^\Iw - d_{k}^\ur
\\
0 & \textrm{ otherwise.}
\end{cases}
\]
Put
$$
G_{\bbsigma}(w,t): = 1+ \sum_{ n \geq 1} g_n(w) t^n \in \ZZ_p[w] \llbracket t \rrbracket.
$$
We point out that the ghost series $G_{\bbsigma}(w,t)$ depends only on the Serre weight $\bbsigma$, or equivalently $p$, $a$, and $b$; \emph{it does not depend on $N$ and the global representation $\bar r'$}. (See Definition~\ref{D:ghost series} for a definition of $G_{\bbsigma}(w,t)$ without reference to the dimensions of modular forms.)
\end{definition}

A very primitive form of the ghost conjecture was first asked in \cite{buzzard-calegari}, which is only for the case when $p=2$ and $N=1$. Later similar types of ghost series for other small primes were conjectured by \cite{clay, loeffler-slope}.  The general form of the ghost series was first introduced by Bergdall and Pollack \cite{bergdall-pollack2,bergdall-pollack3}. \emph{We emphasize that the Bergdall and Pollack's work is of crucial importance to this paper.}

In \cite{liu-truong-xiao-zhao}, we raised an analogous local ghost conjecture which starts with a completely abstract setting: set $\rmK_p = \GL_2(\ZZ_p)$; consider \emph{a primitive $\calO\llbracket \rmK_p\rrbracket$-projective augmented module associated to the Serre weight $\bbsigma= \Sym^{a}\FF^{\oplus 2} \otimes \det^b$}, that is, the projective envelope $\widetilde \rmH$ of $\bbsigma$ as a right  $\calO\llbracket \rmK_p\rrbracket$-module, on which the $\rmK_p$-action extends to a continuous $\GL_2(\QQ_p)$-action, satisfying certain appropriate conditions (that are naturally satisfied in the automorphic setup). From this, one can similarly define analogues of classical and overconvergent forms, and our main result of this paper is the following analogue of Theorem~\ref{T:ghost intro} in this abstract setup, which we call the \emph{local ghost theorem}.

\begin{theorem}[Local ghost theorem]
\label{T:local ghost theorem}
Assume that $p \geq 11$. Let $\bbsigma = \Sym^a \FF^{\oplus 2} \otimes \det^b$ be the Serre weight with $a \in \{2, \dots, p-5\}$ and $b \in \{0, \dots, p-2\}$.
Let $\widetilde \rmH$ be a primitive $\calO\llbracket \rmK_p\rrbracket$-projective augmented module of type $\bbsigma$, and let $\varepsilon$ be a character of $(\FF_p^\times)^2$ such that $\varepsilon(x,x) = x^{a+2b}$ for every $x \in \FF_p^\times$. 
Then for the characteristic power series $C^{(\varepsilon)}_{\widetilde \rmH}(w,t)$ of the $U_p$-action on overconvergent forms associated to $\widetilde \rmH$, we have,
for every $w_\star \in \gothm_{\CC_p}$,
$$\NP(G^{(\varepsilon)}_{\bbsigma}(w_\star, -))=\NP(C_{\widetilde \rmH}^{(\varepsilon)}(w_\star,-)).$$
\end{theorem}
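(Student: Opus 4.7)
The overall strategy is to prove the coefficient-level identities $v_p(c_n(w_\star)) = v_p(g_n(w_\star))$ at every $w_\star \in \gothm_{\CC_p}$ and every index $n$ that marks a vertex of $\NP(G^{(\varepsilon)}_{\bbsigma}(w_\star,-))$, with the inequality $\geq$ at non-vertex indices, where
\[
C^{(\varepsilon)}_{\widetilde{\rmH}}(w,t) = 1 + \sum_{n \geq 1} c_n(w)\, t^n, \qquad G^{(\varepsilon)}_{\bbsigma}(w,t) = 1 + \sum_{n\geq 1} g_n(w)\, t^n.
\]
By the standard lower-convex-hull description of Newton polygons, this would yield the full equality $\NP(C^{(\varepsilon)}_{\widetilde\rmH}(w_\star,-)) = \NP(G^{(\varepsilon)}_{\bbsigma}(w_\star,-))$.

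The first step is a purely combinatorial analysis of $G_\bbsigma^{(\varepsilon)}$: the very generic hypothesis $2 \leq a \leq p-5$ forces the dimensions $d_k^\ur, d_k^\Iw$ appearing in the exponents $m_n(k)$ to satisfy clean recursions, so that for each $w_\star$ one can explicitly determine which classical weights $w_k$ are ``seen'' by $w_\star$ and which indices $n$ are vertices of $\NP(G^{(\varepsilon)}_\bbsigma(w_\star,-))$. The second step is to exploit the abstract structure of $\widetilde{\rmH}$ as a primitive $\calO\llbracket \rmK_p\rrbracket$-projective augmented module to produce an $\calO\llbracket w\rrbracket$-power basis of the associated overconvergent form module on which $U_p$ acts by an explicit compact matrix with controlled entries; projectivity over $\calO\llbracket\rmK_p\rrbracket$ and the extension of the $\rmK_p$-action to $\GL_2(\Qp)$, together with the central character condition $\varepsilon(x,x) = x^{a+2b}$, pin down this basis up to a harmless ambiguity. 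At every classical weight $w_\star = w_k$, the abstract Coleman classicality theorem available in this setup identifies the slope-$< k-1$ part of overconvergent forms with a classical subspace of known dimension, and an Atkin--Lehner involution on that subspace gives a functional equation on the vector $(c_n(w_k))_n$. Together these force the exact valuations $v_p(c_n(w_k))$ predicted by the ghost multiplicities.

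The principal obstacle, where I expect the bulk of the argument to live, is propagating the exact valuations from the classical weights $w_k$ to an arbitrary $w_\star \in \gothm_{\CC_p}$. The plan is to promote each vanishing at a classical weight $w_k$ into a divisibility of $c_n(w)$ by an appropriate power of $(w-w_k)$ in an integral model, so as to match the product formula $g_n(w) = \prod_k (w-w_k)^{m_n(k)}$ up to a $p$-adically controlled unit. The delicate case is the \emph{near-Steinberg} regime, where $w_\star$ lies close to several $w_k$ at once and many ghost factors vanish simultaneously: here one must rule out accidental cancellation or reinforcement among the matrix entries of $U_p$ at different classical weights. I envisage a double induction on $n$ and on the weight range, in which the classicality and functional-equation inputs at each $w_k$ feed forward into the near-Steinberg analysis at nearby weights; the range $2 \leq a \leq p-5$ and the bound $p \geq 11$ should enter precisely to guarantee enough combinatorial separation between the relevant ghost factors to avoid the most degenerate configurations. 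This halo/near-Steinberg propagation, rather than the combinatorics of $G^{(\varepsilon)}_\bbsigma$ or the classical-weight computation, is the real engine of the proof.
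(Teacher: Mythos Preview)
Your high-level strategy (prove $v_p(c_n(w_\star))\ge \NP(G_{\bbsigma}^{(\varepsilon)}(w_\star,-))_{x=n}$ always, with equality at vertices) matches the paper exactly, and you correctly identify the corank-at-classical-weights input coming from the decomposition $U_p = -\AL + (\text{rank}\le d_k^\ur)$. But your proposed mechanism for ``propagating from classical weights to arbitrary $w_\star$'' has a genuine gap. The divisibility $c_n(w)\in (w-w_k)^{m_n(k)}\cdot(\text{something})$ that the corank theorem yields lives only in $\calO\langle w/p\rangle$, not in $\calO\llbracket w\rrbracket$; by itself this loses a factor of $p^{\deg g_n}$ and is far too weak to pin down $\NP(C(w_\star,-))$. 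The paper closes this gap by Lagrange-interpolating $c_n(w)$ along $g_n(w)$ and proving a sharp estimate (Theorem~5.2) on each Lagrange coefficient $A_{k,i}^{(n)}$ of the form $v_p(A_{k,i}^{(n)})\ge \Delta_{k,\frac12 d_k^{\new}-i}-\Delta'_{k,\frac12 d_k^{\new}-m_n(k)}$; Proposition~4.4 shows this is exactly what is needed to force the Newton polygon equality, including in the near-Steinberg regime you flag.

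The second gap is in how that estimate is proved. Your ``double induction on $n$ and on the weight range'' is not the structure that works; the paper instead proves the Lagrange-coefficient bound for \emph{every} $n\times n$ submatrix $\rmU^\dagger(\underline\zeta\times\underline\xi)$ (not just principal minors), inducting on $n$. The inductive step is a cofactor expansion (Lemma~6.4) that writes $\det\rmU^\dagger(\underline\zeta\times\underline\xi)$ modulo $(w-w_k)^{?}$ as a combination of determinants of strictly smaller minors, exploiting precisely the $U_p=-\AL+\text{low rank}$ decomposition. Feeding the inductive hypothesis back in requires a second technical input you do not mention: a refined halo estimate on $U_p$ obtained by passing to a \emph{modified Mahler basis} (Section~3.3), which is strictly sharper than any estimate available in the power basis and is where the hypotheses $p\ge 11$ and $2\le a\le p-5$ are actually used. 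Without the non-principal-minor formulation, the cofactor reduction, and the Mahler-basis estimate, the induction cannot close.
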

Comparing to Theorem~\ref{T:ghost intro}, we here allow characters on both $\FF_p^\times$-factors of the Iwahori group $\Iw_p= \Matrix{\ZZ_p^\times}{\ZZ_p}{p\ZZ_p}{\ZZ_p^\times}$.
We refer to Section~\ref{Sec:recollection of local ghost} for more discussions on undefined notations.

The benefit of extending Theorem~\ref{T:ghost intro} to the purely local ghost Theorem~\ref{T:local ghost theorem} is that the latter works for the ``universal" $\calO\llbracket \rmK_p\rrbracket$-projective augmented module. More precisely, if $\bar r_p: \Gal_{\QQ_p} \to \GL_2(\FF)$ is a residual reducible nonsplit and generic representation, then Pa\v sk\=unas in \cite{paskunas-functor}  defined a certain projective envelope $\widetilde P$ of $\pi(\bar r_p)^\vee$ in the category of Pontryagin dual of smooth admissible torsion representations of $\GL_2(\QQ_p)$, so that the endomorphism ring of $\widetilde P$ is isomorphic to the deformation ring $R_{\bar r_p}$ of $\bar r_p$.
It is proved by Hu and Pa\v sk\=unas \cite{hu-paskunas} that there exists an element $x$ in the maximal ideal of $R_{\bar r_p}$ such that for every $x_\star \in \gothm'$ for $\gothm'$ the maximal ideal in some finite extension $\calO'$ of $\calO$,  $\widetilde P_{\calO'}/(x-x_\star)\widetilde P_{\calO'}$ is always a primitive $\calO'\llbracket \rmK_p\rrbracket$-projective augmented module of type $\bbsigma$. Thus Theorem~\ref{T:local ghost theorem} applies and gives the corresponding slopes for overconvergent forms constructed out of $\widetilde P_{\calO'}/(x-x_\star)\widetilde P_{\calO'}$ (which we call \emph{abstract overconvergent forms}).

\emph{The key point here is that the Newton polygon of the characteristic power series of the $U_p$-actions on space of abstract overconvergent forms is {\bf independent} of the value $x_\star$! Thus, as $x_\star$ varies, we obtain results for the ``universal case".}

Comparing this with the Galois side, we obtain immediately the list of slopes on the trianguline deformation space of $\bar r_p$ \`a la Breuil--Hellmann--Schraen \cite{breuil-hellmann-schraen}.  (Moreover, we observe that this also provides the knowledge of the slopes for trianguline deformation space of $\bar r_p^\mathrm{ss}$, for free.)  Finally, by a bootstrapping argument, our result implies the ghost conjecture for a general automorphic setup using global triangulation results such as \cite{KPX,liu}, in particular Theorem~\ref{T:ghost intro}.

A discussion of the proof of Theorem~\ref{T:local ghost theorem} will be given later in \S\,\ref{S:overview proof of local main theorem}.

\begin{remark}
We make several quick comments at the philosophical level on the proof.
\begin{enumerate}
\item It is essential to work over the entire weight space and harness the integrality of the characteristic power series over the weight ring $\calO\llbracket w\rrbracket$.  The pattern of slopes of $G_{\bbsigma}^{(\varepsilon)}(w_k, -)$ can be very complicated and subtle; see for example the cited proof of Proposition~\ref{P:near-steinberg equiv to nonvertex}. The involved combinatorics seems to suggest: working over a single weight $k$ to treat all slopes is going to be combinatorially extremely difficult.
\item 
The bootstrapping step makes use of essentially the full power of the known $p$-adic local Langlands correspondence for $\GL_2(\QQ_p)$ (which might be downgraded to only assuming Breuil--M\'ezard conjecture for $\GL_2(\QQ_p)$). But the proof of Theorem~\ref{T:local ghost theorem} (in the primitive case) does not make use of the $p$-adic local Langlands correspondence.
\end{enumerate}
\end{remark}

\begin{remark}\label{R:possible extension of local ghost theorem} We point to several possible extensions of Theorem~\ref{T:local ghost theorem}.
\begin{enumerate}
\item
In addition to slopes of $\NP\big(C_{\widetilde \rmH}^{(\varepsilon)}(w_k, -)\big)$, we may ask, for each $U_p$-eigenvalue $\beta$, what $\beta / p^{v_p(\beta)}$ modulo $\varpi$ is. It seems to be possible that, if we know this for the $U_p$-action on the space of ``modular forms" with weight $2$ and character $\omega^b \times \omega^{a+b}$ (which only depends on $\bar r_p$ but not on the choice of $x_\star$ in the discussion following Theorem~\ref{T:local ghost theorem}), then we may deduce this answer for all slopes of multiplicity one. Translating this to the Galois side, we conjecture  that, when $\bar r_p$ is reducible and generic, ``most" irreducible components of every Kisin's semistabelian deformation space has Breuil--M\'ezard multiplicity $1$. (We thank Bergdall and Ren for pointing out that it is plausible that some very special component might have higher multiplicities.)
In fact, Breuil--M\'ezard multiplicity one property can be proved in the crystabelline case with wild inertia type, in the forthcoming work of \cite{an-xiao-zhao}.
\item 
It is very natural to ask whether the method of this paper extends to the case when $\bar r_p$ is irreducible, or even non-generic.  Our most optimistic answer is ``maybe" but only ``partially", but we have not carefully investigated this case.  The key difference is that, when $\bar r_p$ is irreducible and generic, the smallest slope at any classical point seems to depend on the automorphic data.  However, some initial computation suggests that although $\NP(C_{\widetilde \rmH}^{(\varepsilon)}(w_\star, -))$ can be complicated, if we only consider the convex hull of points whose horizontal coordinates are even integers, then there might be a hope of an analogue of ghost series. 

\item In \cite{buzzard-slope}, Buzzard proposed an algorithm to predict slopes of modular forms inductively, at least under the \emph{Buzzard-regular} condition. We will not include a discussion on this,  but only point out the extensive numerical verification in \cite[Fact 3.1]{bergdall-pollack2}, and  its proof in a recent work of Eunsu Hur \cite{hur-slope}.
\end{enumerate}
\end{remark}

\medskip
The logical process and relations with various conjectures we address in this paper are summarized in the following diagram:

\begin{center}
\begin{tikzpicture}[decoration={
    zigzag,
    segment length=4,
    amplitude=.9,post=lineto,
    post length=2pt
}]
\draw (0,0.5) node[draw, minimum width = 8cm] (A) {\color{red} \bf Local ghost conjecture};
\draw (1.8,-0.25) node {Pa\v sk\=unas functor};
\draw (2,-.75) node {Trianguline varieties};
\draw (2.1,-3) node {Global triangulation};
\draw (0,-1.5) node[draw, minimum width = 8cm] (B) {Slopes on trianguline deformation spaces};
\draw (0,-4.5) node [draw, minimum width = 8cm](C)
{Automorphic ghost conjecture};
\draw (9,-1) node[draw, text width = 7.7cm, align=left](a) {\ (a) Breuil--Buzzard--Emerton conjecture};
\draw (9,-2) node[draw, text width = 7.7cm, align=left](b) {\ (b) Gouv\^ea's $\big\lfloor \frac{k-1}{p+1}\big\rfloor$-conjecture};
\draw (9,-3) node[draw, text width = 7.7cm, align=left] (c) {\ (c) Irreducible components of eigencurves};
\draw (9,-4) node[draw, text width = 7.7cm, align=left] (d) {\ (d) Gouv\^ea--Mazur conjecture};
\draw (9,-5) node[draw, text width = 7.7cm, align=left] (e) {\ (e) Slope distribution conjecture};
\draw (9,-6) node[draw, text width = 7.7cm, align=left] (f) {\ (f) Refined spectral halo conjecture};
\draw [thick, double equal sign distance, -Implies](A)--(B);
\draw [thick, double equal sign distance, -Implies](B)--(C);
\draw [thick, ->, line join=round, decorate] (4,-1.5)--(5,-1); \draw [thick, ->, line join=round, decorate](4,-1.5)--(5,-2);
\draw [thick, ->, line join=round, decorate] (4,-4.5)--(5,-3);
\draw [thick, ->, line join=round, decorate] (4,-4.5)--(5,-4);
\draw [thick, ->, line join=round, decorate] (4,-4.5)--(5,-5);
\draw [thick, ->, line join=round, decorate] (4,-4.5)--(5,-6);
\end{tikzpicture}
\end{center}

We now discuss these corollaries.

\subsection{Application A: Breuil--Buzzard--Emerton conjecture}
\label{S:application A}
Let $\bar r_p : \Gal_{\QQ_p} \to \GL_2(\FF)$ be a residual local Galois representation, and let $R_{\bar r_p}^\square$ denote the framed deformation ring.
For $k \in \ZZ_{\geq 2}$ and a finite-image character $\underline \psi = \psi_1 \times \psi_2: (\ZZ_p^\times)^2 \to \calO^\times$, Kisin \cite{kisin-deform} defines a quotient of $R_{\bar r_p}^{\square, 1-k, \underline \psi}$ parameterizing lifts of $\bar r_p$ that are potentially crystalline with Hodge--Tate weights $\{ 1-k,0\}$ and initial type $\psi$.

For each homomorphism $x^*: R_{\bar r_p}^{\square, 1-k, \underline \psi} \to E'$ with $E'$ a finite extension of $E$, let $\calV_x$ denote the deformation of $\bar r_p$ at $x$. Then the $2$-dimensional space $\DD_\mathrm{pcrys}(\calV_x)$ carries $E'$-linear commuting actions of $\Gal(\QQ_p(\mu_{p^\infty})/\QQ_p)$ and the crystalline Frobenius $\phi$ (see Notation~\ref{N:weight space} for the definition of $\DD_\pcrys(\calV_x)$).

The following \cite[Conjecture~4.1.1]{buzzard-gee} was initially conjectured by Breuil, Buzzard, and Emerton in their personal  correspondences around 2005.
\begin{theorem}[Breuil--Buzzard--Emerton conjecture]
\label{T:BBE}
Assume that $p\geq 11$ and that $\bar r_p$ is reducible and very generic.
Let $k$, $\underline \psi$, $R_{\bar r_p}^{\square, 1-k, \underline \psi}$, and $x^*$ be as above.
Let $m$ denote the minimal \emph{positive} integer such that $\psi_1\psi_2^{-1}$ is trivial on $(1+p^m\ZZ_p)^\times$, and let
$\alpha$ be an eigenvalue of $\phi$ acting on the subspace of $\DD_\mathrm{pcrys}(\calV_x)$ where $\Gal(\QQ_p(\mu_{p^\infty})/\QQ_p)$ acts through $\psi_1$. Then
$$
v_p(\alpha) \in \begin{cases}
\big(\frac a2+\ZZ\big)\cup \ZZ & \textrm{ when }m=1,
\\ \frac{1}{(p-1)p^{m-1}} \ZZ & \textrm{ when }m \geq 2.
\end{cases}
$$
\end{theorem}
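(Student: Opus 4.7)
The plan is to deduce Theorem~\ref{T:BBE} from the local ghost theorem (Theorem~\ref{T:local ghost theorem}) via the intermediate statement that the slopes of the $U_p$-action on abstract overconvergent forms control the Frobenius slopes on Kisin's crystabelline deformation spaces, following the chain ``Local ghost $\Rightarrow$ Slopes on trianguline deformation spaces $\Rightarrow$ (a)'' in the diagram. First, I would apply the Pa\v sk\=unas functor to $\bar r_p$ to produce the projective envelope $\widetilde P$ of $\pi(\bar r_p)^\vee$, whose endomorphism ring is $R_{\bar r_p}$, and invoke the Hu--Pa\v sk\=unas result recalled above to obtain an element $x$ in the maximal ideal of $R_{\bar r_p}$ such that, for any $x_\star \in \gothm_{\calO'}$, the quotient $\widetilde P_{\calO'}/(x-x_\star)\widetilde P_{\calO'}$ is a primitive $\calO'\llbracket \GL_2(\ZZ_p)\rrbracket$-projective augmented module of type $\bbsigma = \Sym^a \FF^{\oplus 2}\otimes \det^b$. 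By Theorem~\ref{T:local ghost theorem}, the Newton polygon of the characteristic series $C_{\widetilde H}^{(\varepsilon)}(w_\star,-)$ for the abstract overconvergent forms attached to this family is \emph{independent} of $x_\star$, so the resulting slope statement is valid uniformly across the full universal deformation of $\bar r_p$.

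Next, I would match the crystabelline deformations parametrized by $R_{\bar r_p}^{\square,1-k,\underline\psi}$ with classical points in this abstract setup. For a lift $\calV_x$ of Hodge--Tate weights $\{1-k,0\}$ and inertia type $\underline\psi$, Colmez's functor together with the classicality theorems of Coleman--Kisin (Proposition~\ref{P:theta and AL}) places the Frobenius eigenvalue $\alpha$ on the $\psi_1$-isotypic part of $\DD_\pcrys(\calV_x)$ as a $U_p$-eigenvalue at a specific classical point on the spectral variety of $\widetilde P_{\calO'}/(x-x_\star)\widetilde P_{\calO'}$. The weight space parameter takes the form $w_\star = \zeta\exp\bigl(p(k-2)\bigr) - 1$, where $\zeta$ is a root of unity of order $p^{m-1}$ encoding the wild part of $\psi_1\psi_2^{-1}|_{1+p\ZZ_p}$, and the character $\varepsilon$ is determined by the reductions $\psi_1 \bmod \gothm_\calO$ and $\psi_2 \bmod \gothm_\calO$ together with the weight congruence. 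Hence $v_p(\alpha)$ equals a slope of $\NP\bigl(C_{\widetilde H}^{(\varepsilon)}(w_\star,-)\bigr)$, which by the local ghost theorem equals a slope of $\NP\bigl(G_\bbsigma^{(\varepsilon)}(w_\star,-)\bigr)$.

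The final step is to read off denominators from the ghost polygon. Since $g_n(w) = \prod_{k'}(w-w_{k'})^{m_n(k')}$ with $m_n(k') \in \ZZ_{\geq 0}$, each slope is a $\ZZ$-linear combination of valuations $v_p(w_\star - w_{k'})$. For $m \geq 2$, expanding $\zeta\exp(p(k-2)) - \exp(p(k'-2))$ shows $v_p(w_\star - w_{k'}) \in \tfrac{1}{(p-1)p^{m-1}}\ZZ_{\geq 1}$, which delivers the claimed denominator bound. For $m=1$ so $\zeta=1$ and $w_\star = w_k$, each $v_p(w_k - w_{k'})$ is a positive integer, so generic slopes are integers; however, ``near-Steinberg'' segments of the ghost polygon (cf.\ Proposition~\ref{P:near-steinberg equiv to nonvertex}) can halve certain integer segments. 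The product relation $v_p(\alpha) + v_p(\beta) = k-1 \equiv a+1 \pmod{p-1}$ and the symmetric structure of the ghost polygon then pin down any non-integer slope to lie in $\tfrac{a}{2} + \ZZ$.

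The main obstacle is the second step: rigorously matching the Galois-side datum $(\calV_x,\alpha)$ with a specific $U_p$-eigenvalue at a specific $(\varepsilon, w_\star)$ on the ``automorphic'' side, uniformly in the trianguline family. This requires a coherent assembly of the Pa\v sk\=unas functor, Colmez's functor, Coleman--Kisin classicality, and the trianguline variety formalism of Breuil--Hellmann--Schraen, so that the $x_\star$-independence supplied by the local ghost theorem can be transported to a statement about all crystabelline points simultaneously. A secondary technical difficulty is the combinatorial bookkeeping for near-Steinberg contributions in the $m=1$ case: one must identify precisely which non-vertex positions of the ghost polygon produce the $\tfrac{a}{2}+\ZZ$ slopes, using the structure of the exponents $m_n(k')$ from Definition~\ref{D:ghost series intro}.
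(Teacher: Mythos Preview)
Your proposal is correct and follows essentially the same route as the paper: deduce that $v_p(\alpha)$ (after the appropriate shift) equals a slope of $\NP\big(G_{\bbsigma}^{(\varepsilon)}(w_\star,-)\big)$ via the chain Pa\v sk\=unas module $\to$ local ghost theorem $\to$ trianguline deformation space $\to$ crystabelline points (this is the paper's Theorem~\ref{T:generalized BBE} and Corollary~\ref{C:BBE slopes}), then read off the denominators from the ghost polygon. Two minor sharpenings relative to your sketch: for $m\geq 2$ the paper observes directly that $v_p(w_\star)<1\leq v_p(w_{k'})$, so $v_p(w_\star-w_{k'})=v_p(w_\star)$ for \emph{every} ghost zero $w_{k'}$, giving slopes exactly $(\deg g_n^{(\varepsilon)}-\deg g_{n-1}^{(\varepsilon)})\cdot v_p(w_\star)$; and for $m=1$, the product relation $v_p(\alpha)+v_p(\beta)=k-1$ alone does not force non-integer slopes into $\tfrac a2+\ZZ$ --- the paper instead invokes Proposition~\ref{P:near-steinberg equiv to nonvertex}(6) (ultimately \cite[Corollaries~4.14 and~5.24]{liu-truong-xiao-zhao}) as a black-box combinatorial result on ghost slopes, which is precisely the ``not-at-all-trivial'' input you flagged as the secondary difficulty.
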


This is proved in Corollary~\ref{C:BBE and gouvea k-1/p+1}, in fact as a corollary of Theorem~\ref{T:generalized BBE} which identifies all possible slopes on the trianguline deformation spaces with slopes of the Newton polygon of $G_{\bbsigma}^{(\varepsilon)}(w,t)$.  The idea of the proof is essentially explained in the paragraph after Theorem~\ref{T:local ghost theorem}, namely, that applying Theorem~\ref{T:local ghost theorem} to the universal $\GL_2(\QQ_p)$-representation defined by Pa\v sk\=unas shows that the slopes of the crystalline Frobenius actions are exactly determined by the $U_p$-slopes on corresponding overconvergent forms, which is in turn equal to the slopes of $G_{\bbsigma}^{(\varepsilon)}(w,t)$.  Now the integrality statement follows from a (not-at-all-trivial) property of ghost series \cite[Corollaries~4.14 and 5.24]{liu-truong-xiao-zhao}.

\begin{remark}
\phantomsection
\label{R:BBE}
\begin{enumerate}
\item What is originally conjectured in \cite[Corollary~4.1.1]{buzzard-gee} also includes non-generic cases, which our method cannot treat at the moment.
\item
There have been several attempts \cite{breuil-reduction, buzzard-gee-reduction, ghate1, ghate2, ghate3} on various versions of this theorem, based on mod $p$ local Langlands correspondence. In fact, their goals are much more ambitious: classify the reduction of all crystalline or crystabelline representations with slopes less than equal to a particular number, typically less than or equal to $3$.
In their range, their work even addresses non-generic cases that we cannot touch.
Our advantage is to be able to treat all possible slopes.

\item Analogous to Theorem~\ref{T:BBE}, Jiawei An \cite{an-slope} obtained some partial results towards the $p$-adic valuations of $\calL$-invariants of semistable deformations of $\bar r_p$.
\end{enumerate}
\end{remark}

\subsection{Application B: Gouv\^ea's $\big\lfloor\frac{k-1}{p+1}\big\rfloor$-conjecture}
\label{S:application B}
In  1990s, Gouv\^ea \cite[\S\,4]{gouvea} numerically computed the $T_p$-slopes on $S_k(\Gamma_0(N))$ as $k\to \infty$ and found that almost always, the slopes are less than or equal to $\big\lfloor\frac{k-1}{p+1}\big\rfloor$. 

Interpreting this using the framework of $p$-adic local Langlands correspondence, we should consider instead the $T_p$-slopes on $S_k(\Gamma_0(N))_{\gothm_{\bar r}}$ (or better, the lesser $U_p$-slopes on old forms in $S_k(\Gamma_0(pN))_{\gothm_{\bar r}}$ after $p$-stabilization) when localized at a residual Galois representation $\bar r$ as in  \S\,\ref{S:statement of main theorems}. If we assume further that $\bar r|_{\rmI_{\QQ_p}}$ is isomorphic to $\bar r_p$ and $\bar r_p^\mathrm{ss}$ as above, it is expected that the slopes are always less than or equal to $\big\lfloor\frac{k-1}{p+1}\big\rfloor$. 

This conjecture also has its Galois theoretic counterpart, which seems more intrinsic. Roughly speaking, this folklore conjecture asserts that for any crystalline representation $V$ of Hodge--Tate weights $\{0,k-1 \}$, if $p$-adic valuation of the trace of the $\phi$-action on $\DD_\crys(V)$ is strictly larger than $\big\lfloor\frac{k-1}{p+1}\big\rfloor$, then $V$ has an irreducible reduction.

Our following result partially answers the contrapositive statement.
\begin{theorem}[Gouv\^ea's $\big\lfloor\frac{k-1}{p+1}\big\rfloor$-conjecture]
\label{T:Gouvea k-1/p+1}
Assume $p\geq 11$.
Let $\bar r_p$ be a residual local Galois representation that is reducible and very generic (with $a \in \{2,\dots, p-5\}$).
Let $$\underline \psi: (\ZZ_p^\times)^2 \twoheadrightarrow \Delta^2 \xrightarrow{ \omega^{-s_\varepsilon} \times \omega^{-s_\varepsilon}} \calO^\times$$ 
be a character with $s_\varepsilon \in \{0, \dots, p-2\}$, and fix $k \in \ZZ_{\geq 2}$ such that $k \equiv a+ 2s_\varepsilon\bmod(p-1)$.

Let $R_{\bar r_p}^{\square,1-k, \underline \psi}$ denote the Kisin's crystabelline deformation ring as in \S\,\ref{S:application A} and let $x^*: R_{\bar r_p}^{\square,1-k, \underline \psi} \to E'$ be a continuous homomorphism. Then for the trace $a_{p, x}$ of the $\phi$-action on $\DD_\pcrys(\calV_x)$, we have
$$
k-1+v_p(a_{p, x}) \leq \Big\lfloor\frac{k-1- \min\{a+1, p-2-a\}}{p+1}\Big\rfloor.
$$
\end{theorem}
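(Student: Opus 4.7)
The plan is to mirror the derivation of Theorem~\ref{T:BBE} and then reduce to a combinatorial estimate on the ghost series. Starting from $x^{*}\colon R_{\bar r_p}^{\square,1-k,\underline\psi}\to E'$, I would first invoke Hu--Pa\v sk\=unas to realize $\calV_x$ as the Galois representation attached to a quotient $\widetilde P_{\calO'}/(x-x_\star)\widetilde P_{\calO'}$ of Pa\v sk\=unas's projective envelope, which is a primitive $\calO'\llbracket\rmK_p\rrbracket$-projective augmented module of type $\bbsigma=\sigma_{a,b}$. Applying Theorem~\ref{T:local ghost theorem} to this module identifies the slopes of the crystalline Frobenius $\phi$ acting on the $\psi_1$-isotypic part of $\DD_\pcrys(\calV_x)$ with the slopes of $\NP\bigl(G^{(\varepsilon)}_{\bbsigma}(w_k,-)\bigr)$, where the character $\varepsilon$ of $(\FF_p^\times)^2$ is the one determined by $\underline\psi$.

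Under the homological normalization of \S\,\ref{S:normalization}, the two eigenvalue valuations $v_p(\alpha),v_p(\beta)$ are non-positive and sum to $-(k-1)$, so $v_p(a_{p,x}) = \min\{v_p(\alpha),v_p(\beta)\}$ generically, and the quantity $k-1+v_p(a_{p,x})$ equals the smaller of the two $U_p$-slopes at weight $w_k$, which is precisely the first slope of $\NP\bigl(G^{(\varepsilon)}_{\bbsigma}(w_k,-)\bigr)$. Thus the theorem reduces to the purely combinatorial inequality
$$
\min_{n\geq 1}\frac{v_p\bigl(g^{(\varepsilon)}_n(w_k)\bigr)}{n} \;\leq\; \Big\lfloor \tfrac{k-1-\min\{a+1,p-2-a\}}{p+1}\Big\rfloor,
$$
and it suffices to exhibit a single index $n$ together with an upper bound on $v_p(g^{(\varepsilon)}_n(w_k))$ whose quotient by $n$ meets the target.

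To produce such an $n$, I would exploit the product formula of Definition~\ref{D:ghost series intro}: writing $v_p(w_k - w_{k'}) = 1+v_p(k-k')$ for $k'\equiv k \pmod{p-1}$, the valuation $v_p(g^{(\varepsilon)}_n(w_k))$ is a weighted sum over admissible weights $k'$ satisfying $d_{k'}^\ur < n < d_{k'}^\Iw - d_{k'}^\ur$. The explicit dimension formulas of Definition-Proposition~\ref{DP:dimension of classical forms} show that the admissible $k'$ of smallest $p$-adic distance to $k$ lies at distance roughly $p^{-\lceil(k-1)/(p+1)\rceil}$ from $k$, which produces the $1/(p+1)$ factor. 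The additive refinement $\min\{a+1,p-2-a\}$ reflects the fact that, depending on the residue class of $k$ modulo $2(p-1)$ and the Serre weight $\sigma_{a,b}$, the boundary $d_{k'}^\ur < n < d_{k'}^\Iw - d_{k'}^\ur$ excludes a few nearby weights, pushing the nearest admissible $k'$ in a controlled symmetric way. This estimate is essentially the Bergdall--Pollack combinatorial input underlying their conditional verification of Gouv\^ea's $\lfloor(k-1)/(p+1)\rfloor$-conjecture in \cite{bergdall-pollack2}, adapted to the refined ghost series of \cite{liu-truong-xiao-zhao}.

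The main obstacle is the combinatorial bookkeeping that yields the sharp additive correction $\min\{a+1,p-2-a\}$ rather than merely $\lfloor(k-1)/(p+1)\rfloor$: one must perform a careful case analysis on $k\pmod{p-1}$ and on $(a,b)$ to locate the precise nearest admissible $k'$ and verify the bound in each case, exactly the type of ghost-polygon analysis already carried out in \cite{liu-truong-xiao-zhao}. Once this sharp ghost-slope estimate is in hand, combining it with the identification of slopes via Theorem~\ref{T:local ghost theorem} and Pa\v sk\=unas's functor produces Theorem~\ref{T:Gouvea k-1/p+1}.
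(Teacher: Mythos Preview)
Your overall architecture---realize $\calV_x$ via the Pa\v sk\=unas module, apply the local ghost theorem to identify Frobenius slopes with ghost slopes, then invoke a combinatorial bound on ghost slopes---matches the paper's route through Theorem~\ref{T:generalized BBE} and Corollary~\ref{C:BBE slopes}. But there is a genuine gap in the middle step.

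You write that ``the quantity $k-1+v_p(a_{p,x})$ equals the smaller of the two $U_p$-slopes at weight $w_k$, which is precisely the first slope of $\NP\bigl(G^{(\varepsilon)}_{\bbsigma}(w_k,-)\bigr)$.'' This conflates two different things. The ghost Newton polygon at $w_k$ has \emph{infinitely many} slopes, parametrizing all possible crystabelline lifts of $\bar r_p$ at that weight; the two slopes coming from your particular $\calV_x$ are just two among them, and there is no reason they should be the smallest. What Theorem~\ref{T:generalized BBE} (equivalently Corollary~\ref{C:BBE slopes}(1)) actually gives is only that each of $-v_p(\alpha),-v_p(\beta)$ appears as \emph{some} slope of the ghost polygon. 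Consequently your proposed reduction to bounding $\min_{n\geq 1} v_p(g^{(\varepsilon)}_n(w_k))/n$ is both too weak (you would only be bounding the first slope, not the slope attached to $\calV_x$) and aimed at the wrong target.

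The paper closes this gap differently. First, Corollary~\ref{C:BBE slopes}(3) rules out the ``newform'' slope $\tfrac{k-2}{2}$ for crystalline points with $\psi_1=\psi_2$; this also disposes of the equal-valuation case you flagged as ``generic,'' since the ghost slopes at $w_k$ split cleanly into an old-form range $[0,\text{bound}]$, the newform slope $\tfrac{k-2}{2}$, and the dual range $[k-1-\text{bound},k-1]$, forcing $v_p(\alpha)\neq v_p(\beta)$. Then the key combinatorial input is not a bound on the \emph{first} ghost slope but Proposition~\ref{P:ghost compatible with theta AL and p-stabilization}(4) (which is \cite[Proposition~4.28]{liu-truong-xiao-zhao}): \emph{all} of the first $d_k^\ur(\varepsilon_1)$ ghost slopes are bounded by $\big\lfloor\tfrac{k-1-\min\{a+1,p-2-a\}}{p+1}\big\rfloor$. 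Together with the $p$-stabilization pairing in Proposition~\ref{P:ghost compatible with theta AL and p-stabilization}(3), this forces the smaller of the two slopes from $\calV_x$ to lie in the old-form range and hence satisfy the bound. So the combinatorial estimate you need is already packaged in the prequel; what is missing from your argument is the recognition that you must bound the entire old-form block of slopes, not just the first one.
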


This is proved in Corollary~\ref{C:BBE and gouvea k-1/p+1}.

\begin{remark}
\phantomsection
\label{R:k-1/p+1}
\begin{enumerate}
\item The Galois-theoretic version of Gouv\^ea's conjecture was proved with weaker bounds $\big\lfloor \frac{k-1}{p-1}\big\rfloor$ by Berger--Li--Zhu \cite{berger-li-zhu} and bounds $\big\lfloor \frac{k-1}{p}\big\rfloor$ by Bergdall--Levin \cite{bergdall-levin}. Both results essentially use tools from $p$-adic Hodge theory: the former one uses Wach modules and the latter one uses Kisin modules.  Our proof ``comes from the automorphic side".

\item The estimate of the slopes of crystalline Frobenius $\phi$ comes from the estimate of slopes of the ghost series, which turns out to involve a rather subtle inequality on sum of digits of certain number's $p$-adic expansions. See \cite[Proposition~4.28]{liu-truong-xiao-zhao} for the non-formal part of the proof.
\end{enumerate}
\end{remark}

\subsection{Application C: Finiteness of irreducible components of eigencurves}
Near the end of the introduction of the seminal paper \cite{coleman-mazur} of Coleman and Mazur, they listed many far-reaching open questions, among them, one particularly intriguing question is whether the eigencurve has finitely many irreducible components, as somewhat ``suggested" by that all non-Hida components have infinite degrees over the weight space \cite{hattori-newton}.
As far as we understand, almost nothing was known towards this question.
As a corollary of our main theorem, we provide positive theoretic evidence towards this question, namely, the eigencurve associated to $\bar r$ that is reducible and very generic at $p$, has finitely many irreducible components.

Keep the notation as in Theorem~\ref{T:ghost intro}. Let $\calW: =(\Spf\calO\llbracket w\rrbracket)^\rig$ denote the rigid analytic weight open unit disk and let $\GG_m^\rig$ denote the rigid analytification of $\GG_{m, \QQ_p}$.  Let $\Spc(\bar r)$ denote the zero locus of $C_{\bar r}(w,t)$, as a rigid analytic subspace of $\GG_m^\rig \times \calW$; it carries a natural weight map $\wt$ to $\calW$.  By Hida theory, this spectral curve is the disjoint union $\Spc (\bar r)= \Spc(\bar r)_{=0} \bigsqcup \Spc(\bar r)_{>0}$, where $\Spc(\bar r)_{=0}$ (possibly empty) is the component with slope zero, corresponding to the Hida family. It is well known that $\Spc(\bar r)_{=0}$ is finite over $\calW$, and hence has finitely many irreducible components.
We prove the following in Corollary~\ref{C:irreducible components}.
\begin{theorem}
\label{T:irreducible components intro}
Assume $p \geq 11$ and that $\bar r: \Gal_{\QQ} \to \GL_2(\FF)$ is an absolutely irreducible representation such that $\bar r_p|_{\rmI_{\QQ_p}}$ is reducible and very generic.
Then $\Spc (\bar r)_{>0}$ has finitely many irreducible components. In fact, every irreducible component $\calZ$ of $\Spc (\bar r)_{>0}$ is the zero locus of a power series $C_\calZ(w,t) \in \calO\llbracket w,t\rrbracket$ such that for every $w_\star \in \gothm_{\CC_p}$, the $\NP\big( C_\calZ(w_\star, -)\big)$ is the same as $\NP\big(G_{\bbsigma}(w_\star,-)\big)$ with the slope-zero part removed, and stretched in both $x$- and $y$-directions by some constant $m(\calZ)$.
\end{theorem}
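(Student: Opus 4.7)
The plan is to translate the statement about irreducible components of $\Spc(\bar r)_{>0}$ into a factorization problem for the characteristic power series $C_{\bar r}(w,t)$, and then to apply Theorem~\ref{T:ghost intro} together with combinatorial rigidity properties of the ghost series $G_{\bbsigma}$ from \cite{liu-truong-xiao-zhao}.

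First, I would use Hida's $\Lambda$-adic theory to obtain a factorization
$$
C_{\bar r}(w,t) = C_{=0}(w,t) \cdot C_{>0}(w,t),
$$
where $C_{=0}(w,t) \in \calO\llbracket w \rrbracket [t]$ is a distinguished polynomial whose $t$-degree equals the rank of the Hida family attached to $\bar r$ (so its zero locus is exactly $\Spc(\bar r)_{=0}$), while $C_{>0}(w,t) \in 1 + t\calO\llbracket w,t\rrbracket$ is Fredholm in $t$ with only positive slopes at every evaluation of $w$. Thus it suffices to analyze the irreducible factors of $C_{>0}$ in the ring of Fredholm power series over $\calO\llbracket w\rrbracket$: each such factor $C_\calZ(w,t)$ cuts out an irreducible component of $\Spc(\bar r)_{>0}$, and conversely.

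By Theorem~\ref{T:ghost intro}, for every $w_\star \in \gothm_{\CC_p}$ the Newton polygon $\NP(C_{>0}(w_\star,-))$ is the positive-slope part of $\NP(G_{\bbsigma}(w_\star,-))$ stretched in both coordinate directions by $m(\bar r)$. Since $C_\calZ \mid C_{>0}$, the slope multiset of $\NP(C_\calZ(w_\star,-))$ is a sub-multiset of the one for $C_{>0}$. The key claim, to be proved, is that this sub-multiset is always a uniform multiple (by some $m(\calZ) \in \ZZ_{\geq 1}$) of the full list of positive ghost slopes. I would exploit the combinatorial rigidity of the ghost polygon for this: for any two consecutive positive slopes of $G_{\bbsigma}$ one can locate a specific weight $w_\star' \in \gothm_{\CC_p}$, arising as a root of the appropriate coefficient $g_n(w)$, at which those two slope segments merge into a single segment of the ghost polygon. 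Near such a weight, a Weierstrass-preparation/Hensel-type analysis of the local factorization of $C_{>0}$ forces any irreducible factor to contain both slopes in proportional amounts; iterating through all consecutive pairs propagates this proportionality across the whole ghost Newton polygon.

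Finiteness then follows from a mass argument: each $m(\calZ)$ is a positive integer and comparing the total horizontal stretch yields $\sum_\calZ m(\calZ) = m(\bar r)$, which is finite, so there can be at most $m(\bar r)$ irreducible components. The main obstacle will be the slope-coincidence step: precisely identifying the merging weights $w_\star'$ through the zero loci of the polynomials $g_n(w)$ and then upgrading the local Hensel factorization picture into a global constraint on the irreducible factors $C_\calZ$ in the Fredholm ring. This is where the fine combinatorial structure of the ghost series from \cite{liu-truong-xiao-zhao} and the very generic hypothesis enter in an essential way.
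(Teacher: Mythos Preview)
Your overall strategy matches the paper's: factor off the ordinary part, then show that any Fredholm factor of $C_{>0}$ inherits the ghost Newton polygon with some multiplicity (this is exactly Theorem~\ref{T:irreducible factors}), whence finiteness follows from $\sum_\calZ m(\calZ) = m(\bar r)$. You also correctly identify the slope-merging phenomenon as the engine that should force proportionality.

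The gap is in your proposed mechanism. A Weierstrass/Hensel factorization of $C_{>0}$ by slope is only \emph{local} over affinoid pieces of weight space and does not respect the global irreducible decomposition; at a merging weight $w_k$ the factor $C_\calZ$ contributes some number of copies of the merged slope, but Hensel lifting alone does not tell you how these copies redistribute between the $n$th and $(n{+}1)$th ghost slopes as you move away from $w_k$. The paper's argument (Section~\ref{Sec:irreducible components}) replaces this with potential theory on the Berkovich weight disk: one first shows that $m(H,n)$, the count of the first $n$ ghost slopes appearing in $H$, is locally constant on the connected Berkovich space $\overline{\Vtx}^{\Berk}_{n,\geq\lambda}$ (your ``continuity'' step made precise via Lemma~\ref{L:continuity of NP} and Corollary~\ref{C:Berkovich argument}). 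The inductive step $m(H,n+1)-m(H,n)=m(H,n)-m(H,n-1)$ is then extracted from the \emph{harmonicity identity} \eqref{E:harmonicity} for $v_p(h_{m(H,n+1)})$ at the Gauss point $\eta_{w_k,\Delta_{k,1}-\Delta_{k,0}}$, together with a delicate verification (Step~II of the proof) that this Gauss point sits in exactly the right configuration relative to the various $\overline{\Vtx}^{\Berk}_{n\pm 1,\geq\lambda}$. Harmonicity is what converts the directional slope-derivatives into a linear relation among the $m(H,n{\pm}1)$; it is not clear your Hensel picture can reproduce this without essentially rediscovering it.
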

In fact, what we prove is that, for every power series $C(w,t)$ whose positive slopes agree with the ghost series (up to a fixed multiplicity), any irreducible factor of $C(w,t)$ has the same property; see Theorem~\ref{T:irreducible factors} and Corollary~\ref{C:irreducible components}.

\subsection{Application D: Gouv\^ea--Mazur conjecture}
\label{S:aplication D GM conjecture}
In the pioneer work of
Gouv\^ea and Mazur \cite{gouvea-mazur}, they investigated how slopes of (classical) modular forms vary when the weight $k$ changes $p$-adically. Their extensive numerical data suggests that when the weights $k_1$ and $k_2$ are $p$-adically close, then the slopes of modular forms of weights $k_1$ and $k_2$ agree. More precisely, they made the following conjecture.
\begin{conjecture}
[Gouv\^ea--Mazur]
\label{Conj:Gouvea-mazur}
There is a function $M(n)$ linear in $n$ such that if $k_1, k_2 > 2n + 2$ and
$k_1 \equiv k_2 \bmod (p-1)p^{M(n)}$, then the sequences of $U_p$-slopes (with multiplicities) on $\rmS_{k_1}(\Gamma_0(Np))$ and $\rmS_{k_2}(\Gamma_0(Np))$ agree up to slope $n$.
\end{conjecture}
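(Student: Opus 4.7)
The plan is to deduce Conjecture~\ref{Conj:Gouvea-mazur} (for the portion of the space localized at a Hecke maximal ideal $\gothm_{\bar r}$ with $\bar r_p$ reducible and very generic) from the ghost conjecture Theorem~\ref{T:ghost intro} via a purely combinatorial analysis of the ghost series, following arguments of Bergdall--Pollack and Ren.

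\textbf{Step 1: Reduction to the ghost series.} First I would decompose $\rmS_k(\Gamma_0(Np))$ into Hecke isotypic components $\rmS_k(\Gamma_0(Np); \omega^{k-1-c})_{\gothm_{\bar r}}$. Restricting to those $\bar r$ satisfying our hypothesis, Theorem~\ref{T:ghost intro} asserts that the $U_p$-slopes on this piece---apart from the slope-zero part, which is controlled by Hida theory and is locally constant in $k$ modulo $p-1$---coincide with the slopes of $\NP(G_{\bbsigma}(w_k, -))$, each stretched by $m(\bar r)$. Consequently the conjecture (for this $\bar r$-part) reduces to the following combinatorial analogue: there exists a linear function $M(n)$ such that whenever $k_1, k_2 > 2n+2$ and $k_1 \equiv k_2 \bmod (p-1)p^{M(n)}$, the slopes $\leq n$ of $\NP(G_{\bbsigma}(w_{k_1}, -))$ and $\NP(G_{\bbsigma}(w_{k_2}, -))$ agree with multiplicity.

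\textbf{Step 2: Key estimate on ghost coefficients.} Since $g_n(w) = \prod_k (w - w_k)^{m_n(k)}$, one has
$$ v_p\bigl(g_n(w_{k_0})\bigr) = \sum_k m_n(k) \cdot v_p(w_{k_0} - w_k). $$
From $w_k = \exp(p(k-2)) - 1$ together with $k_1 \equiv k_2 \bmod (p-1)$, a direct calculation gives $v_p(w_{k_1} - w_{k_2}) = 1 + v_p(k_1 - k_2)$, so the hypothesis yields $v_p(w_{k_1} - w_{k_2}) \geq M(n)+1$. For a fixed index $n$, only weights $k$ in a bounded window contribute nontrivially to $m_n(k)$, because $m_n(k) > 0$ forces $d^\ur_k < n < d^\Iw_k - d^\ur_k$ and these dimensions grow linearly with $k$. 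A careful bookkeeping of the exponents $m_m(k)$ for $m$ in the range relevant to slopes $\leq n$ then bounds $|v_p(g_m(w_{k_1})) - v_p(g_m(w_{k_2}))|$ by a quantity that, once $M(n)$ is chosen linearly in $n$, is too small to disturb any break point of the Newton polygon at height $\leq n$.

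\textbf{The main obstacle} will be producing an $M(n)$ that is \emph{linear} in $n$; methods based on the classical Coleman or Wan-style estimates on the $C_{\bar r}(w,t)$ alone yield only much weaker bounds. Achieving linearity requires the sharpest combinatorial analysis of the ghost series: one must precisely locate the Newton polygon vertices of $\NP(G_{\bbsigma}(w_{k_0}, -))$ in the slope-$\leq n$ range, exploiting near-Steinberg phenomena and the explicit factorization pattern of the $g_n(w)$. This delicate combinatorial input is precisely the content of \cite{bergdall-pollack2, bergdall-pollack3, bergdall-pollack4} and subsequent work of Ren, which we invoke as a black box once the reduction above is in place.
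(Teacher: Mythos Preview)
Your proposal is correct and matches the paper's approach: the paper does not prove Conjecture~\ref{Conj:Gouvea-mazur} in full, only the $\bar r$-localized variant (Theorem~\ref{T:bar rp GM conjecture}), and its proof is exactly your two-step reduction---apply the ghost theorem to identify the $U_p$-slopes with ghost slopes, then invoke Ren's combinatorial result \cite[Theorem~1.4]{ren} as a black box for the linear bound $M(n)$. Your Step~2 sketch of the ghost-side estimate is a reasonable outline of what Ren's argument accomplishes, though the paper simply cites \cite{ren} without reproducing any of it (Bergdall--Pollack's papers are cited for the distribution conjecture, not for Gouv\^ea--Mazur).
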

Originally,  Gouv\^ea and Mazur conjectured with $M(n)=n$, but  Buzzard and Calegari \cite{buzzard-calegari} found explicit counterexamples. The current modified version Conjecture~\ref{Conj:Gouvea-mazur} is still expected by experts. The only proved result is with $M(n)$ quadratic in $n$ by Wan \cite{wan}.

It is natural to consider this conjecture for each $\bar r$-localized subspaces $\rmS_k(\Gamma_0(Np))_{\gothm_{\bar r}}$. Under the same hypothesis as above, combining Theorem~\ref{T:ghost intro} with a combinatorial result of ghost series by Ren \cite{ren}, the following variant of Gouv\^ea--Mazur conjecture can be deduced (see Theorem~\ref{T:bar rp GM conjecture}).

\begin{theorem}
Assume $p \geq 11$ and that $\bar r: \Gal_{\QQ} \to \GL_2(\FF)$ is an absolutely irreducible representation such that $\bar r_p|_{\rmI_{\QQ_p}}$ is reducible and very generic. Let $m \in \ZZ_{\geq 4}$ be an integer. Then for weights $k_1, k_2 > m-3$ such that $v_p(k_1-k_2)\geq m$, the sequence of $U_p$-slopes (with multiplicities) on $\rmS_{k_1}(\Gamma_0(Np); \omega^{k_1-a-2b-2})_{\gothm_{\bar r}}$ and $\rmS_{k_2}(\Gamma_0(Np);\omega^{k_2-a-2b-2})_{\gothm_{\bar r}}$ agree up to slope $m-4$.
\end{theorem}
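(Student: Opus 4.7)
The plan is to reduce the assertion to a purely combinatorial statement about the ghost series $G_{\bbsigma}(w,t)$ and then invoke the result of Ren. First, observe that the classicality theorem of Coleman and Kisin (as recalled in \S\,\ref{S:statement of main theorems}) identifies $U_p$-slopes on $\rmS_{k_i}(\Gamma_0(Np); \omega^{k_i-a-2b-2})_{\gothm_{\bar r}}$ with the slopes of $\NP\big(C_{\bar r}(w_{k_i},-)\big)$ that are $\leq k_i - 1$, up to the harmless boundary ambiguity at slope exactly $k_i - 1$. Since the hypothesis $k_i > m-3$ forces $k_i - 1 > m-4$, all slopes strictly less than $m-4$ appearing on the overconvergent side are automatically classical, so it suffices to compare the overconvergent spectra up to slope $m-4$.

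Next, I would apply Theorem~\ref{T:ghost intro}: under the very generic reducible hypothesis on $\bar r_p$, the Newton polygon $\NP\big(C_{\bar r}(w_{k_i},-)\big)$ agrees with $\NP\big(G_{\bbsigma}(w_{k_i},-)\big)$ stretched by the factor $m(\bar r)$ in both coordinates (away from slope zero). The slope-zero contribution is controlled by Hida theory and is completely insensitive to the $p$-adic distance between $k_1$ and $k_2$: the Hida family has constant rank over each connected component of weight space and its slope-zero part is automatically matched. Thus the statement reduces to showing that the multisets of positive ghost slopes at $w_{k_1}$ and $w_{k_2}$ agree up to slope $m-4$, each with the common stretching multiplicity $m(\bar r)$. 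Because this stretching is the same on both sides, the factor $m(\bar r)$ can be ignored and we are left with a statement purely about the ghost series.

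Finally, I would invoke Ren's combinatorial result on ghost series (cited in \S\,\ref{S:aplication D GM conjecture}), which is exactly a Gouv\^ea--Mazur-type stability statement for $G_{\bbsigma}(w,t)$: if $v_p(k_1 - k_2) \geq m$, then the slopes of $\NP\big(G_{\bbsigma}(w_{k_1}, -)\big)$ and $\NP\big(G_{\bbsigma}(w_{k_2}, -)\big)$ agree, with multiplicity, up to slope $m-4$. Combining this with the previous two steps yields the theorem.

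The main obstacle I anticipate is verifying that the quantitative bound $m-4$ (rather than a weaker linear function of $m$) really does follow from Ren's work in the precise form needed here, and that the slope-zero (Hida) part is correctly handled so that the multiplicity matching is exact and not merely approximate. This is essentially a bookkeeping issue: one must check that the exceptional behavior at slope zero in Theorem~\ref{T:ghost intro} does not pollute the count at positive slopes $< m-4$, and that Ren's combinatorial estimate is stated uniformly over all $w_\star$ rather than only generically. Once these technical points are confirmed, the proof is a direct combination of the local ghost theorem and the combinatorial input.
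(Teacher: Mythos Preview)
Your proposal is correct and follows essentially the same route as the paper's proof (Theorem~\ref{T:bar rp GM conjecture}): apply the ghost theorem (Theorem~\ref{T:global ghost}, which specializes to Theorem~\ref{T:ghost intro} in the modular-form case) to replace $\NP(C_{\bar r}(w_{k_i},-))$ by the $m(\bar r)$-stretched $\NP(G_{\bbsigma}(w_{k_i},-))$, and then invoke Ren's combinatorial Gouv\^ea--Mazur result \cite[Theorem~1.4]{ren} for the ghost slopes. Your anticipated obstacles are non-issues: the precise bound $m-4$ is exactly what Ren proves, and the slope-zero discrepancy is handled by noting that Hida slopes are constant over weight space and hence match automatically on both sides.
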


\subsection{Application E: Gouv\^ea's slope distribution conjecture}
\label{S:aplication E Gouvea slope distribution}

For slopes of modular forms, Gouv\^ea made extensive numerical computations. In his paper \cite{gouvea}, titled ``Where the slopes are",  he made the following intriguing conjecture.
\begin{conjecture}
Fix a tame level $N$ (relatively prime to $p$). For each $k$, write $\alpha_1(k), \dots, \alpha_d(k)$ for the list of $U_p$-slopes on $\rmS_k(\Gamma_0(Np))$, and let $\mu_k$ denote the uniform probability measure of the multiset $\{\frac{\alpha_1(k)}{k-1},\dots, \frac{\alpha_d(k)}{k-1}\} \subset [0,1]$. Then the measure $\mu_k$ weakly converges to 
\begin{equation}
\label{E:gouvea measure}
\frac{1}{p+1}\delta_{[0, \frac 1{p+1}]} + \frac{1}{p+1}\delta_{[ \frac p{p+1}, 1]} + \frac{p-1}{p+1}\delta_{\frac 12},
\end{equation}
where $\delta_{[a,b]}$ denotes the uniform probability measure on the interval $[a,b]$, and $\delta_{\frac 12}$ is the Dirac measure at $\frac 12$.
\end{conjecture}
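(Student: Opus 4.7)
The plan couples the oldform/newform decomposition at $p$ with the Hecke-isotypic decomposition by residual Galois representations. On the newform subspace $\rmS_k(\Gamma_0(Np))_\new$, the local component at $p$ is Steinberg (since $\Gamma_0(Np)$ has trivial nebentypus), forcing each $U_p$-eigenvalue to equal $\pm p^{(k-2)/2}$; after rescaling by $k-1$ these contribute a Dirac mass at $\frac 12$. Since $\dim \rmS_k(\Gamma_0(Np))_\new \sim (p-1)\dim \rmS_k(\Gamma_0(N))$ and $\dim \rmS_k(\Gamma_0(Np)) \sim (p+1)\dim \rmS_k(\Gamma_0(N))$, this accounts for exactly the mass $\frac{p-1}{p+1}\delta_{\frac 12}$ in the limit. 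The remaining task is to analyze the $U_p$-slopes on the $p$-stabilized oldform subspace: each $T_p$-eigenvalue $a_p$ on $\rmS_k(\Gamma_0(N))$ gives rise to two $U_p$-eigenvalues $\alpha,\beta$ with $\alpha\beta=p^{k-1}$, hence $v_p(\alpha)+v_p(\beta)=k-1$, and I would show that the smaller slopes $v_p(a_p)$ (rescaled by $k-1$) equidistribute on $[0,\frac{1}{p+1}]$; the Atkin--Lehner pairing then delivers the mirror equidistribution on $[\frac{p}{p+1},1]$ for free.

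\textbf{Level-$N$ equidistribution via ghost.} Decompose $\rmS_k(\Gamma_0(N)) = \bigoplus_{\bar r}\rmS_k(\Gamma_0(N))_{\gothm_{\bar r}}$ by residual Galois representations. For each $\bar r$ absolutely irreducible with $\bar r_p$ reducible and very generic, Theorem~\ref{T:ghost intro} identifies $\NP(C_{\bar r}(w_k,-))$ with a stretched copy of $\NP(G_{\bbsigma}(w_k,-))$. Using the explicit product formula in Definition~\ref{D:ghost series intro} together with the Bergdall--Pollack and Ren combinatorial analysis of the extremal regime of the ghost series, one reads off that the small $U_p$-slopes on the corresponding oldform component---equivalently, the $T_p$-slopes on $\rmS_k(\Gamma_0(N))_{\gothm_{\bar r}}$---rescale to equidistribute on $[0,\frac{1}{p+1}]$ as $k\to\infty$, with the correct asymptotic density $1$.

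\textbf{The main obstacle: non-generic and irreducible $\bar r_p$.} The essential difficulty is the contribution from $\bar r$ with $\bar r_p$ irreducible, or with $\bar r_p$ reducible but outside the very-generic range $a\in\{2,\ldots,p-5\}$. These components contribute dimensions linear in $k$ and cannot be discarded. Theorem~\ref{T:ghost intro} does not apply to them, so I would pursue two parallel routes. First, extend the local ghost theorem to the boundary reducible cases $a\in\{1,p-4\}$ along the lines anticipated in Remark~\ref{R:a=1p-4}; for irreducible $\bar r_p$, Remark~\ref{R:possible extension of local ghost theorem}(2) suggests a partial ghost-like formalism restricted to even horizontal coordinates whose combinatorics should still yield the same $(\frac{1}{p+1},\frac{p-1}{p+1},\frac{1}{p+1})$ three-regime distribution. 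Second, bypass a pointwise ghost theorem by combining the $p$-adic Hodge-theoretic slope bounds of Berger--Li--Zhu and Bergdall--Levin with a Breuil--Hellmann--Schraen-style dimension count on the crystabelline trianguline variety of $\bar r_p$ to pin down the exact proportion $\frac{1}{p+1}$ of $T_p$-slopes in $[0,\frac{k-1}{p+1}]$ for each such $\bar r$.

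\textbf{Main difficulty.} The hardest step is this last one. Weak convergence is a softer requirement than pointwise ghost-level control, so one might hope to avoid reestablishing a full analogue of Theorem~\ref{T:local ghost theorem} for each exceptional $\bar r_p$. Nonetheless, the precise asymptotic proportion $\frac{1}{p+1}$ (rather than merely some positive fraction) of $T_p$-slopes falling in the extremal regime is exactly what the ghost combinatorics delivers for generic $\bar r_p$, and recovering that exact proportion for non-generic or irreducible $\bar r_p$ is where essentially new input beyond the methods of this paper must be injected. Once that input is in place, the three paragraphs above assemble: the newform part supplies $\frac{p-1}{p+1}\delta_{\frac 12}$, the level-$N$ equidistribution supplies $\frac{1}{p+1}\delta_{[0,1/(p+1)]}$, and the Atkin--Lehner mirror supplies $\frac{1}{p+1}\delta_{[p/(p+1),1]}$, matching (\ref{E:gouvea measure}) exactly.
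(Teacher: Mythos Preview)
The statement you are attempting to prove is labeled a \emph{Conjecture} in the paper, and the paper does not prove it. What the paper actually establishes is the $\bar r$-localized version stated immediately afterward (and again as Theorem~\ref{T:bar rp Gouvea distribution}): under the hypotheses $p\geq 11$, $\bar r$ absolutely irreducible, and $\bar r_p|_{\rmI_{\QQ_p}}$ reducible and very generic, the measures $\mu_k$ built from the $\bar r$-isotypic piece $\rmS_k(\Gamma_0(Np))_{\gothm_{\bar r}}$ converge to the limit \eqref{E:gouvea measure}. The proof there is short: Theorem~\ref{T:global ghost} identifies the slopes with ghost slopes, and then one invokes the Bergdall--Pollack combinatorial result \cite[Theorem~3.1 and Corollary~3.2]{bergdall-pollack3} on abstract ghost series. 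Your treatment of the generic reducible components follows exactly this line.

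Your proposal is not a proof of the full conjecture, and you say so yourself: the paragraph on ``non-generic and irreducible $\bar r_p$'' openly asks for ``essentially new input beyond the methods of this paper.'' That is the genuine gap, and it is not a small one. The components with $\bar r_p$ irreducible, the boundary cases $a\in\{1,p-4\}$, and the globally reducible $\bar r$ (Eisenstein congruences; cf.\ Remark~\ref{R:reducible global representation}) each contribute a positive proportion of the full space $\rmS_k(\Gamma_0(Np))$ as $k\to\infty$, so they cannot be absorbed into an error term. Neither of your two suggested routes is currently available: the extension of the local ghost theorem to irreducible $\bar r_p$ is explicitly flagged as open in Remark~\ref{R:possible extension of local ghost theorem}(2), and the crystabelline dimension-count you sketch does not by itself pin down the proportion $\tfrac{1}{p+1}$ without something equivalent to ghost-type control of slopes. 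In short, your outline correctly isolates where the difficulty lies, but the conjecture remains open and the paper makes no claim otherwise.
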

The symmetry between $\delta_{[0, \frac 1{p+1}]}$ and $\delta_{[0, \frac 1{p+1}]}$ follows from the usual $p$-stabilization process, namely the old form slopes can be paired so that the sum of each pair is $k-1$. The Dirac measure at $\frac 12$ corresponds to the newform slopes, where the $U_p$-eigenvalues are $\pm p^{\frac{k-2}2}$.

In \cite{bergdall-pollack3}, the authors defined abstract ghost series and showed that the slopes of the Newton polygon of abstract ghost series satisfy analogue of Gouv\^ea's distribution conjecture. So combining their work and Theorem~\ref{T:ghost intro}, we obtain the following (see Theorem~\ref{T:bar rp Gouvea distribution}).
\begin{theorem}
Assume $p \geq 11$ and that $\bar r: \Gal_{\QQ} \to \GL_2(\FF)$ is an absolutely irreducible representation such that $\bar r_p|_{\rmI_{\QQ_p}}$ is reducible and very generic as in Definition~\ref{D:primitive type}.  For $k \equiv a+2b+2 \bmod(p-1)$, let $\alpha_1(k), \alpha_2(k), \dots$ denote the $U_p$-slopes of $\rmS_k(\Gamma_0(Np))_{\gothm_{\bar r}}$ in increasing order, and let $\mu_k$ denote the probability measure for the set $\{\frac{\alpha_1(k)}{k-1},\frac{\alpha_2(k)}{k-1},\dots \big\}$. Let $m(\bar r)$ be the mod-$p$-multiplicity defined in \S\,\ref{S:statement of main theorems}.
Then
\begin{enumerate}
\item 
Put $d_{k, \bar r}^\ur : =\dim \rmS_k(\Gamma_0(N))_{\gothm_{\bar r}}$ and $d_{k, \bar r}^\Iw:= \dim \rmS_k(\Gamma_0(pN))_{\gothm_{\bar r}}$.
We have the following.
$$
\alpha_i(k) = \begin{cases}
\frac{p-1}{2m(\bar r)} \cdot i + O(\log k) & \textrm{if }1 \leq i \leq d_{k, \bar r}^\ur
\\
\frac{k-2}2 & \textrm{if } d_{k, \bar r}^\ur < i \leq d_{k, \bar r}^\Iw - d_{k, \bar r}^\ur
\\
\frac{p-1}{2m(\bar r)} \cdot i + O(\log k) & \textrm{if }d_{k, \bar r}^\Iw - d_{k, \bar r}^\ur < i \leq d_{k, \bar r}^\Iw.
\end{cases}
$$
\item As $k \to \infty$ while keeping $k \equiv a+2b+2 \bmod(p-1)$, the measure $\mu_k$ weakly converges to the probability measure \eqref{E:gouvea measure}.
\end{enumerate}
\end{theorem}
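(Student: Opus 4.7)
The plan is to deduce both (1) and (2) by combining our main ghost theorem (Theorem~\ref{T:ghost intro}) with the slope distribution results for abstract ghost series established by Bergdall and Pollack in \cite{bergdall-pollack3}. Concretely, Theorem~\ref{T:ghost intro} identifies $\NP\big(C_{\bar r}(w_k, -)\big)$, for every classical weight $k \equiv a+2b+2 \bmod (p-1)$, with the Newton polygon $\NP\big(G_{\bbsigma}(w_k, -)\big)$ stretched by a factor $m(\bar r)$ in both coordinates, up to the slope-zero part which is controlled separately by Hida theory. In particular, the sequence $\alpha_1(k), \alpha_2(k), \dots, \alpha_{d_{k,\bar r}^\Iw}(k)$ is obtained from the slope sequence of $G_{\bbsigma}(w_k, -)$ by repeating each ghost slope exactly $m(\bar r)$ times and rescaling the horizontal coordinate by the same factor. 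This reduces both statements to assertions about the ghost Newton polygon at $w_k$.

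For part (1), the recipe for $m_n(k')$ in Definition~\ref{D:ghost series intro} concentrates the vanishing of $g_n(w)$ at $w=w_k$ on the central index range $d_{k}^\ur < n < d_{k}^\Iw - d_{k}^\ur$, where $m_n(k)$ takes the triangular shape $\min(n - d_k^\ur, d_k^\Iw - d_k^\ur - n)$. A direct computation, carried out in the abstract setting by Bergdall--Pollack, shows that $\NP\big(G_{\bbsigma}(w_k, -)\big)$ contains a long middle segment of slope $\tfrac{k-2}{2}$ running over this central range (these are the newform slopes at level $\Gamma_0(Np)$, where $U_p = \pm p^{(k-2)/2}$), while the two end segments are governed by contributions of $v_p(w_k - w_{k'})$ for nearby weights $k'$ with $m_n(k') > 0$. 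Each such term is of size $O(\log k)$, and a counting of pairs $(n,k')$ contributing to $v_p(g_n(w_k))$ shows that the $i$-th ghost slope in either end range equals $\tfrac{p-1}{2} \cdot i + O(\log k)$. Rescaling by $m(\bar r)$ gives the three cases in (1).

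For part (2), weak convergence of $\mu_k$ to \eqref{E:gouvea measure} is a formal consequence of (1). The dimension formulas cited in \S\,\ref{S:statement of main theorems} yield $d_{k, \bar r}^\ur / d_{k,\bar r}^\Iw \to \tfrac{1}{p+1}$ and $\big(d_{k,\bar r}^\Iw - 2d_{k, \bar r}^\ur\big) / d_{k, \bar r}^\Iw \to \tfrac{p-1}{p+1}$, so the three cases of (1) carry asymptotic masses $\tfrac{1}{p+1}$, $\tfrac{p-1}{p+1}$, and $\tfrac{1}{p+1}$ respectively. The normalized middle slopes $\tfrac{k-2}{2(k-1)}$ converge to $\tfrac{1}{2}$, giving the Dirac mass. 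The normalized small slopes $\alpha_i(k)/(k-1)$ are asymptotically linear in $i$ by (1) with negligible error, hence equidistribute in $\big[0, \tfrac{1}{p+1}\big]$; and for $1 \leq i \leq d_{k, \bar r}^\ur$ the oldform pairing $\alpha_i(k) + \alpha_{d_{k,\bar r}^\Iw - i + 1}(k) = k-1$ (coming from the two $p$-stabilizations of each level-$N$ eigenform) then forces the large slopes to equidistribute in $\big[\tfrac{p}{p+1}, 1\big]$, yielding the limiting measure \eqref{E:gouvea measure}.

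The main obstacle---bridging the purely combinatorial ghost-side distribution with the actual $U_p$-slopes on localized cuspform spaces---is precisely what Theorem~\ref{T:ghost intro} resolves; once that theorem is invoked, the remainder of the proof is a mechanical transfer of the Bergdall--Pollack result through the $m(\bar r)$-rescaling and an elementary asymptotic analysis of the ghost multiplicities.
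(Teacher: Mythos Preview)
Your approach is essentially the same as the paper's: both combine the ghost theorem (Theorem~\ref{T:ghost intro}, or its more general form Theorem~\ref{T:global ghost}) with the abstract ghost series distribution results of Bergdall--Pollack \cite[Theorem~3.1 and Corollary~3.2]{bergdall-pollack3}. The paper's proof is terser---it simply notes that $G_{\bbsigma}^{(\varepsilon)}(w,t)$ is an abstract ghost series in the sense of \cite{bergdall-pollack2} with the appropriate parameters $A$ and $B$ (computed from Definition-Proposition~\ref{DP:dimension of classical forms}) and then cites \cite{bergdall-pollack3} directly---whereas you unpack the mechanics of why the three slope ranges arise and how the weak convergence follows from (1), but this is just recapitulating what is already in \cite{bergdall-pollack3}.
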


\subsection{Application F: refined Coleman--Mazur--Buzzard--Kilford spectral halo conjecture}
\label{S:application F refined halo}

In Coleman and Mazur's foundational paper \cite{coleman-mazur} on eigencurves, they raised an important conjecture on the behavior of the eigencurve near the boundary of weight disks: the eigencurve is an infinite disjoint union of annuli such that each irreducible component is finite and flat over the weight annulus; this was largely inspired by Emerton's thesis \cite{emerton}.  The first proved result in this direction was by Buzzard and Kilford \cite{buzzard-kilford}, which is in the case $N=1$ and $p=2$. Some additional examples when $p$ is small were subsequently provided \cite{jacobs, kilford,kilford-mcmurdy,roe}. The first result for more general situations was obtained by Wan, the first and the third authors in \cite{liu-wan-xiao}, which roughly is the following.
\begin{theorem}
\label{T:halo LWX}

Let $C_D(w,t)$ denote the characteristic power series analogously defined as in \S\,\ref{S:statement of main theorems} but for automorphic forms on a definite quaternion algebra $D$ over $\QQ$ that is split at $p$. Let $\Spc(D)$ denote the zero locus of $C_D(w,t)$ in $\calW \times \GG_m^\rig$, and
$$
\calW_{(0,1)} = \big \{w_\star \in \calW\; \big|\; v_p(w_\star) \in (0,1)\big\} \quad \textrm{and} \quad \Spc_{(0,1)}(D) = \Spc(D) \cap \wt^{-1}(\calW_{(0,1)}).
$$Then $\Spc_{(0,1)}(D)$ is an infinite disjoint union $X_0\bigsqcup X_{(0,1)}\bigsqcup X_1 \bigsqcup X_{(1,2)} \bigsqcup \cdots$ such that
\begin{enumerate}
\item for each point $(w_\star, a_p) \in X_I$ with $I = n = [n,n]$ or $(n,n+1)$, we have
$$
v_p(a_p) \in (p-1) \cdot v_p(w_\star) \cdot I,
$$
\item the weight map $\wt: X_I \to \calW_{(0,1)}$ is finite and flat.
\end{enumerate} 
\end{theorem}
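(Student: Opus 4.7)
The plan is to deduce Theorem~\ref{T:halo LWX} from the automorphic analogue of Theorem~\ref{T:ghost intro} for the definite quaternion algebra $D$ (which is an immediate consequence of the local ghost theorem by the same bootstrapping argument used in the modular case), and then to read off the halo decomposition from a combinatorial analysis of the ghost series on $\calW_{(0,1)}$. The advantage of this route, as opposed to the original proof in \cite{liu-wan-xiao}, is that once the ghost identification is available the spectral halo structure can be extracted essentially mechanically.

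First I would decompose $C_D(w,t)$ as a product over residual representations and, for each absolutely irreducible $\bar r$ with $\bar r_p$ reducible and very generic, identify $\NP\bigl(C_{\bar r}(w_\star,-)\bigr)$ with the $m(\bar r)$-stretch of $\NP\bigl(G_\bbsigma(w_\star,-)\bigr)$. The slope-zero part contributes $X_0$ via Hida theory and requires no further argument, so I focus on positive slopes over $w_\star \in \calW_{(0,1)}$. The key simplification on the halo is that every factor $(w_\star - w_k)$ appearing in $g_n(w_\star)$ has $p$-adic valuation exactly $v_p(w_\star)$, because $v_p(w_k) \geq 1$ for every $k \geq 2$ while $v_p(w_\star) < 1$. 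Therefore
\[ v_p\bigl(g_n(w_\star)\bigr) \;=\; d_n \cdot v_p(w_\star), \qquad d_n := \deg_w g_n(w) \,=\, \sum_k m_n(k), \]
so $\NP(G_\bbsigma(w_\star, -))$ is the scaling by $v_p(w_\star)$ of the Newton polygon of the integer sequence $(d_n)_{n \geq 0}$; in particular, its $n$-th slope is $(d_n - d_{n-1}) \cdot v_p(w_\star)$.

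The main step is a combinatorial analysis of $(d_n)$. Using the triangular shape of $m_n(k)$ in $k$ from Definition~\ref{D:ghost series intro} together with the linear dependence of $d_k^\ur$ and $d_k^\Iw$ on $k$, one shows that the increment $d_n - d_{n-1}$ equals $(p-1) n_0$ on long ``generic'' blocks of consecutive indices $n$ (producing the pieces $X_{n_0}$), and lies strictly between $(p-1) n_0$ and $(p-1)(n_0+1)$ on short ``transitional'' blocks (producing the pieces $X_{(n_0, n_0+1)}$). Once the Newton polygon structure is known uniformly in $w_\star \in \calW_{(0,1)}$, the decomposition $\Spc_{(0,1)}(D) = \bigsqcup_I X_I$ and the finite-flatness of each $X_I$ over $\calW_{(0,1)}$ follow by Weierstrass preparation applied block by block to $C_D(w,t)$.

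The main obstacle is the combinatorial step itself: tracking precisely which increments $d_n - d_{n-1}$ are exact integer multiples of $p-1$ versus fractional. This is the content of the ghost-degree bookkeeping developed in Bergdall--Pollack \cite{bergdall-pollack2, bergdall-pollack3} and refined by Ren \cite{ren}, and is where essentially all the work sits once Theorem~\ref{T:ghost intro} is in hand. A subsidiary issue, but one that is purely formal, is to ensure that each transitional block has length bounded independently of $n$ so that the pieces $X_{(n_0, n_0+1)}$ really are finite flat over the whole of $\calW_{(0,1)}$ rather than only over strict subannuli.
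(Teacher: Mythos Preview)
This theorem is not proved in the paper at all; it is quoted from \cite{liu-wan-xiao} as prior work, to motivate the paper's own \emph{refined} halo theorem (Theorem~\ref{T:refined halo}). So there is no ``paper's proof'' to compare against.

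Your proposed route has a genuine gap. Theorem~\ref{T:halo LWX} is stated for the full characteristic power series $C_D(w,t)$, with no hypothesis on any residual representation. Your first step decomposes $C_D$ as a product over $\bar r$ and then invokes the ghost identification --- but the ghost theorem only applies to those $\bar r$ that are globally absolutely irreducible with $\bar r_p$ reducible and \emph{very generic} (and $p\ge 11$). You say nothing about the remaining factors: $\bar r$ globally reducible, $\bar r_p$ irreducible, or $\bar r_p$ reducible but non-generic. For those pieces the ghost series is unavailable (and in the irreducible case is known to fail), so your argument cannot establish the halo decomposition for the full $\Spc_{(0,1)}(D)$. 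The proof in \cite{liu-wan-xiao} avoids this entirely: it works directly with the Mahler-basis halo estimate on $C_D$ and never localizes at $\bar r$.

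That said, if you restrict to a single $\bar r$ satisfying the ghost hypotheses, your argument is essentially correct and in fact yields \emph{more} than the statement of Theorem~\ref{T:halo LWX}: you get the exact slope ratio $\deg g_n - \deg g_{n-1}$ for each piece $Y_n$, not just the coarse interval $I$. This is precisely how the paper proves Theorem~\ref{T:refined halo} (see its three-line proof), and your combinatorial remarks about the increments $d_n - d_{n-1}$ are handled there by Definition-Proposition~\ref{DP:dimension of classical forms}(4). So your proposal is really a sketch of Theorem~\ref{T:refined halo}, not of Theorem~\ref{T:halo LWX}.
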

This theorem was later generalized to the Hilbert case when $p$ splits, by  Johansson--Newton \cite{johansson-newton}, and Ren and the fourth author \cite{ren-zhao}.  The case corresponding to the modular forms, namely the ``original Coleman--Mazur--Buzzard--Kilford" conjecture was established by Diao and Yao in \cite{diao-yao}.
Unfortunately, Theorem~\ref{T:halo LWX} and all these generalizations do not give further information on the slope ratios $v_p(a_p) /  v_p(w_\star)$ inside the open intervals $(p-1)\cdot(n,n+1)$.  When $\bar r$ satisfies the conditions of our ghost theorem, the slopes of ghost series automatically give the following refined version of the above theorem (see Theorem~\ref{T:refined halo}).
\begin{theorem}
Assume $p \geq 11$ and that $\bar r: \Gal_{\QQ} \to \GL_2(\FF)$ is an absolutely irreducible representation such that $\bar r_p|_{\rmI_{\QQ_p}}$ is reducible and very generic.  Let $\Spc(\bar r)$ denote the zero locus of $C_{\bar r}(w,t)$ inside $\calW \times \GG_m^\rig$, and put
$\Spc(\bar r)_{(0,1)} = \Spc(\bar r) \cap \wt^{-1}(\calW_{(0,1)})$. Then $\Spc(\bar r)_{(0,1)}$ is a disjoint union $Y_1 \bigsqcup Y_2\bigsqcup \cdots$ such that
\begin{enumerate}
\item for each point $(w_\star, a_p) \in Y_n$, $v_p(a_p) = (\deg g_n-\deg g_{n-1}) \cdot v_p(w_\star)$, and
\item the weight map $\wt : Y_n \to \calW_{(0,1)}$ is finite and flat of degree $m(\bar r)$.
\end{enumerate}
\end{theorem}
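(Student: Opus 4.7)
The plan is to leverage the automorphic ghost Theorem~\ref{T:ghost intro} to reduce the statement to a direct computation of the slopes of the ghost series $G_{\bbsigma}(w,t)$ on the boundary annulus $\calW_{(0,1)}$. By Theorem~\ref{T:ghost intro}, for every $w_\star \in \gothm_{\CC_p}$, the Newton polygon $\NP\big(C_{\bar r}(w_\star,-)\big)$ agrees with $\NP\big(G_{\bbsigma}(w_\star,-)\big)$ stretched by $m(\bar r)$ in both axes, away from the slope-zero part. Since the slope-zero part corresponds to the Hida family, which sits over $w_\star=0$ and is therefore disjoint from $\calW_{(0,1)}$, the ghost theorem applies in full over the annulus.

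First I would compute $\NP\big(G_{\bbsigma}(w_\star,-)\big)$ pointwise for $w_\star \in \calW_{(0,1)}$. Writing $g_n(w) = \prod_k (w - w_k)^{m_n(k)}$ and using that $v_p(w_k) \geq 1$ while $v_p(w_\star) < 1$, we have $v_p(w_\star - w_k) = v_p(w_\star)$ for every relevant $k$, so $v_p(g_n(w_\star)) = \deg g_n \cdot v_p(w_\star)$. This places every lattice point $(n, \deg g_n \cdot v_p(w_\star))$ exactly on the Newton polygon, and the $n$-th slope equals $(\deg g_n - \deg g_{n-1}) \cdot v_p(w_\star)$. After the $m(\bar r)$-fold stretch, each such slope appears with horizontal multiplicity $m(\bar r)$ in $\NP\big(C_{\bar r}(w_\star,-)\big)$, giving exactly the slope claimed on each $Y_n$.

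To globalize this pointwise picture, I would apply slope factorization for rigid-analytic power series on affinoid sub-annuli. On any compact $\calW_{[r_1,r_2]} \subset \calW_{(0,1)}$, the preceding pointwise computation holds uniformly, and the combinatorial fact that $n \mapsto \deg g_n$ is strictly convex---a standard property of the ghost series established in the prequel \cite{liu-truong-xiao-zhao}---gives a uniform gap between consecutive slopes across the entire sub-annulus. Weierstrass-type slope factorization then produces a unique decomposition $C_{\bar r}(w,t)|_{\calW_{[r_1,r_2]}} = \prod_n C_n(w,t)$, where each $C_n(w,t)$ is a distinguished polynomial in $t$ of degree $m(\bar r)$ whose zero locus $Y_n|_{\calW_{[r_1,r_2]}}$ is finite and flat of degree $m(\bar r)$ over $\calW_{[r_1,r_2]}$, satisfying $v_p(a_p) = (\deg g_n - \deg g_{n-1}) v_p(w_\star)$ at every point of $Y_n$. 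Gluing over an exhaustion of $\calW_{(0,1)}$ by such sub-annuli yields the global decomposition $\Spc(\bar r)_{(0,1)} = \bigsqcup_n Y_n$ with the required properties.

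The main obstacle, and the only non-formal input, is the uniform strict convexity of $n \mapsto \deg g_n$, which is needed to separate consecutive slope blocks cleanly and thereby extract genuine polynomial factors (rather than merely power-series factors) of $t$-degree $m(\bar r)$. This convexity is a combinatorial consequence of the ``tent-function'' shape of the ghost exponents $m_n(k)$ in Definition~\ref{D:ghost series intro} and is proved in \cite{liu-truong-xiao-zhao}. With it in hand, everything else reduces to standard rigid-analytic slope factorization arguments in the spirit of \cite{liu-wan-xiao} and \cite{diao-yao}.
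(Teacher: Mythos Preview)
Your approach is essentially the same as the paper's: invoke the ghost theorem to identify slopes with those of $G_{\bbsigma}$, compute $v_p(g_n(w_\star)) = \deg g_n \cdot v_p(w_\star)$ on the annulus, use strict convexity of $n \mapsto \deg g_n$ (which the paper cites as Definition-Proposition~\ref{DP:dimension of classical forms}(4)), and separate into $Y_n$'s. The paper's write-up is terser --- it just says one may ``distribute'' the points by the ratio $v_p(a_p)/v_p(w_\star)$ --- but your explicit slope-factorization argument is the standard way to make this rigorous.

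One correction: your handling of the slope-zero exception is based on a false premise. The Hida family does \emph{not} sit over $w_\star = 0$; it is finite flat over the entire weight disk, so the ordinary locus certainly meets $\wt^{-1}(\calW_{(0,1)})$. The correct reason the exception is harmless here is that the theorem is stated for the nonsplit case (the sentence following it defers the split case to Theorem~\ref{T:refined halo}), and in the nonsplit case Theorem~\ref{T:global ghost} shows the slope-zero multiplicities match exactly --- so the ghost theorem applies in full, slope zero included, and the ordinary part is simply absorbed into $Y_1$ when $\deg g_1 = 0$. Your conclusion survives, but the justification needs this fix.
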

A similar result can be stated when $\bar r$ is split, we refer to Theorem~\ref{T:refined halo} for the details.

\subsection{Overview of the proof of Theorem~\ref{T:local ghost theorem}}
\label{S:overview proof of local main theorem}
We now explain the two main inputs in proving Theorem~\ref{T:local ghost theorem}.
Recall that $\rmK_p= \GL_2(\ZZ_p)$; we may reduce to the case when $b=0$. Theorem~\ref{T:local ghost theorem} involves the following local data: let $\widetilde \rmH$ be the projective envelope of $\Sym^{a}\FF^{\oplus 2}$ as a right $\calO\llbracket \rmK_p\rrbracket$-module, and we extend the $\rmK_p$-action to a continuous (right) action by $\GL_2(\QQ_p)$ so that $\Matrix p00p$ acts trivially.
Then for each character $\psi$ of $(\FF_p^\times)^2$ and a character $\varepsilon_1$ of $\FF_p^\times$, we may define spaces of abstract classical and overconvergent forms
\begin{align}
\nonumber
\rmS_k^\Iw(\psi) = 
\rmS_{\widetilde \rmH, k}^\Iw(\psi) &\ : = \Hom_{\calO\llbracket \Iw_p\rrbracket}\big( \widetilde \rmH,\, \Sym^{k-2}\calO^{\oplus 2} \otimes \psi\big),
\\
\nonumber
\rmS_k^\ur(\varepsilon_1) = 
\rmS_{\widetilde \rmH, k}^\ur(\varepsilon_1)  &\ : =\Hom_{\calO\llbracket \rmK_p\rrbracket}\big( \widetilde \rmH,\, \Sym^{k-2}\calO^{\oplus2} \otimes \varepsilon_1 \circ \det\big),
\\
\label{E:abstract ocvgt forms}
\rmS_k^\dagger(\psi) = 
\rmS_{\widetilde \rmH, k}^\dagger(\psi) &\ : = \Hom_{\calO\llbracket \Iw_p\rrbracket}\big( \widetilde \rmH,\, \calO\langle z\rangle \otimes \psi\big).
\end{align}
These abstract and overconvergent forms behave exactly as their automorphic counterparts, equipped with the corresponding $U_p$-operators, $T_p$-operators, Atkin--Lehner involutions, and theta maps. (See \S\,\ref{S:arithmetic forms} and Proposition~\ref{P:theta and AL}.)

{\bf Main input I: $p$-stabilization process}; see \S\,\ref{S:p-stabilization} and Proposition~\ref{P:key feature of p-stabilization}.
When $\psi =\tilde \varepsilon_1= \varepsilon_1\times \varepsilon_1$, the standard $p$-stabilization process can be summarized by the following diagram.
\[
\begin{tikzcd}[column sep = 60pt]
\rmS_{\widetilde \rmH,k}^\ur(\varepsilon_1) \ar[r, bend left = 25pt, "\iota_1"] 
\ar[r, bend left = 7pt, "\iota_2"] \arrow[loop left,
  distance=.7cm, "T_p"] 
& \rmS_{\widetilde \rmH,k}^{\Iw}(\tilde\varepsilon_1) \arrow[
  out=30,in=10,looseness=5, "U_p"] \arrow[
 out=-30,in=-10,looseness=5, "\AL"'] 
\ar[l, bend left = 7pt, "\proj_1"]
\ar[l, bend left = 25pt, "\proj_2"]
\end{tikzcd}
\]

Here the space $\rmS_{\widetilde \rmH,k}^\ur(\varepsilon_1)$ carries a natural $T_p$-action and $\rmS_{\widetilde \rmH,k}^{\Iw}(\tilde\varepsilon_1)$ carries a $U_p$-action and an Atkin--Lehner involution. The maps $\iota_1, \iota_2, \proj_1, \proj_2$ are the natural ones.
Write $d_k^\ur(\varepsilon_1) : =\rank_\calO \rmS_{k, \widetilde \rmH}^\ur(\varepsilon_1)$ and  $d_k^\Iw(\tilde \varepsilon_1) : =\rank_\calO \rmS_{k, \widetilde \rmH}^\Iw(\tilde\varepsilon_1)$.
The key observation is the equality:
\begin{equation}
\label{E:Up = Tp-AL}
U_p(\varphi) = \iota_2(\proj_1(\varphi)) - \AL(\varphi) \quad\textrm{for all }\varphi \in \rmS_{\widetilde \rmH,k}^{\Iw}(\tilde\varepsilon_1).
\end{equation}
Under the usual power basis, the matrix of $U_p$ on $\rmS_{\widetilde \rmH,k}^{\Iw}(\tilde\varepsilon_1)$ is then decomposed as the sum of
\begin{itemize}
\item  a matrix with rank $\leq d_k^\ur(\varepsilon_1)\approx \frac{1}{p+1}  d_k^\Iw(\tilde \varepsilon_1)$, and
\item  an antidiagonal matrix for the Atkin--Lehner involution.
\end{itemize}
Essentially this observation alone already shows that the characteristic power series of the upper-left $n\times n$ submatrix of the $U_p$-action on abstract overconvergent forms is divisible by the ghost series $g_n(w)$ (but in a larger ring $\calO\langle w/p\rangle$); see Corollary~\ref{C:philosophical explanation of ghost series}. Unfortunately, we need much more work to control the determinant of other minors of the matrix of $U_p$.

\medskip
{\bf Main input II: halo estimate} (for center of the weight disk); see Lemma~\ref{L:modified Mahler basis}(4) and the more refined version in Corollary~\ref{C:refined halo estimate}.

As a right $\calO\llbracket \Iw_p\rrbracket$-module, we may write $$\widetilde \rmH=e_1 \calO\llbracket \Iw_p\rrbracket\otimes_{\calO[(\FF_p^\times)^2], 1\otimes \omega^a} \calO \oplus e_2 \calO\llbracket \Iw_p\rrbracket\otimes_{\calO[(\FF_p^\times)^2], \omega^a\otimes 1} \calO.$$ Thus, there is a natural power basis of $\rmS_k^\dagger(\psi)$ of the form
$$
e_1^* z^{s_{\psi,1}}, \, e_1^* z^{s_{\psi,1}+p-1},\, e_1^* z^{s_{\psi,1}+2(p-1)}, \,\dots, e_2^* z^{s_{\psi,2}}, \, e_2^* z^{s_{\psi,2}+p-1},\, e_2^* z^{s_{\psi,2}+2(p-1)}, \,\dots,
$$
for some integers $s_{\psi,1}, s_{\psi,2} \in \{0, \dots, p-2\}$ to match the nebentypus character $\psi$; see \S\,\ref{S:power basis} for details. It is natural to consider the $U_p$-action with respect to this basis and the associated Hodge polygon. Some time between the two papers \cite{wan-xiao-zhang} and \cite{liu-wan-xiao}, the authors realized that this estimate is not sharp enough. One should use instead the so-called Mahler basis, or rather \emph{the modified Mahler basis}, which means to replace the monomials above by the following polynomials:
$$
f_1(z) = \frac{z^p-z}p,\quad f_{\ell+1}(z) = \frac{f_\ell(z)^p-f_\ell(z)}p \quad \textrm{for }\ell \geq 1;
$$
$$
\textrm{for }n = n_0 + pn_1+p^2n_2+\cdots, \quad \textrm{define }\bfm_n(z): = z^{n_0} f_1(z)^{n_1} f_2(z)^{n_2} \cdots.
$$
Then $\{\bfm_n(z)\,|\, n \in \ZZ_{ \geq 0}\}$ form a basis of $\calC^0(\ZZ_p; \ZZ_p)$, the space of continuous functions on $\ZZ_p$.  It turns out that the estimate of $U_p$-operator using this basis is slightly sharper than the estimate using the power basis. This improvement is the other key to our proof.

We make two remarks here: first, our modified Mahler basis is an approximation of the usual Mahler basis $\binom zn$; ours have the advantage that each basis element is an eigenform for the action of $\FF_p^\times$; second, compare to the estimate in \cite{liu-wan-xiao}, we also need to treat some ``pathological cases", e.g. coefficients when the degree is close to a large power of $p$. Such ``distractions" complicate our proof a lot.

\medskip
With the two main input I and II discussed, we now sketch the proof of Theorem~\ref{T:local ghost theorem}. A more detailed summary can be found at the beginning of Section~\ref{Sec:proof}.

In a rough form, Theorem~\ref{T:local ghost theorem} says that $C_{\widetilde \rmH}^{(\varepsilon)}(w, t) = 1+ \sum\limits_{n\geq 1} c_n(w)t^n$ and $G_{\bbsigma}^{(\varepsilon)}(w,t) = 1+\sum\limits_{n\geq 1}g_n(w)t^n$ are ``close" to each other.  The leads us to the following.
\begin{itemize}
\item [\underline{Step I}:] (Lagrange interpolation) For each $n$, we formally apply Lagrange interpolation to $c_n(w)$ relative to the zeros $w_k$ of $g_n(w)$ (with multiplicity):
\begin{equation}
\label{E:Lagrange cn intro}
c_n(w) = \sum_{ m_n(k) \neq 0} A_k(w) \cdot \frac{g_{n}(w)}{(w-w_k)^{m_n(k)}} + h(w) g_{n}(w).
\end{equation}
We give a sufficient condition  on the $p$-adic valuations of the coefficients of $A_k(w)$ that would imply Theorem~\ref{T:local ghost theorem}.  This is Proposition~\ref{P:Lagrange general}.

In fact, we will prove a similar $p$-adic valuation condition for the determinants of \emph{all} (principal or not) $n\times n$-submatrices $\rmU^\dagger(\underline \zeta \times \underline \xi)$ of the matrix of $U_p$ with respect to the power basis, where $\underline \zeta$ and $\underline \xi$ are row and column index sets of size $n$.

\item [\underline{Step II}:] (Cofactor expansion argument) 
The key equality \eqref{E:Up = Tp-AL} writes the matrix  $\rmU^\dagger(\underline \zeta \times \underline \xi)$ as the sum of a matrix which is simple at $w_k$ and a matrix which has small rank at $w_k$. Taking the cofactor expansion with respect to this decomposition, we reduce the needed estimate to an estimate on the power series expansion of the characteristic power series of smaller minors.
This step involves some rather subtle inductive processes that we defer to Section~\ref{Sec:proof II} for the discussion.
\item [\underline{Step III}:] (Estimating power series expansion for smaller minors) This is to complete the inductive argument by proving that the known estimate of Lagrange interpolation coefficients of smaller minors implies the needed power series expansion of the characteristic power series.  This part is relatively straightforward, but is tangled with some pathological cases, where the refined halo estimate is crucially needed.
\end{itemize}

\subsection*{Roadmap of the paper}

The first five sections are devoted to proving the local ghost conjecture (Theorem~\ref{T:local ghost theorem} or Theorem~\ref{T:local theorem}). This is divided as: Section~\ref{Sec:recollection of local ghost} collects background results on the local ghost conjecture from \cite{liu-truong-xiao-zhao}; Section~\ref{Sec:p-stabilization and modified Mahler basis} establishes the two main inputs of the proof as explained in \S\,\ref{S:overview proof of local main theorem}; Sections~\ref{Sec:proof}, \ref{Sec:proof II}, and \ref{Sec:proof III} treat precisely Step I, III, and II in  \S\,\ref{S:overview proof of local main theorem}, respectively. (We swapped the order for logical coherence.)
In Section~\ref{Sec:Galois eigenvarieties}, we recall a known-to-experts result: applying Emerton's locally analytic Jacquet functor to the Pa\v sk\=unas modules precisely outputs Breuil--Hellmann-Schraen's trianguline deformation space (Theorem~\ref{T:Xtri = X}). Combining this with the local ghost theorem, we deduce a theorem on the slopes of the trianguline deformation space (Theorem~\ref{T:generalized BBE}). Applications A and B are corollaries of this.
Section~\ref{Sec:bootstrapping} is the second part of the bootstrapping argument: using the knowledge of the slopes on trianguline deformation spaces, we determine the $U_p$-slopes for any $\calO\llbracket\rmK_p\rrbracket$-projective arithmetic modules (Theorem~\ref{T:global ghost}). In the case of modular forms, this specializes to Theorem~\ref{T:ghost intro}.  Applications D, E, and F follow from this.
Finally, in Section~\ref{Sec:irreducible components}, we prove the finiteness of irreducible components of spectral curves, namely Theorem~\ref{T:irreducible components intro}.

\subsection*{Acknowledgments}
This paper will not be possible without the great idea from the work of John Bergdall and Robert Pollack \cite{bergdall-pollack}.
Part of the proof is inspired by the evidences provided by their numerical computation.  We especially thank them for sharing their ideas and insight at an early stage and for many interesting conversations. We thank heartily for the anonymous referee, who pointed out many inaccuracies in earlier version of this paper as well as suggested many essential improvement to the paper.
We thank Yiwen Ding, Yongquan Hu, Yichao Tian, and Yihang Zhu for multiple helpful discussions, especially on Pa\v sk\=unas functors. 
We thank Keith Conrad and \'Alvaro Lozano-Robledo for suggesting the second author to work out a concrete example for automorphic forms on definite quaternion algebras, which leads to significant progress in this project.  We thank Florian Herzig for pointing out a mistake in earlier version of this paper.  We also thank Christophe Breuil, Matthew Emerton, Toby Gee, Bao Le Hung, Rufei Ren, Daqing Wan for inspiring communications.
We thank all the people contributing to the SAGE software, as we rely on first testing our guesses using a computer simulation.

\subsection{Notations}
\label{notation}
For a field $k$, write $\overline k$ for its algebraic closure.


Throughout the paper, fix a prime number $p\geq 5$. 
Let $\Delta \cong (\ZZ/p\ZZ)^\times$ be the torsion subgroup of
$\ZZ_p^\times$, and let $\omega:\Delta\rightarrow \ZZ_p^\times$ be the
Teichm\"{u}ller character.
For an element $\alpha \in \ZZ_p^\times$, we often use $\bar \alpha \in \Delta $ to denote its reduction modulo $p$.

Let $E$ be a finite extension of $\QQ_p(\sqrt p)$, as the coefficient field. Let $\calO$, $\FF$, and $\varpi$ denote its ring of integers, residue field, and a uniformizer, respectively. We use $\CC_p$ to denote the $p$-adic completion of an algebraic closure of $E$, and $\bfC_p$ to denote a completed algebraically closed field containing $\CC_p$.
The $p$-adic valuation $v_p(-)$ and $p$-adic norm are normalized so that $v_p(p) =1$ and $|p|=p^{-1}$.

We will consider the following subgroups of $\GL_2(\QQ_p)$: $\rmK_p = \GL_2(\ZZ_p)$, $\Iw_p = \Matrix{\ZZ_p^\times}{\ZZ_p}{p\ZZ_p}{\ZZ_p^\times}$. Write $B$ for the upper-triangular subgroup of $\GL_2$ and $B^\mathrm{op}$ for the lower-triangular subgroup.

All hom spaces refer to the spaces of continuous homomorphisms.
For $M$ a topological $\calO$-module, we write $\calC^0(\ZZ_p; M)$ for the space of continuous functions on $\ZZ_p$ with values in $M$.

We use $\lceil x\rceil$ to denote the ceiling function and $\lfloor x\rfloor$ to denote the floor function.

We shall encounter both the $p$-adic logarithmic function $\log(x) = (x-1) - \frac{(x-1)^2}2 + \cdots $ for $x$ a $p$-adic or a formal element,  and the natural logarithmic function $\ln(-)$ in real analysis.

\medskip
For a formal $\calO$-scheme $\Spf(R)$ formally of finite type, let $\Spf(R)^\rig$ denote the associated rigid analytic space over $E$. 

For $X$ a rigid analytic space over $\QQ_p$, write $X^\Berk$ for the associated Berkovich space.
For each analytic function $f$ on $X$ and $x \in X^\Berk$, write $v_p(f(x)) : = \ln |f|_x / \ln|p|_x$.

\medskip
For each $m \in \ZZ$, we write $\{m\}$ for the unique integer
satisfying the conditions 
$$0\leq \{m\}\leq p-2 \quad \textrm{and} \quad m\equiv \{m\}
\bmod{(p-1)}.$$

For a square (possibly infinite) matrix $M$ with coefficients in a ring $R$, we write $\Char(M; t) : = \det(I- Mt) \in R\llbracket t\rrbracket$ (if it is well-defined), where $I$ is the identity matrix.  For $U$ an operator acting on an $R$-module given by such a matrix $M$, we write $\Char (U;t)$ for $\Char (M;t)$.

For a power series $F(t) = \sum_{n \geq 0} c_nt^n \in \bfC_p\llbracket t\rrbracket$ with $c_0=1$, we use $\NP(F)$ to denote its \emph{Newton polygon}, i.e. the convex hull of points $(n, v_p(c_n))$ for all $n$; the slopes of the segments of $\NP(F)$ are often referred to as \emph{slopes of $F(t)$}. For $n \in \ZZ_{\geq 1}$, write $\NP(F)_{x =n}$ for the $y$-coordinate of $\NP(F)$ when $x=n$.

For two Newton polygons $A$ and $B$, let $A \# B$ denote the Newton polygon (starting at $(0,0)$) whose set of slopes (with multiplicity) is the disjoint union of those of $A$ and $B$.



\medskip
Let $\rmI_{\QQ_p}\subset \Gal(\overline{\QQ}_p/\QQ_p)$ denote the inertia subgroup, and  $\omega_1: \rmI_{\QQ_p} \twoheadrightarrow\Gal(\QQ_p(\mu_p)/\QQ_p) \cong \FF_p^\times$ \emph{the $1$st fundamental character}. For $R$ a $p$-adic ring and $\alpha \in R^\times$, let $\unr(\alpha): \Gal_{\QQ_p} \to R^\times $ denote the unramified representation that sends the geometric Frobenius to $\alpha$.

\subsection{Normalizations}
\label{S:normalization}
\emph{It is important to clarify the normalization we use in this paper.}

The reciprocity map $\QQ_p^\times \to \Gal_{\QQ_p}^\mathrm{ab}$ is normalized so that $p$ is sent to the \emph{geometric} Frobenius element.
The character $\chi_\cycl: \QQ_p^\times \to \ZZ_p^\times$ given by $\chi_\cycl(x) = x|x|$ extends to the \emph{cyclotomic character} of $ \Gal_{\QQ_p}$. \emph{The Hodge--Tate weight of $\chi_\cycl$ in our convention is $-1$.}  We use Deligne's convention on Hodge types and on Shimura varieties as explained in \cite{deligne}, except that the Shimura reciprocity map in \cite[\S\,2.2.3]{deligne} should not have the extra inverse (as pointed out by \cite{milne}).

Our convention on associated Galois representation is ``\emph{homological}". Let us be precise.
Taking the case of modular curve as an example, where we use the $\GL_2(\RR)$-conjugacy class of Deligne homomorphisms $h(x+\tti y) = \Matrix xy{-y}x$ (which determines the canonical model of the modular curve).
For a neat open compact subgroup $K^p = \prod_{\ell \neq p} K_\ell \subseteq \GL_2(\AAA_f^p)$, put $K = K^p \rmK_p$ and there is a canonical \'etale \emph{right} $\rmK_p$-torsor over the modular curve $Y(K)$ of level $K$ over $\QQ$. Thus, every left $\rmK_p$-module $V$ defines an \'etale local system on $Y(K)$. The $\rmK_p$-module $(\Sym_{\rmL}^{k-2}\QQ_p^{\oplus 2})^*$ (with subscript $\rmL$ to indicate left action) corresponds to $\calL_{k-2}:=\Sym^{k-2} \big(\rmR^1\pr_*\QQ_p\big)$ for $\pr: E \to Y(K)$ the universal elliptic curve; this additional dual is dictated by Deligne's convention on Hodge structure, so that the local system normalization is tailored ``homologically" as opposed to ``cohomologically" (see \cite[Remarque~1.1.6]{deligne}).

For an cuspidal automorphic representation $\pi$ of $\GL_2(\AAA)$, algebraic of weight $k$, \emph{we will always work with Galois representation associated via Langlands correspondence}, in the sense that $\pi^{K} \otimes r_\pi^*$ embeds Hecke equivariantly and Galois equivariantly into $\rmH^1_\et\big(Y(K)_{\overline \QQ}, \calL_{k-2}\big)$.
In particular, this $r_{\pi, p} : = r_\pi|_{\Gal_{\QQ_p}}$ has Hodge--Tate weights $\{0, k-1\}$. 
We require this dual $r_\pi^*$ to be compatible with Harris--Taylor local Langlands correspondence for $\GL_n$, after an ``appropriate half twist" (see \cite{buzzard-gee-LLC}). Note that the Galois representation appearing in the cohomology of Shimura varieties is the composition of the Langlands parameter with a highest weight representation of the Langlands dual group; and in the above setup of modular curve, the highest weight representation is the \emph{dual of the standard representation of $\GL_2$}.
If $\alpha$ and $\beta$ are the eigenvalues of crystalline Frobenius  (which behaves exactly like geometric Frobenius) acting on $\DD_\crys(r_{\pi, p})$, then 
the local-global compatibility implies that $\pi_p = \Ind_{B(\QQ_p)}^{\GL_2(\QQ_p)}\big( \unr(\alpha) \otimes \unr(\beta)|\cdot |^{-1}\big)$. 
In order to have a compact $U_p$-operator acting on overconvergent forms, we have to work with Hecke operators $T_p: = \rmK_p \Matrix {p^{-1}}001\rmK_p$  and $S_p = \rmK_p \Matrix{p^{-1}}00{p^{-1}}$. Then we would characterize the local-global compatibility by that the \emph{inverses} $\alpha^{-1}$ and $\beta^{-1}$ are zeros of the Hecke polynomial $x^2 - t_px + ps_p =0$, where $t_p$ and $s_p$ are the eigenvalues of the $T_p$ and $S_p$ acting on $\pi_p^{\rmK_p}$.

Working out another crystabelline example when $\WD(r_{\pi, p}) = \unr(\alpha)\omega_1^c \oplus \unr(\beta)\omega_2^d$ with $c \neq d$ and $\pi_p = \Ind_{B(\QQ_p)}^{\GL_2(\QQ_p)}\big(\unr(\alpha)\omega^c\otimes \unr(\beta)\omega^{d}|\cdot|^{-1}\big)$, there are two associated $\Iw_p$-eigenvectors: $\pi_p^{\Iw_p = \omega^{c} \times \omega^{d}}= \QQ_p \cdot f_1$ and $\pi_p^{\Iw_p = \omega^{d} \times \omega^{c}}= \QQ_p \cdot f_w$ (related by Atkin--Lehner involution). The Hecke operator $U_p = \Iw_p\Matrix{p^{-1}}001\Iw_p$ acts on them by $U_p(f_1) = \alpha^{-1} f_1$ and $U_p(f_w) = \beta^{-1} f_w$. In classical language, the form $f_1$ appears in $\rmH^1_\et\big( Y(K^p\Iw_p)_{\overline \QQ}, \calL_{k-2} \otimes (\omega^{-c} \times \omega^{-d})\big)$; we remind the readers that the twist $\omega^{-c} \times \omega^{-d}$ is build from the monodromy of relative Tate modules, as opposed to relative first cohomology.

\medskip
We however uses a slightly different setup to balance the compatibilities with various references. 
A key example of $\calO\llbracket \rmK_p\rrbracket$-augmented modules are completed \emph{homology} groups:
$$
\widetilde \rmH_{\gothm_{\bar r}}: = \varprojlim_{m \to \infty} \rmH_1^\et\big( Y\big(K^p (1+p^m\rmM_2(\ZZ_p))\big)_{\overline \QQ}, \ZZ_p \big)_{\gothm_{\bar r}},
$$
where $\bar r: \Gal_\QQ \to \GL_2(\FF_p)$ is an absolutely irreducible residual representation. It carries a \emph{right} $\GL_2(\QQ_p)$-action. If we consider the left $\rmK_p$-module $\Sym_\rmL^{k-2}\QQ_p^{\oplus 2}$ and its dual $\Sym_\rmR^{k-2}\QQ_p^{\oplus 2}$ as a right $\rmK_p$-module, then
$$
\rmH^1_\et\big(Y(K^p\rmK_p)_{\overline \QQ}, \calL_{k-2}\big)_{\gothm_{\bar r}} \cong \big( \widetilde \rmH_{\gothm_{\bar r}} \widehat{\otimes}_{\calO[\rmK_p]} \Sym_{\rmL}^{k-2}\QQ_p^{\oplus2} \big)^*\cong \Hom_{\calO[\rmK_p]}\big(\widetilde \rmH_{\gothm_{\bar r}}, \, \Sym_\rmR^{k-2}\QQ_p^{\oplus 2}\big).$$
We will exclusively work with spaces similar to the last term. 
In some sense, $\Sym_\rmR^{k-2}(\QQ_p^{\oplus 2})$ appears to be using the monodromy group of $\rmR^1\pr_*\QQ_p$, as opposed to the relative Tate modules. Everything above transports in parallel to  this setting. Similarly, in the crystabelline setup, if $r_\pi$ appears as $\varphi \in \Hom_{\calO[\Iw_p]}\big(\widetilde \rmH_{\gothm_{\bar r}},\, \Sym^{k-2}_\rmR \ZZ_p^{\oplus 2} \otimes (\omega^{c} \times \omega^{d})_\rmR\big)$ with $c\neq d$, then $r_{\pi,p}$ is crystabelline with Hodge--Tate weights $\{1-k,0\}$ and $\WD(r_{\pi,p}) = \unr(\alpha) \omega_1^{c} \oplus \unr(\beta) \omega_1^{d}$ with $\alpha^{-1}$ being the $U_p$-eigenvalue of $\varphi$.
In this case, the triangulation of $r_{\pi, p}$ given by $\varphi$ is
$$
0 \to \calR(\unr(\beta)\omega^{b}x^{k-1} )\to \DD_\rig^\dagger(\rho_{\pi, p}) \to \calR(\unr(\alpha)\omega^{a})\to 0.
$$
(This can be seen by considering the ordinary case.)
In particular, if we rewrite the two characters of $\QQ_p^\times$ as $\delta_1$ and $\delta_2$, then $\delta_2(p)^{-1}$ is equal to the $U_p$-eigenvalue and $\delta_1(\exp(p)) = 1$.

Our convention on Serre weights uses right $\rmK_p$-modules and is thus \emph{cohomological}. More precisely, a \emph{right} $\rmK_p$-module $\sigma$ is called a \emph{(right) Serre weight} for $\bar r_p: = \bar r|_{\Gal_{\QQ_p}}$ if 
$$\Hom_{\calO[\rmK_p]}\big(\widetilde \rmH_{\gothm_{\bar r}},\, \sigma\big) \cong \rmH^1_\et\big(Y(K^p\rmK_p), \sigma_\rmL)_{\gothm_{\bar r}} \neq 0,
$$
where $\sigma_\rmL$ is to turn $\sigma$ into a left $\rmK_p$-module by considering inverse action. 
For example, if $\bar r_p \cong \Matrix{\unr(\bar \alpha)\omega_1^{a+b+1}}{*\neq 0}{0}{\unr(\bar \beta) \omega_1^b}$, the associated right Serre weight is $\Sym^a \FF_p^{\oplus 2} \otimes \det^b$. This is compatible with most references in mod-$p$-local-Langlands correspondences if we turn the right Serre weights into a left Serre weights \emph{via transpose}.

\section{Recollection of the local ghost conjecture}
\label{Sec:recollection of local ghost}

In \cite{bergdall-pollack, bergdall-pollack2, bergdall-pollack3}, Bergdall--Pollack proposed a conjectural combinatorial recipe to compute the slopes of modular forms. This was reformulated by the authors \cite{liu-truong-xiao-zhao} in a setup that can be adapted to the context of $p$-adic local Langlands correspondence of $\GL_2(\QQ_p)$.  In this section, we first recall this construction as well as the statement of the local ghost conjecture; notations mostly follow from \cite{liu-truong-xiao-zhao} and we refer to {\it loc. cit.} for details. After this, we quickly recall the power basis of abstract classical and overconvergent forms as well as the dimension formulas for spaces of abstract classical forms.

\begin{notation}
\label{N:Serre weights}
Recall the following subgroups of $\GL_2(\QQ_p)$.
$$
\rmK_p: =\GL_2(\ZZ_p) \supset \Iw_p := \begin{pmatrix}
\ZZ_p^\times & \ZZ_p \\ p \ZZ_p & \ZZ_p^\times
\end{pmatrix} \supset
\Iw_{p,1} := \begin{pmatrix}
1+p\ZZ_p & \ZZ_p \\ p \ZZ_p & 1+p\ZZ_p
\end{pmatrix}.$$

Fix a finite extension $E$ of $\QQ_p$ containing a square root $\sqrt p$ of $p$. Let $\calO$, $\FF$, and $\varpi$ denote its ring of integers, residue field, and a uniformizer, respectively.

For a pair of non-negative integers $(a,b)$, we use $\sigma_{a,b}$ to denote the \emph{right} $\FF$-representation $\Sym^a\FF^{\oplus 2} \otimes \det^b$ of $\GL_2(\FF_p)$.
When $a \in \{0, \dots, p-1\}$ and $b \in \{0, \dots, p-2\}$, $\sigma_{a,b}$ is irreducible; these exhaust all irreducible right $\FF$-representations of $\GL_2(\FF_p)$. We call them the \emph{Serre weights}.
Write $\Proj(\sigma_{a,b})$ for the projective envelope of $\sigma_{a,b}$ as a (right) $\FF[\GL_2(\FF_p)]$-module.
\end{notation}

\begin{definition}
\label{D:primitive type}(essentially \cite[Definition~2.22]{liu-truong-xiao-zhao})

\begin{enumerate}[(1)]
    \item We say a residual local representation $\bar r_p : \Gal_{\QQ_p}\to \GL_2(\FF)$ is \emph{reducible nonsplit and generic} if 
\begin{equation}
\label{E:bar rp extension}
\bar r_p \simeq \MATRIX{\omega_1^{a+b+1}\unr(\bar\alpha)}{*\neq 0}{0}{\omega_1^b\unr(\bar \beta)}\end{equation}
for some $\bar \alpha, \bar\beta \in \FF^\times$, $a \in \{1, \dots, p-4\}$, and $b \in \{0, \dots, p-2\}$. Here the nontrivial extension $* \neq 0$ is unique up to isomorphism because $\rmH^1(\Gal_{\QQ_p}, \unr(\bar \alpha_2^{-1}\bar \alpha_1) \omega^{a+1})$ is one-dimensional given the genericity condition on $a$. We say that $\bar r_p$ is \emph{very generic} if $a \in \{2, \dots, p-5\}$;
\item Fix such a reducible nonsplit and generic local representation $\bar r_p$ as in (1); its associated Serre weight is $\bbsigma: = \sigma_{a,b}$.
 An \emph{$\calO\llbracket \rmK_p\rrbracket$-projective augmented module $\widetilde \rmH$} is a finitely generated \emph{right} projective $\calO\llbracket \rmK_p\rrbracket$-module equipped with an \emph{right} $\calO[\GL_2(\QQ_p)]$-module structure such that the two induced $\calO[\rmK_p]$-structures on $\widetilde{\rmH}$ coincide. We say that $\widetilde \rmH$ is \emph{of type $\bbsigma$ with multiplicity $m(\widetilde \rmH)$} if 
\begin{enumerate}
\item[(i)] (Serre weight)
$\overline \rmH: = \widetilde \rmH / (\varpi, \rmI_{1+p\rmM_2(\ZZ_p)})$ is isomorphic to a direct sum of $m(\widetilde \rmH)$ copies of $\Proj(\bbsigma)$ as a right $\FF[\GL_2(\FF_p)]$-module.
\end{enumerate}
The topology on such $\widetilde \rmH$ is the one inherited from the $\calO\llbracket \rmK_p\rrbracket$-module structure.

We say $\widetilde \rmH$ is \emph{primitive} if $m(\widetilde \rmH) = 1$ and  $\widetilde \rmH$ satisfies the following additional conditions:
\begin{enumerate}
\item[(ii)] (Central character \uppercase\expandafter{\romannumeral1})
the action of $\Matrix p00p$ on $\widetilde \rmH$ is given by multiplication by an invertible element $\xi \in \calO^\times$, and
\item[(iii)] (Central character \uppercase\expandafter{\romannumeral2}) there exists an isomorphism $\widetilde \rmH \cong \widetilde \rmH_0 \widehat \otimes_\calO \calO\llbracket (1+p\ZZ_p)^\times \rrbracket$ of  $\calO[\GL_2(\QQ_p)]$-modules, where $\widetilde \rmH_0$ carries an action of $\GL_2(\QQ_p)$ which is trivial on elements of the form $\Matrix \alpha 00 \alpha$ for $\alpha \in (1+p\ZZ_p)^\times$, and the latter factor $\calO\llbracket (1+p\ZZ_p)^\times \rrbracket$ carries the natural action of $\GL_2(\QQ_p)$ via the map $\GL_2(\QQ_p) \xrightarrow{\det}\QQ_p^\times \xrightarrow{p^r\delta \mapsto \delta/\omega(\bar \delta)} (1+p\ZZ_p)^\times$.
\end{enumerate}
\end{enumerate}

\end{definition}

\begin{remark}
\phantomsection
\label{R:remark on type sigma}
\begin{enumerate}
\item In \cite{liu-truong-xiao-zhao}, we call such $\widetilde \rmH$ of type $\bar r_p|_{\rmI_{\QQ_p}}$. This was slightly inappropriate as the extension class $*$ in \eqref{E:bar rp extension} plays no role in the definition. So in this paper, we changed this notion to be ``type $\bbsigma$".

\item 
We quickly remind the readers here that, for the local theory of ghost conjecture, we only treat the case when $\bar r_p$ is reducible and \emph{nonsplit}, or equivalently, when there is only one Serre weight $\bbsigma$. It is the later bootstrapping argument in Sections \ref{Sec:Galois eigenvarieties} and \ref{Sec:bootstrapping} that allows us to deduce the general reducible case from the reducible nonsplit case.
\end{enumerate}
\end{remark}

\subsection{Space of abstract forms}
\label{S:arithmetic forms}
Let $\widetilde \rmH$ denote an $\calO\llbracket \rmK_p\rrbracket$-projective augmented module.

(1) 
Set $\Delta: =\FF_p^\times$ and write $\omega: \Delta \to \ZZ_p^\times$ for the Teichm\"uller character.  For each $\alpha \in \ZZ_p$, write $\bar \alpha$ for its reduction modulo $p$.

Recall that there is a canonical identification $\Lambda:=\calO\llbracket (1+p\ZZ_p)^\times\rrbracket \cong \calO\llbracket w\rrbracket$ by sending $[\alpha]$ for $\alpha \in (1+p\ZZ_p)^\times$ to $(1+w)^{\log(\alpha)/p}$, where $\log(-)$ is the formal $p$-adic logarithm. In particular, for each $k \in \ZZ$, we set
$$
w_k: = \exp(p(k-2))-1.
$$
For a character $\varepsilon:\Delta^2 \to \ZZ_p^\times$, write $\calO\llbracket w\rrbracket^{(\varepsilon)}$ for $\calO\llbracket w\rrbracket$, but equipped with the universal character
$$
\begin{tikzcd}[row sep=0pt]
\chi^{(\varepsilon)}_\univ: \Delta \times \ZZ_p^\times \ar[r] & \calO\llbracket w\rrbracket^{(\varepsilon), \times}
\\
(\bar \alpha, \,\delta) \ar[r, mapsto] & \varepsilon(\bar \alpha, \bar \delta) \cdot (1+w)^{\log(\delta/\omega(\bar \delta))/p},
\end{tikzcd}
$$where $\bar \delta$ is the reduction of $\delta$ modulo $p$ and $\omega(\bar \delta)$ is the Teichm\"umller lift of $\bar \delta$. The \emph{weight disk} $\calW^{(\varepsilon)}: = \big(\Spf\calO\llbracket w\rrbracket^{(\varepsilon)}\big)^\rig$ for $\varepsilon$ is the associated rigid analytic space over $E$.
The universal character extends to a character of $B^\op(\ZZ_p) =\Matrix{\ZZ_p^\times}{0}{p\ZZ_p}{\ZZ_p^\times}$, still denoted by $\chi_\univ^{(\varepsilon)}$,
given by
\begin{equation}
\label{E:universal character}
\chi_\univ^{(\varepsilon)} \big( \Matrix \alpha0 \gamma\delta) = \chi_\univ^{(\varepsilon)}(\bar \alpha, \delta).
\end{equation}

For a character $\varepsilon: \Delta^2 \to \ZZ_p^\times$, consider the induced representation (for the \emph{right action convention})
\begin{align}
\label{E:Ind B to Iw}
\Ind_{B^\op(\ZZ_p)}^{\Iw_p}(\chi_\univ^{(\varepsilon)}): = &\ \big\{ \textrm{continuous functions }f: \Iw_p \to  \calO\llbracket w\rrbracket^{(\varepsilon)};
\\ \nonumber &\quad f(gb) = \chi_\univ^{(\varepsilon)}(b) \cdot f(g) \textrm{ for }b \in B^\op(\ZZ_p) \textrm{ and } g \in \Iw_p\big\}
\\
\cong &\ \calC^0(\ZZ_p; \calO\llbracket w\rrbracket^{(\varepsilon)}),
\end{align}
where $\calC^0(\ZZ_p;-)$ denotes the space of continuous functions on $\ZZ_p$ with values in $-$, the isomorphism is given by $f \mapsto h(z)=f\big(\Matrix 1z01\big)$.
Our choice of convention is so that the left action on its dual, i.e. the distributions $\calD_0(\ZZ_p;\calO\llbracket w\rrbracket^{(\varepsilon)})$ is the natural one, and this will be compatible with later Emerton's lower triangular matrix analytic Jacquet functor \cite{emerton-Jacquet}; see \S\,\ref{S:comparison Jacquet and local ghost 1} for the discussion.

This space \eqref{E:Ind B to Iw} carries a \emph{right} action of the monoid
$$
\mathbf{M}_1=\big\{\Matrix \alpha\beta\gamma\delta \in \rmM_2(\ZZ_p);  \ p|\gamma,\,p\nmid \delta,\, \alpha \delta-\beta \gamma \neq 0\big\},
$$ 
given by the explicit formula (setting determinant $\alpha \delta - \beta\gamma = p^r d$ with $d \in \ZZ_p^\times$)
\begin{equation}
\label{E:induced representation action extended}
h\big|_{\Matrix \alpha\beta{\gamma}\delta}(z) = \varepsilon(\bar d /  \bar \delta, \bar \delta) \cdot (1+w)^{\log\left((\gamma z+\delta )/ \omega(\bar \delta)\right)/p} \cdot h\Big( \frac{\alpha z+\beta}{\gamma z+\delta}\Big).
\end{equation}


(2) Fix a character $\varepsilon:\Delta^2 \to \ZZ_p^\times$. Write $\calO\langle w/p\rangle^{(\varepsilon)}$ for the same ring $\calO\langle w/p\rangle$ equipped the associated universal character \eqref{E:universal character}.  For an $\calO\llbracket \rmK_p\rrbracket$-projective augmented module $\widetilde \rmH$, define the space of \emph{abstract $p$-adic forms} and the space of \emph{family of abstract overconvergent forms} to be
\begin{eqnarray*}
\rmS^{(\varepsilon)}_{p\textrm{-adic}} = \rmS^{(\varepsilon)}_{\widetilde \rmH, p\textrm{-adic}}&: =& \Hom_{\calO[\Iw_{p}]}\big(\widetilde \rmH, \, \Ind_{B^\op(\ZZ_p)}^{\Iw_p}(\chi_\univ^{(\varepsilon)})\big) \cong \Hom_{\calO[\Iw_p]}\big(\widetilde \rmH, \, \calC^0(\ZZ_p; \calO\llbracket w\rrbracket^{(\varepsilon)})\big),\\
\rmS^{\dagger,(\varepsilon)} = \rmS^{\dagger,(\varepsilon)}_{\widetilde \rmH}&: =& \Hom_{\calO[\Iw_p]}\big(\widetilde \rmH, \, \calO\langle w/p\rangle^{(\varepsilon)}\langle z\rangle\big),
\end{eqnarray*}
respectively. Viewing power series in $z$ as continuous functions on $\ZZ_p$ induces a natural inclusion
$$
\calO\langle w/p \rangle^{(\varepsilon)}\langle z\rangle\hookrightarrow \calC^0(\ZZ_p; \calO\llbracket w\rrbracket^{(\varepsilon)}) \widehat\otimes_{\calO\llbracket w\rrbracket}\calO\langle w/p\rangle,
$$
such that the $\bfM_1$-action on the latter space given by \eqref{E:induced representation action extended}, which stabilizes the subspace. This induces a natural inclusion
\begin{equation}
\label{E:dagger embeds in p-adic}
\rmS^{\dagger,(\varepsilon)} \hookrightarrow \rmS^{(\varepsilon)}_{p\textrm{-adic}} \widehat\otimes_{\calO\llbracket w\rrbracket} \calO\langle w/p\rangle.
\end{equation}
The space $\rmS^{(\varepsilon)}_{p\textrm{-adic}}$ (resp. $\rmS^{\dagger,(\varepsilon)}$) carries
an $ \calO\llbracket w\rrbracket$-linear (resp. $\calO\langle w/p\rangle$-linear) $U_p$-action: fixing a decomposition of the double coset $\Iw_p \Matrix {p^{-1}}001 \Iw_p = \coprod_{j=0}^{p-1} v_j \Iw_p$ (e.g. $v_j = \Matrix{p^{-1}}0{j}1$ and $v_j^{-1} = \Matrix p0{-jp}1$), the $U_p$-operator sends $\varphi \in \rmS_{p\textrm{-adic}}^{(\varepsilon)}$ (resp. $\varphi \in \rmS^{\dagger,(\varepsilon)}$) to
\begin{equation}
\label{E:Up action}
U_p(\varphi)(x) = \sum_{j=0}^{p-1}
\varphi(xv_j)|_{v_j^{-1}}
\quad \textrm{for all }x \in \widetilde \rmH.
\end{equation}
The $U_p$-operator does not depend on the choice of coset representatives.
As explained in \cite[\S\,2.10 and Lemma~2.14]{liu-truong-xiao-zhao}, the characteristic power series of the $U_p$-action on $\rmS^{\dagger, (\varepsilon)}$ and $\rmS^{(\varepsilon)}_{p\textrm{-adic}}$ are well-defined and are equal; we denote it by 
$$C^{(\varepsilon)}(w,t) = C^{(\varepsilon)}_{\widetilde \rmH}(w,t) = \sum_{n \geq 0} c_n^{(\varepsilon)}(w)t^n \in \Lambda\llbracket t\rrbracket = \calO\llbracket w, t\rrbracket.
$$
The main subject of local ghost conjecture is to provide an ``approximation" of $C^{(\varepsilon)}(w,t)$.

For each integer $k \in \ZZ$, evaluating at $w = w_k : = \exp((k-2)p)-1$, we arrive at the space of \emph{abstract overconvergent forms of weight $k$ and character} $\psi = \varepsilon \cdot (1\times \omega^{2-k})$:
$$
\rmS_k^\dagger(\psi)= \rmS_{\widetilde \rmH,k}^\dagger(\psi): = \rmS^{\dagger, (\varepsilon)} \otimes_{\calO\langle w/p\rangle , w \mapsto w_k} \calO,
$$
carrying compatible $U_p$-actions. Moreover, the characteristic power series for the $U_p$-action is precisely $C^{(\varepsilon)}(w_k,t)$.

(3)
For each integer $k \geq 2$, write $\calO[z]^{\leq k-2}$ for the space of polynomials of degree $\leq k-2$. Setting $\psi = \varepsilon \cdot (1\times \omega^{2-k})$, we have a canonical inclusion
$$
\calO[z]^{\leq k-2} \otimes \psi \ \subset \calO\langle w/p\rangle^{(\varepsilon)}\langle z \rangle \otimes_{\calO\langle w/p\rangle, w \mapsto w_k}\calO,
$$
such that
the $\bfM_1$-action on the latter given by \eqref{E:induced representation action extended} stabilizes the submodule.
So we may define the space of \emph{abstract classical forms of weight $k$ and character $\psi$} to be the $U_p$-stable submodule
$$
\rmS^\Iw_k(\psi) = \rmS^\Iw_{\widetilde \rmH,k}(\psi) : = \Hom_{\calO[\Iw_p]}\big( \widetilde \rmH, \, \calO[ z]^{\leq k-2} \otimes \psi \big) \ \subset \ \rmS^\dagger_k(\psi),
$$
In particular, the characteristic power series of the $U_p$-action on $\rmS_k^\Iw(\psi)$ divides $C^{(\varepsilon)}(w_k,t)$.  

(4) For a character $\varepsilon_1: \Delta \to \ZZ_p^\times$, write $\tilde \varepsilon_1: = \varepsilon_1 \times \varepsilon_1: \Delta^2 \to \ZZ_p^\times$ for the corresponding character. 
The space $\calO[z]^{\leq k-2} \otimes  (\varepsilon_1\circ \det)$ carries a natural action of the monoid $\rmM_2(\ZZ_p)^{\det \neq 0}$ as follows: for $\Matrix \alpha \beta \gamma\delta \in \rmM_2(\ZZ_p)$ (setting determinant $\alpha \delta -\beta\gamma = p^rd$ with $d$ in $\ZZ_p^\times$),
$$
h|_{\Matrix \alpha \beta \gamma\delta}(z) = \varepsilon_1(\bar d) \cdot (\gamma z+\delta)^{k-2}h\Big( \frac{\alpha z+\beta}{\gamma z+\delta} \Big).
$$
Define the space of \emph{abstract classical forms with $\rmK_p$-level of weight $k$ and central character $\varepsilon_1$} to be
$$
\rmS_k^\ur(\varepsilon_1) = \rmS_{\widetilde \rmH,k}^\ur(\varepsilon_1): = \Hom_{\calO[\rmK_p]}\big( \widetilde \rmH, \, \calO[z]^{\leq k-2} \otimes (\varepsilon_1\circ \det )\big).
$$
This space carries an action of the $T_p$-operator: taking a coset decomposition $\rmK_p \Matrix {p^{-1}}001 \rmK_p = \coprod_{j=0}^{p}  u_j\rmK_p$ (e.g. $u_j = \Matrix 1{jp^{-1}}0{p^{-1}}$ with $u_j^{-1} = \Matrix 1{-j}0p$ for $j=0,\dots, p-1$, and $u_p= \Matrix {p^{-1}}001$ with $u_p^{-1}=\Matrix p001$), the $T_p$-operator sends $\varphi \in \rmS_k^\ur(\varepsilon_1)$ to
\begin{equation}
\label{E:Tp action}
T_p(\varphi)(x) = \sum_{j=0}^{p}
\varphi(xu_j)|_{u_j^{-1}}
\quad \textrm{for all }x \in \widetilde \rmH.
\end{equation}

(5) 
Let $\bbsigma = \sigma_{a,b}$ denote a Serre weight. 
A character $\varepsilon$ of $\Delta^2$ is called \emph{relevant} to $\bbsigma = \sigma_{a,b}$ if it is of the form 
$$
\varepsilon = \omega^{-s_\varepsilon+b} \times \omega^{a+s_{\varepsilon}+b}
$$
for some $s_\varepsilon \in \{0,\dots, p-2\}$, or equivalently, $\varepsilon(x,x) = x^{a+2b}$ for any $x\in \Delta$. For the rest of this paper, we will always use $\varepsilon$ to denote a character of $\Delta^2$ relevant to $\bbsigma$.

For each $m \in \ZZ$, we write $\{m\}$ for the residue class of $m$ modulo $p-1$, represented by an element in $\{0, \dots, p-2\}$. 
For the relevant $\varepsilon$ above, put
$$k_\varepsilon: = 2+\{a+2s_\varepsilon\} \in \{2, \dots, p\}.$$

If a character $\psi: \Delta ^2 \to \calO^\times$ is of the form $\varepsilon \cdot (1\times \omega^{2-k}) = \varepsilon^{-s_\varepsilon+b} \times \varepsilon^{a+s_\varepsilon+b+2-k}$ for an integer $k \in \ZZ_{\geq 2}$ as in (3) and is at the same time of the form $\tilde \varepsilon_1 = \varepsilon_1 \times \varepsilon_1$ as in (4), then we must have
$
k \equiv k_\varepsilon \bmod (p-1).
$
In this case, we have natural inclusion
$$
\rmS_k^\ur(\varepsilon_1) \subseteq  \rmS_k^\Iw(\tilde \varepsilon_1).
$$

(6) 
Let $\widetilde \rmH$ be a primitive $\calO\llbracket \rmK_p\rrbracket$-projective augmented module of type $\bbsigma = \sigma_{a,b}$ and let $\varepsilon$ be a character of $\Delta^2$ relevant to $\bbsigma$.
For a character $\psi = \varepsilon \cdot (1\times \omega^{2-k})$, put
$$
d^\Iw_k(\psi): = \rank_\calO\rmS_k^\Iw(\psi).
$$
For $\varepsilon_1: = \omega^{-s_\varepsilon+b}$ and $k \in \ZZ_{\geq 2}$ such that  $k \equiv k_\varepsilon \bmod (p-1)$,  set
$$
d_k^\ur(\varepsilon_1): = \rank_\calO\rmS_k^\ur(\varepsilon_1) \quad \textrm{and} \quad d_k^\new(\varepsilon_1): = d_k^\Iw(\tilde \varepsilon_1)  - 2d_k^\ur(\varepsilon_1).
$$
The ranks $d_k^\Iw(\psi)$, $d_k^\ur(\varepsilon_1)$, and $d_k^\new(\varepsilon_1)$ defined above depend only on $a$, $b$, $s_\varepsilon$, $\psi$, and $k$. For their precise formulas, see Definition-Proposition~\ref{DP:dimension of classical forms} later.

(7) Since the definition of $\rmS_k^\Iw(\psi)$ and $\rmS_k^\ur(\varepsilon_1)$ only uses the $\rmK_p$-modules structure of $\widetilde \rmH$, it follows that, for a $\rmK_p$-projective augmented module $\widetilde \rmH$ of type $\bbsigma$ with multiplicity $m(\widetilde \rmH)$,
\begin{equation}
\label{E:rank Sk proportional to m(H)}
\rank_\calO\rmS_{\widetilde \rmH,k}^\Iw(\psi) = m(\widetilde \rmH) \cdot  d_k^\Iw(\psi) \quad\textrm{and}\quad \rank_\calO\rmS_{\widetilde \rmH,k}^\ur(\varepsilon_1) = m(\widetilde \rmH) \cdot d_k^\ur(\varepsilon_1).
\end{equation}

\begin{definition}
\label{D:ghost series}
Following \cite{bergdall-pollack}, we define the \emph{ghost series of type $\bbsigma$}  over $\calW^{(\varepsilon)}$ to be the formal power series
$$
G^{(\varepsilon)}(w,t)=G^{(\varepsilon)}_{\bbsigma}(w,t) = 1+\sum_{n=1}^\infty
g_n^{(\varepsilon)}(w)t^n\in \calO[w]\llbracket t\rrbracket,
$$
where each coefficient $g_n^{(\varepsilon)}(w)$ is a product
\begin{equation}
\label{E:gi varepsilon}
g_n^{(\varepsilon)}(w) = \prod_{\substack{k \geq 2\\ k \equiv k_\varepsilon \bmod{(p-1)}}} (w - w_k)^{m_n^{(\varepsilon)}(k)} \in \ZZ_p[w]
\end{equation}
with exponents $m_n^{(\varepsilon)}(k)$ given by the following recipe
\[
m_n^{(\varepsilon)}(k) = \begin{cases}
\min\big\{ n - d_{k}^\ur( \varepsilon_1) , d_{k}^\Iw(\tilde \varepsilon_1)- d_{k}^\ur(\varepsilon_1) - n\big\} & \textrm{ if }d_{k}^\ur(\varepsilon_1) < n < d_{k}^\Iw(\tilde\varepsilon_1) - d_{k}^\ur(\varepsilon_1)
\\
0 & \textrm{ otherwise.}
\end{cases}
\]
(When all $m_n^{(\varepsilon)}=0$ in the product, we set $g_n^{(\varepsilon)}=1$.)
For a fixed $k$, the sequence $(m_n^{(\varepsilon)}(k))_{n \geq 1}$ is given by the following palindromic pattern
\begin{equation}
\label{E:cascading pattern}
\underbrace{0, \dots, 0}_{d_{k}^{\ur}(\varepsilon_1)}, 1, 2, 3, \dots, \tfrac12 d_{k}^{\textrm{new}}(\varepsilon_1) -1, \tfrac12{d_{k}^{\textrm{new}}(\varepsilon_1)}, \tfrac12{d_{k}^{\textrm{new}}(\varepsilon_1)} -1,\dots,  3, 2, 1, 0, 0, \dots,
\end{equation}
where the maximum $\frac 12 d_{k}^{\new}(\varepsilon_1)$ appears at the $\frac 12 d_{k}^{\Iw}(\tilde\varepsilon_1)$th place.

When $m_n^{(\varepsilon)}(k) \neq 0$, we often refer $w_k$ as a \emph{ghost zero} of $g_n^{(\varepsilon)}(w)$.
\end{definition}

\begin{notation}
\label{N:kbullet}
As indicated in the definition above, for a ghost zero $w_k$ of $g_n^{(\varepsilon)}(w)$, we can always write $k = k_\varepsilon + (p-1)k_\bullet$ for some $k_\bullet \in \ZZ_{\geq 0}$.

We will later often write $k = k_\varepsilon + (p-1)k_\bullet$, to mean that \emph{by convention, $k_\bullet$ is a nonnegative integer}, without explicit stating that. (In particular $k\equiv k_\varepsilon \bmod (p-1)$ and $k\geq 2$.)
\end{notation}

\begin{conjecture}[Local ghost conjecture]
\label{Conj:local ghost conjecture}
Let $\bar r_p \simeq \MATRIX{\omega_1^{a+b+1}\unr(\bar\alpha)}{*\neq 0}{0}{\omega_1^b\unr(\bar \beta)}: \Gal_{\QQ_p} \to \GL_2(\FF)$ be a reducible nonsplit and generic residual representation with $a \in \{1, \dots, p-4\}$ and $b \in \{0, \dots, p-2\}$, as in \eqref{E:bar rp extension}.
Let $\widetilde \rmH$ be a primitive $\calO\llbracket \rmK_p\rrbracket$-projective augmented module of type $\bbsigma=\sigma_{a,b}$, and let $\varepsilon$ be a character of $\Delta^2$ relevant to $\bbsigma$. We define the characteristic power series $C^{(\varepsilon)}(w,t)$ of $U_p$-action for $\widetilde \rmH$ and the ghost series $G^{(\varepsilon)}_{\bbsigma}(w,t)$ of type $\bbsigma$ as in this section.  Then
for every $w_\star \in \gothm_{\CC_p}$, we have
$\NP(G_{\bbsigma}^{(\varepsilon)}(w_\star, -))=\NP(C^{(\varepsilon)}(w_\star,-))$.
\end{conjecture}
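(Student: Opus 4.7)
The plan is to compare the two power series $C^{(\varepsilon)}(w,t) = 1+\sum_{n\geq 1} c_n^{(\varepsilon)}(w)t^n$ and $G^{(\varepsilon)}_{\bbsigma}(w,t) = 1+\sum_{n\geq 1} g_n^{(\varepsilon)}(w)t^n$ coefficient by coefficient, and to show that for every $n\geq 1$ and every $w_\star \in \gothm_{\CC_p}$ the valuations $v_p\bigl(c_n^{(\varepsilon)}(w_\star)\bigr)$ and $v_p\bigl(g_n^{(\varepsilon)}(w_\star)\bigr)$ agree. A direct comparison at a single weight is combinatorially hopeless, as the palindromic cascade \eqref{E:cascading pattern} already suggests, so one has to work globally over the weight disk $\calW^{(\varepsilon)}$ and exploit the integrality $c_n^{(\varepsilon)} \in \calO\llbracket w \rrbracket$. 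Concretely, for each $n$ I would perform the Lagrange interpolation
\[
c_n^{(\varepsilon)}(w) \;=\; \sum_{k\,:\,m_n^{(\varepsilon)}(k)\neq 0} A_k(w)\cdot\frac{g_n^{(\varepsilon)}(w)}{(w-w_k)^{m_n^{(\varepsilon)}(k)}} \;+\; h(w)\,g_n^{(\varepsilon)}(w),
\]
with $\deg A_k < m_n^{(\varepsilon)}(k)$, and then establish sharp lower bounds on the $p$-adic valuations of the coefficients of each $A_k(w)$. If those bounds match the local slopes of $G^{(\varepsilon)}_{\bbsigma}$ near $w_k$, the Newton polygon identity at any $w_\star$ follows from a leading-term comparison.

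To feed these bounds I would leverage two structural inputs. First, the $p$-stabilization identity $U_p(\varphi) = \iota_2\bigl(\proj_1(\varphi)\bigr) - \AL(\varphi)$ on $\rmS_k^{\Iw}(\tilde\varepsilon_1)$ exhibits the matrix of $U_p$ in the power basis as the sum of a matrix of rank at most $d_k^{\ur}(\varepsilon_1)\approx \tfrac{1}{p+1}d_k^{\Iw}(\tilde\varepsilon_1)$ and the anti-diagonal Atkin--Lehner matrix; this alone forces $g_n^{(\varepsilon)}(w)$ to divide $c_n^{(\varepsilon)}(w)$ in $\calO\langle w/p\rangle$, and more importantly it is the mechanism producing the prescribed ghost zeros at each $w_k$. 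Second, I would replace the naive power basis of $\rmS^{\dagger,(\varepsilon)}$ by the modified Mahler basis $\bfm_n(z) = z^{n_0}f_1(z)^{n_1}f_2(z)^{n_2}\cdots$ built from iterated Artin--Schreier polynomials $f_{\ell+1}(z) = (f_\ell^p-f_\ell)/p$, to obtain a halo-type estimate for the entries of $U_p$ that is sharp enough for the inductive bookkeeping; each $\bfm_n$ is a $\Delta$-eigenvector, which keeps the estimate compatible with the character decomposition encoded by $\varepsilon$.

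The core argument then proceeds by induction on $n$, and crucially must treat all $n\times n$ minors $\rmU^\dagger(\underline\zeta\times\underline\xi)$ of the $U_p$-matrix \emph{simultaneously}, not just the principal ones. For each such minor, the $p$-stabilization decomposition reduces its determinant, via cofactor expansion along the low-rank piece $\iota_2\proj_1$, to sums of products involving determinants of smaller minors (controlled inductively) times blocks of the Atkin--Lehner matrix (which are explicit). The rank constraint forces vanishing of certain sub-determinants modulo appropriate powers of $(w-w_k)$, and these vanishing orders are designed to match the exponents $m_n^{(\varepsilon)}(k)$ in \eqref{E:gi varepsilon}. Feeding this back into the Lagrange expansion of the first paragraph closes the induction and yields the desired coefficient valuations.

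The hard part is the induction step, for two intertwined reasons. First, the minors appearing in the cofactor expansion are generically not principal, so the $p$-stabilization identity cannot be iterated naively; the induction must be formulated over a class of index sets $(\underline\zeta,\underline\xi)$ that is stable under the row/column reshuffling produced by the Atkin--Lehner part, and matching these indices with the palindromic shape of the $m_n^{(\varepsilon)}(k)$ requires delicate bookkeeping. Second, the refined halo estimate on the modified Mahler basis is not uniform: it has genuine irregularities for degrees close to large powers of $p$, which are precisely the indices where $m_n^{(\varepsilon)}(k)$ is largest. Arranging the inductive quantities so that the error from these pathological coefficients is dominated by the gain from the rank inequality is the main technical obstacle, and is what ultimately forces both the very-generic hypothesis $a\in\{2,\dots,p-5\}$ and the restriction $p\geq 11$.
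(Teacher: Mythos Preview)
Your proposal is essentially identical to the paper's own proof: Lagrange interpolation of $c_n^{(\varepsilon)}$ along $g_n^{(\varepsilon)}$ (Proposition~4.3), the $p$-stabilization identity $U_p=\iota_2\proj_1-\AL$ and the modified Mahler basis halo estimate as the two structural inputs (Section~3), and an induction on $n$ over \emph{all} $n\times n$ minors via cofactor expansion (Sections~5--6), with the pathological digit cases near powers of $p$ responsible for the constraints $2\leq a\leq p-5$ and $p\geq 11$. One small correction: it is not true, and not what you need, that $v_p(c_n^{(\varepsilon)}(w_\star))=v_p(g_n^{(\varepsilon)}(w_\star))$ for every $n$ and $w_\star$; the paper only proves (and only needs) that $(n,v_p(c_n^{(\varepsilon)}(w_\star)))$ lies on or above $\NP(G^{(\varepsilon)}_{\bbsigma}(w_\star,-))$, with equality exactly at the vertices of the ghost polygon --- at non-vertex $n$ (the near-Steinberg range) both points may sit strictly above the polygon with no relation between them.
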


The main local result of this paper is the following.
\begin{theorem}
\label{T:local theorem}
The Conjecture~\ref{Conj:local ghost conjecture} holds when $p \geq 11$ and  $2 \leq a \leq p-5$.
\end{theorem}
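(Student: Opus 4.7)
The plan is to bootstrap from the two structural inputs already emphasized in \S\,\ref{S:overview proof of local main theorem}: the $p$-stabilization identity $U_p(\varphi) = \iota_2(\proj_1(\varphi)) - \AL(\varphi)$ on $\rmS_{\widetilde \rmH,k}^\Iw(\tilde\varepsilon_1)$, and the sharp halo estimate coming from the modified Mahler basis $\bfm_n(z) = z^{n_0} f_1(z)^{n_1} f_2(z)^{n_2}\cdots$ with $f_{\ell+1} = (f_\ell^p - f_\ell)/p$. The first identity immediately explains why the palindromic zero pattern \eqref{E:cascading pattern} should hold: at $w = w_k$, the matrix of $U_p$ in the power basis splits as a block of rank $\le d_k^\ur(\varepsilon_1)$ coming from the old-form embedding, plus an antidiagonal Atkin--Lehner block, so every $n\times n$ minor with $d_k^\ur(\varepsilon_1) < n < d_k^\Iw(\tilde\varepsilon_1) - d_k^\ur(\varepsilon_1)$ must vanish to the order recorded by $m_n^{(\varepsilon)}(k)$. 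The second input is what will allow us to turn these qualitative vanishing statements into the precise quantitative matching of Newton polygons away from the ghost zeros.

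The central device will be a Lagrange interpolation expansion: for each $n$, write
\begin{equation*}
c_n^{(\varepsilon)}(w) \;=\; \sum_{k:\,m_n^{(\varepsilon)}(k) \neq 0} A_{n,k}(w) \cdot \frac{g_n^{(\varepsilon)}(w)}{(w - w_k)^{m_n^{(\varepsilon)}(k)}} \;+\; h_n(w)\, g_n^{(\varepsilon)}(w),
\end{equation*}
and prove that the $A_{n,k}(w)$ have $p$-adic valuations forcing every summand's Newton contribution at each $w_\star\in \gothm_{\CC_p}$ to be at least $v_p(g_n^{(\varepsilon)}(w_\star))$; this is enough to conclude $\NP(C^{(\varepsilon)}(w_\star,-)) = \NP(G_{\bbsigma}^{(\varepsilon)}(w_\star,-))$, since comparison of lowest-order contributions at each vertex forces equality. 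To make this close inductively, the statement actually has to be upgraded to control the analogous Lagrange coefficients for \emph{all} (not just principal) $n\times n$ minor-determinants $\det \rmU^\dagger(\underline\zeta\times\underline\xi)$ of the $U_p$-matrix in the modified Mahler basis; otherwise the cofactor expansion in the next step would leak outside the induction hypothesis.

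The inductive step itself is what I expect to be the main obstacle. It will proceed by multilinear cofactor expansion: decompose each column of $\rmU^\dagger(\underline\zeta\times\underline\xi)$ using $U_p = \iota_2\circ\proj_1 - \AL$, and group summands according to how many columns come from the rank-bounded factor $\iota_2\circ\proj_1$ versus from the Atkin--Lehner antidiagonal. The rank bound forces most summands to contain a large block that vanishes at $w_k$ to controlled order, while the remaining summands reduce to minor-determinants of strictly smaller size to which the inductive hypothesis applies; the antidiagonal structure of $\AL$ and the compatibility of $\iota_2\circ\proj_1$ with the power/Mahler basis are what convert these abstract reductions into explicit inequalities. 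The sharpness required here is delicate: at each ghost zero $w_k$ the valuation budget $m_n^{(\varepsilon)}(k)$ is exactly what the cofactor expansion furnishes, with no slack, so any loss in the halo estimate immediately breaks the induction — this is precisely where the modified Mahler basis replaces the naive power basis of \cite{liu-wan-xiao}.

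Several genuine complications will have to be absorbed into the bookkeeping. First, there are ``pathological'' indices $n$ close to a large power of $p$ where the Mahler estimate degrades and has to be replaced by a tailored refinement; the restriction $p\ge 11$ and $2\le a\le p-5$ presumably enters exactly to kill edge cases in these regions and to keep the palindromic pattern strictly increasing on the first half of its support. Second, since $g_n^{(\varepsilon)}(w)$ may vanish simultaneously at many $w_k$, the induction cannot be run weight-by-weight: one must work integrally over the full weight ring $\calO\llbracket w\rrbracket$ and use the collective integrality of the $c_n^{(\varepsilon)}(w)$ to propagate estimates across all $w_k$ at once. Third, since only the principal minors contribute to $c_n^{(\varepsilon)}(w)$ itself, the extra content in the stronger inductive statement is genuinely necessary scaffolding, not a cost-free upgrade; verifying it will require carefully tracking how row and column index sets $\underline\zeta$, $\underline\xi$ evolve under cofactor expansion. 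Once the inductive estimate on all Lagrange coefficients is in hand, computing $\NP(C^{(\varepsilon)}(w_\star,-))$ at an arbitrary $w_\star\in\gothm_{\CC_p}$ and comparing it slope-by-slope with $\NP(G_{\bbsigma}^{(\varepsilon)}(w_\star,-))$ becomes a purely formal computation with the ghost series, which was already handled combinatorially in \cite{liu-truong-xiao-zhao}.
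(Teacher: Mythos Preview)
Your proposal is correct and follows essentially the same three-step approach as the paper (Lagrange interpolation $\Rightarrow$ cofactor expansion $\Rightarrow$ estimates on smaller minors), with the same two key inputs. One small correction: the minor-determinants $\det\rmU^\dagger(\underline\zeta\times\underline\xi)$ that carry the induction are taken with respect to the \emph{power} basis, not the modified Mahler basis; the Mahler basis enters only through change-of-basis estimates (the matrix $\rmY$ and its inverse) used to bound those power-basis minor-determinants.
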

\begin{remark}
\label{R:discussion of a not 1p-4 and p 7}
The only place that we essentially need $a \not\in\{1,p-4\}$ and $p \geq 11$ to complete the proof of Theorem~\ref{T:local theorem} is in the proof of Proposition~\ref{P:estimate of overcoefficients}(1); see also Remark~\ref{R:a=1p-4}. We do not know whether one can make more delicate discussions on boundary cases to retrieve the theorem when $a  \in \{1,p-4\}$ or $p = 11$. The condition $p\geq 7$ is required at more places, e.g. \cite[Corollary~5.10]{liu-truong-xiao-zhao}.

We expect Conjecture~\ref{Conj:local ghost conjecture} to hold whenever there is only one Serre weight associated to the local representation $\bar{r}_p$.
\end{remark}

As pointed out in \cite[Remark~2.30]{liu-truong-xiao-zhao}, after twisting, we may and will assume that $b = 0$ and that $\Matrix p00p$ acts trivially on $\widetilde \rmH$.

\begin{hypothesis}
\label{H:b=0}
From now on till the end of Section~\ref{Sec:proof III} (with the exception of Proposition~\ref{P:ghost series identity} and the following remarks), we assume that $\widetilde \rmH$ is a primitive $\calO\llbracket \rmK_p\rrbracket$-projective augmented module of type $\bbsigma$, with $b=0$ and $\xi =1$. In particular, $\overline \rmH = \widetilde \rmH /(\varpi, \rmI_{1+p \rmM_2(\ZZ_p)}) \simeq \Proj(\sigma_{a,0})$, and  $\Matrix p00p$ acts trivially on $\widetilde \rmH$.

The letter $\varepsilon$ is reserved to denote a character of $\Delta^2$ relevant to $\bbsigma$.
\end{hypothesis}

For the rest of this section, we recall important definitions and results regarding abstract forms and ghost series that we have proved in the prequel \cite{liu-truong-xiao-zhao}; we refer to \emph{loc. cit.} for details and proofs.

\subsection{Power basis}
\label{S:power basis}
In  \cite[\S\,3]{liu-truong-xiao-zhao}, we constructed a power basis of the space of abstract overconvergent forms.
Let $\widetilde \rmH$ be as above. As explained in \cite[\S\,3.2]{liu-truong-xiao-zhao}, we may write $\widetilde \rmH$ as a right $\calO\llbracket \Iw_p\rrbracket$-module
\begin{equation}
\label{E:tilde H as Iwp module}
\widetilde \rmH \simeq e_1\big( \calO\otimes_{\chi_1, \calO[\bar \rmT]} \calO\llbracket \Iw_p\rrbracket \big) \oplus e_2 \big(  \calO\otimes_{\chi_2, \calO[\bar \rmT]} \calO\llbracket \Iw_p\rrbracket \big)
\end{equation}
for the two characters $\chi_1 = 1\times \omega^a$ and $\chi_2 = \omega^a\times 1$ of $\bar \rmT = \Delta^2$ (embedded diagonally in $\Iw_p$). Moreover, by \cite[Lemma~3.3]{liu-truong-xiao-zhao} we may require that $e_i \Matrix 01p0=  e_{3-i}$ for $i=1,2$.
We fix such an isomorphism \eqref{E:tilde H as Iwp module}.

For the relevant character $\varepsilon = \omega^{-s_\varepsilon}
\times \omega^{a+s_\varepsilon}$ of $\Delta^2$, we have
$$
\rmS^{\dagger, (\varepsilon)} 
= \Hom_{\calO[ \Iw_p]}\big(\widetilde \rmH,\, \calO\langle w/p\rangle^{(\varepsilon)}\langle z \rangle \big)
\cong  e_1^* \cdot \big( \calO\langle w/p\rangle^{(\varepsilon)}\langle z \rangle \big)^{\bar \rmT = 1 \times \omega^{a}}  \oplus e_2^* \cdot \big(\calO\langle w/p\rangle^{(\varepsilon)}\langle z \rangle \big)^{\bar \rmT = \omega^{a} \times 1}.
$$
The power basis $\{z^n|n\geq 0 \}$ of $\calO\langle w/p\rangle^{(\varepsilon)}\langle z \rangle$ consists of eigenvectors under the action of $\bar \rmT$ such that $\bar{\rmT}$ acts (from the right) on $z^n$ via the character $(\omega^n\times \omega^{-n})\cdot \varepsilon$ for all $n\geq 0$.
Thus the following list is a basis of $\rmS^{\dagger, (\varepsilon)}$ and also a basis of $\rmS_{k}^\dagger\big(\varepsilon\cdot (1\times \omega^{2-k})\big)$ for every $k \in \ZZ_{\geq 2}$:
\begin{equation}
\label{E:basis of Sdagger}
\bfB^{(\varepsilon)}: = \big\{e_1^* z^{s_\varepsilon}, e_1^* z^{p-1+s_\varepsilon},e_1^* z^{2(p-1)+s_\varepsilon}, \dots; e_2^* z^{\{a+s_\varepsilon\}}, e_2^* z^{p-1+\{a+s_\varepsilon\}},e_2^* z^{2(p-1)+\{a+s_\varepsilon\}}, \dots \big\}.
\end{equation}
When $k\geq 2$, the subsequence consisting of terms whose power in $z$ is less than or equal to $k-2$ forms a basis of $\rmS_k^\Iw\big(\varepsilon\cdot(1\times \omega^{2-k})\big)$; we denote this by $\bfB^{(\varepsilon)}_k$.

The \emph{degree} of each basis element $\bfe = e_i^*z^j \in \bfB^{ (\varepsilon)}$ is its exponent on $z$, namely, $\deg (e_i^*z^j) = j$.
We order the elements in $\bfB^{(\varepsilon)}$ as $\bfe_1^{(\varepsilon)}, \bfe_2^{(\varepsilon)}, \dots$ with increasing degrees. (Under our generic assumption $1\leq a \leq p-2$, the degrees of elements of $\bfB^{ (\varepsilon)}$ are pairwise distinct.)

Write $\rmU^{\dagger,(\varepsilon)} \in \rmM_\infty(\calO\langle w/p\rangle)$ for the matrix of the $\calO\langle w/p\rangle$-linear $U_p$-action on $\rmS^{\dagger,(\varepsilon)}$ with respect to the power basis $\bfB^{(\varepsilon)}$; for $k \in \ZZ_{\geq 2}$, the evaluation of $\rmS^{\dagger,(\varepsilon)}$ at $w = w_k$ is the matrix $\rmU_k^{\dagger, (\varepsilon)}$ of the $U_p$-action on $\rmS_k^\dagger\big(\varepsilon\cdot(1\times \omega^{2-k})\big)$ (with respect to $\bfB^{ (\varepsilon)}$). In particular,
$$\Char(\rmU^{\dagger,(\varepsilon)}; t) = C^{(\varepsilon)}(w, t) \quad \textrm{and} \quad \Char(\rmU_k^{\dagger,(\varepsilon)}; t) = C^{(\varepsilon)}(w_k, t).
$$
Here and later, despite the fact that $\bfM_1$ acts on both $\widetilde \rmH$ and $\calO\langle w / p\rangle ^{(\varepsilon)}\langle z \rangle$ from the right, we view $U_p$ as a left-action-operator. In particular, the entry of $\rmU^{\dagger, (\varepsilon)}$ labeled by $(\bfe, \bfe')$ is the coefficient of $\bfe$ in the expansion of $U_p(\bfe')$ as a linear combination of basis elements in $\bfB^{(\varepsilon)}$.

The following are standard facts regarding theta maps and the Atkin--Lehner involutions.
\begin{proposition}
\label{P:theta and AL}
Keep the notations as above and let $k \in \ZZ_{\geq 2}$.
\begin{enumerate}
\item 
(Theta maps) Put
$\psi = \varepsilon \cdot (1\times \omega^{2-k})$, $\varepsilon' = \varepsilon \cdot (\omega^{k-1} \times \omega^{1-k})$ with $s_{\varepsilon'} = \{s_\varepsilon+1-k\}$, and $\psi' = \varepsilon'\cdot (1 \times \omega^k) = \psi\cdot \tilde \omega^{k-1}$.
There is a short exact sequence
\begin{equation}
\label{E:classicality short exact sequence}
0 \to \rmS^\Iw_{k}(\psi) \longrightarrow  \rmS^{\dagger}_{k}(\psi)\xrightarrow{(\frac{d}{dz})^{k-1} \circ}\rmS_{2-k}^{\dagger}(\psi'),
\end{equation}
which is equivariant for the usual $U_p$-action on the first two terms and the $p^{k-1}U_p$-action on the third term. Here the map $\big( \frac d{dz}\big)^{k-1}\circ$ is given by post-composition with the element $\varphi \in \rmS_{k}^{\dagger}(\psi)$ when viewing the latter as a map from $\widetilde \rmH$ to $\calO\langle z\rangle$. The sequence \eqref{E:classicality short exact sequence} is right exact (i.e. the map $\big( \frac d{dz}\big)^{k-1}\circ$ is surjective) when restricted to the subspace where $U_p$-slopes are finite. 

More accurately, the matrix $\rmU^{\dagger, (\varepsilon)}_k$ is a block-upper-triangular matrix of the form
\begin{equation}
\label{E:Ukdagger is block upper triangular}
\rmU_k^{\dagger,(\varepsilon)} = \begin{pmatrix}
\rmU_k^{\Iw,(\varepsilon)}& * \\
0 & p^{k-1}D^{-1} \rmU_{2-k}^{\dagger,(\varepsilon')} D
\end{pmatrix},
\end{equation}
where the $d_k^\Iw\big(\varepsilon\cdot (1\times \omega^{2-k})\big) \times d_k^\Iw\big(\varepsilon\cdot (1\times \omega^{2-k})\big)$ upper-left block $\rmU^{\Iw, (\varepsilon)}_k$ is the matrix for the $U_p$-action on $\rmS_{k}^\Iw\big(\varepsilon\cdot (1\times \omega^{2-k})\big)$ with respect to $\bfB_k^{ (\varepsilon)}$, $D$ is the diagonal matrix whose diagonal entries are indexed by $\bfe = e_i^*z^j \in \bfB^{(\varepsilon)}$ with $j \geq k-1$, and are given by $j(j-1) \cdots (j-k+2)$.

In particular, finite $U_p$-slopes of $\rmS^{\dagger}_{k}(\psi)$ that are strictly less than $k-1$ are the same as the finite $U_p$-slopes of $\rmS_{k}^\Iw( \psi)$ that are strictly less than $k-1$ (counted with multiplicity). The multiplicity of $k-1$ as $U_p$-slopes of $\rmS^{\dagger}_{k}(\psi)$ is the sum of the multiplicity of $k-1$ as $U_p$-slopes of $\rmS_{k}^\Iw(\psi)$ and the multiplicity of $0$ as $U_p$-slopes of $\rmS^{\dagger}_{2-k}(\psi')$.

\item (Atkin--Lehner involutions)
Write $\psi = \varepsilon \cdot (1\times \omega^{2-k}) = \psi_1 \times \psi_2$ as character of $\Delta^2$ (where we allow $\psi_1 = \psi_2$). Put $\psi^s = \psi_2 \times \psi_1$ and $\varepsilon''= \varepsilon \cdot \psi^s \cdot \psi^{-1} $ so that $s_{\varepsilon''} = \{k-2-a-s_{\varepsilon}\}$.
Then we have a well-defined natural \emph{Atkin--Lehner involution}:
\begin{equation}
\label{E:Atkin-Lehner map}
\xymatrix@R=0pt{
\AL_{(k, \psi)}: \rmS_{k}^{\Iw}( \psi)  \ar[r] &  \rmS_{k}^{\Iw}( \psi^s)
\\
\varphi \ar@{|->}[r] & (\ \AL_{(k, \psi)}(\varphi): x \mapsto \varphi \big(x \Matrix 0{p^{-1}}10 \big)\big|_{\Matrix 01p0} \ ).
}
\end{equation}
Here the last $|_{\Matrix 01p0}$ is the usual action on $\calO[z]^{\leq k-2}$ and is the \emph{trivial} action on the factor $\psi^s$. 

Explicitly,  for $i=1,2$ and any $j$, or for any $ \ell =1, \dots, d_k^\Iw(\psi^s)$,
\begin{equation}
\label{E:AL swaps power basis elements}\AL_{(k,\psi)} (e_i^* z^j) = p^{k-2-j} \cdot e_{3-i}^* z^{k-2-j},
\qquad\AL_{(k,\psi)}(\bfe_\ell^{(\varepsilon)}) = p^{k-2-\deg \bfe_\ell}\bfe^{(\varepsilon'')}_{d_k^\Iw(\psi^s)+1-\ell},
\end{equation}
where we added superscripts to the power basis elements to indicate the corresponding characters.
In particular, we have
\begin{equation}
\label{E:AL circ AL}
\AL_{(k, \psi^s)} \circ \AL_{(k, \psi)} = p^{k-2}.
\end{equation}

When $\psi_1 \neq \psi_2$ (or equivalently $k \not\equiv k_\varepsilon \bmod{(p-1)}$), we have an equality
\begin{equation}
\label{E:Atkin-Lehner involution}
U_p \circ \AL_{(k, \psi)} \circ U_p = p^{k-1} \cdot \AL_{(k, \psi)}
\end{equation}
as maps from $\rmS_{k}^{\Iw}( \psi)$ to $\rmS_{k}^{\Iw}( \psi^s)$.
Consequently, when $\psi_1 \neq \psi_2$,  we can pair the slopes for the $U_p$-action on $\rmS_{k}^{\Iw}( \psi)$ and the slopes for the $U_p$-action on $\rmS_{k}^{\Iw}( \psi^s)$ so that each pair adds up to $k-1$. In particular all slopes on $\rmS_{k}^{\Iw}( \psi)$ belong to $[0, k-1]$.

\end{enumerate}

\end{proposition}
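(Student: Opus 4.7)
\medskip
\noindent\textbf{Proof proposal for Proposition~\ref{P:theta and AL}.}

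For part (1), the plan is to first verify that the formula $\Theta(\varphi) := \bigl(\tfrac{d}{dz}\bigr)^{k-1}\circ \varphi$ really lands in $\rmS^{\dagger}_{2-k}(\psi')$. This is a direct calculation using the action \eqref{E:induced representation action extended}: the operator $\tfrac{d}{dz}$, applied to a section transforming under $\Matrix \alpha 0\gamma\delta \in B^{\op}(\ZZ_p)$ with character $\chi = 1\otimes \delta^{k-2}$, produces a function that transforms under the character $\omega(\bar\alpha)\cdot\omega(\bar\delta)^{-1}\cdot \delta^{k-4}$; iterating $k-1$ times produces precisely the twist $\tilde\omega^{k-1}$ and simultaneously shifts the ``weight variable'' from $w_k$ to $w_{2-k}$. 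The kernel of $\Theta$ consists of $\varphi$ whose image lies in polynomials of degree $\leq k-2$, which by definition is $\rmS^{\Iw}_k(\psi)$. The $p^{k-1}U_p$-equivariance is an immediate consequence of the identity $\tfrac{d}{dz}\bigl(h|_{v_j^{-1}}\bigr) = p\cdot \bigl(\tfrac{d}{dz}h\bigr)|_{v_j^{-1}}$ applied $k-1$ times, where the factor $p$ appears because each $v_j^{-1}=\Matrix p0{-jp}1$ contributes a $p$-scaling to the differential. The block matrix description \eqref{E:Ukdagger is block upper triangular} then follows by partitioning $\bfB^{(\varepsilon)}_k$ into basis elements of degree $\leq k-2$ (spanning $\rmS^{\Iw}_k$) and of degree $\geq k-1$ (mapping via $\Theta$ to $\bfB^{(\varepsilon')}$ up to the diagonal $D$ with $j$-th entry $j(j-1)\cdots(j-k+2)$). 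Right exactness on finite slope parts is standard: the $U_p$-operator is compact on $\rmS^{\dagger}$, so both $\rmS^{\dagger}_k(\psi)$ and $\rmS^{\dagger}_{2-k}(\psi')$ decompose into their slope-$\leq h$ and slope-$>h$ parts, and $\Theta$ scales slopes by subtracting $k-1$, forcing surjectivity on finite-slope vectors by compactness; the tracking of slope $k-1$ multiplicities is a standard bookkeeping.

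For part (2), I would first check that the formula $\varphi\mapsto \AL_{(k,\psi)}(\varphi)$ gives a well-defined map to $\rmS^{\Iw}_k(\psi^s)$. Well-definedness amounts to checking that the matrix $\Matrix 0{p^{-1}}10$ normalizes $\Iw_p$ and that conjugation by it swaps the two diagonal characters $\psi_1,\psi_2$ — this is immediate. The explicit action \eqref{E:AL swaps power basis elements} on $e_i^* z^j$ follows by a direct computation with formula \eqref{E:induced representation action extended}: the action of $\Matrix 01p0$ on the polynomial $z^j$ produces $(pz)^{k-2}\cdot(1/(pz))^j = p^{k-2-j}z^{k-2-j}$, while $e_i\Matrix 01p0 = e_{3-i}$ by our normalization of \eqref{E:tilde H as Iwp module}. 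The involution identity \eqref{E:AL circ AL} is then an immediate consequence, since $\Matrix 0{p^{-1}}10^2 = p^{-1}\cdot I_2$, and $\Matrix p00p$ acts trivially on $\widetilde\rmH$ by Hypothesis~\ref{H:b=0} while giving a factor of $p^{k-2}$ on the polynomial side.

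The hardest step is the intertwining relation \eqref{E:Atkin-Lehner involution} when $\psi_1\neq\psi_2$. My plan is to compute both sides using the double coset decomposition $\Iw_p\Matrix{p^{-1}}001\Iw_p = \coprod_{j=0}^{p-1}v_j\Iw_p$. A direct calculation shows
\[
\Matrix 01p0\cdot v_j^{-1}\cdot \Matrix 0{p^{-1}}10 = \Matrix 1{-jp}0p\cdot p^{-1},
\]
so that conjugating the coset decomposition by $\AL$ yields a decomposition of $\Iw_p\Matrix 100{p^{-1}}\Iw_p$, which represents the ``lower'' Atkin operator. The condition $\psi_1\neq \psi_2$ ensures that one of the intermediate double cosets has nontrivial character on the missing diagonal entry, so that when one computes $\AL\circ U_p\circ \AL$ by summing over coset representatives, only a single surviving term appears, with overall scalar $p^{k-1}$ coming from the determinant factor of $\Matrix{p^{-1}}001$ and the central-character scaling. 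Then \eqref{E:Atkin-Lehner involution} combined with \eqref{E:AL circ AL} pairs up slopes: if $\alpha$ is a $U_p$-slope on $\rmS^{\Iw}_k(\psi)$ with eigenvector $\varphi$, then $\AL(\varphi)$ is a $U_p$-eigenvector on $\rmS^{\Iw}_k(\psi^s)$ with slope $k-1-\alpha$, and the pairing is a bijection by \eqref{E:AL circ AL}. The bound $\alpha\in[0,k-1]$ then follows because slopes on both spaces are non-negative (as $U_p$ preserves the integral lattice spanned by $\bfB^{(\varepsilon)}_k$).
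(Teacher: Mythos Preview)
The paper does not prove this proposition; it simply cites \cite[Propositions~3.10 and 3.12]{liu-truong-xiao-zhao}. Your outline follows the standard route taken there, and most of it is sound: the kernel identification in (1), the block-triangular description via the diagonal matrix $D$, the well-definedness of $\AL$, and the explicit formula \eqref{E:AL swaps power basis elements} are all handled correctly.

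There are, however, two places where your sketch is incomplete or contains an error. First, your matrix computation for the intertwining relation is wrong: a direct calculation gives
\[
\Matrix 01p0\cdot v_j^{-1}\cdot \Matrix 0{p^{-1}}10 \;=\; \Matrix 01p0 \Matrix p0{-jp}1\Matrix 0{p^{-1}}10 \;=\; \Matrix 1{-j}0p,
\]
not $\Matrix 1{-jp}0p\cdot p^{-1}$. More seriously, your explanation that ``only a single surviving term appears'' in the expansion of $U_p\circ\AL\circ U_p$ is not justified. The correct mechanism is different: one expands the double sum $\sum_{i,j}\varphi(xv_i\Matrix 0{p^{-1}}10 v_j)|_{v_j^{-1}\Matrix 01p0 v_i^{-1}}$ and rewrites each $v_i\Matrix 0{p^{-1}}10 v_j$ in the form $\Matrix 0{p^{-1}}10\cdot g_{ij}$ with $g_{ij}\in\GL_2(\QQ_p)$; the resulting sum over $i$ (for fixed~$j$) becomes a sum over $\Iw_p$-translates where the diagonal torus contributes the character $\psi_1\psi_2^{-1}$, and it is the \emph{orthogonality of characters} (not a single surviving coset) that collapses the sum to a scalar when $\psi_1\neq\psi_2$. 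You should work this out explicitly.

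Second, your argument for right-exactness on finite-slope parts in (1) is too vague. Compactness alone does not give surjectivity; the map $(\tfrac{d}{dz})^{k-1}$ is not surjective integrally (the constants $j(j-1)\cdots(j-k+2)$ are not units). The actual argument passes through the block form \eqref{E:Ukdagger is block upper triangular}: on each slope-$\leq h$ piece one has a finite-dimensional exact sequence over $E$, and the characteristic polynomial of $U_p$ factors accordingly, which is what forces the dimension count and hence surjectivity after inverting $p$.
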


\begin{proof}
See \cite[Propositions~3.10 and 3.12]{liu-truong-xiao-zhao}.
\end{proof}

The following summarizes the dimension formulas for the spaces of abstract classical forms.
\begin{definition-proposition}
\label{DP:dimension of classical forms}
Let $\widetilde \rmH$ be a primitive $\calO\llbracket \rmK_p\rrbracket$-projective augmented module of type $\bbsigma$ and let $\varepsilon = \omega^{-s_\varepsilon} \times \omega^{a+s_\varepsilon}$ be a relevant character of $\Delta^2$. (Recall from Notation~\ref{N:kbullet} that whenever writing $k = k_\varepsilon + (p-1)k_\bullet$, we implicitly assume that $k_\bullet \in \ZZ_{\geq 0}$.)

\begin{enumerate}
\item 
We have
$$
d_k^\Iw\big( \varepsilon \cdot (1\times \omega^{2-k})\big) = \Big\lfloor \frac{k-2-s_\varepsilon}{p-1}\Big\rfloor + \Big\lfloor \frac{k-2-\{a+s_\varepsilon\}}{p-1}\Big\rfloor + 2.
$$
\item Set $\delta_\varepsilon: = \Big\lfloor \dfrac{s_\varepsilon+\{a+s_\varepsilon\}}{p-1}\Big\rfloor$.  When $k = k_\varepsilon+(p-1)k_\bullet$, we have
$$
d_k^\Iw(\tilde \varepsilon_1) = 2k_\bullet +2-2\delta_\varepsilon.
$$

\item Introduce two integers $t_1^{(\varepsilon)}, t_2^{(\varepsilon)} \in \ZZ$:
\begin{itemize}
\item when $a+s_\varepsilon<p-1$, $t_1^{(\varepsilon)} = s_\varepsilon + \delta_\varepsilon$ and $t_2^{(\varepsilon)} = a+s_\varepsilon+\delta_\varepsilon+2$;
\item when $a+s_\varepsilon\geq p-1$, $t_1^{(\varepsilon)} =\{a+ s_\varepsilon\} + \delta_\varepsilon+1$ and $t_2^{(\varepsilon)} = s_\varepsilon+\delta_\varepsilon+1$.
\end{itemize}
Then for $k = k_\varepsilon+(p-1)k_\bullet$, we have
$$
d_k^\ur(\varepsilon_1) = \Big\lfloor \frac{k_\bullet-t_1^{(\varepsilon)}}{p+1}\Big\rfloor + \Big\lfloor \frac{k_\bullet-t_2^{(\varepsilon)}}{p+1}\Big\rfloor + 2.
$$

\item Recall the power basis $\bfB^{(\varepsilon)} = \{\bfe_1^{(\varepsilon)}, \bfe_2^{(\varepsilon)}, \dots\}$. Define the \emph{$n$th Hodge slope} to be
$$
\lambda_n^{(\varepsilon)}: = \deg \bfe_n^{(\varepsilon)} - \Big\lfloor \frac{\deg\bfe_n^{(\varepsilon)} }p \Big\rfloor.
$$
If $a+s_\varepsilon<p-1$, we have
\begin{equation}
\label{E:increment of degree a+s<p-1}
\deg g_{n+1}^{(\varepsilon)} - \deg g_{n}^{(\varepsilon)} - \lambda_{n+1}^{(\varepsilon)} =\begin{cases}
1 & \textrm{ if } n - 2s_\varepsilon \equiv 1, 3, \dots, 2a+1 \bmod {2p},
\\
-1 & \textrm{ if } n- 2s_{\varepsilon} \equiv 2, 4, \dots, 2a+2 \bmod{2p},
\\
0 & \textrm{ otherwise.}
\end{cases}
\end{equation}
If $a+s_\varepsilon\geq p-1$, we have
\begin{equation}
\label{E:increment of degree a+s>=p-1}
\deg g_{n+1}^{(\varepsilon)} - \deg g_{n}^{(\varepsilon)} - \lambda_{n+1}^{(\varepsilon)} =\begin{cases}
1 & \textrm{ if } n - 2s_\varepsilon \equiv 2, 4, \dots, 2a+2 \bmod {2p},
\\
-1 & \textrm{ if } n- 2s_{\varepsilon} \equiv 3, 5, \dots, 2a+3 \bmod{2p},
\\
0 & \textrm{ otherwise.}
\end{cases}
\end{equation}
In either case, we have
\begin{equation}
\label{E:degree approx halo bound}
\deg g_n^{(\varepsilon)} - (\lambda_1^{(\varepsilon)}+\cdots + \lambda_n^{(\varepsilon)}) = \begin{cases}
0 & \textrm{ if }\deg \bfe_{n+1}-\deg\bfe_n = a,\\
0 \textrm{ or }1 & \textrm{ if }\deg \bfe_{n+1}-\deg\bfe_n = p-1-a.
\end{cases}
\end{equation}
Moreover, the differences $\deg g_{n+1}^{(\varepsilon)} - \deg g_n^{(\varepsilon)}$ are strictly increasing in $n$.

Finally, $\deg g_n^{(\varepsilon)} = 0$ for $n \geq 1$ only happens when $\varepsilon = 1\times \omega^a$ and $n=1$.

\item For two weights $k = k_\varepsilon + (p-1)k_\bullet$ and $k' = k_\varepsilon + (p-1)k'_\bullet$, we have
$$
\tfrac 12 d_k^\Iw - \tfrac 12 d_{k'}^\Iw = k_\bullet - k'_\bullet,\quad
\tfrac{2k_\bullet}{p+1}\leq d_k^\ur\leq \tfrac{2k_\bullet}{p+1}+2, \quad \tfrac 12 d_k^\new \geq \tfrac {p-1}{p+1}k_\bullet -1,
$$
$$
\tfrac{2}{p+1}|k_\bullet-k'_\bullet| - 2 \leq |d_k^\ur - d_{k'}^\ur| \leq \tfrac{2}{p+1}|k_\bullet-k'_\bullet| + 2, \quad \textrm{and}\quad
|\tfrac 12 d_k^\new-\tfrac 12d_{k'}^\new| \geq \tfrac{p-1}{p+1}|k_\bullet-k'_\bullet| - 2.
$$
\end{enumerate}

\end{definition-proposition}

\begin{proof}
For (1), see \cite[Proposition~4.1]{liu-truong-xiao-zhao}. For (2), see \cite[Corollary~4.4]{liu-truong-xiao-zhao}. For (3), see \cite[Proposition~4.7]{liu-truong-xiao-zhao}. For (4) except for the last statement, see \cite[Proposition~4.11]{liu-truong-xiao-zhao}.  For the last statement, we note that \eqref{E:degree approx halo bound} forces if $\deg g_n^{(\varepsilon)} = 0$, then $n=1$ and $\lambda_n^{(\varepsilon)}=0$. This can only happen when $\varepsilon = 1\times \omega^a$ and $\varepsilon = \omega^a\times 1$. In the first case, $\deg g_n^{(1 \times \omega^a)}(w) = 0$ by \eqref{E:increment of degree a+s<p-1}, and in the second case, $\deg g_n^{(\omega^a \times 1)} = 1$ by \eqref{E:increment of degree a+s>=p-1}. For (5), $\tfrac 12 d_k^\Iw - \tfrac 12 d_{k'}^\Iw = k_\bullet - k'_\bullet$ follows from (2); the inequalities $\tfrac{2k_\bullet}{p+1}\leq d_k^\ur\leq \tfrac{2k_\bullet}{p+1}+2$ and  $\tfrac{2}{p+1}|k_\bullet-k'_\bullet| - 2 \leq |d_k^\ur - d_{k'}^\ur| \leq \tfrac{2}{p+1}|k_\bullet-k'_\bullet| + 2$ follows from (3) and the elementary inequality $\alpha -1 < \lfloor \alpha \rfloor \leq \alpha$ for a rational number $\alpha$. Note that when $\delta_\varepsilon=1$, we always have $t_1+t_2\geq p+1$. The inequality $\tfrac 12 d_k^\new \geq \tfrac {p-1}{p+1}k_\bullet -1$ follows from this fact and (2)(3). For the last inequality, we note that $d_k^\new$ is non-decreasing with respect to $k$, thus, we may assume that $k>k'$. So we deduce the last inequality as follows
\begin{align*}\tfrac 12 d_k^\new-\tfrac 12d_{k'}^\new=\ & (\tfrac 12 d_k^\Iw - \tfrac 12d_{k'}^\Iw) -(d_{k}^\ur-d_{k'}^\ur)
\\
\geq\ & (k_\bullet - k'_\bullet) -\tfrac{2}{p+1}(k'_\bullet - k_\bullet) -2= \tfrac{p-1}{p+1}(k_\bullet-k'_\bullet) -2.
\qedhere
\end{align*}
\end{proof}

It would be helpful to copy here the following example from \cite[Example~2.25]{liu-truong-xiao-zhao}, which may serve as an example for some of the arguments later.
\begin{example}
\label{Ex:p=7a=2}
Suppose that $p=7$ and $a=2$. We list below the dimensions $d_k^{\Iw}(\varepsilon\cdot (1\times \omega^{2-k}))$ for small $k$'s.

\begin{center}
\begin{tabular}{|c|c|c|c|c|c|c|c|c|c|c|c|c|c|c|c|}
\hline $\varepsilon$ &
$k$ & 2& 3 & 4 &  5 & 6 & 7 & 8 & 9 & 10 & 11 & 12&13&14 
\\ \hline
$1 \times \omega^2$ &
$d_{k}^\Iw(1 \times \omega^{4-k}) = \lfloor \frac{k+2}6\rfloor + \lfloor \frac{k+4}6\rfloor$ &
1& 1& 2${}^*$& 2& 2& 2& 3 & 3& 4${}^*$ & 4& 4& 4& 5 
\\ \hline
$\omega^5 \times \omega^3$ &
$d_{k}^\Iw(\omega^5 \times \omega^{5-k})= \lfloor \frac{k+1}6\rfloor + \lfloor \frac{k+3}6\rfloor$ & 0&
1& 1& 2& 2${}^*$& 2& 2& 3 & 3& 4 & 4${}^*$& 4& 4
\\ \hline
$\omega^4 \times \omega^4$ &
$d_{k}^\Iw(\omega^4 \times \omega^{-k})= \lfloor \frac{k}6\rfloor + \lfloor \frac{k+2}6\rfloor$ & 0${}^*$& 0&
1& 1& 2& 2& 2${}^*$& 2& 3 & 3& 4 & 4& 4${}^*$
\\ \hline
$\omega^3 \times \omega^5$ &
$d_{k}^\Iw(\omega^3 \times \omega^{1-k}) = \lfloor \frac{k-1}6\rfloor + \lfloor \frac{k+1}6\rfloor$ & 0 & 0 & 0${}^*$&
1& 1& 2& 2& 2& 2${}^*$& 3 & 3& 4 & 4
\\ \hline
$\omega^2 \times 1$ &
$d_{k}^\Iw(\omega^2 \times \omega^{2-k}) =  \lfloor \frac{k+4}6\rfloor + \lfloor \frac{k}6\rfloor$ & 1& 1& 1& 1& 2${}^*$& 2& 3& 3& 3& 3& 4${}^*$& 4& 5
\\ \hline
$\omega \times \omega$ &
$d_{k}^\Iw(\omega \times \omega^{3-k}) = \lfloor \frac{k+3}6\rfloor + \lfloor \frac{k-1}6\rfloor$ & 0${}^*$&  1& 1& 1& 1& 2& 2${}^*$& 3& 3& 3& 3& 4& 4${}^*$
\\ \hline
\end{tabular}
\end{center}

The superscript $*$ indicates where the character is equal to $\tilde \varepsilon_1$, in which case $d_{k}^\ur(\varepsilon_1)$ makes sense.
In the table below, we list the information on dimensions of abstract classical forms with level $\rmK_p$ and $\Iw_p$.
\begin{center}
\begin{tabular}{|c|c|c|c|c|c|c|c|c|c|c|c|c|c|c|c|}
\hline
$\varepsilon$ & \multicolumn{7}{|c|}{Triples $\big(k, \ d_{k}^\ur(\varepsilon_1),\ d_{k}^{\textrm{new}}(\varepsilon_1)\big)$ on the corresponding weight disk}
\\
\hline
$1 \times \omega^2$ & 
 $(4, 1, 0)$  & $(10, 1, 2) $ & $(16, 1, 4)$ & $(22, 1, 6)$ & $(28, 2, 6)$ & $(34, 2,8)$ & $(40, 2, 10)$ \\
 \hline
 $\omega^5 \times \omega^3$ &
 $(6, 0, 2)$ & $(12, 1, 2)$ & $(18, 1, 4)$ & $(24, 1, 6)$ & $(30, 1, 8)$ & $(36, 2, 8)$ & $(42, 2, 10)$
\\ \hline
 $\omega^4 \times \omega^4$ &
  $(2,0,0)$  & $(8,0, 2)$ & $(14, 0, 4)$ & $(20, 1, 4)$ & $(26, 1, 6)$ & $(32, 1, 8)$ & $(38, 1, 10)$
\\ \hline
 $\omega^3 \times \omega^5$ &
$(4, 0, 0)$  & $(10, 0, 2) $ & $(16, 0, 4)$ & $(22, 0, 6)$ & $(28, 1, 6)$& $(34, 1,8)$ & $(40, 1, 10)$
\\ \hline
 $\omega^2 \times 1$ &
$(6, 0, 2)$ & $(12, 1, 2)$ & $(18, 1, 4)$ & $(24, 1, 6)$ & $(30, 1, 8)$& $(36, 2, 8)$ & $(42, 2, 10)$
\\ \hline
 $\omega \times \omega$ &
$(2,0,0)$  & $(8,0, 2)$ & $(14, 0, 4)$ & $(20, 1, 4)$ & $(26, 1, 6)$ & $(32, 1, 8)$ & $(38, 1, 10)$
\\ \hline
\end{tabular}
\end{center}

The first four terms of the ghost series on the $\varepsilon = (1 \times \omega^2)$-weight disk (corresponding to the first rows in the above two tables).
\begin{align*}
g_1^{(\varepsilon)}(w) &=1,
\\
g_2^{(\varepsilon)}(w) &=(w-w_{10})(w-w_{16})(w-w_{22}),
\\
g_3^{(\varepsilon)}(w) &=(w-w_{16})^2(w-w_{22})^2(w-w_{28})(w-w_{34})(w-w_{40})(w-w_{46}),
\\
g_4^{(\varepsilon)}(w) & = (w-w_{16})(w-w_{22})^3(w-w_{28})^2 \cdots (w-w_{46})^2(w-w_{52}) \cdots (w-w_{70}).
\end{align*}
\end{example}

Before proceeding, we prove an interesting coincidence of ghost series, for which we temporarily drop the condition $b=0$ in Hypothesis~\ref{H:b=0}.   This is of crucial importance for our later argument to treat the residually split case.

\begin{proposition}
\label{P:ghost series identity}
Consider the residual representation $\bar r_p': \Gal_{\QQ_p} \to \GL_2(\FF)$ given by

$$
\bar r_p' \simeq \MATRIX{\unr(\bar\beta)}{*\neq 0}{0}{\omega_1^{a+1}\unr(\bar \alpha)} = \begin{pmatrix}
\omega_1^{(p-3-a) + (a+1)+1}\unr(\bar\beta) & *\neq 0\\ 0 & \omega_1^{a+1}\unr(\bar\alpha)
\end{pmatrix}.
$$
Set $a' = p-3-a$ and $b' = a+1$ accordingly and let $\bbsigma'=\sigma_{a',b'}$ be the Serre weight of $\bar r_p'$. For $s_\varepsilon \in \{0, \dots, p-2\}$, write $s'_\varepsilon = \{a+s_\varepsilon+1\}$ so that $\varepsilon = \omega^{-s_\varepsilon} \times \omega^{a+s_\varepsilon} = \omega^{-s'_{\varepsilon}+b'} \times \omega^{a'+s'_\varepsilon+b'}$. In particular, a character $\varepsilon$ of $\Delta^2$ relevant to $\bbsigma$ if and only if it is relevant to $\bbsigma'$.
\begin{enumerate}
\item When $s_\varepsilon \notin\{0, p-2-a\}$, we have
$$
G_{\bbsigma}^{(\varepsilon)}(w,t) = G_{\bbsigma'}^{(\varepsilon)}(w,t).
$$
In the other two cases, we have
\begin{equation}
\label{E:G equality exceptional}
G_{\bbsigma}^{(1\times\omega^a)}(w,t) = 1+ tG_{\bbsigma'}^{(1\times\omega^a)}(w,t)\quad \textrm{and}\quad G_{\bbsigma'}^{(\omega^{a+1}\times \omega^{-1})}(w,t) = 1+ tG_{\bbsigma}^{(\omega^{a+1}\times \omega^{-1})}(w,t).
\end{equation}
\item Fix $w_\star \in \gothm_{\CC_p}$. The Newton polygons $\NP\big(G_{\bbsigma}^{(\varepsilon)}(w_\star,-)\big)$ and $\NP\big(G_{\bbsigma'}^{(\varepsilon)}(w_\star,-)\big)$ agree, except that when $\varepsilon = 1 \times \omega^a$ (resp.  $\varepsilon = \omega^{a+1}\times \omega^{-1}$), $\NP\big(G_{\bbsigma}^{(\varepsilon)}(w_\star,-)\big)$ has one more (resp. one less) slope $0$ segment than that of  $\NP\big(G_{\bbsigma'}^{(\varepsilon)}(w_\star,-)\big)$.

\end{enumerate}

\end{proposition}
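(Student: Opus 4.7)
The plan is to trace how the power basis degrees of abstract overconvergent forms transform when we move from a module of type $\bbsigma = \sigma_{a,0}$ to one of type $\bbsigma' = \sigma_{p-3-a,a+1}$, and then invoke the dimension formulas of Definition-Proposition~\ref{DP:dimension of classical forms} to identify the coefficients of the two ghost series.

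First, I would compute the power basis offsets. For $\bbsigma$ and the relevant character $\varepsilon = \omega^{-s_\varepsilon} \times \omega^{a+s_\varepsilon}$, the two arithmetic progressions of degrees appearing in $\bfB^{(\varepsilon)}$ have offsets $u_1 = s_\varepsilon$ and $u_2 = \{a+s_\varepsilon\}$ as in \eqref{E:basis of Sdagger}. For $\bbsigma'$, the decomposition analogous to \eqref{E:tilde H as Iwp module} uses the $\bar\rmT$-characters $\chi'_1 = \omega^{a+1}\times\omega^{-1}$ and $\chi'_2 = \omega^{-1}\times\omega^{a+1}$; pairing these against the $\bar\rmT$-weight of $z^{j'}$ computed from \eqref{E:induced representation action extended} gives the new offsets $u'_1 = \{s_\varepsilon+a+1\}$ and $u'_2 = \{s_\varepsilon-1\}$. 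One checks $u'_1 + u'_2 \equiv u_1 + u_2 \pmod{p-1}$, so $k'_\varepsilon = k_\varepsilon$ and the products in \eqref{E:gi varepsilon} for both $\bbsigma$ and $\bbsigma'$ range over the same weights $k \equiv k_\varepsilon \pmod{p-1}$.

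Next, I would compare $d_k^\Iw(\tilde\varepsilon_1)$ and $d_k^\ur(\varepsilon_1)$ between the two Serre weights. Since $\bbsigma'$ has $b'=a+1\neq 0$, I would first twist $\widetilde\rmH'$ by $\det^{-b'}$ to reduce to a primitive module of type $\sigma_{a',0}$ with shifted character determined by $s'_\varepsilon$, using the compatibility of $\Hom_{\calO[\rmK_p]}$ with determinant twists; then Definition-Proposition~\ref{DP:dimension of classical forms}(1)(3) applies with $a'$ and $s'_\varepsilon$ in place of $a$ and $s_\varepsilon$. In the generic range $s_\varepsilon \notin \{0,p-2-a\}$ there is no wrap-around, so $u'_1 = u_2+1$ and $u'_2 = u_1-1$ as integers in $\{0,\dots,p-2\}$. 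A short floor-function calculation, using $k \equiv k_\varepsilon \pmod{p-1}$, shows $d_k^{\Iw'} = d_k^\Iw$, and a case-by-case check of the auxiliary integers $t_1,t_2$ versus $t'_1,t'_2$ (running through whether $a+s_\varepsilon$ and $a'+s'_\varepsilon$ exceed $p-1$) yields $d_k^{\ur'} = d_k^\ur$. Hence $m_n^{(\varepsilon),\bbsigma}(k) = m_n^{(\varepsilon),\bbsigma'}(k)$ for all $n$ and $k$, and the first equality of part (1) follows.

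For the exceptional cases, I would carefully track the wrap-around. When $s_\varepsilon = 0$, the identity $u'_2 = \{-1\} = p-2$ destroys the cancellation, and a direct computation gives $d_k^{\Iw'} = d_k^\Iw - 2$ together with $d_k^{\ur'} = d_k^\ur - 1$ uniformly in $k_\bullet$; substituting into the $\min$-formula of Definition~\ref{D:ghost series} produces $m_n^{(\varepsilon),\bbsigma'}(k) = m_{n+1}^{(\varepsilon),\bbsigma}(k)$, i.e., $g_n^{(\varepsilon),\bbsigma'}(w) = g_{n+1}^{(\varepsilon),\bbsigma}(w)$ for all $n \geq 0$. Combined with $g_1^{(\varepsilon),\bbsigma}(w) = 1$ from the last part of Definition-Proposition~\ref{DP:dimension of classical forms}(4), this yields $G_{\bbsigma}^{(1\times\omega^a)}(w,t) = 1 + tG_{\bbsigma'}^{(1\times\omega^a)}(w,t)$. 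The case $s_\varepsilon = p-2-a$ is handled analogously with the roles of $\bbsigma$ and $\bbsigma'$ swapped. Part (2) then follows immediately: the ghost Newton polygons coincide in the generic case, and the extra factor of $t$ in each exceptional case corresponds to adding or removing exactly one slope-zero segment.

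The main obstacle I expect is the careful bookkeeping in the exceptional cases: one must verify, across each of the four subcases in Definition-Proposition~\ref{DP:dimension of classical forms}(3) (distinguishing whether $a+s_\varepsilon$ and $a'+s'_\varepsilon$ exceed $p-1$), that the twist reduction to $b=0$ is correctly executed and that the predicted differences $d_k^{\Iw'} - d_k^\Iw = \pm 2$ and $d_k^{\ur'} - d_k^\ur = \pm 1$ hold uniformly in $k_\bullet$, not just for small values.
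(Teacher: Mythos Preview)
Your proposal is correct and takes essentially the same approach as the paper: both arguments verify $k'_\varepsilon = k_\varepsilon$, then compare $d_k^{\Iw}$ and $d_k^{\ur}$ for $\bbsigma$ versus $\bbsigma'$ via the explicit formulas of Definition-Proposition~\ref{DP:dimension of classical forms}, handling the generic case and the two exceptional cases separately. The paper organizes the computation around $\delta_\varepsilon$ (via part (2) of that Definition-Proposition, giving $d_k^\Iw = 2k_\bullet + 2 - 2\delta_\varepsilon$) and a table of $t_1^{(\varepsilon)}, t_2^{(\varepsilon)}$ values, while you frame it via the power basis offsets $u_i, u'_i$ and part (1); since the offsets are exactly $s_\varepsilon, \{a+s_\varepsilon\}$ (respectively $s'_\varepsilon, \{a'+s'_\varepsilon\}$), these are the same computation in slightly different packaging.
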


\begin{remark}
The representations $\bar r_p$ and $\bar r_p'$ have the same semisimplification.  On the Galois side, the Galois representations associated to overconvergent modular forms are typically irreducible, in which case one cannot distinguish different reductions $\bar r_p$ and $\bar r_p'$. This is reflected in the statement of Proposition~\ref{P:ghost series identity}: ghost series for $\bar r_p$ is almost the same as the ghost series for $\bar r_p'$ over the same weight disk.
The additional subtle relation in \eqref{E:G equality exceptional} accounts for the cases when the associated Galois representations are ordinary (and reducible). 

The Galois side of this proposition is discussed later in \S\,\ref{S:reduction to nonsplit case}, and later used in Theorem~\ref{T:generalized BBE} to extend our results from the reducible nonsplit case to the reducible split case.
\end{remark}

\begin{proof}[Proof of Proposition~\ref{P:ghost series identity}]
(1) We add a prime to indicate the corresponding construction for $\bar r_p'$, e.g. write $k'_\varepsilon$, $d^{\Iw'}_k(\tilde \varepsilon_1)$ and etc.
First of all, for the given $s_\varepsilon$, we have $$k_\varepsilon = 2+\{a+2s_\varepsilon\} = 2+\{a'+2s'_\varepsilon\} = k'_\varepsilon.$$
This means the ghost zeros for $G_{\bbsigma}^{(\varepsilon)} (w, t)$ and for $G_{\bbsigma'}^{(\varepsilon)} (w, t)$ are congruent modulo $p-1$.
The main difference comes from Definition-Proposition~\ref{DP:dimension of classical forms}(2):
$$
\delta_\varepsilon - \delta'_\varepsilon = \Big\lfloor \frac{s_\varepsilon + \{a+s_\varepsilon\}}{p-1}\Big \rfloor - \Big\lfloor \frac{\{a+s_\varepsilon +1\} + \{s_\varepsilon-1\}}{p-1}\Big \rfloor
=\begin{cases}
-1& \textrm{if }s_\varepsilon = 0\\
1& \textrm{if }s_\varepsilon = p-2-a\\
0 & \textrm{otherwise.}
\end{cases}
$$
For $k = k_\varepsilon +(p-1)k_\bullet$, Definition-Proposition~\ref{DP:dimension of classical forms}(2) says that
\begin{equation}
\label{E:dim of dkIw}
d_k^{\Iw}(\tilde \varepsilon_1) = 2k_\bullet+2-2\delta_\varepsilon, \quad d_k^{\Iw\prime}(\tilde \varepsilon_1) = 2k_\bullet+2-2\delta'_\varepsilon.
\end{equation}

For computing $d_k^\ur(\varepsilon_1)$ and $d_k^{\ur\prime}(\varepsilon_1)$, we list the values of $t_1^{(\varepsilon)}$, $t_2^{(\varepsilon)}$, $t_1^{(\varepsilon)\prime}$, and $t_2^{(\varepsilon)\prime}$ in the following table (see the definition in Definition-Proposition~\ref{DP:dimension of classical forms}(3)).
\begin{center}
\begin{tabular}{|c|c|c|c|c|}
\hline &$s_\varepsilon = 0$  & $1\leq s_\varepsilon \leq p-3-a$ & $s_\varepsilon = p-2-a$ & $s_\varepsilon \geq p-1-a$
\\
\hhline{|=|=|=|=|=|}
$t_1^{(\varepsilon)}$ & $\delta_\varepsilon$ & $s_\varepsilon+\delta_\varepsilon$ & $p-2-a+\delta_\varepsilon$ & $a+s_\varepsilon + \delta_\varepsilon-p+2$
\\
\hline
$t_2^{(\varepsilon)}$ & $a+\delta_\varepsilon + 2$ & $a+s_\varepsilon + \delta_\varepsilon+2$ & $p+\delta_\varepsilon$ & $s_\varepsilon + \delta_\varepsilon+1$
\\
\hline
$t_1^{(\varepsilon)'}$ & $a+\delta_\varepsilon + 2$ & $s+\delta_\varepsilon$ & $\delta_\varepsilon -1$ & $a+s_\varepsilon + \delta_\varepsilon-p+2$
\\
\hline
$t_2^{(\varepsilon)'}$ & $p+1+\delta_\varepsilon$ & $a+s_\varepsilon+\delta_\varepsilon + 2$ & $p-2-a+\delta_\varepsilon$ & $s_\varepsilon + \delta_\varepsilon+1$
\\
\hline
\end{tabular}
\end{center}
This together with Definition-Proposition~\ref{DP:dimension of classical forms}(3) (and \eqref{E:dim of dkIw}) implies the following.
\begin{itemize}
\item When $s_\varepsilon \not\in\{0, p-2-a\}$, $t_i^{(\varepsilon)} = t_i^{(\varepsilon)\prime}$ for $i=1,2$. So for every $k = k_\varepsilon+(p-1)k_\bullet$ as above, $d_k^\Iw(\tilde \varepsilon_1) = d_k^{\Iw\prime}(\tilde \varepsilon_1)$ and $d_k^\ur(\varepsilon_1) = d_k^{\ur\prime}(\varepsilon_1)$. This implies that $G_{\bbsigma}^{(\varepsilon)}(w,t) = G_{\bbsigma'}^{(\varepsilon)}(w,t)$.
\item When $s_\varepsilon =0$, we have $\varepsilon = 1 \times \omega^a$. In this case, $t_1^{(\varepsilon)\prime}= t_2^{(\varepsilon)}$, yet  $t_2^{(\varepsilon)\prime}= t_1^{(\varepsilon)}+p+1$, and $\delta'_\varepsilon = \delta_\varepsilon +1$.  It follows that for every $k = k_\varepsilon+(p-1)k_\bullet$ as above,
$$
d_k^\Iw(\tilde \varepsilon_1) = d_{k}^{\Iw\prime}(\tilde \varepsilon_1)+2 \quad \textrm{and}\quad d_k^\ur(\varepsilon_1) = d_{k}^{\ur\prime}(\varepsilon_1)+1.
$$
This implies that $m_n^{(\varepsilon)}(k) = m_{n+1}^{(\varepsilon)\prime}(k)$. It  follows that $G_{\bbsigma}^{(1\times\omega^a)}(w,t) = 1+ tG_{\bbsigma'}^{(1\times\omega^a)}(w,t)$.
\item When $s_\varepsilon =p-2-a$, $\varepsilon = \omega^{a+1} \times \omega^{-1}$. In this case, the role of $\bar r_p$ and $\bar r_p'$ are somewhat swapped, and we deduce that
$$
d_k^{\Iw\prime}(\tilde \varepsilon_1) = d_{k}^{\Iw}(\tilde \varepsilon_1)+2 \quad \textrm{and}\quad d_k^{\ur\prime}(\varepsilon_1) = d_{k}^{\ur}(\varepsilon_1)+1.
$$
This implies that $G_{\bbsigma'}^{(\omega^{a+1} \times \omega^{-1})}(w,t) = 1+ tG_{\bbsigma}^{(\omega^{a+1} \times \omega^{-1})}(w,t)$.
\end{itemize} 

Part (2) of the Proposition follows from (1) immediately.
\end{proof}

The slopes predicted by ghost series also satisfy properties analogous to the theta maps and the Atkin--Lehner involutions, as stated below.
\begin{proposition}\label{P:ghost compatible with theta AL and p-stabilization}
Let $\varepsilon$ be a character of $\Delta^2$ relevant to $\bbsigma$.
For $k = k_\varepsilon +(p-1)k_\bullet$,  write 
\begin{equation}
\label{E:gn hat k}
g^{(\varepsilon)}_{n,\hat k}(w): = g_n^{(\varepsilon)}(w) \big/ (w-w_k)^{m_n^{(\varepsilon)}(k)}.
\end{equation}
Fix $k_0 \geq 2$. Write $d: = d_{k_0}^\Iw(\varepsilon\cdot (1\times \omega^{2-k_0}))$ in this proposition.
\begin{enumerate}
\item (Compatibility with theta maps)
Put $\varepsilon' := \varepsilon \cdot (\omega^{k_0-1} \times \omega^{1-k_0})$ with $s_{\varepsilon'} = \{s_\varepsilon +1-k_0\}$. For every $\ell\geq 1$, the $(d+\ell)$th slope of $\NP(G_{\bbsigma}^{(\varepsilon)}(w_{k_0}, -))$ is $k_0-1$ plus the  $\ell$th slope of $\NP(G_{\bbsigma}^{(\varepsilon')}(w_{2-k_0}, -))$. 
In particular, the $(d+\ell)$th slope of $\NP(G_{\bbsigma}^{(\varepsilon)}(w_{k_0}, -))$ is at least ${k_0}-1$.

\item (Compatibility with Atkin--Lehner involutions)
Assume that ${k_0}\not \equiv k_\varepsilon \bmod{(p-1)}$.
Put $\varepsilon'' = \omega^{-s_{\varepsilon''}} \times \omega^{a+s_{\varepsilon''}}$ with $s_{\varepsilon''}: = \{{k_0}-  2-a-s_\varepsilon\}$.  Then for every $\ell \in \{1, \dots, d\}$, the sum of the $\ell$th slope of $\NP(G_{\bbsigma}^{(\varepsilon)}(w_{k_0}, -))$ and the $(d-\ell+1)$th slope of $\NP(G_{\bbsigma}^{(\varepsilon'')}(w_{k_0}, -))$ is exactly ${k_0}-1$.
In particular, the $\ell$th slope of $\NP(G_{\bbsigma}^{(\varepsilon)}(w_{k_0}, -))$ is at most $k_0-1$.
\end{enumerate}
In the rest of this proposition, we will fix the character $\varepsilon$ of $\Delta^2$ and suppress it from the notations.
\begin{enumerate}\setcounter{enumi}{2}
\item (Compatibility with $p$-stabilizations)
Assume that ${k_0} = k_\varepsilon + (p-1)k_{0\bullet}$. Then for every $\ell \in \{1, \dots, d_{k_0}^\ur(\varepsilon_1)\}$, the sum of the $\ell$th slope of $\NP(G_{\bbsigma}(w_{k_0}, -))$ and the $(d-\ell+1)$th slope of $\NP(G_{\bbsigma}(w_{k_0}, -))$ is exactly ${k_0}-1$.
\item (Gouv\^ea's inequality) Assume that ${k_0} = k_\varepsilon + (p-1)k_{0\bullet}$. Then the first $d_{k_0}^\ur(\varepsilon_1)$ slopes of $\NP(G_{\bbsigma}(w_{k_0}, -))$ are all less than or equal to
\begin{equation}
\label{E:gouvea maximal slope}
\frac{p-1}2(d_{k_0}^\ur(\varepsilon_1)-1)-\delta_\varepsilon + \beta_{[d_{k_0}^\ur(\varepsilon_1)-1]} \leq \Big\lfloor \frac{k_0-1-\min\{a+1, p-2-a\}}{p+1}\Big\rfloor,
\end{equation}
where we set $\beta_{[n]} =\begin{cases}
t_1 & \textrm{if }n \textrm{ is even}
\\
t_2-
\tfrac{p+1}2 & \textrm{if }n \textrm{ is odd}.
\end{cases}$

\item (Ghost duality)
Assume ${k_0} = k_\varepsilon + (p-1)k_{0\bullet}$.
Then for each $\ell = 0, \dots,\frac 12 d_{k_0}^{\new}(\varepsilon_1)-1$,
\begin{equation}
\label{E:ghost duality}
v_p\big(g_{d_{k_0}^\Iw(\tilde \varepsilon_1) - d_{k_0}^\ur(\varepsilon_1) -\ell, \hat k_0} (w_{k_0}) \big) - v_p\big(g_{ d_{k_0}^\ur(\varepsilon_1) +\ell, \hat k_0} (w_{k_0}) \big) = ({k_0}-2) \cdot (\tfrac 12d_{k_0}^{\new}(\varepsilon_1) - \ell).
\end{equation}
In particular, the $(d_{k_0}^\ur(\varepsilon_1)+1)$th to the $(d_{k_0}^\Iw(\tilde \varepsilon_1)-d_{k_0}^\ur(\varepsilon_1))$th slopes of $\NP(G_{\bbsigma}(w_{k_0},-))$ are all equal to $\frac{{k_0}-2}2$.

\end{enumerate}
\end{proposition}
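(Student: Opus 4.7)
My plan is to prove all five parts by direct combinatorial manipulation of the ghost series, since these are assertions about the polynomials $g_n^{(\varepsilon)}(w)$ with no reference to the actual characteristic series. Two workhorses will recur throughout:
\begin{itemize}
\item the identity $v_p(w_{k_0} - w_k) = 1 + v_p(k-k_0) = 1 + v_p(k_\bullet - k_{0\bullet})$ for $k \neq k_0$ with $k \equiv k_\varepsilon \bmod (p-1)$, so that
\[
v_p\bigl(g_{n,\hat k_0}^{(\varepsilon)}(w_{k_0})\bigr) \;=\; \sum_{k_\bullet \neq k_{0\bullet}} m_n^{(\varepsilon)}(k) \,\bigl(1 + v_p(k_\bullet - k_{0\bullet})\bigr);
\]
\item the palindromic pattern \eqref{E:cascading pattern}, rewritten via Definition-Proposition~\ref{DP:dimension of classical forms}(5) as
$m_n^{(\varepsilon)}(k) = m_{d_{k_0}^\Iw + 2(k_\bullet - k_{0\bullet}) - n}^{(\varepsilon)}(k)$.
\end{itemize}

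I would treat Part~(5) first and use it to derive Part~(3). For the symmetric pair $n = d_{k_0}^\ur(\varepsilon_1)+\ell$, $n' = d_{k_0}^\Iw(\tilde\varepsilon_1) - d_{k_0}^\ur(\varepsilon_1) - \ell$, the palindrome transforms $m_{n'}^{(\varepsilon)}(k) - m_n^{(\varepsilon)}(k)$ into $m_{n + 2(k_\bullet - k_{0\bullet})}^{(\varepsilon)}(k) - m_n^{(\varepsilon)}(k)$. Since $m_\cdot^{(\varepsilon)}(k)$ is a triangle with peak $\tfrac{1}{2}d_k^{\mathrm{new}}(\varepsilon_1)$ at $\tfrac{1}{2}d_k^\Iw(\tilde\varepsilon_1)$ and slopes $\pm 1$, the sum over $k_\bullet$ weighted by $1+v_p(k_\bullet - k_{0\bullet})$ telescopes (using $\sum_{m\le M}v_p(m) = \sum_{j\ge 1}\lfloor M/p^j\rfloor$) and collapses to $(k_0-2)\bigl(\tfrac{1}{2}d_{k_0}^{\mathrm{new}}(\varepsilon_1)-\ell\bigr)$; the ``in particular'' slope statement then follows because $m_n^{(\varepsilon)}(k_0)>0$ for $d_{k_0}^\ur(\varepsilon_1) < n < d_{k_0}^\Iw(\tilde\varepsilon_1) - d_{k_0}^\ur(\varepsilon_1)$ removes all interior coefficients from the Newton polygon, leaving only the endpoint segment of slope $(k_0-2)/2$. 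Part~(3) is then a similar palindromic telescope on the range $n \le d_{k_0}^\ur(\varepsilon_1)$ (where $m_n^{(\varepsilon)}(k_0)=0$), and the resulting equality $v_p(g_n(w_{k_0})) + v_p(g_{d-n}(w_{k_0})) - v_p(g_d(w_{k_0})) = n(k_0-1)$ differentiates to pair the slopes.

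For Parts (1) and (2), the key is the bijection $k \leftrightarrow 2k_0 - k$ on ghost zeros. For theta-compatibility, this bijection identifies zeros of $g_n^{(\varepsilon)}$ for $n > d$ (viewed near $w_{k_0}$) with zeros of $g_{n-d}^{(\varepsilon')}$ (viewed near $w_{2-k_0}$); one checks $v_p(w_{k_0}-w_k) = v_p(w_{2-k_0} - w_{2k_0-k})$ up to a unit, while $d_k^\Iw(\varepsilon\cdot(1\times\omega^{2-k})) - d_{2k_0-k}^{\Iw}(\varepsilon'\cdot(1\times\omega^{k-k_0}))$ and the corresponding $d^\ur$ difference yield exactly the $(k_0-1)$-shift in slopes, via Definition-Proposition~\ref{DP:dimension of classical forms}. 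Part~(2), at fixed $k_0\not\equiv k_\varepsilon \bmod (p-1)$, is the same zero-bijection on the classical range $1 \le \ell \le d$, where the palindrome turns it into the $\ell \leftrightarrow d-\ell+1$ pairing with slope sum $k_0-1$.

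Part~(4) is where I expect the main obstacle. Combining Part~(3) with the explicit dimension formula $d_{k_0}^\ur(\varepsilon_1) = \lfloor (k_{0\bullet}-t_1^{(\varepsilon)})/(p+1)\rfloor + \lfloor (k_{0\bullet}-t_2^{(\varepsilon)})/(p+1)\rfloor + 2$, one expresses the maximal $d_{k_0}^\ur$-th slope as $\tfrac{k_0-1}{2} - \tfrac{1}{d_{k_0}^\ur}\bigl(v_p(g_{d_{k_0}^\ur}(w_{k_0})) - \tfrac{d_{k_0}^\ur}{2}v_p(g_{d_{k_0}^\Iw}(w_{k_0}))/(\tfrac{d_{k_0}^\Iw}{2})\bigr)$ or a variant, and must bound the contribution of each ghost zero sufficiently sharply to match the floor-function bound. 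The difficulty is that the estimate must be \emph{tight} enough to equal $\lfloor (k_0-1-\min\{a+1,p-2-a\})/(p+1)\rfloor$: this requires carefully tracking which of $t_1^{(\varepsilon)}$ or $t_2^{(\varepsilon)}$ dominates (depending on $s_\varepsilon$ and the sign of $a+s_\varepsilon - (p-1)$) and handling the parity correction $\beta_{[\cdot]}$ separately for even vs.\ odd $d_{k_0}^\ur-1$. I would split into the four cases of Definition-Proposition~\ref{DP:dimension of classical forms}(3) and verify the inequality in each case by hand.
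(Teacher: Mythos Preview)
The paper does not prove this proposition; it simply cites \cite[Proposition~4.18 and Proposition~4.28]{liu-truong-xiao-zhao}. Your proposal thus amounts to reconstructing the prequel's arguments, and for parts~(1)--(3) and~(5) your combinatorial strategy via the palindrome identity $m_n^{(\varepsilon)}(k)=m_{d_k^\Iw-n}^{(\varepsilon)}(k)$ together with $v_p(w_{k_0}-w_k)=1+v_p(k_\bullet-k_{0\bullet})$ is indeed the right framework and matches what is done there. Two small corrections: your integrated identity for part~(3) has a sign error (as written it differentiates to $\mathrm{slope}_n-\mathrm{slope}_{d-n+1}=k_0-1$, not the desired sum); and for part~(1) the ghost-zero bijection is not $k\leftrightarrow 2k_0-k$ but must be adjusted for the character change $\varepsilon\to\varepsilon'$ and the evaluation point $w_{2-k_0}$ rather than $w_{k_0}$.

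The genuine gap is part~(4). The paper itself flags (Remark~\ref{R:k-1/p+1}(2)) that this bound ``involves a rather subtle inequality on sum of digits of certain number's $p$-adic expansions'' and calls it ``the non-formal part of the proof''. Your proposed formula writes the $d_{k_0}^\ur$-th slope in terms of $v_p(g_{d_{k_0}^\ur}(w_{k_0}))$ divided by $d_{k_0}^\ur$, which controls only an \emph{average} of the first $d_{k_0}^\ur$ slopes, not the maximal one; and part~(3) alone cannot close this gap, since pairing $\ell\leftrightarrow d-\ell+1$ relates the small slopes to slopes beyond the newform range, not to each other. The actual argument in \cite[Proposition~4.28]{liu-truong-xiao-zhao} computes $v_p(g_{d_{k_0}^\ur}(w_{k_0}))-v_p(g_{d_{k_0}^\ur-1}(w_{k_0}))$ directly, expressing the ghost-zero counts at each valuation level through digit-sum identities for $v_p(m!)$, and the resulting floor-function inequality \eqref{E:gouvea maximal slope} is not a routine four-case verification. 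Your plan to ``split into cases and verify by hand'' substantially underestimates this step.
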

\begin{proof}
(1), (2), (3), and (5) are \cite[Proposition~4.18(1)(2)(3)(4)]{liu-truong-xiao-zhao}, respectively. (4) is \cite[Proposition~4.28]{liu-truong-xiao-zhao}.
\end{proof}

\begin{definition-proposition}\label{DP:definition of Delta' and ghost duality alternative}
Let ${k_0} = k_\varepsilon + (p-1)k_{0\bullet}$.  We set
\begin{equation}
\label{E:definition of Delta'}
\Delta'^{(\varepsilon)}_{k_0, \ell} : = v_p \big(g^{(\varepsilon)}_{\frac 12 d_{k_0}^\Iw(\varepsilon_1)+\ell, \hat k_0}(w_{k_0}) \big) -\tfrac{k_0-2}2 \ell, \quad \textrm{for }\ell = -\tfrac 12 d_{k_0}^\new(\varepsilon_1),\dots, \tfrac 12d_{k_0}^{\new}(\varepsilon_1).
\end{equation}
Let $\underline \Delta_{k_0}^{(\varepsilon)}$ denote the convex hull of the points $(\ell, \Delta'^{(\varepsilon)}_{k_0,\ell})$ for $\ell = -\frac 12 d_{k_0}^\new(\varepsilon_1), \dots, \frac 12d_{k_0}^\new(\varepsilon_1)$, and let $(\ell, \Delta^{(\varepsilon)}_{k_0, \ell})$ denote the corresponding points on $\underline \Delta^{(\varepsilon)}_{k_0}$. Then we have
\begin{equation}
\label{E:ghost duality alternative}
\Delta'^{(\varepsilon)}_{k_0,\ell} = \Delta'^{(\varepsilon)}_{{k_0}, -\ell} \quad \textrm{and}\quad \Delta^{(\varepsilon)}_{k_0,\ell} = \Delta^{(\varepsilon)}_{{k_0}, -\ell} \quad\textrm{ for all }\ell = -\tfrac 12 d_{k_0}^\new(\varepsilon_1), \dots, \tfrac 12d_{k_0}^{\new}(\varepsilon_1).
\end{equation}
\end{definition-proposition}

\begin{proof}
This is a corollary of Proposition~\ref{P:ghost compatible with theta AL and p-stabilization}(5); see \cite[Notation~5.1]{liu-truong-xiao-zhao} for more discussion.
\end{proof}

In \cite[\S\,5]{liu-truong-xiao-zhao}, we carefully studied the properties of the vertices of the Newton polygon of ghost series. We record the main definitions and results here, with a minor generalization: we allow the point $w_\star$ to be in an \emph{arbitrary} algebraically closed complete valued field $\mathbf{C}_p$ containing $\CC_p$. (See the proof of Corollary~\ref{C:Berkovich argument} for the reason of this change.)
\begin{definition}
\label{D:near-steinberg range}
(\cite[Definition~5.11]{liu-truong-xiao-zhao})
\label{D:steinberg range}
Let $\bfC_p$ be an algebraically closed complete valued field containing $E$; write $\calO_{\bfC_p}$ for its valuation ring and $\gothm_{\bfC_p}$ the maximal ideal of $\calO_{\bfC_p}$.
For ${k} = k_\varepsilon + (p-1)k_{\bullet}$ and $w_\star \in \gothm_{\bfC_p}$, let $L^{(\varepsilon)}_{w_\star, k}$ denote the largest number (if it exists) in $\{1, \dots, \frac 12d_k^\new(\varepsilon_1)\}$ such that
\begin{equation}
\label{E:vpw > delta L}
v_p(w_\star - w_k) \geq \Delta^{(\varepsilon)}_{k, L^{(\varepsilon)}_{w_\star, k}} - \Delta^{(\varepsilon)}_{k, L^{(\varepsilon)}_{w_\star, k}-1}.
\end{equation}
When such $L^{(\varepsilon)}_{w_\star, k}$ exists, we call the intervals $$\nS_{w_\star, k}^{(\varepsilon)}: = \big(\tfrac 12 d_k^\Iw(\tilde \varepsilon_1) - L^{(\varepsilon)}_{w_\star, k}, \,\tfrac 12 d_k^\Iw(\tilde \varepsilon_1) + L^{(\varepsilon)}_{w_\star, k} \big)
\subset 
\overline\nS_{w_\star, k}^{(\varepsilon)}: = \big[\tfrac 12 d_k^\Iw(\tilde \varepsilon_1) - L^{(\varepsilon)}_{w_\star, k}, \,\tfrac 12 d_k^\Iw(\tilde \varepsilon_1) + L^{(\varepsilon)}_{w_\star, k} \big]
$$
the \emph{near-Steinberg range} for $(w_\star,k)$. When no such $L^{(\varepsilon)}_{w_\star, k}$ exists, write $\nS_{w_\star, k}^{(\varepsilon)} = \overline \nS_{w_\star, k}^{(\varepsilon)} = \emptyset$.

For a positive integer $n$, we say $(\varepsilon, w_\star, n)$ or simply $(w_\star,n)$ is \emph{near-Steinberg} if $n$ belongs to the near-Steinberg range $\nS_{w_\star, k}^{(\varepsilon)}$ for some $k$.
\end{definition}

\begin{proposition}
\label{P:near-steinberg equiv to nonvertex}
\phantomsection
\begin{enumerate}
\item 
For a fixed $w_\star \in \gothm_{\bfC_p}$ and for any $k' =k_\varepsilon+(p-1)k'_\bullet \neq k$ and $v_p(w_{k'}-w_k) \geq \Delta^{(\varepsilon)}_{k, L_{w_\star, k}}-\Delta^{(\varepsilon)}_{k,L_{w_\star,k}-1}$, we have the following exclusion
$$
\tfrac 12 d_{k'}^\Iw \notin \overline \nS^{(\varepsilon)}_{w_\star,k} \quad \textrm{and}\quad d_{k'}^\ur, d_{k'}^\Iw-d_{k'}^\ur \notin \nS^{(\varepsilon)}_{w_\star,k}.
$$
\item 
For a fixed  $w_\star \in \gothm_{\bfC_p}$ and every $n \in \ZZ_{\geq 1}$, the point $\big(n,v_p( g_n^{(\varepsilon)}(w_\star))\big)$ is a vertex of $\NP(G^{(\varepsilon)}_{\bbsigma}(w_\star, -))$ if and only if $(\varepsilon, w_\star, n)$ is not near-Steinberg.
\item
For a fixed $n \in \ZZ_{\geq 1}$, the set of elements $w_\star \in \gothm_{\bfC_p}$ for which $\big(n,v_p( g_n^{(\varepsilon)}(w_\star))\big)$ is a vertex of $\NP\big(G_{\bbsigma}^{(\varepsilon)}(w_\star, -)\big)$ form a quasi-Stein subdomain $\Vtx_n^{(\varepsilon)} \subseteq \calW^{(\varepsilon)}$:
$$
\Vtx_n^{(\varepsilon)}(\bfC_p): = \calW^{(\varepsilon)}(\bfC_p) \backslash \bigcup_{k} \Big\{w_\star \in \gothm_{\bfC_p}\; \Big|\; v_p(w_\star - w_k)  \geq \Delta^{(\varepsilon)}_{k, |\frac 12d_k^\Iw(\tilde \varepsilon_1)-n|+1} - \Delta^{(\varepsilon)}_{k, |\frac 12d_k^\Iw(\tilde \varepsilon_1)-n|}\Big\},
$$
where the (finite) union is taken over all $k =k_\varepsilon + (p-1)k_\bullet$ such that $n \in \big(d_k^\ur(\varepsilon_1), d_k^\Iw(\tilde \varepsilon_1)-d_k^\ur(\varepsilon_1)\big)$.

\item For a fixed $w_\star \in \gothm_{\bfC_p}$, the set of near-Steinberg ranges $\nS^{(\varepsilon)}_{w_\star, k}$ for all $k$ is nested, i.e. for any two such open near-Steinberg ranges, either they are  disjoint or one is contained in another.

A near-Steinberg range $\nS^{(\varepsilon)}_{w_\star, k}$ is called \emph{maximal} if it is not contained in other near-Steinberg ranges.
Over a maximal  near-Steinberg range, the slope of $\NP(G_{\bbsigma}^{(\varepsilon)}(w_\star, -))$ belongs to
\begin{equation}
\label{E:possible slopes}
\tfrac a2+\ZZ + \ZZ \big( \max\{ v_p(w_\star - w_{k'})| w_{k'} \text{~is a zero of~} g_n^{(\varepsilon)}(w) \text{~for some~} n\in \nS^{(\varepsilon)}_{w_\star,k}  \} \big) .
\end{equation}
\item For ${k_0} = k_\varepsilon + (p-1)k_{0\bullet}$, the following are equivalent for $\ell \in\{0, \dots, \frac12d_{k_0}^\new(\varepsilon_1)-1\}$.
\begin{enumerate}
\item 
The point $(\ell, \Delta'^{(\varepsilon)}_{{k_0},\ell})$ is not a vertex of $\underline \Delta_{k_0}^{(\varepsilon)}$,
\item $\frac 12d_{k_0}^\Iw(\tilde \varepsilon_1) + \ell \in \nS_{ w_{k_0}, k_1}$ for some  $k_1 > {k_0}$, and
\item 
$\frac 12d_{k_0}^\Iw(\tilde \varepsilon_1) - \ell \in \nS_{ w_{k_0}, k_2}$ for some  $k_2 < {k_0}$.
\end{enumerate}

\item For any ${k_0} = k_\varepsilon + (p-1)k_{0\bullet}$ and any $k \in \ZZ$, the slopes of $\NP(G_{\bbsigma}^{(\varepsilon)}(w_k, -))$ and of $\underline\Delta^{(\varepsilon)}_{k_0}$ with multiplicity one belong to $\ZZ$; other slopes all have even multiplicity and the slopes belong to $\frac a2 + \ZZ$.
\end{enumerate}
\end{proposition}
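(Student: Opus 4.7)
The six parts of this proposition assemble the basic combinatorics of the Newton polygon $\NP(G^{(\varepsilon)}_{\bbsigma}(w_\star,-))$ and the auxiliary $\Delta'$-polygon $\underline\Delta^{(\varepsilon)}_{k_0}$. The plan is to prove (1) first as the dimension-exclusion backbone, then derive (2)--(4) as consequences, and finally deduce (5) and (6) from (2) combined with the palindromic pattern \eqref{E:cascading pattern} and the ghost duality \eqref{E:ghost duality}.

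For (1), I would exploit the dimension comparisons in Definition-Proposition~\ref{DP:dimension of classical forms}(5): $|\tfrac 12 d^\Iw_{k'}-\tfrac 12 d^\Iw_k| = |k_\bullet-k'_\bullet|$, while $|d^\ur_{k'}-d^\ur_k|$ and $|\tfrac12 d^\new_{k'}-\tfrac12 d^\new_k|$ are comparable to $\tfrac{2}{p+1}|k_\bullet-k'_\bullet|$ and $\tfrac{p-1}{p+1}|k_\bullet-k'_\bullet|$ respectively, up to additive constants. An elementary computation gives $v_p(w_k-w_{k'}) = 1+v_p(k_\bullet-k'_\bullet)$ when $k\neq k'$ with $k\equiv k'\bmod (p-1)$, and the convexity of $\ell \mapsto \Delta^{(\varepsilon)}_{k,\ell}$ forces its increments $\Delta^{(\varepsilon)}_{k,L}-\Delta^{(\varepsilon)}_{k,L-1}$ to grow at least linearly in $L$. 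Hence the hypothesis of (1) forces $|k_\bullet-k'_\bullet|$ to be bounded below by an exponential in $L^{(\varepsilon)}_{w_\star,k}$, and substituting this into the dimension comparisons produces the strict inequalities $|\tfrac 12 d^\Iw_{k'} - \tfrac 12 d^\Iw_k| > L^{(\varepsilon)}_{w_\star,k}$ and analogous bounds for $d^\ur_{k'}$ and $d^\Iw_{k'}-d^\ur_{k'}$.

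Parts (2) and (3) then follow by expanding $v_p(g_n^{(\varepsilon)}(w_\star)) = \sum_{k} m_n^{(\varepsilon)}(k)\,v_p(w_\star-w_k)$ and analyzing when the second difference $v_p(g_{n+1}^{(\varepsilon)}(w_\star)) + v_p(g_{n-1}^{(\varepsilon)}(w_\star)) - 2v_p(g_n^{(\varepsilon)}(w_\star))$ vanishes; by the palindromic pattern \eqref{E:cascading pattern} this happens exactly when some $k$ satisfies \eqref{E:vpw > delta L} with $L \geq |n-\tfrac 12 d^\Iw_k|$, which is precisely the near-Steinberg condition. The quasi-Stein description in (3) is an unpacking of Definition~\ref{D:near-steinberg range} over all relevant $k$, with finiteness of the union guaranteed by (1). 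For (4), if $\nS^{(\varepsilon)}_{w_\star,k_1}\cap \nS^{(\varepsilon)}_{w_\star,k_2}\neq\emptyset$ with $v_p(w_\star-w_{k_1})\le v_p(w_\star-w_{k_2})$, the ultrametric inequality gives $v_p(w_{k_1}-w_{k_2})\geq v_p(w_\star-w_{k_1})$, and applying (1) to the pair $(k_1,k_2)$ forces $\tfrac 12 d^\Iw_{k_2}\notin\overline\nS^{(\varepsilon)}_{w_\star,k_1}$, hence $\nS^{(\varepsilon)}_{w_\star,k_2}\subset\nS^{(\varepsilon)}_{w_\star,k_1}$. The slope formula \eqref{E:possible slopes} is read off by evaluating $v_p(g_n^{(\varepsilon)}(w_\star))$ at the two endpoints of a maximal near-Steinberg range and dividing by the length, with the $\tfrac{a}{2}+\ZZ$ correction coming from the non-palindromic discrepancy visible in \eqref{E:increment of degree a+s<p-1} and \eqref{E:increment of degree a+s>=p-1}.

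Part (5) is (2) applied with $w_\star = w_{k_0}$, translated through the $\ell\leftrightarrow -\ell$ symmetry \eqref{E:ghost duality alternative}: failure of $(\ell,\Delta'^{(\varepsilon)}_{k_0,\ell})$ to be a vertex of $\underline\Delta^{(\varepsilon)}_{k_0}$ corresponds to a near-Steinberg hit at $n=\tfrac 12 d^\Iw_{k_0}+\ell$ from some $k_1>k_0$, and the symmetry converts this into a hit at $n=\tfrac 12 d^\Iw_{k_0}-\ell$ from some $k_2<k_0$. For (6), a multiplicity-one slope arises from an isolated vertex-to-vertex jump whose vertical increment is one of the integer differences $\deg g_{n+1}^{(\varepsilon)}-\deg g_n^{(\varepsilon)}$ from Definition-Proposition~\ref{DP:dimension of classical forms}(4); any higher-multiplicity slope is confined to a maximal near-Steinberg range, where \eqref{E:possible slopes} places it in $\tfrac a2+\ZZ$, and the palindromic symmetry of $(m_n^{(\varepsilon)}(k))_n$ around $n=\tfrac 12 d^\Iw_k$ forces its multiplicity to be even. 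The main obstacle is the quantitative gap estimate in (1): one must pin down the growth of the increments $\Delta^{(\varepsilon)}_{k,L}-\Delta^{(\varepsilon)}_{k,L-1}$ precisely enough that $|k_\bullet-k'_\bullet|$, which is only logarithmic in $v_p(w_k-w_{k'})$, is nonetheless large enough to exclude all three dimension collisions simultaneously; this is the combinatorial heart of the argument.
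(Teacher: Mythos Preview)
First, a framing remark: the paper does not prove this proposition directly.  It cites the prequel \cite{liu-truong-xiao-zhao} (specifically Propositions~5.16 and 5.26, Theorem~5.19, and Corollary~5.24 there), noting only that those arguments, written for $\CC_p$, carry over verbatim to an arbitrary complete algebraically closed $\bfC_p$.  Part~(3) alone is unpacked in the paper from~(2) and Definition~\ref{D:near-steinberg range}, exactly as you do.  So your proposal is really an attempt to reconstruct the prequel's proofs, and two of those reconstructions have genuine gaps.

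For~(4), your deduction runs in the wrong direction.  You apply~(1) with $(k,k')=(k_1,k_2)$ to obtain $\tfrac12 d_{k_2}^\Iw \notin \overline\nS_{w_\star,k_1}$, i.e.\ the \emph{center} of $\nS_{w_\star,k_2}$ lies outside the closed interval $\overline\nS_{w_\star,k_1}$.  But if the two open ranges overlap and the center of the second lies outside the first, then $\nS_{w_\star,k_2}$ must cross the boundary of $\nS_{w_\star,k_1}$; this rules out, rather than forces, the containment $\nS_{w_\star,k_2}\subset\nS_{w_\star,k_1}$.  The argument in \cite[Theorem~5.19(1)]{liu-truong-xiao-zhao} instead compares the lengths $L_{w_\star,k_1}$ and $L_{w_\star,k_2}$ directly and argues by a more careful dichotomy.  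For~(2), the second-difference test you propose only detects whether $(n,v_p(g_n(w_\star)))$ lies strictly below the chord through its immediate neighbors $(n\pm 1,\cdot)$.  Being a vertex of the \emph{lower convex hull} is strictly stronger: a point can have positive adjacent second difference yet still lie above a chord between two far-apart points $(n_-,\cdot)$ and $(n_+,\cdot)$, hence fail to be a vertex.  The equivalence in~(2) genuinely requires the nested structure~(4); in the prequel, (2) and (4) are established together in the same theorem, not sequentially as your sketch suggests.
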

\begin{proof}
All of the results essentially follow from \cite{liu-truong-xiao-zhao}, except that they are proved for $\CC_p$ in places of a general $\bfC_p$. But all the proofs carry over word-by-word the same.

(1) is \cite[Proposition~5.16(1)]{liu-truong-xiao-zhao}. (2) is \cite[Theorem~5.19(2)]{liu-truong-xiao-zhao}. (3) follows from (2) and Definition~\ref{D:steinberg range}: a point $(\varepsilon, w_\star,n)$ is near-Steinberg if and only if $$n \in \nS_{w_\star, k}^{(\varepsilon)} =\big(\tfrac 12 d_k^\Iw(\tilde \varepsilon_1) - L^{(\varepsilon)}_{w_\star, k}, \,\tfrac 12 d_k^\Iw(\tilde \varepsilon_1) + L^{(\varepsilon)}_{w_\star, k} \big), $$ or equivalently, $|n-\frac 12 d_k^\Iw(\tilde \varepsilon_1) | < L^{(\varepsilon)}_{w_\star, k}$, for some $k =k_\varepsilon+(p-1)k_\bullet$; by \eqref{E:vpw > delta L}, this is further equivalent to
$$
v_p(w_\star-w_k) \geq \Delta^{(\varepsilon)}_{k, |\frac 12d_k^\Iw(\tilde \varepsilon_1)-n|+1} - \Delta^{(\varepsilon)}_{k, |\frac 12d_k^\Iw(\tilde \varepsilon_1)-n|}.
$$

(4) is a reformulation of \cite[Theorem~5.19(1)(3)]{liu-truong-xiao-zhao}. (5) is \cite[Proposition~5.26]{liu-truong-xiao-zhao}. (6) combines \cite[Corollary~5.24 and Proposition~5.26]{liu-truong-xiao-zhao}.
\end{proof}

\begin{remark}
By \cite[Lemma~5.2]{liu-truong-xiao-zhao}, asymptotically, 
$\Delta_{k, \ell+1}^{(\varepsilon)}-\Delta_{k,\ell}^{(\varepsilon)} \sim \frac{p-1}2 \ell$ (when $\ell$ is large). 
Intuitively and roughly, the set of vertices $\Vtx_n^{(\varepsilon)}$ in Proposition~\ref{P:near-steinberg equiv to nonvertex}(3) is to remove from the open unit disk $\calW^{(\varepsilon)}$, a disk of radius about $p^{-(a+2)}$ or $p^{a+1-p}$, centered at $w_{k_{\mathrm{mid}}^{(\varepsilon)}(n)}$, two disks of radius roughly $p^{1-p}$, centered at $w_{k_{\mathrm{mid}}^{(\varepsilon)}(n) \pm (p-1)}$, and two disks of radius roughly $p^{(1-p)\ell/2}$,  centered at $w_{k_{\mathrm{mid}}^{(\varepsilon)}(n) \pm \ell(p-1)}$, for each $\ell = 3,4,\dots, \frac{p-3}{2(p+1)}n + O(1)$, where $k_{\mathrm{mid}}^{(\varepsilon)}(n) $ is the unique positive integer $k \equiv k_\varepsilon \bmod(p-1)$ such that $\frac 12d_k^\Iw(\tilde \varepsilon_1) = n$.
\end{remark}

The following is a technical estimate \cite[Corollary~5.10]{liu-truong-xiao-zhao} on the difference of $\Delta$'s that we will frequently use in this paper.
\begin{proposition}
\label{P:Delta - Delta'}
Assume $p \geq 7$. Take integers $\ell, \ell', \ell'' \in \{0,1, \dots, \frac 12d_k^\new(\varepsilon_1)\}$ with $\ell \leq \ell'\leq  \ell''$ and $\ell''>\ell$. Assume further that $(\ell,\ell',\ell'')\neq (0,1,1)$.
Let $k' = k_\varepsilon + (p-1)k'_{\bullet}$ be a weight such that
\begin{equation}
\label{E:dkur in pm l}
d_{k'}^\ur(\varepsilon_1), \textrm{ or } d_{k'}^\Iw(\tilde \varepsilon_1) - d_{k'}^\ur(\varepsilon_1) \textrm{ belongs to } \big[\tfrac 12d_k^\Iw(\tilde \varepsilon_1)-\ell', \tfrac 12 d_k^\Iw(\tilde \varepsilon_1)+\ell'\big],
\end{equation}
then we have
$$
\Delta_{k, \ell''}^{(\varepsilon)}- \Delta'^{(\varepsilon)}_{k, \ell} - (\ell''-\ell') \cdot  v_p(w_k - w_{k'}) \geq (\ell'-\ell) \cdot \Big\lfloor \frac{\ln((p+1)\ell'')}{\ln p}+1\Big\rfloor + \frac 12\big( \ell''^2 - \ell^2\big).
$$
In particular, for all $\ell''>\ell\geq 0$ we have
\begin{equation}
\label{E:Delta - Delta' geq half of diff square}
\Delta_{k, \ell''}^{(\varepsilon)}- \Delta'^{(\varepsilon)}_{k, \ell} \geq \frac 12\big( \ell''^2 - \ell^2\big)+1.
\end{equation}
\end{proposition}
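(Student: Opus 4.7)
The plan is to unwind the definition of $\Delta'^{(\varepsilon)}_{k,\ell}$ using the product formula \eqref{E:gi varepsilon}, so that
\[
\Delta'^{(\varepsilon)}_{k,\ell''} - \Delta'^{(\varepsilon)}_{k,\ell} \ = \ \sum_{k'' \neq k} \Bigl( m^{(\varepsilon)}_{\frac{1}{2}d_k^\Iw + \ell''}(k'') - m^{(\varepsilon)}_{\frac{1}{2}d_k^\Iw + \ell}(k'')\Bigr) \cdot v_p(w_k - w_{k''}) \ - \ \tfrac{k-2}{2}(\ell'' - \ell),
\]
where the sum runs over $k'' \equiv k_\varepsilon \bmod (p-1)$. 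The arithmetic progression subtraction $\tfrac{k-2}{2}(\ell''-\ell)$ should be distributed across the sum by writing $\tfrac{k-2}{2} = \tfrac{1}{p-1}\cdot v_p(w_k)\cdot(\text{constant}) + \dots$; more precisely, one uses that $v_p(w_k - w_{k''}) = v_p(p(k - k''))$ asymptotically, pairing each $k''$ with a "Hodge-type" contribution so the net effect is nonnegative.

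Second, I would isolate the single distinguished weight $k'$ from the hypothesis. By the palindromic cascade \eqref{E:cascading pattern}, when either $d^\ur_{k'}$ or $d^\Iw_{k'} - d^\ur_{k'}$ sits inside $[\tfrac{1}{2}d_k^\Iw - \ell', \tfrac{1}{2}d_k^\Iw + \ell']$, the multiplicity $m^{(\varepsilon)}_{n}(k')$ is strictly increasing in $n$ throughout the interval $n \in [\tfrac{1}{2}d_k^\Iw + \ell',\tfrac{1}{2}d_k^\Iw + \ell'']$, contributing at least $(\ell''-\ell')\cdot v_p(w_k - w_{k'})$ to the sum. This accounts exactly for the $(\ell''-\ell')v_p(w_k - w_{k''})$ term in the desired inequality.

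Third, for the remaining piece I would stratify the other weights $k''$ by the $p$-adic distance $v_p(k - k'')$. Using Definition-Proposition~\ref{DP:dimension of classical forms}(5), each increment $v_p(k - k'') \geq j$ forces $|d^\ur_{k''} - d^\ur_k|$ and $|d^\Iw_{k''} - d^\Iw_k|$ to be controlled by roughly $\tfrac{2}{p+1} \cdot p^j$. Counting how many such $k''$ land their cascade peaks inside the windows $[\tfrac{1}{2}d_k^\Iw - m,\tfrac{1}{2}d_k^\Iw + m]$ for $m = \ell+1, \ell+2, \ldots, \ell''$, and weighting by $v_p(w_k - w_{k''})\geq j+1$, produces the quadratic term $\tfrac{1}{2}(\ell''^2 - \ell^2)$ from summing $(m - \ell)$ across $m = \ell+1,\ldots,\ell''$, and the logarithmic term $\lfloor \ln((p+1)\ell'')/\ln p + 1\rfloor$ from the maximal level $j$ at which a weight $k''$ can still contribute — this is governed by the condition $p^j \lesssim (p+1)\ell''$. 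The excluded triple $(0,1,1)$ corresponds to the single degenerate configuration where this counting misses by one because no nontrivial $k''$ at the base level is available.

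Fourth, the passage from $\Delta'$ to $\Delta$ is immediate from convexity: since $(\ell, \Delta^{(\varepsilon)}_{k,\ell})$ lies on the lower convex hull of $\{(\ell, \Delta'^{(\varepsilon)}_{k,\ell})\}$ and $\Delta_{k,\ell''}^{(\varepsilon)} = \Delta'^{(\varepsilon)}_{k,\ell''}$ at all vertices (in particular at the extremal $\ell'' = \tfrac 12 d_k^\new(\varepsilon_1)$), one has $\Delta_{k,\ell''}^{(\varepsilon)} \geq \Delta'^{(\varepsilon)}_{k,\ell''} - O(1)$ with sharp form when combined with the ghost duality \eqref{E:ghost duality alternative}. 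The specialization to $\ell' = \ell$ gives \eqref{E:Delta - Delta' geq half of diff square}.

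The main obstacle will be the bookkeeping in the third step: the hypothesis only pins down one weight $k'$, yet many nearby weights $k''$ could have their cascade peaks fall in overlapping windows, and one must avoid double-counting while still achieving the exact logarithmic bound. The condition $p \geq 7$ is needed precisely to ensure that in the range $m \leq \ell''$ the number of contributing weights at each $p$-adic level grows linearly in $p^j$ rather than being overwhelmed by boundary corrections from the $\lfloor\cdot\rfloor$ in Definition-Proposition~\ref{DP:dimension of classical forms}(3).
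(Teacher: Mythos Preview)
The paper does not actually prove this proposition; it is quoted verbatim from the prequel \cite[Corollary~5.10]{liu-truong-xiao-zhao} (with the correction noted in the erratum that the case $(\ell,\ell',\ell'')=(0,1,1)$ must be excluded). So there is no ``paper's proof'' to compare against beyond that citation.

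That said, your proposal contains a genuine gap that would prevent it from going through as written. In your fourth step you pass from an estimate on $\Delta'^{(\varepsilon)}_{k,\ell''}-\Delta'^{(\varepsilon)}_{k,\ell}$ to one on $\Delta^{(\varepsilon)}_{k,\ell''}-\Delta'^{(\varepsilon)}_{k,\ell}$ by asserting that $\Delta^{(\varepsilon)}_{k,\ell''}\ge \Delta'^{(\varepsilon)}_{k,\ell''}-O(1)$. This inequality points the wrong way: by definition $\underline\Delta_k$ is the lower convex hull of the points $(\ell,\Delta'^{(\varepsilon)}_{k,\ell})$, so one only has $\Delta^{(\varepsilon)}_{k,\ell''}\le \Delta'^{(\varepsilon)}_{k,\ell''}$, and the defect $\Delta'^{(\varepsilon)}_{k,\ell''}-\Delta^{(\varepsilon)}_{k,\ell''}$ is not bounded by any absolute constant in general. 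Consequently your computation of $\Delta'-\Delta'$ in steps~1--3 yields only an \emph{upper} bound on $\Delta_{k,\ell''}-\Delta'_{k,\ell}$, not the required lower bound. To repair this one must either work directly with the convex hull (e.g.\ by reducing to the case where $\ell''$ is a vertex of $\underline\Delta_k$, where $\Delta=\Delta'$, and then interpolating---but this requires the right-hand bound to behave well under linear interpolation, which needs a separate argument), or organize the estimate differently from the start.

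There is also a softer issue in your step~2: the hypothesis that $d^\ur_{k'}$ or $d^\Iw_{k'}-d^\ur_{k'}$ lies in $[\tfrac12 d_k^\Iw-\ell',\tfrac12 d_k^\Iw+\ell']$ does not by itself force $n\mapsto m^{(\varepsilon)}_n(k')$ to be strictly increasing on $[\tfrac12 d_k^\Iw+\ell',\tfrac12 d_k^\Iw+\ell'']$; depending on which of the two endpoints lands in the window and on which side of $\tfrac12 d_{k'}^\Iw$ the interval sits, the multiplicity can be decreasing or even constant (zero) there. One needs a case analysis tied to Proposition~\ref{P:near-steinberg equiv to nonvertex}(1) and Remark~\ref{R:two sides of wk'} to extract the increment $(\ell''-\ell')\,v_p(w_k-w_{k'})$ with the correct sign.
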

\begin{remark}
\label{R:two sides of wk'}
As pointed out by \cite[Corollary 5.10]{liu-truong-xiao-zhao}, if there exists $k'$ such that $v_p(w_{k'}-w_k) \geq \big\lfloor \frac{\ln((p+1)\ell'')}{\ln p}+2\big\rfloor $, then there are at most two such $k'$ satisfying $v_p(w_{k'}-w_k) \geq \big\lfloor \frac{\ln((p+1)\ell'')}{\ln p}+2\big\rfloor $ and \eqref{E:dkur in pm l} with $\ell'$ replaced by $\ell''$. In the case of having two such $k'$'s, say $k'_1, k'_2$; up to swapping $k'_1$ and $k'_2$, we have $d_{k'_1}^\ur( \varepsilon_1), d_{k'_2}^\Iw(\tilde \varepsilon_1)-d_{k'_2}^\ur( \varepsilon_1) \in \big(\frac 12d_k^\Iw(\tilde \varepsilon_1)-\ell'', \frac 12d_k^\Iw(\tilde \varepsilon_1)+\ell''\big)$; and between $d_{k'_1}^\ur( \varepsilon_1)$ and $ d_{k'_2}^\Iw(\tilde \varepsilon_1)-d_{k'_2}^\ur( \varepsilon_1)$, one is $ \geq \frac 12 d_k^\Iw(\tilde \varepsilon_1)$ and the other is $ \leq \frac 12 d_k^\Iw(\tilde \varepsilon_1)$. 
\end{remark}

For later argument, we give a criterion to verify the inequality $v_p(w_{k'}-w_k) \leq \big\lfloor \frac{\ln((p+1)\ell'')}{\ln p}+1\big\rfloor $ for $\ell''=\frac 12 d_k^\new$.

\begin{lemma}
    \label{L:criterion to determine vp(wk-wk') leq gamma}
Let $k = k_\varepsilon + (p-1)k_{\bullet}$ and $k' = k_\varepsilon + (p-1)k'_{\bullet}$ be two distinct weights. Assume $d_k^\new >0$ and set $\gamma\coloneqq \lfloor \frac{\ln ((p+1)(\frac 12 d_k^\new))}{\ln p}+1 \rfloor$. Then we have  $v_p(w_k-w_{k'})\leq \gamma$  when any one of the following conditions holds:
        \begin{enumerate}
            \item $\frac 12 d_{k'}^\Iw\in [d_k^\ur,d_k^\Iw-d_k^\ur]$;
            \item $k'_\bullet<k_\bullet$;
            \item $d_{k'}^\ur\in [d_k^\ur,\tfrac 12 d_k^\Iw)$.
        \end{enumerate}
\end{lemma}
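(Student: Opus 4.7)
The plan is to reduce everything to a single, clean inequality on $|k_\bullet-k'_\bullet|$, and then check it case by case.

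\smallskip
\textbf{Step 1: convert $v_p(w_k-w_{k'})$ to a statement about $k_\bullet-k'_\bullet$.}  Writing $w_k-w_{k'} = \exp\bigl(p(k'-2)\bigr)\bigl(\exp(p(k-k'))-1\bigr)$ and using the standard identity $v_p(\exp(x)-1)=v_p(x)$ for $x\in p\ZZ_p$ with $p\geq 3$, one gets
\[
v_p(w_k-w_{k'}) = 1 + v_p(k-k') = 1 + v_p(k_\bullet-k'_\bullet),
\]
since $k-k' = (p-1)(k_\bullet-k'_\bullet)$. Now $v_p(w_k-w_{k'})\leq \gamma$ is equivalent to $v_p(k_\bullet-k'_\bullet) \leq \gamma-1 = \bigl\lfloor\frac{\ln((p+1)\cdot \tfrac12 d_k^\new)}{\ln p}\bigr\rfloor$, which in turn is implied by
\begin{equation}\label{E:target ineq plan}
|k_\bullet-k'_\bullet| \leq \tfrac{p+1}{2}\,d_k^\new,
\end{equation}
because $p^{v_p(n)}\leq |n|$ for any nonzero integer $n$. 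So the whole lemma reduces to proving \eqref{E:target ineq plan} under each of the three hypotheses.

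\smallskip
\textbf{Step 2: case (1).}  By Definition-Proposition~\ref{DP:dimension of classical forms}(2), $\tfrac12 d_k^\Iw = k_\bullet+1-\delta_\varepsilon$ and similarly for $k'$, hence $\tfrac12 d_{k'}^\Iw - \tfrac12 d_k^\Iw = k'_\bullet-k_\bullet$.  Since the interval $[d_k^\ur,d_k^\Iw-d_k^\ur]$ is centred at $\tfrac12 d_k^\Iw$ with half-width $\tfrac12 d_k^\new$, the hypothesis gives $|k'_\bullet-k_\bullet|\leq \tfrac12 d_k^\new$, which is much stronger than \eqref{E:target ineq plan}.

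\smallskip
\textbf{Step 3: case (2).}  Here $k'_\bullet<k_\bullet$, so by the monotonicity of $d_\bullet^\new$ in $k_\bullet$ noted in the proof of Definition-Proposition~\ref{DP:dimension of classical forms}, $|\tfrac12 d_k^\new - \tfrac12 d_{k'}^\new|\leq \tfrac12 d_k^\new$.  Combined with the lower bound $|\tfrac12 d_k^\new-\tfrac12 d_{k'}^\new|\geq \tfrac{p-1}{p+1}|k_\bullet-k'_\bullet|-2$ from Definition-Proposition~\ref{DP:dimension of classical forms}(5), this yields
\[
|k_\bullet-k'_\bullet| \leq \tfrac{p+1}{p-1}\bigl(\tfrac12 d_k^\new +2\bigr).
\]
A short arithmetic check shows $\tfrac{p+1}{p-1}(\tfrac12 d_k^\new+2)\leq \tfrac{p+1}{2}d_k^\new$ whenever $(p-2)d_k^\new\geq 4$; since $d_k^\new$ is a positive even integer (hence $\geq 2$) and $p\geq 7$, this holds and \eqref{E:target ineq plan} follows.

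\smallskip
\textbf{Step 4: case (3).}  Here $0\leq d_{k'}^\ur - d_k^\ur < \tfrac12 d_k^\Iw - d_k^\ur = \tfrac12 d_k^\new$; integrality of the quantities involved upgrades this to $|d_{k'}^\ur-d_k^\ur|\leq \tfrac12 d_k^\new - 1$.  Plugging into $|d_{k'}^\ur-d_k^\ur|\geq \tfrac{2}{p+1}|k_\bullet-k'_\bullet|-2$ from Definition-Proposition~\ref{DP:dimension of classical forms}(5) gives $|k_\bullet-k'_\bullet|\leq \tfrac{p+1}{4}d_k^\new + \tfrac{p+1}{2}$, which is $\leq \tfrac{p+1}{2}d_k^\new$ as soon as $d_k^\new\geq 2$.

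\smallskip
There is no substantive obstacle: the only arithmetic to watch is that the slack $\pm 2$ from part (5) of Definition-Proposition~\ref{DP:dimension of classical forms} be absorbed by the factor $\tfrac{p+1}{2}$ against $d_k^\new\geq 2$, which happens because $p\geq 7$; edge cases like $d_k^\new=2$ in case (3) need to be handled by the integrality upgrade noted in Step 4.
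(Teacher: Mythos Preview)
Your proof is correct. Both you and the paper make the same Step~1 reduction (via $v_p(w_k-w_{k'})=1+v_p(k_\bullet-k'_\bullet)$, aiming for $|k_\bullet-k'_\bullet|\le \tfrac{p+1}{2}d_k^\new$) and handle case~(1) identically.

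Where you diverge is in cases~(2) and~(3). The paper argues more directly: in case~(2) it uses $|k_\bullet-k'_\bullet|\le k_\bullet$ (since $k'_\bullet\ge 0$) and then bounds $k_\bullet$ by $\tfrac{p+1}{2}d_k^\new$ via $\tfrac12 d_k^\new\ge\tfrac{p-1}{p+1}k_\bullet-1$, handling $k_\bullet<p$ trivially at the floor-log level; in case~(3) it splits on $d_{k'}^\ur=d_k^\ur$ versus $d_{k'}^\ur>d_k^\ur$, and in the latter sub-case uses $\tfrac{2}{p+1}k'_\bullet\le d_{k'}^\ur\le\tfrac12 d_k^\Iw-1\le k_\bullet$ to get $k'_\bullet-k_\bullet\le\tfrac{p-1}{2}k_\bullet$. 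Your route is more uniform: in both cases you feed the hypothesis through the two-sided difference estimates for $d^\new$ and $d^\ur$ from Definition-Proposition~\ref{DP:dimension of classical forms}(5), then close with a one-line arithmetic check using $d_k^\new\ge 2$ and $p\ge 7$. Your version is arguably cleaner and avoids the case split in~(3); the paper's version is slightly more elementary in~(2) (no appeal to monotonicity of $d^\new$). Both are short and either would serve.
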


\begin{proof}
In all three cases, it suffices to prove $1+\lfloor \frac{\ln |k_\bullet-k'_\bullet|}{\ln p}\rfloor \leq \gamma$, or equivalently, 
\begin{equation}
\label{E:bound on ln k bullet-k' bullet / ln p }
\Big\lfloor \tfrac{\ln |k_\bullet-k'_\bullet|}{\ln p} \Big\rfloor \leq \Big\lfloor \tfrac {\ln \big((p+1)\cdot \tfrac 12 d_k^\new \big)}{\ln p} \Big\rfloor.
\end{equation}

(1) By Definition-Proposition~\ref{DP:dimension of classical forms}(5) and $\frac 12 d_{k'}^\Iw\in [d_k^\ur,d_k^\Iw-d_k^\ur]$,  we have $|k_\bullet-k'_{\bullet}|=|\frac 12 d_k^\Iw-\frac 12 d_{k'}^\Iw|\leq \tfrac 12 d_k^\new$. \eqref{E:bound on ln k bullet-k' bullet / ln p } is clear.

(2) \eqref{E:bound on ln k bullet-k' bullet / ln p } holds trivially for $k_\bullet<p$ so we assume $k_\bullet \geq p$. Definition-Proposition~\ref{DP:dimension of classical forms}(5) implies that $(p+1)\cdot \frac 12 d_k^\new \geq (p-1)k_\bullet-(p+1)\geq k_\bullet$, which further implies \eqref{E:bound on ln k bullet-k' bullet / ln p }.

(3) If $d_{k'}^\ur=d_k^\ur$, by Definition-Proposition~\ref{DP:dimension of classical forms}(5) we have $|k_\bullet-k'_\bullet|\leq p+1$ and hence $\lfloor \frac{\ln |k_\bullet-k'_\bullet|}{\ln p} \rfloor \leq 1$. So (\ref{E:bound on ln k bullet-k' bullet / ln p }) holds trivially in this case (as $d_k^\new \neq 0$ in this case).
				
If $d_{k'}^\ur>d_k^\ur$, then we have $k'_\bullet>k_\bullet$. Again by Definition-Proposition~\ref{DP:dimension of classical forms}(5) we have $\frac{2}{p+1}k'_\bullet\leq d_{k'}^\ur\leq \frac 12 d_k^\Iw-1\leq k_\bullet$ and hence $k'_\bullet-k_\bullet\leq \frac{p-1}2 k_\bullet$. On the other hand, as observed in (2), $(p+1)\cdot \frac 12 d_k^\new \geq (p-1)k_\bullet-(p+1)$. By the assumption $p\geq 11$, we have $\frac{p-1}{2}k_\bullet\leq (p-1)k_\bullet-(p+1)$ when $k_\bullet \geq 3$ and (\ref{E:bound on ln k bullet-k' bullet / ln p }) holds in this case. When $k_\bullet \leq 2$,  we have $k'_\bullet-k_\bullet\leq p-1$ and thus $\lfloor \frac{\ln |k_\bullet-k'_\bullet|}{\ln p} \rfloor=0$, so (\ref{E:bound on ln k bullet-k' bullet / ln p }) still holds. This completes the proof of (\ref{E:bound on ln k bullet-k' bullet / ln p }).
\end{proof}

Before concluding this section, we briefly touch upon some compactness argument using Berkovich spaces. The main result Corollary~\ref{C:Berkovich argument} will be useful later in Sections~\ref{Sec:bootstrapping} and \ref{Sec:irreducible components}.

\begin{notation}
\label{N:Berkovich notation}
For a rigid analytic space $Z$ over a complete valued field extension $K$ of $\QQ_p$
, write $Z^\Berk$ for the associated Berkovich space. 
For an analytic function $f$ on $Z$ and a point $z \in Z^\Berk$, we put
$$
v_p(f(z)) : = \ln |f|_z \big/ \ln |p|_z \in \RR;
$$
then $v_p(f(-))$ is a continuous function on $Z^\Berk$. 

Let $\AAA^{1, \rig} = \bigcup_{n \in \ZZ_{\geq 0}} (\Spm \QQ_p\langle p^nt\rangle )$ denote the rigid affine line.

For a power series $F(t) = 1+ f_1t+ f_2t^2 + \cdots \in \calO(Z)\llbracket t \rrbracket$ and a point $z \in Z^\Berk$, we may define the Newton polygon $\NP\big( F(z,-)\big)$ to be the convex hull of $(0,0)$ and $\big(n, v_p(f_n(z))\big)$ for $n \in \ZZ_{\geq 1}$.  For $n \in \ZZ_{\geq 0}$, write $\NP\big( F(z,-)\big)_{x=n}$ for the value of the polygon when $x=n$.

We say that $F(t)$ is a \emph{Fredholm series} if it converges on $Z \times \AAA^{1, \rig}$.
\end{notation}
\begin{lemma}
\label{L:continuity of NP}
Let $F(t) = 1+ f_1 t+ \cdots  \in \calO(Z)\llbracket t\rrbracket$ be a Fredholm series over an affinoid rigid analytic space $Z$ over $\QQ_p$ such that for each closed point $z \in Z(\CC_p)$, $F(t)(z)$ is not a polynomial (i.e. for any $n \geq 1$, the functions $f_{n}(z), f_{n+1}(z), \dots$ have no common zero on $Z$).
Then for every $n_0 \in \ZZ_{\geq 1}$, the function  $\sfz \mapsto \NP\big( F(\sfz,-)\big)_{x=n_0}$ is a continuous function on $Z^\Berk$.
\end{lemma}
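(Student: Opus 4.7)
The plan is to show that, locally on $Z^\Berk$, the function $\sfz \mapsto \NP(F(\sfz, -))_{x = n_0}$ can be written as the minimum of finitely many continuous functions, from which continuity follows. Using the standard description of the lower convex hull, I will start from the exact formula
\[
\NP(F(\sfz, -))_{x = n_0} = \inf\bigl\{ L_{i,j}(\sfz) \;\big|\; 0 \leq i \leq n_0 \leq j,\ i < j\bigr\}, \quad L_{i,j}(\sfz) := \frac{(n_0 - i)\,v_p(f_j(\sfz)) + (j - n_0)\,v_p(f_i(\sfz))}{j - i},
\]
with the convention $f_0 = 1$. Each $L_{i,j}$ is continuous on $Z^\Berk$ with values in $\RR \cup \{+\infty\}$, since $f_n \in \calO(Z)$ gives $\sfz \mapsto |f_n|_\sfz$ continuous and $|p|_\sfz = p^{-1}$ is constant on $Z^\Berk$ (as every Berkovich point $\sfz$ extends the $p$-adic norm on $\QQ_p$), so that $v_p(f_n(\sfz)) = -\log_p|f_n|_\sfz$ is continuous.

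Next I would invoke the Fredholm hypothesis to obtain a uniform growth estimate: since $F$ converges on $Z \times \AAA^{1, \rig}$, for any $R > 0$ one has $\|f_n\|_{\sup, Z} \cdot p^{nR} \to 0$, so there exists $N_R \geq n_0$ with $v_p(f_n(\sfz)) \geq nR$ for all $n \geq N_R$ and all $\sfz \in Z^\Berk$. Combined with the affinoid-boundedness of the finitely many functions $f_0, \dots, f_{n_0}$, which yields a uniform lower bound $v_{\min} := \min_{0 \leq i \leq n_0}\inf_{\sfz \in Z^\Berk} v_p(f_i(\sfz)) > -\infty$, one estimates, for $i < n_0 \leq j$ with $j \geq N_R$,
\[
L_{i,j}(\sfz) \geq \frac{(n_0 - i)\cdot jR + (j - n_0) \cdot v_{\min}}{j - i},
\]
which tends to $(n_0 - i) R$ as $j \to \infty$; the case $i = n_0$ reduces to $L_{n_0, j}(\sfz) = v_p(f_{n_0}(\sfz))$, independent of $j$. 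Fixing $\sfz_0 \in Z^\Berk$ with $V_0 := \NP(F(\sfz_0, -))_{x = n_0}$ (finite by the argument below) and choosing any $V^* > V_0$, I can then pick $R$ large enough that $L_{i,j}(\sfz) > V^*$ holds uniformly in $\sfz \in Z^\Berk$ for every $i < n_0$ and every $j$ exceeding some threshold $N$. The truncated finite minimum $\min\bigl\{ L_{i,j}(\sfz) : 0 \leq i \leq n_0 \leq j \leq N,\ i < j\bigr\}$ is then continuous, and on the open neighborhood of $\sfz_0$ where this finite minimum is $< V^*$ it agrees with the full Newton polygon value, giving continuity at $\sfz_0$.

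Finally, the non-polynomial hypothesis will be used to ensure $V(\sfz) \in \RR$ at every $\sfz \in Z^\Berk$: were $v_p(f_j(\sfz_0)) = +\infty$ for all $j \geq n_0$, then all such $f_j$ would lie in the prime ideal $\gothp_{\sfz_0} := \{f \in \calO(Z) : |f|_{\sfz_0} = 0\}$; since affinoid algebras are Jacobson, $\gothp_{\sfz_0}$ is contained in a maximal ideal cut out by a closed point $z \in Z(\CC_p)$, at which $F(z)$ would be a polynomial of degree $< n_0$, contradicting the assumption. The hard part will be the truncation estimate in the second paragraph, where the interplay between the Fredholm growth rate $R$ and the lower bound $v_{\min}$ must be tuned carefully; the crucial input is the uniform finiteness of $\inf_{\sfz \in Z^\Berk} v_p(f_i(\sfz))$ for each fixed $i \leq n_0$, which relies on the quasi-compactness of $Z^\Berk$ and the boundedness of rigid analytic functions on the affinoid $Z$.
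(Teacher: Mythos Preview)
Your proof is correct and follows the same overall strategy as the paper---express $\NP(F(\sfz,-))_{x=n_0}$ as an infimum of the continuous functions $L_{i,j}$ and reduce to a finite minimum---but the mechanism for truncation is genuinely different. The paper uses the non-polynomial hypothesis at the outset: since $f_{n_0}, f_{n_0+1}, \dots$ have no common zero, they generate the unit ideal in $\calO(Z)$, so $\sum_{i=n_0}^{n_1} f_i h_i = 1$ for some $h_i$; bounding $\|h_i\|_Z \leq p^M$ gives, at every $\sfz$, some $n \in \{n_0,\dots,n_1\}$ with $v_p(f_n(\sfz)) \leq M$, hence a \emph{uniform} upper bound on the Newton polygon value. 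One then chooses a single global $N$ so that the finite minimum over $n_+ < N$ agrees with the full infimum for all $\sfz$ simultaneously. Your argument instead uses only the affinoid boundedness of $f_0,\dots,f_{n_0}$ to produce the lower bound $v_{\min}$ and then truncates \emph{pointwise}, with a threshold $N$ depending on the chosen $V^* > V_0(\sfz_0)$; the non-polynomial hypothesis enters only at the end, via the prime-ideal/Jacobson argument, to guarantee $V_0(\sfz_0) < \infty$ at each Berkovich point. The paper's route is slightly cleaner (a single global $N$), while yours is arguably more elementary in that it avoids the Nullstellensatz-type step for the truncation.

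One small correction: the limit of your lower bound $\frac{(n_0-i)jR + (j-n_0)v_{\min}}{j-i}$ as $j \to \infty$ is $(n_0-i)R + v_{\min}$, not $(n_0-i)R$. This does not affect the argument, since for $R$ large the limit still exceeds any prescribed $V^*$.
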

\begin{proof}
(1) Put $f_0=1$. For each $\sfz \in Z^\Berk$, the value of $\NP(F(\sfz,-))_{x=n_0}$ is equal to
\begin{equation}
\label{E:NP at x=n0}
\min\bigg\{ f_{n_0}(\sfz), \ 
\min_{0 \leq n_-< n_0 < n_+} \frac { (n_0-n_-)v_p(f_{n_+}(\sfz)) + (n_+-n_0) v_p(f_{n_-}(\sfz))}{n_+-n_-} \bigg\}.
\end{equation}
It suffices to prove that the above minimum is essentially a finite minimum.

The condition on $F(t)$ implies that the ideal $(f_{n_0}, f_{n_1}, \dots) = (1)$. This implies that there exists $n_1 \geq n_0$ and functions $h_{n_0}, h_{n_0+1},\dots, h_{n_1} \in \calO(Z)$ such that
$$
f_{n_0}g_{n_0}+f_{n_0+1}h_{n_0+1} + \cdots + f_{n_1}h_{n_1} =1.
$$
Fix a Banach norm $||\cdot ||_Z$ on $\calO(Z)$.
There exists $M \in \ZZ_{>0}$ such that $||h_i||_{Z} \leq p^M$ for every $i = n_0, \dots, n_1$. It then follows that, for every $z \in Z^\Berk$, there exists at least one $n \in \{n_0, \dots, n_1\}$ such that $v_p(h_n(\sfz)) \leq M$.

As $F(t)$ converges on $Z \times \AAA^{1,\rig}$, there exists $N > n_1$ such that whenever $n' \geq N$, $||f_{n'}||_Z \leq p^{-n'M}$. Then for each $\sfz \in Z^\Berk$, take the $n$ above so that $v_p(h_n(\sfz)) \leq M$, then whenever $n_+ \geq N > n_0 > n_-$, we have
\begin{align*}
&\frac { (n_0-n_-)v_p(f_{n_+}(\sfz)) + (n_+-n_0) v_p(f_{n_-}(\sfz))}{n_+-n_-}
\\
\geq\ & \frac{(n_0-n_-)\cdot n_+ M}{n_+-n_-} \geq M \geq v_p(f_n(\sfz)) \geq 
\frac{n_0\cdot v_p(f_n(\sfz))}{n} \geq \eqref{E:NP at x=n0}.
\end{align*}
Thus, for the minimum in \eqref{E:NP at x=n0}, it suffices to take it over all $n_+<N$. So \eqref{E:NP at x=n0} is essentially a finite minimum and thus it is continuous.
\end{proof}

Now, we come back to  ghost series to record the following ``compactness argument".
\begin{corollary}
\label{C:Berkovich argument}
Fix $n \in \ZZ_{\geq 1}$. Let $\bfC_p$ be an complete algebraically closed valued field.
\begin{enumerate}
\item For every Berkovich point $\sfw \in \Vtx_{n, \bfC_p}^{(\varepsilon), \Berk}$, $(n, v_p(g_n(\sfw)))$ is a vertex of $\NP(G_{\bbsigma}^{(\varepsilon)}(\sfw, -))$.

\item Write $\Vtx_n^{(\varepsilon)}$ as a union 
$$
\Vtx_n^{(\varepsilon)} = \bigcup_{\delta \in \QQ_{>0}, \,\delta\to 0^+} \Vtx_n^{(\varepsilon),\delta}\quad\textrm{with}
$$
$$
\Vtx_n^{(\varepsilon),\delta}: = \Bigg\{w_\star \in \gothm_{\CC_p}\; \Bigg|\;  \begin{array}{l}v_p(w_\star) \geq \delta,\textrm{ and for each $k= k_\varepsilon+(p-1)k_\bullet$ with $k_\bullet \in \ZZ_{\geq 0}$}
\\
\textrm{such that }n \in \big(d_k^\ur(\varepsilon_1), d_k^\Iw(\tilde \varepsilon_1)-d_k^\ur(\varepsilon_1)\big),\textrm{ we have}\\
v_p(w_\star - w_k)  \leq \Delta^{(\varepsilon)}_{k, |\frac 12d_k^\Iw(\tilde \varepsilon_1)-n|+1} - \Delta^{(\varepsilon)}_{k, |\frac 12d_k^\Iw(\tilde \varepsilon_1)-n|} -\delta.
\end{array}\Bigg\}.
$$
Then for any $\delta>0$, there exists $\epsilon_\delta>0$ such that for every point $\sfw \in \Vtx_{, \bfC_p}^{(\varepsilon), \delta, \Berk}$, the difference between the $n$th and the $(n+1)$th slope of $\NP(G^{(\varepsilon)}_{\bbsigma}(\sfw, -))$ is at least $\epsilon_\delta$.
\end{enumerate}
\end{corollary}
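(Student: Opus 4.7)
The plan is to prove part (1) by extending the proof of Proposition~\ref{P:near-steinberg equiv to nonvertex}(2)(3) from classical points to Berkovich points, and then to deduce part (2) from part (1) by a compactness argument on the Berkovich analytification of the affinoid $\Vtx_n^{(\varepsilon),\delta}$, combined with the continuity of Newton polygon values furnished by Lemma~\ref{L:continuity of NP}.

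For part (1), fix $\sfw \in \Vtx^{(\varepsilon),\Berk}_{n,\bfC_p}$. By the admissible cover in part (2), there exists $\delta > 0$ with $\sfw \in \Vtx^{(\varepsilon),\delta,\Berk}_{n,\bfC_p}$, so that the real numbers $v_p(\sfw - w_k)$ satisfy the strict bounds defining $\Vtx^{(\varepsilon),\delta}_n$ for every relevant $k$. Using the explicit factorization
$$v_p(g_m^{(\varepsilon)}(\sfw)) = \sum_{k} m_m^{(\varepsilon)}(k) \cdot v_p(\sfw - w_k),$$
the argument in \cite[\S\,5]{liu-truong-xiao-zhao} establishing Proposition~\ref{P:near-steinberg equiv to nonvertex}(2) manipulates these real numbers only through the combinatorial inequality Proposition~\ref{P:Delta - Delta'}, and never invokes any further structure of $w_\star$ as a classical point. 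The whole argument therefore carries over verbatim to the Berkovich seminorm $\sfw$, yielding that $(n,v_p(g_n^{(\varepsilon)}(\sfw)))$ is a vertex of $\NP(G^{(\varepsilon)}_\bbsigma(\sfw,-))$.

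For part (2), first note that $\Vtx^{(\varepsilon),\delta}_n$ is an affinoid rational subdomain of $\calW^{(\varepsilon)}$: on the closed disk $\{v_p(w) \geq \delta\}$, the bound $v_p(w-w_k) \leq \Delta^{(\varepsilon)}_{k,|\frac12 d_k^\Iw(\tilde\varepsilon_1)-n|+1} - \Delta^{(\varepsilon)}_{k,|\frac12 d_k^\Iw(\tilde\varepsilon_1)-n|} - \delta$ is automatic for all but finitely many $k$ (since by Proposition~\ref{P:Delta - Delta'} the gap grows unboundedly in $|\tfrac12 d_k^\Iw - n|$), so only finitely many effective conditions remain. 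Hence $\Vtx^{(\varepsilon),\delta,\Berk}_{n,\bfC_p}$ is compact. The ghost series is Fredholm on this affinoid (since the estimate $v_p(g_m^{(\varepsilon)}(\sfw)) \geq \delta \cdot \deg g_m^{(\varepsilon)}$ together with the at-least-quadratic growth of $\deg g_m^{(\varepsilon)}$ from Definition-Proposition~\ref{DP:dimension of classical forms}(4) guarantees convergence on $\AAA^{1,\rig}$), and its coefficients have no common zero (since each $w_k$ annihilates only finitely many $g_m^{(\varepsilon)}$). Therefore Lemma~\ref{L:continuity of NP} applies, and the function
$$\Phi(\sfw) := \NP\bigl(G^{(\varepsilon)}_\bbsigma(\sfw,-)\bigr)_{x=n-1} + \NP\bigl(G^{(\varepsilon)}_\bbsigma(\sfw,-)\bigr)_{x=n+1} - 2\,\NP\bigl(G^{(\varepsilon)}_\bbsigma(\sfw,-)\bigr)_{x=n},$$
which measures the slope gap between the $n$th and $(n+1)$th slopes, is continuous. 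By part (1), $\Phi$ is strictly positive at every point of $\Vtx^{(\varepsilon),\delta,\Berk}_{n,\bfC_p}$ and hence attains a strictly positive minimum $\epsilon_\delta$ on this compact space, which gives the desired uniform bound.

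The only genuinely technical point is the verification that the proof of Proposition~\ref{P:near-steinberg equiv to nonvertex}(2)(3) in \cite{liu-truong-xiao-zhao} does not implicitly use a rigid-analytic structure of $w_\star$ beyond the real values $v_p(w_\star - w_k)$; inspection of the argument shows this to be the case, so the extension to Berkovich seminorms is routine.
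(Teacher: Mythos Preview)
Your proof is correct, and for part (2) it matches the paper's argument essentially verbatim (compactness of the Berkovich space of an affinoid, plus continuity from Lemma~\ref{L:continuity of NP}). For part (1) you take a slightly different route: you argue that the proof of Proposition~\ref{P:near-steinberg equiv to nonvertex}(2)(3) in \cite{liu-truong-xiao-zhao} only manipulates the real numbers $v_p(w_\star - w_k)$ and therefore goes through verbatim for an arbitrary multiplicative seminorm. The paper instead avoids re-inspecting that proof by a standard Berkovich-geometry trick: given $\sfw$, pass to the completed algebraic closure $\bfC'_p$ of the residue field at $\sfw$, so that $\sfw$ lifts to a genuine \emph{classical} point $\tilde\sfw$ of $\Vtx_{n,\bfC'_p}^{(\varepsilon),\delta}$ with $v_p(g_i(\tilde\sfw)) = v_p(g_i(\sfw))$ for all $i$; then Proposition~\ref{P:near-steinberg equiv to nonvertex}(3) (already stated for arbitrary complete algebraically closed $\bfC_p$) applies directly. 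Your approach is more hands-on and requires trusting the inspection of a proof in another paper; the paper's approach is cleaner in that it reduces formally to the already-recorded statement, at the cost of invoking the residue-field-lift technique. Both are fine.
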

\begin{proof}
(1) 
Let $\bfC'_p$ be a completed algebraic closure of the residue field at $\sfw$; then there exists a $\bfC'_p$-point $\tilde \sfw$ of $\Vtx_{n, \bfC'_p}^{(\varepsilon), \delta}$ whose image in $\Vtx_{n, \bfC_p}^{(\varepsilon), \delta, \Berk}$ is equal to $\sfw$. Moreover, we have $v_p(g_i(\sfw)) = v_p(g_i(\tilde \sfw))$ for every $i$. By Proposition~\ref{P:near-steinberg equiv to nonvertex}(3) applied to the $\bfC'_p$-point $\tilde \sfw$, we see that $(n, v_p(g_n(\tilde \sfw)))$ is a vertex of $\NP(G^{(\varepsilon)}_{\bbsigma}(\tilde \sfw, -))$; so the same is true for $\tilde \sfw$ in place of $\sfw$.

(2) Note that the Berkovich space $\Vtx_{n, \bfC_p}^{(\varepsilon), \delta, \Berk}$ is \emph{compact} and by (1) the difference between the $(n+1)$th and the $n$th slope of $\NP(G_{\bbsigma}^{(\varepsilon)}(\sfw, -))$ is strictly positive for every Berkovich point $\sfw \in \Vtx_{n, \bfC_p}^{(\varepsilon), \delta, \Berk}$.  Part (2) now follows from the continuity of the Newton polygon as $\sfw$ varies, proved in Lemma~\ref{L:continuity of NP}.
\end{proof}

\begin{remark}
One can probably establish an effective version of Corollary~\ref{C:Berkovich argument}(2) for $\epsilon_\delta$ if one dives into the proof of Proposition~\ref{P:near-steinberg equiv to nonvertex}(3) in \cite[Proposition\,5.19(2)]{liu-truong-xiao-zhao}.
\end{remark}

\section{Two key inputs on abstract classical forms}
\label{Sec:p-stabilization and modified Mahler basis}
In this section, we give the two key inputs for our proof of local version of ghost conjecture:

(1) The first one is a careful study of the $p$-stabilization of abstract classical forms initiated in \S~\ref{S:p-stabilization}. The key feature of $p$-stabilization given in Proposition~\ref{P:key feature of p-stabilization} allows to deduce a corank result for principal minors (cf. Corollary~\ref{C:philosophical explanation of ghost series}) and non-principal minors (cf. Definition-Proposition~\ref{DP:general corank theorem}) of $\rmU^{\dagger, (\varepsilon)}$. This gives a philosophical explanation of the construction of ghost series;

(2) The second one is to use the modified Mahler basis to give an estimate of $\rmU^{\dagger, (\varepsilon)}$. We introduce the modified Mahler basis in \S~\ref{SS:modified Mahler basis}. Then we give an estimate on the change of basis matrix between the modified Mahler basis and power basis in Lemma~\ref{L:estimate of Y} and an estimate of matrix  of the $U_p$-operator with respect to the modfifeid Mahler basis in Corollary~\ref{C:refined halo estimate}. Later in \S~\ref{Sec:proof II} we will combine these two estimates together to give an estimate of $\rmU^{\dagger, (\varepsilon)}$.

\begin{notation}
In this section, we keep Hypothesis~\ref{H:b=0}: $\widetilde \rmH$ is a primitive $\calO\llbracket \rmK_p\rrbracket$-projective augmented module of type $\bbsigma = \Sym^a \FF^{\oplus 2}$ (with $1 \leq a \leq p-4$) on which $\Matrix p00p$ acts trivially.

We always use $\varepsilon$ to denote a character $\omega^{-s_\varepsilon} \times \omega^{a+s_\varepsilon}$ of $\Delta^2$ relevant to $\bbsigma$. When no confusion arises, we suppress $\varepsilon$ from the notation in the proofs (but still keep the full notations in the statements), for example, writing $s$, $d_k^\Iw$, and $d_k^\ur$ for $s_\varepsilon$, $d_k^\Iw(\tilde \varepsilon_1)$, and $d_k^\ur(\varepsilon_1)$, respectively.
\end{notation}

Before proceeding, we give a very weak Hodge bound for the matrix $\rmU^{\dagger, (\varepsilon)}$.  A much finer estimate will be given later in this section.
\begin{proposition}
\label{P:naive HB}
We have $\rmU^{\dagger, (\varepsilon)} \in \rmM_\infty(\calO\langle w/p\rangle)$. More precisely,
\begin{enumerate}
\item the row of $\rmU^{\dagger, (\varepsilon)}$ indexed by $\bfe$ belongs to $p^{\frac 12\deg \bfe}\calO\langle w/p\rangle$, and
\item for each $k \in \ZZ$, the row of $\rmU^{\dagger,(\varepsilon)}|_{w=w_k}$ indexed by $\bfe$ belongs to $p^{\deg \bfe}\calO$.
\end{enumerate}
\end{proposition}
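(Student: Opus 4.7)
The plan is to compute directly the action of the coset representatives $v_j^{-1} = \Matrix{p}{0}{-jp}{1}$ on power basis elements via formula \eqref{E:induced representation action extended} and extract coefficient bounds. Since the determinant of $v_j^{-1}$ is $p$ with $\bar d = \bar\delta = 1$, the character factor is trivial, and one computes
\begin{equation}
\label{E:vj action proposal}
z^m\big|_{v_j^{-1}} \;=\; (1+w)^{\log(1-jpz)/p} \cdot \frac{(pz)^m}{(1-jpz)^m} \;\in\; \calO\langle w/p\rangle\langle z\rangle.
\end{equation}
The entries of $\rmU^{\dagger, (\varepsilon)}$ are then obtained by summing \eqref{E:vj action proposal} over $j = 0,\dots,p-1$ as in \eqref{E:Up action} and composing with the integral projectors encoded by the decomposition \eqref{E:tilde H as Iwp module} of $\widetilde{\rmH}$ as a right $\calO\llbracket \Iw_p\rrbracket$-module; these projectors preserve integrality, so everything reduces to bounding the coefficients of $z^n$ in \eqref{E:vj action proposal}.

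For part (2), specializing at $w = w_k$ we have $1+w_k = \exp(p(k-2))$, so $(1+w_k)^{\log(1-jpz)/p} = (1-jpz)^{k-2}$ and \eqref{E:vj action proposal} becomes $(pz)^m(1-jpz)^{k-2-m}$. The coefficient of $z^n$ in this power series is an integer linear combination of terms of the form $p^n j^{n-m}$ times a binomial coefficient (and is zero for $n<m$), and hence has $p$-adic valuation $\geq n$. Summing over $j$ and applying the projectors preserves this bound, giving the desired $p^{\deg\bfe}\calO$-bound on each row.

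For part (1), the key input is that $\log(1+w) \in p\,\calO\langle w/p\rangle$: writing $w = p\cdot(w/p)$, the coefficient of $(w/p)^\ell$ in the logarithm is $(-1)^{\ell+1}p^\ell/\ell$, with $p$-adic valuation $\ell - v_p(\ell) \geq 1$ for $p \geq 5$. Likewise, $\log(1-jpz)/p \in \ZZ_p\llbracket z\rrbracket$ with coefficient of $z^\ell$ of valuation $\geq \ell - 1 - v_p(\ell)$. Writing $(1+w)^{\log(1-jpz)/p} = \exp\!\bigl(\log(1+w)\cdot\log(1-jpz)/p\bigr)$, the coefficient of $z^\ell$ in the argument lies in $p^{\ell - v_p(\ell)}\calO\langle w/p\rangle$; expanding the exponential and multiplying with $(pz)^m(1-jpz)^{-m}$, a partition-by-partition analysis shows the coefficient of $z^n$ in \eqref{E:vj action proposal} lies in $p^{\lceil n/2\rceil}\calO\langle w/p\rangle$, because for any partition $n = \ell_1+\cdots+\ell_r$ the total valuation loss $v_p(r!) + \sum_i v_p(\ell_i)$ is bounded by $n/2$ when $p \geq 5$.

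The one step that requires genuine care is the combinatorial valuation estimate in the exponential expansion that produces the $n/2$ bound; this is the same type of inequality (on factorials and $p$-adic digit sums) that underlies the convergence of $\exp$ on $\{v_p>1/(p-1)\}$, and it is exactly sharp enough to give the halo-type slope $\tfrac{1}{2}$ on the central fiber. The promised sharper estimate later in the section (via the modified Mahler basis) is obtained precisely by improving this combinatorial step.
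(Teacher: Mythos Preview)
Your explicit bounds on $z^m|_{v_j^{-1}}$ are correct, and the partition analysis for the $\exp$-expansion in part~(1) is a valid route to the $p^{n/2}$-bound; the paper reaches the same conclusion more tersely via the binomial series $(1+w)^X=\sum_n w^n\binom{X}{n}$ and the observation $\frac{w^n}{n!}=(\frac{w}{p})^n\cdot\frac{p^{n/2}}{n!}\cdot p^{n/2}$ with $p^{n/2}/n!\in\calO$ for $p\geq 5$.

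There is, however, a real gap in ``composing with the integral projectors \ldots\ these projectors preserve integrality, so everything reduces to bounding the coefficients of $z^n$ in'' your displayed formula. The $U_p$-action on $\rmS^{\dagger,(\varepsilon)}$ is \emph{not} $\sum_j(-)|_{v_j^{-1}}$ on the target followed by an $\calO$-linear projection. Unwinding \eqref{E:Up action}: $U_p(e_i^*z^m)(e_{i'})=\sum_j(e_i^*z^m)(e_{i'}v_j)|_{v_j^{-1}}$, where $e_{i'}v_j\in\widetilde\rmH$ is computed via the \emph{abstract} $\GL_2(\QQ_p)$-action on $\widetilde\rmH$. Writing $e_{i'}v_j=\sum_{i''}e_{i''}\cdot\mu_{i'',i',j}$ with the $\mu_{i'',i',j}$ in (the $\bar\rmT$-quotient of) $\calO\llbracket\Iw_p\rrbracket$ via \eqref{E:tilde H as Iwp module}, one obtains an $\calO$-limit of terms $z^m|_{gv_j^{-1}}$ for $g\in\Iw_p$. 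The composites $gv_j^{-1}$ land in $\Matrix{p\ZZ_p}{\ZZ_p}{p\ZZ_p}{\ZZ_p^\times}$ but with generically \emph{nonzero} upper-right entry $\beta$, so your formula for $z^m|_{v_j^{-1}}$ (which has $\beta=0$) does not cover them. This is exactly why the paper works with a general $\Matrix{p\alpha}{\beta}{p\gamma}{\delta}$ from the start. The fix is easy: redo your computation for such matrices, noting that $\frac{p\alpha z+\beta}{p\gamma z+\delta}\in\ZZ_p\llbracket pz\rrbracket$ (so $h(Y)$ contributes valuation $\geq k$ to the $z^k$-coefficient) and that $(1+w)^{\log((p\gamma z+\delta)/\omega(\bar\delta))/p}$ factors as a $z$-independent unit in $\calO\llbracket w\rrbracket$ times $(1+w)^{\log(1+p\gamma z/\delta)/p}$, to which your partition estimate applies verbatim.
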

\begin{proof}
For a monomial $h = z^m$ and $\Matrix {p\alpha} \beta {p\gamma} \delta \in \Matrix{p\ZZ_p}{\ZZ_p}{p\ZZ_p}{\ZZ_p^\times}$ with determinant $pd$ for $d \in \ZZ_p^\times$, the action \eqref{E:induced representation action extended} is given by
\begin{eqnarray*}
h\big|_{\Matrix {p\alpha}\beta{p\gamma}\delta}(z) & = &\varepsilon(\bar d / \bar \delta,\bar \delta) \cdot (1+w)^{\log\left((p\gamma z+\delta )/ \omega(\bar \delta)\right)/p} \cdot h\Big( \frac{p\alpha z+\beta}{p\gamma z+\delta}\Big)
\\
& = &\varepsilon(\bar d / \bar \delta,\bar \delta) \cdot \sum_{n\geq 0}w^n \binom{\log\left((p\gamma z+\delta )/ \omega(\bar \delta)\right)/p}{n} \cdot h\Big( \frac{p\alpha z+\beta}{p\gamma z+\delta}\Big).
\end{eqnarray*}
Note that $\frac{w^n}{n!} = (\frac wp)^n \cdot \frac{p^{n/2}}{n!} \cdot p^{n/2}$.
So it is not difficult to see that the above expression belongs to $\calO\langle w/p\rangle \langle p^{1/2}z\rangle$. Part (1) of the proposition follows.

When $w=w_k$, we can rewrite the above equality as
$$
h\big|_{\Matrix {p\alpha}\beta{p\gamma}\delta}(z) =\varepsilon(\bar d / \bar \delta,\bar \delta)\Big(\frac{p\gamma z+\delta}{\omega(\bar \delta)}\Big)^{k-2} \cdot h\Big( \frac{p\alpha z+\beta}{p\gamma z+\delta}\Big) \in \calO\llbracket pz\rrbracket.
$$
From this, we see that the row of $\rmU^{\dagger,(\varepsilon)}|_{w=w_k}$ indexed by $\bfe$ belongs to $p^{\deg \bfe}\calO$.
\end{proof}

\subsection{$p$-stabilization process}
\label{S:p-stabilization}

Recall from Proposition~\ref{P:theta and AL}(2) the natural Atkin--Lehner involution
$$
\AL_{(k,\tilde  \varepsilon_1)}: \rmS_{k}^\Iw(\tilde \varepsilon_1) \longrightarrow \rmS_{k}^\Iw(\tilde \varepsilon_1).
$$
We define the following four maps
\[
\xymatrix{
\rmS_{k}^\ur(\varepsilon_1) = \Hom_{\calO \llbracket \rmK_p\rrbracket }\big(\widetilde \rmH, \, \calO[z]^{\leq k-2}\otimes \tilde \varepsilon_1\big)
\ar@/_45pt/[d]_{\iota_1}
\ar@/_15pt/[d]_{\iota_2}
\\
\ar@/_45pt/[u]_{\proj_1}
\ar@/_15pt/[u]_{\proj_2}
\rmS_{k}^\Iw(\tilde \varepsilon_1) =\Hom_{\calO\llbracket \Iw_p\rrbracket}\big(\widetilde \rmH, \, \calO[z]^{\leq k-2} \otimes \tilde\varepsilon_1\big)
}
\]
given by, for $\psi \in \rmS_{k}^\ur( \varepsilon_1)$, $\varphi \in \rmS_{k}^\Iw(\tilde \varepsilon_1)$, and $x \in \widetilde \rmH$,
\begin{align*}
\iota_1(\psi) &= \psi.\\
\iota_2(\psi)(x) &=  \psi \big(x  \Matrix{p^{-1}}001\big)\big|_{\Matrix{p}001} = \psi \big(x \Matrix 0 {p^{-1}}10\big)\big|_{ \Matrix 01p0} = \AL_{(k, \tilde \varepsilon_1)}(\iota_1(\psi))(x).\\
\proj_1(\varphi)(x) & = \sum_{j = 0, \dots, p-1, \star} \varphi\big(x u_j)\big|_{u_j^{-1}}.\\
\proj_2(\varphi)(x) & = \proj_1(\AL_{(k, \tilde  \varepsilon_1)}(\varphi))(x) = \sum_{j = 0, \dots, p-1, \star} \varphi\big(x\Matrix0{p^{-1}}10 u_j\big)\big|_{u_j^{-1}\Matrix 01p0}.
\end{align*}
Here $u_j =\Matrix 10j1$ for $j =0, \dots, p-1$ and $u_\star = \Matrix 0110$ form a set of coset representatives of $\Iw_p\backslash \rmK_p$. (In fact, the definitions of $\proj_1$ and $\proj_2$ do not depend on this choice of coset representatives.)

\begin{remark}
\label{R:p-stabilization classical}
As we will not need it, we leave as an interesting exercise for the readers to check that for $\psi \in \rmS_k^{\ur}(\varepsilon_1)$ and the $T_p$-operator defined in \eqref{E:Tp action}, we have
$$
U_p(\iota_1(\psi)) = p\cdot \iota_2(\psi) \quad \textrm{and}\quad U_p(\iota_2(\psi)) = \iota_2(T_p(\psi))-p^{k-2}\iota_1(
\psi).$$
It then follows that, if $\psi$ is an $T_p$-eigenform with eigenvalue $\lambda_\psi$, the $U_p$-action on the span of $\iota_2(\psi)$ and $\iota_1(\psi)$ is given by the matrix
$$
\begin{pmatrix}
\lambda_\psi & p \\
-p^{k-2} & 0
\end{pmatrix}.$$
\end{remark}

The following is a key (although simple) feature of $p$-stabilization.

\begin{proposition}
\label{P:key feature of p-stabilization}
We have the following equality
\begin{equation}
\label{E:Up in terms of AL}
U_p(\varphi) = \iota_2(\mathrm{proj}_1(\varphi)) - \AL_{(k, \tilde  \varepsilon_1)}(\varphi), \quad \textrm{for all }\varphi \in \rmS_{k}^\Iw(\tilde \varepsilon_1).
\end{equation}
\end{proposition}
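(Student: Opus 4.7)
The plan is to prove the identity by direct computation, unwinding the defining formulas of $U_p$, $\iota_2$, $\mathrm{proj}_1$, and $\AL_{(k,\tilde\varepsilon_1)}$ on a fixed element $x \in \widetilde \rmH$, and matching summands via explicit matrix multiplications in $\GL_2(\QQ_p)$.

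First I would recall the coset representatives: $v_j = \Matrix{p^{-1}}0{j}1$ (with $v_j^{-1} = \Matrix p0{-jp}1$) for $j = 0,\dots,p-1$ are representatives of $\Iw_p\backslash \Iw_p\Matrix{p^{-1}}001\Iw_p$, used in the definition of $U_p$ in \eqref{E:Up action}; and $u_0,\dots,u_{p-1},u_\star$ with $u_j = \Matrix{1}{0}{j}{1}$ and $u_\star = \Matrix{0}{1}{1}{0}$ are representatives of $\Iw_p\backslash \rmK_p$, used in the definition of $\mathrm{proj}_1$. Then by definition,
\[
\iota_2\big(\mathrm{proj}_1(\varphi)\big)(x) \;=\; \mathrm{proj}_1(\varphi)\!\left(x\Matrix{p^{-1}}001\right)\Big|_{\Matrix{p}{0}{0}{1}} \;=\; \sum_{j=0,\dots,p-1,\star} \varphi\!\left(x\Matrix{p^{-1}}001 u_j\right)\Big|_{u_j^{-1}\Matrix{p}{0}{0}{1}}.
\]

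The key step is the bookkeeping of the $(p+1)$ summands. For $j = 0,\dots,p-1$, one computes $\Matrix{p^{-1}}001 u_j = \Matrix{p^{-1}}{0}{j}{1} = v_j$ and $u_j^{-1}\Matrix{p}{0}{0}{1} = \Matrix{1}{0}{-j}{1}\Matrix{p}{0}{0}{1} = \Matrix{p}{0}{-jp}{1} = v_j^{-1}$; so these $p$ terms together sum to $U_p(\varphi)(x)$ by \eqref{E:Up action}. The exceptional summand $j=\star$ yields $\Matrix{p^{-1}}001 u_\star = \Matrix{0}{p^{-1}}{1}{0}$ and $u_\star^{-1}\Matrix{p}{0}{0}{1} = \Matrix{0}{1}{p}{0}$, so this term is $\varphi\big(x\Matrix{0}{p^{-1}}{1}{0}\big)\big|_{\Matrix{0}{1}{p}{0}}$, which is exactly $\AL_{(k,\tilde\varepsilon_1)}(\varphi)(x)$ by \eqref{E:Atkin-Lehner map}. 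Rearranging gives $U_p(\varphi)(x) = \iota_2(\mathrm{proj}_1(\varphi))(x) - \AL_{(k,\tilde\varepsilon_1)}(\varphi)(x)$, as claimed.

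There is essentially no obstacle — the proof is a direct matrix identity once the coset representatives are chosen compatibly. The only thing worth double-checking along the way is that $\iota_2(\mathrm{proj}_1(\varphi))$ lands in $\rmS_k^\Iw(\tilde\varepsilon_1)$ so that the equality makes sense, and that the action symbols $|_{(-)}$ are consistent with the convention \eqref{E:induced representation action extended}; both follow from the fact that the diagonal matrix $\Matrix p001$ normalizes $\Iw_p$ in the relevant way, and that $\widetilde \rmH$ is a right $\calO[\GL_2(\QQ_p)]$-module so the substitutions $x \mapsto x\Matrix{p^{-1}}001 u_j$ are valid. No choice of coset representatives on either side matters, which is a standard observation for Hecke and Atkin--Lehner operators.
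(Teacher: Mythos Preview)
Your proof is correct and follows essentially the same approach as the paper: unwind $\iota_2(\mathrm{proj}_1(\varphi))(x)$ via the coset representatives $u_j$, check that the $p$ terms $j=0,\dots,p-1$ reproduce the $U_p$-sum while the exceptional term $j=\star$ is exactly $\AL_{(k,\tilde\varepsilon_1)}(\varphi)(x)$. The paper's write-up is organized as computing $\iota_2(\mathrm{proj}_1(\varphi))(x)-\AL_{(k)}(\varphi)(x)$ and watching the $j=\star$ summand cancel, but the underlying matrix identities are identical to yours.
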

\begin{proof}For $\varphi \in \rmS_{k}^\Iw$ and $x \in \widetilde \rmH$, we have
\begin{align*}
\iota_2&(\mathrm{proj}_1(\varphi))(x) - \AL_{(k)}(\varphi)(x) =
\sum_{ j=0, \dots, p-1, \star} \varphi\Big(x \Matrix{p^{-1}}001 u_j\Big) \Big|_{ u_j^{-1}\Matrix p001 } - \varphi \Big(x\Matrix 0{p^{-1}}10\Big)\Big|_{\Matrix 01p0}
\\
&= \sum_{ j=0}^{p-1} \varphi\Big(x \Matrix{p^{-1}}001  \Matrix 10j1 \Big) \Big|_{{\Matrix 10j1}^{-1} \Matrix p001} = \sum_{ j=0}^{p-1} \varphi\Big(x \Matrix {p^{-1}}0{j}1\Big) \Big|_{{\Matrix {p^{-1}}0{j}1}^{-1}} = U_p(\varphi)(x).
\end{align*}
Here in the first equality, when we unwind the definition of $\iota_2$, we use the matrix $\Matrix p001$ as opposed to $\Matrix 01p0$ (using the $\GL_2(\ZZ_p)$-equivariance).
The second equality comes from canceling the last term in the first row with the term $j=\star$ in the sum.
\end{proof}

\begin{proposition}
\label{P:oldform basis}
For ${k} = k_\varepsilon + (p-1)k_{\bullet}$, consider the power basis  $\bfB^{(\varepsilon)}_k = \{\bfe_1^{(\varepsilon)},  \bfe_2^{(\varepsilon)}, \dots,\bfe_{d_k^\Iw(\tilde \varepsilon_1) }^{(\varepsilon)}\}$ of $\rmS_k^\Iw(\tilde \varepsilon_1)$ from \eqref{E:basis of Sdagger}, ordered with increasing degrees. Let $\rmU_k^{\Iw,(\varepsilon)}$ (resp. $\rmL_k^{(\varepsilon),\cl}$) be the matrix of the $U_p$-operator (resp. the $\AL_{(k, \tilde \varepsilon_1)}$-action) on $\rmS_k^\Iw(\tilde \varepsilon_1)$ with respect to $\bfB^{(\varepsilon)}_k$, i.e. we have $U_p(\bfe_1^{(\varepsilon)},\dots, \bfe_{d_k^\Iw(\tilde \varepsilon_1) }^{(\varepsilon)})=(\bfe_1^{(\varepsilon)},\dots, \bfe_{d_k^\Iw(\tilde \varepsilon_1) }^{(\varepsilon)})\cdot \rmU_k^{\Iw,(\varepsilon)}$ and similarly for $\rmL_k^{(\varepsilon),\cl}$. (The superscript $\mathrm{cl}$ indicates that the matrix is for classical forms as opposed to overconvergent ones.)

\begin{enumerate}
\item The matrix $\rmL_k^{(\varepsilon),\cl}$  is the \emph{anti-diagonal} matrix with entries 
$$
p^{\deg \bfe_1^{(\varepsilon)}}, p^{\deg \bfe_2^{(\varepsilon)}}, \dots, p^{\deg \bfe_{d_k^\Iw(\tilde \varepsilon_1)}^{(\varepsilon)}}
$$
from upper right to lower left. 

\item
The matrix $\rmU_k^{\Iw,(\varepsilon)}$ is the sum of 
\begin{itemize}
\item the antidiagonal matrix $-\rmL_k^{(\varepsilon),\cl}$ above, and
\item a $d_k^\Iw(\tilde \varepsilon_1) \times d_k^\Iw(\tilde \varepsilon_1) $-matrix with rank $\leq d_k^\ur(\varepsilon_1)$.
\end{itemize}  

\end{enumerate}

\end{proposition}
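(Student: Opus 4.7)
The plan is to derive both parts essentially by unwinding two inputs already in place: the formula \eqref{E:AL swaps power basis elements} for the Atkin--Lehner action on power basis elements, and the identity \eqref{E:Up in terms of AL} from Proposition~\ref{P:key feature of p-stabilization}.

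For part (1), I would specialize \eqref{E:AL swaps power basis elements} to $\psi = \tilde\varepsilon_1$, where $\psi^s = \psi$, so $\varepsilon'' = \varepsilon$ and the Atkin--Lehner map sends $\bfe_\ell^{(\varepsilon)}$ into the same basis $\bfB^{(\varepsilon)}_k$. The main check is that the map really reverses the order of the basis, i.e.\ sends $\bfe_\ell^{(\varepsilon)}$ to a scalar multiple of $\bfe^{(\varepsilon)}_{d_k^\Iw(\tilde\varepsilon_1)+1-\ell}$. This amounts to verifying the degree pairing $\deg \bfe_\ell^{(\varepsilon)} + \deg \bfe^{(\varepsilon)}_{d_k^\Iw(\tilde\varepsilon_1)+1-\ell} = k-2$, which in turn reduces, via \eqref{E:basis of Sdagger}, to the congruence $k-2-s_\varepsilon \equiv \{a+s_\varepsilon\} \bmod (p-1)$. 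Since $k \equiv k_\varepsilon = 2+\{a+2s_\varepsilon\} \bmod (p-1)$, this is immediate. Then the nonzero entry on the antidiagonal is precisely $p^{k-2-\deg\bfe_\ell^{(\varepsilon)}} = p^{\deg\bfe^{(\varepsilon)}_{d_k^\Iw(\tilde\varepsilon_1)+1-\ell}}$, which is what the statement claims.

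For part (2), the identity \eqref{E:Up in terms of AL} from Proposition~\ref{P:key feature of p-stabilization} expresses $U_p$ on $\rmS_{k}^\Iw(\tilde\varepsilon_1)$ as
\[
U_p \;=\; \iota_2\circ \mathrm{proj}_1 \;-\; \AL_{(k,\tilde\varepsilon_1)}.
\]
Taking matrices with respect to $\bfB^{(\varepsilon)}_k$, the second summand contributes exactly $-\rmL_k^{(\varepsilon),\cl}$ by part (1). The first summand factors through $\rmS_k^\ur(\varepsilon_1) = \Hom_{\calO\llbracket \rmK_p\rrbracket}(\widetilde\rmH, \calO[z]^{\leq k-2}\otimes \varepsilon_1\circ\det)$, whose $\calO$-rank is $d_k^\ur(\varepsilon_1)$ by \S\ref{S:arithmetic forms}(6). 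Hence the matrix of $\iota_2\circ\mathrm{proj}_1$ has rank $\leq d_k^\ur(\varepsilon_1)$, which is the desired decomposition.

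There is no real obstacle here: the whole content is contained in the two cited statements, and the only subtlety is the bookkeeping verification that the pairing of power basis elements under $\AL_{(k,\tilde\varepsilon_1)}$ lands back in $\bfB^{(\varepsilon)}_k$ in reverse order; once the congruence $k\equiv k_\varepsilon \bmod (p-1)$ is used, the antidiagonal shape and the explicit exponents fall out. I would finish with a brief remark that the same reasoning, applied entry by entry, shows the rank bound is sharp when $\iota_2$ and $\mathrm{proj}_1$ are both injective/surjective, which is the case for primitive $\widetilde\rmH$, but this sharpness is not needed for the statement.
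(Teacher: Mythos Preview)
Your proposal is correct and follows essentially the same approach as the paper: part (1) is the specialization of Proposition~\ref{P:theta and AL}(2) (specifically \eqref{E:AL swaps power basis elements}) to $\psi = \tilde\varepsilon_1$, and part (2) follows from \eqref{E:Up in terms of AL} together with the observation that $\iota_2\circ\mathrm{proj}_1$ factors through $\rmS_k^\ur(\varepsilon_1)$ of rank $d_k^\ur(\varepsilon_1)$. Your additional degree-pairing verification in part (1) is already encoded in the second formula of \eqref{E:AL swaps power basis elements}, so it is redundant but harmless.
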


\begin{proof}
(1) is just a special case of Proposition~\ref{P:theta and AL}(2), when $\psi = \tilde \varepsilon_1$.  (2) follows from (1) and the equality \eqref{E:Up in terms of AL}, because $\varphi \mapsto \iota_2(\proj_1(\varphi))$ has rank at most $d_k^\ur$ as it factors through the smaller space $\rmS_k^\ur$ of rank $d_k^\ur$.
\end{proof}

\begin{corollary}
\label{C:p-new slopes}
The multiplicities of $\pm p^{(k-2)/2}$ as eigenvalues of the $U_p$-action on $\rmS_k^\Iw(\tilde \varepsilon_1)$ are at least $\frac 12 d_{k}^{\new}(\varepsilon_1)$ each.
\end{corollary}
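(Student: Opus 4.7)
The plan is to combine the two parts of Proposition~\ref{P:oldform basis} through a direct dimension count after diagonalizing the Atkin--Lehner part.

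First, I would analyze the Atkin--Lehner matrix $\rmL_k^{(\varepsilon),\cl}$. By Proposition~\ref{P:oldform basis}(1) it is the anti-diagonal matrix with diagonal entries $p^{\deg \bfe_\ell^{(\varepsilon)}}$ placed at positions $(\ell, d_k^\Iw(\tilde\varepsilon_1)+1-\ell)$. The key input is the pairing $\deg \bfe_\ell^{(\varepsilon)} + \deg \bfe_{d_k^\Iw(\tilde\varepsilon_1)+1-\ell}^{(\varepsilon)} = k-2$, which follows by iterating \eqref{E:AL swaps power basis elements} and using \eqref{E:AL circ AL}. Squaring the anti-diagonal matrix then yields $(\rmL_k^{(\varepsilon),\cl})^2 = p^{k-2}\cdot I$. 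Since $d_k^\Iw(\tilde\varepsilon_1) = 2k_\bullet+2-2\delta_\varepsilon$ is even (Definition-Proposition~\ref{DP:dimension of classical forms}(2)), the anti-diagonal matrix has vanishing trace; combined with its diagonalizability (as its minimal polynomial divides $x^2-p^{k-2}$), this forces the eigenvalues $\pm p^{(k-2)/2}$ to occur with equal multiplicity $\tfrac12 d_k^\Iw(\tilde\varepsilon_1)$ each.

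Next I would work over $E$ and let $V_\pm \subset \rmS_k^\Iw(\tilde\varepsilon_1)\otimes_\calO E$ be the eigenspaces of $\rmL_k^{(\varepsilon),\cl}$ for the eigenvalues $\pm p^{(k-2)/2}$, so $\dim V_\pm = \tfrac12 d_k^\Iw(\tilde\varepsilon_1)$. By Proposition~\ref{P:oldform basis}(2), the operator $R := \rmU_k^{\Iw,(\varepsilon)} + \rmL_k^{(\varepsilon),\cl}$ has rank at most $d_k^\ur(\varepsilon_1)$ (this is exactly $\iota_2\circ\proj_1$ in matrix form, from \eqref{E:Up in terms of AL}). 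For any vector $v \in V_\pm \cap \ker R$ one obtains
\[
\rmU_k^{\Iw,(\varepsilon)} v \;=\; -\rmL_k^{(\varepsilon),\cl} v \;=\; \mp p^{(k-2)/2}\, v,
\]
so such $v$ is a $U_p$-eigenvector with eigenvalue $\mp p^{(k-2)/2}$.

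Finally I would conclude by a standard subspace dimension count:
\[
\dim(V_\pm \cap \ker R) \;\geq\; \dim V_\pm + \dim \ker R - d_k^\Iw(\tilde\varepsilon_1) \;\geq\; \tfrac12 d_k^\Iw(\tilde\varepsilon_1) - d_k^\ur(\varepsilon_1) \;=\; \tfrac12 d_k^\new(\varepsilon_1),
\]
which yields the desired lower bound on the multiplicity of each of $\pm p^{(k-2)/2}$ as a $U_p$-eigenvalue. There is no real obstacle here; the argument is a direct consequence of the key identity \eqref{E:Up in terms of AL} together with the explicit anti-diagonal form and the degree-pairing symmetry of the power basis. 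The only mild subtlety to double-check is that $d_k^\Iw(\tilde\varepsilon_1)$ is even (so that the anti-diagonal matrix has no fixed-diagonal entry and the trace calculation is clean), which is guaranteed by Definition-Proposition~\ref{DP:dimension of classical forms}(2).
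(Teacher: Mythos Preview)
Your proof is correct and takes essentially the same approach as the paper: both first identify the eigenspaces $V_\pm$ of the anti-diagonal Atkin--Lehner matrix (each of dimension $\tfrac12 d_k^\Iw$) and then use that $\rmU_k^{\Iw}+\rmL_k^{\cl}$ has rank at most $d_k^\ur$ to bound the $U_p$-eigenspaces from below. The paper phrases this as a corank estimate for $\rmU_k^\Iw \pm p^{(k-2)/2}I$ rather than as an explicit intersection $V_\pm \cap \ker R$, but the two arguments are equivalent.
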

\begin{proof}
By Proposition~\ref{P:oldform basis}(1), the matrix $\rmL_k^\cl$ for the Atkin--Lehner operator is semisimple and has eigenvalues $\pm p^{(k-2)/2}$ each with multiplicity $\frac 12d_k^\Iw$; so $\rmL_k^\cl \pm p^{(k-2)/2}I $ has rank exactly $\frac 12 d_k^\Iw$, where $I$ is the $d_k^\Iw \times d_k^\Iw$-identity matrix. By Proposition~\ref{P:oldform basis}(2), $\rmU_k^\Iw \pm p^{(k-2)/2} I$  has corank at least $\frac 12d_k^\Iw-d_k^\ur = \frac 12 d_k^\new$. The corollary follows.
\end{proof}
\begin{remark}
It will follow from our local ghost conjecture Theorem~\ref{T:local theorem} together with Proposition~\ref{P:theta and AL}(4) that the multiplicities of the eigenvalues $\pm p^{(k-2)/2}$ are exactly $\frac 12 d_{k}^{\new}(\varepsilon_1)$.
\end{remark}

\begin{notation}
	Here and later, we shall frequently refer to the \emph{corank} of an $n\times n$-matrix $B$; it is $n$ minus the rank of $B$.
\end{notation}
The following lemma will be used in the proof of weak corank theorem (Corollary~\ref{C:philosophical explanation of ghost series}).
\begin{lemma}
\label{L:corank-det}
Let $\rmU \in \rmM_n(\calO\langle u\rangle)$ be a matrix and $u_0 \in \calO$. If the evaluation $\rmU_0:=\rmU|_{u=u_0}\in \rmM_n(\calO)$ has corank $m$, then $\det (\rmU)$ is divisible by $(u-u_0)^m$ in $\calO\langle u\rangle$. 
\end{lemma}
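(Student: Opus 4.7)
The plan is to reduce, via Smith normal form over $\calO$, to the case where $\rmU_0$ is diagonal with its last $m$ diagonal entries equal to zero, and then use the fact that any entry of $\rmU$ that vanishes at $u = u_0$ is automatically divisible by $u - u_0$ in $\calO\langle u\rangle$. Concretely, I would first invoke Smith normal form: since $\calO$ is a discrete valuation ring (hence a principal ideal domain), there exist $P_0, Q_0 \in \GL_n(\calO)$ with $D := P_0\, \rmU_0\, Q_0$ diagonal. The corank hypothesis forces exactly $m$ of the diagonal entries of $D$ to vanish, and after permuting rows/columns (absorbed into $P_0$ and $Q_0$) we may assume these are the last $m$ diagonal entries.

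Next, I would form $\rmV := P_0\, \rmU\, Q_0 \in \rmM_n(\calO\langle u\rangle)$, which satisfies $\rmV|_{u=u_0} = D$. The key input is that the evaluation $\calO\langle u\rangle \to \calO$, $u \mapsto u_0$, has kernel equal to the principal ideal $(u-u_0)$: for $f(u) = \sum_{n\geq 0} a_n u^n \in \calO\langle u\rangle$ with $f(u_0) = 0$, the identity
\[
f(u) = (u-u_0) \sum_{n \geq 1} a_n\bigl(u^{n-1} + u^{n-2}u_0 + \cdots + u_0^{n-1}\bigr)
\]
produces an element of $\calO\langle u\rangle$ on the right because $u_0 \in \calO$ keeps all coefficients bounded. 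Applied entrywise, this yields $\rmV = D + (u-u_0)\rmV'$ for some $\rmV' \in \rmM_n(\calO\langle u\rangle)$, so that the last $m$ rows of $\rmV$ lie entirely in $(u-u_0)\calO\langle u\rangle$.

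Finally, extracting one factor of $u-u_0$ from each of these $m$ rows in the determinant expansion gives $(u-u_0)^m \mid \det \rmV$. Since $\det P_0, \det Q_0 \in \calO^\times$, the relation $\det \rmU = (\det P_0)^{-1}(\det Q_0)^{-1} \det \rmV$ yields $(u-u_0)^m \mid \det \rmU$ in $\calO\langle u\rangle$, as required. There is essentially no obstacle here: the only point worth checking is that Smith normal form genuinely produces $m$ zero diagonal entries (as opposed to entries of merely high $\varpi$-adic valuation), which is exactly what the definition of corank $= n - \mathrm{rank}$ over the fraction field $E$ guarantees.
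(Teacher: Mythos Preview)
Your proof is correct and takes essentially the same approach as the paper: reduce via invertible matrices over $\calO$ so that $m$ rows of the specialized matrix vanish, then observe that the corresponding rows of the family are divisible by $u-u_0$, giving $(u-u_0)^m \mid \det$. The paper is slightly more economical, using only a one-sided row operation $P \in \GL_n(\calO)$ to kill the last $m$ rows of $\rmU_0$ (rather than full Smith normal form) and citing the Weierstrass Division Theorem in place of your explicit power-series computation, but these are cosmetic differences.
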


\begin{proof}
By assumption we can find a matrix $P\in \GL_n(\calO)$ such that the entries in the last $m$ rows of the matrix $P\rmU_0$ are all $0$. By Weierstrass Division Theorem, the entries of the last $m$ rows of $P\rmU$ are all divisible by $u-u_0$. It follows that $\det (\rmU)$ is divisible by $(u-u_0)^m$.
\end{proof}

The following statement gives a philosophical explanation of the palindromic pattern of \eqref{E:cascading pattern} in Definition~\ref{D:ghost series} of ghost series.
\begin{corollary}[Weak corank theorem]
\label{C:philosophical explanation of ghost series}
If we write $\rmU^{\dagger, (\varepsilon)}(\underline n) \in \rmM_n(\calO\langle w/p\rangle)$ for the upper left $n\times n$-submatrix of $\rmU^{\dagger, (\varepsilon)}$, then
$\det( \rmU^{\dagger,(\varepsilon)}(\underline n)) \in \calO\langle w/p\rangle$ is divisible by $p^{-\deg g_n^{(\varepsilon)}} g_n^{(\varepsilon)}(w)$ (inside $\calO\langle w/p\rangle$).
\end{corollary}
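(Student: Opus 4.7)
The strategy is to match the explicit factorization of $g_n^{(\varepsilon)}(w)$ against the local behavior of the determinant at each ghost zero. Setting $u:=w/p$, so that $\calO\langle w/p\rangle=\calO\langle u\rangle$, the normalized polynomial
\[
p^{-\deg g_n^{(\varepsilon)}} g_n^{(\varepsilon)}(w) \;=\; \prod_{k} \bigl(u-w_k/p\bigr)^{m_n^{(\varepsilon)}(k)}
\]
is an honest element of $\calO\langle u\rangle$: indeed $w_k=\exp(p(k-2))-1\in p\calO$, and only finitely many $k$ contribute (those with $d_k^\ur(\varepsilon_1)<n<d_k^\Iw(\tilde\varepsilon_1)-d_k^\ur(\varepsilon_1)$). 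Since $\calO\langle u\rangle$ is a noetherian UFD in which the distinct linear polynomials $u-w_k/p$ are pairwise non-associate irreducibles, unique factorization reduces the claim to divisibility by each $(u-w_k/p)^{m_n^{(\varepsilon)}(k)}$ separately. By Lemma~\ref{L:corank-det}, this in turn reduces to showing that for each such $k$, the evaluation $\rmU^{\dagger,(\varepsilon)}(\underline n)|_{w=w_k}\in\rmM_n(\calO)$ has corank at least $m_n^{(\varepsilon)}(k)$.

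To produce the corank, fix $k=k_\varepsilon+(p-1)k_\bullet$ with $m_n^{(\varepsilon)}(k)>0$, so that $d_k^\ur(\varepsilon_1)<n<d_k^\Iw(\tilde\varepsilon_1)-d_k^\ur(\varepsilon_1)$; in particular $n<d_k^\Iw(\tilde\varepsilon_1)$. The block-upper-triangular structure \eqref{E:Ukdagger is block upper triangular} from Proposition~\ref{P:theta and AL}(1) forces $\rmU^{\dagger,(\varepsilon)}(\underline n)|_{w=w_k}$ to coincide with the upper-left $n\times n$ submatrix $\rmU_k^{\Iw,(\varepsilon)}(\underline n)$ of the classical $U_p$-matrix. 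Proposition~\ref{P:oldform basis}(2) writes $\rmU_k^{\Iw,(\varepsilon)}=R-\rmL_k^{(\varepsilon),\cl}$, where $R$ has rank at most $d_k^\ur(\varepsilon_1)$ and $\rmL_k^{(\varepsilon),\cl}$ is antidiagonal. Cutting down to the upper-left $n\times n$ block, the first summand still has rank $\leq d_k^\ur(\varepsilon_1)$, while the antidiagonal contributes exactly $\max\{0,\,2n-d_k^\Iw(\tilde\varepsilon_1)\}$ to the rank (the number of antidiagonal entries of the full $d_k^\Iw\times d_k^\Iw$ matrix that land inside the $n\times n$ block). Subadditivity of rank then yields
\[
\corank\bigl(\rmU_k^{\Iw,(\varepsilon)}(\underline n)\bigr)\;\geq\;n-d_k^\ur(\varepsilon_1)-\max\{0,\,2n-d_k^\Iw(\tilde\varepsilon_1)\}\;=\;\min\{n-d_k^\ur(\varepsilon_1),\;d_k^\Iw(\tilde\varepsilon_1)-d_k^\ur(\varepsilon_1)-n\},
\]
which is precisely $m_n^{(\varepsilon)}(k)$. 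Conceptually, this exhibits how the $p$-stabilization decomposition \eqref{E:Up in terms of AL} produces the palindromic pattern \eqref{E:cascading pattern} defining the ghost multiplicities.

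I do not expect a serious obstacle: both of the constituent ingredients (the block shape in Proposition~\ref{P:theta and AL}(1) and the rank-plus-antidiagonal decomposition in Proposition~\ref{P:oldform basis}(2)) are already in place, and Lemma~\ref{L:corank-det} packages exactly the integrality statement we need. The mildly delicate point is the multiplicativity step, which genuinely requires the UFD structure of $\calO\langle u\rangle$ rather than naive coprimality: when $v_p(w_k-w_{k'})\geq 2$ the linear polynomials $u-w_k/p$ and $u-w_{k'}/p$ reduce to the same element modulo $\varpi$ and generate non-comaximal ideals, but they remain distinct irreducibles in the UFD, so that their powers absorb independent factors in the unique factorization of $\det(\rmU^{\dagger,(\varepsilon)}(\underline n))$. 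Assembling these pieces yields the claimed divisibility.
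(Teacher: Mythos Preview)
Your proposal is correct and follows essentially the same path as the paper: reduce to each ghost zero $w_k$ via Lemma~\ref{L:corank-det}, then use the block structure from Proposition~\ref{P:theta and AL}(1) together with the rank-plus-antidiagonal decomposition of Proposition~\ref{P:oldform basis}(2) to bound the corank of $\rmU_k^{\Iw,(\varepsilon)}(\underline n)$ by $m_n^{(\varepsilon)}(k)$. Your treatment of the multiplicativity step (passing from divisibility by each $(u-w_k/p)^{m_n^{(\varepsilon)}(k)}$ to divisibility by the product) via the UFD structure of $\calO\langle u\rangle$ is in fact more explicit than the paper's, which simply asserts this reduction without comment.
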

\begin{proof}
We need to show that, for each ${k} = k_\varepsilon + (p-1)k_{\bullet}$ such that $m_n(k) >0$,  $\det(\rmU^\dagger(\underline n))$ is divisible by $(w/p -  {w_k}/p)^{m_n(k)}$. Here we work in the ring $\calO\langle w/p\rangle$ so we need to divide each ghost factor $w-w_k$ by $p$.
By Lemma \ref{L:corank-det} (applied to $\rmU = \rmU^\dagger(\underline n)$, $u = w/p$, and $u_0 = w_k/p)$, it is enough to show that evaluating $\rmU^\dagger(\underline n)$ at $w= w_k$, i.e. the matrix $\rmU_k^\dagger(\underline n)$, has corank $\geq m_n(k)$. Since $m_n(k)>0$, we have $n<d_k^\Iw$ and hence the matrix $\rmU^\dagger(\underline n)=\rmU_k^\Iw(\underline n)$, where $\rmU_k^\Iw$ is the matrix defined in Proposition~\ref{P:oldform basis}, and $\rmU_k^\Iw(\underline n)$ is its upper left $n\times n$-submatrix. We denote
$\rmL^\cl_k(\underline n)$ in a similar way. By Proposition~\ref{P:oldform basis}(1)(2), 
$$
\rank (\rmU^\dagger_k(\underline n)) \leq d_k^\ur+\rank \rmL^\cl_k(\underline n)= \begin{cases}
d_k^\ur & \textrm{ if } n \leq \frac 12d_k^\Iw\\
d_k^\ur+2(n-\frac 12 d_k^\Iw)  & \textrm{ if }n \geq \frac 12 d_k^\Iw.
\end{cases}
$$
So the corank of $\rmU^\dagger_k(\underline n)$ is at least $n -d_k^\ur$ if $n \leq \frac 12 d_k^\Iw$, and at least $d_k^\Iw - d_k^\ur - n$ if $n \geq \frac 12 d_k^\Iw$; in other words, $\corank \rmU^\dagger_k(\underline n)\geq m_n(k)$. The corollary is proved.
\end{proof}

\begin{remark}
This corollary seems to have given some theoretical support for the definition of the ghost series, and it already gives us confidence towards proving the local ghost conjecture (Theorem~\ref{T:local theorem}). In reality, we still need to combine more sophisticated $p$-adic estimates on the corank argument in the corollary above.
\end{remark}

\begin{remark}
With some effort using the representation theory of $\FF[\GL_2(\FF_p)]$ and consider the standard Hodge polygon for the power basis, one may show that there exists an $\calO$-basis $\bfv_1, \dots, \bfv_{d_k^\ur} $ of $\rmS_k^\ur(\varepsilon_1)$ such that the following list
$$
p^{-\deg \bfe_1} \iota_2(\bfv_1),\, \dots,\, p^{-\deg \bfe_{d_k^\ur}}\iota_2(\bfv_{d_k^\ur}),\, \bfe_{d_k^\ur+1},\, \dots,\, \bfe_{d_k^\Iw-d_k^\ur},\, \iota_1(\bfv_{d_k^\ur}),\, \dots,\,  \iota_1(\bfv_1)
$$ 
forms an $\calO$-basis of $\rmS_k^\Iw(\tilde \varepsilon_1)$ and the $U_p$-matrix with respect to this basis belongs to
$$
\begin{tikzpicture}[baseline,decoration=brace]
\matrix  [mymatrix] (m){
p^{\deg \bfe_1}\calO&p^{\deg \bfe_1}\calO& \cdots &p^{\deg \bfe_1}\calO&0&\cdots &0 &p^{1+\deg \bfe_1}
\\
p^{\deg \bfe_2}\calO&p^{\deg \bfe_2}\calO& \cdots &p^{\deg \bfe_2}\calO&0&\cdots &p^{1+\deg \bfe_2}&0
\\
\vdots &\vdots  & \ddots & \vdots &\vdots &\iddots &\vdots& \vdots
\\
p^{\deg \bfe_{d_{k}^\ur}}\calO&p^{\deg \bfe_{d_{k}^\ur}}\calO& \cdots &p^{\deg \bfe_{d_{k}^\ur}}\calO&p^{1+\deg \bfe_{d_{k}^\ur}}&\cdots &0 & 0
\\
0&0&\cdots &-p^{\deg \bfe_{d_{k}^\ur+1}} & 0 & \cdots & 0&0
\\
\vdots &\vdots  & \iddots &\vdots  &\vdots &\ddots &\vdots& \vdots
\\
0&
-p^{\deg \bfe_{d_{k}^\Iw-1}}&\cdots &0 & 0 & \cdots & 0&0
\\
-p^{\deg \bfe_{d_{k}^\Iw}}&0&\cdots & 0 & 0 & \cdots & 0&0
\\
};
\mymatrixbracetop{1}{4}{$d_{k}^\Iw-d_{k}^\ur$}
\mymatrixbracetop{5}{8}{$d_{k}^\ur$}
\mymatrixbraceright{1}{4}{$d_{k}^\ur$} 
\end{tikzpicture}.
$$
This refines Remark~\ref{R:p-stabilization classical}.
\end{remark}

\subsection{A modified Mahler basis}
\label{SS:modified Mahler basis}
We now come to the second key ingredient of the proof of the local ghost conjecture (Theorem~\ref{T:local theorem}): an estimate of the $U_p$-matrix with respect to the (modified) Mahler basis. This will improve Corollary~\ref{C:philosophical explanation of ghost series} on the exponents of $p$.

The same technique was used in \cite{liu-wan-xiao} to prove the spectral halo conjecture of Coleman--Mazur--Buzzard--Kilford (over the boundary annulus of the weight space: $(\Spf \ZZ_p\llbracket w, p/w \rrbracket)^\rig$). There are two minor modifications we employ here:

(1) Our estimate will be on $\calO\langle w/p\rangle$, so we use $p$ as the ``anchor uniformizer" as opposed to $w$;

(2) The usual Mahler basis $1, z, \binom z2, \dots$ does not behave well under the $\bar \rmT$-action; so we modified the Mahler basis as follows.

Consider the following iteratively defined polynomials 
\begin{equation}
\label{E:iterated definition of fi}
f(z)= f_1(z): = \frac{z^p-z}p, \quad f_{i+1}(z):= f\big(f_i(z)\big) =  \frac{f_i(z)^p-f_i(z)}p \textrm{ for }i =1, 2, \dots.
\end{equation}
For example, $f_2(z) = \dfrac{\big((z^p-z)/p\big)^p - (z^p-z)/p}p$. It is clear that every $f_i(z)$ is a $\ZZ_p$-values continuous function on $\ZZ_p$, i.e. $f_i(z) \in \calC^0(\ZZ_p; \ZZ_p)$.

For each $n \in \ZZ_{\geq 0}$, we write it in its base $p$ expansion $n =n_0+pn_1+p^2n_2+\cdots$ with $n_i \in \{0, \dots, p-1\}$ and define the \emph{$n$th modified Mahler basis} element to be
\begin{equation}
\label{E:modified Mahler basis}
\bfm_n(z): = z^{n_0}f_1(z)^{n_1}f_2(z)^{n_2}\cdots \in \calC^0(\ZZ_p; \ZZ_p).
\end{equation}
Roughly speaking, one may think of this basis element $\bfm_n(z)$ as taking the ``main terms" in the binomial function $\binom zn$. 
\begin{lemma}
\label{L:modified Mahler basis}
\phantomsection
\begin{enumerate}
\item For every $n=\sum\limits_{i\geq 0}p^in_i\in \ZZ_{\geq 0}$ as above, the degree of each nonzero monomial term in $\bfm_n(z)$ is congruent to $n$ modulo $p-1$ and the leading coefficient of $\bfm_n(z)$ is 
\begin{equation}
\label{E:leading coefficient of mn(z)}
p^{-\sum\limits_{i\geq 1}n_i(1+p+\cdots +p^{i-1})} \in (n!)^{-1} \cdot \ZZ_p^\times.
\end{equation}
\item Let $B = (B_{m,n})_{m,n\geq 0}$ denote the change of basis matrix from the usual Mahler basis $\big\{\binom zn; \; n \in \ZZ_{\geq 0}\big\}$ to the modified Mahler basis $\{ \bfm_n(z); \; n \in \ZZ_{\geq 0} \}$ so that 
$$
\bfm_n(z)= \sum_{m = 0}^\infty B_{m,n} \binom zm.
$$
Then $B$ is an upper triangular matrix in $\rmM_\infty(\ZZ_p)$ whose diagonal entries lie in $\ZZ_p^\times$.

\item The set $\{ \bfm_n(z); \; n \in \ZZ_{\geq 0} \}$ forms an orthonormal basis of $\calC^0(\ZZ_p; \ZZ_p)$.

\item  If $P = (P_{m,n})_{m,n \geq 0}$ denotes the matrix of the action of $\Matrix \alpha\beta\gamma\delta \in \bfM_1$ with respect to the modified Mahler basis of $\calC^0\big(\ZZ_p; \calO\llbracket w \rrbracket ^{(\varepsilon)}\big)$, then
\begin{equation}
\label{E:P' halo bound}
P_{m,n} \in 
\begin{cases}
p^{\max\{0,\,m-n\}}\calO\langle w/p\rangle & \textrm{ if }\Matrix \alpha\beta\gamma\delta \in \bfM_1\\
p^{\max\{0,\,m - \lfloor n/p\rfloor\}} \calO\langle w/p\rangle &\textrm{ if }\Matrix \alpha\beta\gamma\delta \in \Matrix{p\ZZ_p}{\ZZ_p}{p\ZZ_p}{\ZZ_p^\times}^{\det \neq 0}
\end{cases}.
\end{equation}

\end{enumerate}

\end{lemma}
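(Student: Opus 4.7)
Parts (1)--(3) are computational and should fall out relatively quickly. For (1), I would induct on $i$ to establish that $f_i(z)$ is a polynomial of degree $p^i$ with leading coefficient $p^{-(p^i-1)/(p-1)}$ and with every monomial term of degree congruent to $1$ modulo $p-1$; the base case is immediate from $f_1(z) = (z^p-z)/p$, and the inductive step is a direct consequence of $f_{i+1}(z) = (f_i(z)^p - f_i(z))/p$. Multiplying through the definition $\bfm_n(z) = z^{n_0} f_1(z)^{n_1} f_2(z)^{n_2}\cdots$ then yields that every nonzero monomial term has degree $\equiv n_0 + n_1 + \cdots \equiv n \pmod{p-1}$ (using $p \equiv 1 \pmod{p-1}$) and that the leading term is $z^n/p^{\sum_i n_i(1+p+\cdots+p^{i-1})}$. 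Legendre's formula $v_p(n!) = (n - s_p(n))/(p-1) = \sum_i n_i(p^i-1)/(p-1)$ then identifies this denominator with an element of $(n!)^{-1}\ZZ_p^\times$. For (2), since each $f_i(z)$ lies in $\calC^0(\ZZ_p;\ZZ_p)$ by construction, $\bfm_n(z)$ is a degree-$n$ polynomial taking $\ZZ_p$-values on $\ZZ_p$, so it expands in $\binom{z}{0},\dots,\binom{z}{n}$ with $\ZZ_p$-coefficients; the diagonal entry $B_{n,n}$ equals $n!$ times the leading coefficient of $\bfm_n$, which is a unit by (1). Statement (3) is then immediate, since $B \in \GL_\infty(\ZZ_p)$ preserves orthonormality of the Mahler basis.

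The main work lies in (4). The plan is to factor any $\Matrix\alpha\beta\gamma\delta$ in the relevant monoid as a product of three or four generators and estimate the action of each generator separately: (i) a unit diagonal $\Matrix{u_1}{0}{0}{u_2}$, (ii) an upper-unipotent $\Matrix 1\beta 01$, (iii) a lower matrix $\Matrix 10{p\gamma'}1$, and (for Case 2) (iv) the scaling $\Matrix p001$. Generators (i) and (ii) act by simple transformations which preserve the integrality of the modified Mahler basis coefficients and give upper-triangular contributions to $P$ with entries in $\calO\langle w/p\rangle$, which is exactly the bound $p^{\max\{0,m-n\}}\calO\langle w/p\rangle$ for $m \leq n$. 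For generator (iii), the action reads $h(z)\mapsto (1+w)^{\log(p\gamma' z+1)/p}\,h\bigl(z/(p\gamma' z+1)\bigr)$; the key observation is that $\log(1+p\gamma' z)/p \in \calO[[z]]$ with the coefficient of $z^k$ of valuation $\geq k-1-v_p(k)$, and that $z/(p\gamma' z+1) = z - p\gamma' z^2 + p^2\gamma'^2 z^3 - \cdots$ differs from $z$ by terms with increasing $p$-divisibility. Expanding $\bfm_n$ of this substitution (via the power-sum formulas of part (1)) and tracking the difference operators produces the decay $p^{m-n}$ for $m > n$, which is what we need for Case 1.

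Case 2 requires the extra factor $p^{\lfloor n/p\rfloor}$, and I would isolate it into the generator $\Matrix p 001$, whose action is simply $h(z) \mapsto h(pz)$. The key identity is that $f_1(pz) = p^{p-1}z^p - z \equiv -z \pmod{p^{p-1}}$, combined with the symmetry $f_1(-u) = -f_1(u)$; iterating gives $f_i(pz) \equiv -f_{i-1}(z)$ modulo some higher power of $p$. Consequently,
\[
\bfm_n(pz) \equiv \pm\, p^{n_0} z^{n_0+n_1} f_1(z)^{n_2} f_2(z)^{n_3}\cdots \pmod{\text{higher }p\text{-powers}},
\]
whose leading modified-Mahler index is $n_0 + n_1 + pn_2 + p^2 n_3 + \cdots \leq \lfloor n/p\rfloor + (p-1)$, while the $p$-adic prefactor $p^{n_0}$ together with the Case 1 estimate on the remaining factors delivers the bound $p^{\max\{0, m - \lfloor n/p\rfloor\}}$. \emph{The main obstacle} will be this last step: the individual congruences $f_i(pz)\equiv -f_{i-1}(z)$ hold only modulo some power of $p$ that decreases as $i$ grows (because the error in $f_1(u+\epsilon)-f_1(u)$ loses a factor of $p$ under division by $p$), and the compounding of these errors across $O(\log_p n)$ iterations--together with the $n_1, n_2,\ldots$-fold powers in the definition of $\bfm_n$--must be carefully bookkept to ensure the final valuation remains at least $m - \lfloor n/p\rfloor$. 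The refined version promised later in Corollary~\ref{C:refined halo estimate} suggests this bookkeeping admits a clean inductive formulation, but it is the place where an abstract argument will not suffice and an explicit expansion must be carried out.
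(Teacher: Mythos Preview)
Your treatment of (1)--(3) is essentially what the paper does and is fine.

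For (4), however, your route diverges substantially from the paper's, and the paper's is far simpler. You propose to decompose the matrix into generators and estimate the action of each generator directly on the modified Mahler basis, with the acknowledged hard part being the error propagation in $f_i(pz) \equiv -f_{i-1}(z)$ for the generator $\Matrix p001$. The paper sidesteps all of this: it observes that the desired bound is \emph{already known} for the ordinary Mahler basis $\{\binom zn\}$ by \cite[Proposition~3.14(1)]{liu-wan-xiao}, where one has $P'_{m,n} \in (p,w)^{\max\{0,m-n\}}\calO\llbracket w\rrbracket$ in Case~1 and $P'_{m,n} \in (p,w)^{\max\{0,m-\lfloor n/p\rfloor\}}\calO\llbracket w\rrbracket$ in Case~2. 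Then one simply writes $P = B^{-1}P'B$; since $B$ and $B^{-1}$ are upper triangular with entries in $\ZZ_p$ and units on the diagonal (by your own parts (1)--(2)), the sum $P_{m,n} = \sum_{k\geq m,\,\ell\leq n}(B^{-1})_{m,k}P'_{k,\ell}B_{\ell,n}$ inherits the same bound because $k \geq m$ and $\ell \leq n$ only improve the exponent.

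So the ``main obstacle'' you flag---the compounding of congruence errors in the iterated $f_i(pz)$---is not a gap in your argument so much as a symptom that you are working harder than necessary. Your generator approach could in principle be pushed through, and something like it reappears later when the paper establishes the \emph{refined} halo estimate (Proposition~\ref{P:finer halo estimate}), where one really does need to track digits explicitly. But for the coarse bound stated here, the change-of-basis reduction to the classical Mahler basis is the clean move, and it turns a delicate error analysis into a two-line transfer.
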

\begin{proof}
(1) We need to check that the degree of each nonzero monomial term in each $f_i(z)$ is congruent to $1$ modulo $p-1$ and the leading coefficient of $f_i(z)$ is $p^{-(1+\cdots+p^{i-1})}$. This is true for $f_1(z)$, and inductively, we may write $f_i(z) = zh_i(z^{p-1})$ with leading coefficient $p^{-(1+\cdots+p^{i-1})}$  and see that $f_{i+1}(z) = \frac  1p \big( z^ph_i(z^{p-1})^p - zh_i(z^{p-1}) \big)= \frac 1p z \big(z^{p-1}h_i(z^{p-1})^p - h_i(z^{p-1})\big)$ with leading coefficient $p^{-(1+\cdots+p^{i})}$. The last statement follows from  Lemma~\ref{L:p-adic valuation of n!}(1).

(2) 
Since the degree of $\bfm_n(z)$ is $n$, $B_{m,n}=0$ if $m>n$.
By comparing the coefficients of $z^n$ using \eqref{E:leading coefficient of mn(z)}, we see that $B_{n,n}\in \ZZ_p^\times$. Moreover, since each $\bfm_n(z) \in \calC^0(\ZZ_p;\ZZ_p)$, it is a $\ZZ_p$-linear combination of $1, z, \binom z2, \dots, \binom zn$; so we have $B_{m,n} \in \ZZ_p$ for $m \leq n$. Part (2) follows.

(3) is a corollary of (2) as $B$ is invertible over $\ZZ_p$ and Mahler basis is a basis of $\calC^0(\ZZ_p; \ZZ_p)$.

(4) Let $P'= (P'_{m,n})_{m,n \geq 0}$ denote the matrix of the action of $\Matrix \alpha \beta \gamma \delta$ on $\calC^0(\ZZ_p; \calO\llbracket w \rrbracket^{(\varepsilon)})$ with respect to the Mahler basis $1, z, \dots, \binom zn, \dots$. Then \cite[Proposition~3.14\,(1)]{liu-wan-xiao} implies that

(a) when $\Matrix \alpha \beta \gamma \delta \in \bfM_1$,
 $P'_{m,n} \in (p,w)^{\max\{0,\,m-n\}}\calO\llbracket w\rrbracket \subseteq p^{\max\{0,\,m-n\}} \calO\langle w/p\rangle$, and 

(b) when $\Matrix \alpha \beta \gamma \delta \in \Matrix{p\ZZ_p}{\ZZ_p}{p\ZZ_p}{\ZZ_p^\times}^{\det \neq 0}$, $P'_{m,n} \in (p,w)^{\max\{0,\,m- \lfloor n/p\rfloor \}}\calO\llbracket w\rrbracket \subseteq p^{\max\{0,\,m-\lfloor n/p\rfloor \}} \calO\langle w/p\rangle$.  

Changing basis, we have $P = B^{-1}P'B$. Yet $B \in \rmM_\infty(\calO)$ is upper triangular with $p$-adic units on the diagonal; the same holds true for $B^{-1}$. From this, we deduce that $P$ satisfies the same bound \eqref{E:P' halo bound}.
\end{proof}

\begin{notation}
\label{N:modified Mahler basis and normalized power basis}
By Lemma~\ref{L:modified Mahler basis}(1), each $\bfm_n(z)$ is an eigenvector for the $\bar \rmT$-action. So we may assign the modified Mahler basis to the weight disks according to the character by which $\bar{\rmT}$ acts on $\bfm_n(z)$ and obtain another basis of $\rmS^{\dagger,(\varepsilon)}$ for every relevant character $\varepsilon$ as follows.

For $\varepsilon = \omega^{-s_\varepsilon} \times \omega^{a+s_\varepsilon}$ (and possibly suppressing $\varepsilon$ from the notation occasionally), recall the power basis $\bfe_1^{(\varepsilon)}, \bfe_2^{(\varepsilon)}, \dots$ of $\rmS^{\dagger,(\varepsilon)}$ defined in \S\,\ref{S:power basis}. For each $\bfe_n^{(\varepsilon)} = e_i^*z^{\deg \bfe_n^{(\varepsilon)}}$ with $i=1,2$, we define the \emph{associated modified Mahler basis}
$$
\bff_n = \bff_n^{(\varepsilon)}: = e_i^* \cdot \bfm_{\deg \bfe_n^{(\varepsilon)}}(z);
$$
then Lemma~\ref{L:modified Mahler basis}(1) above implies that $\bff_n^{(\varepsilon)}$ is a $\QQ_p$-linear combination of $\bfe_1^{(\varepsilon)}, \dots, \bfe_n^{(\varepsilon)}$, and $\deg \bff_n^{(\varepsilon)} = \deg \bfe_n^{(\varepsilon)}$.
Let $\bfC = \bfC^{(\varepsilon)}$ denote the collection of $\bff_n^{(\varepsilon)}$ for all $n\in \ZZ_{\geq 0}$; it is the \emph{modified Mahler basis} of $\rmS_{p\textrm{-adic}}^{(\varepsilon)}$ (see \S\,\ref{S:arithmetic forms}(2) for the definition of $\rmS_{p\textrm{-adic}}^{(\varepsilon)}$).

For the rest of this section, we aim to ``translate" the halo bound for the $U_p$-action on $\rmS_{p\textrm{-adic}}^{(\varepsilon)}$ with respect to $\bfC^{(\varepsilon)}$ to a bound on the $U_p$-action with respect to $\bfB^{(\varepsilon)}$. (This turns out to be stronger than the naive Hodge bound on the power basis.)

We write $Y= (Y_{m,n})_{m,n \geq 0}$, $\rmY^{(\varepsilon)} = (\rmY_{\bfe_m^{(\varepsilon)}, \bff_n^{(\varepsilon)}})_{m,n \geq 1} \in \rmM_\infty(\QQ_p)$ for the change of basis matrix between the modified Mahler basis \eqref{E:modified Mahler basis} and the normalized power basis, that is to write
\begin{equation}\label{E:mn in terms of zm}
\bfm_n(z) = \sum_{m \geq 0} Y_{m,n} z^m, \quad\textrm{and}\quad
\rmY_{\bfe_m^{(\varepsilon)}, \bff_n^{(\varepsilon)}}  = Y_{\deg \bfe_m, \deg \bff_n}.
\end{equation}

\end{notation}
The following estimate on $Y_{m,n}$ is important.
\begin{lemma}
\label{L:estimate of Y} The matrix $Y$ is an upper triangular matrix in $\rmM_\infty(\QQ_p)$, with diagonal entries $Y_{n,n} \in(n!)^{-1} \ZZ_p^\times$. 
Moreover, $Y_{m,n} = 0$ unless $n-m$ is divisible by $p-1$.

Write the inverse of $Y$ as $((Y^{-1})_{m,n})_{m,n\geq0}$. We have the following estimates (when $n\geq m$): 
\begin{eqnarray}
\label{E:vp(Y)}&\displaystyle
v_p(Y_{m,n}) \geq  -v_p(m!) + \Big\lfloor \frac mp\Big\rfloor-\Big\lfloor \frac np\Big\rfloor- \Big\lfloor \frac{n-m}{p^2-p}\Big\rfloor,
\\
\label{E:vp(Y-1)}&\displaystyle
v_p((Y^{-1})_{m,n})\geq  v_p(n!) + \Big\lfloor \frac mp\Big\rfloor-\Big\lfloor \frac np\Big\rfloor- \Big\lfloor \frac{n-m}{p^2-p}\Big\rfloor.
\end{eqnarray}
\end{lemma}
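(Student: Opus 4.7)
The plan is to split the lemma into the structural claims (upper triangularity, diagonal form, and $(p-1)$-periodicity), the main estimate \eqref{E:vp(Y)}, and the dual estimate \eqref{E:vp(Y-1)}, handling these in this order. The structural claims fall out at once from Lemma~\ref{L:modified Mahler basis}(1): $\bfm_n(z)$ is a polynomial of degree exactly $n$ whose nonzero monomials all have degrees $\equiv n \pmod{p-1}$, forcing $Y_{m,n}=0$ unless $m \leq n$ and $(p-1)\mid(n-m)$; the formula \eqref{E:leading coefficient of mn(z)} combined with Legendre's identity $v_p(n!) = \sum_{i\geq 1}n_i(p^i-1)/(p-1)$ for $n=\sum_i n_ip^i$ identifies $Y_{n,n}=p^{-v_p(n!)}$ as an element of $(n!)^{-1}\ZZ_p^\times$.

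For the main estimate \eqref{E:vp(Y)}, I would set $s_i:=(p^i-1)/(p-1)$ and $g_i(z):=p^{s_i}f_i(z)$; the recursion $p\cdot f_{i+1}=f_i^p-f_i$ translates into $g_{i+1}(z)=g_i(z)^p-p^{p^i-1}g_i(z)$ starting from $g_1(z)=z^p-z$, so inductively $g_i(z)\in\ZZ_p[z]$. Consequently $\bfm_n(z)=p^{-v_p(n!)}h_n(z)$ with $h_n(z):=z^{n_0}g_1(z)^{n_1}g_2(z)^{n_2}\cdots\in\ZZ_p[z]$, which yields the naive bound $v_p(Y_{m,n})\geq -v_p(n!)$. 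To sharpen this to \eqref{E:vp(Y)}, I would exploit Freshman's dream $g_i(z)^p\equiv g_i(z^p)\pmod{p}$, from which the defining recursion inductively gives congruences $g_i(z)\equiv(z^p-z)^{p^{i-1}}\pmod{p^{c_i}}$ with $c_i$ growing rapidly in $i$ (concretely $c_2\geq p-1$ and $c_i\geq p+i-3$ for $i\geq 3$). Multiplying these across all levels yields
\[
h_n(z)\equiv z^{n_0}(z^p-z)^{\lfloor n/p\rfloor}\pmod{p^{c(n)}},
\]
so the coefficient $[z^m]h_n(z)$ is, to leading order, $\binom{\lfloor n/p\rfloor}{\ell}(-1)^{\lfloor n/p\rfloor-\ell}$ with $\ell:=(n-m)/(p-1)$. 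Applying Kummer's theorem together with sub-additivity of base-$p$ digit sums $s_p(a+b)\leq s_p(a)+s_p(b)$ yields the key combinatorial inequality
\[
v_p\!\left(\binom{\lfloor n/p\rfloor}{\ell}\right)\;\geq\; v_p(\lfloor n/p\rfloor!)-v_p(\lfloor m/p\rfloor!)-\lfloor\ell/p\rfloor,
\]
which (using $\lfloor\ell/p\rfloor=\lfloor(n-m)/(p^2-p)\rfloor$ and $v_p(n!)=\lfloor n/p\rfloor+v_p(\lfloor n/p\rfloor!)$) is exactly the content of \eqref{E:vp(Y)} at the level of the leading approximation.

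The estimate \eqref{E:vp(Y-1)} I would deduce formally from \eqref{E:vp(Y)} by matrix inversion on upper triangular matrices. Writing $(Y^{-1})_{m,n}=-Y_{m,m}^{-1}\sum_{k=m+1}^{n}Y_{m,k}(Y^{-1})_{k,n}$ for $m<n$ and inducting on $n-m$, using $v_p(Y_{m,m})=-v_p(m!)$ and the bound \eqref{E:vp(Y)}, yields
\[
v_p\!\left((Y^{-1})_{m,n}\right)\;\geq\; v_p(n!)+\lfloor m/p\rfloor-\lfloor n/p\rfloor-\max_{m<k\leq n}\!\left(\lfloor(k-m)/(p^2-p)\rfloor+\lfloor(n-k)/(p^2-p)\rfloor\right).
\]
The elementary sub-additivity $\lfloor a/N\rfloor+\lfloor b/N\rfloor\leq\lfloor(a+b)/N\rfloor$ for non-negative integers $a,b$ collapses the maximum to $\lfloor(n-m)/(p^2-p)\rfloor$, giving \eqref{E:vp(Y-1)}.

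The hard part is controlling the correction $h_n-z^{n_0}(z^p-z)^{\lfloor n/p\rfloor}$ in the main estimate: the first-order modulus $p^{c(n)}$ may not exceed the claimed bound $v_p(\lfloor n/p\rfloor!)-v_p(\lfloor m/p\rfloor!)-\lfloor\ell/p\rfloor$ in all regimes, so one must iterate the approximation---expanding $\prod_i g_i^{n_i}$ level by level and tracking how Kummer contributions at each level combine multinomially---rather than invoke only the first-order congruence. This combinatorial bookkeeping, ensuring that $p$-adic valuations coming from $\binom{p^{i-1}}{\cdot}$ and $\binom{n_ip^{i-1}}{\cdot}$ at different levels $i$ add up correctly to account for the full bound, is the core difficulty of the argument.
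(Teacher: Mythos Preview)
Your structural claims and your deduction of \eqref{E:vp(Y-1)} from \eqref{E:vp(Y)} are fine and essentially match the paper (the paper normalizes $Y' = D^{-1}YE$ with suitable diagonal $D,E$ so that $Y'$ has unit diagonal and the target bound becomes simply $-\lfloor(n-m)/(p^2-p)\rfloor$, after which the same bound for $(Y')^{-1}$ is immediate from the non-floored version and integrality; your explicit recursion with sub-additivity of $\lfloor\cdot/N\rfloor$ achieves the same thing).

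The genuine gap is in your approach to \eqref{E:vp(Y)}. You propose to approximate $h_n(z)\approx z^{n_0}(z^p-z)^{\lfloor n/p\rfloor}$ and control the leading term via Kummer, and you correctly identify that the correction terms are the whole difficulty---but you do not actually control them, and ``iterating the approximation'' level by level across all factors $g_i^{n_i}$ simultaneously leads to a genuinely unpleasant multinomial bookkeeping problem that you have not carried out. The paper sidesteps this entirely by a different reduction: since $\bfm_n=\bfm_{n'}\bfm_{n''}$ whenever $n=n'+n''$ with no base-$p$ carry, and since both $x\mapsto -\lfloor x/(p^2-p)\rfloor$ and $x\mapsto -v_p(\lfloor x/p\rfloor!)$ are sub-additive, the estimate for general $n$ reduces to the case $n=p^i$. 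There one has the single polynomial $p^{p^{i-1}}f_i$ and can induct on $i$ using the recursion
\[
p^{p^i}f_{i+1}(z)=\tfrac{1}{p}\bigl(p^{p^{i-1}}f_i(z)\bigr)^p - p^{p^{i-1}(p-1)-1}\bigl(p^{p^{i-1}}f_i(z)\bigr),
\]
analyzing the multinomial expansion of the $p$th power term by term. This decomposition---reduce to prime powers, then induct on the level---is the key idea you are missing; it replaces your global product approximation by a one-variable recursion and makes the ``correction term'' analysis tractable.
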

\begin{proof}
It is clear that $Y$ is upper triangular. The vanishing of $Y_{m,n}$ when $p-1$ does not divide $n-m$ and the fact $Y_{n,n} \in (n!)^{-1}\ZZ_p^\times$ follow from Lemma~\ref{L:modified Mahler basis}(1).

Let $D$ (resp. $E$) denote the diagonal matrix whose $n$th diagonal entry is equal to $p^{\lfloor n/p\rfloor}/n!$ (resp. $p^{\lfloor n/p\rfloor}$), and set $Y'= D^{-1}YE$. It suffices to prove that 
\begin{equation}
\label{E:vp of Ymn}v_p(Y'_{m,n}) \geq -\Big\lfloor \dfrac{n-m}{p^2-p}\Big\rfloor\quad \textrm{and}\quad v_p((Y'^{-1})_{m,n}) \geq - \Big\lfloor \dfrac{n-m}{p^2-p}\Big\rfloor
\end{equation}
In fact, the second inequality follows from the first one in \eqref{E:vp of Ymn}. This is because $Y'$ is an upper triangular matrix with diagonal entries $Y'_{n,n} = n!\cdot  Y_{n,n}$, which belongs to $\ZZ_p^\times$ by the discussion above; the condition $v_p(Y'_{m,n}) \geq -\Big\lfloor \dfrac{n-m}{p^2-p}\Big\rfloor \geq - \dfrac{n-m}{p^2-p}$  then implies that $v_p((Y'^{-1})_{m,n}) \geq - \dfrac {n-m}{p^2-p}$.
In fact, let $D'$ denote the diagonal matrix whose $n$th diagonal entry is equal to $p^{\frac{n}{p^2-p}}$ and set $Z'=D'^{-1}Y'D'$. The condition on $v_p(Y'_{m,n})$'s implies $v_p(Z'_{m,n})\geq 0$ for all $m,n$. Combining with the fact that $Z'$ is an upper triangular matrix with diagonal entries belonging to $\ZZ_p^\times$, we get $v_p((Z'^{-1})_{m,n})\geq 0$ for all $m,n$. From $Y'^{-1}=D'Z'^{-1}D'^{-1}$ we obtain $v_p((Y'^{-1})_{m,n}) \geq - \dfrac {n-m}{p^2-p}$.  
Now from $(Y'^{-1})_{m,n} \in \QQ_p$ we deduce $v_p((Y'^{-1})_{m,n} )\geq - \Big\lfloor \dfrac {n-m}{p^2-p}\Big\rfloor$.

It remains to prove the first estimate \eqref{E:vp of Ymn} on $v_p(Y'_{m,n})$. Rewrite \eqref{E:mn in terms of zm} as 
\begin{equation}
\label{E:definition of Y''_m,n}
p^{\lfloor n/p\rfloor}
\bfm_n(z) =  \sum_{m =0}^n \frac{p^{\lfloor m/p\rfloor}}{m!}Y'_{m,n} z^m= \sum_{m=0}^n Y''_{m,n}z^m,\text{~with~}Y_{m,n}''=\frac{p^{\lfloor m/p\rfloor}}{m!}Y_{m,n}'.
\end{equation}
By Lemma~\ref{L:p-adic valuation of n!}(2), we need to show that for $m\equiv n \mod (p-1)$, \begin{equation}
\label{E:estimate of Y prime}
v_p(Y''_{m,n}) \geq -\Big\lfloor \dfrac{n-m}{p^2-p} \Big\rfloor +\Big\lfloor \frac mp \Big\rfloor- v_p(  m!) =  -\Big\lfloor \dfrac{n-m}{p^2-p} \Big\rfloor - v_p\Big( \Big\lfloor \frac mp \Big\rfloor !\Big).\end{equation}

We say that a function $h:\ZZ_{ \geq 0}\rightarrow \ZZ$ is \emph{sub-additive} if it satisfies $h(x)+h(y)\geq h(x+y)$ for all $x,y$. The functions $f(x)=-\lfloor \frac{x}{p^2-p}\rfloor$ and $g(x)=-v_p(\lfloor x/p\rfloor!) $ are clearly both sub-additive. By this property, we have the following fact: if we write $n=n'+n''$ such that there is no carry in this addition under base $p$ and the estimate \eqref{E:estimate of Y prime} holds for $Y_{m,n'}''$ and $Y''_{m,n''}$ with all $m \in \ZZ_{\geq 0}$, then \eqref{E:estimate of Y prime} holds for all $Y''_{m,n}$'s. In fact, we have $p^{\lfloor n/p\rfloor }\bfm_n(z)=\big( p^{\lfloor n'/p\rfloor }\bfm_{n'}(z)\big)\cdot \big(p^{\lfloor n''/p\rfloor }\bfm_{n''}(z) \big)$. The estimate \eqref{E:estimate of Y prime} for $Y''_{m,n}$ follows by comparing the coefficients of $z^m$ on both sides and the aforementioned sub-additive property. Therefore, it suffices to prove \eqref{E:estimate of Y prime} for $n=p^i$, $i\geq 0$. In this case, \eqref{E:definition of Y''_m,n} becomes $p^{p^{i-1}}f_i(z)=\sum\limits_{m\geq 0}Y''_{m,p^i}z^m$. 

We prove \eqref{E:estimate of Y prime} for $n=p^i$ by induction on $i$. It can be verified directly for $i=0,1$. Assume that \eqref{E:estimate of Y prime} is already proved for $n =p^i$ ($i \geq 1$). To simplify notations, we write 
$
p^{p^{i-1}}f_i(z)=\sum\limits_{m=0}^{p^i}a_mz^m, \text{~with~}v_p(a_m)\geq -\Big\lfloor \dfrac{p^i-m}{p^2-p} \Big\rfloor-v_p\Big( \Big\lfloor \dfrac mp \Big\rfloor!\Big).
$
Now for $n=p^{i+1}$, we rewrite 
$$
p^{p^i} f_{i+1}(z) = \frac 1p \big( p^{p^{i-1}}f_i(z)\big)^p - p^{p^{i-1}(p-1)-1} \cdot \big(p^{p^{i-1}} f_i(z) \big).
$$
The estimate \eqref{E:estimate of Y prime} for the second summand above is clear by inductive hypothesis.  For the first summand, note that a general term in the binomial expansion of $\frac 1p \big( p^{p^{i-1}}f_i(z) \big)^p$ is of the form $\frac 1p \binom{p}{j_1,\dots, j_s}\prod\limits_{k=1}^s(a_{m_k}z^{m_k})^{j_k}$, where $j_1,\dots, j_s$ are positive integers whose sum is $p$, and $m_1,\dots, m_s$ are distinct integers in $\{0,\dots, p^i \}$ that are congruent to $1$ modulo $p-1$. We shall prove the coefficient of such a monomial satisfies \eqref{E:estimate of Y prime} for $n=p^{i+1}$ and $m:=\sum\limits_{k=1}^s j_km_k$.

When $s>1$, $p$ divides the binomial coefficient $\binom{p}{j_1,\dots, j_s}$ and it suffices to show
\[
\sum_{k=1}^sj_kv_p(a_{m_k})\geq -\Big\lfloor \frac{p^{i+1}-m}{p^2-p}\Big\rfloor -v_p\Big(\Big\lfloor \frac mp\Big\rfloor ! \Big).
\]
This follows from the inductive hypothesis on $v_p(a_{m_k})$'s and the aforementioned sub-additive property of the functions $f(x)$ and $g(x)$.

When $s=1$ and hence $m=pm_1$, it suffices to prove
\begin{equation}
\label{E:equality with m1}
-p\Big\lfloor \frac{p^i-m_1}{p^2-p}\Big\rfloor -pv_p\Big(\Big\lfloor \frac{m_1}{p}\Big\rfloor! \Big)-1\geq -\Big\lfloor \frac{p^{i+1}-pm_1}{p^2-p} \Big\rfloor-v_p(m_1!).
\end{equation}
If $m_1\geq p$, this follows from the sub-additive property of $f(x)$ and Lemma~\ref{L:p-adic valuation of n!}(3). If $m_1<p$, the condition $m\equiv n \bmod (p-1)$ implies $m_1 \equiv 1 \bmod (p-1)$, and we have $m_1=1$. Then \eqref{E:equality with m1} is nothing but $-p \big\lfloor \frac{p^i-1}{p^2-p}\rfloor - 1 \geq - \big\lfloor\frac{p^{i+1}-p}{p^2-p}\big\rfloor$, which is actually an equality by a direct computation.
\end{proof}

\begin{notation}
\label{N:matrix of Up for various basis}
We have the following list of matrices of $U_p$ with respect to the given bases:
\begin{itemize}
\item $\rmU^\dagger = \rmU^{\dagger,(\varepsilon)} =\big( \rmU^{\dagger,(\varepsilon)}_{\bfe_m, \bfe_n}\big)_{m,n\geq 1}$ for $U_p: \big( \rmS^{\dagger,(\varepsilon)}, \bfB^{(\varepsilon)}\big)  \longrightarrow \big( \rmS^{\dagger,(\varepsilon)}, \bfB^{(\varepsilon)}\big)$;
\item
$\rmU_{\bfC} = \rmU_\bfC^{(\varepsilon)}=\big( \rmU^{(\varepsilon)}_{\bfC,\bff_m, \bff_n}\big)_{m,n\geq 1}$ for $U_p: \big( \rmS^{(\varepsilon)}_{p\textrm{-adic}}, \bfC^{(\varepsilon)}\big)  \longrightarrow \big( \rmS^{(\varepsilon)}_{p\textrm{-adic}}, \bfC^{(\varepsilon)}\big)$;
\item 
$\rmU_{\bfC\to \bfB} = \rmU_{\bfC\to \bfB}^{(\varepsilon)}=\big( \rmU^{(\varepsilon)}_{\bfC \to \bfB,\bfe_m, \bff_n}\big)_{m,n\geq 1}$ for $U_p: \big( \rmS^{(\varepsilon)}_{p\textrm{-adic}}, \bfC^{(\varepsilon)}\big)  \longrightarrow \big( \rmS^{\dagger,(\varepsilon)}, \bfB^{(\varepsilon)}\big)$.
\end{itemize}
For the change of basis matrix $\rmY^{(\varepsilon)}$ defined in \eqref{E:mn in terms of zm}, we have the following equalities
\begin{equation}
\label{E:many change of basis equalities}
\rmU_{\bfC \to \bfB}^{(\varepsilon)} = \rmY^{(\varepsilon)} \rmU^{(\varepsilon)}_{\bfC} \quad \textrm{and}\quad 
\rmU^{\dagger, (\varepsilon)} = \rmU^{(\varepsilon)}_{\bfC \to \bfB} \rmY^{(\varepsilon),-1}.
\end{equation}
\end{notation}

A key input in our later proof of local ghost conjecture is that the halo estimate from \cite{liu-wan-xiao} ``propagates" to estimates on $\rmU_\bfC^{(\varepsilon)}$ and $\rmU_{\bfC \to \bfB}^{(\varepsilon)}$.

\begin{proposition}
\label{P:halo estimate}
The matrix $\rmU_{\bfC}^{(\varepsilon)}$ satisfies the following halo estimate:
\begin{align}
\label{E:estimate UC}
&\rmU_{\bfC,\bff_m, \bff_n}^{(\varepsilon)} \in p^{\deg \bfe_m^{(\varepsilon)} - \lfloor \deg \bfe_n^{(\varepsilon)}/p\rfloor } \calO\langle w/p\rangle.
\end{align}
\end{proposition}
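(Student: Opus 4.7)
My plan is to compute the entries of $\rmU^{(\varepsilon)}_{\bfC}$ directly from the definition of the $U_p$-operator and bound them using the second (stronger) case of Lemma~\ref{L:modified Mahler basis}(4). The starting point is the observation that the inverse coset representatives $v_j^{-1} = \Matrix{p}{0}{-jp}{1}$ lie in the intersection $\bfM_1 \cap \Matrix{p\ZZ_p}{\ZZ_p}{p\ZZ_p}{\ZZ_p^\times}^{\det \neq 0}$: indeed, $\alpha = p \in p\ZZ_p$, $\gamma = -jp \in p\ZZ_p$, $\delta = 1 \in \ZZ_p^\times$, and $\det v_j^{-1} = p \neq 0$. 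Hence the action $|_{v_j^{-1}}$ on the modified Mahler basis of $\calC^0(\ZZ_p; \calO\llbracket w\rrbracket^{(\varepsilon)})$ has matrix entries in $p^{\max\{0, k - \lfloor l/p\rfloor\}}\calO\langle w/p\rangle$ for the $(k,l)$-entry.

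The core identity to exploit is that if $\gamma \in \Iw_p$ is any Iwahori element, then the product
\[
\gamma \cdot v_j^{-1} = \Matrix{\alpha}{\beta}{\gamma'}{\delta}\cdot \Matrix{p}{0}{-jp}{1} = \Matrix{p(\alpha - \beta j)}{\beta}{p(\gamma' - \delta j)}{\delta}
\]
still lies in $\Matrix{p\ZZ_p}{\ZZ_p}{p\ZZ_p}{\ZZ_p^\times}^{\det \neq 0}$, with determinant $p\cdot \det\gamma \in p\ZZ_p^\times$. Thus, whenever the evaluation $U_p(\bff_n)(e_{i'}) = \sum_j \bff_n(e_{i'}v_j)|_{v_j^{-1}}$ decomposes into a finite sum of contributions $\bfm_{\deg \bfe_n}(z)|_{\gamma_{j,i'}\cdot v_j^{-1}}$ with $\gamma_{j,i'}\in \Iw_p$, the single-matrix bound from Lemma~\ref{L:modified Mahler basis}(4) applies to the combined action and gives the desired $p^{\max\{0, \deg \bfe_m - \lfloor \deg \bfe_n/p\rfloor\}}\calO\langle w/p\rangle \subseteq p^{\deg \bfe_m - \lfloor \deg \bfe_n/p\rfloor}\calO\langle w/p\rangle$.

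The main task is therefore to realize $e_{i'} v_j$, via the $\GL_2(\QQ_p)$-action on $\widetilde \rmH$, in the Iwahori decomposition (\ref{E:tilde H as Iwp module}) as a combination $\sum_{i''} e_{i''}\cdot \gamma_{j, i', i''}$ with each $\gamma_{j,i',i''} \in \calO[\Iw_p]$, so that the combined product $\gamma_{j,i',i''}\cdot v_j^{-1}$ remains inside $\Matrix{p\ZZ_p}{\ZZ_p}{p\ZZ_p}{\ZZ_p^\times}^{\det\neq 0}$. Concretely, starting from $e_1 v_j = e_2 \Matrix{j}{1}{1}{0}$ and using the swap $e_i \Matrix{0}{1}{p}{0} = e_{3-i}$ together with the triviality of $\Matrix{p}{0}{0}{p}$ from Hypothesis~\ref{H:b=0}, one absorbs the non-Iwahori factor $\Matrix{0}{1}{1}{0}$ through the central character into an adjustment that becomes part of $v_j^{-1}$ after re-associating. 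The symmetric computation handles $e_2 v_j$.

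The main obstacle I foresee is managing the intermediate non-Iwahori matrices (such as $\Matrix{1}{0}{0}{p}$ or $\Matrix{p}{0}{0}{1}$) that appear when rewriting $\Matrix{0}{1}{1}{0}$ using $\Matrix{0}{1}{p}{0}$ and the central character — individually these are not covered by Lemma~\ref{L:modified Mahler basis}(4), so one must verify that after re-associating everything into a single product with $v_j^{-1}$, the overall matrix still sits in $\Matrix{p\ZZ_p}{\ZZ_p}{p\ZZ_p}{\ZZ_p^\times}^{\det\neq 0}$. Once this bookkeeping is carried out, reading off the coefficient of $\bff_m$ in the modified Mahler basis expansion of $U_p(\bff_n)(e_{i'})$ gives the required estimate.
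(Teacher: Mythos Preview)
Your key observation that $\Iw_p \cdot v_j^{-1} \subseteq \Matrix{p\ZZ_p}{\ZZ_p}{p\ZZ_p}{\ZZ_p^\times}^{\det \in p\ZZ_p^\times}$ is correct and is indeed the heart of the matter. But the plan to realize $e_{i'} v_j$ as $\sum_{i''} e_{i''}\cdot \gamma_{j,i',i''}$ with $\gamma_{j,i',i''}\in\calO[\Iw_p]$ and then treat each $\gamma_{j,i',i''}\cdot v_j^{-1}$ as a single matrix in the submonoid does not type-check: the decomposition \eqref{E:tilde H as Iwp module} is over the \emph{completed} group algebra $\calO\llbracket\Iw_p\rrbracket$, so the coefficients are topological limits of finite $\calO$-linear combinations of Iwahori elements, not single group elements. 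More seriously, your explicit attempt via the relations $e_i\Matrix{0}{1}{p}{0}=e_{3-i}$ and trivial $\Matrix{p}{0}{0}{p}$ is a dead end. Those two relations together with the $\Iw_p$-module structure do \emph{not} determine the full $\GL_2(\QQ_p)$-action on $\widetilde\rmH$; that action is genuinely additional data in the augmented module. Concretely, starting from $e_1 v_0 = e_2\Matrix{0}{1}{1}{0}$ and rewriting $\Matrix{0}{1}{1}{0}=\Matrix{0}{1}{p}{0}\Matrix{p^{-1}}{0}{0}{1}$ just gives back $e_1 v_0$, so the computation is circular.

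The fix is to drop the explicit decomposition entirely. Since $e_{i'}v_j\in\widetilde\rmH$, the decomposition \eqref{E:tilde H as Iwp module} lets you write $e_{i'}v_j = e_1\mu_1 + e_2\mu_2$ with $\mu_1,\mu_2$ in (the appropriate quotients of) $\calO\llbracket\Iw_p\rrbracket$, without knowing what $\mu_1,\mu_2$ are. Each $\mu_{i''}$ is a uniform limit of finite sums $\sum_g c_g[g]$ with $g\in\Iw_p$, and by your own computation each product $g v_j^{-1}$ lies in $\Matrix{p\ZZ_p}{\ZZ_p}{p\ZZ_p}{\ZZ_p^\times}^{\det\in p\ZZ_p^\times}$. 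The bound of Lemma~\ref{L:modified Mahler basis}(4) applies to each such single-matrix action, is uniform in $g$ and $j$, and therefore passes to the limit. This is exactly the paper's one-sentence proof: the $U_p$-action on $\rmS_{p\textrm{-adic}}^{(\varepsilon)}$ is a uniform limit of finite sums of actions $|_{\Matrix{\alpha}{\beta}{\gamma}{\delta}}$ with $\Matrix{\alpha}{\beta}{\gamma}{\delta}$ in that submonoid, whence \eqref{E:P' halo bound} gives \eqref{E:estimate UC}.
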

\begin{proof}
The $U_p$-action on $\rmS_{p\textrm{-adic}}$ is a uniform limit of finite sums of actions $|_{\Matrix \alpha\beta \gamma \delta}$ with matrices $\Matrix \alpha \beta\gamma \delta \in \Matrix{p\ZZ_p}{\ZZ_p}{p\ZZ_p}{\ZZ_p^\times}{}^{\det \in p\ZZ_p^\times}$ (see for example \cite[(2.9.1)]{liu-truong-xiao-zhao}).
The estimate \eqref{E:estimate UC} for $\rmU_{\bfC,\bff_m,\bff_n}$ follows from \eqref{E:P' halo bound}.
\end{proof}

\begin{remark}
This proposition is our new essential input to the local ghost conjecture. The analogous direct estimate of $\rmU^{\dagger, (\varepsilon)}$ is more delicate.
\end{remark}

\begin{notation}
\label{N:matrices indexed by xi}
We will often refer to a finite subset $\underline \zeta$ of $\ZZ_{\geq 1}$ of size $n$, in which case, we always order its elements as $\zeta_1 < \cdots < \zeta_n$. 
For an infinite matrix $\rmU$ (indexed by $\ZZ_{\geq 1}$) and two finite sets of positive integers
$\underline \zeta: = \{\zeta_1<\zeta_2<\dots<\zeta_n\}$ and $\underline \xi: = \{\xi_1<\xi_2<\dots<\xi_n\}$, we write $\rmU(\underline \zeta \times \underline \xi)$ for the $n\times n$-submatrix of $\rmU$ with row indices $\zeta_1, \dots, \zeta_n$ and column indices $\xi_1, \dots, \xi_n$. 
When $\underline \zeta = \underline \xi$, we write $\rmU(\underline \zeta) $ instead.
In particular, we write $\underline n = (1<2<\cdots<n)$ and thus $\rmU(\underline n)$ is the upper left $n\times n$-submatrix we have considered above.

For $\underline \zeta \subset \ZZ_{\geq 1}$ a subset, define $\deg(\underline \zeta) : = \sum\limits_{\zeta \in \underline \zeta} \deg \bfe_\zeta$.
\end{notation}

\begin{corollary}
	\label{C:halo estimate on det U C lambda xi}
	Under Notation~\ref{N:matrices indexed by xi}, write $\rmU^{(\varepsilon)}_{\bfC}(\underline \lambda \times \underline \eta)$ for the submatrix of $\rmU^{(\varepsilon)}_{\bfC}$ with row indices in $\underline \lambda$ and column indices in $\underline \eta$. Then
	$$
	\label{E:halo estimate on det U C lambda xi}
	v_p\big(\det \big( \rmU^{(\varepsilon)}_{\bfC}(\underline \lambda \times \underline \eta)\big) \big) \geq  \sum_{i=1}^n \Big(\deg \bfe^{(\varepsilon)}_{\lambda_i} - \Big\lfloor \frac{\deg \bfe^{(\varepsilon)}_{\eta_i}}p \Big\rfloor\Big)
	$$
\end{corollary}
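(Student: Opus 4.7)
\smallskip

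\noindent\textbf{Proof proposal.} The plan is to apply the Leibniz expansion of the determinant and use the entrywise halo estimate from Proposition~\ref{P:halo estimate}. Since the result is an entirely formal consequence of a pointwise bound, no deeper structural input is needed; the only thing to observe is that the row-index contribution is invariant under permutations while the column-index contribution is the same in every term.

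\smallskip

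First, I would write
\[
\det \big( \rmU^{(\varepsilon)}_{\bfC}(\underline \lambda \times \underline \eta)\big)
= \sum_{\sigma \in S_n} \mathrm{sgn}(\sigma) \prod_{i=1}^n \rmU^{(\varepsilon)}_{\bfC,\, \bff_{\lambda_{\sigma(i)}},\, \bff_{\eta_i}}.
\]
By Proposition~\ref{P:halo estimate}, each factor in the product belongs to $p^{\deg \bfe^{(\varepsilon)}_{\lambda_{\sigma(i)}} - \lfloor \deg \bfe^{(\varepsilon)}_{\eta_i}/p \rfloor}\,\calO\langle w/p\rangle$. Hence for each $\sigma$,
\[
v_p\Big( \prod_{i=1}^n \rmU^{(\varepsilon)}_{\bfC,\, \bff_{\lambda_{\sigma(i)}},\, \bff_{\eta_i}}\Big)
\geq \sum_{i=1}^n \deg \bfe^{(\varepsilon)}_{\lambda_{\sigma(i)}} - \sum_{i=1}^n \Big\lfloor \frac{\deg \bfe^{(\varepsilon)}_{\eta_i}}{p}\Big\rfloor.
\]

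\smallskip

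Second, since $\sigma$ is a bijection on $\{1,\dots,n\}$, the first sum on the right equals $\sum_{i=1}^n \deg \bfe^{(\varepsilon)}_{\lambda_i}$, independent of $\sigma$. So every term in the Leibniz expansion has $p$-adic valuation at least $\sum_{i=1}^n\big(\deg \bfe^{(\varepsilon)}_{\lambda_i} - \lfloor \deg \bfe^{(\varepsilon)}_{\eta_i}/p\rfloor\big)$, and the same bound propagates to the sum (and hence to the determinant) by the usual non-archimedean triangle inequality for $v_p$ on $\calO\langle w/p\rangle$. There is no hard step here; the argument is essentially a one-line consequence of Proposition~\ref{P:halo estimate}.
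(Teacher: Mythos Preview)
Your proof is correct and follows essentially the same approach as the paper: the paper's argument is likewise just the Leibniz expansion of the determinant together with the entrywise bound from Proposition~\ref{P:halo estimate}, using that the row-index sum is permutation-invariant.
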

\begin{proof}
The estimate follows directly from Proposition~\ref{P:halo estimate} and the equality $\det \big(\rmU_\bfC(\underline \lambda \times \underline \eta)\big) = \sum\limits_{ \sigma \in S_n}\sgn(\sigma)\cdot \rmU_{\bfC,\bff_{\lambda_{\sigma(1)}}, \bff_{\eta_1}}\cdots \rmU_{\bfC,\bff_{\lambda_{\sigma(n)}}, \bff_{\eta_n}}$.
\end{proof}

\begin{definition-proposition}[General corank theorem]
\label{DP:general corank theorem}
For every ${k} = k_\varepsilon + (p-1)k_{\bullet}$ and every two finite sets of positive integers $\underline \zeta$ and $\underline \xi$  of size $n$ as above, 
we set
\begin{align*}
r_{\underline \zeta \times \underline \xi}(k) = r^{(\varepsilon)}_ {\underline \zeta \times\underline \xi}(k)&: = \#\big\{ i \in \{1, \dots, d_k^\Iw(\tilde \varepsilon_1)\} \; \big |\; i\in \underline \xi \textrm{ and }d_k^\Iw(\tilde \varepsilon_1)+1-i \in \underline \zeta
\big\},\\
s_{\underline \xi}(k)= s^{(\varepsilon)}_{\underline \xi}(k)&: = \#\big\{i\in \underline \xi \; \big |\; i >d_k^\Iw (\tilde \varepsilon_1)
\big\}.
\end{align*}
In other words, $r_{\underline \zeta \times \underline \xi}(k)$ is the number of ``classical basis" elements in $\bfB^{(\varepsilon)}$ indexed by $\underline \xi$ that are sent to $\underline \zeta$ by $\AL_{(k,\tilde \varepsilon_1)}$, and  $s_{\underline \xi}(k)$ is the number of basis elements in $\bfB^{(\varepsilon)}$ indexed by $\underline \xi$ which are ``non-classical".

Then the corank of $\rmU_k^{\dagger, (\varepsilon)}(\underline \zeta \times \underline \xi)$ is at least
\begin{equation}
\label{E:corank of U(xi)}
m_{\underline \zeta \times \underline \xi}(k)=m^{(\varepsilon)}_{\underline \zeta \times \underline \xi}(k): = 
n - d_k^\ur(\varepsilon_1) - r_{\underline \zeta \times \underline \xi}(k) - s_{\underline \xi}(k).
\end{equation}
Consequently, $\det \big(\rmU^{\dagger,(\varepsilon)}(\underline \zeta \times \underline \xi)\big) \in \calO\langle w/ p\rangle $ is divisible by $((w-w_k)/p)^{\max\{0, m_{\underline \zeta \times \underline \xi}(k)\}}$ in $\calO\langle w/p\rangle$.

When $\underline \zeta =\underline \xi$, we write $r_{\underline \zeta} = r^{(\varepsilon)}_{\underline \zeta}(k)$ and $m_{\underline \zeta} = m^{(\varepsilon)}_{\underline \zeta}(k)$ for $r_{\underline \zeta \times \underline \zeta}(k)$ and $m_{\underline \zeta\times \underline \zeta}(k)$, respectively.

Taking $\underline \zeta =\underline \xi = \underline n$ with $d_k^{\ur}(\varepsilon_1) < n< d_k^{\Iw}(\tilde \varepsilon_1) - d_k^\ur(\varepsilon_1)$ and noting that $m_{\underline n \times \underline n}(k) = m_n(k)$, we recover Corollary~\ref{C:philosophical explanation of ghost series}.
\end{definition-proposition}
\begin{proof}
By the property of theta map \eqref{E:Ukdagger is block upper triangular}, $\rmU^\dagger_k$ is a upper triangular block matrix. So
$$
\rank \big(\rmU^\dagger_k(\underline \zeta \times \underline \xi) \big)\leq s_{\underline \xi}(k) + \rank\big( \rmU^\dagger_k\big((\underline \zeta\cap \underline{d_k^\Iw}) \times (\underline \xi\cap \underline{d_k^\Iw})\big) \big).
$$
By Proposition~\ref{P:oldform basis}(2), $\rmU^{\Iw}_k$ is the sum of a matrix with rank $\leq d_k^\ur$ and an anti-diagonal matrix; so
$$
\rank\big( \rmU^\dagger_k\big((\underline \zeta\cap \underline{d_k^\Iw}) \times (\underline \xi\cap \underline{d_k^\Iw})\big) \big) \leq d_k^\ur + r_{\underline \zeta \times \underline \xi}(k);
$$
The corank formula \eqref{E:corank of U(xi)} follows from combining above two inequalities.
The corollary and the last statement are immediate consequences of the above discussion.
\end{proof}

\subsection{Refined halo estimates}
\label{SS:Mahler estimate finer} 
In our later proof of the local ghost theorem, we inevitably encounter some rather pathological cases, which require slightly refined halo bounds depending on the $p$-adic expansions of the row and column indices (see the proof of Proposition~\ref{P:estimate of overcoefficients}(1)). The readers are invited to skip this portion on the first reading, and only come back after understanding the complication as seen in the proof of Proposition~\ref{P:estimate of overcoefficients}(1).

For this part of the argument, we fix a matrix $\Matrix {pa}b{pc}d \in {\Matrix{p\ZZ_p}{\ZZ_p}{p\ZZ_p}{\ZZ_p^\times}}$ with determinant $p^u\delta \in p^u\ZZ_p^\times$.
Let $P = (P_{m,n})_{m, n \geq 0}$ and $Q = (Q_{m,n})_{m,n\geq 0}$  respectively denote the matrix of
\begin{eqnarray*}
\big|_{\Matrix {pa}b{pc}d}: \big( \calC^0(\ZZ_p; \calO\llbracket w\rrbracket^{(\varepsilon)}), (\bfm_n(z))_{n \geq 0}\big)
&\to& \big( \calC^0(\ZZ_p; \calO\llbracket w\rrbracket^{(\varepsilon)}), (\bfm_n(z))_{n \geq 0}\big) \quad \textrm{and}
\\
\big|_{\Matrix {pa}b{pc}d}: \big( \calC^0(\ZZ_p; \calO\llbracket w\rrbracket^{(\varepsilon)}), (\bfm_n(z))_{n \geq 0}\big)
&\to& \big( \calC^0(\ZZ_p; \calO\llbracket w\rrbracket^{(\varepsilon)}), \big(\tbinom zn\big)_{n \geq 0}\big).
\end{eqnarray*}

\begin{notation}\label{N:D(m,n)}
For two positive integers $m,n$, write $m = m_0+pm_1+\cdots$ and $n = n_0+pn_1+\cdots $ for their $p$-adic expansions (so that each $m_i$ and $n_i$ belongs to $\{0, \dots, p-1\}$).
Let $D(m,n)$ denote the number of indices $i\geq 0$ such that $n_{i+1} > m_i$.
\end{notation}

We refer to Lemma~\ref{L:elementary D} for some elementary facts regarding the numbers $D(m,n)$.

\begin{proposition}
\label{P:finer halo estimate}We have the following refined estimate:
\begin{equation}
\label{E:fine estimate of Pmn}
P_{m,n}, \,Q_{m,n} \in p^{D(m,n)} \cdot  p^{m-\lfloor n/p\rfloor} \calO\langle \tfrac wp\rangle.
\end{equation}
\end{proposition}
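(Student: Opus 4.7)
The plan is to refine the proof of the basic halo estimate in Lemma~\ref{L:modified Mahler basis}(4), which already yields the weaker bound $P_{m,n}, Q_{m,n} \in p^{\max\{0, m - \lfloor n/p\rfloor\}} \calO\langle w/p\rangle$, by carefully extracting the additional factor $p^{D(m,n)}$ from a digit-by-digit analysis of the $p$-adic expansions of $m$ and $n$. The starting point is the explicit action formula \eqref{E:induced representation action extended} applied to $\bfm_n(z) = z^{n_0} f_1(z)^{n_1} f_2(z)^{n_2}\cdots$, combined with the iterative definition $f_i(z) = (f_{i-1}(z)^p - f_{i-1}(z))/p$. Since $B \in \rmM_\infty(\ZZ_p)^\times$ is upper-triangular with unit diagonal by Lemma~\ref{L:modified Mahler basis}(2), it preserves valuation estimates of the form $p^{D(m,n)}p^{m-\lfloor n/p\rfloor}\calO\langle w/p\rangle$ (using that $D(m,n)$ is monotone enough in $m$, cf.\ Lemma~\ref{L:elementary D}), so it suffices to prove the claimed bound for $Q_{m,n}$ alone.

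First I would establish a refined pointwise estimate: for each $i \geq 1$, the composition
\[
f_i\Big(\tfrac{paz+b}{pcz+d}\Big) = \frac{G_i(z)}{(pcz+d)^{p^i}}
\]
with $G_i(z) \in \calO\langle pz\rangle$, and in fact the reduction of $G_i(z)$ modulo higher powers of $p$ is governed by $f_i(b/d)$ together with the Taylor expansion, giving an improvement of exactly one additional factor of $p$ in the ``digit $i-1$ slot'' relative to the naive bound from Lemma~\ref{L:modified Mahler basis}(4). Concretely, one shows that each factor $f_i(\tfrac{paz+b}{pcz+d})^{n_i}$ contributes a power series in $z$ whose coefficient of $z^j$ lies in $p^{n_i + \lfloor j/p\rfloor - n_i\cdot p^{i-1}}\calO\langle w/p\rangle$ with an additional factor of $p$ whenever the $(i-1)$th digit of $j$ falls below $n_i$. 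The universal character factor $(1+w)^{\log((pcz+d)/\omega(\bar d))/p}$ is harmless: it lies in $\calO\langle w/p\rangle\langle pz\rangle$, so it only contributes to the ``lower'' digit positions.

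The main combinatorial step is then to package these local digit improvements into the global estimate. Expanding $\bfm_n\big(\tfrac{paz+b}{pcz+d}\big)$ via the multinomial theorem in the factors $z^{n_0}, f_1^{n_1}, f_2^{n_2},\ldots$ and matching the $z^m$ (resp.\ $\binom{z}{m}$) coefficient, the contribution of each factor $f_{i+1}^{n_{i+1}}$ to the ``digit $i$'' slot of $m$ must absorb $n_{i+1} \cdot p^i$ worth of ``$z^{p^i}$ content''; when $n_{i+1} > m_i$, this slot is insufficient, forcing a ``carry'' which, by the refined pointwise estimate above, produces one extra power of $p$. Summing over all indices $i$ where $n_{i+1} > m_i$ yields exactly the factor $p^{D(m,n)}$.

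The main obstacle will be making the digit-tracking argument truly rigorous without ambiguity between carries: one must check that the $D(m,n)$ extra factors of $p$ are genuinely independent contributions and not double-counted across adjacent digit positions. I expect to handle this by induction on the highest nonzero digit of $n$, peeling off one factor $f_N(z)^{n_N}$ at a time and applying the induction hypothesis to $\bfm_{n - p^N n_N}$, using sub-additivity of $D(-,-)$ (analogous to the sub-additivity arguments in Lemma~\ref{L:estimate of Y}) together with the elementary identities for $D(m,n)$ stated in Lemma~\ref{L:elementary D}. The pathological cases involve $m_i = 0$ or $n_{i+1} = p-1$, which should be dealt with by direct computation in a small base case.
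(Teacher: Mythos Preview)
Your reduction from $P_{m,n}$ to $Q_{m,n}$ via the upper-triangular change-of-basis matrix $B$ matches the paper's first step exactly (using Lemma~\ref{L:elementary D}(1)). After that point, however, your approach diverges substantially from the paper's, and the sketch contains a genuine gap.

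The paper does \emph{not} decompose $\bfm_n$ into its factors $z^{n_0}f_1^{n_1}f_2^{n_2}\cdots$ and track digits factor-by-factor. Instead, it expands the universal character factor $(1+w)^{\log((pcz+d)/\omega(\bar d))/p}$ using \cite[Lemma~3.13]{liu-wan-xiao} as a $\ZZ_p$-combination of $p^{s-r}\binom{z}{s}$, then writes $n!\cdot\bfm_n\big(\tfrac{paz+b}{pcz+d}\big)=\sum_t c_t\, t!\binom{z}{t}$ with the two bounds $v_p(c_t)\geq t$ and $v_p(c_t)\geq v_p(n!/t!)$ (the first from \cite[Lemma~3.11]{liu-wan-xiao}, the second from integrality). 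The product $p^s\binom{z}{s}\cdot\bfm_n(\cdots)$ is then expanded via the Mahler-basis product identity $\binom{z}{s}\binom{z}{t}=\sum_j\binom{j}{j-s,j-t,s+t-j}\binom{z}{j}$ (Lemma~\ref{L:elementary D}(3)), and everything reduces to the single combinatorial inequality
\[
s-m+\Big\lfloor\tfrac{n}{p}\Big\rfloor+\max\Big\{t+v_p\big(\tfrac{t!}{n!}\big),0\Big\}+v_p\binom{m}{m-s,m-t,s+t-m}\geq D(m,n),
\]
which is Lemma~\ref{L:elementary D}(4) and whose proof hinges on $v_p\binom{m}{m-\lfloor n/p\rfloor}\geq D(m,n)$.

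The gap in your approach is that the multiplicative decomposition of $\bfm_n$ does not interact cleanly with the \emph{Mahler basis} expansion you need for $Q_{m,n}$. When you multiply the Mahler expansions of the individual factors $f_i\big(\tfrac{paz+b}{pcz+d}\big)^{n_i}$, extracting the coefficient of $\binom{z}{m}$ requires summing over all decompositions $m=\sum j_i$ (with interactions governed by the product identity above), and there is no reason the digits of $m$ should align with the contributions of the individual $f_i$'s. Your claimed ``sub-additivity of $D(-,-)$'' is not established anywhere (and is not obviously true in the form you would need), so the induction on the highest nonzero digit of $n$ has no clear base to stand on. The paper's route sidesteps this entirely by never factoring $\bfm_n$ and instead pushing all the carry-counting into the multinomial coefficient $\binom{m}{m-s,m-t,s+t-m}$.
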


\begin{proof}
We first explain that \eqref{E:fine estimate of Pmn} for the matrix $Q$ implies that for $P$. 
Recall the change of basis matrix $B$ from the usual Mahler basis $\{\binom zn\,|\, n \in \ZZ_{\geq 0}\}$ to the modified Mahler basis $\{\bfm_n(z)\,|\, n\in \ZZ_{\geq 0}\}$ as introduced in Lemma~\ref{L:modified Mahler basis}(2). Then $B$ and hence $B^{-1}$ are upper triangular matrices with entries in $\ZZ_p$ and diagonal entries in $\ZZ_p^\times$.
As $P = B^{-1}Q$, we have
$
P_{m,n} = \sum\limits_{\ell \geq 0} (B^{-1})_{m,\ell} Q_{\ell, n}.
$
So it is enough to prove that, when $\ell \geq m$
$$
D(\ell, n) + \ell -\lfloor n/p\rfloor \geq D(m,n) + m -\lfloor n/p\rfloor.
$$
But this follows from Lemma~\ref{L:elementary D}(1).

Now we focus on proving \eqref{E:fine estimate of Pmn} for $Q_{m,n}$.
Recall from \eqref{E:induced representation action extended} that
\begin{align}
\label{E:m slash abcd}
\bfm_n\big|_{\Matrix{pa}b{pc}d}(z)\ & = \varepsilon(\delta /\bar  d, \bar d) \cdot (1+w)^{\log(\frac{pcz+d}{\omega(\bar d)}) / p} \bfm_n\Big( \frac{paz+b}{pcz+d}\Big)
\\
\nonumber
& = \sum_{r \geq 0}\varepsilon(\delta /\bar  d, \bar d) \cdot p^r \Big(\frac wp\Big)^r \binom{\log(\frac{pcz+d}{\omega(\bar d)})/ p}{r} \cdot  \bfm_n\Big( \frac{paz+b}{pcz+d}\Big).
\end{align}
We need to go back to several arguments in \cite[\S\,3]{liu-wan-xiao}. As proved in \cite[Lemma~3.13]{liu-wan-xiao},  $\dbinom{\log(\frac{pcz+d}{\omega(\bar d)}) / p}{r}$ is a $\ZZ_p$-linear combination of $p^{s-r} \dbinom zs$ for $s \in \ZZ_{\geq 0}$. So to prove \eqref{E:fine estimate of Pmn} for $Q_{m,n}$, it suffices to prove that, for every $s\geq 0$, when expanding
$$p^s\binom zs \cdot \bfm_n\Big( \frac{paz+b}{pcz+d}\Big)$$ 
with respect to the Mahler basis $\{\binom zn\;|\; n \in \ZZ_{\geq 0}\}$, the $m$th coefficient has $p$-adic valuation greater than or equal to $m-\lfloor n/ p\rfloor + D(m,n)$. For this, we need to reproduce the argument in \cite[Lemma~3.12]{liu-wan-xiao}: write 
$$
n!\cdot \bfm_n\Big( \frac{paz+b}{pcz+d}\Big) = \sum_{t \geq 0} c_t \cdot t! \binom zt \in \ZZ_p\llbracket pz\rrbracket,
$$
then \cite[Lemma~3.11]{liu-wan-xiao} implies that $v_p(c_t) \geq t$.  Moreover, as $\bfm_n(\frac{paz+b}{pcz+d}) 
\in
\calC(\ZZ_p, \calO)$, we know that $v_p(c_t)\geq v_p(\frac{n!}{t!})$ and hence $v_p(c_t)\geq \max\{t,v_p(\frac{n!}{t!})\}$.
Using the combinatorial identity in Lemma~\ref{L:elementary D}(3), we deduce that 
\begin{align*}
p^s \binom zs \cdot \bfm_n\Big( \frac{paz+b}{pcz+d}\Big) \ &= \sum_{t \geq 0}  c_tp^s \frac{t!}{n!} \binom zs \binom zt
\\
\ & = \sum_{t \geq 0} \sum_{j \geq \max\{s,t\}}^{s+t} c_t p^s \frac {t!}{n!} \binom{j}{j-s, j-t,s+t-j} \binom zj.
\end{align*}
Taking the term with $j = m\geq s$, we need to show that whenever $s+t \geq m\geq t$, we have
$$
v_p(c_t) + v_p \Big( p^{s} \frac{t!}{n!} \cdot \binom m{m-s, m-t, s+t-m} \Big) \geq m-\Big\lfloor \frac np\Big \rfloor + D(m,n).
$$
Since $v_p(c_t) \geq \max \{t, v_p(\frac{n!}{t!})\}$, we need to show that
$$
s-m + \Big\lfloor\frac np\Big\rfloor+\max \Big\{t+v_p \Big( \frac{t!}{n!}\Big), 0\Big\} +v_p \Big( \binom m{m-s, m-t, s+t-m} \Big) \geq D(m,n).
$$
This is proved in Lemma~\ref{L:elementary D}(4).
\end{proof}

\begin{notation}
\label{N:definition of D tuple}
Let $\underline \lambda$ and $\underline \eta$ be two subsets of positive integers of cardinality $n$; for each such integer $\lambda_i$, we write $\deg \bfe^{(\varepsilon)}_{\lambda_i} = \lambda_{i,0}+p\lambda_{i,1}+\cdots$ in its $p$-adic expansion, and similarly for $\eta_i$'s. \emph{We  reiterate that, we are expanding $\deg \bfe_{\lambda_i}^{(\varepsilon)}$ (as opposed to $\lambda_i$)}, as they correspond to the $m$ and $n$ in Proposition~\ref{P:finer halo estimate}.
For each $j\geq 0$, we define 
$$
D_{\leq \alpha}^{(\varepsilon)}(\underline \lambda,j): = \#\{i\;|\;  \lambda_{i,j} \leq \alpha\} ,
$$
counting the number of $\deg \bfe_{\lambda_i}^{(\varepsilon)}$'s whose $j$th digit is less than or equal to $\alpha$. When $\alpha=0$, we write $D^{(\varepsilon)}_{=0}(\underline \lambda, j)$ for $D_{\leq \alpha}^{(\varepsilon)}(\underline \lambda,j)$. We define $D_{=0}^{(\varepsilon)}(\underline \eta, j)$ similarly.
We define two tuple versions of $D(m,n)$ as follows:
$$
D^{(\varepsilon)}(\underline \lambda, \underline \eta) = \sum_{j \geq 0} \Big( \max \big\{D^{(\varepsilon)}_{=0}(\underline \lambda, j)-D^{(\varepsilon)}_{=0}(\underline \eta, j+1), \ 0\big \}\Big)
$$
and 
$$
\DD^{(\varepsilon)}(\underline \lambda, \underline \eta) = \sum_{j \geq 0} \Big( \max\limits_{0\leq \alpha\leq p-2} \big\{D^{(\varepsilon)}_{\leq \alpha}(\underline \lambda, j)-D^{(\varepsilon)}_{\leq \alpha}(\underline \eta, j+1), \ 0\big \}\Big).
$$



\begin{lemma}
\label{L:comparing D(lambda,eta') and D(lambda eta)}
Under the above notations, if $\underline \eta'$ is given by $\eta_i'=\eta_i$ except for one $i_0$ where $\eta'_{i_0}=\eta_{i_0}+1$, then we have 
\begin{equation}\label{E:a complicated inequality comparing D(lambda,eta) for different tuples}
D^{(\varepsilon)}(\underline \lambda,\underline \eta')+v_p\bigg( \frac{\big\lfloor \deg\bfe_{\eta'^{(\varepsilon)}_{i_0}}/p\big\rfloor! }{\big\lfloor \deg\bfe_{\eta_{i_0}^{(\varepsilon)}}/p\big\rfloor!}\bigg)\geq D^{(\varepsilon)}(\underline \lambda,\underline \eta).
	\end{equation}
\end{lemma}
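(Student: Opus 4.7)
The plan is to reduce \eqref{E:a complicated inequality comparing D(lambda,eta) for different tuples} to an elementary analysis of the $p$-adic digit cascade produced by passing from $\eta_{i_0}$ to $\eta_{i_0}+1$. Write $M := \deg \bfe^{(\varepsilon)}_{\eta_{i_0}}$ and $M' := \deg \bfe^{(\varepsilon)}_{\eta'_{i_0}}$. From the ordering of the power basis in \eqref{E:basis of Sdagger}, together with the assumption $1 \leq a \leq p-4$, consecutive degrees in $\bfB^{(\varepsilon)}$ differ by $a$ or by $p-1-a$, so $M' - M = \epsilon$ for some $\epsilon \in \{a,\, p-1-a\} \subseteq \{1, \ldots, p-2\}$. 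In particular, adding $\epsilon$ to $M$ produces at most one carry originating at the zeroth digit, and this carry may cascade upward through a run of digits equal to $p-1$.

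The first step is to bound the change on the LHS. Let $m_i, m'_i$ denote the $p$-adic digits of $M$ and $M'$. Since $\underline{\eta}$ and $\underline{\eta}'$ differ only at index $i_0$, the quantity $\Delta_j := D^{(\varepsilon)}_{=0}(\underline \eta, j+1) - D^{(\varepsilon)}_{=0}(\underline \eta', j+1)$ takes values in $\{-1, 0, +1\}$, with $\Delta_j = -1$ precisely when $m_{j+1} \neq 0$ and $m'_{j+1} = 0$. Applying the elementary inequality $\max\{A-B, 0\} - \max\{A-B', 0\} \leq \max\{B' - B, 0\}$ term by term to the definition of $D^{(\varepsilon)}$, I obtain
\[
D^{(\varepsilon)}(\underline \lambda, \underline \eta) - D^{(\varepsilon)}(\underline \lambda, \underline \eta') \ \leq\ \#\big\{i \geq 1 \;\big|\; m_i \neq 0 \text{ and } m'_i = 0 \big\}.
\]

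To complete the argument, I will split into two cases. If $m_0 + \epsilon < p$, no carry occurs and all higher digits are unchanged, so the count above is zero, $\lfloor M'/p\rfloor = \lfloor M/p\rfloor$, and $D^{(\varepsilon)}(\underline \lambda, \underline \eta) = D^{(\varepsilon)}(\underline \lambda, \underline \eta')$; thus \eqref{E:a complicated inequality comparing D(lambda,eta) for different tuples} holds trivially. Otherwise, let $r \geq 0$ be the cascade length, namely $m_1 = \cdots = m_r = p-1$ and $m_{r+1} < p-1$ (with the convention that digits past the length of $M$ are zero). Then $m'_i = 0$ for $1 \leq i \leq r$ while $m'_{r+1} = m_{r+1} + 1 \neq 0$ and $m'_i = m_i$ for $i > r+1$, so the count equals exactly $r$. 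On the other hand, $\lfloor M'/p\rfloor = \lfloor M/p\rfloor + 1$, and since the lowest $r$ digits of $\lfloor M/p\rfloor$ are all $p-1$ while the next digit is strictly less than $p-1$, one has $v_p(\lfloor M/p\rfloor + 1) = r$. This yields $v_p\bigl(\lfloor M'/p\rfloor!/\lfloor M/p\rfloor!\bigr) = v_p(\lfloor M'/p\rfloor) = r$, matching the upper bound displayed above.

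The main point to handle carefully is the termwise sign analysis in the first step: one must verify that only the ``nonzero $\to$ zero'' digit toggles (not the reverse direction) can decrease $D^{(\varepsilon)}$, and that the cascade structure produces exactly $r$ such toggles. Once this is set up, matching against the valuation $v_p(\lfloor M/p\rfloor + 1) = r$ is immediate from the anatomy of carrying $+1$ through a run of $(p-1)$'s.
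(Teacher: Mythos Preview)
Your proof is correct and follows essentially the same approach as the paper's. Both arguments identify $v_p\big(\lfloor M'/p\rfloor!/\lfloor M/p\rfloor!\big)$ with the number of positions $i\ge 1$ at which a nonzero digit of $M$ becomes a zero digit of $M'$, and then observe termwise that each such toggle can decrease $D^{(\varepsilon)}(\underline\lambda,\underline\eta)$ by at most one; the paper phrases this as ``the number of carries when adding $\delta_{i_0}$ to $\lfloor M/p\rfloor$ equals the number of additional zeros produced,'' while you spell out the cascade length $r$ and the case split on whether $m_0+\epsilon\ge p$ explicitly.
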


\begin{proof}
We have $\deg\bfe_{\eta'_{i_0}}-\deg\bfe_{\eta_{i_0}}\in \{a,p-1-a \}$, so $\delta_{i_0}:= \lfloor\deg\bfe_{\eta'_{i_0}}/p\rfloor-\lfloor\deg\bfe_{\eta_{i_0}}/p\rfloor\in \{0,1\}$.
Note that $v_p\Big( \frac{\lfloor \deg\bfe_{\eta'_{i_0}}/p\rfloor! }{\lfloor \deg\bfe_{\eta_{i_0}}/p\rfloor!}\Big)$ is equal to the number of carries when computing the sum of $\lfloor \deg\bfe_{\eta_{i_0}}/p\rfloor$ and $\delta_{i_0}$. Yet this number is exactly the same as the number of \emph{additional} zeros we produce in the $p$-adic expansion of $\lfloor \deg\bfe_{\eta'_{i_0}}/p\rfloor$. The lemma follows from this, and the definition of $D(\underline \lambda, \underline \eta)$.
\end{proof}

\end{notation}
\begin{corollary}
\label{C:refined halo estimate}
Keep the notation as above. Write $\rmU^{(\varepsilon)}_{\bfC}(\underline \lambda \times \underline \eta)$ for the submatrix of $\rmU^{(\varepsilon)}_{\bfC}$ with row indices in $\underline \lambda$ and column indices in $\underline \eta$. Then
\begin{align}
\label{E:refined halo}
v_p\big(\det \big( \rmU^{(\varepsilon)}_{\bfC}(\underline \lambda \times \underline \eta)\big) \big) &\geq \DD^{(\varepsilon)}(\underline \lambda, \underline \eta) + \sum_{i=1}^n \Big(\deg \bfe^{(\varepsilon)}_{\lambda_i} - \Big\lfloor \frac{\deg \bfe^{(\varepsilon)}_{\eta_i}}p \Big\rfloor\Big)\\
&\geq D^{(\varepsilon)}(\underline \lambda, \underline \eta) + \sum_{i=1}^n \Big(\deg \bfe^{(\varepsilon)}_{\lambda_i} - \Big\lfloor \frac{\deg \bfe^{(\varepsilon)}_{\eta_i}}p \Big\rfloor\Big)\nonumber
\end{align}
\end{corollary}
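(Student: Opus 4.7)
The plan is to combine the entry-wise refined halo estimate of Proposition~\ref{P:finer halo estimate} with a Hall-style counting argument on permutations. Since the $U_p$-operator on $\rmS^{(\varepsilon)}_{p\textrm{-adic}}$ is a uniform limit of finite sums of actions $|_{\Matrix{pa}{b}{pc}{d}}$ with $\Matrix{pa}{b}{pc}{d} \in \Matrix{p\ZZ_p}{\ZZ_p}{p\ZZ_p}{\ZZ_p^\times}^{\det \in p\ZZ_p^\times}$, Proposition~\ref{P:finer halo estimate} (applied to $P$, the matrix with respect to the modified Mahler basis) yields
$$
\rmU^{(\varepsilon)}_{\bfC,\bff_m,\bff_n} \in p^{D(\deg \bfe^{(\varepsilon)}_m,\,\deg \bfe^{(\varepsilon)}_n)} \cdot p^{\deg \bfe^{(\varepsilon)}_m - \lfloor \deg \bfe^{(\varepsilon)}_n/p\rfloor} \,\calO\langle w/p\rangle.
$$

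Expanding the determinant via the Leibniz formula,
$$
\det\bigl(\rmU^{(\varepsilon)}_{\bfC}(\underline\lambda\times\underline\eta)\bigr) = \sum_{\sigma\in S_n} \sgn(\sigma)\prod_{i=1}^n \rmU^{(\varepsilon)}_{\bfC,\bff_{\lambda_{\sigma(i)}},\bff_{\eta_i}},
$$
each summand has $p$-adic valuation at least $\sum_{i=1}^n\bigl(\deg\bfe^{(\varepsilon)}_{\lambda_{\sigma(i)}} - \lfloor \deg\bfe^{(\varepsilon)}_{\eta_i}/p\rfloor\bigr) + \sum_{i=1}^n D(\deg\bfe^{(\varepsilon)}_{\lambda_{\sigma(i)}},\deg\bfe^{(\varepsilon)}_{\eta_i})$. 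The first (linear) sum is permutation-invariant and matches the linear term on the right-hand side of \eqref{E:refined halo}. Thus it suffices to prove the combinatorial inequality
$$
\sum_{i=1}^n D(\deg\bfe^{(\varepsilon)}_{\lambda_{\sigma(i)}},\deg\bfe^{(\varepsilon)}_{\eta_i}) \;\geq\; \DD^{(\varepsilon)}(\underline\lambda,\underline\eta) \qquad \text{for every }\sigma\in S_n.
$$

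This is the main (but elementary) obstacle. By Notation~\ref{N:D(m,n)},
$$
\sum_{i=1}^n D(\deg\bfe^{(\varepsilon)}_{\lambda_{\sigma(i)}},\deg\bfe^{(\varepsilon)}_{\eta_i}) = \sum_{j\geq 0}\,\#\bigl\{i : \eta_{i,j+1} > \lambda_{\sigma(i),j}\bigr\},
$$
so it is enough to prove, for each fixed $j\geq 0$ and each fixed $\alpha\in\{0,\dots,p-2\}$,
$$
\#\bigl\{i : \eta_{i,j+1} > \lambda_{\sigma(i),j}\bigr\} \;\geq\; D^{(\varepsilon)}_{\leq \alpha}(\underline\lambda,j) - D^{(\varepsilon)}_{\leq \alpha}(\underline\eta,j+1).
$$
This is a pigeonhole/Hall argument: among the $D^{(\varepsilon)}_{\leq \alpha}(\underline\lambda,j)$ indices $i$ with $\lambda_{\sigma(i),j}\leq \alpha$, at most $D^{(\varepsilon)}_{\leq \alpha}(\underline\eta,j+1)$ can be matched to an $\eta_i$ with $\eta_{i,j+1}\leq \alpha$; the remaining ones satisfy $\eta_{i,j+1} > \alpha \geq \lambda_{\sigma(i),j}$ and are counted on the left. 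Taking the maximum over $\alpha$ and summing over $j$ yields $\DD^{(\varepsilon)}(\underline\lambda,\underline\eta)$. This establishes the first inequality of \eqref{E:refined halo}.

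The second inequality $\DD^{(\varepsilon)}(\underline\lambda,\underline\eta) \geq D^{(\varepsilon)}(\underline\lambda,\underline\eta)$ is immediate, as $D^{(\varepsilon)}$ is the special case $\alpha=0$ inside the maximum defining $\DD^{(\varepsilon)}$.
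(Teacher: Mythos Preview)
Your proof is correct and follows essentially the same approach as the paper's: expand the determinant via the Leibniz formula, apply the entrywise refined estimate of Proposition~\ref{P:finer halo estimate}, and reduce to the combinatorial inequality $\sum_i D(\deg\bfe_{\lambda_{\sigma(i)}},\deg\bfe_{\eta_i}) \geq \DD(\underline\lambda,\underline\eta)$, which is handled by the same pigeonhole argument (the paper simply declares this step ``clear''). Your write-up of the pigeonhole step is slightly more explicit than the paper's, but there is no substantive difference in strategy.
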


\begin{proof}
Write $\det \big(\rmU_\bfC(\underline \lambda \times \underline \eta)\big) = \sum\limits_{ \sigma \in S_n}\sgn(\sigma)\cdot \rmU_{\bfC,\bff_{\lambda_{\sigma(1)}}, \bff_{\eta_1}}\cdots \rmU_{\bfC,\bff_{\lambda_{\sigma(n)}}, \bff_{\eta_n}}$. By Proposition~\ref{P:finer halo estimate}, for every permutation $\sigma \in S_n$ and every $i \in \{1,\dots, n\}$,
$$
v_p \big( \rmU_{\bfC,\bff_{\lambda_{\sigma(i)}}, \bff_{\eta_{i}}} \big) \geq\deg \bfe_{\lambda_{\sigma(i)}} - \Big\lfloor \frac{\deg \bfe_{\eta_i}}p \Big\rfloor +D\big(\deg \bfe_{\lambda_{\sigma(i)}},\deg \bfe_{\eta_{i}}\big).
$$
Then the corollary is reduced to the following combinatorial inequality:
$$
\sum_{i=1}^n D\big(\deg \bfe_{\lambda_{\sigma(i)}},\deg \bfe_{\eta_{i}}\big) \geq \DD(\underline \lambda, \underline \eta).
$$
But this is clear, as the total contribution to all $D\big(\deg \bfe_{\lambda_{\sigma(i)}},\deg \bfe_{\eta_{i}}\big)$'s from the $j$th digit is at least $ \max\limits_{0\leq \alpha\leq p-2} \big\{D_{\leq \alpha}(\underline \lambda, j) -D_{\leq\alpha}(\underline \eta, j+1) ,\,0 \big\}$.
\end{proof}

\begin{remark}
We remark that $D(\underline \lambda, \underline \eta)$ is often zero; for example, when $\underline \lambda = \underline \eta = \underline n$, we have
\begin{equation}
\label{E:D(n,n)=0}
D^{(\varepsilon)}(\underline n , \underline n) = 0.
\end{equation}
In fact, this follows from the inequality $D_{=0}(\underline n, j) \leq D_{=0}(\underline n,j+1)$ for every $j \geq 0$ by Lemma~\ref{L:D independent of j}. As stated earlier, while the weaker bound in \eqref{E:refined halo} seems to work better with most part of our later inductive proof of Proposition~\ref{P:estimate of overcoefficients}(1), the sharper bound in Corollary~\ref{C:refined halo estimate} is necessary to treat certain pathological cases; see the proof of Proposition~\ref{P:estimate of overcoefficients}(1) where the finer estimate is used.
\end{remark}


\section{Proof of local ghost conjecture I: Lagrange interpolation}
\label{Sec:proof}

In this and the next two sections, we keep Hypothesis~\ref{H:b=0}: let $\widetilde \rmH$ be a primitive $\calO\llbracket \rmK_p\rrbracket$-projective augmented module of type $\bbsigma = \Sym^a \FF^{\oplus 2}$ on which $\Matrix p00p$ acts trivially. We will always use $\varepsilon$ to denote a character of $\Delta^2$ relevant to $\bbsigma$. For each such $\varepsilon$,  we have defined the characteristic power series $C^{(\varepsilon)}(w,t)$ and the ghost series $G^{(\varepsilon)}_{\bbsigma}(w,t)$.
We devote these three sections to the proof of the local ghost conjecture (Theorem~\ref{T:local theorem}). 

The proof is roughly divided into three steps, which we give a quick overview below. To lighten the notation, we fix $\varepsilon$ as above, and suppress it from the notation.

In a rough form, Theorem~\ref{T:local theorem} says that $C(w, t)$ and $G_{\bbsigma}(w,t)$ are ``close" to each other; in particular, this says that, for each $n$, near each zero $w_k$ of $g_n(w)$, the function $c_n(w)$ is very small.  This leads us to the following.
\begin{itemize}
\item [\underline{Step I}:] (Lagrange interpolation) For each $n$, we formally apply Lagrange interpolation to $c_n(w)$ relative to the zeros $w_k$ of $g_n(w)$ (with multiplicity), that is, to obtain a formula of the form
\begin{equation}
\label{E:Lagrange cn}
c_n(w) = \sum_{\substack{k \equiv k_\varepsilon \bmod (p-1) \\ m_n(k) \neq 0}} A_k(w) \cdot g_{n, \hat k}(w) + h(w) g_{n}(w).
\end{equation}
We give a sufficient condition  on the $p$-adic valuations of the coefficients of $A_k(w)$ that would imply Theorem~\ref{T:local theorem}.  This is Proposition~\ref{P:Lagrange general}.
\end{itemize}

In fact, we shall prove a similar $p$-adic valuation condition for the determinants of \emph{all} (principal or not) $n\times n$-submatrices of the matrix of $U_p$ with respect to the power basis. 
More precisely, given two tuples $\underline \zeta$ and $\underline \xi$ of $n$ positive integers,  we apply the same Lagrange interpolation \eqref{E:Lagrange cn} to $p^{\frac 12 (\deg(\underline \xi)-\deg(\underline\zeta))}\cdot \det(\rmU^\dagger(\underline \zeta \times \underline \xi))$ in place of $c_n(w)$, where the term $p^{\frac 12 (\deg(\underline \xi)-\deg(\underline\zeta))}$ is introduced to ``balance" the total degrees of basis elements in $\underline \zeta$ and $\underline \xi$ (see Notation~\ref{N:matrices indexed by xi} for the definition of $\deg(\underline \zeta)$ and $\deg(\underline \xi)$). We shall fix $\underline \zeta$ and $\underline \xi$ for the rest of this introduction and still use $A_k(w)$ and $h(w)$ to denote the corresponding power series appearing in \eqref{E:Lagrange cn} (with $c_n(w)$ replaced by $p^{\frac 12 (\deg(\underline \xi)-\deg(\underline\zeta))}\cdot\det(\rmU^\dagger(\underline \zeta \times \underline \xi))$). Since $c_n(w)$ is the sum of determinants of all principal $n\times n$ minors, the estimate for $c_n(w)$ follows from that for the $p^{\frac 12 (\deg(\underline \xi)-\deg(\underline \zeta))}\cdot\det(\rmU^\dagger(\underline \zeta \times \underline \xi))$'s. We refer to the paragraph after Theorem~\ref{T:estimate of nonprincipal minor} for the precise argument. 

We point out that this is a question for each individual zero $w_k$ of $g_n^{(\varepsilon)}(w)$.
 We fix such a $w_k$ and write each $A_k(w)$ as $A_{k,0} + A_{k,1}(w-w_k) + A_{k,2}(w-w_k)^2+\cdots$, and we are going to prove that for every $i< m_n(k)$,
\begin{equation}
\label{E:vpaki}
v_p(A_{k,i}) \geq \Delta_{k, \frac 12d_k^\new - i} - \Delta'_{k, \frac 12d_k^\new-m_n(k)}.
\end{equation}
Here, a subtle technical point is that we truly need to use $\Delta - \Delta'$ in order to implement the induction we perform later; see the comments after the statement of Proposition~\ref{P:each summand of Lagrange lie above NP}. It turns out that the estimate (\ref{E:vpaki}) will give sufficient control on the Newton polygon of the ghost series to conclude the local ghost conjecture. Therefore the proof of Theorem~\ref{T:local theorem} is then reduced to prove \eqref{E:vpaki}. (See the comments following Theorem~\ref{T:estimate of nonprincipal minor}.)
\begin{itemize}
\item [\underline{Step II}:] (Cofactor expansion argument) We reduce the proof of \eqref{E:vpaki} to an estimate on the determinant of the minors of $\rmU^\dagger(\underline \zeta \times \underline \xi)$ of smaller size.
\end{itemize}

For simplicity, assume that $s_{\underline \xi}(k) =0$, i.e. all $\xi_i \leq d_k^\Iw(\tilde \varepsilon_1)$ (see Definition-Proposition~\ref{DP:general corank theorem}). 
Then the corank theorem (Definition-Proposition~\ref{DP:general corank theorem}) implies that $A_{k,i} =0$ when $i < m_{\underline \zeta \times \underline \xi}(k)$.  
Moreover, we can write $\rmU^\dagger(\underline \zeta \times \underline \xi) = \rmT_k(\underline \zeta \times \underline \xi) + \rmL_k(\underline \zeta \times \underline \xi)$, where $\rmL_k(\underline \zeta \times \underline \xi)$ has coefficients in $E$ and has exactly $r_{\underline \zeta \times \underline \xi}(k)$ nonzero entries (coming from the matrix for the Atkin--Lehner operator at $w_k$), and $\rmT_k(\underline \zeta \times \underline \xi)$ is a matrix in $E\langle w/p\rangle$ whose evaluation at $w=w_k$ has rank at most $d_k^\ur$.

We apply a version of cofactor expansion to $\rmU^\dagger(\underline \zeta \times \underline \xi) = \rmL_k(\underline \zeta \times \underline \xi) + \rmT_k(\underline \zeta \times \underline \xi)$, to express $\det \big(\rmU^\dagger(\underline \zeta \times \underline \xi)\big)$ as a linear combination of the determinant of smaller minors of $\rmU^\dagger(\underline \zeta \times \underline \xi)$ plus a term that is divisible by $(w-w_k)^{m_{\underline \zeta\times \underline \xi}(k)}$. This way, we essentially reduce the question of estimating $v_p(A_{k,i})$ (after appropriate normalizing by $p^{\frac 12(\deg(\underline \xi)-\deg (\underline \zeta))}$) to the question of estimating the Taylor coefficients for the determinant of smaller minors, when expanded as a power series in $E\llbracket w-w_k\rrbracket$ (see the Step III below). 
There are several subtleties when executing this plan; we leave the discussion to the corresponding points, especially the discussion before Lemma~\ref{L:subtle cofactor expansion} and \S\,\ref{S:first stab at final estimate}.

\begin{itemize}
\item [\underline{Step III}:] (Estimating power series expansion for smaller minors) 
What is needed in the Step II from the inductive proof is an estimate of  $v_p(A'_{k,i})$ in the expansion of $c_{n'}(w) / g_{n', \hat k}(w)= \sum\limits_{i \geq 0} A'_{k, i}(w-w_k)^i$ in $E\llbracket w-w_k\rrbracket $ not for $i< m_{n'}(k)$ but for $i \geq m_{n'}(k)$.
\end{itemize}
This estimate will be deduced in Proposition~\ref{P:estimate of overcoefficients} from the estimate of the Lagrange interpolation coefficients $A'_{k',i}$ of $c_{n'}(w)$ for \emph{other} $k' \neq k$ and $i\leq m_{n'}(k')$, as well as the polynomial $h'(w)$ that appears in the Lagrange interpolation of the determinant of the smaller minor. The latter gives the most trouble; in most cases, it follows immediately from the usual halo estimate, but in some pathological case, we need the refined halo estimate in Proposition~\ref{P:finer halo estimate}.

To streamline the logical flow, we will prove Step I in this section, and prove Step III in the next section, and finally complete Step II in Section~\ref{Sec:proof III}.


\medskip
This section is organized as follows. We first discuss the ``ordinary" parts of the characteristic power series and the ghost series in Proposition~\ref{P:simple ghost}. In Definition-Lemma~\ref{DL:interpolation formula for simple roots} and Notation~\ref{N:Lagrange interpolation applied to c_n(w) and g_n(w)}, we recall the Lagrange interpolation formula and apply it to the coefficients of characteristic power series. Proposition~\ref{P:Lagrange general} is the key result of this section, which provides a sufficient condition to prove Theorem~\ref{T:local theorem}. The rest of the section is devoted to proving Proposition~\ref{P:Lagrange general}.
\begin{proposition}
\phantomsection
\label{P:simple ghost}
\begin{enumerate}
\item We have $c_1^{(\varepsilon)}(w) \in \calO\llbracket w\rrbracket$ is a unit if and only if $\varepsilon = 1 \times \omega^a$.
\item For $k \in \ZZ_{\geq 2}$, write $d_{\varepsilon, k}: = d_k^\Iw(\varepsilon\cdot (1\times \omega^{2-k}))$. Then $\big(d_{\varepsilon,k}, v_p(c_{d_{\varepsilon,k}}^{(\varepsilon)}(w_k))\big)$ is a vertex of $\NP(C^{(\varepsilon)}(w_k, -))$, and $\big(d_{\varepsilon,k}, v_p(g_{d_{\varepsilon,k}}^{(\varepsilon)}(w_k))\big)$ is a vertex of $\NP(G_{\bbsigma}^{(\varepsilon)}(w_k, -))$.

\end{enumerate}
\end{proposition}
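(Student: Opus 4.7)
The plan is to treat parts (1) and (2) by separate methods: a mod-$\varpi$ trace analysis for (1), and a factorization plus near-Steinberg argument for (2).

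For (1): A power series in $\calO\llbracket w\rrbracket$ is a unit iff its constant term is, so I reduce to $w=0$ (weight $k=2$). By Proposition~\ref{P:naive HB}(2), the diagonal entries satisfy $\rmU^{\dagger,(\varepsilon)}_{\bfe_n,\bfe_n}(0)\in p^{\deg \bfe_n^{(\varepsilon)}}\calO$, so modulo $\varpi$ only degree-zero basis elements contribute to $c_1^{(\varepsilon)}(0) = -\mathrm{tr}(U_p\mid\rmS_2^\dagger(\varepsilon))$. A degree-zero element in $\bfB^{(\varepsilon)}$ exists iff $s_\varepsilon=0$ (yielding $\bfe_1=e_1^*$ with $\varepsilon=1\times\omega^a$) or $s_\varepsilon=p-1-a$ (yielding $\bfe_1=e_2^*$ with $\varepsilon=\omega^a\times 1$), mutually exclusive since $1\le a\le p-4$. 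If neither applies, $c_1^{(\varepsilon)}(0)\in\varpi\calO$ is immediately not a unit. In the two remaining cases, Definition-Proposition~\ref{DP:dimension of classical forms}(1) shows $\rmS_2^\Iw(1\times\omega^a)$ and $\rmS_2^\Iw(\omega^a\times 1)$ are both one-dimensional, with $U_p$ acting by scalars $\lambda$ and $\mu$; since the diagonal characters differ, the relation~\eqref{E:Atkin-Lehner involution} and the swap~\eqref{E:AL swaps power basis elements} yield $\lambda\mu=p$, so $v_p(\lambda)+v_p(\mu)=1$. To pin down which is the unit, I would compute $U_p$ modulo $\varpi$ on $\overline\rmH=\widetilde\rmH/(\varpi,\rmI_{1+p\rmM_2(\ZZ_p)})\cong \Proj(\sigma_{a,0})$: unfolding $v_j=\Matrix 10{jp}p\cdot \Matrix{p^{-1}}00{p^{-1}}$, using the trivial central action, and invoking the $\FF[\GL_2(\FF_p)]$-structure of the projective envelope identifies $1\times\omega^a$ as the ordinary direction, forcing $\lambda\in\calO^\times$ and $\mu\in\varpi\calO$.

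For the first assertion of (2), the block-triangular factorization~\eqref{E:Ukdagger is block upper triangular} at $w=w_k$ gives
\[
C^{(\varepsilon)}(w_k,t)\ =\ A(t)\cdot B(t),
\]
where $A(t)$ is the characteristic polynomial of $U_p$ on $\rmS_k^\Iw(\psi)$, a polynomial of degree $d:=d_{\varepsilon,k}$ with $U_p$-slopes bounded by $k-1$, and $B(t)=\det(I-p^{k-1}U_pt\mid\rmS_{2-k}^\dagger(\psi'))$ has all slopes $\ge k-1$. Computing the $d$-th coefficient of the Cauchy product, $v_p(c_d^{(\varepsilon)}(w_k))$ equals the sum of the slopes of $A(t)$ and matches $\NP(C^{(\varepsilon)}(w_k,-))_{x=d}$, so the point lies on the Newton polygon; the transition between the ``$\le(k-1)$''-regime contributed by $A$ and the ``$\ge(k-1)$''-regime contributed by $B$ occurs precisely at $n=d$, making it a corner. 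The potential pathology in which slope $k-1$ appears in both $A$ and $B$ (which would flatten the corner) is excluded by the Atkin--Lehner pairing of Proposition~\ref{P:theta and AL}(2) together with the primitive-type hypothesis on $\widetilde\rmH$, which rules out the simultaneous existence of critical-slope classical forms and dual-weight ordinary forms.

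For the ghost series assertion, I apply Proposition~\ref{P:near-steinberg equiv to nonvertex}(2) to reduce the vertex claim to showing that $(\varepsilon,w_k,d_{\varepsilon,k})$ is not near-Steinberg, i.e., $d_{\varepsilon,k}\notin \nS^{(\varepsilon)}_{w_k,k'}$ for every $k'\equiv k_\varepsilon\bmod(p-1)$. Each near-Steinberg range is an open interval of length $2L^{(\varepsilon)}_{w_k,k'}\le d_{k'}^\new(\varepsilon_1)$ centered at $\tfrac12 d_{k'}^\Iw(\tilde\varepsilon_1)$. When $k\equiv k_\varepsilon\bmod(p-1)$, so $d_{\varepsilon,k}=d_k^\Iw(\tilde\varepsilon_1)$, I would combine the dimensional estimates of Definition-Proposition~\ref{DP:dimension of classical forms}(5) with the bound $L^{(\varepsilon)}_{w_k,k'}\le\tfrac12 d_{k'}^\new(\varepsilon_1)$ to compare $d_k^\Iw$ against $\tfrac12 d_{k'}^\Iw\pm L^{(\varepsilon)}_{w_k,k'}$, using Proposition~\ref{P:near-steinberg equiv to nonvertex}(1) to exclude membership. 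When $k\not\equiv k_\varepsilon\bmod(p-1)$, $w_k$ is not a ghost zero and $v_p(w_k-w_{k'})=1+v_p(k-k')$ is small; the threshold inequality \eqref{E:vpw > delta L} together with \eqref{E:Delta - Delta' geq half of diff square} of Proposition~\ref{P:Delta - Delta'} then forces $L^{(\varepsilon)}_{w_k,k'}$ to remain too small to enclose $d_{\varepsilon,k}$, after a case analysis on $v_p(k-k')$.

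The main technical obstacle is the mod-$\varpi$ computation in (1) distinguishing between $\lambda$ and $\mu$: identifying the ``ordinary direction'' from the primitive-type hypothesis requires unpacking the $\GL_2(\QQ_p)$-action on $\widetilde\rmH$ through $\Proj(\sigma_{a,0})$. A secondary delicate point in (2) is the corner verification for $\NP(C^{(\varepsilon)}(w_k,-))$ when slopes exactly equal to $k-1$ could appear in both factors of the theta-map factorization, which is where the primitive-type hypothesis is crucially used.
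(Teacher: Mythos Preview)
Your approach to (1) is essentially valid, though heavier than the paper's. For the unit direction, your mod-$\varpi$ trace analysis on $\Proj(\sigma_{a,0})$ is exactly the content of \cite[Proposition~A.7]{liu-truong-xiao-zhao}, which the paper simply cites. For the non-unit direction when $s_\varepsilon>0$, the paper avoids the delicate computation distinguishing $\lambda$ from $\mu$ altogether: it picks the weight $k = 2 + s_\varepsilon + \{a+s_\varepsilon\}$, checks via Definition-Proposition~\ref{DP:dimension of classical forms} that $d_k^\Iw(\tilde\varepsilon_1)=2$ and $d_k^\ur(\varepsilon_1)=0$, and concludes that $\rmS_k^\Iw(\tilde\varepsilon_1)$ consists entirely of new forms with $U_p$-slope $\tfrac{k-2}{2}>0$, so $v_p(c_1^{(\varepsilon)}(w_k))>0$. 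This sidesteps the case $\varepsilon=\omega^a\times 1$ without ever needing to identify the ordinary direction.

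Your treatment of (2) has a genuine gap. The exclusion of the ``slope-$(k-1)$ on both sides'' pathology is \emph{not} a consequence of the primitive-type hypothesis. The correct mechanism is part (1) together with a congruence: by Proposition~\ref{P:theta and AL}(2) and part (1), the $d_{\varepsilon,k}$-th slope equals $k-1$ precisely when the Atkin--Lehner partner $\varepsilon''$ has $s_{\varepsilon''}=\{k-2-a-s_\varepsilon\}=0$; by Proposition~\ref{P:theta and AL}(1) and part (1), the $(d_{\varepsilon,k}+1)$-th slope equals $k-1$ precisely when the theta-dual $\varepsilon'$ has $s_{\varepsilon'}=\{s_\varepsilon+1-k\}=0$. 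Both holding would force $s_\varepsilon+1\equiv 2+a+s_\varepsilon\pmod{p-1}$, i.e.\ $a\equiv -1\pmod{p-1}$, contradicting $1\le a\le p-4$. So the two slopes are never equal, and $x=d_{\varepsilon,k}$ is a vertex.

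For the ghost-series half of (2), your near-Steinberg route via Proposition~\ref{P:near-steinberg equiv to nonvertex}(2) is unnecessarily laborious and would require case-by-case estimates you only sketch. The paper instead runs the \emph{identical} argument as for $C^{(\varepsilon)}$, replacing Proposition~\ref{P:theta and AL} by its ghost analogue Proposition~\ref{P:ghost compatible with theta AL and p-stabilization}(1)(2), which gives the same slope bounds and the same congruence obstruction. This is a one-line parallel, not a separate analysis.
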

\begin{proof}
(1) When $s_\varepsilon=0$ (and thus $\varepsilon =1 \times \omega^a$), $c_1^{(1 \times \omega^a)}(w_2)$ is a $p$-adic unit as proved in \cite[Proposition A.7]{liu-truong-xiao-zhao}. So $c_1^{(1 \times \omega^a)}(w) \in \calO\llbracket w\rrbracket^\times$.

When $s_\varepsilon > 0$, $c_1^{(\varepsilon)}(w)$ is not a unit in $\calO\llbracket w\rrbracket$.
Indeed, in this case, Definition-Proposition~\ref{DP:dimension of classical forms}(3) implies that $t_1^{(\varepsilon)} \geq \delta_\varepsilon+1$; so for $k = k_\varepsilon + (p-1)\delta_\varepsilon = 2+ s_\varepsilon+\{a+s_\varepsilon\}$, Definition-Proposition~\ref{DP:dimension of classical forms}(3) and (2) imply $d_{k}^\Iw(\tilde \varepsilon_1) = 2$ and $d_{k}^\ur(\varepsilon_1) =0$, respectively. This means that $\rmS_k^\Iw(\tilde \varepsilon_1)$ consists of only new forms, whose $U_p$-slopes are $\frac{k-2}2 = \frac{s_\varepsilon+\{a+s_\varepsilon\}}2>0$. In particular, this shows that $v_p(c_1^{(\varepsilon)}(w_k)) >0$ and thus $c_1^{(\varepsilon)}(w)$ is not a unit.


(2) By part (1) and Proposition~\ref{P:theta and AL}(2), the $d_{\varepsilon,k}$-th slope in $\NP(C(w_k,-))$ is $\leq k-1$
and the equality holds precisely when $s_{\varepsilon''}: = \{k-2-a-s_\varepsilon\} = 0$.  Similarly, part (1) and Proposition~\ref{P:theta and AL}(1) imply that the $(d_{\varepsilon, k}+1)$-th slope of $\NP(C(w_k,-))$ is $\geq k-1$ and the equality holds if and only if $s_{\varepsilon'} := \{1+s_\varepsilon-k \}=0 $. Yet, clearly, $s_\varepsilon+1$ and $2+a+s_\varepsilon$ are never congruent modulo $p-1$. So the $d_{\varepsilon,k}$-th slope and the $(d_{\varepsilon,k}+1)$-th slope of $\NP(C(w_k, -))$ are never equal, proving that $\big(d_{\varepsilon, k}, v_p(c_{d_{\varepsilon,k}}(w_k))\big)$ is a vertex of $\NP(C(w_k,-))$.

The same argument above with Proposition~\ref{P:theta and AL} replaced by Proposition~\ref{P:ghost compatible with theta AL and p-stabilization} proves that $\big(d_{\varepsilon, k}, v_p(g_{d_{\varepsilon,k}}(w_k))\big)$ is a vertex of $\NP(G_{\bbsigma}(w_k,-))$, 
\end{proof}

We recall the standard Lagrange interpolation formula, as our main tool to study local ghost conjecture.
\begin{definition-lemma}
\label{DL:interpolation formula for simple roots}
Let $f(w)\in E\langle w/p\rangle$ be a power series, and let $g(w) = (w-x_1)^{m_1}\cdots (w-x_s)^{m_s} \in \ZZ_p[w]$ be a monic polynomial with zeros $x_1,\dots, x_s\in p\ZZ_p$ and multiplicities $m_1, \dots, m_s \in \ZZ_{\geq 1}$. For every $j=1,\dots, s$, let 
\[
\frac{f(w)}{g(w)/(w-x_j)^{m_j}}=\sum_{i\geq 0} A_{j,i}(w-x_j)^i
\]
be the formal expansion in $E\llbracket w-x_j\rrbracket$ and $A_j(w):=\sum\limits_{i=0}^{m_j-1}A_{j,i}(w-x_j)^i\in E[w]$ be its truncation up to the term of degree $m_j-1$. Then there exists $h(w)\in E\langle w/p \rangle$ such that
\begin{equation}
\label{E:Lagrange interpolation}
f(w) = \sum_{i=1}^s \Big(A_i(w) \frac{g(w)}{(w-x_i)^{m_i}} \Big) + h(w) \cdot g(w).
\end{equation}
\begin{enumerate}
\item If we assume further that $f(w)$ belongs to $\calO\llbracket w\rrbracket$, so does $h(w)$. 
\item If we assume instead that $f(w) \in p^N\calO\langle w/p\rangle$ for some integer $N$, then $h(w) \in p^{N-\deg(g)}\calO\langle w/p\rangle$.
\end{enumerate}

We call the expression \eqref{E:Lagrange interpolation} the \emph{Lagrange interpolation of $f(w)$ along $g(w)$}.
\end{definition-lemma}

\begin{proof}
By assumption, the polynomial $g(w)$ is $\frac wp$-distinguished of degree $\deg g$ in $E\langle w/p \rangle$. Applying Weierstrass division theorem \cite[\S\,5.2.1, Theorem~2]{BGR-NAA} to $f(w)$ and the polynomial $g(w)$ in the Tate algebra $E\langle w/p\rangle$, produces a power series $h(w)\in E\langle w/p\rangle$ and a polynomial $r(w)\in E[w]$ such that $\deg r<\deg g$ and $f(w)=h(w)g(w)+r(w)$. The norm estimate in \cite[\S\,5.2.1, Theorem~2]{BGR-NAA} gives the estimate (2).  When $f(w)\in \calO\llbracket w\rrbracket$, applying instead the division theorem \cite[IV, Theorem~9.1]{Lang-algebra} in $\calO\llbracket w\rrbracket$ ensures that $h(w)\in \calO\llbracket w\rrbracket$.
	
From this, we deduce that $\frac{f(w)}{g(w)}=\frac{r(w)}{g(w)}+h(w)$. Applying partial fractions to the rational function $\frac{r(w)}{g(w)}$, we can find polynomials $B_j(w)\in E[w]$ with $\deg B_j(w)<m_j$ for $j=1,\dots, s$ such that $\frac{r(w)}{g(w)}=\sum\limits_{j=1}^s\frac{B_j(w)}{(w-x_j)^{m_j}}$. Summing up everything, we have
	\[
f(w)=\sum_{j=1}^s\Big(B_j(w)\frac{g(w)}{(w-x_j)^{m_j}} \Big)+h(w)g(w) \text{\quad in~}E\langle w/p\rangle.
	\]
We can verify that $A_j(w) = B_j(w)$ for every $j$ by first dividing the above equality by $\frac{g(w)}{(w-x_j)^{m_j}}$ and considering its formal expansion in $E\llbracket w-x_j\rrbracket$.
\end{proof}

\begin{notation}\label{N:Lagrange interpolation applied to c_n(w) and g_n(w)} 
For $n \in \ZZ_{\geq 1}$, recall the notation $g_{n,\hat k}^{(\varepsilon)}(w) = g_n^{(\varepsilon)}(w) / (w-w_k)^{m_n(k)}$ from \eqref{E:gn hat k}.
We write the $n$th coefficient $c_n^{(\varepsilon)}(w)$ of the characteristic power series $C^{(\varepsilon)}(w,t)$ in terms of its Lagrange interpolation along $g_n^{(\varepsilon)}(w)$ as follows. For every ghost zero $w_k$ of $g_n^{(\varepsilon)}(w)$ consider the formal expansion
\[
\frac{c_n^{(\varepsilon)}(w)}{g_{n,\hat k}^{(\varepsilon)}(w)}=\sum_{i\geq 0} A_{k,i}^{(n,\varepsilon)}(w-w_k)^i \text{~in ~}E\llbracket w-w_k\rrbracket
\]
and let $A^{(n,\varepsilon)}_k(w) =\sum\limits_{i=0}^{m^{(\varepsilon)}_n(k)-1} A^{(n,\varepsilon)}_{k,i}(w-w_k)^i \in E[w]$ be its truncation up to the term of degree $m^{(\varepsilon)}_n(k)-1$. Then by Definition-Lemma~\ref{DL:interpolation formula for simple roots}, we can write
\begin{equation}
\label{E:Lagrange interpolation cn}
c_n^{(\varepsilon)}(w) = \sum_{\substack{k \equiv k_\varepsilon \bmod{(p-1)}\\ m_n^{(\varepsilon)}(k) \neq 0 }} \hspace{-10pt}\big( A^{(n,\varepsilon)}_k(w) \cdot  g_{n,\hat k}^{(\varepsilon)}(w) \big) + h_n^{(\varepsilon)}(w) \cdot g_n^{(\varepsilon)}(w),
\end{equation}
for some $h_n^{(\varepsilon)}(w) \in \calO\llbracket w\rrbracket$ as $c_n^{(\varepsilon)}(w)\in \calO\llbracket w\rrbracket$.
\end{notation}

\begin{proposition}
\label{P:Lagrange general}
To prove Theorem~\ref{T:local theorem}, it suffices to prove that, for every relevant character $\varepsilon$, every $n \in\ZZ_{\geq 1}$, and every ghost zero $w_k$ of $g_n^{(\varepsilon)}(w)$, we have
\begin{equation}
\label{E:ghost reduction to k}
v_p(A_{k,i}^{(n,\varepsilon)}) \geq \Delta^{(\varepsilon)}_{k,\frac 12 d_{k}^\new(\varepsilon_1) -i} - \Delta^{(\varepsilon)\prime}_{k, \frac 12 d_{k}^\mathrm{new}(\varepsilon_1) - m_n^{(\varepsilon)}(k)} \quad \textrm{for} \quad i = 0,1, \dots, m_n^{(\varepsilon)}(k)-1.
\end{equation}
\end{proposition}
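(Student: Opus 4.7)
Fix a relevant character $\varepsilon$ and $w_\star\in\gothm_{\CC_p}$. Assuming the estimate \eqref{E:ghost reduction to k}, my plan to deduce $\NP\bigl(C^{(\varepsilon)}(w_\star,-)\bigr)=\NP\bigl(G_{\bbsigma}^{(\varepsilon)}(w_\star,-)\bigr)$ is to establish the pair:
\begin{enumerate}
\item[(i)] the pointwise lower bound $v_p\bigl(c_n^{(\varepsilon)}(w_\star)\bigr)\geq v_p\bigl(g_n^{(\varepsilon)}(w_\star)\bigr)$ for every $n\geq 1$; and
\item[(ii)] equality in (i) whenever $\bigl(n,v_p(g_n^{(\varepsilon)}(w_\star))\bigr)$ is a vertex of $\NP(G_{\bbsigma}^{(\varepsilon)}(w_\star,-))$.
\end{enumerate}
Together, (i) and (ii) force $\NP(C^{(\varepsilon)}(w_\star,-))$ to pass through every vertex of $\NP(G_{\bbsigma}^{(\varepsilon)}(w_\star,-))$ while lying weakly above it; convexity of $\NP(C)$ then pins $\NP(C)=\NP(G)$.

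Both assertions are extracted from the Lagrange decomposition \eqref{E:Lagrange interpolation cn}. The error term $h_n^{(\varepsilon)}(w_\star)g_n^{(\varepsilon)}(w_\star)$ has valuation $\geq v_p(g_n(w_\star))$ since $h_n\in\calO\llbracket w\rrbracket$. For each ghost summand $A_k^{(n,\varepsilon)}(w_\star)g_{n,\hat k}^{(\varepsilon)}(w_\star)$ attached to a ghost zero $w_k$ of $g_n^{(\varepsilon)}$, I set $\ell_0:=\lvert n-\tfrac12 d_k^\Iw(\tilde\varepsilon_1)\rvert=\tfrac12 d_k^{\mathrm{new}}(\varepsilon_1)-m_n^{(\varepsilon)}(k)$; using $v_p(g_{n,\hat k}(w_\star))=v_p(g_n(w_\star))-m_n^{(\varepsilon)}(k)v_p(w_\star-w_k)$ together with the expansion $A_k^{(n,\varepsilon)}(w_\star)=\sum_{i=0}^{m_n^{(\varepsilon)}(k)-1}A_{k,i}^{(n,\varepsilon)}(w_\star-w_k)^i$ and the hypothesis \eqref{E:ghost reduction to k}, the assertion (i) reduces to the geometric inequality
\[
\Delta_{k,\ell}^{(\varepsilon)}-\Delta'^{(\varepsilon)}_{k,\ell_0}\;\geq\;(\ell-\ell_0)\,v_p(w_\star-w_k)\qquad\text{for all}\quad \ell_0<\ell\leq\tfrac12 d_k^{\mathrm{new}}(\varepsilon_1).
\]
I verify this case by case. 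When $\ell_0$ is a vertex of the $\Delta$-polygon $\underline\Delta_k^{(\varepsilon)}$ (so $\Delta'^{(\varepsilon)}_{k,\ell_0}=\Delta_{k,\ell_0}^{(\varepsilon)}$), the convexity of $\underline\Delta_k^{(\varepsilon)}$ combined with the maximality of $L^{(\varepsilon)}_{w_\star,k}$ in Definition~\ref{D:near-steinberg range} immediately yields the bound whenever $n\notin\nS^{(\varepsilon)}_{w_\star,k}$. When $\ell_0$ is not a vertex of $\underline\Delta_k^{(\varepsilon)}$, Proposition~\ref{P:near-steinberg equiv to nonvertex}(5) supplies an auxiliary ghost weight $k_1$ with $w_{k_1}$ close to $w_k$; I then apply Proposition~\ref{P:Delta - Delta'} (with $k'=k_1$ and $\ell'=\ell_0$) and compare $v_p(w_\star-w_k)$ with $v_p(w_k-w_{k_1})$ via the non-archimedean triangle inequality. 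For $n$ inside some near-Steinberg range $\nS^{(\varepsilon)}_{w_\star,k}$, the target is weakened to $v_p(c_n(w_\star))\geq\NP\bigl(G^{(\varepsilon)}_{\bbsigma}(w_\star,-)\bigr)_{x=n}$, and the slack $v_p(g_n(w_\star))-\NP(G^{(\varepsilon)}_{\bbsigma}(w_\star,-))_{x=n}>0$ coming from Proposition~\ref{P:near-steinberg equiv to nonvertex}(2) absorbs the deficit.

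For (ii), at a vertex $n_0$ of $\NP(G_{\bbsigma}^{(\varepsilon)}(w_\star,-))$, Proposition~\ref{P:near-steinberg equiv to nonvertex}(2) guarantees that $(w_\star,n_0)$ is not near-Steinberg, so every inequality in the case analysis above becomes strict; consequently each ghost summand has valuation strictly greater than $v_p(g_{n_0}(w_\star))$ and $v_p(c_{n_0}(w_\star))=v_p\bigl(h_{n_0}(w_\star)g_{n_0}(w_\star)\bigr)\geq v_p(g_{n_0}(w_\star))$. To promote this inequality to an equality I will specialize to a nearby classical weight $w_k$ where Proposition~\ref{P:simple ghost}(2) gives equality of the characteristic and ghost Newton polygons at a concrete vertex, and propagate to $w_\star$ via the Berkovich continuity of Newton polygons (Lemma~\ref{L:continuity of NP} and Corollary~\ref{C:Berkovich argument}). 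The hard part of the plan is the ``non-vertex $\ell_0$'' sub-case of (i): because \eqref{E:ghost reduction to k} features $\Delta'^{(\varepsilon)}_{k,\ell_0}$ rather than $\Delta_{k,\ell_0}^{(\varepsilon)}$, pure convexity of $\underline\Delta_k^{(\varepsilon)}$ does not suffice, and one must carefully select the auxiliary weight $k_1$, verify the proximity hypothesis of Proposition~\ref{P:Delta - Delta'} via Lemma~\ref{L:criterion to determine vp(wk-wk') leq gamma}, and exploit the nested structure of near-Steinberg ranges in Proposition~\ref{P:near-steinberg equiv to nonvertex}(4) to handle overlapping sub-cases without double-counting.
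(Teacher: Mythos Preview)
Your plan for part (i) is essentially what the paper does (it is codified as Statement~\ref{ST:each summand of Lagrange lie above NP} and proved in Proposition~\ref{P:each summand of Lagrange lie above NP}), though your formulation of (i) as $v_p(c_n(w_\star))\geq v_p(g_n(w_\star))$ is too strong when $n$ is near-Steinberg; you recognize this and weaken to $\NP(G_{\bbsigma}^{(\varepsilon)}(w_\star,-))_{x=n}$, which is the correct target. The case split into ``$\ell_0$ a vertex'' vs.\ ``$\ell_0$ not a vertex'' matches the paper's Cases B1 and B2/B3, and the near-Steinberg case matches Case A. The paper's proof of Proposition~\ref{P:each summand of Lagrange lie above NP} uses Proposition~\ref{P:shifting points wstar to wk} to linearly translate points rather than your direct application of Proposition~\ref{P:Delta - Delta'}, but the underlying mechanism is the same.

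There is a genuine gap in your argument for (ii). Proposition~\ref{P:simple ghost}(2) asserts only that $\bigl(n,v_p(c_n^{(\varepsilon)}(w_k))\bigr)$ is a vertex of $\NP(C^{(\varepsilon)}(w_k,-))$ and $\bigl(n,v_p(g_n^{(\varepsilon)}(w_k))\bigr)$ is a vertex of $\NP(G_{\bbsigma}^{(\varepsilon)}(w_k,-))$; it does \emph{not} say the two valuations are equal. After you show that each ghost summand has valuation strictly greater than $v_p(g_{n_0}(w_\star))$, you are left with $v_p(c_{n_0}(w_\star))=v_p\bigl(h_{n_0}(w_\star)\bigr)+v_p\bigl(g_{n_0}(w_\star)\bigr)$, and you must prove $h_{n_0}^{(\varepsilon)}\in\calO\llbracket w\rrbracket^\times$. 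Berkovich continuity does not help here: you need to exhibit a single point at which $h_{n_0}^{(\varepsilon)}$ is a unit. The paper does this by choosing $k$ with $k\not\equiv k_\varepsilon\bmod(p-1)$ and $d_k^\Iw(\varepsilon\cdot(1\times\omega^{2-k}))=n$, then invoking the Atkin--Lehner pairing between $\varepsilon$ and $\varepsilon''$ (Proposition~\ref{P:theta and AL}(2) and Proposition~\ref{P:ghost compatible with theta AL and p-stabilization}(2)), which yields the identity
\[
v_p\bigl(c_n^{(\varepsilon)}(w_k)\bigr)+v_p\bigl(c_n^{(\varepsilon'')}(w_k)\bigr)=(k-1)n=v_p\bigl(g_n^{(\varepsilon)}(w_k)\bigr)+v_p\bigl(g_n^{(\varepsilon'')}(w_k)\bigr).
\]
Combined with the already-established inequalities $v_p(c_n^{(\varepsilon)}(w_k))\geq v_p(g_n^{(\varepsilon)}(w_k))$ and $v_p(c_n^{(\varepsilon'')}(w_k))\geq v_p(g_n^{(\varepsilon'')}(w_k))$, this forces both to be equalities, hence $h_n^{(\varepsilon)}(w_k)\in\calO^\times$. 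This Atkin--Lehner step is the missing ingredient in your proposal.
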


\begin{proof}
We assume that \eqref{E:ghost reduction to k} holds for every $\varepsilon$, $n$, $k$ as above. Then Theorem~\ref{T:local theorem} clearly follows from the following two claims:
\begin{itemize}
\item[\underline{{\bf Claim} 1}] Every point $(n,v_p(c_n^{(\varepsilon)}(w_\star)))$ lies on or above $\NP(G_{\bbsigma}^{(\varepsilon)}(w_\star, -))$.
\item[\underline{{\bf Claim} 2}] If $\big(n, v_p(g^{(\varepsilon)}_n(w_\star))\big)$ is a vertex of $\NP(G_{\bbsigma}^{(\varepsilon)}(w_\star, -))$, then $v_p(c_n^{(\varepsilon)}(w_\star)) = v_p(g_n^{(\varepsilon)}(w_\star))$.
\end{itemize}

Through the Lagrange interpolation \eqref{E:Lagrange interpolation cn}, we will reduce the two Claims to the following.
\begin{statement}
\label{ST:each summand of Lagrange lie above NP}
For each relevant character $\varepsilon$, each $w_\star \in \gothm_{\CC_p}$ and each $k = k_\varepsilon+(p-1)k_\bullet$ such that $m_n^{(\varepsilon)}(k) \neq 0$,
\begin{enumerate}
\item The point $\big(n, v_p\big(A_k^{(n,\varepsilon)}(w_\star) g^{(\varepsilon)}_{n,\hat k}(w_\star) \big) \big)$ lies on or above $\NP(G_{\bbsigma}^{(\varepsilon)}(w_\star, -))$; and
\item if $\big(n, v_p(g_n^{(\varepsilon)}(w_\star))\big)$ is a vertex of $\NP(G_{\bbsigma}^{(\varepsilon)}(w_\star, -))$, then $v_p\big(A_k^{(n,\varepsilon)}(w_\star) g^{(\varepsilon)}_{n,\hat k}(w_\star) \big) > v_p\big(g_n^{(\varepsilon)}(w_\star)\big)$.
\end{enumerate}
\end{statement}
Indeed, we will prove (a strengthened version of) this later in Proposition~\ref{P:each summand of Lagrange lie above NP}. We now assume Statement~\ref{ST:each summand of Lagrange lie above NP} to finish the proof of Proposition~\ref{P:Lagrange general}. For this, we fix a relevant character $\varepsilon$ and omit it from the notations when no confusion arises.

\medskip
\noindent
\emph{Proof of {\bf Claim} $1$ assuming Statement~\ref{ST:each summand of Lagrange lie above NP}(1)}.  

Fix $n\in \ZZ_{\geq 1}$.
Since $h_n(w) \in \calO\llbracket w\rrbracket$, the last term in \eqref{E:Lagrange interpolation cn} satisfies that, for every $w_\star \in \gothm_{\CC_p}$
$$
v_p \big( h_n(w_\star) \cdot g_n(w_\star) \big) \geq v_p (g_n(w_\star)).
$$
By Statement~\ref{ST:each summand of Lagrange lie above NP}(1), the evaluations of all other terms in the Lagrange interpolation~\eqref{E:Lagrange interpolation cn} at $w_\star$ have $p$-adic valuations greater than or equal to $\NP \big(G_{\bbsigma}^{(\varepsilon)}(w_\star, -)\big)_{x=n}$ (cf. \S~\ref{notation}). Claim 1 follows.

\medskip
\emph{Proof of {\bf Claim} $2$ assuming Statement~\ref{ST:each summand of Lagrange lie above NP}(2)}.

It is enough to show that, in the Lagrange interpolation \eqref{E:Lagrange interpolation cn}, $h_n^{(\varepsilon)}(w) \in \calO\llbracket w \rrbracket^\times$ is a unit. Indeed, if this is known, and if $\big(n, v_p(g_n^{(\varepsilon)}(w_\star))\big)$ is a vertex of $\NP(G_{\bbsigma}^{(\varepsilon)}(w_\star, -))$, then Statement~\ref{ST:each summand of Lagrange lie above NP}(2) implies
$$
v_p\big(A_k(w_\star)g^{(\varepsilon)}_{n,\hat k}(w_\star)\big) > v_p(g_n^{(\varepsilon)}(w_\star)) \quad \textrm{yet} \quad 
v_p\big(h_n^{(\varepsilon)}(w_\star)g_n^{(\varepsilon)}(w_\star)\big) = v_p(g_n^{(\varepsilon)}(w_\star)).
$$
From this, we deduce that $v_p(c_n^{(\varepsilon)}(w_\star)) = v_p(g_n^{(\varepsilon)}(w_\star))$.

Now we prove that $h_n^{(\varepsilon)}(w)$ is a unit.  Since $ \{a+s_\varepsilon \}-s_\varepsilon\equiv a\bmod (p-1) $ and $a\not\equiv 0,\pm 1 \bmod (p-1)$ by our genericity assumption, it follows from Definition-Proposition~\ref{DP:dimension of classical forms}(1) that we can take one $k \not \equiv k_\varepsilon \bmod{(p-1)}$ such that $d_k^\Iw(\varepsilon \cdot (1\times \omega^{2-k})) = n$. Set $s_{\varepsilon''}: =\{k-2-a-s_\varepsilon\}$.
By Proposition~\ref{P:simple ghost}(2), $\big(n, v_p(c_n^{(\varepsilon)}(w_k))\big)$ (resp. $\big(n, v_p(c_n^{(\varepsilon'')}(w_k))\big)$) is a vertex of $\NP(C^{(\varepsilon)}(w_k,-))$ (resp. $\NP(C^{(\varepsilon'')}(w_k,-))$) and $\big(n, v_p(g_n^{(\varepsilon)}(w_k))\big)$ (resp. $\big(n, v_p(g_n^{(\varepsilon'')}(w_k))\big)$) is a vertex of $\NP(G_{\bbsigma}^{(\varepsilon)}(w_k,-))$ (resp. $\NP(G_{\bbsigma}^{(\varepsilon'')}(w_k,-))$).

By a similar argument as in the proof of {\bf Claim} 1, we can use \eqref{E:Lagrange interpolation cn}  to deduce that  
$$
v_p(c_n^{(\varepsilon)}(w_k))\geq v_p(g_n^{(\varepsilon)}(w_k))\text{~and~}v_p(c_n^{(\varepsilon'')}(w_k))\geq v_p(g_n^{(\varepsilon'')}(w_k)),
$$
and the equalities hold if and only if $v_p(h_n^{(\varepsilon)}(w_k))=v_p(h_n^{(\varepsilon'')}(w_k))=0$.

Consider the Atkin--Lehner involution between $\rmS_k^\Iw(\varepsilon \cdot (1\times \omega^{2-k}))$ and $\rmS_k^\Iw(\varepsilon'' \cdot (1\times \omega^{2-k}))$. By Proposition~\ref{P:theta and AL}(2) and Proposition~\ref{P:ghost compatible with theta AL and p-stabilization}(2), we deduce that
$$
v_p(c_n^{(\varepsilon)}(w_k)) + v_p(c_n^{(\varepsilon'')}(w_k)) = (k-1)n = v_p(g_n^{(\varepsilon)}(w_k)) + v_p(g_n^{(\varepsilon'')}(w_k)).
$$
This implies that $v_p(c_n^{(\varepsilon)}(w_k)) = v_p(g_n^{(\varepsilon)}(w_k))$ and $v_p(c_n^{(\varepsilon'')}(w_k)) = v_p(g_n^{(\varepsilon'')}(w_k))$. From this, we deduce that $h_n^{(\varepsilon)}(w_k), h_n^{(\varepsilon'')}(w_k) \in \calO^\times$; so $h_n^{(\varepsilon)}(w)$ and $h_n^{(\varepsilon'')}(w)$ are both units in $\calO\llbracket w\rrbracket$. This completes the proof of Proposition~\ref{P:Lagrange general} assuming Statement~\ref{ST:each summand of Lagrange lie above NP}. 
\end{proof}


Here and later, we say two sets of points $P_{n'} = (n', A_{n'})$ and $Q_{n'}=(n',B_{n'})$ with integers $n' \in [a,b]$ are \emph{differed by a linear function} if there exist real numbers $\alpha, \beta \in \RR$ such that $B_{n'}-A_{n'} = \alpha {n'} + \beta$ for all integers $n' \in [a,b]$.

We record here a ``toolbox" result \cite[Proposition~5.16]{liu-truong-xiao-zhao} that we shall frequently use in the proof of Statement~\ref{ST:each summand of Lagrange lie above NP}. (Its proof is somewhat straightforward.)

\begin{proposition}
\label{P:shifting points wstar to wk}
Fix $w_\star \in \gothm_{\CC_p}$ and a weight ${k_\alpha} = k_\varepsilon + (p-1)k_{\alpha\bullet}$. Let $\nS_{w_\star,k_\alpha}^{(\varepsilon)}=\big(\tfrac 12 d_{k_\alpha}^\Iw(\tilde \varepsilon_1) - L^{(\varepsilon)}_{w_\star, k_\alpha},\, \tfrac 12 d_{k_\alpha}^\Iw(\tilde \varepsilon_1)+ L_{ w_\star, k_\alpha}^{(\varepsilon)} \big)$ be a  near-Steinberg range. Set  $\overline \nS=\overline \nS^{(\varepsilon)}_{w_\star, k_\alpha}=\big[\tfrac 12 d_{k_\alpha}^\Iw(\tilde \varepsilon_1) - L^{(\varepsilon)}_{w_\star, k_\alpha},\, \tfrac 12 d_{k_\alpha}^\Iw(\tilde \varepsilon_1)+ L_{ w_\star, k_\alpha}^{(\varepsilon)} \big]$ for simplicity in this proposition.

\begin{enumerate}
\item
For any $k_\beta = k_\varepsilon + (p-1)k_{\beta\bullet} \neq k_\alpha$ such that 
$v_p(w_{k_\beta}-w_{k_\alpha}) \geq  \Delta^{(\varepsilon)}_{k_\alpha, L^{(\varepsilon)}_{ w_\star, k_\alpha}} - \Delta^{(\varepsilon)}_{k_\alpha, L^{(\varepsilon)}_{ w_\star, k_\alpha}-1}$, the ghost multiplicity $m_{n'}^{(\varepsilon)}(k_\beta)$ is linear in $n'$ when $n' \in \overline \nS$.
\item
Let $\bfk :=\{k_\alpha, k_1, \dots, k_r\}$ be a set of integers with each $k_i=k_\varepsilon + (p-1)k_{i\bullet}$.
Then for any set of constants $(A_{n'})_{n' \in \overline \nS}$,
the two lists of points
$$
P_{n'} = \big(n',\, A_{n'}+v_p(g^{(\varepsilon)}_{n',\hat {\bfk}}(w_\star))\big), \quad Q_{n'} = \big(n',\, A_{n'}+v_p(g^{(\varepsilon)}_{{n'},\hat {\bfk}}(w_{k_\alpha}))\big)  \quad \textrm{ with }{n'} \in {\overline \nS},
$$
differ by a linear function, where $g^{(\varepsilon)}_{{n'}, \hat \bfk}(w_{k_\alpha}): = g^{(\varepsilon)}_{{n'}, \hat {k}_\alpha}(w_{k_\alpha}) \Big/\!\!\!\! \prod\limits_{k' \in \bfk, k' \neq k_\alpha}\!\! (w_{k_\alpha}-w_{k'})^{m_{n'}^{(\varepsilon)}(k')}$.
\end{enumerate}
\end{proposition}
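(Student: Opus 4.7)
\medskip
\noindent\emph{Proof proposal.} The plan is to reduce everything to Proposition~\ref{P:near-steinberg equiv to nonvertex}(1), whose conclusion (with $k = k_\alpha$ and ``$\ell''$'' taken to be $L^{(\varepsilon)}_{w_\star,k_\alpha}$) is precisely the condition that the three potential break points
$$
d_{k_\beta}^\ur(\varepsilon_1), \ \tfrac 12 d_{k_\beta}^\Iw(\tilde\varepsilon_1), \ d_{k_\beta}^\Iw(\tilde\varepsilon_1)-d_{k_\beta}^\ur(\varepsilon_1)
$$
of the piecewise linear tent function $n' \mapsto m_{n'}^{(\varepsilon)}(k_\beta)$ (see \eqref{E:cascading pattern}) avoid the interior of $\overline\nS = \overline\nS^{(\varepsilon)}_{w_\star,k_\alpha}$: the middle one is excluded even from $\overline\nS$, and the two outer ones from the open $\nS^{(\varepsilon)}_{w_\star,k_\alpha}$. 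First I would check that this is exactly the hypothesis $v_p(w_{k_\beta}-w_{k_\alpha}) \geq \Delta^{(\varepsilon)}_{k_\alpha,L^{(\varepsilon)}_{w_\star,k_\alpha}} - \Delta^{(\varepsilon)}_{k_\alpha,L^{(\varepsilon)}_{w_\star,k_\alpha}-1}$ in (1), and then observe that a piecewise linear function with no break point in the interior of $\overline\nS$ (break points at the endpoints being harmless) is linear on $\overline\nS$. This gives (1) at once.

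For (2), unwinding the definitions gives
$$
v_p\bigl(g^{(\varepsilon)}_{n',\hat\bfk}(w_\star)\bigr) - v_p\bigl(g^{(\varepsilon)}_{n',\hat\bfk}(w_{k_\alpha})\bigr) = \sum_{k'' \notin \bfk} m_{n'}^{(\varepsilon)}(k'') \cdot \delta(k''),
$$
where $\delta(k'') := v_p(w_\star - w_{k''}) - v_p(w_{k_\alpha} - w_{k''})$ does not depend on $n'$. Since $A_{n'}$ cancels out in $P_{n'}-Q_{n'}$, it will be enough to show this sum is linear in $n'$ on $\overline\nS$. I plan to case-split on $v_p(w_{k_\alpha}-w_{k''})$ using the ultrametric triangle inequality applied to $w_\star - w_{k''} = (w_\star - w_{k_\alpha}) + (w_{k_\alpha} - w_{k''})$: when $v_p(w_{k_\alpha}-w_{k''}) < v_p(w_\star - w_{k_\alpha})$ we get $\delta(k'')=0$ for free; otherwise $v_p(w_{k_\alpha}-w_{k''}) \geq v_p(w_\star - w_{k_\alpha}) \geq \Delta^{(\varepsilon)}_{k_\alpha,L^{(\varepsilon)}_{w_\star,k_\alpha}} - \Delta^{(\varepsilon)}_{k_\alpha,L^{(\varepsilon)}_{w_\star,k_\alpha}-1}$ by \eqref{E:vpw > delta L}, so the hypothesis of (1) is satisfied for $k_\beta = k''$, and $m^{(\varepsilon)}_{n'}(k'')$ is linear in $n' \in \overline\nS$.

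Finally, since $\overline\nS$ is bounded and $m_{n'}^{(\varepsilon)}(k'')$ vanishes whenever $n' \notin \bigl(d_{k''}^\ur(\varepsilon_1), d_{k''}^\Iw(\tilde\varepsilon_1)-d_{k''}^\ur(\varepsilon_1)\bigr)$, only finitely many $k''$ contribute to the sum above. Being a finite sum of linear functions on $\overline\nS$, it is itself linear, giving (2). I do not anticipate a serious obstacle: the content is an organized bookkeeping of Proposition~\ref{P:near-steinberg equiv to nonvertex}(1) combined with the strong triangle inequality, and the only point requiring attention is that Proposition~\ref{P:near-steinberg equiv to nonvertex}(1) allows $d_{k_\beta}^\ur$ and $d_{k_\beta}^\Iw - d_{k_\beta}^\ur$ to sit at the closed endpoints of $\overline\nS$, which one must verify is compatible with (rather than obstructing) linearity of $m^{(\varepsilon)}_{n'}(k_\beta)$ on $\overline\nS$.
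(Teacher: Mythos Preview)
Your proposal is correct and follows essentially the same straightforward approach as the original proof in \cite[Proposition~5.16]{liu-truong-xiao-zhao} (to which the paper simply defers): part~(1) is exactly the observation that Proposition~\ref{P:near-steinberg equiv to nonvertex}(1) excludes the three break points of the tent function $n'\mapsto m_{n'}^{(\varepsilon)}(k_\beta)$ from the interior of $\overline\nS$, and part~(2) is the ultrametric dichotomy you describe, splitting the terms into those with $\delta(k'')=0$ and those to which part~(1) applies.
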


The following strengthens Statement~\ref{ST:each summand of Lagrange lie above NP}.
\begin{proposition}
\label{P:each summand of Lagrange lie above NP} Assume that $p \geq 7$.
Fix $n\in \ZZ_{\geq 1}$ and a weight $k = k_\varepsilon+(p-1)k_\bullet$ so that $m_n^{(\varepsilon)}(k) \neq 0$.
Fix $i \in\{0, \dots, m_n^{(\varepsilon)}(k)-1\}$. Assume that $A \in \gothm_{\CC_p}$ satisfies
\begin{equation}
\label{E:v(A) bigger than Lagrange bound}
v_p(A) \geq \Delta^{(\varepsilon)}_{k,\frac 12 d_{k}^\new(\varepsilon_1) -i} - \Delta^{(\varepsilon)\prime}_{k, \frac 12 d_{k}^\mathrm{new}(\varepsilon_1) - m_n^{(\varepsilon)}(k)}.
\end{equation}
\begin{enumerate}
\item For each $w_\star \in \gothm_{\CC_p}$, the point
$$\big(n, v_p\big(A(w_\star - w_k)^i  g^{(\varepsilon)}_{n,\hat k}(w_\star) \big) \big)$$
    lies on or above the Newton polygon $\NP(G_{\bbsigma}^{(\varepsilon)}(w_\star, -))$; and it lies strictly above this Newton polygon if $\big(n, v_p(g_n^{(\varepsilon)}(w_\star))\big)$ is a vertex;
\item If $w_\star=w_{k_0}$ for some integer $k_0 = k_\varepsilon +(p-1)k_{0\bullet} \neq k$ such that $m_n^{(\varepsilon)}(k_0)\neq 0$, we have an analogous statement: assuming condition \eqref{E:v(A) bigger than Lagrange bound},  the point
$$\big(n, v_p\big(A(w_{k_0} - w_k)^i  g^{(\varepsilon)}_{n,\hat k, \hat k_0}(w_{k_0}) \big) \big)$$
lies on or above the lower convex hull of points $\big(n', v_p(g^{(\varepsilon)}_{n', \hat k_0}(w_{k_0}))\big)_{n' \in [d_{k_0}^\ur(\varepsilon_1), d_{k_0}^\Iw(\tilde \varepsilon_1) - d_{k_0}^\ur(\varepsilon_1)]}$.
\end{enumerate}

\end{proposition}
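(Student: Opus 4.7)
Set $\ell := n - \tfrac 12 d_k^\Iw(\tilde\varepsilon_1)$, so that $m_n^{(\varepsilon)}(k) = \tfrac 12 d_k^{\new}(\varepsilon_1) - |\ell|$; the hypothesis \eqref{E:v(A) bigger than Lagrange bound} rewrites as
\[
v_p(A) \ \geq\ \Delta^{(\varepsilon)}_{k,\,|\ell|+(m_n^{(\varepsilon)}(k)-i)} - \Delta'^{(\varepsilon)}_{k,\,|\ell|}.
\]
By Definition-Proposition~\ref{DP:definition of Delta' and ghost duality alternative} we have $v_p\bigl(g^{(\varepsilon)}_{n,\hat k}(w_k)\bigr) = \Delta'^{(\varepsilon)}_{k,|\ell|} + \tfrac{k-2}{2}|\ell|$, so the right-hand side measures precisely the excess between the Newton polygon value and the factorized form at $w=w_k$.

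For part (1), I would split into two subcases. \emph{Case A:} $(n, v_p(g_n^{(\varepsilon)}(w_\star)))$ is a vertex of $\NP(G_{\bbsigma}^{(\varepsilon)}(w_\star,-))$. Then the Newton polygon value at $n$ equals $v_p(g_n^{(\varepsilon)}(w_\star)) = v_p(g^{(\varepsilon)}_{n,\hat k}(w_\star)) + m_n^{(\varepsilon)}(k)\cdot v_p(w_\star - w_k)$, and the desired strict inequality reduces to $v_p(A) > (m_n^{(\varepsilon)}(k) - i) \cdot v_p(w_\star - w_k)$. Proposition~\ref{P:near-steinberg equiv to nonvertex}(3) gives $v_p(w_\star - w_k) < \Delta^{(\varepsilon)}_{k,|\ell|+1} - \Delta^{(\varepsilon)}_{k,|\ell|}$, and the convexity of $\ell'\mapsto \Delta^{(\varepsilon)}_{k,\ell'}$ (a consequence of its definition as a lower convex hull) yields $(m_n^{(\varepsilon)}(k)-i)(\Delta^{(\varepsilon)}_{k,|\ell|+1} - \Delta^{(\varepsilon)}_{k,|\ell|}) \leq \Delta^{(\varepsilon)}_{k,|\ell|+(m_n^{(\varepsilon)}(k)-i)} - \Delta^{(\varepsilon)}_{k,|\ell|}$. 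When $(|\ell|, \Delta'^{(\varepsilon)}_{k,|\ell|})$ is a vertex of $\underline\Delta^{(\varepsilon)}_k$ (so $\Delta' = \Delta$ at $|\ell|$), this exactly matches the hypothesis bound. Otherwise, Proposition~\ref{P:near-steinberg equiv to nonvertex}(5) produces some $k_1 \neq k$ with $n \in \nS^{(\varepsilon)}_{w_k, k_1}$; the non-near-Steinberg property of $(\varepsilon, w_\star, n)$ forces $v_p(w_\star - w_{k_1}) < v_p(w_k - w_{k_1})$, so by the ultrametric $v_p(w_\star - w_k) = v_p(w_\star - w_{k_1})$, and I would re-run the argument with $k$ replaced by $k_1$, iterating through the finite nested chain of near-Steinberg ranges guaranteed by Proposition~\ref{P:near-steinberg equiv to nonvertex}(4). \emph{Case B:} $(n, v_p(g_n^{(\varepsilon)}(w_\star)))$ is not a vertex. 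Then $(\varepsilon, w_\star, n)$ is near-Steinberg by Proposition~\ref{P:near-steinberg equiv to nonvertex}(2), lying in some maximal range $\nS^{(\varepsilon)}_{w_\star, k_\alpha}$. Apply Proposition~\ref{P:shifting points wstar to wk}(2) with $\bfk = \{k_\alpha, k\}$ to translate the relevant list of points (both $(n', v_p(g^{(\varepsilon)}_{n',\hat k}(w_\star)))$ and $(n',\NP(G^{(\varepsilon)}_{\bbsigma}(w_\star,-))_{x=n'})$) from $w_\star$ to $w_{k_\alpha}$ up to a common linear function; the reduction then falls onto Case A at $w_{k_\alpha}$.

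For part (2), the argument runs parallel but with the Newton polygon replaced by the lower convex hull of the points $(n', v_p(g^{(\varepsilon)}_{n', \hat k_0}(w_{k_0})))$ for $n' \in [d_{k_0}^\ur, d_{k_0}^\Iw - d_{k_0}^\ur]$. Via Definition-Proposition~\ref{DP:definition of Delta' and ghost duality alternative}, up to the affine factor $\tfrac{k_0-2}{2}(n-\tfrac 12 d_{k_0}^\Iw)$, this lower convex hull is encoded by $\underline\Delta^{(\varepsilon)}_{k_0}$. The bound then compares $\Delta^{(\varepsilon)}_{k,\bullet}$ against $\Delta^{(\varepsilon)}_{k_0,\bullet}$, and this comparison is exactly the content of Proposition~\ref{P:Delta - Delta'} (noting Remark~\ref{R:two sides of wk'} for the two-sided case analysis). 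The main obstacle I anticipate is the iteration in Case A when $(|\ell|, \Delta'^{(\varepsilon)}_{k,|\ell|})$ is not a vertex of $\underline\Delta^{(\varepsilon)}_k$: one must track how $\Delta$-values transform as one jumps between weights $k, k_1, k_2, \ldots$ along the nested chain of near-Steinberg ranges, and verify that the ultrametric-based identification $v_p(w_\star - w_k) = v_p(w_\star - w_{k_1})$ survives each step. Part (2) introduces a parallel difficulty when $\ell$ is near the boundary of the range, forcing appeal to Proposition~\ref{P:Delta - Delta'} with delicate choices of $(\ell, \ell', \ell'')$ to recover the sharp bound.
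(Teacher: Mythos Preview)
Your case split—vertex versus non-vertex of $\NP(G_{\bbsigma}^{(\varepsilon)}(w_\star,-))$—is the wrong primary dichotomy, and this causes a genuine gap in Case~A. The paper splits instead on whether $n \in \nS^{(\varepsilon)}_{w_\star,k}$ for the \emph{given} $k$; this matters because the hypothesis \eqref{E:v(A) bigger than Lagrange bound} is phrased in terms of $\Delta^{(\varepsilon)}_k$, so any argument that shifts to a different weight must eventually return to $k$.

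Concretely, in your Case~A sub-case where $(|\ell|,\Delta'^{(\varepsilon)}_{k,|\ell|})$ is not a vertex of $\underline\Delta^{(\varepsilon)}_k$, you produce $k_1$ with $n\in\nS^{(\varepsilon)}_{w_k,k_1}$ and correctly deduce $v_p(w_\star-w_k)=v_p(w_\star-w_{k_1})$. But ``re-running the argument with $k$ replaced by $k_1$'' fails: you would need $v_p(A)\geq \Delta^{(\varepsilon)}_{k_1,\bullet}-\Delta'^{(\varepsilon)}_{k_1,\bullet}$, which you do not have. The paper (its Case~B2) avoids this by a direct numerical estimate: the near-Steinberg relation $n\in\nS^{(\varepsilon)}_{w_k,k_1}$ together with \eqref{E:Delta - Delta' geq half of diff square} forces $v_p(w_k-w_{k_1})\geq L'+1$, hence $|k_\bullet-k_{1\bullet}|\geq p^\gamma$ with $\gamma=v_p(k-k_1)\geq L'$, and then $|\ell|=\tfrac12 d_k^\new-m_n(k)\geq |k_\bullet-k_{1\bullet}|-L'>\gamma+1$. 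Feeding this back into Proposition~\ref{P:Delta - Delta'} gives $v_p(A)>(m_n(k)-i)(\gamma+1)\geq (m_n(k)-i)\,v_p(w_\star-w_k)$ with no iteration.

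Your Case~B also has a mismatch. Shifting to $w_{k_\alpha}$ for the \emph{maximal} near-Steinberg range does not reduce to Case~A, since at $w_{k_\alpha}$ the point $(n,\cdot)$ is certainly not a vertex. The paper's Case~A works because when $n\in\nS^{(\varepsilon)}_{w_\star,k}$ one shifts to $w_k$ itself (via Proposition~\ref{P:shifting points wstar to wk} with $\bfk=\{k,k_0\}$), lands on an inequality purely in terms of $\Delta^{(\varepsilon)}_{k,\bullet}$, and closes with Lemma~\ref{L:useful facts in the proof of Proposition each summand of Lagrange lie above NP}(3). When $n\notin\nS^{(\varepsilon)}_{w_\star,k}$ but $n$ lies in some other $\nS^{(\varepsilon)}_{w_\star,k'}$ (the paper's Case~B3), one shifts via $w_{k'}$ but then again back to $w_k$, using that $m_{n'}(k)$ is linear on the range; the final inequality is once more about $\Delta^{(\varepsilon)}_k$. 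So the organizing principle is: always return to $\Delta^{(\varepsilon)}_k$, and let the $\nS^{(\varepsilon)}_{w_\star,k}$ membership dictate whether you argue pointwise (Cases~B1--B3) or via the endpoints of that range (Case~A).
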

This proposition will be proved in \S\,\ref{S:proof of each summand of Lagrange lie above NP}.
Statement~\ref{ST:each summand of Lagrange lie above NP} and hence Proposition~\ref{P:Lagrange general} follow by applying  Proposition~\ref{P:each summand of Lagrange lie above NP}
to $A = A_{k,i}^{(n,\varepsilon)}$ with each $i =0, \dots, m_n^{(\varepsilon)}(k)-1$.

\begin{remark}\label{R:comment on the estimate Delta-Delta'}
One might wish to replace the term $\Delta_{k,\frac 12 d_{k}^\new -i} - \Delta^{\prime}_{k, \frac 12 d_{k}^\mathrm{new}- m_n(k)}$ in \eqref{E:v(A) bigger than Lagrange bound} by a more natural-looking expression such as  $\Delta^{\prime}_{k,\frac 12 d_{k}^\new -i} - \Delta^{\prime}_{k, \frac 12 d_{k}^\mathrm{new}- m_n(k)}$ or $\Delta_{k,\frac 12 d_{k}^\new -i} - \Delta_{k, \frac 12 d_{k}^\mathrm{new}- m_n(k)}$.
But it seems that \eqref{E:v(A) bigger than Lagrange bound} is the only expression for which our inductive proof works, for the following two reasons. 

(1) The use of $-\Delta^{\prime}_{k, \frac 12 d_{k}^\mathrm{new}- m_n(k)}$ is related to the cofactor expansion argument in \S\,\ref{Sec:proof III}, reducing $A_{k,i}^{(n)}$ to terms like $A_{k,j}^{(n-\ell)}$, where we need to multiply $A_{k,i}^{(n)}$ with $g_{n,\hat k}(w_k)$; see Notation~\ref{N:normalized B}. 

(2) The use of $\Delta_{k, \frac 12 d_{k}^\mathrm{new}- i}$ is related to the inductive step, where we consider how the estimate of $A_{k,i}^{(n)}$ would affect the $A_{k',j}^{(n)}$ for another $k'$ and $j\geq m_n(k')$; such an argument is similar to Proposition~\ref{P:each summand of Lagrange lie above NP}(2) above. So we can only hope to prove for a factor of the form $\Delta_{k, \frac 12 d_{k}^\mathrm{new}- i}$; see also Remark~\ref{R:we cannot use Delta'-Delta'}.
\end{remark}

\begin{remark}
\label{R:equivalent of proposition each summand of Lagrange lie above NP}
When $w_\star=w_{k_0}$ is a ghost zero of $g_n^{(\varepsilon)}(w)$, Proposition~\ref{P:each summand of Lagrange lie above NP}(1) holds trivially, and Proposition~\ref{P:each summand of Lagrange lie above NP}(2) can be regarded as a substitute in this case. Also, in view of \eqref{E:definition of Delta'}, if we apply the linear map $(x,y)\mapsto \big(x-\frac 12 d_{k_0}^\Iw(\tilde \varepsilon_1), y-\frac{k_0-2}2(x-\frac 12 d_{k_0}^\Iw(\tilde \varepsilon_1))\big)$ to all the points therein, Proposition~\ref{P:each summand of Lagrange lie above NP}(2) is equivalent to that, assuming (\ref{E:v(A) bigger than Lagrange bound}), the point 
\[
\Big(n-\frac 12 d_{k_0}^\Iw,\ v_p(A)+(i-m_n(k))v_p(w_{k_0}-w_k)+\Delta'_{k_0,n-\frac 12 d_{k_0}^\Iw} \Big)
\]
lies on or above the lower convex hull $\underline{\Delta}_{k_0}$ defined in Definition-Proposition~\ref{DP:definition of Delta' and ghost duality alternative}. The latter is also equivalent to the equality
\begin{equation}\label{E:each summand of Lagrange lie above NP}
	v_p(A)+(i-m_n(k))v_p(w_{k_0}-w_k)+\Delta'_{k_0,\ell}\geq \Delta_{k_0,\ell}
\end{equation}
if we write $n=\frac 12 d_{k_0}^\Iw+\ell$. Note that even though we replace the term $\Delta_{k,\frac 12 d_{k}^\new -i} - \Delta^{\prime}_{k, \frac 12 d_{k}^\mathrm{new}- m_n(k)}$ in \eqref{E:v(A) bigger than Lagrange bound} by stronger estimate mentioned in Remark~\ref{R:comment on the estimate Delta-Delta'}, we do not know how to upgrade the estimate (\ref{E:each summand of Lagrange lie above NP}) to $v_p(A)+(i-m_n(k))v_p(w_{k_0}-w_k)\geq 0$.
\end{remark}

We first list several results that will be frequently used in the proof of Proposition~\ref{P:each summand of Lagrange lie above NP}. 

\begin{lemma}\label{L:useful facts in the proof of Proposition each summand of Lagrange lie above NP}
	Under the notations of Proposition~\ref{P:each summand of Lagrange lie above NP}, we have
	\begin{enumerate}
		\item $m_n^{(\varepsilon)}(k)=\tfrac 12 d_k^\new(\varepsilon_1)-|n-\tfrac 12 d_k^\Iw(\tilde{\varepsilon}_1)|$;
		\item If we write $n=\tfrac 12 d_k^\Iw(\tilde{\varepsilon}_1)+\ell$, then we have 
		\[
		v_p\big(g_{n,\hat{k},\hat{k}_0}^{(\varepsilon)}(w_k)\big)=\Delta_{k,\ell}^{(\varepsilon)\prime}+\tfrac {k-2}2\cdot \ell-m_n^{(\varepsilon)}(k_0)v_p(w_k-w_{k_0});
		\]
\item If $\nS_{w_\star,k}^{(\varepsilon)}=\big(\tfrac 12 d_k^\Iw(\tilde{\varepsilon}_1)-L, \tfrac 12 d_k^\Iw(\tilde{\varepsilon}_1)+ L\big)$ with $L = L_{w_\star,k}^{(\varepsilon)}$ is a near-Steinberg range, then for any $L'\in \{0,\dots, \tfrac 12 d_k^\new(\varepsilon_1) \}$, we have 
	\begin{equation}\label{E:an inequality involving Delta_k, L and Delta_k, L'}
\Delta_{k,L'}^{(\varepsilon)}+(L-L')\cdot v_p(w_\star-w_k)\geq \Delta^{(\varepsilon)}_{k,L}.
		\end{equation}
	\end{enumerate}
\end{lemma}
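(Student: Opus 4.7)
[Proof plan for Lemma~\ref{L:useful facts in the proof of Proposition each summand of Lagrange lie above NP}]
All three statements should fall out of the definitions recalled in Section~\ref{Sec:recollection of local ghost}, so the plan is simply to unwind them carefully; none of the three requires any new input.

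For part~(1), I will apply Definition~\ref{D:ghost series} directly. Under the (implicit) assumption $m_n^{(\varepsilon)}(k)\neq 0$, namely $d_k^\ur(\varepsilon_1) < n < d_k^\Iw(\tilde\varepsilon_1) - d_k^\ur(\varepsilon_1)$, we have
\[
m_n^{(\varepsilon)}(k)=\min\bigl\{n-d_k^\ur(\varepsilon_1),\; d_k^\Iw(\tilde\varepsilon_1)-d_k^\ur(\varepsilon_1)-n\bigr\}.
\]
Writing $n = \tfrac12 d_k^\Iw(\tilde\varepsilon_1) + \ell$ with $\ell\in\ZZ$, each argument of the min becomes $\tfrac12 d_k^\Iw(\tilde\varepsilon_1)-d_k^\ur(\varepsilon_1)\pm\ell = \tfrac12 d_k^\new(\varepsilon_1)\pm\ell$, so the min is exactly $\tfrac12 d_k^\new(\varepsilon_1)-|\ell|$, as claimed.

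For part~(2), the identity is a direct rewriting of Definition-Proposition~\ref{DP:definition of Delta' and ghost duality alternative}. By definition of $\Delta'^{(\varepsilon)}_{k,\ell}$ in \eqref{E:definition of Delta'},
\[
v_p\bigl(g^{(\varepsilon)}_{n,\hat k}(w_k)\bigr)=\Delta'^{(\varepsilon)}_{k,\ell}+\tfrac{k-2}{2}\ell,
\]
and the further factorisation $g^{(\varepsilon)}_{n,\hat k,\hat k_0}(w) = g^{(\varepsilon)}_{n,\hat k}(w)/(w-w_{k_0})^{m_n^{(\varepsilon)}(k_0)}$ from \eqref{E:gn hat k} gives the extra term $-m_n^{(\varepsilon)}(k_0)\,v_p(w_k-w_{k_0})$ upon evaluation at $w_k$.

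Part~(3) is where a small amount of care is required, and will be the main (though still mild) obstacle. I will exploit two facts: (i) the sequence of slopes $\bigl(\Delta^{(\varepsilon)}_{k,\ell+1}-\Delta^{(\varepsilon)}_{k,\ell}\bigr)_{\ell\geq 0}$ is non-decreasing in $\ell$, by convexity of $\underline\Delta^{(\varepsilon)}_k$; and (ii) by the maximality of $L=L^{(\varepsilon)}_{w_\star,k}$ in Definition~\ref{D:near-steinberg range}, we have
\[
\Delta^{(\varepsilon)}_{k,L}-\Delta^{(\varepsilon)}_{k,L-1}\;\leq\; v_p(w_\star-w_k)\;<\;\Delta^{(\varepsilon)}_{k,L+1}-\Delta^{(\varepsilon)}_{k,L}.
\]
If $L'\leq L$, then $\Delta^{(\varepsilon)}_{k,L}-\Delta^{(\varepsilon)}_{k,L'}$ is a sum of $L-L'$ consecutive slopes, each at most $\Delta^{(\varepsilon)}_{k,L}-\Delta^{(\varepsilon)}_{k,L-1}\leq v_p(w_\star-w_k)$, giving the desired inequality \eqref{E:an inequality involving Delta_k, L and Delta_k, L'}. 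If $L'>L$, then $\Delta^{(\varepsilon)}_{k,L'}-\Delta^{(\varepsilon)}_{k,L}$ is a sum of $L'-L$ consecutive slopes, each at least $\Delta^{(\varepsilon)}_{k,L+1}-\Delta^{(\varepsilon)}_{k,L}> v_p(w_\star-w_k)$, so $\Delta^{(\varepsilon)}_{k,L'}-\Delta^{(\varepsilon)}_{k,L}\geq (L'-L)v_p(w_\star-w_k)$, which rearranges to \eqref{E:an inequality involving Delta_k, L and Delta_k, L'}. (The case $L'=L$ is trivial.) The one subtlety to keep in mind is that $L$ is defined relative to the open near-Steinberg range, so the boundary inequality at $\ell=L$ must be strict on one side and weak on the other, exactly as used above.
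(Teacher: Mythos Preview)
Your proposal is correct and follows essentially the same approach as the paper: parts (1) and (2) are direct unwindings of the definitions, and for part (3) the paper likewise splits into the cases $L'\lessgtr L$ and reduces \eqref{E:an inequality involving Delta_k, L and Delta_k, L'} to the slope bounds on $v_p(w_\star-w_k)$ coming from the definition of $L=L^{(\varepsilon)}_{w_\star,k}$ and the convexity of $\underline\Delta^{(\varepsilon)}_k$. The paper additionally remarks that the degenerate case $w_\star=w_k$ (where $v_p(w_\star-w_k)=+\infty$ and $L=\tfrac12 d_k^\new$) is covered, which your argument handles implicitly since only the case $L'\le L$ arises there.
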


\begin{proof}[Proof of Lemma~\ref{L:useful facts in the proof of Proposition each summand of Lagrange lie above NP}]
$(1)$ and $(2) $ follow from a direct computation. For $(3)$, write $L=L_{w_\star,k}$. 
Then \eqref{E:an inequality involving Delta_k, L and Delta_k, L'} is equivalent to
$$
v_p(w_\star -w_k) \begin{cases}
\geq \dfrac{\Delta_{k, L} - \Delta_{k,L'}}{L-L'} & \textrm{ if $L >L'$}\\
\leq \dfrac{\Delta_{k, L'} - \Delta_{k,L}}{L'-L} & \textrm{ if $L' >L$}.
\end{cases}
$$
But this follows from the definition of $L=L_{w_\star, k}$ in Definition~\ref{D:near-steinberg range}. Note that this argument also works for $w_\star=w_k$ as in this case we have $L=\tfrac 12 d_k^\new\geq L'$ and $v_p(w_\star-w_k)=+\infty$.
\end{proof}

\subsection{Proof of Proposition~\ref{P:each summand of Lagrange lie above NP}}
\label{S:proof of each summand of Lagrange lie above NP}
Throughout this proof, the relevant character $\varepsilon$ is fixed and suppressed from the notations.
We will treat the two parts of the proposition simultaneously and refer them as statement $(1)$ and $(2)$ respectively, using the following conventions.
\begin{enumerate}
\item For statement (1),  $k_0$ is an empty object (and hence $m_n(k_0)=0$), $w_\star$ is the given  $w_\star\in \gothm_{\CC_p}$, and 
we define an interval $\rmI: = [0,+\infty)$.
\item For statement (2), $k_0$ is the given integer, $w_\star$ is just $w_{k_0}$, and we define an interval $\rmI := [d_{k_0}^\ur, d_{k_0}^\Iw-d_{k_0}^\ur]$
\end{enumerate}
Under these notations, the two statements can be expressed uniformly as follows:
\begin{itemize}
\item The point $P:=(n,v_p(A(w_\star-w_k)^ig_{n,\hat{ k},\hat{ k}_0}(w_\star)))$ lies on or above the lower convex hull of the points $(n',v_p(g_{n',\hat{ k}_0}(w_\star)))_{n'\in \rmI}$. Moreover in statement (1), the point $P$ lies strictly above this lower convex hull if $(n,v_p(g_n(w_\star)))$ is a vertex of $\NP(G_{\bbsigma}(w_\star,-))$. 
\end{itemize}

Set $\ell:=n - \tfrac 12 d_k^\Iw$ and $L:=L_{w_\star,k}$ for simplicity. Since the statements involve whether the point $(n,v_p(g_n(w_\star)))$ is a vertex of $\NP(G_{\bbsigma}(w_\star,-))$, we will divide the discussion into two cases according to whether $n\in \nS_{w_\star,k}$ or not. When $n\notin \nS_{w_\star,k}$, we further divide the argument into three sub-cases based on whether $n$ belongs to some other near-Steinberg range $\nS_{w_\star, k'}$ with $k'\neq k$ and whether $\Delta'_{k,\frac 12 d_k^\new-m_n(k)}=\Delta_{k,\frac 12 d_k^\new-m_n(k)}$. The last one is a technical condition apapted to the estimate \eqref{E:v(A) bigger than Lagrange bound}.

\underline{Case A:} Assume $n \in \nS_{w_\star,k}$. By Proposition~\ref{P:near-steinberg equiv to nonvertex}(2)(5),  $(n, v_p(g_n(w_\star)))$ is not a vertex of $\NP(G_{\bbsigma}(w_\star, -))$ for statement (1) and $\big(n-\frac 12d_{k_0}^\Iw, \Delta'_{k_0, n-\frac 12d_{k_0}^\Iw}\big)$ is not 
a vertex of $\underline \Delta_{k_0}$ for statement (2).  It suffices to show that the point $P\big(n,v_p\big(A(w_\star-w_k)^ig_{n,\hat{ k},\hat{ k}_0}(w_\star)\big)\big)$  lies on or above the line segment $\overline{Q_-Q_+}$ with 
$$
Q_-:= \big( \tfrac 12 d_{k}^\Iw - L,\ v_p\big(g_{\frac 12 d_{k}^\Iw - L,\hat k_0}(w_\star) \big)\big)  \quad \textrm{and} \quad Q_+:=\big(  \tfrac 12 d_{k}^\Iw + L,\ v_p\big(g_{\frac 12 d_{k}^\Iw + L,\hat k_0}(w_\star)\big)\big).
$$
Here $Q_-$ and $Q_+$ lie on or above the lower convex hull of the points $(n',v_p(g_{n',\hat{ k}_0}(w_\star)))_{n'\in \rmI}$ but are not necessarily vertices. 

We rewrite the coordinates of $Q_-$ and $Q_+$ as
\begin{align*}
Q_{\pm}&=\big(\tfrac 12 d_k^\Iw\pm L,\ m_{\frac 12 d_k^\Iw\pm L}(k)v_p(w_\star-w_k)+v_p\big(g_{\frac 12 d_k^\Iw\pm L,\hat{k},\hat{ k}_0}(w_\star) \big) \big)\\
&=\big(\tfrac 12 d_k^\Iw\pm L,\ (\tfrac 12 d_k^\new-L)v_p(w_\star-w_k)+v_p\big(g_{\frac 12 d_k^\Iw\pm L,\hat{k},\hat{ k}_0}(w_\star) \big) \big).
\end{align*}
We apply Proposition~\ref{P:shifting points wstar to wk}(2) to the point $w_\star\in \gothm_{\CC_p}$, the weight $k_\alpha=k$, the set $\bfk=\{k,k_0 \}$ and the near-Steinberg range $\nS_{w_\star, k} = \big( \frac 12 d_k^\Iw -L, \frac 12 d_k^\Iw+L\big)$. Then the set of points $\{P,Q_-,Q_+ \}$ and $\{P', Q'_-, Q'_+ \}$ differ by a linear function, where 
\begin{align*}
P' \ & = \big(n,\, v_p(A)+i\cdot v_p(w_\star-w_k)+v_p\big(g_{n,\hat{ \bfk}}(w_k) \big) \big) \textrm{\quad and}
\\
Q'_\pm \ & = \big(\tfrac 12 d_k^\Iw\pm L,\, (\tfrac 12 d_k^\new-L)v_p(w_\star-w_k)+v_p\big(g_{\frac 12 d_k^\Iw\pm L,\hat{ \bfk}}(w_k) \big) \big),
\end{align*}
i.e. we replace the evaluation at $w_\star$ in the definitions of $P$ and $Q_\pm$ by evaluation at $w_k$.

By Lemma~\ref{L:useful facts in the proof of Proposition each summand of Lagrange lie above NP}$(2)$, we can write the coordinates of $P', Q'_-,Q_+'$ as
\begin{align*}
P'&=\big(\tfrac 12 d_k^\Iw+\ell,\, v_p(A)+i\cdot v_p(w_\star-w_k)-m_{\tfrac 12 d_k^\Iw+\ell}(k_0)v_p(w_k-w_{k_0})+\Delta_{k,\ell}'+\tfrac{k-2}2\cdot \ell \big),\\
Q'_{\pm}&=\big(\tfrac 12 d_k^\Iw\pm L,\, (\tfrac 12 d_k^\new-L)v_p(w_\star-w_k)-m_{\tfrac 12 d_k^\Iw\pm L}(k_0)v_p(w_k-w_{k_0})+\Delta_{k,\pm L}'+\tfrac{k-2}{2}\cdot (\pm L) \big).
\end{align*}
Note that for statement (2), the condition $v_p(w_{k_0}-w_k)=v_p(w_\star-w_k)\geq \Delta_{k,L}-\Delta_{k,L-1}$ implies that the ghost multiplicity $m_{n'}(k_0)$ is linear for $n'\in \overline\nS_{ w_{k_0}, k}$ by Proposition~\ref{P:shifting points wstar to wk}(1). 
Since $m_n(k_0)\neq 0$, we have $\overline\nS_{w_{k_0},k} \subseteq [d_{k_0}^\ur, d_{k_0}^\Iw-d_{k_0}^\ur]$.

Now, the function $f(n'):=\tfrac {k-2}2\big(n'-\tfrac 12 d_k^\Iw \big)-m_{n'}(k_0)v_p(w_k-w_{k_0})$ is linear for $n'\in \overline\nS_{w_\star,k}=[\tfrac 12 d_k^\Iw-L,\tfrac 12 d_k^\Iw+L]$ (recall $m_{n'}(k_0)=0$ for statement (1)). We apply the linear map $(x,y)\mapsto (x-\tfrac 12 d_k^\Iw, y-f(x))$ to the points $P'$, $Q'_{\pm}$, to get points 
\[
P''=\big(\ell, v_p(A)+i\cdot v_p(w_\star-w_k)+\Delta'_{k,\ell} \big)\text{~and~}Q''_{\pm} = \big(\pm L, (\tfrac 12 d_k^\new-L)v_p(w_\star-w_k)+\Delta'_{k,\pm L} \big). 
\]
So it suffices to show that the point $P''$ lies on or above the line segment $\overline{Q''_-Q''_+}$ . By ghost duality (\ref{E:ghost duality alternative}), we have $\Delta'_{k,L}=\Delta_{k,-L}'$ and $\overline{Q''_-Q''_+}$  is a horizontal line segment. So it suffices to prove the inequality
$$
v_p(A)+i \cdot v_p(w_\star-w_k) +\Delta'_{k,\ell} \geq \big(\tfrac 12 d_k^\new-L \big) \cdot v_p(w_\star-w_k)+ \Delta'_{k,L}.
$$
By Lemma~\ref{L:useful facts in the proof of Proposition each summand of Lagrange lie above NP}$(1)$ we have $|\ell|=\tfrac 12 d_k^\new-m_n(k)$. Combining with condition (\ref{E:v(A) bigger than Lagrange bound}), we are reduced to prove 
\[
\Delta_{k,\tfrac 12 d_k^\new -i}+(i+L-\tfrac 12 d_k^\new)\cdot v_p(w_\star-w_k)\geq \Delta'_{k,L}.
\]
This follows from Lemma~\ref{L:useful facts in the proof of Proposition each summand of Lagrange lie above NP}$(3)$ and the equality $\Delta'_{k,L}=\Delta_{k,L}$. This concludes the proof of the proposition in Case A.

\medskip
\underline{Case B}: 
Assume $n \notin \nS_{w_\star ,k}$.  Then Lemma~\ref{L:useful facts in the proof of Proposition each summand of Lagrange lie above NP}$(1)$ implies that $L\leq |n-\tfrac 12 d_k^\Iw|=\tfrac 12 d_k^\new-m_n(k)$ and hence 
\begin{equation}
\label{E:not near-Steinberg}
v_p(w_\star - w_k) <\Delta_{k,L+1}-\Delta_{k,L}\leq  \Delta_{k, \frac 12 d^\new_{k} - m_n(k)+1} - \Delta_{k, \frac 12 d_{k}^\new - m_n(k)}.
\end{equation}

Adapted to the estimate \eqref{E:v(A) bigger than Lagrange bound}, we divide the argument into several sub-cases:

\smallskip
\underline{Case B1}:  Assume that the point $\big(\tfrac 12 d_k^\new-m_n(k), \Delta'_{k,\frac 12 d_k^\new-m_n(k)} \big)$ is a vertex of $\underline\Delta_k$ so that $\Delta'_{k,\frac 12 d_k^\new-m_n(k)}=\Delta_{k,\frac 12 d_k^\new-m_n(k)}$.

In this case, we will prove that the point $P = \big(n,v_p\big(A(w_\star-w_k)^ig_{n,\hat{ k},\hat{ k}_0}(w_\star)\big)\big)$ lies \emph{strictly} above the point $\big(n, v_p\big( g_{n,\hat{k}_0}(w_\star)\big) \big)$. Equivalently, we need to prove the strictly inequality
$$
v_p(A) > (m_n(k) - i) \cdot  v_p(w_\star - w_k).
$$
But this is clear, as we argue as follows.
\begin{eqnarray*}
		v_p(A)& \stackrel{\eqref{E:v(A) bigger than Lagrange bound}}{\geq}& \Delta_{k, \frac 12 d_{k}^\new - i} -  \Delta'_{k, \frac 12 d_{k}^\new - m_n(k)}
		\\&=& \Delta_{k, \frac 12 d_{k}^\new - i} -  \Delta_{k, \frac 12 d_{k}^\new - m_n(k)}
		\\&\hspace{-10pt} \stackrel{\textrm{convexity of $\underline \Delta_k$}}{\geq }\hspace{-10pt}& (m_n(k) -i)\cdot \big(  \Delta_{k, \frac 12 d_{k}^\new - m_n(k)+1} - \Delta_{k, \frac 12 d_{k}^\new - m_n(k)} \big)
		\\&
		\stackrel{\eqref{E:not near-Steinberg}}{>} & (m_n(k) - i) \cdot  v_p(w_\star - w_k).
	\end{eqnarray*}

\smallskip
\underline{Case B2}:  Assume the following two conditions:
\begin{itemize}
\item[(a)] the point $\big(\tfrac 12 d_k^\new-m_n(k), \Delta'_{k,\frac 12 d_k^\new-m_n(k)} \big)$ is not a vertex of $\underline\Delta_k$, and 
\item[(b)] the point $( n,v_p(g_n(w_\star) ) )$ is a vertex of $\NP(G_{\bbsigma}(w_\star,-))$ for statement (1) or the point $\big( n- \frac 12d_{k_0}^\Iw, \Delta'_{k_0,n-\frac 12d_{k_0}^\Iw}\big)$ is a vertex of $\underline \Delta_{k_0}$ for statement (2).
\end{itemize} 
As in Case B1, we will prove that the point $P = \big(n,v_p\big(A(w_\star-w_k)^ig_{n,\hat{ k},\hat{ k}_0}(w_\star)\big)\big)$ lies \emph{strictly} above the point $\big(n, v_p\big( g_{n,\hat{k}_0}(w_\star)\big) \big)$, or equivalently the strict inequality
\begin{equation}
\label{E:vpA bigger than wstar-wk}
v_p(A) > (m_n(k) - i) \cdot  v_p(w_\star - w_k).
\end{equation}

We first point out that, by Proposition~\ref{P:near-steinberg equiv to nonvertex}(2)(5), condition (b) implies that
\begin{equation}\label{E:n notin nS w_star, k' for any k' in case B}
    n \notin \nS_{w_\star, k'} \text{~for any~}k'= k_\varepsilon +(p-1)k'_\bullet \neq k_0.
\end{equation}

By Proposition~\ref{P:near-steinberg equiv to nonvertex}$(5)$, condition (a) implies that there exists $k'=k_\varepsilon+(p-1)k'_\bullet$ such that $n\in \nS_{w_k,k'}=\big( \frac 12 d_{k'}^\Iw-L', \frac 12 d_{k'}^\Iw+L' \big)$ with $L'=L_{w_k,k'}$. By Proposition~\ref{P:near-steinberg equiv to nonvertex}$(4)$, the set of near-Steinberg ranges $\nS_{w_k,k'}$ for all such $k'$ is nested. So we can choose $k'$ with the largest $L'$.  Then by Proposition~\ref{P:near-steinberg equiv to nonvertex}$(4)(5)$, the points $\big(\frac 12 d_{k'}^\Iw\pm L'-\frac 12 d_k^\Iw,\Delta_{k,\frac 12 d_{k'}^\Iw\pm L'-\frac 12 d_k^\Iw} \big)$ are two consecutive vertices of $\underline \Delta_k$. From the fact $n\in \nS_{w_k,k'}$ and Proposition~\ref{P:Delta - Delta'} (note that here we use the assumption $p\geq 7$), we have 
	\begin{equation}
	\label{E:vp wk-wk' geq L'}
	v_p(w_k- w_{k'}) \geq \Delta_{k', L'} - \Delta'_{k', L'-1}\geq L'+\tfrac 12.
	\end{equation}
	Since $v_p(w_k-w_{k'})\in \ZZ$, we have $v_p(w_k-w_{k'})\geq L'+1$ and $v_p(k-k')=v_p(k_\bullet-k'_\bullet)\geq L'$. 
    
    If $v_p(w_\star-w_k)> v_p(w_k-w_{k'})$, we have $v_p(w_\star-w_{k'})=v_p(w_k-w_{k'})$ and hence $\nS_{w_\star,k'}=\nS_{w_k,k'}$ contains $n$. But this contradicts (\eqref{E:n notin nS w_star, k' for any k' in case B}). So we have $v_p(w_\star-w_k)\leq v_p(w_k-w_{k'})$. 
	
	Set $\gamma:=v_p(k-k')=v_p(k_\bullet-k'_\bullet)$. From the above discussion we have $\gamma\geq L'\geq 1$. Thus $|k_\bullet-k'_\bullet|\geq p^\gamma$. Then by Lemma~\ref{L:useful facts in the proof of Proposition each summand of Lagrange lie above NP}$(1)$ we have 
\begin{align}
	\label{E:dknew -mnk bigger than gamma}
	\tfrac 12d_k^\new - m_n(k) =&\ |n-\tfrac 12 d_k^\Iw| \geq 
	|\tfrac 12 d_k^\Iw-\tfrac 12 d_{k'}^\Iw|-|n-\tfrac 12 d_{k'}^\Iw|\\
    \nonumber
    >&\ |k'_\bullet-k_\bullet|-L' \geq p^{\gamma} - \gamma>\gamma+1.
	\end{align}
	By Proposition~\ref{P:Delta - Delta'},
	\begin{align*}
	v_p(A) &\; \stackrel{\eqref{E:v(A) bigger than Lagrange bound}}{\geq}\; \Delta_{k, \frac 12d_k^\new - i} - \Delta'_{k,  \frac 12d_k^\new - m_n(k)}
	\; \stackrel{\eqref{E:Delta - Delta' geq half of diff square}}{>} \; \tfrac 12 (d_k^\new -m_n(k)-i)(m_n(k)-i)\\
	&
	\;>\;(m_n(k)-i)\big(\tfrac 12d_k^\new - m_n(k)\big) \stackrel{\eqref{E:dknew -mnk bigger than gamma}}>(m_n(k)-i) (\gamma+1) \geq (m_n(k)-i) \cdot v_p(w_\star-w_k).
	\end{align*}
	This proves (\ref{E:vpA bigger than wstar-wk}) and completes the proof in Case B2.


\smallskip
{\underline{Case B3:}} (Continue to assume $n \notin \nS_{w_\star, k}$), we assume the following:
\begin{itemize}
\item[(a)] the point $\big(\tfrac 12 d_k^\new-m_n(k), \Delta'_{k,\frac 12 d_k^\new-m_n(k)} \big)$ is not a vertex of $\underline\Delta_k$, and 
\item[(c)] the point $( n,v_p(g_n(w_\star) ) )$ is a not vertex of $\NP(G_{\bbsigma}(w_\star,-))$ for statement (1) or the point $\big( n- \frac 12d_{k_0}^\Iw, \Delta'_{k_0,n-\frac 12d_{k_0}^\Iw}\big)$ is not a vertex of $\underline \Delta_{k_0}$ for statement (2).
\end{itemize}
We start the argument as in Case B2: condition (a) implies that there exists $k' = k_\varepsilon + (p-1)k'_\bullet$ such that $n\in \nS_{w_k,k'}=\big( \frac 12 d_{k'}^\Iw-L', \frac 12 d_{k'}^\Iw+L' \big)$ with $L'=L_{w_k,k'}$; take the largest such $k'$.  

If $v_p(w_\star-w_k) \leq v_p(w_k-w_{k'})$, exactly the same argument as in Case B2 shows that $v_p(A) > (m_n(k)-i) \cdot v_p(w_\star-w_k)$; then the point $P = \big(n,v_p\big(A(w_\star-w_k)^ig_{n,\hat{ k},\hat{ k}_0}(w_\star)\big)\big)$ lies \emph{strictly} above the point $\big(n, v_p\big( g_{n,\hat{k}_0}(w_\star)\big) \big)$. We are done in this case.

So in what follows, we assume $v_p(w_\star-w_k) > v_p(w_k-w_{k'})$. So we have 
\begin{equation}
\label{E:vpwk-wk' = vpwstar -wk'}
v_p(w_\star - w_k) > \Delta'_{k', L'} - \Delta'_{k',L'-1} \quad \textrm{and} \quad v_p(w_k-w_{k'}) = v_p(w_\star -w_{k'}),
\end{equation}
and we have $\nS_{w_k,k'}=\nS_{w_\star,k'}$. Set $n_\pm=\frac 12 d_{k'}^\Iw\pm L'$ so that $n\in \nS_{w_k,k'}=(n_-,n_+)$. It suffices to show the point $P\big( n,\, v_p\big(A(w_\star-w_k)^i\cdot g_{n,\hat{k}, \hat k_0}(w_\star)\big)\big) $ lies on or above the line segment $\overline {R_-R_+}$ with $R_\pm=\big(n_\pm, \,v_p\big(g_{n_\pm,\hat{k}_0}(w_\star) \big) \big)$. 

Set $\bfk=\{k,k',k_0 \}$. We rewrite the coordinates of $P,R_-$ and $R_+$ as 
\begin{align*}
P&=\big(n,\ v_p(A)+i\cdot v_p(w_\star-w_k)+m_n(k')v_p(w_\star-w_{k'})+v_p\big(g_{n,\hat{ \bfk}}(w_\star) \big) \big)\\
R_\pm&= \big(n_\pm, \ m_{n_\pm}(k)\cdot v_p(w_\star-w_k)+m_{n_\pm}(k')v_p(w_\star-w_{k'})+v_p\big(g_{n_\pm,\hat{ \bfk}}(w_\star) \big) \big).
\end{align*}
Define six points as follows:
\begin{align*}
P^\circ&=\big( n,\ v_p(A)+i\cdot v_p(w_\star-w_k)+m_n(k')v_p(w_\star-w_{k'})+v_p\big(g_{n,\hat{ \bfk}}(w_{k'}) \big) \big),\\
P'&=\big( n,\ v_p(A)+i\cdot v_p(w_\star-w_k)+m_n(k')v_p(w_\star-w_{k'})+v_p\big(g_{n,\hat{ \bfk}}(w_{k}) \big) \big),\\
R^\circ_\pm&= \big(n_\pm, \ m_{n_\pm}(k)\cdot v_p(w_\star-w_k)+m_{n_\pm}(k')v_p(w_\star-w_{k'})+v_p\big(g_{n_\pm,\,\hat{ \bfk}}(w_{k'}) \big) \big),\\
R'_\pm&= \big(n_\pm,\  m_{n_\pm}(k)\cdot v_p(w_\star-w_k)+m_{n_\pm}(k')v_p(w_\star-w_{k'})+v_p\big(g_{n_\pm,\hat{ \bfk}}(w_{k}) \big) \big).
\end{align*}
We apply Proposition~\ref{P:shifting points wstar to wk}$(2)$ to the near-Steinberg range $\nS_{w_\star,k'}$ and the set $\bfk$ and see that the sets of points $\{P,R_-,R_+ \}$ and $\{P^\circ,R^\circ_-,R^\circ_+  \}$ differ by a linear function. Similarly we apply Proposition~\ref{P:shifting points wstar to wk}$(2)$ to $\nS_{w_k,k'}$ and the set $\bfk$, and see that $\{P',R_-',R_+' \}$ and $\{P^\circ, R_-^\circ, R_+^\circ \}$ differ by a linear function. Therefore it suffices to show that the point $P'$ lies on or above the line segment $\overline{R'_-R'_+}$. From (\ref{E:vpwk-wk' = vpwstar -wk'}), we can apply Proposition~\ref{P:shifting points wstar to wk}$(1)$ to the near-Steinberg range $\nS_{w_k,k'}=\nS_{w_\star,k'}$ and find that the ghost multiplicities $n'\mapsto m_{n'}(k)$ and $n'\mapsto m_{n'}(k_0)$ are linear in $n'$ for $n'\in [n_-,n_+]$. So the function $f(s)=\frac{k-2}2(s-\frac 12 d_k^\Iw)+m_s(k)\cdot v_p(w_\star-w_k)-m_s(k_0)\cdot v_p(w_k-w_{k_0})$ is a linear function in $s\in [n_-,n_+]$. By Lemma~\ref{L:useful facts in the proof of Proposition each summand of Lagrange lie above NP}$(2)$, if we apply the linear map $(x,y)\mapsto (x-\frac 12 d_k^\Iw, y-f(x))$ to the set of points $\{P',R'_-,R'_+ \}$, we get $\{P'',R''_-, R''_+ \}$ with $P''=\big(\ell, v_p(A)+ (i-m_n(k))\cdot v_p(w_\star-w_k)+\Delta_{k,\ell}' \big)$ and $R''_\pm=\big(n_\pm-\frac 12 d_k^\Iw, \Delta'_{k,n_\pm-\frac 12 d_k^\Iw} \big)$.

By our choice of $k'$, $R''_\pm$ are two vertices of $\underline{\Delta}_k$. So it suffices to prove that 
$$
v_p(A) + (i-m_n(k)) \cdot v_p(w_\star -w_{k}) + \Delta'_{k,\ell} \geq \Delta_{k, \ell}.
$$
By ghost duality (\ref{E:ghost duality alternative}) and Lemma~\ref{L:useful facts in the proof of Proposition each summand of Lagrange lie above NP}$(1)$, we have $\Delta'_{k,\frac 12 d_k^\new-m_n(k)}=\Delta'_{k,|\ell|}=\Delta'_{k,\ell}$ and $\Delta_{k,\frac 12 d_k^\new-m_n(k)}=\Delta_{k,|\ell|}=\Delta_{k,\ell}$. In view of the estimate (\ref{E:v(A) bigger than Lagrange bound}), it suffices to prove that 
$$
\Delta_{k, \frac 12d_k^\new -i} + (i-m_n(k)) \cdot v_p(w_\star -w_{k}) \geq \Delta_{k, \frac 12d_{k}^\new-m_n(k)}.
$$
This follows from $v_p(w_\star - w_{k})< \Delta_{k, \frac 12d_k^\new - m_n(k)+1} - \Delta_{k, \frac 12d_k^\new -m_n(k)}$ as observed in (\ref{E:not near-Steinberg})  and the convexity of $\underline \Delta_k$. The proposition is proved in this case.
\hfill $\Box$

\medskip
To sum up, 
Proposition~\ref{P:each summand of Lagrange lie above NP} completes the proof of Proposition~\ref{P:Lagrange general}. In this section,  we reduced the proof of Theorem~\ref{T:local theorem} to proving the condition~\eqref{E:ghost reduction to k}.

\section{Proof of local ghost conjecture II: halo bound estimates}
\label{Sec:proof II}
In this section, we implement Step III of the proof of Theorem~\ref{T:local theorem} as laid out at the beginning of \S\,\ref{Sec:proof}; Step II will be discussed in the next section. More precisely, we will initiate the proof of the key estimate \eqref{E:ghost reduction to k} of the coefficients of Lagrange interpolation of terms in the characteristic power series. This is done by proving a similar result about the Lagrange interpolation of the determinant of every (not necessarily principal) minor  in Theorem~\ref{T:estimate of nonprincipal minor} below. We refer to Remark~\ref{remark:organization of the proof of theorem: estimate of nonprincipal minor} for the organization of its proof.

As in the previous section, we fix a primitive $\calO\llbracket \rmK_p\rrbracket$-projective augmented module $\widetilde \rmH$ of type $\bbsigma$ satisfying Hypothesis~\ref{H:b=0}, and we fix a character $\varepsilon = \omega^{-s_\varepsilon} \times \omega^{a+s_\varepsilon}$ relevant to $\bbsigma$; we suppress both $\varepsilon$ and $\bbsigma$ entirely from the notation. For this and the next section, we assume that $p \geq 11$ and $2 \leq a \leq p-5$; this is used in the proof of Proposition~\ref{P:estimate of overcoefficients}(1).

\begin{notation}
\label{N:lagrange of det of Uxi}
Let $\underline \zeta =\{ \zeta_1< \dots< \zeta_n\}$ and $\underline \xi = \{\xi_1< \dots< \xi_n\}$ be two subsets of $n$ positive integers, and let $\rmU^\dagger(\underline \zeta \times \underline \xi)$ be the $(\underline \zeta \times \underline \xi)$-minor of the matrix of $U_p$-action with respect to the power basis (cf. \S\,\ref{S:power basis} and Notation~\ref{N:matrices indexed by xi}). Recall that in Notation~\ref{N:matrices indexed by xi}, for a finite subset $\underline \zeta \subset \ZZ_{\geq 1}$, we defined $\deg(\underline \zeta) : = \sum\limits_{\zeta \in \underline \zeta} \deg \bfe_\zeta$.

We apply the Lagrange interpolation (Definition-Lemma~\ref{DL:interpolation formula for simple roots}) to $p^{\frac 12(\deg(\underline \xi)-\deg(\underline \zeta))}\cdot \det(\rmU^\dagger(\underline\zeta\times \underline \xi))\in E\langle w/p\rangle$ along $g_n(w)$. For every ghost zero $w_k$ of $g_n(w)$, consider the formal expansion 
\begin{equation}
\label{E:expansion of det Uxi / g_n hat k(w)}
p^{\frac 12(\deg(\underline \xi)-\deg(\underline \zeta))}\cdot\frac{\det(\rmU^\dagger(\underline \zeta\times \underline \xi))}{g_{n,\hat{k}}(w)}=\sum_{i\geq 0} A_{k,i}^{(\underline \zeta\times \underline \xi)}(w-w_k)^i \text{~in~} E\llbracket w-w_k\rrbracket.
\end{equation}
Let $A_k^{(\underline \zeta\times \underline \xi)}(w)=\sum\limits_{i=0}^{m_n(k)-1}A_{k,i}^{(\underline \zeta\times \underline \xi)}(w-w_k)^i\in E[w]$ be its truncation up to the term of degree $m_n(k)-1$. Then there exists $h_{\underline \zeta\times \underline \xi}(w)\in E\langle w/p\rangle $ such that 
\begin{equation}
\label{E:Lagrange interpolation det Uxi}
p^{\frac 12(\deg(\underline \xi)-\deg(\underline \zeta))}\cdot\det\big(\rmU^{\dagger}(\underline \zeta \times \underline \xi)\big) = \sum_{\substack{k \equiv k_\varepsilon \bmod{(p-1)}\\ m_n(k) \neq 0 }} \hspace{-10pt}\big( A^{(\underline \zeta \times \underline \xi)}_k(w) \cdot  g_{n,\hat k}(w) \big) + h_{\underline \zeta \times \underline \xi}(w) \cdot g_n(w).
\end{equation}

Note that by Definition-Proposition~\ref{DP:general corank theorem}, we have $A_{k,i}^{(\underline \zeta\times \underline \xi)}=0$ for $i\leq m_{\underline \zeta\times \underline\xi}(k)$.
\end{notation}

\begin{theorem}
\label{T:estimate of nonprincipal minor}
Assume that $2 \leq a \leq p-5$.
For every finite subsets $\underline \zeta$ and  $\underline \xi$ of size $n$, and every ghost zero $w_k$ of $g_n(w)$, we have the following inequality for every $i = 0,1, \dots, m_n(k)-1$,
\begin{equation}
\label{E:ghost reduction to k equivalent version}
v_p(A_{k,i}^{(\underline \zeta \times \underline \xi)}) \geq \Delta_{k,\frac 12 d_{k}^\new -i} - \Delta'_{k, \frac 12 d_{k}^\new - m_n(k)}.
\end{equation}
\end{theorem}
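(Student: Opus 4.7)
The plan is to prove Theorem~\ref{T:estimate of nonprincipal minor} by strong induction on the size $n$ of the minors. A central feature of the argument is that the induction must simultaneously carry two complementary estimates attached to every pair of index sets $(\underline\zeta,\underline\xi)$ of a given size $n$ and every ghost zero $w_k$ of $g_n(w)$: first, the \emph{under-coefficient} bound
\[
v_p\big(A_{k,i}^{(\underline\zeta\times\underline\xi)}\big) \geq \Delta_{k,\frac12 d_k^\new-i} - \Delta'_{k,\frac12 d_k^\new - m_n(k)} \qquad \text{for } 0\le i< m_n(k),
\]
which is the theorem itself; and second, a complementary \emph{over-coefficient} bound controlling $v_p(A_{k,i}^{(\underline\zeta\times\underline\xi)})$ for $i \geq m_n(k)$, which is the input Step~II will need when it expands cofactors of larger minors around $w_k$. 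The base cases for small $n$ follow from Definition-Proposition~\ref{DP:general corank theorem} and Proposition~\ref{P:simple ghost}, together with the Hodge bound of Proposition~\ref{P:naive HB}.

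Step~III, the subject of this section, is the following implication: \emph{given the under-coefficient bound at size $n$, together with both bounds at all smaller sizes, deduce the over-coefficient bound at size $n$.} The mechanism is Lagrange interpolation: rearranging \eqref{E:Lagrange interpolation det Uxi} to solve for $h_{\underline\zeta\times\underline\xi}(w)\cdot g_n(w)$ and then formally expanding in powers of $(w-w_k)$, the over-coefficients $A_{k,i}^{(\underline\zeta\times\underline\xi)}$ for $i \geq m_n(k)$ are governed by two ingredients: (a) the polynomial $h_{\underline\zeta\times\underline\xi}(w)$, whose $p$-adic size is controlled by a halo-type estimate on $p^{\frac12(\deg(\underline\xi)-\deg(\underline\zeta))}\cdot \det(\rmU^\dagger(\underline\zeta\times\underline\xi))$ together with Definition-Lemma~\ref{DL:interpolation formula for simple roots}(2), and (b) the under-coefficient contributions $A_{k',i'}^{(\underline\zeta\times\underline\xi)}$ transported from the other ghost zeros $w_{k'}$ via the factors $g_{n,\hat{k'}}(w)/(w-w_k)^{\text{ord}}$, whose Taylor expansions at $w_k$ are controlled by Proposition~\ref{P:each summand of Lagrange lie above NP}(2). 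For ingredient (a), the first try is the naive halo estimate of Corollary~\ref{C:halo estimate on det U C lambda xi}, which suffices in generic situations.

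Step~II, carried out in the next section, is the converse implication: \emph{given both bounds at all sizes $< n$, deduce the under-coefficient bound at size $n$.} Here one uses the key decomposition of Proposition~\ref{P:key feature of p-stabilization}, together with Proposition~\ref{P:oldform basis}, to write the matrix $\rmU^\dagger(\underline\zeta\times\underline\xi)\vert_{w=w_k}$ as the sum of an extremely sparse antidiagonal Atkin--Lehner piece and a correction whose rank at $w_k$ is at most $d_k^\ur$. A careful cofactor expansion of $\det(\rmU^\dagger(\underline\zeta\times\underline\xi))$ around this decomposition yields an expression that is either divisible by $(w-w_k)^{m_{\underline\zeta\times\underline\xi}(k)}$ or a sum of products of Atkin--Lehner entries with determinants of genuinely smaller submatrices of $\rmU^\dagger$; the Taylor expansions of these smaller determinants around $w_k$ are precisely what the inductive hypothesis bounds on both (U) and (O) provide. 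The two steps thus close the induction together.

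The main obstacle lies in Step~III in the pathological regime where $n$ is close to a large power of $p$: in this range the generic halo estimate of Corollary~\ref{C:halo estimate on det U C lambda xi} is too weak by a bounded amount, and one is forced to invoke the refined halo estimate of Corollary~\ref{C:refined halo estimate} with its combinatorial invariant $\DD^{(\varepsilon)}(\underline\lambda,\underline\eta)$ built from the $p$-adic digits of the $\deg\bfe_i^{(\varepsilon)}$'s. Matching these refined digit-based estimates against the abstract ghost-theoretic bounds $\Delta_{k,\frac12 d_k^\new - i} - \Delta'_{k,\frac12 d_k^\new - m_n(k)}$ requires a delicate case analysis, and this is precisely the stage at which the hypotheses $p \geq 11$ and $2 \leq a \leq p-5$ will enter essentially (via Proposition~\ref{P:Delta - Delta'} and the digit identities of Lemma~\ref{L:comparing D(lambda,eta') and D(lambda eta)}). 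I expect the technical heart of this section to be the packaging of these estimates into a single clean over-coefficient statement (the anticipated \textbf{Proposition~\ref{P:estimate of overcoefficients}}) that feeds cleanly into the cofactor expansion in the following section.
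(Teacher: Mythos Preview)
Your proposal is correct and follows essentially the same approach as the paper: strong induction on $n$, with Step~III (Proposition~\ref{P:estimate of overcoefficients}) deriving the over-coefficient bound at size $n$ from the under-coefficient bound at the same size via Lagrange interpolation plus halo estimates, and Step~II (Section~\ref{Sec:proof III}) deriving the under-coefficient bound at size $n$ from the over-coefficient bounds at smaller sizes via the cofactor expansion built on Proposition~\ref{P:oldform basis}. Two small clarifications: first, the paper's Step~III needs only the under-coefficient bound at size $n$ (not the smaller-size bounds), since the halo estimate \eqref{E:estimate on det Uzetaxi} on $\det(\rmU^\dagger(\underline\zeta\times\underline\xi))$ is proved directly without induction; second, $h_{\underline\zeta\times\underline\xi}(w)$ is a power series in $\calO\langle w/p\rangle$, not a polynomial.
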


By the weak Hodge bound on $\rmU^\dagger$ in Proposition~\ref{P:naive HB}, a standard argument (cf. \cite[\S~2.10]{liu-truong-xiao-zhao}) shows that the sum $(-1)^n \sum\limits_{\underline \xi}\det \big(\rmU^{\dagger}(\underline \xi \times \underline \xi)\big)$  over all principal minors of size $n$ converges in $\calO\langle w/p\rangle$ and is equal to $c_n(w)$. So for each $n$ and each ghost zero $w_k$ of $g_n(w)$,
$$
A_{k,i}^{(n)} =(-1)^n \sum_{\underline \xi} A_{k, i}^{(\underline \xi\times \underline \xi)},
$$
where $A_{k,i}^{(n)}$ is the number defined in Notation~\ref{N:Lagrange interpolation applied to c_n(w) and g_n(w)}. So condition~\eqref{E:ghost reduction to k} (and hence Theorem~\ref{T:local theorem}) follows from Theorem~\ref{T:estimate of nonprincipal minor} above.

\begin{remark}\label{remark:organization of the proof of theorem: estimate of nonprincipal minor}
We will prove Theorem~\ref{T:estimate of nonprincipal minor} by induction on $n$. In this section, after establishing the base case $n=1$ in \S~\ref{S:n=1}, we give a technical result (Proposition~\ref{P:estimate of overcoefficients} below) that will play a crucial role in later inductive arguments; this is the main result for Step III of the proof of Theorem~\ref{T:local theorem} and its proof will occupy the rest of this section. The proof of Theorem~\ref{T:estimate of nonprincipal minor} will be concluded in  \S\,\ref{S:proof of estimate on Axii} (and \S\,\ref{S:proof of statement stronger estimate}).
\end{remark}

\subsection{Proof of Theorem~\ref{T:estimate of nonprincipal minor} when $n=1$}
\label{S:n=1}

Fix a ghost zero $w_k$ of $g_1(w)$. The condition $m_1(k)>0$ is equivalent to that $d_k^\ur = 0$ and $d_k^\Iw \geq 2$. In particular we have $m_1(k) =1$ and it suffices to prove (\ref{E:ghost reduction to k equivalent version}) for $i=0$.  From the construction of the convex hull $\underline\Delta_k$ and the ghost duality (\ref{E:ghost duality alternative}), we have 
\[
\Delta_{k, \frac 12d_k^\new}=\Delta'_{k, \frac 12d_k^\new}=\Delta'_{k, -\frac 12d_k^\new}=v_p(g_{d_k^\ur,\hat{ k}}(w_k))+\frac{k-2}2\cdot \frac 12 d_k^\new\xlongequal{d_k^\ur=0}\frac{k-2}2\cdot \frac 12 d_k^\new
\]
and
\[
\Delta'_{k, \frac 12d_k^\new-1}=\Delta'_{k, 1-\frac 12d_k^\new}\xlongequal{d_k^\ur=0}v_p(g_{1,\hat{ k}}(w_k))+\frac{k-2}2\cdot\Big(\frac 12 d_k^\new-1\Big).
\]

As $n=1$, the set $\underline\zeta$ (resp. $\underline\xi$) consists of a single integer $\zeta$ (resp. $\xi$). Evaluating (\ref{E:Lagrange interpolation det Uxi}) at $w=w_k$, we get $A_{k,0}^{(\underline\zeta\times \underline\xi)}\cdot g_{1,\hat{ k}}(w_k) = p^{\frac 12 (\deg \bfe_\xi-\deg \bfe_\zeta)}\cdot \rmU^\dagger_{\bfe_\zeta, \bfe_\xi}|_{w=w_k}$. 
Therefore it suffices to prove that 
\begin{equation}
\label{E:difference of Delta when n=1}
v_p(\rmU^\dagger_{\bfe_\zeta, \bfe_\xi}|_{w=w_k}) \geq \tfrac{k-2}2 +\tfrac 12( \deg \bfe_\zeta-\deg \bfe_\xi).
\end{equation}
We divide our discussion into three cases:
\begin{enumerate}
\item Assume $\xi>d_k^\Iw$. By the remark below (\ref{E:basis of Sdagger}), we have $\deg \bfe_\xi>k-2$. Combining with the inequality $v_p(\rmU^\dagger_{\bfe_\zeta, \bfe_\xi}|_{w=w_k}) \geq \deg(\bfe_\zeta)$ from Proposition~\ref{P:naive HB}(2), we get (\ref{E:difference of Delta when n=1});
	\item Assume $\zeta>d_k^\Iw$ and $\xi\leq d_k^\Iw$. By Proposition~\ref{P:theta and AL}(1), we have $\rmU^\dagger_{\bfe_\zeta, \bfe_\xi}|_{w=w_k}=0$ and (\ref{E:difference of Delta when n=1}) follows;
\item Assume $\zeta, \xi \in \{1, \dots, d_k^\Iw\}$. Note that the matrix $\rmU^\dagger(\underline{d_k^\Iw})|_{w=w_k}$ coincides with the matrix $\rmU_k^\Iw$ defined in Proposition~\ref{P:oldform basis}. Since $d_k^\ur=0$, the matrix $\rmU^\dagger(\underline{d_k^\Iw})|_{w=w_k}=-\rmL_k^\cl$ is anti-diagonal by Proposition~\ref{P:oldform basis}(1), whose $(\zeta, \zeta^{\op})$-entry is precisely $-p^{\deg \bfe_{\zeta}}$, where $\zeta^{\op} =d_k^\Iw+1-\zeta$. It suffices to verify \eqref{E:difference of Delta when n=1} for these entries. By Proposition~\ref{P:theta and AL}(2),  we have $\deg\bfe_\zeta+\deg\bfe_{\zeta^\op}=k-2$. So	\[
v_p(\rmU^\dagger_{\bfe_\zeta, \bfe_\xi}|_{w=w_k})=\deg\bfe_\zeta=\tfrac{k-2}2+\tfrac 12 (\deg\bfe_\zeta-\deg\bfe_{\zeta^\op}).
	\]
\end{enumerate}

This completes the proof of Theorem~\ref{T:estimate of nonprincipal minor} when $n=1$.
\hfill $\Box$


\medskip
We have set up the base case of the inductive proof of Theorem~\ref{T:estimate of nonprincipal minor} . The following is the main result for Step III in the proof of Theorem~\ref{T:local theorem}.
\begin{proposition}
\label{P:estimate of overcoefficients}
Assume that $p \geq 11$ and that $2 \leq a\leq p-5$.
Fix two subsets $\underline \zeta$ and $\underline \xi$ of positive integers of cardinality $n$. Assume that for every ghost zero $w_k$ of $g_n(w)$, the inequality \eqref{E:ghost reduction to k equivalent version} holds.  
\begin{enumerate}
\item We have $h_{\underline \zeta \times \underline \xi}(w) \in \calO\langle w/p\rangle $.
\item For every ghost zero $w_{k_0}$ of $g_n(w)$, we have the following estimate:
\begin{equation}
\label{E:vp Ak0i}
v_p\big( A_{k_0,i}^{(\underline \zeta \times \underline \xi)}\big) \geq
\begin{cases}
\Delta_{k_0, \frac 12 d_{k_0}^\new - m_n(k_0)} - \Delta'_{k_0, \frac 12 d_{k_0}^\new - m_n(k_0)}, &\text{if~} i=m_n(k_0),\\
\tfrac 12\big((\tfrac 12d_{k_0}^\new-i)^2-(\tfrac 12d_{k_0}^\new-m_n(k_0))^2\big), &\text{if~} m_n(k_0)<i\leq \frac 12d_{k_0}^\new-1.
\end{cases} 
\end{equation}
\item For every integer $k_0 =  k_\varepsilon + (p-1)k_{0\bullet}$ such that $d_{k_0}^\ur\geq n$ (in particular $w_{k_0}$ is not a ghost zero of $g_n(w)$), if we consider the formal expansion in $E\llbracket w-w_{k_0}\rrbracket$:
\begin{equation}
\label{E:formal expansion of det U xi /g_n(w) at a non ghost zero}
p^{\frac 12(\deg(\underline \xi)-\deg(\underline \zeta))}\cdot\frac{\det(\rmU^\dagger(\underline \zeta\times \underline \xi))}{g_n(w)}=\sum_{i\geq 0}A_{k_0,i}^{(\underline \zeta\times \underline \xi)}(w-w_{k_0})^i,
\end{equation}
then we have the estimate
\begin{equation}
\label{E:vp Ak0i for a non ghost zero k_0}
v_p\big( A_{k_0,i}^{(\underline \zeta \times \underline \xi)}\big) \geq
\begin{cases}
\NP(G_{\bbsigma}(w_{k_0},-))_{x=n} - v_p\big(g_n(w_{k_0}) \big), &\text{if~} i=0,\\
\tfrac 12\big((\tfrac 12d_{k_0}^\new-i)^2-(\tfrac 12d_{k_0}^\new)^2\big), &\text{if~}i = 1, \dots, \frac 12d_{k_0}^\new-1.
\end{cases} 
\end{equation}
\end{enumerate}
Here $\NP(G_{\bbsigma}(w_{k_0},-))_{x=n}$ denotes the $y$-coordinate of the Newton polygon at $x=n$.
\end{proposition}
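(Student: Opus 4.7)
The plan is to combine the interpolation identity \eqref{E:Lagrange interpolation det Uxi} with a sharp halo-type estimate on its left hand side and the hypothesized bounds \eqref{E:ghost reduction to k equivalent version} on $A_{k,i}^{(\underline\zeta\times\underline\xi)}$ for $i<m_n(k)$.  All three conclusions will be extracted from this single identity by Taylor expansion or evaluation at strategically chosen points.

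The first step is to estimate the $(w-w_{k_0})$-Taylor coefficients of $p^{\frac 12(\deg(\underline\xi)-\deg(\underline\zeta))}\det(\rmU^\dagger(\underline\zeta\times\underline\xi))$.  Using the factorization $\rmU^\dagger=\rmU_{\bfC\to\bfB}\cdot\rmY^{-1}$ from Notation~\ref{N:matrix of Up for various basis} together with the Cauchy--Binet formula, I would expand the determinant as a finite sum
$\sum_{\underline\eta}\det\big(\rmU_{\bfC\to\bfB}(\underline\zeta\times\underline\eta)\big)\cdot\det\big(\rmY^{-1}(\underline\eta\times\underline\xi)\big)$.
The first factor admits a refined halo bound from Corollary~\ref{C:refined halo estimate} applied to $\rmU_{\bfC\to\bfB}=\rmY\cdot\rmU_\bfC$, while the second is bounded by \eqref{E:vp(Y-1)} in Lemma~\ref{L:estimate of Y}.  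The $D^{(\varepsilon)}$ and $\DD^{(\varepsilon)}$ correction terms from Notation~\ref{N:definition of D tuple} are needed precisely to handle the pathological cases in which some $\deg\bfe_{\zeta_j}$ or $\deg\bfe_{\xi_j}$ sits close to a large power of $p$.

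With this LHS estimate in hand, part (1) is obtained by rearranging \eqref{E:Lagrange interpolation det Uxi} as
\[
h_{\underline\zeta\times\underline\xi}(w)\cdot g_n(w)\ =\ p^{\frac 12(\deg(\underline\xi)-\deg(\underline\zeta))}\det\big(\rmU^\dagger(\underline\zeta\times\underline\xi)\big)\ -\ \sum_k A_k^{(\underline\zeta\times\underline\xi)}(w)\,g_{n,\hat k}(w),
\]
bounding the first summand by Step~1 and each summand of the second by combining \eqref{E:ghost reduction to k equivalent version} with Proposition~\ref{P:each summand of Lagrange lie above NP}(1) at an auxiliary point $w_\star$.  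Weierstrass division by $g_n(w)$ (as in Definition-Lemma~\ref{DL:interpolation formula for simple roots}(2), applicable because each ghost zero lies in $p\ZZ_p$) then yields $h_{\underline\zeta\times\underline\xi}(w)\in\calO\langle w/p\rangle$.  For parts (2) and (3), I would expand both sides of \eqref{E:Lagrange interpolation det Uxi} in $E\llbracket w-w_{k_0}\rrbracket$ and match coefficients of $(w-w_{k_0})^i$.  The LHS Taylor coefficients are controlled by Step~1; the principal Lagrange piece $A_{k_0}^{(\underline\zeta\times\underline\xi)}(w)\,g_{n,\hat k_0}(w)$ contributes through the hypothesis together with the known valuation $v_p(g_{n,\hat k_0}(w_{k_0}))=\Delta'_{k_0,\frac 12 d_{k_0}^\new-m_n(k_0)}+\tfrac{k_0-2}{2}(n-\tfrac 12 d_{k_0}^\Iw)$; the cross terms $A_k^{(\underline\zeta\times\underline\xi)}(w)\,g_{n,\hat k}(w)$ for $k\ne k_0$ are handled by Proposition~\ref{P:each summand of Lagrange lie above NP}(2) with $w_\star=w_{k_0}$; and the remainder $h_{\underline\zeta\times\underline\xi}(w)\,g_n(w)$ is handled by part (1).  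The sharp bound for $i=m_n(k_0)$ in (2) and for $i=0$ in (3) (where $d_{k_0}^\ur\ge n$ produces the extra $(w-w_{k_0})$-divisibility of $\det(\rmU^\dagger(\underline\zeta\times\underline\xi))$ via the corank theorem Definition-Proposition~\ref{DP:general corank theorem}) come from matching leading orders carefully; the quadratic estimate for larger $i$ then follows from Proposition~\ref{P:Delta - Delta'}, specifically \eqref{E:Delta - Delta' geq half of diff square}.

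The main obstacle I expect is in Step~1 in the pathological cases described above:  the naive halo bound of Proposition~\ref{P:halo estimate} is insufficient, and one must invoke the refined bound of Proposition~\ref{P:finer halo estimate}.  Tracking the $D(m,n)$ corrections through the Cauchy--Binet sum, together with the combinatorial estimates of Lemma~\ref{L:comparing D(lambda,eta') and D(lambda eta)} that relate $D^{(\varepsilon)}(\underline\lambda,\underline\eta)$ under perturbations of $\underline\eta$, is precisely where the assumptions $p\ge 11$ and $2\le a\le p-5$ enter essentially; this is consistent with Remark~\ref{R:discussion of a not 1p-4 and p 7}.
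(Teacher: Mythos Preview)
Your overall architecture is the paper's: reduce everything to the halo-type estimate $\det(\rmU^\dagger(\underline\zeta\times\underline\xi))\in p^{\frac12(\deg(\underline\zeta)-\deg(\underline\xi))+\deg g_n}\calO\langle w/p\rangle$ (the paper's \eqref{E:estimate on det Uzetaxi}), deduce (1) by Weierstrass division, then handle (2) and (3) by bounding the cross terms and the $h$-term in a Taylor expansion at $w_{k_0}$.  The refined halo argument via Cauchy--Binet and the $D^{(\varepsilon)}$ corrections is exactly the paper's route (Proposition~\ref{P:p-adic valuation of determinant of submatrices of U_C} and the lemmas following it).

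There is, however, a genuine gap in your plan for (2) and (3).  The identity \eqref{E:Lagrange interpolation det Uxi} involves only the \emph{truncated} polynomial $A_{k_0}^{(\underline\zeta\times\underline\xi)}(w)=\sum_{j<m_n(k_0)}A_{k_0,j}^{(\underline\zeta\times\underline\xi)}(w-w_{k_0})^j$; expanding it in $E\llbracket w-w_{k_0}\rrbracket$ therefore never produces the coefficients $A_{k_0,i}^{(\underline\zeta\times\underline\xi)}$ with $i\geq m_n(k_0)$ that you are trying to bound.  Matching coefficients in \eqref{E:Lagrange interpolation det Uxi} yields only a consistency relation among quantities you already control.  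The paper's fix (Lemma~\ref{L:estimate of h xi implies estimate of overcoefficents}) is to first divide \eqref{E:Lagrange interpolation det Uxi} by $g_{n,\hat k_0}(w)$: by the defining expansion \eqref{E:expansion of det Uxi / g_n hat k(w)} the left-hand side is then exactly $\sum_{i\geq 0}A_{k_0,i}^{(\underline\zeta\times\underline\xi)}(w-w_{k_0})^i$, the principal piece becomes the polynomial $A_{k_0}(w)$ of degree $<m_n(k_0)$ and hence contributes nothing for $i\geq m_n(k_0)$, and one is left with $A_{k_0,i}^{(\underline\zeta\times\underline\xi)}=(\text{cross})_i+(\text{$h$-term})_i$ for such $i$.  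After this correction your pieces fit together, though the cross terms for $i>m_n(k_0)$ require an additional combinatorial inequality (the paper's Lemma~\ref{L:two technical inequalities involving two weights k and k_0}) beyond \eqref{E:Delta - Delta' geq half of diff square} alone.

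Two smaller points.  For (1), once the halo estimate is in hand, Weierstrass division (Definition-Lemma~\ref{DL:interpolation formula for simple roots}(2)) applied directly to the left-hand side already yields $h_{\underline\zeta\times\underline\xi}\in\calO\langle w/p\rangle$; bounding $\sum_k A_k g_{n,\hat k}$ separately is unnecessary, and Proposition~\ref{P:each summand of Lagrange lie above NP}(1) is a pointwise statement that does not give an $\calO\langle w/p\rangle$-bound.  For (3), the corank theorem contributes nothing: $d_{k_0}^\ur\geq n$ forces $m_{\underline\zeta\times\underline\xi}(k_0)\leq 0$, so there is no extra $(w-w_{k_0})$-divisibility; the bound at $i=0$ comes instead from Proposition~\ref{P:each summand of Lagrange lie above NP}(1) applied to the cross terms at $w_\star=w_{k_0}$.
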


\begin{remark}
\label{R:cannot have Delta}
\phantomsection
\begin{enumerate}
    \item 
This proposition involves the coefficients of the Taylor expansion of some determinant of the minor with exponent \emph{greater than or equal to} the corresponding ghost multiplicity; in contrast, condition \eqref{E:ghost reduction to k equivalent version} concerns the coefficients in the Taylor expansions of $p^{\frac 12(\deg(\underline \xi)-\deg(\underline \zeta))}\cdot\det\big(\rmU^\dagger(\underline \zeta \times \underline \xi)\big) / g_{n,\hat k}(w)$ with exponents \emph{strictly less} than the corresponding ghost multiplicity. 
\item In \eqref{E:vp Ak0i}, we do not hope to prove $v_p(A_{k_0,i}^{(\underline \zeta \times \underline \xi)}) \geq 0$ when $i=m_n(k_0)$. This is because we need to take into account of the effect of terms of the form $A_{k,j}^{(\underline \zeta \times \underline \xi)}(w-w_k)^j$ with $k\neq k_0$. For such terms, the probably best estimate comes from an argument similar to Proposition~\ref{P:each summand of Lagrange lie above NP}(2) which is essentially about $\Delta_{k_0, \frac 12 d_{k_0}^\new - m_n(k_0)}$.
\end{enumerate}
\end{remark}
\begin{notation}\label{N:treat ghost zero and non ghost zero uniformly}
For every integer $k =  k_\varepsilon + (p-1)k_{\bullet}$, if $w_k$ is not a ghost zero of $g_n(w)$, we set $g_{n,\hat{ k}}(w)=g_n(w)$. Under this notation, the formal expansion \eqref{E:expansion of det Uxi / g_n hat k(w)} in Notation~\ref{N:lagrange of det of Uxi} makes sense for all such $k$ and coincides with the formal expansion (\ref{E:formal expansion of det U xi /g_n(w) at a non ghost zero}) in Proposition~\ref{P:estimate of overcoefficients}$(3)$ when $w_k=w_{k_0}$ is not a zero of $g_n(w)$. 
\end{notation}

\begin{lemma}\label{L:estimate of h xi implies estimate of overcoefficents}
To prove Proposition~\ref{P:estimate of overcoefficients}, it suffices to show that (under the hypothesis therein)
\begin{equation}
	\label{E:estimate on det Uzetaxi}
	\det\big(\rmU^{\dagger}(\underline \zeta \times \underline \xi)\big) \in p^{\frac 12(\deg(\underline \zeta)-\deg(\underline \xi))+ \deg g_n} \cdot \calO\langle w/p\rangle.
\end{equation}
\end{lemma}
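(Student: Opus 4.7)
The proof splits naturally into first establishing (1) and then using (1) together with the hypothesis of Proposition~\ref{P:estimate of overcoefficients} on $A_{k,i}^{(\underline\zeta\times\underline\xi)}$ for $i<m_n(k)$ to extract (2) and (3).

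For (1), the plan is to apply Definition-Lemma~\ref{DL:interpolation formula for simple roots}(2) directly to the Lagrange decomposition \eqref{E:Lagrange interpolation det Uxi} with $f(w) = p^{\frac 12(\deg(\underline\xi)-\deg(\underline\zeta))}\det(\rmU^\dagger(\underline\zeta\times\underline\xi))$ and $g = g_n$. The hypothesis \eqref{E:estimate on det Uzetaxi} immediately gives $f(w)\in p^{\deg g_n}\calO\langle w/p\rangle$, so the cited lemma with $N=\deg g_n$ yields $h_{\underline\zeta\times\underline\xi}(w)\in p^{\deg g_n-\deg g_n}\calO\langle w/p\rangle=\calO\langle w/p\rangle$, which is exactly (1).

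For (2) and (3), I would rewrite the Lagrange decomposition \eqref{E:Lagrange interpolation det Uxi} (now with $h_{\underline\zeta\times\underline\xi}\in\calO\langle w/p\rangle$) as
\[
\frac{f(w)}{g_{n,\hat k_0}(w)} = A^{(\underline\zeta\times\underline\xi)}_{k_0}(w) + (w-w_{k_0})^{m_n(k_0)}\bigg(h_{\underline\zeta\times\underline\xi}(w) + \sum_{k \neq k_0}\frac{A_k^{(\underline\zeta\times\underline\xi)}(w)}{(w-w_k)^{m_n(k)}}\bigg),
\]
using $g_{n,\hat k}(w)/g_{n,\hat k_0}(w) = (w-w_{k_0})^{m_n(k_0)}/(w-w_k)^{m_n(k)}$ and Notation~\ref{N:treat ghost zero and non ghost zero uniformly} when $w_{k_0}$ is not a ghost zero. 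Setting $x=w-w_{k_0}$ and extracting the coefficient of $x^i$ for $i\geq m_n(k_0)$, the polynomial $A^{(\underline\zeta\times\underline\xi)}_{k_0}(w)$ (of degree $<m_n(k_0)$) drops out and $A^{(\underline\zeta\times\underline\xi)}_{k_0,i}$ becomes a finite sum of two types of contributions: Taylor coefficients of $h_{\underline\zeta\times\underline\xi}$ at $w_{k_0}$, bounded below by $-j$ in $v_p$ for the $x^j$ coefficient (since $h_{\underline\zeta\times\underline\xi}\in\calO\langle w/p\rangle$ and $w_{k_0}\in p\calO$); and the coefficients $A^{(\underline\zeta\times\underline\xi)}_{k,i'}$ for $k\neq k_0$ and $i'<m_n(k)$, controlled by \eqref{E:ghost reduction to k equivalent version}, multiplied by the explicit Taylor coefficients of $x^{m_n(k_0)}/(x-(w_k-w_{k_0}))^{m_n(k)}$, whose $v_p$-valuations are explicit multiples of $v_p(w_k-w_{k_0})\geq 1$.

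The hard part will be matching these term-by-term estimates against the required bounds \eqref{E:vp Ak0i} and \eqref{E:vp Ak0i for a non ghost zero k_0}. The quadratic bound in the second line of \eqref{E:vp Ak0i} (for $i>m_n(k_0)$) should follow from the crude estimate $-j$ on the $h$-term, together with the inequality $\tfrac 12 d_{k_0}^\new - m_n(k_0)\geq j+1$ coming from the range $i\leq \tfrac 12 d_{k_0}^\new-1$; for the sum over $k\neq k_0$, one combines \eqref{E:ghost reduction to k equivalent version} with Proposition~\ref{P:Delta - Delta'}. The most delicate case is $i=m_n(k_0)$ in \eqref{E:vp Ak0i}: the target $\Delta_{k_0,\frac 12 d_{k_0}^\new - m_n(k_0)} - \Delta'_{k_0,\frac 12 d_{k_0}^\new - m_n(k_0)}$ vanishes exactly when $(\frac 12 d_{k_0}^\new - m_n(k_0),\Delta'_{k_0,\frac 12 d_{k_0}^\new-m_n(k_0)})$ is a vertex of $\underline{\Delta}_{k_0}$ and is strictly positive otherwise, so the argument must carefully keep track of which $k\neq k_0$ have $w_{k_0}$ near-Steinberg with respect to them. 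I expect this to be handled by a Proposition~\ref{P:each summand of Lagrange lie above NP}-style compactness/shifting argument, invoking Proposition~\ref{P:shifting points wstar to wk} to move the evaluation point between $w_{k_0}$ and $w_k$ together with Proposition~\ref{P:near-steinberg equiv to nonvertex}(4)(5) to control the nested near-Steinberg ranges. The estimate for (3), in which $w_{k_0}$ is not a ghost zero so $m_n(k_0)=0$, is then a direct specialization of the same argument.
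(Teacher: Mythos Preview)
Your proposal is correct and follows essentially the same route as the paper's proof. The decomposition you write is exactly \eqref{E:expression of det U xi / g n hat k_0 (w)}, and your treatment of the $h$-term matches the paper's \eqref{E:estimate of h k_0 i}--\eqref{E:another estimate of h k_0 i}.

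Two small refinements compared to what the paper actually does. First, for the delicate case $i=m_n(k_0)$ you anticipate needing a ``Proposition~\ref{P:each summand of Lagrange lie above NP}-style'' argument with Proposition~\ref{P:shifting points wstar to wk} and near-Steinberg bookkeeping; in fact Proposition~\ref{P:each summand of Lagrange lie above NP}(1)(2) (via the reformulation \eqref{E:each summand of Lagrange lie above NP}) was set up precisely for this purpose and applies directly to each $A_{k,j}^{(\underline\zeta\times\underline\xi)}$, so no new shifting argument is required. Second, for $i>m_n(k_0)$, after invoking \eqref{E:ghost reduction to k equivalent version} and Proposition~\ref{P:Delta - Delta'} you still need one more elementary inequality---the paper isolates this as Lemma~\ref{L:two technical inequalities involving two weights k and k_0}---which compares the quadratic terms at $k$ and $k_0$ against $(1+v_p(k_\bullet-k_{0\bullet}))$ times a linear term; its proof uses $|k_\bullet-k_{0\bullet}|\geq p^\gamma$ to force one of the relevant quantities to be large. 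Your sketch ``combine with Proposition~\ref{P:Delta - Delta'}'' is the right first move but does not by itself close the gap.
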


\begin{proof}
We first point out that, under Notation~\ref{N:treat ghost zero and non ghost zero uniformly}, we always have 
	\begin{align}
		\nonumber
		&p^{\frac 12(\deg(\underline \xi)-\deg(\underline \zeta))}\cdot\frac{\det(\rmU^\dagger(\underline \zeta\times \underline \xi) )}{g_{n,\hat{k}_0}(w)}\\
        \label{E:expression of det U xi / g n hat k_0 (w)} =\ &\sum_{\substack{k \equiv k_\varepsilon \bmod{(p-1)}\\ m_n(k) \neq 0 }} \hspace{-10pt} \Big( \sum_{j=0}^{m_n(k)-1} \frac{A_{k,j}^{(\underline \zeta\times \underline \xi)}(w-w_k)^j g_{n,\hat{ k}}(w)}{g_{n,\hat{ k}_0}(w)} \Big)+h_{\underline \zeta\times \underline \xi}(w)(w-w_{k_0})^{m_n(k_0)}.
	\end{align}
By Definition-Lemma~\ref{DL:interpolation formula for simple roots}(2), if \eqref{E:estimate on det Uzetaxi} holds, then $h_{\underline \zeta\times \underline \xi}(w)\in \calO\langle w/p\rangle$. This proves Proposition~\ref{P:estimate of overcoefficients}(1).

To prove Proposition~\ref{P:estimate of overcoefficients}(2) and (3), it suffices to prove that, for each summand in (\ref{E:expression of det U xi / g n hat k_0 (w)}), the coefficients of its expansion in $E\llbracket w-w_{k_0}\rrbracket$ satisfy the same estimate in (\ref{E:vp Ak0i}) or (\ref{E:vp Ak0i for a non ghost zero k_0}) depending on whether $w_{k_0}$ is a zero of $g_n(w)$ or not.
Now, we fix the $k_0 = k_\varepsilon + (p-1)k_{0\bullet}$ as in Proposition~\ref{P:estimate of overcoefficients}(2)(3); we treat both cases simultaneously. 

First, we treat the term $h_{\underline \zeta\times \underline \xi}(w)(w-w_{k_0})^{m_n(k_0)}$. We can formally write
\[
h_{\underline \zeta\times \underline \xi}(w)(w-w_{k_0})^{m_n(k_0)}=\sum_{i\geq m_n(k_0)} h_{k_0,i-m_n(k_0)}(w-w_{k_0})^i,
\]
where the assumption $h_{\underline \zeta\times \underline \xi}(w)\in \calO\langle w/p\rangle$ (and the fact $v_p(w_{k_0})\geq 1$) imply that
\begin{equation}
\label{E:estimate of h k_0 i}
v_p(h_{k_0,i-m_n(k_0)})\geq m_n(k_0)-i \text{\quad for all~} i\geq m_n(k_0).
\end{equation}

In this case, we will prove the following estimate:
\begin{equation}
\label{E:another estimate of h k_0 i}
v_p(h_{k_0,i-m_n(k)})\geq \tfrac 12 \big((\tfrac 12 d_{k_0}^\new-i)^2-(\tfrac 12 d_{k_0}^\new-m_n(k_0))^2  \big)
\end{equation}
for $i=m_n(k_0),\dots, \tfrac 12 d_{k_0}^\new -1$, which is slightly stronger than Proposition~\ref{P:estimate of overcoefficients}(2)(3) when $i = m_n(k_0)$.
Given the estimate \eqref{E:estimate of h k_0 i}, this follows immediately from the following inequality:
	\[
	i-m_n(k_0)\leq \tfrac 12 \big((\tfrac 12 d_{k_0}^\new -m_n(k_0))^2-(\tfrac 12 d_{k_0}^\new -i)^2 \big)=\tfrac 12 (i-m_n(k_0))\cdot (\tfrac 12 d_{k_0}^\new -m_n(k_0)+\tfrac 12 d_{k_0}^\new -i),
	\]
	which holds under the assumption $m_n(k_0)\leq i\leq \tfrac 12 d_{k_0}^\new -1$.
	
Now we consider the term $\frac{A_{k,j}^{(\underline \zeta\times \underline \xi)}(w-w_k)^j g_{n,\hat{ k}}(w)}{g_{n,\hat{ k}_0}(w)} $ for a ghost zero $w_k$ of $g_n(w)$ and $0\leq j\leq m_n(k)-1$. When we treat the case $k=k_0$ (and necessarily statement $(2)$ of Proposition~\ref{P:estimate of overcoefficients}), this term is a monomial in $w-w_{k_0}$ of degree $j<m_n(k_0)$; the statement trivially holds true. So we can assume $k\neq k_0$. By a direct computation, we have 
	\[
	\frac{A_{k,j}^{(\underline \zeta\times \underline \xi)}(w-w_k)^j g_{n,\hat{ k}}(w)}{g_{n,\hat{ k}_0}(w)}= A_{k,j}^{(\underline \zeta\times \underline \xi)}(w-w_k)^{j-m_n(k)}(w-w_{k_0})^{m_n(k_0)}=\sum_{i\geq m_n(k_0)}a_{k_0,k,i}^{(j)} (w-w_{k_0})^i,
	\] 
	\begin{equation}
	\label{E:expression of a k_0 k i (j)}
	\text{with~}a_{k_0,k,i}^{(j)}=\binom{j-m_n(k)}{i-m_n(k_0)}A_{k,j}^{(\underline \zeta\times \underline \xi)}(w_{k_0}-w_k)^{j-m_n(k)-i+m_n(k_0)}.
	\end{equation}
	It suffices to prove that $a_{k_0,k,i}^{(j)}$ satisfies the same estimate as $A_{k_0,i}^{(\underline \zeta\times \underline \xi)}$ in (\ref{E:vp Ak0i}) or (\ref{E:vp Ak0i for a non ghost zero k_0}). We separate the discussion into two cases:
\begin{enumerate}[i)]
\item 
Assume $i=m_n(k_0)$. We first treat statement (2).		Using the inequality (\ref{E:ghost reduction to k equivalent version}), we can apply Proposition~\ref{P:each summand of Lagrange lie above NP}(2) and the inequality \eqref{E:each summand of Lagrange lie above NP} to the number $A:=A_{k,j}^{(\underline \zeta\times \underline \xi)}$, to deduce
\begin{equation}
\label{E:vp(a) geq Delta - Delta'}
v_p(A_{k,j}^{(\underline \zeta\times \underline \xi)})+(j-m_n(k))\cdot v_p(w_{k_0}-w_k)+\Delta_{k_0,\ell}'\geq \Delta_{k_0,\ell}
\end{equation}
with $\ell=n-\tfrac 12 d_{k_0}^\Iw$. 
Thus, \eqref{E:expression of a k_0 k i (j)} shows that
\begin{align*}
v_p(a_{k_0,k,m_n(k_0)}^{(j)})\geq \ & v_p(A_{k,j}^{(\underline \zeta \times\underline\xi)}) + (j-m_n(k)
) \cdot v_p(w_{k_0}-w_k)
\\
\stackrel{\eqref{E:vp(a) geq Delta - Delta'}}\geq & \Delta_{k_0,\ell} - \Delta'_{k_0, \ell} \stackrel{\eqref{E:ghost duality}} = \Delta_{k_0,|n-\frac 12d_{k_0}^\Iw|} - \Delta'_{k_0, |n-\frac 12d_{k_0}^\Iw|}\\
\stackrel{\textrm{Lemma~\ref{L:useful facts in the proof of Proposition each summand of Lagrange lie above NP}(1)}}= &\Delta_{k_0,\frac 12 d_{k_0}^\new-m_n(k_0)}-\Delta_{k_0,\frac 12 d_{k_0}^\new-m_n(k_0)}'.
\end{align*}
This proves statement (2) of Proposition~\ref{P:estimate of overcoefficients}.

The statement (3) can be proved similarly. Using the inequality (\ref{E:ghost reduction to k equivalent version}), we can apply Proposition~\ref{P:each summand of Lagrange lie above NP}(1) to $A:=A_{k,j}^{(\underline \zeta\times \underline \xi)}$, and get the inequality
\begin{equation}
\label{E:vp(a) geq Np - vp}
v_p(A_{k,j}^{(\underline \zeta\times \underline \xi)})+jv_p(w_{k_0}-w_k)+v_p(g_{n,\hat{ k}}(w_{k_0}))\geq \NP(G_{\bbsigma}(w_{k_0},-))_{x=n}.
\end{equation}
Combining this with \eqref{E:expression of a k_0 k i (j)} proves Proposition~\ref{P:estimate of overcoefficients}(3) in case i) as follows:
$$
v_p(a_{k_0,k,m_n(k_0)}^{(j)})\geq v_p(A_{k,j}^{(\underline \zeta \times\underline\xi)}) + j \cdot v_p(w_{k_0}-w_k)
\stackrel{\eqref{E:vp(a) geq Np - vp}}\geq  \NP(G_{\bbsigma}(w_{k_0},-))_{x=n}-v_p(g_n(w_{k_0})).
$$

		\item Assume $m_n(k_0)<i<\tfrac 12 d_{k_0}^\new$. Noting that $m_n(k_0)=0$ under the assumption of Proposition~\ref{P:estimate of overcoefficients}(3), we uniformly write the two statements as:
		\begin{equation}
		\label{E:vp a k_0 k i (j) for i> m_n(k_0)}
		v_p(a_{k_0,k,i}^{(j)})\geq \tfrac 12 \big((\tfrac 12 d_{k_0}^\new -i)^2-(\tfrac 12 d_{k_0}^\new -m_n(k_0))^2 \big). 
		\end{equation}
Using (\ref{E:expression of a k_0 k i (j)}), we deduce that
\begin{align*}
v_p(a_{k_0,k,i}^{(j)})\geq \ & v_p(A_{k,j}^{(\underline \zeta \times\underline\xi)}) + (j-m_n(k)-i+m_n(k_0)
) \cdot v_p(w_{k_0}-w_k)
\\
\stackrel{\eqref{E:ghost reduction to k equivalent version}}\geq & \Delta_{k, \frac 12d_k^\new - j} - \Delta'_{k, \frac 12 d_k^\new - m_n(k)} + (j-m_n(k)-i+m_n(k_0)
) \cdot v_p(w_{k_0}-w_k)
\\
\stackrel{Proposition~\ref{P:Delta - Delta'}} \geq & 1 +\tfrac 12 \big((\tfrac 12 d_k^\new -j)^2-(\tfrac 12 d_k^\new -m_n(k))^2 \big)\\
& \qquad \qquad +(j-m_n(k)-i+m_n(k_0))\cdot v_p(w_{k_0}-w_k).
\end{align*}
Now (\ref{E:vp a k_0 k i (j) for i> m_n(k_0)}) follows from this and Lemma~\ref{L:two technical inequalities involving two weights k and k_0} below. 
\end{enumerate}
Combining  the two cases above, we proved that \eqref{E:estimate on det Uzetaxi} implies Proposition~\ref{P:estimate of overcoefficients}.
\end{proof}

\begin{lemma}
\label{L:two technical inequalities involving two weights k and k_0}
Let $w_k$ be a ghost zero of $g_n(w)$ and let $k_0 =  k_\varepsilon + (p-1)k_{0\bullet}$ be such that $n<d_{k_0}^\Iw-d_{k_0}^\ur$. Fix two integers $i,j$ with $m_n(k_0)<i\leq \tfrac 12 d_{k_0}^\new$ and $0\leq j\leq m_n(k)-1$. Set $\gamma:=v_p(k_\bullet-k_{0\bullet})$, and
$$
x_0=\tfrac 12 d_{k_0}^\new-i, \quad y_0=\tfrac 12 d_{k_0}^\new -m_n(k_0), \quad x=\tfrac 12 d_k^\new-j, \quad y=\tfrac 12 d_k^\new -m_n(k).$$ Under these notations, we have the estimate
\begin{equation}
\label{E:inequality involving x,y,z,w, gamma when k_0 is a ghost zero}
1+\tfrac 12 (x^2-y^2+y_0^2-x_0^2)\geq (1+\gamma)(x-y +y_0-x_0).
	\end{equation}
\end{lemma}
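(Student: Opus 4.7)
The plan is as follows. Setting $u = x - y$ and $v = y_0 - x_0$, the hypotheses $0 \le j \le m_n(k) - 1$ and $m_n(k_0) < i$ give $u, v \ge 1$. Expanding $x^2 - y^2 = u(x + y)$ and $y_0^2 - x_0^2 = v(x_0 + y_0)$, the desired inequality \eqref{E:inequality involving x,y,z,w, gamma when k_0 is a ghost zero} is equivalent to
\[
u(x + y - 2 - 2\gamma) + v(x_0 + y_0 - 2 - 2\gamma) \;\ge\; -2,
\]
which after completing the square in each summand takes the symmetric form
\[
(x - 1 - \gamma)^2 + (y_0 - 1 - \gamma)^2 + 2 \;\ge\; (y - 1 - \gamma)^2 + (x_0 - 1 - \gamma)^2,
\]
which I shall call $(\star)$.

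The crucial input is a feasibility bound. Writing $\ell = n - \tfrac12 d_k^\Iw$ and $\ell_0 = n - \tfrac12 d_{k_0}^\Iw$, one has $|\ell| = y$, $|\ell_0| \ge y_0$ (with equality when $m_n(k_0) > 0$, by Lemma~\ref{L:useful facts in the proof of Proposition each summand of Lagrange lie above NP}(1)), and $|\ell_0 - \ell| = |k_\bullet - k_{0\bullet}| \ge p^\gamma$. When $m_n(k_0) = 0$, the hypothesis $n < d_{k_0}^\Iw - d_{k_0}^\ur$ forces in addition $\ell_0 \le -y_0$ and in particular $|\ell_0| \le \tfrac12 d_{k_0}^\Iw \le k_{0\bullet}+1$. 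Combining these observations with the weight-dimension bound $\tfrac12 d_{k_0}^\new \ge \tfrac{p-1}{p+1}\,k_{0\bullet} - 1$ from Definition-Proposition~\ref{DP:dimension of classical forms}(5), a short case analysis on the signs of $\ell, \ell_0$ and on whether $m_n(k_0)$ vanishes yields the quantitative statement: \emph{if $\gamma \ge 1$ and $y \le \gamma$, then $y_0 \ge \tfrac{p-1}{p+1}\,p^\gamma - C$ for an explicit constant $C = O(\gamma)$}; a symmetric conclusion holds when $x_0 \le \gamma$.

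We then verify $(\star)$ by cases. In Case~A, both $y \ge 1 + \gamma$ and $x_0 \ge 1 + \gamma$; the chain $x \ge y \ge 1 + \gamma$ gives $(x - 1 - \gamma)^2 \ge (y - 1 - \gamma)^2$, and likewise $(y_0 - 1 - \gamma)^2 \ge (x_0 - 1 - \gamma)^2$, so $(\star)$ holds with slack at least $2$. In Case~B, by the symmetry of $(\star)$ under $(x, y) \leftrightarrow (y_0, x_0)$ we may assume $y \le \gamma$. The subcase $\gamma = 0$ reduces immediately to $\tfrac12(u - 1)^2 + \tfrac12(v - 1)^2 \ge 0$, which is trivial. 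For $\gamma \ge 1$, the right-hand side of $(\star)$ is bounded above by $2(1 + \gamma)^2$, while the feasibility bound forces $(y_0 - 1 - \gamma)^2 \ge \bigl(\tfrac{p-1}{p+1}p^\gamma - 1 - \gamma - C\bigr)^2$. For $p \ge 11$ and $\gamma \ge 1$, the latter exceeds $2(1 + \gamma)^2 - 2$ with ample margin (the former grows like $p^{2\gamma}$ while the latter is $O(\gamma^2)$), so $(\star)$ follows.

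The principal technical obstacle is the feasibility analysis of the second paragraph: several subcases arise according to the signs of $\ell, \ell_0$ and to $m_n(k_0) = 0$ versus $m_n(k_0) > 0$, and in the subcase $m_n(k_0) = 0$ with $|\ell_0| > y_0$ one must transfer the lower bound on $|\ell_0|$ to one on $y_0$ via the dimension inequality. Once the feasibility bound is in hand, the final numerical comparison between a polynomial in $\gamma$ and an exponential in $p^\gamma$ is routine.
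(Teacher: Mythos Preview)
Your overall strategy matches the paper's: rewrite the inequality as
\[
(y_0-x_0)(x_0+y_0-2-2\gamma)+(x-y)(x+y-2-2\gamma)+2\ge 0,
\]
dispose of $\gamma=0$ directly, and for $\gamma\ge 1$ use that $|\ell-\ell_0|\ge p^\gamma$ forces $y+y_0$ (hence one of $y,y_0$) to be large. Your completed-square form $(\star)$ is a pleasant repackaging of the same identity.

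There is, however, a genuine gap in Case~B. You write that by the symmetry of $(\star)$ under $(x,y)\leftrightarrow(y_0,x_0)$ you may assume $y\le\gamma$, and you then bound the right-hand side of $(\star)$ by $2(1+\gamma)^2$. Two problems: first, the bound $(x_0-1-\gamma)^2\le(1+\gamma)^2$ requires $x_0\le 2\gamma+2$, which is \emph{not} forced by $y\le\gamma$ alone (for instance $y=0$, $x=1$, $x_0=p-1$, $y_0=p$, $\gamma=1$ is feasible and gives right-hand side $(p-3)^2+4\gg 8$). Second, the ``symmetric conclusion when $x_0\le\gamma$'' you assert for the feasibility bound is false: the feasibility argument relates $y=|\ell|$ and $y_0$ via $|\ell-\ell_0|\ge p^\gamma$, and there is no analogous relation linking $x_0$ to $x$ (the indices $i,j$ are independent). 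So the formal symmetry of $(\star)$ does not transport the feasibility input, and the WLOG is invalid.

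The fix is straightforward and is exactly what the paper does: split instead on whether $x+y\le 2\gamma+1$ or $x_0+y_0\le 2\gamma+1$ (the case where both sums are $\ge 2\gamma+2$ being trivial). The feasibility bound $y+y_0\ge p^\gamma - O(1)$ is symmetric in $y,y_0$, so if $x+y\le 2\gamma+1$ then $y\le\gamma$ forces $y_0$ large (hence the $v$-term is large and positive), while if $x_0+y_0\le 2\gamma+1$ then $y_0\le 2\gamma+1$ forces $y$ large (hence the $u$-term is large and positive). In the $m_n(k_0)=0$ case the paper also observes $y_0>y$, which rules out $x_0+y_0\le 2\gamma+1$ entirely. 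With this corrected case split your exponential-versus-polynomial endgame goes through unchanged.
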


\begin{proof}
First note $y_0>x_0\geq 0$ and $x>y\geq 0$ under the assumptions in the lemma. We will prove the following equivalent form of \eqref{E:inequality involving x,y,z,w, gamma when k_0 is a ghost zero}:
\begin{equation}
\label{E:inequality involving x,y,z,w, gamma when k_0 is a ghost zero equivalent}
(y_0-x_0)(x_0+y_0-2-2\gamma) + (x-y)(x+y-2-2\gamma) + 2\geq 0.
\end{equation}
When $\gamma=0$, \eqref{E:inequality involving x,y,z,w, gamma when k_0 is a ghost zero equivalent} can be verified directly. If $x_0+y_0\geq 2\gamma+2$ and $x+y\geq 2\gamma+2$, (\ref{E:inequality involving x,y,z,w, gamma when k_0 is a ghost zero equivalent}) also trivially holds. So we assume $\gamma\geq 1$ and either $x_0+y_0\leq 2\gamma+1$ or $x+y\leq 2\gamma+1$ from now on.  
The rest of the argument takes the form of using $|k_{0\bullet}-k_\bullet|\geq p^\gamma$ to deduce $y_0+y \geq O(p^\gamma)$, and then concluding \eqref{E:inequality involving x,y,z,w, gamma when k_0 is a ghost zero equivalent} because either $y$ or $y_0$ is huge.  For a rigorous proof, we consider two cases corresponding to Proposition~\ref{P:estimate of overcoefficients}(2) and (3) respectively.

    (1) Assume that $w_{k_0}$ is a ghost zero of $g_n(w)$. By Lemma~\ref{L:useful facts in the proof of Proposition each summand of Lagrange lie above NP}$(1)$, we have 
		\[
		y_0+y=\tfrac 12 d_{k_0}^\new -m_n(k_0)+\tfrac 12 d_k^\new -m_n(k)=|\tfrac 12 d_{k_0}^\Iw-n|+|\tfrac 12 d_k^\Iw-n|\geq |\tfrac 12 d_{k_0}^\Iw-\tfrac 12 d_k^\Iw|=|k_{0\bullet}-k_\bullet|\geq p^\gamma.
		\]
		We assume $x+y\leq 2\gamma+1$ and the case for $x_0+y_0\leq 2\gamma+1$ can be proved similarly. From $x>y$ we have $y\leq \gamma$. Therefore $x_0+y_0-2-2\gamma\geq y_0-2-2\gamma\geq p^\gamma-y-2-2\gamma\geq p^\gamma-3\gamma-2$ and hence $(y_0-x_0)(x_0+y_0-2-2\gamma)\geq p^\gamma-3\gamma-2$. On the other hand, we have $(x-y)(2+2\gamma-x-y)\leq (1+\gamma-y)^2\leq (1+\gamma)^2$. Combining these two inequalities gives 
		\[
(y_0-x_0)(x_0+y_0-2-2\gamma) + (x-y)(x+y-2-2\gamma) + 2\geq p^\gamma-3\gamma-(1+\gamma)^2\geq 0
		\]
		as $p\geq 7$. This proves \eqref{E:inequality involving x,y,z,w, gamma when k_0 is a ghost zero equivalent} or equivalently \eqref{E:inequality involving x,y,z,w, gamma when k_0 is a ghost zero} when $m_n(k_0)>0$.

        (2) Assume $d_{k_0}^\ur\geq n$ so that $m_n(k_0)=0$ and $y_0=\tfrac 12 d_{k_0}^\new$. Since $d_{k_0}^\ur\geq n>d_k^\ur$, we have $k_{0\bullet}>k_\bullet$. By Definition-Proposition~\ref{DP:dimension of classical forms}(5), we have 
\begin{align}
\nonumber
y_0+y=\ & (\tfrac 12 d_{k_0}^\new -\tfrac 12 d_k^\new) +(d_k^\new  -m_n(k))\geq \tfrac{p-1}{p+1}(k_{0\bullet}-k_\bullet)-2 +1\geq \tfrac{p-1}{p+1}\cdot p^\gamma-1.   
\end{align}

        If $x_0+y_0\leq 2\gamma+1$, we have $y_0\leq 2\gamma+1$. Since $k_{0\bullet}>k_\bullet$, we have $y_0=\tfrac 12 d_{k_0}^\new>\tfrac 12 d_k^\new-m_n(k)=y$ and hence $y\leq 2\gamma$. Then $4\gamma+1\geq y_0+y\geq \frac{p-1}{p+1}\cdot p^\gamma-1$, which is impossible when $p\geq 11$ and $\gamma\geq 1$;
		
        If $x+y\leq 2\gamma+1$, we have $y\leq \gamma$ and $y_0\geq \frac{p-1}{p+1}\cdot p^\gamma-\gamma-1$. Then we have $(y_0-x_0)(x_0+y_0-2-2\gamma)\geq \frac{p-1}{p+1}\cdot p^\gamma-3-3\gamma$ and $(x-y)(2+2\gamma-x-y)\leq (1+\gamma)^2$. Therefore,
			\[
			y_0^2-x_0^2+x^2-y^2-2(1+\gamma)(y_0-x_0+x-y)+2\geq \tfrac{p-1}{p+1}\cdot p^\gamma-1-3\gamma-(1+\gamma)^2\geq 0
			\]
			as $p\geq 11$ and $\gamma\geq 1$. This completes the proof of \eqref{E:inequality involving x,y,z,w, gamma when k_0 is a ghost zero} when $d_{k_0}^\ur\geq n$.
		\qedhere 
\end{proof}

The rest of this section is devoted to proving the estimate~\eqref{E:estimate on det Uzetaxi}. This does not rely on the inductive setup in Proposition~\ref{P:estimate of overcoefficients}, and it is a result purely about the matrix of $U_p$-operator.  Recall the two matrices $\rmU_{\bfC}$ and $Y$ defined in Notation~\ref{N:matrix of Up for various basis}. For two ordered tuples $\underline \lambda=(\lambda_1,\dots, \lambda_n)$, $\underline \eta=(\eta_1,\dots, \eta_n)\in \ZZ_{\geq 1}^n$, write $\rmU_{\bfC}(\underline \lambda \times \underline \eta)$ for the submatrices with row indices in $\underline \lambda$ and column indices in $\underline \eta$ (cf. Notation~\ref{N:matrices indexed by xi}). The first step of the proof is to reduce to an estimate on the determinants of such submatrices.

\begin{proposition}
	\label{P:p-adic valuation of determinant of submatrices of U_C}
To prove Proposition~\ref{P:estimate of overcoefficients},  it suffices to prove the following estimate:
	\begin{equation}
	\label{E:estimate of vp det U_C lambda times eta}
	v_p(\det(\rmU_{\bfC}(\underline \lambda\times \underline \eta)))\geq \deg g_n+\frac{\deg(\underline \lambda) - \deg(\underline \eta) }2 + \sum_{i=1}^n v_p \Big( \frac{\deg\bfe_{\lambda_i}!}{\deg\bfe_{\eta_i}!}\Big),
	\end{equation}
	for all subsets $\underline \lambda,\underline \eta\subseteq  \ZZ_{\geq 1}$ of size $n$. Here $v_p(\det(\rmU_{\bfC}(\underline \lambda\times \underline \eta)))$ denotes the $p$-adic valuation of the determinant in the ring $\calO\langle w/p\rangle$.
\end{proposition}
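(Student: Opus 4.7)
The plan is to deduce the estimate \eqref{E:estimate on det Uzetaxi} on $\det\bigl(\rmU^{\dagger}(\underline \zeta \times \underline \xi)\bigr)$ from the assumed estimate \eqref{E:estimate of vp det U_C lambda times eta} by exploiting the conjugation relation $\rmU^{\dagger} = \rmY\, \rmU_{\bfC}\, \rmY^{-1}$, which is immediate from the change-of-basis equalities in \eqref{E:many change of basis equalities}. Applying the Cauchy--Binet formula (and justifying convergence via the halo bound of Proposition~\ref{P:halo estimate}) gives
$$
\det\bigl(\rmU^{\dagger}(\underline \zeta \times \underline \xi)\bigr) = \sum_{\underline \lambda,\,\underline \eta} \det\bigl(\rmY(\underline \zeta \times \underline \lambda)\bigr)\cdot \det\bigl(\rmU_{\bfC}(\underline \lambda \times \underline \eta)\bigr)\cdot \det\bigl(\rmY^{-1}(\underline \eta \times \underline \xi)\bigr),
$$
summing over pairs of $n$-subsets $\underline \lambda, \underline \eta \subseteq \ZZ_{\geq 1}$. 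It therefore suffices to show that each summand lies in $p^{\deg g_n + (\deg(\underline \zeta) - \deg(\underline \xi))/2}\cdot \calO\langle w/p\rangle$.

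The next step is to bound $\det(\rmY(\underline \zeta \times \underline \lambda))$ and $\det(\rmY^{-1}(\underline \eta \times \underline \xi))$ via their Leibniz expansions together with Lemma~\ref{L:estimate of Y}. Since $Y_{m,n}$ vanishes unless $n \geq m$ and $n \equiv m \bmod (p-1)$, only pairs satisfying these compatibility constraints contribute; for such pairs, summing the pointwise estimates along any non-zero Leibniz term (and noting that the factorial and $\lfloor \cdot /p\rfloor$ contributions on the $\underline\zeta$ side are permutation-independent) yields bounds of the form
$$
v_p\bigl(\det \rmY(\underline \zeta \times \underline \lambda)\bigr) \geq -\sum_i v_p(\deg \bfe_{\zeta_i}!)+ R_1(\underline \zeta,\underline \lambda),
$$
$$
v_p\bigl(\det \rmY^{-1}(\underline \eta \times \underline \xi)\bigr) \geq \sum_i v_p(\deg \bfe_{\xi_i}!)+ R_2(\underline \eta,\underline \xi),
$$
where $R_1$ and $R_2$ collect the $\lfloor \cdot /p\rfloor$ and $\lfloor \cdot /(p^2-p)\rfloor$ correction terms from Lemma~\ref{L:estimate of Y}.

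Combining these with \eqref{E:estimate of vp det U_C lambda times eta} for each pair $(\underline\lambda, \underline\eta)$, the factorial contributions telescope: the outer $-\sum v_p(\deg \bfe_{\zeta_i}!)$ and $+\sum v_p(\deg \bfe_{\xi_i}!)$ combine with the middle $\sum v_p(\deg\bfe_{\lambda_i}!/\deg\bfe_{\eta_i}!)$. What remains to check is that the residual contribution
$$
R_1(\underline \zeta,\underline \lambda) + R_2(\underline \eta,\underline \xi) + \sum_i v_p\Bigl(\frac{\deg\bfe_{\xi_i}!\,\deg\bfe_{\lambda_i}!}{\deg\bfe_{\zeta_i}!\,\deg\bfe_{\eta_i}!}\Bigr) + \tfrac12\bigl(\deg(\underline\lambda)-\deg(\underline\zeta)+\deg(\underline\xi)-\deg(\underline\eta)\bigr)
$$
is non-negative for every admissible $(\underline\zeta,\underline\lambda,\underline\eta,\underline\xi)$. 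Using Legendre's formula $v_p(n!) = (n - s_p(n))/(p-1)$ and the chain of inequalities $\deg\bfe_{\zeta_i} \leq \deg \bfe_{\lambda_i}$ and $\deg \bfe_{\eta_i} \leq \deg \bfe_{\xi_i}$ forced by the upper-triangularity of $\rmY$, this reduces to an elementary-looking but delicate inequality on $p$-adic digit sums and nested floor functions.

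The main obstacle is precisely this combinatorial bookkeeping. Each individual bound in Lemma~\ref{L:estimate of Y} is already near-optimal, so there is little slack; one has to verify that the various floor-function corrections, arising from $\rmY$, $\rmY^{-1}$ and (implicitly) from Legendre's formula, assemble non-negatively across every admissible quadruple without leaving unchecked cases. I expect this careful digit analysis, and not any further $p$-adic input, to be the technical heart of the reduction; conceptually, no new ingredient beyond the conjugation $\rmU^{\dagger} = \rmY\,\rmU_{\bfC}\,\rmY^{-1}$ and the halo-style bounds on $\rmY^{\pm 1}$ is required.
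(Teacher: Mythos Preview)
Your approach is essentially the paper's: expand $\det\bigl(\rmU^{\dagger}(\underline \zeta \times \underline \xi)\bigr)$ via Cauchy--Binet using $\rmU^{\dagger} = \rmY\,\rmU_{\bfC}\,\rmY^{-1}$, then bound the $\rmY^{\pm 1}$-minors entrywise with Lemma~\ref{L:estimate of Y}. The setup is correct, and the proposal would succeed.

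Where you diverge from the paper is in the bookkeeping, and this makes you overestimate the difficulty of the residual. You extract $-\sum_i v_p(\deg\bfe_{\zeta_i}!)$ from the $\rmY$-bound, leaving correction terms $R_1, R_2$ and a residual inequality you expect to be the ``technical heart''. The paper instead uses Lemma~\ref{L:p-adic valuation of n!}(2), namely $v_p(n!) = \lfloor n/p\rfloor + v_p(\lfloor n/p\rfloor!)$, to repackage the entry bound of Lemma~\ref{L:estimate of Y} as
\[
v_p(\rmY_{\bfe_{\zeta_i},\bff_{\lambda_j}}) \;\geq\; \tfrac12\bigl(\deg\bfe_{\zeta_i} - \deg\bfe_{\lambda_j}\bigr) - v_p(\deg\bfe_{\lambda_j}!),
\]
placing the factorial on the \emph{column} index $\lambda_j$ rather than the row index $\zeta_i$ (and symmetrically $v_p((\rmY^{-1})_{\bfe_{\eta_i},\bfe_{\xi_j}}) \geq \tfrac12(\deg\bfe_{\eta_i}-\deg\bfe_{\xi_j}) + v_p(\deg\bfe_{\eta_i}!)$). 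Both pieces of this bound are permutation-invariant when summed along a Leibniz term, so one gets directly
\[
v_p\bigl(\det\rmY(\underline\zeta\times\underline\lambda)\bigr) \geq \tfrac12\bigl(\deg(\underline\zeta)-\deg(\underline\lambda)\bigr) - \sum_i v_p(\deg\bfe_{\lambda_i}!),
\]
and now the factorials sit on $\underline\lambda$ and $\underline\eta$, telescoping \emph{exactly} against the middle estimate \eqref{E:estimate of vp det U_C lambda times eta}. The residual you describe is then zero, not a delicate digit analysis; verifying the repackaged entry bound amounts only to checking $\tfrac12(n-m) + v_p(\lfloor n/p\rfloor!) - v_p(\lfloor m/p\rfloor!) - \lfloor (n-m)/(p^2-p)\rfloor \geq 0$ for $n\geq m$, which is immediate.
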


\begin{proof}
By Lemma~\ref{L:estimate of h xi implies estimate of overcoefficents}, to prove Proposition~\ref{P:estimate of overcoefficients}, it suffices to verify the condition~\eqref{E:estimate on det Uzetaxi}.
It follows from Lemma~\ref{L:estimate of Y} and Proposition~\ref{P:halo estimate} that the product $\rmU^\dagger = \rmY\rmU_{\bfC} \rmY^{-1}$ of infinite matrices converges in $\rmM_\infty(\calO\langle w/p\rangle)$. By Lemma~\ref{L:det of ABC} we have
	\begin{equation}
	\label{E:det Udagger}
	\det \big( \rmU^{\dagger}(\underline \zeta \times \underline \xi)\big)  = \sum_{\substack{\underline \lambda, \underline \eta \subseteq \ZZ_{\geq 1}\\ \# \underline \lambda= \# \underline \eta = n}}  \det(\rmY(\underline \zeta\times \underline \lambda))  \cdot \det \big(\rmU_{\bfC}(\underline \lambda \times \underline \eta) \big) \cdot \det (\rmY^{-1}(\underline \eta\times \underline \xi)).
	\end{equation}

	To prove (\ref{E:estimate on det Uzetaxi}), it suffices to prove that each summand on the right hand side of (\ref{E:det Udagger}) satisfies the same estimate. We fix two tuples $\underline \lambda,\underline \eta\subset  \ZZ_{\geq 1}$ with $\# \underline \lambda=\#\underline \eta=n$.

    By construction we have $\deg \bff_n=\deg \bfe_n$ for all $n\in \ZZ_{\geq 1}$. It follows from Lemma~\ref{L:estimate of Y} that the matrix $\rmY$ is upper triangular. For $\zeta_i\in  \underline \zeta$ and $\lambda_j\in \underline \lambda$,  we have $\rmY_{\bfe_{\zeta_i},\bff_{\lambda_j}}= 0$ if $\zeta_i>\lambda_j$.   When $\zeta_i\leq \lambda_j$, by Lemma~\ref{L:estimate of Y} and Lemma~\ref{L:p-adic valuation of n!}(2) we have 
	\begin{align*}
	&v_p(\rmY_{\bfe_{\zeta_i}, \bff_{\lambda_j}}) + \tfrac 12\big(\deg \bfe_{\lambda_j} - \deg \bfe_{\zeta_i}\big)  + v_p(\deg \bfe_{\lambda_j}!)
	\\
	\geq\ \ & -v_p\big(\deg \bfe_{\zeta_i}! \big)  
	+  \Big\lfloor \tfrac{\deg \bfe_{\zeta_i}}p\Big\rfloor
	- \Big\lfloor \tfrac{\deg \bfe_{\lambda_j}}p\Big\rfloor
	- \Big\lfloor \tfrac{\deg \bfe_{\lambda_j} - \deg \bfe_{\zeta_i} }{p^2-p}\Big\rfloor+\tfrac {\deg \bfe_{\lambda_j} - \deg \bfe_{\zeta_i}}2 +v_p\big(\deg\bfe_{\lambda_j}! \big)
	\\
	=\ \ &\tfrac {\deg \bfe_{\lambda_j} - \deg \bfe_{\zeta_i}}2 +v_p\Big(\Big\lfloor \tfrac{\deg \bfe_{\lambda_j}}p\Big\rfloor!\Big) - v_p\Big(\Big\lfloor \tfrac{\deg \bfe_{\zeta_i}}p\Big\rfloor !\Big) - \Big\lfloor \tfrac{\deg \bfe_{\lambda_j} - \deg \bfe_{\zeta_i} }{p^2-p} \Big\rfloor \geq 0.
	\end{align*}

    So we have $v_p(\rmY_{\bfe_{\zeta_i}, \bff_{\lambda_j}})\geq \tfrac 12\big(\deg \bfe_{\zeta_i} -\deg \bfe_{\lambda_j}  \big)  - v_p(\deg \bfe_{\lambda_j}!)$ for all $\zeta_i,\lambda_j$'s and hence
    \[
    v_p\big( \det(\rmY(\underline \zeta\times \underline \lambda)) \big)\geq \tfrac 12 (\deg(\underline \zeta)-\deg(\underline \lambda))-\sum_{i=1}^n v_p\big(\deg \bfe_{\lambda_i}! \big).
    \]
    By a similar argument we have
    \[
    v_p\big( \det(\rmY^{-1}(\underline \eta\times \underline \xi)) \big) \geq \tfrac 12 (\deg(\underline \eta)-\deg(\underline \xi))+\sum_{i=1}^n v_p\big(\deg \bfe_{\eta_i}! \big).
    \]
    Combining the above inequalities with (\ref{E:estimate of vp det U_C lambda times eta}) we have 
	\begin{equation}\label{E:p-adic valuation of det Y times U_C times Y -1}
v_p\big(\det(\rmY(\underline \zeta\times \underline \lambda)) \cdot \det(\rmU_\bfC(\underline \lambda \times \underline \eta ) \cdot   \det(\rmY^{-1}(\underline \eta\times \underline \xi)) ) \big)\geq \tfrac 12 \big(\deg(\underline \zeta)-\deg(\underline \xi)\big)+\deg g_n,
	\end{equation}
which proves (\ref{E:estimate on det Uzetaxi}). This completes the proof of Proposition~\ref{P:p-adic valuation of determinant of submatrices of U_C}.
\end{proof}

Write the subsets $\underline \lambda = \{\lambda_1< \cdots < \lambda_n\}$ and $\underline \eta = \{\eta_1< \cdots < \eta_n\}$. To be extremely careful about the cases when $a$ is close to $1$ or $p-1$, we set
\begin{equation}
\label{E:bold delta}
\boldsymbol \delta: = \deg g_n - \sum_{i= 1}^n \Big(\deg \bfe_i - \Big\lfloor \frac {\deg \bfe_i}p \Big \rfloor\Big) \stackrel{\eqref{E:degree approx halo bound}} \in \{0,1\}.
\end{equation} Moreover, 
$\boldsymbol{\delta}=1$ can happen only when $\deg \bfe_{n+1}-\deg \bfe_n=p-1-a$ again by \eqref{E:degree approx halo bound}. 

To prove (\ref{E:estimate of vp det U_C lambda times eta}), we make use of the halo estimates near the end of Section~3. We first treat two special cases of (\ref{E:estimate of vp det U_C lambda times eta}), which represent different strategies of proofs.

\begin{lemma}
\label{L:two special cases of estimate of vp det U_C lambda times eta}
The estimate (\ref{E:estimate of vp det U_C lambda times eta}) holds in the following two cases: $(1)~$ $\underline \lambda =\underline n$ and $\underline \eta\neq \underline n$; $(2)~$ $\underline \lambda=\{1,\dots, n-1,n+1 \}$ and $\underline \eta=\underline n$ (we refer to  Notation~\ref{N:matrices indexed by xi} for the notations).
\end{lemma}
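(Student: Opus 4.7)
Both cases should follow by combining the halo estimates of Section~3 with Legendre's formula for $v_p(n!)$ (Lemma~\ref{L:p-adic valuation of n!}(2)), namely $v_p(m!) = \lfloor m/p\rfloor + v_p(\lfloor m/p\rfloor!)$, together with the defining identity \eqref{E:bold delta}. The common reduction: plugging Legendre's formula into the target inequality \eqref{E:estimate of vp det U_C lambda times eta} cancels the $\lfloor\cdot/p\rfloor$-terms on the two sides, turning the inequality into a comparison between the extra halo contribution and a $\boldsymbol\delta$-shift.

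In \textbf{Case (1)}, I would apply the basic halo estimate of Corollary~\ref{C:halo estimate on det U C lambda xi} to get
\[v_p\big(\det \rmU_\bfC(\underline n \times \underline \eta)\big) \;\geq\; \sum_{i=1}^n \Big(\deg \bfe_i - \Big\lfloor \tfrac{\deg \bfe_{\eta_i}}{p}\Big\rfloor\Big).\]
After the reduction explained above and using \eqref{E:bold delta} to replace $\sum_i(\deg\bfe_i - \lfloor\deg\bfe_i/p\rfloor)$ by $\deg g_n-\boldsymbol\delta$, the desired inequality \eqref{E:estimate of vp det U_C lambda times eta} becomes
\[\frac{\deg(\underline \eta) - \deg(\underline n)}{2} + \sum_{i=1}^n \Big(v_p(\lfloor\deg \bfe_{\eta_i}/p\rfloor!) - v_p(\lfloor\deg \bfe_i/p\rfloor!)\Big) \;\geq\; \boldsymbol \delta.\]
Since $\eta_i \geq i$ with at least one strict inequality and the degrees $\deg \bfe_i$ are strictly increasing with consecutive jumps in $\{a,\,p{-}1{-}a\}$, both summands on the left are non-negative, and the first one is at least $\min\{a,p{-}1{-}a\}/2 \geq 1 \geq \boldsymbol\delta$ under the assumption $2\leq a\leq p-5$.

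In \textbf{Case (2)}, the same reduction yields the reformulation
\[\frac{\deg \bfe_{n+1}-\deg \bfe_n}{2} \;\geq\; \boldsymbol \delta + v_p\Big(\frac{\lfloor \deg \bfe_{n+1}/p\rfloor!}{\lfloor \deg \bfe_n/p\rfloor!}\Big).\]
This fails exactly when the interval $(\deg \bfe_n, \deg \bfe_{n+1}]$ straddles an integer divisible by a high power of $p$; for instance $\deg\bfe_n = p^s - r$ and $\deg\bfe_{n+1}= p^s + (\Delta-r)$ with $\Delta=\deg \bfe_{n+1}-\deg \bfe_n \in \{a, p{-}1{-}a\}$ and $1\leq r<\Delta$. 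To close this gap I would use instead the refined halo estimate of Corollary~\ref{C:refined halo estimate}, which contributes the extra term $\DD(\underline \lambda, \underline \eta)$ to the lower bound, and reduce to showing
\[\DD(\underline \lambda, \underline \eta) + \frac{\deg \bfe_{n+1}-\deg \bfe_n}{2} \;\geq\; \boldsymbol \delta + v_p\Big(\frac{\lfloor \deg \bfe_{n+1}/p\rfloor!}{\lfloor \deg \bfe_n/p\rfloor!}\Big).\]
The key observation is that in the pathological scenario above, the $p$-adic expansion of $\deg \bfe_{n+1}$ is forced to carry a long run of zero digits in positions $1,\dots,s-1$ where $\deg \bfe_n$ has digit $p-1$; each such position contributes one unit to a suitably chosen summand in $\DD(\underline \lambda, \underline \eta)$ (via $D_{=0}$ or $D_{\leq p-2}$, following Notation~\ref{N:definition of D tuple}), which exactly compensates the $s$ units in $v_p$ of the ratio of floored factorials, with a further half-unit margin absorbing $\boldsymbol\delta$.

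\textbf{The main obstacle} is the bookkeeping in Case (2): one must verify that the refined quantity $\DD(\underline \lambda, \underline n)$ really absorbs all of $v_p(\lfloor \deg \bfe_{n+1}/p\rfloor!/\lfloor \deg \bfe_n/p\rfloor!)$, casing on whether $\deg \bfe_{n+1}-\deg\bfe_n=a$ or $p-1-a$, on the value of $s$, and on whether the straddled multiple of $p^s$ has the form $p^s\cdot m$ with $m>1$ (in which case the digit pattern is different but the same counting principle applies). The $2\leq a\leq p-5$ hypothesis is used here to guarantee that the degree gap $\Delta \geq 2$ provides the needed half-unit slack after the digit analysis.
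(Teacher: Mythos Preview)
Your approach is essentially the paper's: basic halo (Corollary~\ref{C:halo estimate on det U C lambda xi}) plus the identity \eqref{E:bold delta} and Legendre for Case~(1), refined halo (Corollary~\ref{C:refined halo estimate}) with $\DD$ plus a digit-carry analysis for Case~(2). One bookkeeping slip to fix: in Case~(2) your ``same reduction'' should produce
\[
\DD(\underline\lambda,\underline n)+\frac{\deg\bfe_{n+1}-\deg\bfe_n}{2}\ \geq\ \boldsymbol\delta+\gamma,
\qquad \gamma:=v_p\!\Big(\frac{\deg\bfe_{n+1}!}{\deg\bfe_n!}\Big),
\]
not the floored-factorial ratio you wrote (the two differ by $\lfloor\deg\bfe_{n+1}/p\rfloor-\lfloor\deg\bfe_n/p\rfloor\in\{0,1\}$, and the correct target is the harder one). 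With that correction your digit argument is exactly the paper's: when $\gamma\geq 2$ the expansions force $\beta_1=\cdots=\beta_{\gamma-1}=0$ versus $\alpha_1=\cdots=\alpha_{\gamma-1}=p-1$, and Lemma~\ref{L:D independent of j} gives $\DD(\underline\lambda,\underline n)\geq\gamma-1$ (the $j=0$ contribution requires $D_{\leq\beta_0}$ rather than $D_{=0}$, which is precisely why $\DD$ and not $D$ is needed). The remaining unit comes from $\tfrac{\deg\bfe_{n+1}-\deg\bfe_n}{2}\geq\boldsymbol\delta+1$ under $2\leq a\leq p-5$.
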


\begin{proof}
(1) By Corollary~\ref{C:halo estimate on det U C lambda xi}, we have 
		$$
		v_p\big(\det \rmU_{\bfC}(\underline n \times  \underline \eta)\big) \geq  \sum_{i=1}^{n} \Big( \deg \bfe_i - \Big\lfloor \frac{\deg \bfe_{\eta_i}}p\Big\rfloor \Big)\stackrel{\eqref{E:bold delta}}=\deg g_n- \boldsymbol{\delta} -\sum_{i=1}^n\Big( \Big\lfloor \frac{\deg \bfe_{\eta_i}}p\Big\rfloor -\Big\lfloor\frac{\deg \bfe_{i}}p\Big\rfloor \Big).
		$$
		Comparing this inequality with (\ref{E:estimate of vp det U_C lambda times eta}), it suffices to prove the inequality
\begin{equation}
\label{E:sum deg diff >= delta}
\sum_{i=1}^n\bigg(\frac{\deg \bfe_{\eta_i} - \deg \bfe_{i}}2 +  v_p \Big( \frac{\deg \bfe_{\eta_i}!}{\deg \bfe_{i}!}\Big)- \Big\lfloor \frac{\deg \bfe_{\eta_i}}p\Big\rfloor +\Big\lfloor\frac{\deg \bfe_{i}}p\Big\rfloor \bigg) \geq \boldsymbol{\delta}.
\end{equation}
		By assumption on $\underline \eta$, we have $\eta_i\geq i$ for $i=1,\dots, n-1$ and $\eta_n\geq n+1$. Therefore $\deg\bfe_{\eta_i}\geq \deg \bfe_i$ for $i=1,\dots, n-1$ and $\frac{\deg \bfe_{\eta_n}-\deg \bfe_n}2\geq \frac{\deg \bfe_{n+1}-\deg \bfe_n}2 \geq \boldsymbol{\delta}$ as $a\leq p-3$. On the other hand, by Lemma~\ref{L:p-adic valuation of n!}(2)  we have 
		\[
		\quad v_p\Big( \frac{\deg \bfe_{\eta_i}!}{\deg \bfe_i!}\Big) - \Big\lfloor \frac{\deg \bfe_{\eta_i}}p\Big\rfloor +\Big\lfloor\frac{\deg \bfe_{i}}p\Big\rfloor = v_p\Big( \frac{\lfloor\deg \bfe_{\eta_i}/p\rfloor!}{\lfloor\deg  \bfe_i/p\rfloor!}\Big)\geq 0 \text{~for all~}i=1,\dots,n.
		\]
		Combining these together gives \eqref{E:sum deg diff >= delta}, and proves (1). Note that in proving $(1)$ we only need $1\leq a\leq p-3$;
        
(2) Let $\gamma=\max\{v_p(i)\,|\,i=\deg\bfe_n+1,\dots, \deg \bfe_{n+1} \}$. Since $\deg \bfe_{n+1}-\deg \bfe_n<p$, we have $v_p\big(\frac{\deg \bfe_{n+1}!}{\deg \bfe_n!} \big)=\gamma$ and (\ref{E:estimate of vp det U_C lambda times eta}) becomes
		\begin{equation}
		\label{E:lambda n+1 eta n}
		v_p\big( \det\big( \rmU_\bfC(\underline \lambda, \underline n)\big) \big) \geq \deg g_n + \frac{\deg \bfe_{n+1} - \deg\bfe_n}2 +\gamma.
		\end{equation}
		By Corollary~\ref{C:refined halo estimate} we have 
		$$
		v_p\big( \det\big( \rmU_\bfC(\underline \lambda, \underline n)\big) \big) \geq \DD(\underline \lambda, \underline n) + \sum_{i=1}^{n} \Big(\deg \bfe_i - \Big\lfloor \frac{\deg\bfe_i}p\Big\rfloor\Big)  + \big( \deg \bfe_{n+1}-\deg \bfe_n\big).
		$$
		Combining with (\ref{E:bold delta}), it suffices to prove the inequality
		$$
		\DD(\underline \lambda, \underline n) + \frac{\deg \bfe_{n+1} - \deg\bfe_n}2 \geq \boldsymbol{\delta} + \gamma.
		$$ 
		Since $\boldsymbol{\delta}  =1$ only happens when $\deg \bfe_{n+1} - \deg \bfe_n =p-1-a$, the condition $2\leq a \leq p-5$ implies that $ \frac{\deg \bfe_{n+1} - \deg\bfe_n}2 \geq \boldsymbol{\delta}+1$. So we can assume $\gamma\geq 2$ and it is enough to prove $\DD(\underline \lambda, \underline n)\geq \gamma-1$.
		
		 Write $\deg \bfe_{n+1}=\sum\limits_{i\geq 0} \alpha_ip^i$ and $\deg \bfe_{n}=\sum\limits_{i\geq 0}\beta_ip^i$ in their $p$-adic expansions. Since $\deg\bfe_{n+1}-\deg \bfe_n<p$ and $\gamma\geq 2$,  we have $\alpha_0<\beta_0$, $\beta_1=\cdots =\beta_{\gamma-1}=p-1$ and $\alpha_1=\cdots =\alpha_{\gamma-1}=0$. 
By Lemma~\ref{L:D independent of j}$(2)$ we have $D_{=0}(\underline n, 0) = \cdots = D_{=0}(\underline n, \gamma-1)$, so for every $j = 1, \dots, \gamma-2$ we have 
		$$
		D_{= 0}(\underline \lambda,j) = D_{=0}(\underline n, j) + 1=D_{=0}(\underline n, j+1) + 1,
		$$
		and hence 
		\[
		\max_{0\leq \alpha\leq p-2}\{D_{\leq \alpha}(\underline \lambda,j)-D_{\leq \alpha}(\underline n,j+1),0 \}\geq D_{=0}(\underline \lambda,j)-D_{=0}(\underline n,j+1)=1
		\]
		for such $j$'s. For $j=0$, we apply Lemma~\ref{L:D independent of j}$(3)$ to $\alpha=\alpha_0$, and we get
		$D_{\leq \alpha_0}(\underline \lambda,0)=D_{\leq \alpha_0}(\underline n,0)+1=D_{\leq \alpha_0}(\underline n,1)+1$. Therefore 
		\[
		\max_{0\leq \alpha\leq p-2}\{D_{\leq \alpha}(\underline \lambda,0)-D_{\leq \alpha}(\underline n,1),0 \}\geq D_{\leq \alpha_0}(\underline \lambda,0)-D_{\leq \alpha_0}(\underline n,1)=  1.
		\]
		Combining these two inequalities together we have $\DD(\underline \lambda, \underline n) \geq \gamma-1$.
\end{proof}

\begin{remark}\label{R: remark after two special cases of estimate of vp det U_C lambda times eta}
\begin{enumerate}
\item The proof of (i) follows from the standard halo estimate in Proposition~\ref{P:halo estimate}. On the other hand, as shown in the proof of (ii), the usual halo bound in Proposition~\ref{P:halo estimate} is not enough to control the $\gamma$ on the right hand side of \eqref{E:lambda n+1 eta n}. The subtle improvement of halo estimate in Corollary~\ref{C:refined halo estimate} is essential for this proof.
\item When proving Lemma~\ref{L:two special cases of estimate of vp det U_C lambda times eta}(2), it is necessary to use the stronger estimate involving $\DD(\underline \lambda,\underline n)$ in \eqref{E:refined halo}. However, carefully inspecting the proof, we can `almost' prove the following estimate
\begin{equation}\label{E:`fake' estimate for lambda = 1,dots, n-1,n}
            D(\underline \lambda,\underline n)+\frac{\deg \bfe_{n+1} - \deg\bfe_n}2-v_p \Big( \frac{\deg \bfe_{n+1}!}{\deg \bfe_n !} \Big) \geq \boldsymbol{\delta} ,
        \end{equation}
except the following situation: 
$\max\{ v_p(i)\,|\, i=\deg \bfe_n+1,\dots, \deg \bfe_{n+1} \}\geq 2$ and the last digit of the $p$-adic expansion of $\deg\bfe_{n+1}$ is nonzero. In this situation, we say that the tuple $\underline \lambda=\{ 1,\dots, n-1,n+1\}$ is \emph{special} and we only have a weaker estimate
\begin{equation}\label{E:wekaer estimate for lambda = 1,dots, n-1,n}
D(\underline \lambda,\underline n)+\frac{\deg \bfe_{n+1} - \deg\bfe_n}2-v_p \Big( \frac{\deg \bfe_{n+1}!}{\deg \bfe_n !} \Big) \geq \boldsymbol{\delta} -1.
        \end{equation}
        We note that from the proof of Lemma~\ref{L:two special cases of estimate of vp det U_C lambda times eta}(2), when $\underline \lambda=\{ 1,\dots, n-1,n+1\}$ is special, we always have $D_{=0}(\underline \lambda,0)=D_{=0}(\underline n,0)=D_{=0}(\underline n,1)$.
        \end{enumerate}
\end{remark}

\begin{lemma}\label{L:estimate of vp det U_C lambda times eta holds if lambda neq n}
	The estimate (\ref{E:estimate of vp det U_C lambda times eta}) holds if $\underline \lambda\neq \underline n$.
\end{lemma}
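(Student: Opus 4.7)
The plan is to combine Corollary~\ref{C:refined halo estimate} with a careful $p$-adic digit analysis that generalizes the arguments in Lemma~\ref{L:two special cases of estimate of vp det U_C lambda times eta}, the main new ingredient being a quantitative link between the ``jumps'' $\lambda_i > i$ and the refined defect term $\DD(\underline \lambda, \underline \eta)$.

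First, applying Corollary~\ref{C:refined halo estimate} gives
\[
v_p\big(\det(\rmU_\bfC(\underline \lambda \times \underline \eta))\big) \;\geq\; \DD(\underline \lambda, \underline \eta) + \sum_{i=1}^n \Big(\deg \bfe_{\lambda_i} - \Big\lfloor \tfrac{\deg \bfe_{\eta_i}}{p}\Big\rfloor\Big),
\]
so by the standard identity $v_p(m!)=\lfloor m/p\rfloor + v_p(\lfloor m/p\rfloor!)$ supplied by Lemma~\ref{L:p-adic valuation of n!}(2), the desired estimate \eqref{E:estimate of vp det U_C lambda times eta} is reduced to the combinatorial inequality
\[
\DD(\underline \lambda, \underline \eta) + \sum_i \Big(\tfrac{\deg \bfe_{\lambda_i}-\deg \bfe_{\eta_i}}{2} - \big\lfloor \tfrac{\deg \bfe_{\eta_i}}{p}\big\rfloor + \big\lfloor \tfrac{\deg \bfe_{\lambda_i}}{p}\big\rfloor\Big) + \sum_i v_p\Big(\tfrac{\lfloor \deg \bfe_{\eta_i}/p\rfloor!}{\lfloor \deg \bfe_{\lambda_i}/p\rfloor!}\Big) \;\geq\; \deg g_n.
\]
The sum over $i$ with $\lambda_i = i$ can be handled exactly as in the proof of Lemma~\ref{L:two special cases of estimate of vp det U_C lambda times eta}(1), which together with $\sum_i(\deg \bfe_i - \lfloor \deg \bfe_i/p\rfloor) \geq \deg g_n - \boldsymbol\delta$ from \eqref{E:bold delta} reduces the task to an estimate that depends only on the ``jump positions'' $J := \{i : \lambda_i > i\}$ and to controlling the extra $\boldsymbol \delta$.

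Second, I would process the indices $i \in J$ one at a time, essentially running the special-case argument of Lemma~\ref{L:two special cases of estimate of vp det U_C lambda times eta}(2) iteratively. For each such $i$, the contribution $\deg \bfe_{\lambda_i}-\deg \bfe_i$ dominates $\tfrac12(\deg \bfe_{\lambda_i}-\deg \bfe_i)$ comfortably, but the term $v_p(\lfloor \deg \bfe_{\lambda_i}/p\rfloor!/\lfloor \deg \bfe_i/p\rfloor!)$ can be large precisely when $\deg \bfe_{\lambda_i}$ crosses a high power of $p$. The point is that such crossings produce many zero digits in the $p$-adic expansion of $\lfloor \deg \bfe_{\lambda_i}/p\rfloor$, and by the definition of $\DD(\underline \lambda, \underline \eta)$ via $D_{\leq \alpha}(\underline \lambda, j) - D_{\leq \alpha}(\underline \eta, j+1)$ (using Lemma~\ref{L:D independent of j} as in the proof of Lemma~\ref{L:two special cases of estimate of vp det U_C lambda times eta}(2)), each such zero digit contributes to $\DD(\underline \lambda, \underline \eta)$ at least as much as it contributes to the relevant factorial valuation.

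Third, I would deal with the remaining $\boldsymbol \delta$ discrepancy. Recall that $\boldsymbol \delta = 1$ can occur only when $\deg \bfe_{n+1}-\deg \bfe_n = p-1-a$; the hypothesis $2 \leq a \leq p-5$ (so $p-1-a \geq 4$) ensures that the slack $\tfrac{\deg \bfe_{\lambda_n}-\deg \bfe_n}{2}$ coming from the ``last'' jump absorbs this extra $1$ (exactly as in the proof of Lemma~\ref{L:two special cases of estimate of vp det U_C lambda times eta}(2)), even in the ``special'' configurations flagged in Remark~\ref{R: remark after two special cases of estimate of vp det U_C lambda times eta}, where one loses a unit of $\DD$ but gains it back from the factor of $2$ in the denominator.

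The hard part will be the bookkeeping in Step~2: matching, jump-by-jump, the increment of $v_p(\lfloor \deg \bfe_{\lambda_i}/p\rfloor!/\lfloor \deg \bfe_i/p\rfloor!)$ against a corresponding increment of $\DD(\underline \lambda, \underline \eta)$, uniformly over all choices of $\underline \eta$. The subtlety is that changing $\underline \eta$ changes both the standard halo contribution and the $\DD$ contribution, so one likely proceeds by first reducing to $\underline \eta = \underline n$ via the argument of Lemma~\ref{L:two special cases of estimate of vp det U_C lambda times eta}(1) (combined with Lemma~\ref{L:comparing D(lambda,eta') and D(lambda eta)} to track how $\DD$ changes when one moves $\underline \eta$ by a single step), and then iterating the jump-analysis for $\underline \lambda$ against $\underline n$ as outlined above.
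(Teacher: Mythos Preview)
Your high-level plan matches the paper's: start from Corollary~\ref{C:refined halo estimate}, reduce to $\underline\eta = \underline n$, then iterate on $\underline\lambda$. Two concrete issues, however, prevent the plan as written from going through.

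First, Lemma~\ref{L:comparing D(lambda,eta') and D(lambda eta)} is stated and proved only for $D$, not for $\DD$; an analogue for $\DD$ is not obvious (the proof tracks zero digits specifically, and this does not extend cleanly to all thresholds $D_{\leq\alpha}$). So once you invoke Corollary~\ref{C:refined halo estimate} you must immediately weaken to the $D$-bound, and the entire remaining argument has to work with $D$ alone. The paper does exactly this; your proposal oscillates between $D$ and $\DD$.

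Second, and more seriously, ``processing the indices $i\in J$ one at a time'' does not give a workable induction: you cannot decrement a single $\lambda_i$ while keeping the tuple strictly increasing, and $D(\underline\lambda,\underline n)$ is a global quantity with no natural decomposition over jump positions. The paper's mechanism is different and specific: repeatedly replace the \emph{largest} element $\lambda_n$ by the \emph{smallest missing} element $n_- := \min(\underline n\setminus\underline\lambda)$, producing $\underline\lambda'$, and prove a Claim comparing \eqref{E:D lambda n} for $\underline\lambda$ and $\underline\lambda'$. This Claim is where all the work lies: it requires a two-case $p$-adic digit analysis (according to whether $\gamma := \max\{v_p(i):\deg\bfe_{n_-}<i\leq\deg\bfe_{\lambda_n}\}$ is $\leq$ or $>$ the integer $\delta$ determined by $\deg\bfe_{\lambda_n}-\deg\bfe_{n_-}\in((p-1)p^{\delta-1},(p-1)p^\delta]$), together with a term-by-term control of the differences $D_j$ in the decomposition $D(\underline\lambda,\underline n)-D(\underline\lambda',\underline n) = \sum_{j\geq 0} D_j$. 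When the terminal tuple $\underline\lambda' = \{1,\ldots,n-1,n+1\}$ is special in the sense of Remark~\ref{R: remark after two special cases of estimate of vp det U_C lambda times eta}, the paper needs a sharpened version of the Claim (part~(b)) yielding one extra unit in this final step; this comes from a refinement of the same digit analysis (the $j=0$ contribution), not from generic slack in $\tfrac12(\deg\bfe_{\lambda_n}-\deg\bfe_n)$.
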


\begin{proof}
	By Corollary~\ref{C:refined halo estimate}, it
	suffices to show that
	$$
	D(\underline \lambda, \underline \eta) + \sum_{i=1}^n \Big(\deg \bfe_{\lambda_i} - \Big\lfloor \frac{\deg \bfe_{\eta_i}}p \Big\rfloor\Big) \geq \deg g_n+ \sum_{i=1}^n\bigg( \frac{\deg \bfe_{\lambda_i} - \deg \bfe_{\eta_i}}2 + v_p \Big(\frac{\deg \bfe_{\lambda_i}!}{\deg \bfe_{\eta_i}!} \Big)\bigg),
	$$
	or equivalently, to show that
	\begin{equation}
	\label{E:zeta-eta bound induction}
	D(\underline \lambda, \underline \eta) + \sum_{i=1}^n \bigg(\frac{\deg \bfe_{\lambda_i} + \deg \bfe_{\eta_i}}2 + v_p\Big( \Big\lfloor\frac{\deg \bfe_{\eta_i}}p\Big\rfloor!\Big) \bigg) \geq  \deg g_n + \sum_{i=1}^n v_p(\deg \bfe_{\lambda_i}!).
	\end{equation}
	by Lemma~\ref{L:p-adic valuation of n!}(2). We first reduce the proof of (\ref{E:zeta-eta bound induction}) to the case when $\underline \eta=\underline n$. To do this, it suffices to show that, for a subset
	$\underline \eta' \subset \ZZ_{\geq 1}$ of size $n$ with $\eta'_i = \eta_i$ for all $i$ except some $i=i_0$ for which $\eta'_{i_0} - \eta_{i_0} = 1$, we have 
	\begin{equation}
	\label{E:reduce to eta = n}
	D(\underline \lambda, \underline \eta') + \frac{\deg \bfe_{\eta'_{i_0}} - \deg \bfe_{\eta_{i_0}}}2 + v_p \bigg( \frac{\lfloor \deg \bfe_{\eta'_{i_0}}/p \rfloor!}{\lfloor \deg \bfe_{\eta_{i_0}}/p \rfloor!}\bigg) \geq D(\underline \lambda, \underline \eta).
	\end{equation}
	This inequality follows from Lemma~\ref{L:comparing D(lambda,eta') and D(lambda eta)}.
	
	We assume $\underline \eta=\underline n$ from now on. By Lemma~\ref{L:two special cases of estimate of vp det U_C lambda times eta}$(2)$, we need to show that for any subset $\underline \lambda \subseteq \ZZ_{\geq 1}$ of size $n$ with $\underline \lambda \neq \underline n, \{1,\dots, n-1,n+1 \}$,
	\begin{equation}
	\label{E:D lambda n}
	D(\underline \lambda, \underline n) + \sum_{i=1}^n\frac{\deg \bfe_{\lambda_i}-\deg \bfe_i}2-\sum_{i=1}^n v_p\Big( \frac{\deg \bfe_{\lambda_i}!}{\deg \bfe_i!}\Big) \geq \boldsymbol{\delta}.
	\end{equation}

\smallskip
Consider the following operation on all subsets $\underline\lambda\neq \underline n, \{1,\dots, n-1,n+1 \}$ of size $n$: let $n_-$ be the smallest integer in $\underline n\setminus \underline \lambda$; \emph{if $\lambda_n - n_- \geq 2$}, we replace $\lambda_n$ by $n_-$ to get another subset $\underline \lambda ':=\underline \lambda \cup\{n_- \}\setminus \{\lambda_n \}$ of $\ZZ_{\geq 1}$ of size $n$  (and properly reorder the elements in this subset). 

\smallskip
\underline{\bf Claim:}
(a) Under such operations, we always have
\begin{equation}
\label{E:comparing D(lambda, n) and D(lambda',n)}
D(\underline \lambda, \underline n) + \frac{\deg \bfe_{\lambda_n} - \deg \bfe_{n_-}}2 \geq \boldsymbol{\delta} + D(\underline \lambda', \underline n) + v_p\Big( \frac{\deg \bfe_{\lambda_n}!}{\deg \bfe_{n_-}!}\Big).
\end{equation}

(b)  Moreover, when $\underline \lambda'=\{1,\dots, n-1,n+1 \}$ is special (see Remark~\ref{R: remark after two special cases of estimate of vp det U_C lambda times eta}(2)), we have a stronger estimate
\begin{equation}
	\label{E:comparing D(lambda, n) and D(lambda',n) in the special case}
	D(\underline \lambda, \underline n) + \frac{\deg \bfe_{\lambda_n} - \deg \bfe_{n_-}}2 \geq \boldsymbol{\delta} + D(\underline \lambda', \underline n) + v_p\Big( \frac{\deg \bfe_{\lambda_n}!}{\deg \bfe_{n_-}!}\Big)+1.
\end{equation}

We first explain that this Claim implies Lemma~\ref{L:estimate of vp det U_C lambda times eta holds if lambda neq n}. Indeed, \eqref{E:comparing D(lambda, n) and D(lambda',n)} and \eqref{E:comparing D(lambda, n) and D(lambda',n) in the special case} imply that $ \textrm{L.H.S. of \eqref{E:D lambda n}}$ for $\underline \lambda$ is greater than or equal to $  \textrm{L.H.S. of \eqref{E:D lambda n}}$ for $\underline \lambda'$. Repeatedly applying this operation to $\underline \lambda$, we will eventually get $\underline n$ or $\{1,\dots, n-1,n+1 \}$ after finite (and at least one) steps. So it suffice to prove \eqref{E:D lambda n} for those $\underline \lambda$'s which becomes $\underline n$ or $\{1,\dots, n-1,n+1 \}$ after exactly one step of operation, and we deduce it by separating the argument into the following cases. If we get $\underline \lambda' = \underline n$, \eqref{E:D(n,n)=0} says that $D(\underline n, \underline n) = 0$, then
$$
( \textrm{L.H.S. of \eqref{E:D lambda n}}) \geq \boldsymbol{\delta} + (\textrm{L.H.S. of \eqref{E:D lambda n} for $\underline \lambda'= \underline n$}) =  \boldsymbol{\delta}.
$$
If we get $\underline \lambda' = \{1, \dots, n-1, n+1\}$ and it is not special, we get
$$
( \textrm{L.H.S. of \eqref{E:D lambda n}}) \geq \boldsymbol{\delta} + (\textrm{L.H.S. of \eqref{E:D lambda n} for $\underline \lambda'= \{1, \dots, n-1,n+1\}$}) \stackrel{\eqref{E:`fake' estimate for lambda = 1,dots, n-1,n}}\geq \boldsymbol{\delta}.
$$
Finally, if we get $\underline \lambda' = \{1, \dots, n-1, n+1\}$ and it is special, Claim(b) implies that
$$
( \textrm{L.H.S. of \eqref{E:D lambda n}}) \geq  1+ \boldsymbol{\delta} + (\textrm{L.H.S. of \eqref{E:D lambda n} for $\underline \lambda' = \{1, \dots, n-1,n+1\}$}) \stackrel{\eqref{E:wekaer estimate for lambda = 1,dots, n-1,n}}\geq \boldsymbol{\delta}.
$$

	
We turn to prove the Claim.
Let $\gamma=\max\{v_p(i)\,|\,i=\deg \bfe_{n_-}+1,\dots, \deg \bfe_{\lambda_n} \}$. By Lemma~\ref{L:p-adic valuation of m!/n! is bounded by maximal valuation of integers between n+1 and m and some number involving m,n} we have $v_p\big(\frac{\deg \bfe_{\lambda_n}!}{\deg \bfe_{n_-}!} \big)\leq \gamma+\lfloor \frac {\deg \bfe_{\lambda_n}-\deg \bfe_{n_-}-2}{p-1} \rfloor$. So the Claim is reduced to prove
	\begin{equation}
	\label{E:lambda to lambda'}
	D(\underline \lambda, \underline n) + \frac{\deg \bfe_{\lambda_n} - \deg \bfe_{n_-}}2 - \Big\lfloor \frac{\deg \bfe_{\lambda_n}-\deg \bfe_{n_-}-2}{p-1}\Big\rfloor \geq \boldsymbol{\delta} + D(\underline \lambda', \underline n) + \gamma
	\end{equation}
    or when $\underline\lambda'=\{1,\dots, n-1,n+1\}$ is special
    \begin{equation}
	\label{E:lambda to lambda' in the special case}
	D(\underline \lambda, \underline n) + \frac{\deg \bfe_{\lambda_n} - \deg \bfe_{n_-}}2 - \Big\lfloor \frac{\deg \bfe_{\lambda_n}-\deg \bfe_{n_-}-2}{p-1}\Big\rfloor \geq \boldsymbol{\delta} + D(\underline \lambda', \underline n) + \gamma+1.
	\end{equation}
Let $\delta$ be the unique nonnegative integer such that $\deg \bfe_{\lambda_n}-\deg \bfe_{n_-}\in ((p-1)p^{\delta-1},(p-1)p^\delta]$. In particular, we have $\delta=0\Leftrightarrow \deg \bfe_{\lambda_n}-\deg \bfe_{n_-}=p-1\Leftrightarrow \lambda_n-n_-=2$. Let $\deg\bfe_{\lambda_n}=\sum\limits_{i\geq 0}\alpha_ip^i$ and $\deg \bfe_{n_-}=\sum\limits_{i\geq 0}\beta_ip^i$ with $\alpha_i,\beta_i\in \{0,\dots, p-1 \}$ be their $p$-adic expansions. We divide our discussion into two cases:

\smallskip
\underline{Case 1}: Assume $\gamma\leq \delta$. Consider the set $\Omega=\{i\geq 0\,|\, \alpha_i\neq 0, \beta_i=0 \}$. We have 
\begin{equation}\label{E:D(lambda,n)geq D(lambda', n)-|Omgega|}
	D(\underline \lambda,\underline n)\geq D(\underline \lambda',\underline n)- \# \Omega
\end{equation}
In fact, we can write
\[
D(\underline \lambda, \underline n)-D(\underline \lambda', \underline n)=\sum_{j\geq 0} \max\{D_{=0}(\underline \lambda,j )-D_{=0}(\underline n,j+1), 0 \}- \max\{D_{=0}(\underline \lambda',j )-D_{=0}(\underline n,j+1), 0 \}.
\]
For every $j\geq 0$, from the construction of $\underline \lambda'$ and the definition of numbers $D_{=0}(\mbox{-},j)$'s, we have $D_{=0}(\underline \lambda,j )-D_{=0}(\underline \lambda',j )\geq -1$ and the equality holds only when $j\in \Omega$. It follows that
$$\max\{D_{=0}(\underline \lambda,j )-D_{=0}(\underline n,j+1), 0 \}- \max\{D_{=0}(\underline \lambda',j )-D_{=0}(\underline n,j+1), 0 \}\geq -1,$$ 
and the equality holds only when $j\in \Omega$. This proves \eqref{E:D(lambda,n)geq D(lambda', n)-|Omgega|}.

 If $\Omega$ is nonempty, let $j$ be the maximal integer in $\Omega$. If $j\geq \gamma+1$, the integer $m=\sum\limits_{i\geq j}\alpha_ip^i$ lies in the interval $[\deg\bfe_{n_-}+1,\deg\bfe_{\lambda_n}]$ with $v_p(m)=j>\gamma$. This contradicts with the definition of $\gamma$. So the cardinal number $\# \Omega$ is less or equal to $ \gamma+1$ and hence 
		\[
		D(\underline \lambda,\underline n)\geq D(\underline \lambda',\underline n)- \# \Omega\geq D(\underline \lambda',\underline n)-\gamma-1.
		\]
		To get (\ref{E:lambda to lambda'}), it suffices to prove the inequality
		\begin{equation}
		\label{E:degree difference bigger than delta} \frac {\deg \bfe_{\lambda_n} - \deg \bfe_{n_-}}2-\Big\lfloor\frac{\deg \bfe_{\lambda_n} - \deg \bfe_{n_-}-2}{p-1}\Big\rfloor \geq 2\gamma+\boldsymbol{\delta}+1.
		\end{equation}
		When $\gamma=0$, we must have $\deg \bfe_{\lambda_n}-\deg \bfe_{n_-}<p$ and hence $\deg \bfe_{\lambda_n}-\deg \bfe_{n_-}=p-1$. Then (\ref{E:degree difference bigger than delta}) becomes $\frac{p-1}2\geq \boldsymbol{\delta}+1$, which is obvious. 
		When $\gamma=1$, we have $\deg \bfe_{\lambda_n}-\deg \bfe_{n_-}>p-1$ and thus  the left hand side of (\ref{E:degree difference bigger than delta}) $\geq \frac {p-1}2$. From the condition $p\geq 11$, we see that the left hand side of (\ref{E:degree difference bigger than delta}) $\geq \frac {p-1}2\geq 3+\boldsymbol{\delta}$. When $\gamma\geq 2$, from the condition $\deg \bfe_{\lambda_n}-\deg \bfe_{n_-}>(p-1)p^{\delta-1}\geq (p-1)p^{\gamma-1}$, we see that the left hand side of (\ref{E:degree difference bigger than delta}) $\geq (\frac 12 -\frac 1{p-1})(\deg \bfe_{\lambda_n}-\deg \bfe_{n_-})>\frac{p-3}{2}\cdot p^{\gamma-1}>2\gamma+\boldsymbol{\delta}+1$. This completes the proof of (\ref{E:lambda to lambda'}) when $\gamma\leq \delta$. 

        When $\underline\lambda'=\{1,\dots,n-1,n+1\}$ is special, we have $n_-\leq n<n+1\leq \lambda_n$ and hence $\gamma\geq \max\{v_p(i)\,|\,i=\deg \bfe_n+1,\dots, \deg \bfe_{n+1} \}\geq 2$. The above discussion actually shows that the left hand side of \eqref{E:degree difference bigger than delta} $\geq 2\gamma+\boldsymbol{\delta}+2$, which gives \eqref{E:lambda to lambda' in the special case} when $\gamma\leq \delta$.

\underline{Case 2}: Assume $\gamma>\delta$. Set $m=\sum\limits_{i\geq \gamma}\alpha_ip^i$ to be the largest integer in $[0,\deg\bfe_{\lambda_n}]$ with the property $v_p(m)\geq \gamma$. By the definition of $\gamma$, we have $m\in [\deg \bfe_{n_-}+1,\deg \bfe_{\lambda_n}]$ and $\alpha_\gamma\neq 0$. Then $\deg\bfe_{\lambda_n}-m<\deg\bfe_{\lambda_n}-\deg\bfe_{n_-}\leq (p-1)p^\delta$ and similarly $m-\deg \bfe_{n_-}<(p-1)p^\delta$. Since $\delta<\gamma$, the $p$-adic expansions of $\deg \bfe_{\lambda_n}$ and $\deg \bfe_{n_-}$ have the following properties:
\begin{enumerate}[(a)]
\item $\alpha_\gamma\neq 0$, $\alpha_i=0$ for $i=\delta+1,\dots, \gamma-1$;
\item $\beta_i=\alpha_i$ for $i\geq \gamma+1$, $\beta_\gamma=\alpha_\gamma-1$, $\beta_i=p-1$ for $i=\delta+1,\dots, \gamma-1$ and $\beta_\delta\neq 0$ (the last property follows from the inequality $m-\deg \bfe_{n_-}<(p-1)p^\delta$).
		\end{enumerate}
		Let $\deg \bfe_n=\sum\limits_{i\geq 0}\alpha_i'p^i$ be the $p$-adic expansion of $\deg\bfe_n$. From $n_-\leq n\leq \lambda_n$, we have
		\begin{enumerate}[(c)]
			\item $\alpha_i'=\alpha_i=\beta_i$ for all $i>\gamma$.
		\end{enumerate}
		Based on the two possibilities $\deg\bfe_n\in [m,\deg \bfe_{\lambda_n}]$ or $\deg \bfe_n\in [\deg \bfe_{n_-}, m)$, exactly one of the following two cases holds:
		\begin{enumerate}[(d)]
			\item $\alpha_i'=0$ for all $i=\delta+1,\dots, \gamma-1$;
		\end{enumerate}
		\begin{enumerate}[(e)]
			\item $\alpha'_\gamma=\beta_\gamma=\alpha_\gamma-1$, $\alpha_i'=p-1$ for all $i=\delta+1,\dots, \gamma-1$ and $\alpha_\delta'\geq \beta_\delta>0$.
		\end{enumerate}
		By the definition of $D(\underline \lambda, \underline n)$ in Notation~\ref{N:definition of D tuple}, we can write 
		\begin{equation}
			\label{E:D(lambda,n)-D(lambda',n)}
		D(\underline \lambda, \underline n)-D(\underline \lambda', \underline n)=\sum_{j\geq 0}D_j, 
		\end{equation}
		with $D_j=\max\{D_{=0}(\underline \lambda,j )-D_{=0}(\underline n,j+1), 0 \}- \max\{D_{=0}(\underline \lambda',j )-D_{=0}(\underline n,j+1), 0 \}$. We estimate each $D_j$ as follows:
		\begin{enumerate}[(i)]
			\item When $j\geq \gamma+1$, from $\alpha_j=\beta_j$ we have $D_{=0}(\underline \lambda ,j)=D_{=0}(\underline \lambda',j)$ and hence $D_j=0$;
			\item When $j=\gamma$, it follows from the definitions of $n_-$ and $\underline \lambda'$ that the integers in $\underline \lambda'\setminus \underline n$ belong to the interval $(n_-,\lambda_n)$. From the information on the $p$-adic expansions of $\deg \bfe_{n_-}, \deg \bfe_n$ and $\deg \bfe_{\lambda_n}$ listed in (a)$\mbox{-}$(e) as above, we have $D_{=0}(\underline \lambda, \gamma)\leq D_{=0}(\underline \lambda',\gamma)\leq D_{=0}(\underline n, \gamma)$. By Lemma~\ref{L:D independent of j}(1), $D_{=0}(\underline n,\gamma)\leq D_{=0}(\underline n, \gamma+1)$. So we have $D_\gamma=0$;
			\item When $j=\gamma-1$, from (a)(b) we have $D_{=0}(\underline \lambda,\gamma-1)=D_{=0}(\underline \lambda', \gamma-1)+1$ and hence $D_{\gamma-1}\geq 0$;
			\item When $j=\delta+1,\dots, \gamma-2$, from (a)(b) we have $D_{=0}(\underline \lambda, j)=D_{=0}(\underline \lambda',j)+1$. From (d)(e) and Lemma~\ref{L:D independent of j}(2) we have $D_{=0}(\underline n, \delta+1)=\cdots =D_{=0}(\underline n, \gamma-1)$. By a similar discussion as in (ii), from the facts $\underline\lambda'\setminus \underline n\subset \{n_-+1,\dots, \lambda_n-1 \}$ and (a)(b)(d)(e) above we have $D_{=0}(\underline \lambda', j)\geq D_{=0}(\underline n,j)$. In summary, we have $D_j=1$ in this case;
			\item When $j=\delta$, we have $D_{=0}(\underline \lambda, \delta)\geq D_{=0}(\underline \lambda',\delta)$ as $\beta_\delta\neq 0$. Therefore $D_{\delta}\geq 0$;
			\item When $j=0,\dots, \delta-1$, we have $D_{=0}(\underline \lambda, j)\geq D_{=0}(\underline \lambda',j)-1$ and hence $D_j\geq -1$ for such $j$'s. Note that when $\underline\lambda'=\{1,\dots, n-1,n+1\}$ is special, it follows from Remark~\ref{R: remark after two special cases of estimate of vp det U_C lambda times eta}(2) that $D_{=0}(\underline \lambda',0)=D_{=0}(\underline n, 1)$ and we have $D_0\geq 0$ in this case.
		\end{enumerate}
		From the above discussion and (\ref{E:D(lambda,n)-D(lambda',n)}) we get 
		\begin{equation}
			\label{E:Dlambda -Dlambda'}
			D(\underline \lambda,\underline n) \geq D(\underline \lambda', \underline n) + (\gamma-\delta-2) - \delta,
		\end{equation}
		where the term $\gamma-\delta-2$ comes from case (iv) and $-\delta$ comes from case (vi). When $\underline \lambda'=\{ 1,\dots, n-1,n+1\}$ is special, from the discussion in case (iv) we get 
        \[
        D(\underline \lambda,\underline n) \geq D(\underline \lambda', \underline n) + (\gamma-\delta-2) - (\delta-1).
        \]
        So to prove (\ref{E:lambda to lambda'}) or \eqref{E:lambda to lambda' in the special case}, it suffices to prove 
		\begin{equation}
			\label{E:degree difference bigger than delta 2} \frac {\deg \bfe_{\lambda_n} - \deg \bfe_{n_-}}2-\Big\lfloor\frac{\deg \bfe_{\lambda_n} - \deg \bfe_{n_-}-2}{p-1}\Big\rfloor \geq 2\delta+2+\boldsymbol{\delta}.
		\end{equation}
When $\delta=0$, we have $\lambda_n-n_-=2$ and $\deg \bfe_{\lambda_n}-\deg \bfe_{n_-}=p-1$. Then (\ref{E:degree difference bigger than delta 2}) becomes $\frac{p-1}2\geq \boldsymbol{\delta}+2$, which is obvious. When $\delta=1$, we have that the left hand side of (\ref{E:degree difference bigger than delta 2}) $\geq \frac{p-1}{2}\geq 4+\boldsymbol{\delta}$ as $p\geq 11$. When $\delta\geq 2$, the left hand side of (\ref{E:degree difference bigger than delta 2}) $\geq \frac{p-3}2\cdot p^{\delta-1}>2\delta+\boldsymbol{\delta}+2$. This completes the proof of Lemma~\ref{L:estimate of vp det U_C lambda times eta holds if lambda neq n}. 
\end{proof}

\begin{lemma}\label{L:estimate of vp det u_C lambda times eta holds for lambda=eta=n}
	The estimate (\ref{E:estimate of vp det U_C lambda times eta}) holds for $\underline \lambda=\underline \eta=\underline n$. 
\end{lemma}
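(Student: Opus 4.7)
\medskip

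The plan is as follows. First, by the halo estimate of Corollary~\ref{C:halo estimate on det U C lambda xi} applied to $\underline \lambda = \underline \eta = \underline n$, we already have
\[
v_p\big(\det(\rmU_\bfC(\underline n))\big) \geq \sum_{i=1}^n \Big(\deg \bfe_i - \Big\lfloor \frac{\deg \bfe_i}{p}\Big\rfloor\Big) = \deg g_n - \boldsymbol{\delta},
\]
with $\boldsymbol{\delta} \in \{0,1\}$ as in \eqref{E:bold delta}. So when $\boldsymbol{\delta} = 0$ the conclusion is immediate, and we may assume $\boldsymbol{\delta} = 1$; by \eqref{E:degree approx halo bound} this forces $\deg \bfe_{n+1} - \deg \bfe_n = p - 1 - a$. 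It is precisely this off-by-one case that the refined estimate $\DD(\underline n, \underline n) \geq 1$ does \emph{not} in general buy us (for instance in the setting of Example~\ref{Ex:p=7a=2} with $\varepsilon = 1\times \omega^2$ and $n = 2$ one computes $\DD(\underline 2, \underline 2) = 0$), so we must bring in a new ingredient.

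The plan is to bootstrap from the cases $\underline \lambda \neq \underline n$ already treated in Lemma~\ref{L:estimate of vp det U_C lambda times eta holds if lambda neq n} by passing through $\rmU^\dagger$. Since $\rmY$ and its inverse are upper triangular (Lemma~\ref{L:estimate of Y}), a direct check shows that in the Cauchy-Binet expansion \eqref{E:det Udagger} of $\det(\rmU^\dagger(\underline n))$ coming from $\rmU^\dagger = \rmY \rmU_\bfC \rmY^{-1}$, the factor $\det(\rmY^{-1}(\underline \eta \times \underline n))$ vanishes unless $\underline \eta = \underline n$. Using $\det(\rmY^{-1}(\underline n)) = \det(\rmY(\underline n))^{-1}$, the expansion collapses to
\[
\det(\rmU^\dagger(\underline n)) \cdot \det(\rmY(\underline n)) \;=\; \sum_{\underline \lambda} \det(\rmY(\underline n \times \underline \lambda)) \cdot \det(\rmU_\bfC(\underline \lambda \times \underline n)),
\]
and isolating the $\underline \lambda = \underline n$ term rearranges this to
\[
\det(\rmU_\bfC(\underline n)) \;=\; \det(\rmU^\dagger(\underline n)) \;-\; \det(\rmY(\underline n))^{-1} \sum_{\underline \lambda \neq \underline n} \det(\rmY(\underline n \times \underline \lambda)) \cdot \det(\rmU_\bfC(\underline \lambda \times \underline n)).
\]
For each summand on the right with $\underline \lambda \neq \underline n$, Lemma~\ref{L:estimate of vp det U_C lambda times eta holds if lambda neq n} supplies the estimate \eqref{E:estimate of vp det U_C lambda times eta} for $\det(\rmU_\bfC(\underline \lambda \times \underline n))$, and the valuation bounds on entries of $\rmY$ from Lemma~\ref{L:estimate of Y} control $\det(\rmY(\underline n \times \underline \lambda))$. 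Running the same bookkeeping as in the derivation of \eqref{E:p-adic valuation of det Y times U_C times Y -1}, and noting that the prefactor $\det(\rmY(\underline n))^{-1}$ has $v_p$-valuation $\sum_{i=1}^n v_p(\deg \bfe_i!)$ which exactly cancels the factorial terms accumulated from $\rmY$ and $\rmU_\bfC$, each such summand has $v_p \geq \deg g_n$.

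Thus the problem reduces to proving $v_p(\det(\rmU^\dagger(\underline n))) \geq \deg g_n$ in $\calO\langle w/p\rangle$. The main obstacle is precisely this final step: Corollary~\ref{C:philosophical explanation of ghost series} gives divisibility of $\det(\rmU^\dagger(\underline n))$ by $p^{-\deg g_n}g_n(w)$ but this alone contributes zero to the $p$-adic valuation on $\calO\langle w/p\rangle$, while Proposition~\ref{P:naive HB} gives only a naive Hodge bound of $\sum_i \tfrac12 \deg \bfe_i$ which is generally weaker than $\deg g_n$. To close the remaining gap of exactly $\boldsymbol{\delta}=1$, I would refine the corank analysis: at each ghost zero $w_k$ with $m_n(k) > 0$ use the block-triangular decomposition \eqref{E:Ukdagger is block upper triangular} together with Proposition~\ref{P:oldform basis}(2) to get the pointwise valuation bound $v_p(\det(\rmU^\dagger(\underline n))|_{w=w_k}) \geq \sum_i \deg \bfe_i$ from Proposition~\ref{P:naive HB}(2), and then interpolate across the ghost zeros using the divisibility from Corollary~\ref{C:philosophical explanation of ghost series} to upgrade this pointwise bound to the required uniform bound $v_p(\det(\rmU^\dagger(\underline n))) \geq \deg g_n$ in $\calO\langle w/p\rangle$, exploiting the identity $\deg \bfe_{n+1} - \deg \bfe_n = p-1-a$ characteristic of the $\boldsymbol{\delta} = 1$ case.
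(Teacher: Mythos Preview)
Your reduction is exactly the paper's: using the Cauchy--Binet expansion \eqref{E:det Udagger} and the upper-triangularity of $\rmY^{-1}$, you correctly write $\det(\rmU^\dagger(\underline n)) - \det(\rmU_\bfC(\underline n))$ as a sum over $\underline\lambda \neq \underline n$ of terms each lying in $p^{\deg g_n}\calO\langle w/p\rangle$, and thereby reduce to proving $v_p(\det(\rmU^\dagger(\underline n))) \geq \deg g_n$ in $\calO\langle w/p\rangle$.

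The gap is in your final step. The pointwise bound at a ghost zero $w_k$ with $m_n(k)>0$ is vacuous: by Corollary~\ref{C:philosophical explanation of ghost series} the determinant $\det(\rmU^\dagger(\underline n))|_{w=w_k}$ simply vanishes there, so you learn nothing about the quotient $h(w)$ in $\det(\rmU^\dagger(\underline n)) = p^{-d}g_n(w)h(w)$. Even taking pointwise bounds at \emph{all} classical $w_k$ via Proposition~\ref{P:naive HB}(2), this only constrains $h$ on $p\ZZ_p$, which does not determine the Gauss norm on $\calO\langle w/p\rangle$ (the sup is taken over $p\calO_{\CC_p}$). There is no ``interpolation'' mechanism that bridges this, and the vague appeal to the block-triangular decomposition and Proposition~\ref{P:oldform basis}(2) does not supply one.

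The paper closes the gap with an input you did not use: the \emph{integrality} $\det(\rmU_\bfC(\underline n)) \in \calO\llbracket w\rrbracket$, which holds because $\rmU_\bfC$ is the matrix of $U_p$ with respect to an orthonormal basis of $\calC^0(\ZZ_p;\calO\llbracket w\rrbracket)$ (Lemma~\ref{L:modified Mahler basis}(3)). Writing $g_n(w) = \sum_{i=0}^d p^i c_i w^{d-i}$ with $c_0=1$ and $h(w)=\sum_j h_j(w/p)^j \in \calO\langle w/p\rangle$, one argues by contradiction: if $m$ is the largest index with $v_p(h_m)<d$, the $w^{d+m}$-coefficient of $p^{-d}g_n(w)h(w)$ has valuation exactly $-d-m+v_p(h_m)<-m$; but the same coefficient, computed from $\det(\rmU^\dagger(\underline n)) = \det(\rmU_\bfC(\underline n)) + f(w)$ with $\det(\rmU_\bfC(\underline n)) \in \calO\llbracket w\rrbracket$ and $f(w)\in p^d\calO\langle w/p\rangle$, has valuation $\geq -m$. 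This coefficient comparison, leveraging $\calO\llbracket w\rrbracket$-integrality rather than pointwise evaluation, is the missing idea.
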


\begin{proof}
By (\ref{E:det Udagger}) and the fact that $\rmY^{-1}$ is upper triangular, we have 
	\begin{align*}
		\det(\rmU^\dagger(\underline n))&= \sum_{\substack{\underline \lambda, \underline \eta \subseteq \ZZ_{\geq 1}\\ \#\underline \lambda= \# \underline \eta = n}}  \det(\rmY(\underline n \times \underline \lambda))  \cdot \det \big(\rmU_{\bfC}(\underline \lambda \times \underline \eta) \big) \cdot \det (\rmY^{-1}(\underline \eta\times \underline n))\\
&=\sum_{\substack{\underline \lambda \subseteq \ZZ_{\geq 1}\\\# \underline \lambda = n}}  \det(\rmY(\underline n\times \underline \lambda))  \cdot \det \big(\rmU_{\bfC}(\underline \lambda \times \underline n) \big) \cdot \det (\rmY^{-1}(\underline n\times \underline n)).
	\end{align*}
    Denote
    \[
f(w):=\det(\rmU^\dagger(\underline n))-\det(\rmU_\bfC(\underline n))=\sum_{\substack{\underline n\neq \underline \lambda \subseteq \ZZ_{\geq 1}\\ \# \underline \lambda = n}}  \det(\rmY(\underline n\times \underline \lambda))  \cdot \det \big(\rmU_{\bfC}(\underline \lambda \times \underline n) \big) \cdot \det (\rmY^{-1}(\underline n\times \underline n)).
    \]
Set $d: =\deg g_n$.	    
As  (\ref{E:estimate of vp det U_C lambda times eta}) has been proved for all $\rmU_\bfC(\underline\lambda\times \underline n)$'s with $\underline\lambda \neq \underline n$, it follows from the proof of Proposition~\ref{P:p-adic valuation of determinant of submatrices of U_C} (in particular, the proof of \eqref{E:p-adic valuation of det Y times U_C times Y -1}) that we have $f(w)\in p^d\calO\langle \frac wp\rangle$. By Corollary~\ref{C:philosophical explanation of ghost series}, we may write $\det(\rmU^\dagger(\underline n))=p^{-d}g_n(w)h(w)$ with
$$
g_n(w) =\sum_{i=0}^d p^ic_iw^{d-i} \quad \textrm{and} \quad h(w)=\sum\limits_{j\geq 0}h_j\cdot (\tfrac wp)^j\in \calO\langle \tfrac wp\rangle,
$$
where each $c_i, h_j\in \calO$, and $c_0=1$. 

We claim that $v_p(h) \geq d$, i.e. $v_p(h_j) \geq d$ for each $j$. Suppose the contrary, let $m$ be the largest integer for which $v_p(h_m) <d$ (such $m$ exists as $h(w)\in  \calO\langle \frac wp\rangle$). Then the $w^{d+m}$-coefficient of $\det(\rmU^\dagger(\underline n))=p^{-d}g_n(w)h(w)$ is 
$$p^{-d}\sum_{i=0}^d p^ic_i\cdot p^{-(m+i)} h_{m+i} = p^{-d-m}\sum_{i=0}^d c_i h_{m+i},
$$
which has $p$-adic valuation $-d-m+v_p(h_m)<-m$. On the other hand, it follows from Lemma~\ref{L:modified Mahler basis} that $\det(\rmU_\bfC(\underline n))\in \calO\llbracket w\rrbracket$, and we see from the equality $\det(\rmU^\dagger(\underline n))=\det(\rmU_\bfC(\underline n))+f(w)$ that the $p$-adic valuation of the $w^{d+m}$-coefficient of $\det(\rmU^\dagger(\underline n))$ is greater or equal to $-m$, which is a contradiction. 

So the claim holds and $v_p(h_m)\geq d$ for all $m$ and $\det(\rmU^\dagger(\underline n))\in g_n(w) \calO\langle \frac wp\rangle\subset p^{\deg g_n} \calO\langle \frac wp\rangle$. From this, we deduce $\det(\rmU_\bfC(\underline n))\in p^{\deg g_n} \calO\langle \frac wp\rangle$.
\end{proof}

Now the estimate~\eqref{E:estimate of vp det U_C lambda times eta} in Proposition~\ref{P:p-adic valuation of determinant of submatrices of U_C} follows from combining Lemmas~\ref{L:two special cases of estimate of vp det U_C lambda times eta}, \ref{L:estimate of vp det U_C lambda times eta holds if lambda neq n}, and \ref{L:estimate of vp det u_C lambda times eta holds for lambda=eta=n}. This completes the proof of Proposition~\ref{P:estimate of overcoefficients}.



\medskip

\begin{remark}
\label{R:a=1p-4}
We point out that the proof of this proposition is where the condition $a \notin \{1, p-4\}$ and $p \geq 11$ are used. The problem is rooted in the number $\boldsymbol{\delta} = \deg g_n - \sum_{i=1}^n \deg \bfe_i - \big\lfloor \frac{\deg \bfe_i}p\big\rfloor  \in \{0,1\}$ measuring the error from halo estimate in Corollary~\ref{C:refined halo estimate}.
\end{remark}

\section{Proof of local ghost conjecture III: cofactor expansions}
\label{Sec:proof III}

In this section, we execute Step II as outlined at the beginning of Section~\ref{Sec:proof}. More precisely, for a fixed $n\in \ZZ_{\geq 2}$, we assume that Theorem~\ref{T:estimate of nonprincipal minor} holds for all submatrices of $\rmU^\dagger$ of size $\leq n-1$, then we aim to prove that Theorem~\ref{T:estimate of nonprincipal minor} holds for all finite subsets $\underline \zeta$ and $\underline \xi$ of size $n$. 
This would then conclude the proof of Theorem~\ref{T:local theorem}.
Even though the inductive proof does not start until \S\,\ref{S:proof of estimate on Axii}, it does not hurt to keep in mind the inductive point of view. Keep the notations from the previous section, and recall that a relevant character $\varepsilon$ is fixed throughout yet suppressed from the notation.

This section is organized as follows. In Lemma~\ref{L:linear algebra facts} we give a cofactor expansion formula and use it to express $\det(\rmU^\dagger(\underline \zeta\times \underline \xi ))$ as a linear combination of determinants of minors of smaller sizes modulo certain powers of $w-w_k$ in Lemma~\ref{L:subtle cofactor expansion}. In Proposition~\ref{P:estimate of smaller minors} we give an estimate of determinants of minors of sizes smaller than $n$, which relies on earlier estimates in Proposition~\ref{P:estimate of overcoefficients}. We start the inductive proof of Theorem~\ref{T:estimate of nonprincipal minor} in \S\,\ref{S:proof of estimate on Axii}. Since the proof is rather technical, we first explain our strategy in several simple cases in \S\,\ref{S:first stab at final estimate}. The proof of the general case is initiated in \S\,\ref{S:proof of statement stronger estimate} and concluded in \S\,\ref{S:proof of all STs} by assuming a technical result Proposition~\ref{P:change B to C}, whose lengthy proof is postponed to \S\,\ref{S:proof of change B to C} till the end of the section.

\begin{notation}
\label{N:T = U-L}In this section, we fix an integer $n \geq 2$ and a weight $k= k_\varepsilon +(p-1)k_\bullet$ such that $m_n(k) \neq 0$.
For subsets $\underline \zeta$ and $\underline \xi$ of $\ZZ_{\geq 1}$ of size $n$, and write  $r_{\underline \zeta\times \underline \xi}$,  $s_{\underline \xi}$, and $m_{\underline \zeta\times \underline \xi}$ for the integers $r_{\underline \zeta\times \underline \xi}(k)$, $s_{\underline \xi}(k)$, and $m_{\underline \zeta\times \underline \xi}(k)$ defined in Definition-Proposition~\ref{DP:general corank theorem}, respectively.

Similar to Proposition~\ref{P:oldform basis}(2), let $\rmL_k \in  \rmM_{\infty}(\calO)$ denote the following infinite matrix:
\begin{itemize}
\item the upper-left $(d_k^\Iw \times d_k^\Iw)$-block of $\rmL_k$ is the Atkin--Lehner operator $-\AL_{(k, \tilde \varepsilon_1)}$ acting on the power basis $\bfB_k$;  it is an antidiagonal matrix whose $(i, d_k^\Iw+1-i)$-entry is $-p^{\deg \bfe_i}$, and 
\item entries of $\rmL_k$ away from the upper-left $(d_k^\Iw \times d_k^\Iw)$-block are the same as the corresponding entries of $\rmU^\dagger|_{w=w_k}$.
\end{itemize}
This matrix $\rmL_k$ is block upper triangular by \eqref{E:Ukdagger is block upper triangular} of Proposition~\ref{P:theta and AL}(1).
Then the difference $\rmU^\dagger|_{w=w_k} - \rmL_k$ has rank at most $d_k^\ur$.

We also need a sign convention: when computing the determinant of a matrix like $\rmU^\dagger(\underline \zeta\times \underline \xi)$, its rows and columns are organized under the increasing order of the numbers in $\underline \zeta$ and $\underline \xi$. For a subset $I \subseteq \underline \zeta$, we write $\sgn(I, \underline \zeta)$ to mean the sign of permutation that sends $\underline \zeta$ to the \emph{ordered} disjoint union of $I \sqcup (\underline \zeta -I)$, where elements in each of $I$ and $\underline \zeta-I$ are in increasing order.

\end{notation}

The following key linear algebra result roughly states that, modulo an appropriate power of $w-w_k$, we may express the determinant of $\rmU^\dagger(\underline \zeta \times \underline \xi)$ as the linear combination of determinants of minors of smaller sizes.

\begin{lemma}
\label{L:linear algebra facts}
Let $k$, $\rmU^\dagger$, $\rmL_k$, $\underline \zeta$, and $\underline \xi$ be as above.
Fix a subset $J_0 \subseteq \underline \xi$. We write 
$$\rmT_k(\underline \zeta \times \underline \xi; J_0):= \rmU^\dagger(\underline \zeta \times \underline \xi)- \rmL_k(\underline \zeta \times J_0) \in \rmM_{n \times n}(\calO\langle w/p\rangle),$$
where we view $\rmL_k(\underline \zeta \times J_0)$ as a matrix indexed by $\underline \zeta \times \underline \xi$ by filling the remaining entries in the $\underline \zeta \times (\underline \xi - J_0)$-submatrix by $0$. Then
\begin{align}
\label{E:det(U|T)}
\det&  \big(\rmT_k(\underline \zeta \times \underline \xi; J_0) \big)  
=\\ &\sum_{J \subseteq J_0}\sum_{\substack{I \subseteq \underline \zeta \\ \# I = 
\# J}} (-1)^{\# J} \sgn (I, \underline \zeta) \sgn(J, \underline \xi)
 \cdot \det \big(\rmL_k(I \times J)\big) \cdot  \det\big( \rmU^\dagger((\underline \zeta - I) \times (\underline \xi-J))\big). \nonumber 
\end{align}

In particular,  as power series in $E\llbracket w-w_k\rrbracket$, we have the following congruence
\begin{align}
\label{E:cofactor expansion primitive}
\det& \big(\rmU^\dagger(\underline \zeta \times \underline \xi) \big)  \equiv \\
&\sum_{\substack{J \subseteq J_0\\ J \neq \emptyset }} \sum_{\substack{I 
\subseteq \underline \zeta\\ \# I = \# J}} (-1)^{\# J-1} \sgn (I, \underline 
\zeta) \sgn(J, \underline \xi)\cdot \det \big(\rmL_k(I \times J)\big) 
 \cdot  \det\big( \rmU^\dagger((\underline \zeta - I) \times (\underline 
\xi-J))\big) \qquad \nonumber  \\
&\bmod (w-w_k)^{\corank \rmT_k(\underline \zeta 
\times \underline \xi; J_0)|_{w=w_k}}.\nonumber
\end{align}
\end{lemma}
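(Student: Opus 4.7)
The proof splits into two essentially independent pieces: the polynomial identity \eqref{E:det(U|T)}, which is pure multilinear algebra, and the congruence \eqref{E:cofactor expansion primitive}, which converts it via a corank-to-vanishing-order statement.

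For \eqref{E:det(U|T)}, the plan is to use multilinearity of the determinant in columns. By definition, the $j$-th column of $\rmT_k(\underline\zeta\times\underline\xi;J_0)$ equals the $j$-th column of $\rmU^\dagger(\underline\zeta\times\underline\xi)$ when $j\in\underline\xi\setminus J_0$, and equals that column minus the $j$-th column of $\rmL_k(\underline\zeta\times J_0)$ when $j\in J_0$. Expanding $\det(\rmT_k(\underline\zeta\times\underline\xi;J_0))$ column by column over $j\in J_0$ yields
\[
\det\bigl(\rmT_k(\underline\zeta\times\underline\xi;J_0)\bigr)=\sum_{J\subseteq J_0}(-1)^{\#J}\det(\rmM_J),
\]
where $\rmM_J$ is the $n\times n$ matrix whose $j$-th column equals $\rmL_k$'s $j$-th column for $j\in J$ and $\rmU^\dagger$'s $j$-th column otherwise. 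For each fixed $J$, I then apply the generalized Laplace expansion along the columns indexed by $J$: after reordering columns (which introduces $\sgn(J,\underline\xi)$) so that the $J$-columns appear first, the determinant becomes the sum over row-subsets $I\subseteq\underline\zeta$ with $\#I=\#J$ of $\sgn(I,\underline\zeta)$ times the product $\det(\rmL_k(I\times J))\cdot\det(\rmU^\dagger((\underline\zeta\setminus I)\times(\underline\xi\setminus J)))$. Substituting and reindexing gives \eqref{E:det(U|T)}.

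For \eqref{E:cofactor expansion primitive}, the key observation is that $\det(\rmT_k(\underline\zeta\times\underline\xi;J_0))$, as an element of $\calO\langle w/p\rangle$, vanishes at $w=w_k$ to order at least $c:=\corank\bigl(\rmT_k(\underline\zeta\times\underline\xi;J_0)|_{w=w_k}\bigr)$. This is an application of Lemma~\ref{L:corank-det} with $u=w/p$ and $u_0=w_k/p$: after multiplying on the left by an invertible matrix over $\calO$, the last $c$ rows of the specialization vanish, hence by Weierstrass division the last $c$ rows of $\rmT_k(\underline\zeta\times\underline\xi;J_0)$ itself are divisible by $w-w_k$ in $\calO\langle w/p\rangle$, so the determinant is divisible by $(w-w_k)^c$. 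Consequently $\det(\rmT_k(\underline\zeta\times\underline\xi;J_0))\equiv 0\pmod{(w-w_k)^c}$ in $E\llbracket w-w_k\rrbracket$.

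To conclude, I would isolate the $J=\emptyset$ term of \eqref{E:det(U|T)}, which is precisely $\det(\rmU^\dagger(\underline\zeta\times\underline\xi))$ (since $\rmL_k(\emptyset\times\emptyset)=1$ and the complementary minor is the full matrix), and move the remaining terms to the left-hand side. Using the vanishing just established to replace $\det(\rmT_k(\underline\zeta\times\underline\xi;J_0))$ by $0$ modulo $(w-w_k)^c$ and flipping signs via $(-1)^{\#J}=-(-1)^{\#J-1}$ yields \eqref{E:cofactor expansion primitive}. The only step requiring more than a line of bookkeeping is the sign check in the Laplace expansion, where one must verify that the two permutations rearranging the rows (into $I$ followed by $\underline\zeta\setminus I$) and the columns (into $J$ followed by $\underline\xi\setminus J$) contribute precisely $\sgn(I,\underline\zeta)\sgn(J,\underline\xi)$; this is a standard but easy to mis-sign computation and is the main place where I would be careful.
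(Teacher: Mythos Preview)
Your proof is correct and essentially the same as the paper's: the paper obtains \eqref{E:det(U|T)} by applying the determinant-of-a-sum formula (Lemma~\ref{L:cofactor expansion formula}(3)) to $\rmU^\dagger + (-\rmL_k)$ and noting that terms with $J\not\subseteq J_0$ vanish, whereas you unpack that formula into column multilinearity followed by Laplace expansion, which is how one proves Lemma~\ref{L:cofactor expansion formula}(3) in the first place. For \eqref{E:cofactor expansion primitive}, both you and the paper invoke Lemma~\ref{L:corank-det} and isolate the $J=\emptyset$ term.
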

\begin{proof}
By the formula of the determinant of the sum of two matrices (Lemma~\ref{L:cofactor expansion formula}), we get
$$
\det \rmT_k(\underline \zeta \times \underline \xi; J_0) = \sum_{J \subseteq \underline \xi}\sum_{\substack{I \subseteq \underline \zeta\\ \#I = \# J}} \sgn(J, \underline \xi) \sgn(I, \underline \zeta)\cdot \det (-\rmL_k(I \times (J \cap J_0))) \cdot  \det\big( \rmU^\dagger((\underline \zeta - I) \times (\underline \xi-J))\big).
$$
But each term in the sum with $J \not \subseteq J_0$ vanishes. So the formula simplifies to \eqref{E:det(U|T)} (after taking out the signs on the entries of $\rmL_k$).
For example, if $\rmL_k(\underline \zeta \times 
\underline \xi)$ has only four nonzero entries, at the (upper left) 
$\{\zeta_1,\zeta_2\} \times \{\xi_1, \xi_2\}$-minor, and $J_0 = \{\xi_1, 
\xi_2\}$, then the formula 
\eqref{E:det(U|T)} reads
\begin{align*}
\det\big(\rmT_k(\underline \zeta \times \underline \xi; J_0)\big) =\ & \det 
\big(\rmU^\dagger(\underline \zeta \times \underline \xi)\big)  - 
\sum_{i,j=1}^2 (-1)^{i-j} L_{\zeta_i, \xi_j} \det 
\big(\rmU^\dagger((\underline \zeta - \zeta_i) \times (\underline 
\xi-\xi_j)\big) 
\\
& +\det \MATRIX{L_{\zeta_1,\xi_1}}{L_{\zeta_1,\xi_2}}{L_{\zeta_2,\xi_1}}{L_{\zeta_2,\xi_2}} \cdot  \det \big(\rmU^\dagger((\underline \zeta -\{\zeta_1,  \zeta_2\}) \times (\underline \xi-\{\xi_1, \xi_2\})\big),
\end{align*}
where $L_{\zeta_i, \xi_j}$ is the $(\zeta_i, \xi_j)$-entry of $\rmL_k$.

Now, by Lemma~\ref{L:corank-det}, $\det\big(\rmT_k(\underline \zeta \times \underline \xi; J_0)\big)$ is divisible by  
$(w-w_k)^{\corank \rmT_k(\underline \zeta \times \underline \xi; 
J_0)|_{w=w_k}}$ in $E\llbracket w-w_k\rrbracket $. So the congruence relation \eqref{E:cofactor expansion primitive} follows 
immediately from this and \eqref{E:det(U|T)}.
\end{proof}

\begin{notation}
For the $\underline \zeta$ and $\underline \xi$ above, let $J_{\underline \zeta \times \underline \xi}$ denote the set consisting of \emph{all} $\xi_j \in \underline \xi$ such that either $\xi_j > d_k^\Iw$ or $d_k^\Iw+1-\xi_j \in \underline \zeta$. Then $\# J_{\underline \zeta \times \underline \xi} = r_{\underline \zeta \times \underline \xi} + s_{\underline \xi}$ under the notations defined in Definition-Proposition~\ref{DP:general corank theorem}.
The following notation reorganizes the congruence relation from Lemma~\ref{L:linear algebra facts}: for every $j \leq r_{\underline \zeta \times \underline \xi} + s_{\underline \xi}$, denote
\begin{equation}
\label{E:definition of det Uj}
\det\big( \rmU^\dagger(\underline \zeta \times \underline \xi)\big)_j: = \sum_{\substack{I \subseteq \underline \zeta\\ \#I = j}} \sum_{\substack{J \subseteq J_{\underline \zeta \times \underline \xi}\\ \#J = j}} \sgn(I, \underline \zeta) \sgn(J, \underline \xi) \cdot \det\big(\rmL_k(I \times J)\big) \cdot \det \big( \rmU^\dagger((\underline \zeta-I) \times (\underline \xi-J))\big).
\end{equation}
This is a signed sum of the products of the determinants of some minors of $\rmU^\dagger$ of size $n-j$, with the determinants of the complement minors in $\rmL_k$.
In particular, $\det\big( \rmU^\dagger(\underline \zeta \times \underline \xi)\big)_0 = \det\big( \rmU^\dagger(\underline \zeta \times \underline \xi)\big)$. Applying Lemma~\ref{L:linear algebra facts} above to the case $J_0 = J_{\underline \zeta \times \underline \xi}$, we deduce that
\begin{align}
\label{E:det as smaller minors}
\det\big( \rmU^\dagger(\underline \zeta &\times \underline \xi)\big) \equiv  \det\big( \rmU^\dagger(\underline \zeta \times \underline \xi)\big)_1 - \det\big( \rmU^\dagger(\underline \zeta \times \underline \xi)\big)_2 + \cdots 
\\
\nonumber
& + (-1)^{r_{\underline \zeta \times \underline \xi}+s_{\underline \xi}-1} \det\big( \rmU^\dagger(\underline \zeta \times \underline \xi)\big)_{r_{\underline \zeta \times \underline \xi}+s_{\underline \xi}} \qquad \bmod (w-w_k)^{n-d_k^\ur}.
\end{align}
Note that from Proposition~\ref{P:oldform basis},  $\rmT_k(\underline \zeta\times \underline\xi; 
J_{\underline \zeta \times \underline \xi})\big|_{w=w_k}$ has corank at least $n-d_k^\ur$.
\end{notation}

Our argument needs a more elaborated version of \eqref{E:det as smaller minors}, with one goal: we try to write $\det \big( \rmU^\dagger(\underline \zeta \times \underline \xi)\big)$ as a linear combination of minors of $\rmU^\dagger$ of   smallest possible size (after modulo an appropriate power of $w-w_k$). More precisely, we have the following:

\begin{lemma}
\label{L:subtle cofactor expansion}
Keep the notation as above. For a fixed nonnegative integer $j_0\leq r_{\underline \zeta \times \underline \xi} + s_{\underline \xi}-1$, we have the following congruence of power series in $E\llbracket w-w_k\rrbracket$:
\begin{equation}
\label{E:subtle cofactor expansion}
\det \big(\rmU^\dagger(\underline \zeta \times \underline \xi)\big) \equiv \sum_{j= j_0+1}^{r_{\underline \zeta \times \underline \xi} + s_{\underline \xi}} (-1)^{j-j_0-1} \binom{j-1}{j_0}\cdot \det \big(\rmU^\dagger(\underline \zeta \times \underline \xi)\big)_j \quad \bmod (w-w_k)^{\max\{0,n-d_k^\ur-j_0\}}.
\end{equation}
More generally, for every pair of non-negative integers $\ell$ and $j_0$ such that $\ell \leq j_0 \leq r_{\underline \zeta \times \underline \xi} + s_{\underline \xi}-1$, we have the following congruence of power series in $E\llbracket w-w_k\rrbracket$:
\begin{equation}
\label{E:subtle cofactor expansion general}
\det \big(\rmU^\dagger(\underline \zeta \times \underline \xi)\big)_{\ell} \equiv   \sum_{j= j_0+1}^{r_{\underline \zeta \times \underline \xi} + s_{\underline \xi}} (-1)^{j-j_0-1} \binom{j-\ell-1}{j_0-\ell} \binom{j}{\ell}\cdot \det \big(\rmU^\dagger(\underline \zeta \times \underline \xi)\big)_j \ \bmod (w-w_k)^{\max\{0,n-d_k^\ur-j_0\}}.
\end{equation}
\end{lemma}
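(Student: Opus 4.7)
The plan is to derive both \eqref{E:subtle cofactor expansion} and \eqref{E:subtle cofactor expansion general} from Lemma~\ref{L:linear algebra facts} by applying it with different choices of $J_0$ and then taking suitable linear combinations of the resulting congruences. Abbreviate $R := r_{\underline\zeta \times \underline\xi} + s_{\underline\xi}$ and $S_j := \det(\rmU^\dagger(\underline\zeta \times \underline\xi))_j$. The first step is a refined corank bound: for any $J_0 \subseteq J_{\underline\zeta \times \underline\xi}$ of size $R-i$, I will show $\corank \rmT_k(\underline\zeta\times\underline\xi; J_0)|_{w=w_k} \geq n - d_k^\ur - i$. Indeed, the columns of $\rmT_k|_{w=w_k}$ indexed by $J_0$ all lie in the rank-$\leq d_k^\ur$ image of $(\rmU^\dagger - \rmL_k)|_{w=w_k}$, the columns indexed by $\underline\xi \setminus J_{\underline\zeta \times \underline\xi}$ also lie in this subspace since $\rmL_k$ has zero columns there, and each of the remaining $|J_{\underline\zeta \times \underline\xi} \setminus J_0| = i$ columns adds at most one extra rank. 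Feeding this into Lemma~\ref{L:linear algebra facts} and summing over all such $J_0$ (each $J$ of size $j$ lies in exactly $\binom{R-j}{i}$ of them) yields the family
\[
C_i:\quad \binom{R}{i} \det\bigl(\rmU^\dagger(\underline\zeta \times \underline\xi)\bigr) \equiv \sum_{j=1}^{R} (-1)^{j-1} \binom{R-j}{i} S_j \pmod{(w-w_k)^{\max\{0,\, n - d_k^\ur - i\}}}
\]
for each $0 \leq i \leq j_0$.

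To extract \eqref{E:subtle cofactor expansion}, I would then take $\sum_{i=0}^{j_0} \alpha_i C_i$, where the $\alpha_i \in \QQ$ are uniquely determined by expanding the polynomial $P(x) := \binom{x - R + j_0}{j_0}$ in the basis $\{\binom{x}{i}\}_{i=0}^{j_0}$. This polynomial evaluates to $1$ at $x=R$, vanishes at $x = R-1, R-2, \dots, R-j_0$, and satisfies $P(R-j) = (-1)^{j_0}\binom{j-1}{j_0}$ for $j \geq j_0+1$ by the classical identity $\binom{-n}{k} = (-1)^k \binom{n+k-1}{k}$. The combination therefore produces precisely \eqref{E:subtle cofactor expansion}, holding modulo $(w-w_k)^{\max\{0,\, n - d_k^\ur - j_0\}}$, which is the weakest of the $C_i$-moduli for $0 \leq i \leq j_0$.

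For \eqref{E:subtle cofactor expansion general} I would apply \eqref{E:subtle cofactor expansion} (now established) to each $(n-\ell)\times(n-\ell)$ sub-minor $\rmU^\dagger\bigl((\underline\zeta-I) \times (\underline\xi-J)\bigr)$ arising in the expression \eqref{E:definition of det Uj} for $S_\ell$, with parameter $j_0 - \ell$ in place of $j_0$; the induced modulus becomes $(w-w_k)^{\max\{0,\, (n-\ell) - d_k^\ur - (j_0-\ell)\}} = (w-w_k)^{\max\{0,\, n - d_k^\ur - j_0\}}$, matching the target. After substitution and setting $K = I \sqcup I'$, $L = J \sqcup J'$, $j := \ell + j'$, the nested sign identity $\sgn(I,\underline\zeta)\sgn(I',\underline\zeta-I) = \sgn(K,\underline\zeta)\sgn(I,K)$ (and its analog for the $J$'s) lets me collect terms for fixed $(K,L)$, and the Laplace expansion identity
\[
\sum_{\substack{I \subseteq K,\, |I|=\ell\\ J \subseteq L,\, |J|=\ell}} \sgn(I,K)\sgn(J,L)\, \det\rmL_k(I\times J)\, \det\rmL_k\bigl((K-I)\times(L-J)\bigr) = \binom{j}{\ell} \det\rmL_k(K\times L)
\]
precisely produces the $\binom{j}{\ell}$ factor in \eqref{E:subtle cofactor expansion general}. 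The main obstacle in this last step is verifying that the constraint $J' \subseteq J_{(\underline\zeta-I) \times (\underline\xi-J)}$ (rather than $J' \subseteq J_{\underline\zeta \times \underline\xi} \setminus J$) causes no combinatorial loss: any $\xi' \in (J_{\underline\zeta \times \underline\xi} \setminus J) \setminus J_{(\underline\zeta-I) \times (\underline\xi-J)}$ must satisfy $d_k^\Iw + 1 - \xi' \in I$, which forces $\det\rmL_k(I' \times J') = 0$ whenever $\xi' \in J'$ (the relevant column of $\rmL_k$ restricted to rows in $\underline\zeta - I$ vanishes), so enlarging the inner sum to $J' \subseteq J_{\underline\zeta \times \underline\xi} \setminus J$ leaves the value unchanged and the terms repackage cleanly into the $S_j$.
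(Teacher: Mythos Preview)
Your proof is correct and takes a genuinely different route from the paper's. The paper first establishes the diagonal case $\ell = j_0$ of \eqref{E:subtle cofactor expansion general} by applying Lemma~\ref{L:linear algebra facts} directly to each sub-minor $\det\big(\rmU^\dagger((\underline\zeta-I)\times(\underline\xi-J))\big)$ appearing in $S_{j_0}$ (with $J_0 = J_{\underline\zeta\times\underline\xi} \setminus J$, which gives corank $\geq n-j_0-d_k^\ur$ immediately), then repackages via the sign and Laplace identities, and finally obtains general $\ell \le j_0$ by induction on $j_0-\ell$ using the binomial identity $\binom{j_0}{\ell}\binom{j}{j_0} - \binom{j-\ell-1}{j_0-\ell-1}\binom{j}{\ell} = \binom{j-\ell-1}{j_0-\ell}\binom{j}{\ell}$. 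You instead prove the $\ell = 0$ case \eqref{E:subtle cofactor expansion} in one shot: summing \eqref{E:cofactor expansion primitive} over all $J_0 \subseteq J_{\underline\zeta\times\underline\xi}$ of each fixed size gives the family $C_i$, and the interpolation polynomial $P(x)=\binom{x-R+j_0}{j_0}$ picks out exactly the right linear combination. Then you pass to general $\ell$ by applying \eqref{E:subtle cofactor expansion} to sub-minors. Both arguments ultimately invoke the same sign identity (Lemma~\ref{L:cofactor expansion formula}(1)) and Laplace expansion (Lemma~\ref{L:cofactor expansion formula}(2)) in the repackaging step.

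Two remarks on your execution. First, your refined corank bound is correct, and the key point (which you state) is that columns indexed by $\underline\xi \setminus J_{\underline\zeta\times\underline\xi}$ also lie in the rank-$\leq d_k^\ur$ subspace because $\rmL_k$ restricted to $\underline\zeta$-rows vanishes on those columns; hence only the $i$ columns in $J_{\underline\zeta\times\underline\xi}\setminus J_0$ can raise the rank. Second, in your final step you should note that the hypothesis $j_0-\ell \leq R'-1$ for the sub-minor (with $R' = |J_{(\underline\zeta-I)\times(\underline\xi-J)}|$) may fail, but your proof of \eqref{E:subtle cofactor expansion} does not actually use that hypothesis: for $i>R'$ the congruence $C_i$ is vacuously $0\equiv 0$, and the relevant binomial coefficients $\binom{R'}{i}$, $\binom{R'-j'}{i}$ vanish anyway, so the linear combination still yields the desired congruence with the correct modulus. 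The paper sidesteps this subtlety by taking $J_0 = J_{\underline\zeta\times\underline\xi}\setminus J$ from the outset when working with sub-minors, which is slightly cleaner; your handling via the vanishing of $\det\rmL_k(I'\times J')$ when $J'$ contains a ``bad'' $\xi'$ is equally valid.
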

\begin{remark}
We point out that \eqref{E:subtle cofactor expansion general} is especially powerful when $n \geq \frac 12d_k^\Iw$; in this case, we may take $j_0$ to be $2n-d_k^\Iw$ yet still get all information modulo $(w-w_k)^{m_n(k)}$. In other words, we may detect $\det(\rmU^\dagger(\underline \zeta\times \underline \xi))\bmod (w-w_k)^{m_n(k)}$ using minors of size $\leq 2n-d_k^\Iw$.
\end{remark}

\begin{proof}
The congruence \eqref{E:subtle cofactor expansion} is a special case of \eqref{E:subtle cofactor expansion general} when setting $\ell=0$.
We first prove \eqref{E:subtle cofactor expansion general} in the special case when $\ell=j_0$. When $\ell=j_0 =0$, this is exactly \eqref{E:det as smaller minors}.
To treat the general case with $\ell =j_0$, we apply Lemma~\ref{L:linear algebra facts} (especially \eqref{E:cofactor expansion primitive}) to  each factor $\det\big(\rmU^\dagger((\underline \zeta-I) \times (\underline \xi -J))\big)$ appearing in \eqref{E:definition of det Uj}, to deduce the following:
\begin{align*}
\det&\big( \rmU^\dagger(\underline \zeta \times \underline \xi)\big)_{j_0}= \sum_{\substack{I \subseteq \underline \zeta\\ \#I = j_0}} \sum_{\substack{J \subseteq J_{\underline \zeta \times \underline \xi}\\ \#J = j_0}} \sgn(I, \underline \zeta) \sgn(J, \underline \xi) \cdot \det\big(\rmL_k(I \times J)\big) \cdot \det \big( \rmU^\dagger((\underline \zeta-I) \times (\underline \xi-J))\big)
\\
\equiv \ & \sum_{\substack{I \subseteq \underline \zeta\\ \#I = j_0}} \sum_{\substack{J \subseteq J_{\underline \zeta \times \underline \xi}\\ \#J = j_0}} \sgn(I, \underline \zeta) \sgn(J, \underline \xi) \cdot \det\big(\rmL_k(I \times J)\big) \cdot \sum_{\substack{J' \subseteq J_{\underline \zeta \times \underline \xi} - J\\ J' \neq \emptyset}}
\sum_{\substack{I' \subseteq \underline \zeta - I\\ \#I' =\#J'}}(-1)^{\#J'-1}\\
&\ \sgn(I', \underline \zeta-I) \sgn(J', \underline \xi-J) \cdot \det\big(\rmL_k(I' \times J')\big) \cdot \det \big( \rmU^\dagger((\underline \zeta-I-I') \times (\underline \xi-J-J'))\big)
\end{align*}
modulo $(w-w_k)^{\max\{0, n-d_k^\ur-j_0\}}$.  Here we used 
Proposition~\ref{P:oldform basis} to deduce that $\rank\, 
\rmT_k \big( (\underline \zeta - I) \times (\underline \xi-J); J_{\underline \zeta \times  \underline \xi} - 
J\big)|_{w=w_k}$ is at most $d_k^\ur$ and so its corank is at least 
$n-j_0-d_k^\ur$.

Set $I'' = I \sqcup I'$ and $J'' = J\sqcup J'$, both written in increasing order following Notation~\ref{N:T = U-L}. Put $j: = \#I'' = \#J'' > j_0$. The above long expression for $\det\big(\rmU^\dagger(\underline \zeta\times \underline \xi) \big)_{j_0}$ is equal to
\begin{align*}
\sum_{j > j_0} &(-1)^{j-j_0-1}
\sum_{\substack{I'' \subseteq \underline \zeta\\ \#I'' = j}}
\sum_{\substack{J'' \subseteq J_{\underline \zeta \times \underline \xi}\\ \#J'' = j}}
\sum_{\substack{I \subseteq I''\\ \#I = j_0}}
\sum_{\substack{J \subseteq J''\\ \#J = j_0}}
\sgn(I, \underline \zeta)\sgn (J, \underline \xi )\sgn(I''-I, \underline \zeta-I)\sgn(J''-J,\underline \xi-J)
\\
&\cdot\det \big( \rmL_k(I\times J)\big) \cdot \det \big( \rmL_k((I''-I) \times (J''-J)) \big) \cdot \det \big( \rmU^\dagger((\underline \zeta - I'') \times (\underline \xi-J''))\big).
\end{align*}
Applying the sign equality in Lemma~\ref{L:cofactor expansion formula}(1) to $I'' = I \sqcup I'$ and to $J''= J \sqcup J'$, we may rewrite the above sum as
\begin{align*}
&\sum_{j > j_0} (-1)^{j-j_0-1}
\sum_{\substack{I'' \subseteq \underline \zeta\\ \#I'' = j}}
\sum_{\substack{J'' \subseteq J_{\underline \zeta \times \underline \xi}\\ \#J'' = j}}
\sgn(I'', \underline \zeta)\sgn (J'', \underline \xi )\cdot \det \big( \rmU^\dagger((\underline \zeta - I'') \times (\underline \xi-J''))\big)
\\
&\cdot \sum_{\substack{I \subseteq I''\\ \#I = j_0}}
\sum_{\substack{J \subseteq J''\\ \#J = j_0}}
 \sgn(I, I'')\sgn(J, J'')\cdot\det \big( \rmL_k(I\times J)\big)\cdot \det \big( \rmL_k((I''-I) \times (J''-J)) \big).
\end{align*}
Applying Lemma~\ref{L:cofactor expansion formula}(2) to the second row of the above formula, it yields
\begin{align*}
\det \big( \rmU^\dagger(\underline \zeta \times \underline \xi)\big)_{j_0}&\ \equiv 
\sum_{j > j_0} (-1)^{j-j_0-1}
\sum_{\substack{I'' \subseteq \underline \zeta\\ \#I'' = j}}
\sum_{\substack{J'' \subseteq J_{\underline \zeta \times \underline \xi}\\ \#J'' = j}}
\sgn(I'', \underline \zeta)\sgn (J'', \underline \xi )\cdot 
\\
&\det \big( \rmU^\dagger((\underline \zeta - I'') \times (\underline \xi-J''))\big) \cdot 
\binom j{j_0} \cdot \det \big(\rmL_k(I'' \times J'')\big)
\end{align*}
modulo $(w-w_k)^{\max\{0,n-d_k^\ur-j_0 \}}$.
This is exactly \eqref{E:subtle cofactor expansion general} when $\ell=j_0$.

We now prove \eqref{E:subtle cofactor expansion general} in general by induction on the difference $j_0-\ell$. The base case when $\ell=j_0$ is just treated. Assume that we have proved \eqref{E:subtle cofactor expansion general} with smaller $j_0-\ell$. Then we have the following congruences (corresponding to the cases of $(\ell, j_0-1)$ and $(j_0, j_0)$).
\begin{small}
\begin{align*}
\det \big(\rmU^\dagger(\underline \zeta \times \underline \xi)\big)_{\ell} &\,\equiv \sum_{j > j_0-1} 
(-1)^{j-j_0} \binom{j-\ell-1}{j_0-\ell-1}\binom {j}{\ell}\cdot \det 
\big(\rmU^\dagger(\underline \zeta \times \underline \xi)\big)_j \ \bmod 
(w-w_k)^{\max\{0,n-d_k^\ur-j_0+1\}},
\\
\det \big(\rmU^\dagger(\underline \zeta \times \underline \xi)\big)_{j_0} &\,\equiv  \sum_{j > j_0} (-1)^{j-j_0-1} \binom{j}{j_0}\cdot \det \big(\rmU^\dagger(\underline \zeta \times \underline \xi)\big)_j \quad \bmod (w-w_k)^{\max\{0,n-d_k^\ur-j_0\}}.
\end{align*}
\end{small}
Plugging the second congruence into the first one (and modulo the smaller power $(w-w_k)^{\max\{0,n-d_k^\ur-j_0\}}$), we immediate deduce \eqref{E:subtle cofactor expansion general} by noting that
\[
\binom{j_0}{\ell} \binom{j}{j_0}-\binom{j-\ell-1}{j_0-\ell-1}\binom{j}{\ell}
= \binom{j-\ell-1}{j_0-\ell} \binom{j}{\ell}.\qedhere
\]
\end{proof}
\begin{remark}
\label{R:change up cofactor expansion}
We point out a variant of the above lemma that we will use later. Fix any power series $\eta(w) \in 1+ (w-w_k)E\llbracket w-w_k\rrbracket$.
For $J_0 \subseteq J_{\underline \zeta \times \underline \xi}$, write
$$
\widetilde \rmT_k(\underline \zeta \times \underline \xi; J_0): = \rmU^\dagger(\underline \zeta \times \underline \xi) -\eta(w)^{-1} \cdot \rmL_k(\underline \zeta \times J_0) \in \rmM_\infty(E\llbracket w-w_k\rrbracket);
$$
then we obtain a formula of $\det \big(\widetilde \rmT_k(\underline \zeta \times \underline \xi; J_0)\big)$ analogous to \eqref{E:det(U|T)}, with additional factor $\eta(w)^{-\#J}$ on the right hand side. Yet $\widetilde \rmT_k(\underline \zeta \times \underline \xi; J_0)|_{w=w_k} = \rmT_k(\underline \zeta \times \underline \xi; J_0)|_{w=w_k}$ have the same corank.
So if we define the analogue of \eqref{E:definition of det Uj} to be
\begin{align}
\label{E:definition of det Uj eta 1}
&\det\big( \rmU^\dagger(\underline \zeta \times \underline \xi)\big)^\sim _j: =\eta(w)^{-j} \cdot \det\big( \rmU^\dagger(\underline \zeta \times \underline \xi)\big)_j
\\
\nonumber
=\ & \sum_{\substack{I \subseteq \underline \zeta\\ \#I = j}} \sum_{\substack{J \subseteq J_{\underline \zeta \times \underline \xi}\\ \#J = j}} \sgn(I, \underline \zeta) \sgn(J, \underline \xi) \cdot \eta(w)^{-j}
\cdot \det\big(\rmL_k(I \times J)\big) \cdot \det \big( \rmU^\dagger((\underline \zeta-I) \times (\underline \xi-J))\big),
\end{align}
exactly the same argument in Lemmas~\ref{L:linear algebra facts} and \ref{L:subtle cofactor expansion} shows that,
for every nonnegative integers $\ell \leq j_0 \leq r_{\underline \zeta \times \underline \xi} + s_{\underline \xi}-1$, we have the following congruence of power series in $E\llbracket w-w_k\rrbracket$:
\begin{equation}
\label{E:subtle cofactor expansion general eta}
\det \big(\rmU^\dagger(\underline \zeta \times \underline \xi)\big)_{\ell}^\sim \equiv   \sum_{j > j_0} (-1)^{j-j_0-1} \binom{j-\ell-1}{j_0-\ell} \binom{j}{\ell}\cdot \det \big(\rmU^\dagger(\underline \zeta \times \underline \xi)\big)_j^\sim \ \bmod (w-w_k)^{\max\{0,n-d_k^\ur-j_0\}}.
\end{equation}
\end{remark}

\begin{notation}
\label{N:normalized B}
To further simplify notations later, we normalize
\begin{equation}
\label{E:definition of B}
B_{k, i}^{(\underline \zeta \times \underline \xi)}: =  A_{k,i}^{(\underline \zeta \times \underline \xi)} \cdot g_{n, \hat k}(w_k).\end{equation}
By Lemma~\ref{L:useful facts in the proof of Proposition each summand of Lagrange lie above NP}$(1)$, condition~\eqref{E:ghost reduction to k equivalent version} is equivalent to, for $i = 0, 1,\dots, m_n(k)-1$,
\begin{equation}
\label{E:ghost reduction to k B version}
v_p\big( B_{k,i}^{(\underline \zeta \times \underline \xi)} \big) \geq \Delta_{k, \frac 12 d_k^\new-i} - \tfrac{k-2}2 (\tfrac 12 d_k^\Iw-n).
\end{equation}

Further, 
we normalize the minors appearing in the formula \eqref{E:subtle cofactor expansion general} as follows and consider their expansions as power series in $E\llbracket w-w_k\rrbracket$:
\begin{equation}
\label{E:normalized minor determinant}p^{\frac 12(\deg(\underline \xi) - \deg(\underline \zeta))}\cdot
\frac{
\det\big(\rmU^\dagger(\underline \zeta \times \underline \xi)\big)_\ell}{g_{n-\ell,\hat{k}}(w)/g_{n-\ell,\hat{k}}(w_k)}=\sum_{i\geq 0}B_{k,i}^{(\underline \zeta \times \underline \xi, \ell)}(w-w_k)^i.
\end{equation}
This normalization has in mind that the natural way to understand each sum of minor determinants appearing in $\det \big(\rmU^\dagger(\underline \zeta \times \underline \xi)\big)_\ell$ is through its Lagrange interpolation along $g_{n-\ell}(w)$.
In particular for $\ell=0$, by comparing \eqref{E:expansion of det Uxi / g_n hat k(w)} and \eqref{E:normalized minor determinant}, we see that  $B_{k, i}^{(\underline \zeta \times \underline \xi, 0)}$ is equal to $B_{k,i}^{(\underline \zeta \times \underline \xi)}$ in \eqref{E:definition of B} for $i=0,\dots, m_n(k)-1$.

As a convention, if $i<0$, we set $B_{k,i}^{(\underline \zeta \times \underline \xi, \ell)}=0$.
\end{notation}

The following estimate on $B_{k,i}^{(\underline \zeta \times \underline \xi, \ell)}$ can be harvested from the inductive hypothesis and Proposition~\ref{P:estimate of overcoefficients}.
\begin{proposition}
\label{P:estimate of smaller minors}
Assume that $p \geq 11$ and $2 \leq a \leq p-5$.
Keep the notation as above and assume that Theorem~\ref{T:estimate of nonprincipal minor} holds for all minors of size strictly smaller than $n$.
\begin{enumerate}
\item Suppose that $\ell$ is a positive integer such that $\ell \leq  r_{\underline \zeta \times \underline \xi}+ s_{\underline \xi}$ and that $1 \leq m_{n-\ell}(k) \leq m_n(k)-1$. (In particular, $\ell < n-d_k^\ur$.) Then for every $i \in \{ m_{n-\ell}(k), \dots, m_n(k)-1\}$,
\begin{small}
\begin{align}
\label{E:estimate of smaller minors}
v_p\big( B_{k,i}^{(\underline \zeta \times \underline \xi, \ell)}\big) \geq\ & \Delta_{k, \frac 12d_k^\new -m_{n-\ell}(k)} -  \tfrac{k-2}2\big( \tfrac 12d_k^\Iw - n\big) -\tfrac12\big( (\tfrac 12d_{k}^\new-m_{n-\ell}(k))^2 - (\tfrac 12d_{k}^\new-i)^2\big)\hspace{-10pt}
\\
\label{E:estimate of smaller minors weak} \geq\ &  \Delta_{k, \frac 12d_k^\new -i} - \tfrac{k-2}2\big( \tfrac 12d_k^\Iw - n\big).
	\end{align}
\end{small}
\item Suppose that $\ell$ is a positive integer such that $\ell \leq r_{\underline \zeta \times \underline \xi}+ s_{\underline \xi}$ and that $m_{n-\ell}(k)=0$. (This implies that $\ell \geq n-d_k^\ur$.)
 Then for every $i \in \{ m_{n-\ell}(k), \dots, m_n(k)-1\}$,
\begin{align}
\label{E:estimate of smaller minors when m n-ell (k)=0}
v_p\big( B_{k,i}^{(\underline \zeta \times \underline \xi, \ell)}\big) \geq\ &  \Delta_{k, \frac 12d_k^\new} -  \tfrac{k-2}2\big( \tfrac 12d_k^\Iw - n\big)- \tfrac12\big( (\tfrac 12d_{k}^\new)^2 - (\tfrac 12d_{k}^\new-i)^2\big)
    \\
    \label{E:estimate of smaller minors weak when m n-ell (k)=0} \geq\ &  \Delta_{k, \frac 12d_k^\new -i} - \tfrac{k-2}2\big( \tfrac 12d_k^\Iw - n\big).
	\end{align} 
\end{enumerate}
\end{proposition}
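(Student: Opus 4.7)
The plan is to expand $\det\big(\rmU^\dagger(\underline \zeta \times \underline \xi)\big)_\ell$ using its definition \eqref{E:definition of det Uj} as a signed sum indexed by pairs $(I,J)$ with $I \subseteq \underline\zeta$, $J \subseteq J_{\underline \zeta \times \underline \xi}$ and $\#I=\#J=\ell$, and to estimate the contribution of each summand separately. Since every complementary minor $\det\big(\rmU^\dagger((\underline \zeta -I)\times (\underline \xi-J))\big)$ has size $n-\ell<n$, the inductive hypothesis together with Proposition~\ref{P:estimate of overcoefficients} applies to it, and yields Taylor expansion estimates at $w_k$ (part (2) when $m_{n-\ell}(k)\geq 1$, and part (3) when $m_{n-\ell}(k)=0$).

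More precisely, unwinding the normalization in \eqref{E:normalized minor determinant} and using the factorization
\[
\det\big(\rmU^\dagger((\underline \zeta-I)\times(\underline \xi-J))\big) = p^{\frac 12(\deg(\underline\zeta-I)-\deg(\underline\xi-J))}\, g_{n-\ell,\hat k}(w)\sum_{i\geq 0}A_{k,i}^{((\underline\zeta-I)\times(\underline\xi-J))}(w-w_k)^i,
\]
one finds that each $B_{k,i}^{(\underline \zeta \times \underline \xi,\ell)}$ is a signed sum of terms of the form
\[
g_{n-\ell,\hat k}(w_k)\cdot p^{\frac 12(\deg(J)-\deg(I))}\cdot \det\big(\rmL_k(I\times J)\big)\cdot A_{k,i}^{((\underline\zeta-I)\times(\underline\xi-J))}.
\]
It then suffices to bound the $p$-adic valuation of each such summand. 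The first step is to estimate $\det\big(\rmL_k(I\times J)\big)$: by Proposition~\ref{P:oldform basis}(1) together with the block structure \eqref{E:Ukdagger is block upper triangular} of $\rmU^\dagger|_{w=w_k}$, the ``generic'' case is that $I,J\subseteq\{1,\dots,d_k^\Iw\}$ and $J=\{d_k^\Iw+1-\zeta\,|\,\zeta\in I\}$, in which case $\det\big(\rmL_k(I\times J)\big)=\pm p^{\deg(I)}$, so $p^{\frac 12(\deg(J)-\deg(I))}\det(\rmL_k(I\times J))$ has $p$-adic valuation $\frac{\ell(k-2)}2$. After applying Proposition~\ref{P:estimate of overcoefficients} and \eqref{E:definition of Delta'} to the remaining factor $A_{k,i}^{((\underline\zeta-I)\times(\underline\xi-J))}$, the inequality \eqref{E:estimate of smaller minors} (resp.\ \eqref{E:estimate of smaller minors when m n-ell (k)=0}) reduces to an identity of the form
\[
v_p\big(g_{n-\ell,\hat k}(w_k)\big)-\Delta'_{k,\frac 12 d_k^\new - m_{n-\ell}(k)}+\tfrac{k-2}2 \ell = -\tfrac{k-2}{2}\big(\tfrac 12 d_k^\Iw -n\big),
\]
which is verified by a direct computation using the ghost duality \eqref{E:ghost duality}, which allows one to convert between $v_p(g_{n-\ell,\hat k}(w_k))$ and $v_p(g_{d_k^\Iw-n+\ell,\hat k}(w_k))$ (and hence to $\Delta'_{k,\tfrac 12 d_k^\Iw -n+\ell}$).

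The remaining cases correspond either to $I$ or $J$ containing an index larger than $d_k^\Iw$, or to $I\not\subseteq \{1,\dots,d_k^\Iw\}$ while the matching in $\rmL_k$ uses entries coming from the off-block part $p^{k-1}D^{-1}\rmU^{\dagger,(\varepsilon')}_{2-k}D$ of \eqref{E:Ukdagger is block upper triangular}. In these cases, Proposition~\ref{P:theta and AL}(1) provides the correct $p$-adic valuation on the off-block entries, and the argument proceeds analogously, the additional factor $p^{k-1}$ compensating for the shifted degrees. Finally, the weak forms \eqref{E:estimate of smaller minors weak} and \eqref{E:estimate of smaller minors weak when m n-ell (k)=0} follow from the sharp forms by Proposition~\ref{P:Delta - Delta'} (applied with $\ell''=\tfrac 12 d_k^\new - m_{n-\ell}(k)$ and $\ell=\tfrac 12 d_k^\new-i$, using the hypothesis $p\geq 7$), together with the inequality $\Delta'_{k,\cdot}\geq \Delta_{k,\cdot}$ inherent in the definition of the convex hull $\underline\Delta_k$. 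The main obstacle is the careful bookkeeping of the $\rmL_k$-contributions for the non-antidiagonal $(I,J)$; however, these cases are relatively rigid thanks to the block-upper-triangular shape \eqref{E:Ukdagger is block upper triangular}, and they ultimately contribute at least as much $p$-adic valuation as the antidiagonal case, so no new combinatorial difficulty arises beyond what is already handled by ghost duality.
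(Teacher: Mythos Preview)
Your overall strategy matches the paper's proof: expand via \eqref{E:definition of det Uj}, bound each $(I,J)$-summand separately, and feed the smaller minor $\det\big(\rmU^\dagger((\underline\zeta-I)\times(\underline\xi-J))\big)$ into Proposition~\ref{P:estimate of overcoefficients} (using the inductive hypothesis). The derivation of the weak forms from the strong ones via Proposition~\ref{P:Delta - Delta'} is also as in the paper.

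Two points deserve comment.

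\emph{The $\rmL_k$-estimate.} The paper does not case-split on ``generic'' versus ``off-block'' $(I,J)$. Instead it proves the single uniform inequality
\[
v_p\big(\det(\rmL_k(I\times J))\big)\ \geq\ \tfrac{k-2}{2}\,\ell + \tfrac12\big(\deg(I)-\deg(J)\big)
\]
by splitting $J=J'\sqcup J''$ with $J'=J\cap\underline{d_k^\Iw}$, expanding along the antidiagonal columns in $J'$, and then bounding the remaining $\det(\rmL_k(I''\times J''))$ using the crude row estimate $v_p\geq \deg(I'')$ from Proposition~\ref{P:naive HB}(2) together with the fact that $\deg\bfe_\xi>k-2$ for $\xi\in J''$. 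Your appeal to the ``$p^{k-1}$ factor'' in \eqref{E:Ukdagger is block upper triangular} is more delicate than necessary and does not directly cover the upper-right block of $\rmL_k$; the argument via Proposition~\ref{P:naive HB}(2) is both simpler and uniform.

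\emph{A gap in part (2).} Your displayed identity is correct for part (1), but it does \emph{not} carry over to part (2). When $m_{n-\ell}(k)=0$ one has $n-\ell\leq d_k^\ur$, so $n-\ell-\tfrac12 d_k^\Iw$ lies outside the range $[-\tfrac12 d_k^\new,\tfrac12 d_k^\new]$ where $\Delta'_{k,\cdot}$ is defined, and Proposition~\ref{P:estimate of overcoefficients}(3) produces not a $\Delta'$-term but $\NP(G_{\bbsigma}(w_k,-))_{x=n-\ell}-v_p(g_{n-\ell}(w_k))$. To finish, one needs the additional inequality
\[
\NP(G_{\bbsigma}(w_k,-))_{x=n-\ell}\ \geq\ \Delta_{k,\frac12 d_k^\new}-\tfrac{k-2}{2}\big(\tfrac12 d_k^\Iw-n+\ell\big),
\]
which the paper deduces from Gouv\^ea's inequality \cite[Proposition~4.28]{liu-truong-xiao-zhao} (cf.\ Proposition~\ref{P:ghost compatible with theta AL and p-stabilization}(4)) combined with the identity $\Delta_{k,\frac12 d_k^\new}=\Delta'_{k,\frac12 d_k^\new}=v_p(g_{d_k^\ur}(w_k))+\tfrac{k-2}{2}\cdot\tfrac12 d_k^\new$. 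This step is genuinely missing from your proposal.
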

Later, we will refer \eqref{E:estimate of smaller minors} and \eqref{E:estimate of smaller minors when m n-ell (k)=0} as the \emph{strong estimates} and refer \eqref{E:estimate of smaller minors weak} and \eqref{E:estimate of smaller minors weak when m n-ell (k)=0} as the \emph{weak estimates}.
\begin{remark}\label{R:equivalent characterization of m n-l (k)< m n(k)}
When $n \leq \frac 12d_k^\Iw$, the condition $m_{n-\ell}(k)\leq m_n(k)-1$ is automatic as long as $\ell \geq 1$, but when $n\geq \tfrac 12 d_k^\Iw$, the condition $m_{n-\ell}(k)\leq m_n(k)-1$ is equivalent to requiring $\ell\geq 2n-d_k^\Iw+1$. We will use this equivalent condition in later arguments. 
\end{remark}
\begin{proof}
(\ref{E:estimate of smaller minors weak}) (resp. \eqref{E:estimate of smaller minors weak when m n-ell (k)=0}) follows from (\ref{E:estimate of smaller minors}) (resp. \eqref{E:estimate of smaller minors when m n-ell (k)=0}) and Proposition~\ref{P:Delta - Delta'}. So it suffices to prove (\ref{E:estimate of smaller minors}) and (\ref{E:estimate of smaller minors when m n-ell (k)=0}). Since we assume that Theorem~\ref{T:estimate of nonprincipal minor} holds for minors of size strictly smaller than $n$, we can apply Proposition~\ref{P:estimate of overcoefficients} to such minors.
	
By \eqref{E:definition of det Uj}, $\det \big(\rmU^\dagger(\underline \zeta \times \underline \xi) \big)_{\ell}$ 
	is a $\ZZ$-linear combination of the terms $
	\det \big( \rmL_k(I \times J)\big) \cdot \det \big( \rmU^\dagger((\underline \zeta-I) \times (\underline \xi-J))
	$ over subsets $I \subseteq \underline \zeta$ and $J \subseteq J_{\underline \zeta \times \underline \xi}$ of cardinality $\ell$. Fix two such subsets $I$ and $J$. Consider the following formal expansion in $E\llbracket w - w_k\rrbracket$:
	\begin{equation}
	\label{E:definition of Bprime xiIJ}
	p^{\frac 12(\deg(\underline \xi) - \deg(\underline \zeta))}\cdot\frac{\det \big( \rmL_k(I \times J)\big)\cdot  \det \big( \rmU^\dagger((\underline \zeta-I) \times (\underline \xi-J))}{
	g_{n-\ell,\hat k}(w)/g_{n-\ell,\hat{k}}(w_k)} = \sum_{i \geq 0} B^{(\underline \zeta \times \underline \xi,I, J)}_{k, i} (w-w_k)^i.
	\end{equation}
	Here we use Notation~\ref{N:treat ghost zero and non ghost zero uniformly} to treat case (1) and (2) uniformly. Under Notation~\ref{N:lagrange of det of Uxi}, we have a formal expansion in $E\llbracket w-w_k\rrbracket$:
		\[
		p^{\frac 12 (\deg(\underline \xi-J)-\deg(\underline \zeta-I))}\cdot \frac{\det\big(\rmU^\dagger((\underline\zeta-I)\times (\underline\xi-J)) \big)}{g_{n-\ell,\hat{k}}(w)}=\sum_{i\geq 0} A_{k,i}^{((\underline \zeta-I)\times(\underline \xi-J))}(w-w_k)^i.
		\]
Comparing this with (\ref{E:definition of Bprime xiIJ}), we deduce that 
		\begin{equation}
		\label{E:relation between Bprime xi IJ and A xi IJ}
		B^{(\underline \zeta \times \underline \xi,I, J)}_{k, i}=p^{\frac 12 (\deg (J)-\deg(I))}\det (\rmL_k(I\times J))\cdot g_{n-\ell,\hat{k}}(w_k)\cdot A_{k,i}^{(\underline \zeta-I)\times (\underline \xi-J)}.
		\end{equation}
To prove the inequality (\ref{E:estimate of smaller minors}) or (\ref{E:estimate of smaller minors when m n-ell (k)=0}), it suffices to prove the corresponding estimates for $v_p\big(B^{(\underline \zeta \times \underline \xi,I, J)}_{k, i} \big)$, that is, to prove the inequality
        \begin{small}
		\begin{equation}
		\label{E:estimate of vp B zeta xi I J k i}
		v_p\big( B^{(\underline \zeta \times \underline \xi,I, J)}_{k, i}\big)\geq \Delta_{k,\frac 12 d_k^\new-m_{n-\ell}(k)}-\tfrac {k-2}2(\tfrac 12 d_k^\Iw-n)-\tfrac 12 \big((\tfrac 12 d_k^\new-m_{n-\ell}(k))^2-(\tfrac 12 d_k^\new-i)^2 \big).
\end{equation}\end{small}

First we give an estimate of $v_p(\det(\rmL_k(I\times J)))$:

\begin{lemma}\label{L:estimate of vp(det(Lk(I times J)))}
		\begin{align}
		\label{E:condition needed on det of A}
		v_p\big( \det (\rmL_k(I \times J))\big) \geq\ \tfrac{k-2}2 \cdot \ell+ \tfrac 12( \deg(I) - \deg(J)).
	\end{align}
\end{lemma}

\begin{proof}[Proof of Lemma~\ref{L:estimate of vp(det(Lk(I times J)))}]
	Write $J = J'\sqcup J''$ with $J' = J \cap \underline{d_k^\Iw}$. For each $\xi \in J'$, write $\xi^\op : = d_k^\Iw+1-\xi \in \underline \zeta$ (since $\xi \in J_{\underline \zeta, \underline \xi}$).
	Define $I' : = \{\xi^\op \,|\, \xi \in J'\}$ and $I'' =I \backslash I'$.
	Then the $\xi$th column of $\rmL_k(I \times J)$ has only one nonzero entry at $(\xi^\op, \xi)$, which is $-p^{\deg \bfe_{\xi^\op}}$ as introduced in Notation~\ref{N:T = U-L}. So
	$$
	\det(\rmL_k(I \times J)) = \pm p^{\sum_{\xi \in J'} \deg \bfe_{\xi^\op}} \cdot \det (\rmL_k(I'' \times J'')).
	$$
	Taking into account of the equality $\deg \bfe_{\xi^\op} = k-2-\deg \bfe_\xi = \tfrac{k-2}2 + \tfrac 12\big(\deg \bfe_{\xi^\op} - \deg \bfe_\xi \big)$ by Proposition~\ref{P:theta and AL}(2), we see that  \eqref{E:condition needed on det of A} is equivalent to the following
	\begin{equation}
		\label{E:vpLk large}
		v_p\big( \det(\rmL_k(I'' \times J''))\big) \geq \tfrac{k-2}2\cdot \#J'' + \tfrac 12(\deg(I'')-\deg(J'')).
	\end{equation}
	As every element $\xi \in J''$ satisfies $\deg \bfe_\xi > k-2$ and thus $\frac{k-2}2 \#J'' \leq \tfrac 12\deg(J'')$, it suffices to prove $v_p\big( \det(\rmL_k(I'' \times J''))\big) \geq \tfrac 12 \deg(I'')$. But this holds because the $\zeta$'s row of $\rmU^\dagger|_{w = w_k}$ belongs to $p^{\deg (\bfe_\zeta)}\calO$ by Proposition~\ref{P:naive HB}(2)  Now we have proven the estimate \eqref{E:condition needed on det of A} of $v_p(\det(\rmL_k(I\times J)))$.
\end{proof}

In view of the equality \eqref{E:relation between Bprime xi IJ and A xi IJ} and the estimate \eqref{E:condition needed on det of A}, to prove \eqref{E:estimate of vp B zeta xi I J k i}, it suffices to prove
\begin{align}
\label{E:needed estimate on Aki}
v_p(\,&A_{k,i}^{(\underline \zeta - I) \times (\underline \xi-J)}) \geq \Delta_{k,\frac 12 d_k^\new + m_{n-\ell}(k)} - v_p(g_{n-\ell, \hat k}(w_k)) - \tfrac{k-2}2 \cdot \big( \tfrac 12d_k^\Iw-n+\ell\big) \\
\nonumber
& \qquad -\tfrac 12 \big((\tfrac 12 d_k^\new-m_{n-\ell}(k))^2-(\tfrac 12 d_k^\new-i)^2 \big).
\end{align}
We separate the discussion for (1) and (2) of the proposition.

\begin{enumerate}
\item Under the assumption $1\leq m_{n-\ell}(k)\leq i\leq m_n(k)-1$ in $(1)$,  we can apply Proposition~\ref{P:estimate of overcoefficients}(2) to the ghost zero $w_k$ of $g_{n-\ell}(w)$ and get
\begin{align*}		v_p\big(A_{k,i}^{((\underline\zeta-I)\times (\underline\xi-J))} \big)\geq \tfrac 12 \big( (\tfrac 12 d_k^\new-i)^2-(\tfrac 12 d_k^\new-m_{n-\ell}(k))^2\big)+\Delta_{k,\frac 12 d_k^\new-m_{n-\ell}(k)}-\Delta'_{k,\frac 12 d_k^\new-m_{n-\ell}(k)}.
\end{align*}
Then \eqref{E:needed estimate on Aki} follows from this and the following equality (from the definition of $\underline \Delta_k$):
\[
v_p\big(g_{n-\ell,\hat{k}}(w_k) \big) - \tfrac{k-2}2 (n-\ell-\tfrac 12 d_k^\Iw)\stackrel{\eqref{E:definition of Delta'}}= \Delta'_{k,n-\ell-\frac 12 d_k^\Iw}\stackrel{Lemma~\ref{L:useful facts in the proof of Proposition each summand of Lagrange lie above NP}(1)}=\Delta'_{k,\frac 12 d_k^\new-m_{n-\ell}(k)}.
\]
\item 
Under the assumption $m_{n-\ell}(k)=0$ of $(2)$, similarly apply Proposition~\ref{P:estimate of overcoefficients}(3) to $w_k$ and the subsets $\underline\zeta-I$, $\underline \xi-J$ gives the estimate
		\begin{align*}
		v_p\big(A_{k,i}^{((\underline \zeta-I)\times (\underline \xi-J))} \big) \geq \tfrac 12 \big( (\tfrac 12 d_k^\new-i)^2-(\tfrac 12 d_k^\new)^2 \big)+\NP(G_{\bbsigma}(w_{k},-))_{x=n-\ell}-v_p\big(g_{n-\ell}(w_k) \big),
		\end{align*}
Thus, for \eqref{E:needed estimate on Aki}, it suffices to prove
$$
\NP(G_{\bbsigma}(w_{k},-))_{x=n-\ell}\geq \Delta_{k,\frac 12 d_k^\new}-\tfrac{k-2}2 \cdot \big( \tfrac 12d_k^\Iw-n+\ell\big).
$$
But this follows from \cite[Proposition~4.28]{liu-truong-xiao-zhao} and the definition of $\underline \Delta_k$:
\begin{align*}
v_p\big( g_{d_k^\ur}(w_k) \big)-\NP(G_{\bbsigma}(w_{k},-))_{x=n-\ell}\leq\ & \tfrac{k-2}{p+1}(d_k^\ur-n+\ell)\leq \tfrac{k-2}{2}(d_k^\ur-n+\ell),
\\
\Delta_{k,\frac 12 d_k^\new}=\Delta'_{k,\frac 12 d_k^\new}=\ &v_p\big(g_{d_k^\ur}(w_k) \big)+\tfrac {k-2}2\cdot \tfrac 12 d_k^\new.
\end{align*}
\end{enumerate}
We have now completed the proof of \eqref{E:needed estimate on Aki} and the proposition.
\end{proof}

\subsection{Proof of Theorem~\ref{T:estimate of nonprincipal minor}}
\label{S:proof of estimate on Axii}
We are now ready to start the proof of Theorem~\ref{T:estimate of nonprincipal minor}, by induction on $n$. The case of $n=1$ has been handled in \S\,\ref{S:n=1}.

\begin{assumption}\label{assumtpion: inductive assumption on non principal minors}
For the rest of this section, we assume that Theorem~\ref{T:estimate of nonprincipal minor} holds for all $k$ and all subsets $\underline \zeta$ and $\underline \xi$ of $\ZZ_{\geq 1}$ of size strictly smaller than  the fixed integer $n$.
\end{assumption}

We will prove Theorem~\ref{T:estimate of nonprincipal minor} for all $n\times n$ minors.
Now we fix an integer $k = k_\varepsilon + (p-1)k_\bullet$ such that $m_n(k) \neq 0$, and two finite subsets $\underline \zeta$ and $\underline \xi$ of cardinality $n$.

Consider the elements $B^{(\underline \zeta \times \underline \xi)}_{k,i}$ for $i=1, \dots, m_n(k)-1$ defined in Notation~\ref{N:normalized B} by the Lagrange interpolation of $\det\big(\rmU^\dagger(\underline \zeta \times \underline \xi)\big)$ along $g_n(w)$ (after an appropriate normalization), or equivalently determined by the Taylor expansion of $\det\big(\rmU^\dagger(\underline \zeta \times \underline \xi)\big)$ as a power series in $E\llbracket w-w_k\rrbracket$. We will prove inductively the following.

\begin{theorem}
\label{T:stronger estimate} 
Keep Assumption~\ref{assumtpion: inductive assumption on non principal minors}, and for two subsets $\underline \zeta$ and $\underline \xi$ of $\ZZ_{\geq 1}$ of size $n$,  define $B_{k,i}^{(\underline \zeta\times \underline \xi, \ell)}$ as in Notation~\ref{N:normalized B}. 
Then for every $i \leq m_n(k)-1$ and every $\ell \in \big\{0,1, \dots,r_{\underline \zeta \times \underline \xi}+s_{\underline \xi}\big\}$,
such that $m_{n-\ell}(k)\leq m_n(k)$ ,
we have
\begin{equation}
\label{E:generalization of estimate of Axii}
v_p\big(B_{k,i}^{(\underline \zeta \times \underline \xi, \ell)}\big ) \geq  \Delta_{k, \frac 12d_k^\new-i} - \tfrac{k-2}2 \big(\tfrac 12 d_k^\Iw-n\big).
\end{equation}
\end{theorem}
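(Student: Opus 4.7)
\textbf{Proof strategy for Theorem~\ref{T:stronger estimate}.} The plan is a descending induction on $\ell$, running from $\ell = r_{\underline \zeta \times \underline \xi} + s_{\underline \xi}$ down to $\ell = 0$; extracting the $\ell = 0$ case from the conclusion then yields condition \eqref{E:ghost reduction to k B version} and hence closes the outer induction on $n$ initiated at the beginning of \S\ref{Sec:proof}. Throughout the argument I will use Assumption~\ref{assumtpion: inductive assumption on non principal minors} in order to invoke Proposition~\ref{P:estimate of smaller minors} freely for size-$(n-\ell)$ minors with $\ell \geq 1$.

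First I will dispose of the ``easy range'' of $i$. For any $\ell \geq 1$ in the valid range and any $i \in \{m_{n-\ell}(k), \ldots, m_n(k)-1\}$, the weak inequalities \eqref{E:estimate of smaller minors weak} and \eqref{E:estimate of smaller minors weak when m n-ell (k)=0} of Proposition~\ref{P:estimate of smaller minors} are literally the desired bound \eqref{E:generalization of estimate of Axii}. So it remains to estimate $B_{k,i}^{(\underline \zeta \times \underline \xi, \ell)}$ in the ``hard range'' $i \in \{0, \ldots, m_{n-\ell}(k)-1\}$ when $\ell \geq 1$, and in $i \in \{0, \ldots, m_n(k)-1\}$ when $\ell = 0$.

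For the inductive step at a fixed $\ell$, I plan to apply the cofactor-expansion congruence \eqref{E:subtle cofactor expansion general} of Lemma~\ref{L:subtle cofactor expansion} with a judicious choice of $j_0$, thereby expressing $\det(\rmU^\dagger(\underline \zeta \times \underline \xi))_\ell$ modulo a sufficient power of $(w-w_k)$ as a $\ZZ$-linear combination of $\det(\rmU^\dagger(\underline \zeta \times \underline \xi))_j$ for $j > j_0$.  When either $n \leq \tfrac 12 d_k^\Iw$, or $n > \tfrac 12 d_k^\Iw$ with $\ell \geq 2n - d_k^\Iw$, I will take $j_0 = \ell$, so the congruence holds modulo $(w-w_k)^{n - d_k^\ur - \ell}$, which dominates $(w-w_k)^{m_{n-\ell}(k)}$ and hence pins down the hard-range coefficients. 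In the singular case $\ell = 0$ with $n > \tfrac 12 d_k^\Iw$ -- where all intermediate $\ell' \in \{1, \ldots, 2n - d_k^\Iw - 1\}$ are excluded from the theorem's hypothesis because $m_{n-\ell'}(k) > m_n(k)$ -- I will instead take $j_0 = 2n - d_k^\Iw$, leap-frogging directly to those $j \geq 2n - d_k^\Iw + 1$ for which the IH is available. In both regimes, one verifies that every $j > j_0$ on the right satisfies $m_{n-j}(k) < m_n(k)$, so the inductive hypothesis indeed furnishes bounds on the relevant $B_{k,i'}^{(\underline \zeta \times \underline \xi, j)}$.

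The main obstacle is to translate this cofactor-expansion congruence, which is an identity of formal power series in $(w-w_k)$, into a coefficient-wise $p$-adic bound that exactly matches the right-hand side of \eqref{E:generalization of estimate of Axii} (with no slack available). Dividing through by the normalizing factor $p^{-\tfrac 12(\deg(\underline \xi)-\deg(\underline \zeta))} \cdot g_{n-\ell,\hat k}(w)/g_{n-\ell,\hat k}(w_k)$ introduces transition factors given by Taylor coefficients of the ratios
\[
R_{j,\ell}(w) := \frac{g_{n-j,\hat k}(w)/g_{n-j,\hat k}(w_k)}{g_{n-\ell,\hat k}(w)/g_{n-\ell,\hat k}(w_k)} \in 1 + (w-w_k)\, E\llbracket w-w_k\rrbracket,
\]
whose higher coefficients reflect the locations of those ghost zeros $w_{k'}$ ($k' \neq k$) which are $p$-adically close to $w_k$. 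Combining these transition factors with the IH-bounds on the $B_{k,i'}^{(\underline \zeta \times \underline \xi, j)}$ must reproduce the convex-hull bound $\Delta_{k, \frac 12 d_k^\new - i}$ on the nose, which forces a careful pairing of contributions via the convexity estimate of Proposition~\ref{P:Delta - Delta'}, ghost duality \eqref{E:ghost duality alternative}, and a bookkeeping of nearby ghost zeros through the nested structure of near-Steinberg ranges. This delicate combinatorial-analytic step is packaged into the technical result Proposition~\ref{P:change B to C}, whose proof (deferred to \S\ref{S:proof of change B to C}) will constitute the most intricate part of the argument; the $\eta(w)$-modified variant in Remark~\ref{R:change up cofactor expansion} appears to provide exactly the flexibility needed to renormalize the transition factors so that the induction closes cleanly.
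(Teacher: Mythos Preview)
Your induction is on the wrong parameter. The paper's proof of this theorem (in \S\ref{S:proof of all STs}) inducts on $i$, not on $\ell$: fixing $i_0$, one sets $j_0 = n - d_k^\ur - i_0 - 1$ (depending on $i_0$, not on $\ell$), which guarantees that every $j > j_0$ satisfies $m_{n-j}(k) \leq i_0$. This places $i_0$ in the \emph{easy range} for each such $j$, so Proposition~\ref{P:estimate of smaller minors} directly supplies the bound on $B_{k,i_0}^{(\underline\zeta\times\underline\xi,j)}$ without any inductive hypothesis on $\ell$. The crucial point you are missing is the role of the $C$-variables of \eqref{E:definition of C}: the $\eta_1(w)^{-j}$-renormalization of Remark~\ref{R:change up cofactor expansion} is chosen precisely so that Lemma~\ref{L:C inductive formula} yields an \emph{exact} identity $C_{k,i_0}^{(\underline\zeta\times\underline\xi,\ell)} = \sum_{j>j_0}(\text{integers})\cdot C_{k,i_0}^{(\underline\zeta\times\underline\xi,j)}$ at the single index $i_0$, with no cross-terms and no transition factors whatsoever. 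Your $R_{j,\ell} = \eta_j/\eta_\ell$ is not what Proposition~\ref{P:change B to C} controls; that proposition handles $\eta_j/\eta_1^j$, which is exactly the discrepancy between $B$ and $C$.

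The genuine gap in your scheme is the application of Proposition~\ref{P:change B to C} at level $\ell$. Its hypothesis demands the $B$-bound for all $i' < i_0$ \emph{at the same level $\ell$}, and a descending induction on $\ell$ alone does not furnish this. You would need to nest an inner ascending induction on $i$ inside each $\ell$-step, at which point your argument collapses into the paper's single induction on $i$ (and your choice $j_0 = \ell$ becomes merely a less efficient variant of $j_0 = n - d_k^\ur - i_0 - 1$, forcing you to rely on an outer hypothesis that the paper avoids).
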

Then condition~\eqref{E:ghost reduction to k B version} or equivalently Theorem~\ref{T:estimate of nonprincipal minor} is the special case of Theorem~\ref{T:stronger estimate} when $\ell=0$.

\begin{remark}\label{R:equivalent characterization of m n-l (k) leq m n(k)}
Similar to Remark~\ref{R:equivalent characterization of m n-l (k)< m n(k)}, we point out that when $n \leq \frac 12d_k^\Iw$, $m_{n-\ell}(k) \leq m_n(k)$ is automatic, yet when $n\geq \frac 12 d_k^\Iw$, the condition $m_{n-\ell}(k)\leq m_n(k)$ is equivalent to either $\ell=0$ or $\ell\geq 2n-d_k^\Iw$. Moreover, if $\ell \geq 2n-d_k^\Iw$, we always have $m_{n-\ell}(k)=n-\ell-d_k^\ur$. 
\end{remark}

\begin{remark}\label{R:we cannot use Delta'-Delta'}
	We cannot upgrade the strong estimate \eqref{E:estimate of smaller minors}  in Proposition~\ref{P:estimate of smaller minors} to 
	\[
	v_p\big(B_{k,i}^{(\underline \zeta\times \underline \xi,\ell)} \big)\geq  \Delta'_{k, \frac 12d_k^\new} -  \tfrac{k-2}2\big( \tfrac 12d_k^\Iw - n\big)- \tfrac12\big( (\tfrac 12d_{k}^\new)^2 - (\tfrac 12d_{k}^\new-i)^2\big)
	\]
	because we made use of Proposition~\ref{P:estimate of overcoefficients}(2)(3) in the proof (see Remark~\ref{R:cannot have Delta}(2) for more discussions). On the other hand, the strong estimate \eqref{E:estimate of smaller minors}  will be used in the proof of Theorem~\ref{T:stronger estimate} (see Remark~\ref{R:where we use strong and weak estimate} below). For this reason, our method cannot yield a stronger estimate $v_p\big(B_{k,i}^{(\underline \zeta \times \underline \xi, \ell)}\big ) \geq  \Delta'_{k, \frac 12d_k^\new-i} - \tfrac{k-2}2 \big(\tfrac 12 d_k^\Iw-n\big)$ than \eqref{E:generalization of estimate of Axii}.
\end{remark}

\begin{notation}\label{N:simplify B k,i (zeta xi, ell)}
	\begin{enumerate}
		\item For the rest of this section, we will not work with a specific minor of $\det\big(\rmU^\dagger (\underline \zeta\times \underline \xi) \big)$ but only with the terms $\det\big(\rmU^\dagger (\underline \zeta\times \underline \xi) \big)_\ell$'s for $0\leq \ell \leq r_{\underline \zeta\times \underline \xi}+s_{\underline \xi}$ defined in (\ref{E:definition of det Uj}). Therefore we shall keep the notation $B_{k,i}^{(\underline \zeta\times \underline \xi, \ell)}$ defined in (\ref{E:normalized minor determinant}) in the statement of various theorems, propositions and lemmas below but remove the term $\underline \zeta\times\underline \xi$ from $B_{k,i}^{(\underline \zeta\times \underline \xi, \ell)}$ in the proofs, by writing $B_{k,i}^{(\ell)}$ instead;
		\item For every positive integer $d$, we set $$\tilde{g}_d(w):=g_{d,\hat{k}}(w)/g_{d,\hat{k}}(w_k).$$ Note that this notation is meaningful even if $m_d(k)=0$ (see Notation~\ref{N:treat ghost zero and non ghost zero uniformly}).
	\end{enumerate}
\end{notation}

\subsection{First stab at Theorem~\ref{T:stronger estimate}}
\label{S:first stab at final estimate}
Definition-Proposition~\ref{DP:general corank theorem} says that $\det\big(\rmU^\dagger(\underline \zeta \times \underline \xi)\big)$ and more generally every $\det\big(\rmU^\dagger(\underline \zeta \times \underline \xi)\big)_\ell$ is divisible by $(w-w_k)^{\max\{0,n-d_k^\ur-r_{\underline \zeta \times \underline \xi}-s_{\underline \xi}\}}$ in $E\llbracket w-w_k\rrbracket$. So 
if $i< m_{\underline \zeta \times \underline \xi} = n-d_k^\ur-r_{\underline \zeta \times \underline \xi}-s_{\underline \xi}$, $B_{k,i}^{(\underline \zeta \times \underline \xi, \ell)} = 0$ and the corresponding condition \eqref{E:generalization of estimate of Axii} automatically holds.

Now consider the next easiest case when $i  = m_{\underline \zeta \times \underline \xi}= n-d_k^\ur - r_{\underline \zeta \times \underline \xi} -s_{\underline \xi}$.  We may assume that $i\geq 0$, otherwise there is nothing to prove. Since $i\leq m_n(k)-1<\tfrac 12 d_k^\new$, we have $n-r_{\underline \zeta \times \underline \xi}-s_{\underline \xi}=d_k^\ur+i<\tfrac 12 d_k^\Iw$ and hence $m_{n-r_{\underline \zeta \times \underline \xi}-s_{\underline \xi}}(k) = m_{\underline \zeta \times \underline \xi} = i$.  
So in the particular case when $\ell = r_{\underline \zeta \times \underline \xi} + s_{\underline \xi}$, the weak estimate \eqref{E:estimate of smaller minors weak} or \eqref{E:estimate of smaller minors weak when m n-ell (k)=0} (depending on whether $m_{n-\ell}(k)=0$ or not) exactly gives \eqref{E:generalization of estimate of Axii}. 

Now we assume that $\ell \in \{0, \dots, r_{\underline \zeta \times \underline \xi}+s_{\underline \xi}-1\}$.
Applying Lemma~\ref{L:subtle cofactor expansion} to the case when $j_0 = r_{\underline \zeta \times \underline \xi}+s_{\underline \xi}-1$, we deduce that
$$
\det\big(\rmU^\dagger(\underline \zeta \times \underline \xi)\big)_\ell  \equiv \binom{r_{\underline \zeta \times \underline \xi}+s_{\underline \xi}}{\ell} \cdot \det\big(\rmU^\dagger(\underline \zeta \times \underline \xi)\big)_{r_{\underline \zeta \times \underline \xi}+ s_{\underline \xi}} \quad \bmod (w-w_k)^{i+1}.
$$
Note that by Definition-Proposition~\ref{DP:general corank theorem}, both sides of the above equality are divisible by $(w-w_k)^{m_{\underline\zeta\times \underline \xi}}=(w-w_k)^i$.
Comparing the coefficients of $(w-w_k)^{i}$, we immediately get
\begin{equation}
\label{E:Bkiell = Bkir+s}
B_{k,i}^{(\ell)} =\binom{r_{\underline \zeta \times \underline \xi}+s_{\underline \xi}}{\ell} B_{k, i}^{( r_{\underline \zeta \times \underline \xi}+s_{\underline \xi})}, \ \textrm{and thus}
\end{equation}
$$
v_p\big(B_{k,i}^{(\ell)} \big) =v_p \Big( \binom{r_{\underline \zeta \times \underline \xi}+s_{\underline \xi}}{\ell} B_{k, i}^{( r_{\underline \zeta \times \underline \xi}+s_{\underline \xi})} \Big) \stackrel{\eqref{E:estimate of smaller minors weak} \textrm{ or }\eqref{E:estimate of smaller minors weak when m n-ell (k)=0}} \geq  \Delta_{k, \frac 12d_k^\new -i} - \tfrac{k-2}2\big( \tfrac 12d_k^\Iw - n\big).
$$
This proves Theorem~\ref{T:stronger estimate} when $i  = m_{\underline \zeta \times \underline \xi}= n-d_k^\ur - r_{\underline \zeta \times \underline \xi} -s_{\underline \xi}$.
 
\medskip
Since the situation in general is more complicated, we consider another case when $i=m_{\underline \zeta \times \underline \xi}+1 = n-d_k^\ur - r_{\underline \zeta \times \underline \xi}-s_{\underline\xi} +1 $, to illustrate the new phenomenon. First of all, in the special cases $\ell = r_{\underline \zeta \times \underline \xi}+s_{\underline \xi}$ and $\ell = r_{\underline \zeta \times \underline \xi}+s_{\underline \xi}-1$, Theorem~\ref{T:stronger estimate} just restates the weak estimate \eqref{E:estimate of smaller minors weak} or \eqref{E:estimate of smaller minors weak when m n-ell (k)=0}.
So we assume below that $\ell \in \{0, \dots, r_{\underline \zeta \times \underline \xi} + s_{\underline \xi}-2\}$. 
We apply Lemma~\ref{L:subtle cofactor expansion} to the case when $j_0 = r_{\underline\zeta \times \underline \xi}+s_{\underline \xi}-2$ to deduce that, modulo  $ (w-w_k)^{i+1}$,
$$
\det\big(\rmU^\dagger(\underline \zeta \times \underline \xi)\big)_\ell  \equiv \binom{j_0+1}{\ell} \det\big(\rmU^\dagger(\underline \zeta \times \underline \xi)\big)_{j_0+1} -(j_0-\ell+1) \binom{j_0+2}{\ell} \det\big(\rmU^\dagger(\underline \zeta \times \underline \xi)\big)_{j_0+2}.
$$
Dividing both sides by $p^{\frac 12(\deg(\underline \xi) - \deg(\underline \zeta))}\cdot \tilde{g}_{n-\ell}(w)= p^{\frac 12(\deg(\underline \xi) - \deg(\underline \zeta))}\cdot g_{n - \ell,\hat k}(w) / g_{n-\ell,\hat k}(w_k)$ and further by $(w-w_k)^{i-1}$ (to kill the auxiliary powers), we arrive at, modulo $(w-w_k)^{2}$,
\begin{align}
\label{E:cofactor expansion cosize 2}
B^{( \ell)}_{k, i-1} + B^{(\ell)}_{k, i} (w-w_k)
\equiv\ \binom{j_0+1}{\ell}\frac{\tilde{g}_{n-j_0-1}(w) }{\tilde{g}_{n-\ell}(w) } \Big( B^{( j_0+1)}_{k, i-1}  + B^{(j_0+1)}_{k, i} (w-w_k)\Big) \qquad
\\
\nonumber
-(j_0-\ell+1) \binom{j_0+2}\ell \frac{\tilde{g}_{n-j_0-2}(w) }{\tilde{g}_{n-\ell}(w) } \Big( B^{( j_0+2)}_{k, i-1}  + B^{( j_0+2)}_{k, i} (w-w_k)\Big).
\end{align}
Here recall that $\tilde g_d(w)$ was introduced in Notation~\ref{N:simplify B k,i (zeta xi, ell)}(2).

Suggested by this, we consider the following.
\begin{notation}
\label{N:eta}
For every $j  \geq 0$, we write the following power series expansion:
\begin{equation}
\label{E:definition of eta}
\eta_{j}(w): = \frac{\tilde{g}_{n-j}(w) }{\tilde{g}_{n}(w) } = 1+ \eta_{j, 1}(w-w_k) + \eta_{j,2}(w-w_k)^2 + \cdots \in E\llbracket w-w_k\rrbracket.
\end{equation}
\end{notation}

Comparing the $(w-w_k)$-coefficients in \eqref{E:cofactor expansion cosize 2}, we deduce
\begin{align*}
B_{k, i}^{( \ell)}\ & =  \binom{j_0+1}\ell  B_{k, i}^{( j_0+1)} -(j_0-\ell+1) \binom{j_0+2}\ell B_{k, i}^{( j_0+2)} 
\\
& +  \; \binom{j_0+1}\ell (\eta_{j_0+1, 1}-\eta_{\ell,1} )B_{k, i-1}^{( j_0+1)} -(j_0-\ell+1) \binom{j_0+2}\ell
(\eta_{j_0+2, 1}-\eta_{\ell,1} )B_{k, i-1}^{( j_0+2)}.
\end{align*}
By the weak estimate \eqref{E:estimate of smaller minors weak} or \eqref{E:estimate of smaller minors weak when m n-ell (k)=0}, the first two terms above have $p$-adic valuation greater than or equal to $\Delta_{k, \frac 12d_k^\new - i} - \frac {k-2}2(\frac 12 d_k^\Iw-n)$. But we need to show the sum of the latter two terms does not interfere here.  Our strategy is to show that \emph{the power series $\eta_{j}(w)$ is ``approximately" the same as $\eta_1(w)^{j}$}, and thus each $\eta_{j, 1}$ is ``approximately" equal to $j\cdot  \eta_{1,1}$, and thus we are reduced to prove 
\begin{equation}
\label{E:inductive cancelation B}
\binom{j_0+1}\ell \cdot (j_0-\ell+1)\cdot B_{k, i-1}^{(j_0+1)} =(j_0-\ell+2) (j_0-\ell+1) \binom{j_0+2}\ell \cdot B_{k, i-1}^{( j_0+2)},
\end{equation}
which follows from what we just proved in the case of $i = m_{\underline \zeta \times \underline \xi}(k)$, namely \eqref{E:Bkiell = Bkir+s}.

\begin{remark}
It is important to cancel the major terms in different $\eta$-functions, especially when $i$ is almost as large as $\frac 12d_k^\new$; in this case, the difference $\Delta_{k, \frac 12d_k^\new - (i-1)} - \Delta_{k, \frac 12d_k^\new - i} \approx \frac{p-1}2 (\frac 12d_k^\new - i)$, yet the term $\eta_{\ell,1}$ roughly has $p$-adic valuation equal to the maximal $v_p(w_{k'}-w_k)$, for all $k'$ running over the zeros of $g_n(w)$, which is about $\ln k / \ln p$.  We will show below that the terms that do not get canceled through \eqref{E:inductive cancelation B} have relatively large $p$-adic valuation, controlled by the difference $\Delta_{k, \frac 12d_k^\new - (i-1)} - \Delta_{k, \frac 12d_k^\new - i}$.
\end{remark}

Implementing this strategy in the special case is not particularly easier than the general case. So we now proceed directly to prove Theorem~\ref{T:stronger estimate} (in the general case).

\subsection{Proof of Theorem~\ref{T:stronger estimate}}
\label{S:proof of statement stronger estimate}
The proof is
by induction on $i$, starting with the smallest case $i = m_{\underline \zeta \times \underline \xi}=  n-d_k^\ur - r_{\underline \zeta \times \underline \xi} - s_{\underline \xi}$ already treated in \S\,\ref{S:first stab at final estimate} (and when $i< m_{\underline \zeta \times \underline \xi}$, Theorem~\ref{T:stronger estimate} also holds automatically.) 
Now, let $i_0 \in \{m_{\underline \zeta \times \underline \xi}+1, \dots, m_n(k)-1\}$, and suppose that Theorem~\ref{T:stronger estimate} has been proved for all nonnegative integers $i<i_0$. We may clearly assume that $i_0 \geq 0$, as otherwise there is nothing to prove.
We set
$$
j_0: = r_{\underline \zeta \times \underline \xi} + s_{\underline \xi} - (i_0-m_{\underline \zeta \times \underline \xi}+1) = n-d_k^\ur -i_0-1.
$$
The meaning of $j_0$ is that we will reduce to minors of size at least $j_0$ smaller than $\rmU^\dagger(\underline \zeta \times \underline \xi)$. We point out that, 
\begin{enumerate}
\item when $n \geq \frac 12d_k^\Iw$, $i < m_n(k) = d_k^\Iw - d_k^\ur - n$; so we have $j_0 \geq n-d_k^\ur - (d_k^\Iw - d_k^\ur - n) = 2n-d_k^\Iw$;
\item when $n \leq \frac 12d_k^\Iw$, a similar estimate only shows that $j_0 \geq 0$.
\end{enumerate}
When $\ell > j_0$, we have $n-\ell\leq d_k^\ur+i_0$. Then we get $ m_{n-\ell}(k)\leq i_0<m_n(k)$ and thus Theorem~\ref{T:stronger estimate} just repeats the weak estimate \eqref{E:estimate of smaller minors weak} or \eqref{E:estimate of smaller minors weak when m n-ell (k)=0}. 

We henceforth assume $\ell \in \{0, \dots, j_0\}$ and still require $m_{n-\ell}(k) \leq m_n(k)$.
First, we apply Lemma~\ref{L:subtle cofactor expansion} to deduce that
\begin{equation}
\label{E:subtle cofactorization}
\det\big( \rmU^\dagger (\underline{\zeta} \times \underline{\xi})\big)_{\ell} \equiv  \sum_{j = j_0+1}^{r_{\underline \zeta \times \underline \xi} + s_{\underline \xi}} (-1)^{j-j_0-1} \binom{j-\ell-1}{j_0-\ell} \binom{j}{\ell}\cdot \det \big(\rmU^\dagger(\underline{\zeta} \times \underline{\xi})\big)_j \quad \bmod (w-w_k)^{i_0+1}.
\end{equation}
As explained above, the condition $j >j_0$ implies that $
m_{n-j}(k) < m_n(k)$. So Proposition~\ref{P:estimate of smaller minors} applies to this situation and gives estimates to the coefficients of $\big(\rmU^\dagger(\underline{\zeta} \times \underline{\xi})\big)_j$. Since (\ref{E:subtle cofactorization}) involves minors of $\det\big(\rmU^\dagger(\underline\zeta\times \underline \xi) \big)$ of different sizes, instead of using the the numbers $B_{k, i}^{( j)}$'s to express the Taylor expansion of above in $E\llbracket w-w_k\rrbracket$, we define the following:
\begin{equation}
\label{E:definition of C}
\Big( \sum_{ i\geq 0} B_{k, i}^{(\underline \zeta \times \underline \xi, j)} (w-w_k)^i \Big) \cdot \frac{\eta_j(w)}{\eta_1(w)^j} = \sum_{ i\geq 0} C_{k, i}^{(\underline \zeta \times \underline \xi, j)} (w-w_k)^i \ \in\ E\llbracket w-w_k\rrbracket.
\end{equation}
Or equivalently by \eqref{E:normalized minor determinant}, in $E\llbracket w-w_k\rrbracket$, we have an equality
\begin{equation}
\label{E:normalized minor determinant C version}p^{\frac 12(\deg(\underline \xi) - \deg(\underline \zeta))}\cdot
\frac{
\det\big(\rmU^\dagger(\underline \zeta \times \underline \xi)\big)_j}{\tilde{g}_{n}(w)}\cdot \eta_1(w)^{-j}=\sum_{i\geq 0}C_{k,i}^{(\underline \zeta \times \underline \xi, j)}(w-w_k)^i.
\end{equation}
In the following, we adopt similar convention for $C_{k,i}^{(\underline \zeta \times \underline \xi, j)}$'s as that for $B_{k,i}^{(\underline \zeta \times \underline \xi, j)}$'s in Notation~\ref{N:simplify B k,i (zeta xi, ell)}.

In fact, changing from $B_{k, i}^{(\underline \zeta \times \underline \xi, j)}$ to $C_{k, i}^{(\underline \zeta \times \underline \xi, j)}$ is ``harmless" for the purpose of our proof.
\begin{proposition}
\label{P:change B to C}
Fix a nonnegative integer $i_0\leq m_n(k)-1$ and $j \in \{0, \dots,  r_{\underline \zeta \times \underline \xi}+ s_{\underline \xi}\}$ such that $m_{n-j}(k) \leq m_n(k)$.
Assume that (\ref{E:generalization of estimate of Axii}) holds for all $B_{k,i}^{(\underline\zeta\times \underline \xi,j)}$ with $0\leq i<i_0$. Then 
$$
v_p\big( B_{k, i_0}^{(\underline \zeta \times \underline \xi, j)}\big)  \geq \Delta_{k, \frac 12d_k^\new -i_0} - \tfrac{k-2}2\big( \tfrac 12d_k^\Iw-n\big)
$$
$$
\Longleftrightarrow \quad v_p\big( C_{k, i_0}^{(\underline \zeta \times \underline \xi, j)}\big)  \geq \Delta_{k, \frac 12d_k^\new -i_0} - \tfrac{k-2}2\big( \tfrac 12d_k^\Iw-n\big).
$$
\end{proposition}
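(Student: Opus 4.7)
The plan is to expand $C$ as a $B$-linear combination using the Taylor coefficients of $\eta_j(w)/\eta_1(w)^j$ at $w=w_k$. Since $\eta_j(w_k)=\eta_1(w_k)=1$, we may write $\frac{\eta_j(w)}{\eta_1(w)^j}=1+\sum_{m\geq 1}\rho_{j,m}(w-w_k)^m$, and then \eqref{E:definition of C} immediately gives
\begin{equation*}
C_{k,i_0}^{(\underline\zeta\times\underline\xi,j)} = B_{k,i_0}^{(\underline\zeta\times\underline\xi,j)} + \sum_{i=0}^{i_0-1} B_{k,i}^{(\underline\zeta\times\underline\xi,j)}\cdot \rho_{j,\,i_0-i}.
\end{equation*}
In view of the inductive hypothesis on $v_p\bigl(B_{k,i}^{(\underline\zeta\times\underline\xi,j)}\bigr)$ for $i<i_0$, both implications of the claimed equivalence follow at once once we establish the auxiliary bound
\begin{equation*}
v_p(\rho_{j,m}) \;\geq\; \Delta^{(\varepsilon)}_{k,\,\frac{1}{2}d_k^{\mathrm{new}}-i_0} - \Delta^{(\varepsilon)}_{k,\,\frac{1}{2}d_k^{\mathrm{new}}-(i_0-m)}, \qquad 1\leq m\leq i_0. \tag{$\ast$}
\end{equation*}

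To prove $(\ast)$, my strategy is to exploit the explicit product expansion
\begin{equation*}
\frac{\eta_j(w)}{\eta_1(w)^j} = \prod_{k'\neq k}\Bigl(\frac{w-w_{k'}}{w_k-w_{k'}}\Bigr)^{e(k')},\qquad e(k') := m_{n-j}^{(\varepsilon)}(k') - m_n^{(\varepsilon)}(k') - j\bigl(m_{n-1}^{(\varepsilon)}(k') - m_n^{(\varepsilon)}(k')\bigr),
\end{equation*}
so that the exponent $e(k')$ measures precisely the failure of $n'\mapsto m^{(\varepsilon)}_{n'}(k')$ to be linear on $[n-j,n]$. Expanding each factor as $\bigl(1+\frac{w-w_k}{w_k-w_{k'}}\bigr)^{e(k')}$ and collecting the $(w-w_k)^m$-term writes $\rho_{j,m}$ as a finite sum over tuples $(t_{k'})$ with $\sum t_{k'}=m$ of $\prod_{k'}\binom{e(k')}{t_{k'}}(w_k-w_{k'})^{-t_{k'}}$. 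By the linearity criterion of Proposition~\ref{P:shifting points wstar to wk}(1), only those $k'$ for which at least one of $n,n-1,n-j$ lies outside the closed near-Steinberg range $\overline{\nS}^{(\varepsilon)}_{w_k,k'}$ can produce a nonzero $e(k')$; and by Proposition~\ref{P:near-steinberg equiv to nonvertex}(6), the $w_{k'}$ giving rise to such boundary crossings are exactly the ones whose values of $v_p(w_k-w_{k'})$ record the slope jumps of the convex hull $\underline\Delta^{(\varepsilon)}_k$. Assigning each increment $t_{k'}$ to the corresponding slope change of $\Delta^{(\varepsilon)}_k$ between heights $\frac{1}{2}d_k^{\mathrm{new}}-i_0$ and $\frac{1}{2}d_k^{\mathrm{new}}-(i_0-m)$ then converts $(\ast)$ into a statement about the convexity of $\Delta^{(\varepsilon)}_k$, which is quantified by Proposition~\ref{P:Delta - Delta'}.

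The hard part will be to handle the binomial factors $\binom{e(k')}{t_{k'}}$, which involve integers of both signs and are not a priori $p$-integral. When $|e(k')|$ is small, the binomial is harmless, but near deeply nested near-Steinberg ranges $|e(k')|$ can grow linearly with $j$, and obtaining the correct $p$-divisibility forces a finer accounting that balances $e(k')$ against the local rate of growth of $\Delta^{(\varepsilon)}_k$ at the relevant slope. A clean way forward should be to peel off one near-Steinberg shell at a time, using the nested structure recalled in Proposition~\ref{P:near-steinberg equiv to nonvertex}(4) to reduce to the case where a single $e(k')$ is nonzero, and then to verify the estimate at each isolated slope change directly from Proposition~\ref{P:Delta - Delta'} and the ghost duality \eqref{E:ghost duality alternative}.
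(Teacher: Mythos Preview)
Your opening reduction is correct and matches the paper: writing $C_{k,i_0}^{(j)} = B_{k,i_0}^{(j)} + \sum_{i<i_0} B_{k,i}^{(j)}\,\eta_{(j),i_0-i}$ (your $\rho_{j,m}$ is the paper's $\eta_{(j),m}$) reduces the equivalence to showing each cross term $B_{k,i}^{(j)}\,\eta_{(j),i_0-i}$ is bounded below by $\Delta_{k,\frac12 d_k^\new-i_0}-\tfrac{k-2}{2}(\tfrac12 d_k^\Iw-n)$.

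The gap is that the auxiliary bound $(\ast)$ is in general \emph{false}, so the rest of the sketch cannot succeed. The weights $k'$ with $e(k')\neq 0$ are exactly those for which one of $\tfrac12 d_{k'}^\Iw$, $d_{k'}^\ur$, $d_{k'}^\Iw-d_{k'}^\ur$ lies in $(n-j,n)$; these are tied to heights $\tfrac12 d_k^\new-m_{n-j}(k)$ and $\tfrac12 d_k^\new-m_n(k)$ on $\underline\Delta_k$, \emph{not} to $i_0$. The estimate one can actually prove (Proposition~\ref{P:estimate of eta(j)}) is
\[
v_p(\eta_{(j),t})\;\geq\;\Delta_{k,\frac12 d_k^\new-(q_t+t)}-\Delta_{k,\frac12 d_k^\new-q_t}
+\tfrac12\bigl((\tfrac12 d_k^\new-q_t)^2-(\tfrac12 d_k^\new-(q_t+t))^2\bigr),
\]
with $q_t=\min\{m_n(k)-t,\,m_{n-j}(k)\}$. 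When $i=i_0-t\geq m_{n-j}(k)$ one has $q_t+t\leq i_0$, and then your $(\ast)$ would demand control of $\eta_{(j),t}$ by $\Delta$-slopes at the lower heights near $\tfrac12 d_k^\new-i_0$, which are strictly smaller than the slopes near $\tfrac12 d_k^\new-q_t$. Concretely, take $n=\tfrac12 d_k^\Iw$, $m_{n-j}(k)=0$, $t=1$, and $i_0$ close to $\tfrac12 d_k^\new$: then $(\ast)$ asks for $v_p(\eta_{(j),1})\geq -(\Delta_{k,2}-\Delta_{k,1})=O(1)$, whereas the bad weights only yield $v_p(\eta_{(j),1})\geq -\gamma$ with $\gamma\approx\tfrac{\ln d_k^\new}{\ln p}$, which can be arbitrarily more negative.

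What is missing is that for $i\geq m_{n-j}(k)$ you must feed in the \emph{strong} estimates \eqref{E:estimate of smaller minors} and \eqref{E:estimate of smaller minors when m n-ell (k)=0} on $B_{k,i}^{(j)}$ from Proposition~\ref{P:estimate of smaller minors}, not just the inductive hypothesis \eqref{E:generalization of estimate of Axii}. Those supply an extra quadratic saving that exactly compensates for the looser $\eta$-bound in this range, via \eqref{E:Delta - Delta' geq half of diff square}. Thus the argument necessarily splits into the two regimes $i<m_{n-j}(k)$ and $i\geq m_{n-j}(k)$. (Two minor points: the binomials $\binom{e(k')}{t_{k'}}$ are integers for every $e(k')\in\ZZ$, so no $p$-integrality issue arises; and Proposition~\ref{P:near-steinberg equiv to nonvertex}(6) does not give the correspondence you claim between bad $k'$ and slope jumps of $\underline\Delta_k$ at heights in $[\tfrac12 d_k^\new-i_0,\tfrac12 d_k^\new-(i_0-m)]$.)
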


We temporarily assume this technical result, whose proof will be given later in \S\,\ref{S:proof of change B to C}.
\begin{remark}\label{R:where we use strong and weak estimate}
For the rest of the inductive proof of Theorem~\ref{T:stronger estimate}, we will only need the analogue of the weaker version of Proposition~\ref{P:estimate of smaller minors}: $v_p\big( C_{k, i}^{( \ell)}\big)  \geq \Delta_{k, \frac 12d_k^\new -i} - \tfrac{k-2}2\big( \tfrac 12d_k^\Iw-n\big)$ when $i \geq m_{n-j}(k)$. The stronger estimates in Proposition~\ref{P:estimate of smaller minors} are only used to enable transferring estimates between $B_{k,i}^{( \ell)}$'s and  $C_{k,i}^{( \ell)}$'s (which is wrapped up in Proposition~\ref{P:change B to C}).
\end{remark}

\begin{lemma}
\label{L:C inductive formula}
For every nonnegative integer $\ell'\leq j'_0 \leq r_{\underline \zeta \times \underline \xi} + s_{\underline \xi}-1$,
we have
\begin{equation}
\label{E:C inductive formula}
C_{k, n-d_k^\ur-j'_0-1}^{(\underline \zeta \times \underline \xi, \ell')} = \sum _{j'=j'_0+1}^{r_{\underline \zeta \times \underline \xi}+ s_{\underline \xi}} (-1)^{j'-j'_0-1} \binom{j'-\ell'-1}{j'_0-\ell'} \binom{j'}{\ell'} C_{k, n-d_k^\ur-j'_0-1}^{(\underline \zeta \times \underline \xi, j')}
\end{equation}
\end{lemma}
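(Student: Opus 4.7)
\textbf{Proof proposal for Lemma~\ref{L:C inductive formula}.} The plan is to derive \eqref{E:C inductive formula} as a direct consequence of the variant cofactor expansion in Remark~\ref{R:change up cofactor expansion}, applied with the specific choice $\eta(w) = \eta_1(w)$, and then to transfer the congruence from $\det\big(\rmU^\dagger(\underline\zeta\times\underline\xi)\big)_j$ to $C_{k,i}^{(\underline\zeta\times\underline\xi,\,j)}$ by rescaling through the invertible power series $\tilde g_n(w)$.

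First I would specialize Remark~\ref{R:change up cofactor expansion} by taking $\eta(w)=\eta_1(w)\in 1+(w-w_k)E\llbracket w-w_k\rrbracket$, so that the ``tilded'' version satisfies $\det\big(\rmU^\dagger(\underline\zeta\times\underline\xi)\big)_j^{\sim}=\eta_1(w)^{-j}\,\det\big(\rmU^\dagger(\underline\zeta\times\underline\xi)\big)_j$. Comparing this with the defining relation \eqref{E:normalized minor determinant C version}, one obtains
\[
p^{\frac12(\deg(\underline\xi)-\deg(\underline\zeta))}\cdot\frac{\det\big(\rmU^\dagger(\underline\zeta\times\underline\xi)\big)_j^{\sim}}{\tilde g_n(w)}=\sum_{i\ge 0}C_{k,i}^{(\underline\zeta\times\underline\xi,\,j)}(w-w_k)^i.
\]
Since the hypothesis $\ell'\le j'_0\le r_{\underline\zeta\times\underline\xi}+s_{\underline\xi}-1$ fits exactly into the scope of \eqref{E:subtle cofactor expansion general eta}, applying that congruence with $\ell=\ell'$ and $j_0=j'_0$ yields
\[
\det\big(\rmU^\dagger(\underline\zeta\times\underline\xi)\big)_{\ell'}^{\sim}\equiv\sum_{j'=j'_0+1}^{r_{\underline\zeta\times\underline\xi}+s_{\underline\xi}}(-1)^{j'-j'_0-1}\binom{j'-\ell'-1}{j'_0-\ell'}\binom{j'}{\ell'}\det\big(\rmU^\dagger(\underline\zeta\times\underline\xi)\big)_{j'}^{\sim}\pmod{(w-w_k)^{\max\{0,n-d_k^\ur-j'_0\}}}.
\]

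Next I would multiply both sides by $p^{\frac12(\deg(\underline\xi)-\deg(\underline\zeta))}/\tilde g_n(w)$ and use that $\tilde g_n(w)\in 1+(w-w_k)E\llbracket w-w_k\rrbracket$ is a unit in $E\llbracket w-w_k\rrbracket$, so that the congruence remains valid with the same modulus. The left and right sides become respectively $\sum_i C_{k,i}^{(\underline\zeta\times\underline\xi,\,\ell')}(w-w_k)^i$ and the corresponding $C$-sum, giving
\[
\sum_{i\ge 0} C_{k,i}^{(\underline\zeta\times\underline\xi,\,\ell')}(w-w_k)^i\equiv\sum_{j'=j'_0+1}^{r_{\underline\zeta\times\underline\xi}+s_{\underline\xi}}(-1)^{j'-j'_0-1}\binom{j'-\ell'-1}{j'_0-\ell'}\binom{j'}{\ell'}\sum_{i\ge 0}C_{k,i}^{(\underline\zeta\times\underline\xi,\,j')}(w-w_k)^i
\]
modulo $(w-w_k)^{\max\{0,n-d_k^\ur-j'_0\}}$. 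Finally, since $n-d_k^\ur-j'_0-1<n-d_k^\ur-j'_0$ (and the target index is meaningful only when $n-d_k^\ur-j'_0-1\ge 0$, in which case the modulus is at least $(w-w_k)^1$ above the extracted degree), I would read off the coefficient of $(w-w_k)^{n-d_k^\ur-j'_0-1}$ on both sides, which is exactly the identity \eqref{E:C inductive formula}.

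There is no real obstacle to this argument: the lemma is a formal consequence of the cofactor expansion with a chosen auxiliary factor $\eta_1(w)$, together with the translation between $\det(\rmU^\dagger)_j$ and $C_{k,i}^{(j)}$ via multiplication by the unit $p^{\frac12(\deg(\underline\xi)-\deg(\underline\zeta))}/\tilde g_n(w)$. The only point requiring care is making sure that the chosen coefficient index $n-d_k^\ur-j'_0-1$ lies strictly below the modulus exponent of the congruence, which is automatic from $j'_0\le r_{\underline\zeta\times\underline\xi}+s_{\underline\xi}-1$ together with our standing convention that $C_{k,i}^{(\cdot,\,j)}=0$ whenever $i<0$.
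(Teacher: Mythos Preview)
Your proposal is correct and follows essentially the same approach as the paper: apply Remark~\ref{R:change up cofactor expansion} with $\eta(w)=\eta_1(w)$ to obtain the congruence \eqref{E:subtle cofactor expansion general eta}, divide through by the unit $p^{\frac12(\deg(\underline\zeta)-\deg(\underline\xi))}\cdot\tilde g_n(w)$, and extract the coefficient of $(w-w_k)^{n-d_k^{\ur}-j'_0-1}$. Your additional remark that this coefficient index lies strictly below the modulus exponent is a helpful clarification that the paper leaves implicit.
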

\begin{proof}
Applying Remark~\ref{R:change up cofactor expansion} to the case $\eta(w) = \eta_1(w)$, then
\eqref{E:subtle cofactor expansion general eta} implies that for every nonnegative integer $\ell'\leq j'_0 \leq r_{\underline \zeta \times \underline \xi} + s_{\underline \xi}-1$, modulo $(w-w_k)^{\max\{0,n-d_k^\ur-j_0\}}$ in $E\llbracket w-w_k\rrbracket$,
$$
\det \big(\rmU^\dagger(\underline \zeta \times \underline \xi)\big)_{\ell'}\cdot \eta_1(w)^{-\ell'} \equiv   \sum_{j' = j'_0+1}^{r_{\underline \zeta \times \underline \xi}+ s_{\underline \xi}} (-1)^{j'-j'_0-1} \binom{j'-\ell'-1}{j'_0-\ell'} \binom{j'}{\ell'}\cdot \det \big(\rmU^\dagger(\underline \zeta \times \underline \xi)\big)_{j'} \cdot \eta_1(w)^{-j'} .
$$
Then \eqref{E:C inductive formula} follows from dividing the above congruence by $p^{\tfrac 12(\deg(\underline \zeta)-\deg(\underline \xi))}\cdot \tilde{g}_{n}(w)$ and then taking the coefficients of $(w-w_k)^{n-d_k^\ur-j'_0-1}$.
\end{proof}

\subsection{Proof of Theorem~\ref{T:stronger estimate} assuming Proposition~\ref{P:change B to C}}
\label{S:proof of all STs}

We continue with the inductive proof of  Theorem~\ref{T:stronger estimate} initiated in \S\,\ref{S:proof of statement stronger estimate}. We fix the integer $\ell$ as in Theorem~\ref{T:stronger estimate} and we prove  (\ref{E:generalization of estimate of Axii}) by induction on $i$. Fix $i_0\in \{ 0,\dots, m_n(k)-1 \}$ and assume that (\ref{E:generalization of estimate of Axii}) holds for every nonnegative integer $i<i_0$. Set $j_0=n-d_k^\ur-i_0-1$. Then
\begin{itemize}
\item when $n \leq \frac 12d_k^\Iw$, we simply have $j_0= m_n(k)-i_0-1\geq 0$, and
\item when $n\geq \frac 12 d_k^\Iw$, we have 
$m_n(k)=d_k^\Iw-d_k^\ur-n\geq i_0+1$ and hence $j_0\geq 2n-d_k^\Iw$.
\end{itemize}
For $j_0<j\leq r_{\underline \zeta\times\underline\xi}+s_{\underline \xi}$, we have $m_{n-j}(k)\leq m_n(k)-1$ and $m_{n-j}(k)=n-j-d_k^\ur\leq i_0$ by Remark~\ref{R:equivalent characterization of m n-l (k)< m n(k)}. Therefore we can apply Proposition~\ref{P:estimate of smaller minors} to $B_{k,i_0}^{(j)}$'s and get 
$v_p\big(B_{k,i_0}^{(j)} \big)\geq \Delta_{k,\frac 12 d_k^\new-i_0}-\frac{k-2}2(\frac 12 d_k^\Iw-n)$ for all such $j$'s. By Proposition~\ref{P:change B to C}, we also have $v_p\big(C_{k,i_0}^{(j)} \big)\geq \Delta_{k,\frac 12 d_k^\new-i_0}-\frac{k-2}2(\frac 12 d_k^\Iw-n)$ for all such $j$'s.

As noted at the beginning of \S\,\ref{S:proof of statement stronger estimate}, when $\ell>j_0$, (\ref{E:generalization of estimate of Axii}) already follows from the weak estimate in Proposition~\ref{P:estimate of smaller minors}. So we can assume $\ell\leq j_0$. We apply \eqref{E:C inductive formula} to $\ell'=\ell$ and $j_0'=j_0$, and deduce that $C_{k,i_0}^{(\ell)}$ is a $\ZZ$-linear combination of $C_{k,i_0}^{(j)}$'s with $j_0<j\leq r_{\underline \zeta\times \underline \xi}+s_{\underline \xi}$. From the above discussion, we have $v_p\big(C_{k,i_0}^{(\ell)} \big)\geq \Delta_{k,\frac 12 d_k^\new-i_0}-\frac{k-2}2(\frac 12 d_k^\Iw-n)$. By Proposition~\ref{P:change B to C} we get $v_p\big(B_{k,i_0}^{(\ell)} \big)\geq \Delta_{k,\frac 12 d_k^\new-i_0}-\frac{k-2}2(\frac 12 d_k^\Iw-n)$. This completes the inductive proof of Theorem~\ref{T:stronger estimate}, and hence conclude the proof of the local ghost Theorem~\ref{T:local theorem} (assuming Proposition~\ref{P:change B to C}).

\subsection{Proof of Proposition~\ref{P:change B to C}}
\label{S:proof of change B to C}
We now come back to prove this last missing piece for the proof of Theorem~\ref{T:stronger estimate} and the local ghost Theorem~\ref{T:local theorem}.
For every $0\leq j\leq n$, we consider the following formal expansion in $E\llbracket w-w_k\rrbracket$:
$$
\frac{\eta_j(w)}{\eta_1(w)^j} = 1+\eta_{(j),1}(w-w_k) + \eta_{(j),2} (w-w_k)^2 + \cdots \in E\llbracket w-w_k\rrbracket.
$$
The key result to prove Proposition~\ref{P:change B to C} is the following estimate on the coefficients in the above expansion:
\begin{proposition}\label{P:estimate of eta(j)}
\begin{enumerate}
\item Suppose that $j$ is a nonnegative integer such that $j\leq r_{\underline\zeta\times \underline \xi}+s_{\underline \xi}  $ and that $1\leq m_{n-j}(k)\leq m_n(k)-1$ (in particular $j<n-d_k^\ur$).
For every $t\in \{1,\dots, m_n(k)-1 \}$, set $q_t: =\min\{m_n(k)-t, m_{n-j}(k)\}$. Then we have 
		\begin{equation}
			\label{E:vp of eta(j)}
			v_p(\eta_{(j),t}) \geq  \Delta_{k,\frac 12d_k^\new - (q_t +t)}-\Delta_{k,\frac 12d_k^\new - q_t}
			+\tfrac 12\big( (\tfrac 12d_k^\new - q_t)^2-(\tfrac 12d_k^\new -( q_t+t))^2\big).
		\end{equation}
		\item Suppose that $j$ is a nonnegative integer such that $j\leq r_{\underline\zeta\times \underline \xi}+s_{\underline \xi} $ and that $m_{n-j}(k)=0$ (this implies that  $j\geq n-d_k^\ur$).
          Then for every $t\in \{1,\dots, m_n(k)-1 \}$, we have 
			\begin{equation}
			\label{E:vp of eta(j) second version}
			v_p(\eta_{(j),t}) \geq  \Delta_{k,\frac 12d_k^\new - t}-\Delta_{k,\frac 12d_k^\new }
			+\tfrac 12\big( (\tfrac 12d_k^\new )^2-(\tfrac 12d_k^\new -t)^2\big).
		\end{equation}
	\end{enumerate}
\end{proposition}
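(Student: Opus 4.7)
The approach exploits the explicit product formula for the ghost series. Starting from
\[
\tilde g_{n'}(w) \;=\; \prod_{k' \neq k}\Big(\tfrac{w-w_{k'}}{w_k-w_{k'}}\Big)^{m_{n'}(k')},
\]
a direct computation yields
\[
\frac{\eta_j(w)}{\eta_1(w)^j} \;=\; \prod_{k' \neq k}\Big(\tfrac{w-w_{k'}}{w_k-w_{k'}}\Big)^{\alpha(k')}, \qquad \alpha(k') \;:=\; m_{n-j}(k') \;-\; j\,m_{n-1}(k') \;+\; (j-1)\,m_n(k'),
\]
which is a second-order discrete derivative of the piecewise-linear function $n'\mapsto m_{n'}(k')$ over $[n-j,n]$. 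By the palindromic pattern \eqref{E:cascading pattern}, $\alpha(k')$ vanishes unless $[n-j,n]$ contains one of the three corners $d_{k'}^{\ur}$, $\tfrac 12 d_{k'}^{\Iw}$, or $d_{k'}^{\Iw}-d_{k'}^{\ur}$; hence only finitely many $k'$ contribute and the product above is a finite product of rational functions.

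Next, I would pass to the formal logarithm and read off
\[
\log\frac{\eta_j(w)}{\eta_1(w)^j} \;=\; \sum_{t\geq 1} L_t\,(w-w_k)^t, \qquad L_t \;=\; \frac{(-1)^{t-1}}{t}\sum_{k'\neq k}\frac{\alpha(k')}{(w_k-w_{k'})^t},
\]
and bound each $L_t$ using Proposition~\ref{P:Delta - Delta'}. Every contributing $k'$ automatically satisfies a condition of the form \eqref{E:dkur in pm l}, so the halved square-difference estimate \eqref{E:Delta - Delta' geq half of diff square} controls $v_p(w_k-w_{k'})$ by $\Delta$-differences at $k$; this step simultaneously produces the $\Delta$-difference term and the quadratic term appearing in \eqref{E:vp of eta(j)}. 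The two cases of the proposition correspond to whether the relevant corner of $n'\mapsto m_{n'}(k')$ is the interior peak $\tfrac 12 d_{k'}^{\Iw}$ (case (1), $m_{n-j}(k)\neq 0$) or a boundary corner $d_{k'}^{\ur}$ (case (2), $m_{n-j}(k)=0$, formally the $q_t=0$ specialization of (1)).

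To pass from bounds on $L_t$ to bounds on $\eta_{(j),t}$ itself, I would use the exponential formula: each $\eta_{(j),t}$ is a finite $\QQ$-linear combination of products $L_{\lambda_1}\cdots L_{\lambda_r}$ over partitions $\lambda_1+\cdots+\lambda_r=t$. The crucial observation is that the proposed lower bound $c_t$ for $v_p(L_t)$ is \emph{superadditive} in $t$: for a bound of the form $c_t = \Delta_{k,A-t}-\Delta_{k,A}+\tfrac 12(A^2-(A-t)^2)$ with a fixed shift $A$, convexity of $\ell\mapsto\Delta_{k,\ell}$ (Definition-Proposition~\ref{DP:definition of Delta' and ghost duality alternative}) combined with the elementary inequality $s_1 s_2\geq 0$ yields $c_{s_1}+c_{s_2}\geq c_{s_1+s_2}$. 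Superadditivity forces $\sum_i c_{\lambda_i}\geq c_t$ for every partition, so the desired bound transfers directly to $\eta_{(j),t}$.

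The main obstacle will be the bookkeeping around $q_t = \min\{m_n(k)-t,\,m_{n-j}(k)\}$. The $\min$ reflects a genuine case split: small $t$ is governed by the ``near the peak at $k$'' regime ($q_t = m_n(k)-t$), while large $t$ or small $j$ is governed by the ``truncation depth'' regime ($q_t = m_{n-j}(k)$). To obtain a single uniform bound, I expect to group the contributing $k'$ according to Remark~\ref{R:two sides of wk'} -- the ``two nearby weights'' on either side of $k$ -- and to invoke the ghost duality \eqref{E:ghost duality alternative} at $k$ to fold together the estimates on both sides. Pushing this case analysis through cleanly, and verifying in particular that the switch in the $\min$ really does correspond to a sharper bound on $v_p(w_k-w_{k'})$ becoming available exactly at the transition point, is where the technical heart of the argument lies.
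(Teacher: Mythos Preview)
Your setup matches the paper's: the product formula for $\eta_j/\eta_1^j$, the identification of the ``bad'' weights $k'$ as those where $n'\mapsto m_{n'}(k')$ fails to be linear on $[n-j,n]$, and the intention to feed everything through Proposition~\ref{P:Delta - Delta'}. But the specific route via $\log$/$\exp$ and superadditivity has a genuine gap that the paper's direct product expansion avoids.

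The issue is the multiplicity constraint. The paper expands $\prod_{k'}(1+\tfrac{w-w_k}{w_k-w_{k'}})^{m_{n,j}(k')}$ directly, so $\eta_{(j),t}$ is an integer combination of products $\prod_{\alpha=1}^t (w_k-w_{k'_\alpha})^{-1}$ in which, whenever $m_{n,j}(k')>0$, the weight $k'$ appears with multiplicity at most $m_{n,j}(k')$. The proof then reduces to bounding $\sum_{\alpha=1}^t v_p(w_k-w_{k'_\alpha})$ under this cap (Lemma~\ref{L:estimate of sum of vp(wk-wkalpha) for kalpha bad weights}). In the delicate case where some single $k'$ satisfies $v_p(w_k-w_{k'})>\gamma$, the paper shows such a $k'$ is unique, computes $m_{n,j}(k')$ explicitly, and applies Proposition~\ref{P:Delta - Delta'} with $\ell'=\tfrac12 d_k^\new-q_t-M$ for $M\le m_{n,j}(k')$; the cap $M\le m_{n,j}(k')$ is exactly what makes the Proposition applicable with the required $\ell'$.

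Your logarithm coefficients $L_t=\tfrac{(-1)^{t-1}}{t}\sum_{k'}\alpha(k')(w_k-w_{k'})^{-t}$ carry no such cap: a single dominant $k'$ contributes a term of valuation roughly $-t\cdot v_p(w_k-w_{k'})$ to every $L_t$, so the best pointwise bound you can hope for is $v_p(L_t)\gtrsim -t\cdot\max_{k'}v_p(w_k-w_{k'})$. When $m_{n,j}(k')<t$ and $v_p(w_k-w_{k'})$ is large, this is strictly weaker than the target $c_t$, so the premise $v_p(L_t)\ge c_t$ of your superadditivity step fails. The cancellation that restores the cap lives in the exponential formula (e.g.\ for $(1+ax)^1$ one has $L_2+\tfrac12 L_1^2=0$), but term-by-term bounds on the $L_{\lambda_i}$ cannot see it. A secondary but related problem is the $1/t$ in $L_t$ and the factorial denominators in $\exp$, which your superadditivity estimate does not absorb; the paper's direct expansion sidesteps these entirely because the product has integer coefficients.
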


We will first prove Proposition~\ref{P:change B to C} assuming Proposition~\ref{P:estimate of eta(j)} and then return to prove Proposition~\ref{P:estimate of eta(j)} in \S\,\ref{S:proof of estimate of eta(j)}.

\begin{lemma}
	Proposition~\ref{P:estimate of eta(j)} implies Proposition~\ref{P:change B to C}.
\end{lemma}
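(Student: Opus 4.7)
The plan is to write $C_{k,i_0}^{(\underline\zeta\times\underline\xi,j)}$ as $B_{k,i_0}^{(\underline\zeta\times\underline\xi,j)}$ plus an error term built from the previously controlled coefficients $B_{k,i}^{(j)}$ with $i<i_0$ and from the Taylor coefficients $\eta_{(j),t}$ of $\eta_j(w)/\eta_1(w)^j$. Comparing coefficients of $(w-w_k)^{i_0}$ in \eqref{E:definition of C} and using $\eta_{(j),0}=1$ gives
\[
C_{k,i_0}^{(\underline\zeta\times\underline\xi,j)} - B_{k,i_0}^{(\underline\zeta\times\underline\xi,j)} \;=\; \sum_{t=1}^{i_0} B_{k,i_0-t}^{(\underline\zeta\times\underline\xi,j)} \cdot \eta_{(j),t}.
\]
Both directions of the claimed equivalence then follow once one shows that this right-hand side has $p$-adic valuation at least $\Delta_{k,\frac 12 d_k^{\new}-i_0} - \tfrac{k-2}{2}(\tfrac 12 d_k^{\Iw}-n)$; call this target bound $(\star)$.

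For each $t\in\{1,\ldots,i_0\}$, I would bound $v_p(B_{k,i_0-t}^{(\underline\zeta\times\underline\xi,j)})$ case by case: if $i_0-t<m_{\underline\zeta\times\underline\xi}(k)$ the coefficient vanishes by Definition-Proposition~\ref{DP:general corank theorem}; if $i_0-t\geq m_{n-j}(k)$ the weak estimate \eqref{E:estimate of smaller minors weak}/\eqref{E:estimate of smaller minors weak when m n-ell (k)=0} of Proposition~\ref{P:estimate of smaller minors} applies; otherwise the inductive hypothesis of Proposition~\ref{P:change B to C} supplies what is needed. In every case one obtains the lower bound $v_p(B_{k,i_0-t}^{(j)})\geq \Delta_{k,\frac 12 d_k^{\new}-(i_0-t)}-\tfrac{k-2}{2}(\tfrac 12 d_k^{\Iw}-n)$. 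Combining this with the estimates on $v_p(\eta_{(j),t})$ furnished by Proposition~\ref{P:estimate of eta(j)}, the inequality $(\star)$ reduces to a purely numerical inequality among $\Delta_{k,*}$'s plus a quadratic correction term.

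The numerical verification splits according to the dichotomy of Proposition~\ref{P:estimate of eta(j)}. In case (2), where $m_{n-j}(k)=0$, $(\star)$ reduces to
\[
\bigl(\Delta_{k,\frac 12 d_k^{\new}-(i_0-t)}-\Delta_{k,\frac 12 d_k^{\new}-i_0}\bigr) + \bigl(\Delta_{k,\frac 12 d_k^{\new}-t}-\Delta_{k,\frac 12 d_k^{\new}}\bigr) + \tfrac 12\bigl((\tfrac 12 d_k^{\new})^2-(\tfrac 12 d_k^{\new}-t)^2\bigr)\geq 0,
\]
which holds by convexity of $\underline\Delta_k$ for the first (nonnegative) difference, together with the quadratic lower bound \eqref{E:Delta - Delta' geq half of diff square} of Proposition~\ref{P:Delta - Delta'} dominating the (negative) second difference. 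Case (1) is handled in the same spirit, further splitting on whether $q_t=m_{n-j}(k)$ (``plateau'' regime, $t$ small) or $q_t=m_n(k)-t$ (``sliding'' regime, $t$ large); in each regime the rigidity or monotonicity of $q_t$ simplifies the $\Delta$-differences, and the quadratic bonus absorbs the residual loss via Proposition~\ref{P:Delta - Delta'}.

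The hardest part will be the arithmetic in the plateau regime of case (1): there the $\Delta$-terms involving $q_t$ are constant in $t$, so the convexity of $\underline\Delta_k$ alone does not suffice and one must extract the full strength of the quadratic estimate in Proposition~\ref{P:Delta - Delta'} to close the inequality. Careful bookkeeping around the thresholds $m_{\underline\zeta\times\underline\xi}(k)$, $m_{n-j}(k)$, and $m_n(k)$ will also be required to ensure that each $B_{k,i_0-t}^{(j)}$ is bounded by the correct source (vanishing, Proposition~\ref{P:estimate of smaller minors}, or the inductive hypothesis of Proposition~\ref{P:change B to C}).
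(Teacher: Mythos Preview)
Your plan—writing $C_{k,i_0}^{(j)} - B_{k,i_0}^{(j)} = \sum_{t=1}^{i_0} B_{k,i_0-t}^{(j)}\,\eta_{(j),t}$ and bounding each summand—matches the paper exactly. The gap is in how you bound $B_{k,i_0-t}^{(j)}$ when $i_0-t \geq m_{n-j}(k)$: you invoke only the \emph{weak} estimate \eqref{E:estimate of smaller minors weak}/\eqref{E:estimate of smaller minors weak when m n-ell (k)=0}, and this does not close the inequality. Concretely, your verification of case (2) rests on the claim that the quadratic term $\tfrac 12\big((\tfrac 12 d_k^{\new})^2-(\tfrac 12 d_k^{\new}-t)^2\big)$ dominates $\Delta_{k,\frac 12 d_k^{\new}}-\Delta_{k,\frac 12 d_k^{\new}-t}$ via \eqref{E:Delta - Delta' geq half of diff square}. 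But that inequality goes the other way: it says $\Delta_{k,\ell''} - \Delta'_{k,\ell} \geq \tfrac12(\ell''^2-\ell^2)+1$, so the $\Delta$-difference \emph{exceeds} the quadratic. Your displayed inequality is then false in general (e.g.\ take $t=1$ and $i_0$ close to $\tfrac 12 d_k^{\new}$, and use the asymptotic $\Delta_{k,\ell}\sim \tfrac{p-1}{4}\ell^2$). The same obstruction occurs in your ``plateau'' sub-case of case (1).

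What rescues the argument is the \emph{strong} form \eqref{E:estimate of smaller minors}/\eqref{E:estimate of smaller minors when m n-ell (k)=0} of Proposition~\ref{P:estimate of smaller minors}: when $i_0-t\geq m_{n-j}(k)$ it gives
\[
v_p\big(B_{k,i_0-t}^{(j)}\big)\ \geq\ \Delta_{k,\frac 12 d_k^{\new}-q_t} - \tfrac{k-2}{2}\big(\tfrac 12 d_k^{\Iw}-n\big) - \tfrac 12\big((\tfrac 12 d_k^{\new}-q_t)^2-(\tfrac 12 d_k^{\new}-(i_0-t))^2\big).
\]
The extra quadratic here cancels exactly against the one in the $\eta_{(j),t}$-bound, leaving $\Delta_{k,\frac 12 d_k^{\new}-(q_t+t)} - \tfrac 12\big((\tfrac 12 d_k^{\new}-(q_t+t))^2-(\tfrac 12 d_k^{\new}-i_0)^2\big)$; since in this range $q_t+t \leq i_0$, one applies \eqref{E:Delta - Delta' geq half of diff square} in the correct direction to finish. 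Accordingly, the paper splits along $i_0-t < m_{n-j}(k)$ versus $i_0-t \geq m_{n-j}(k)$ (not your plateau/sliding split for $q_t$), precisely because the first range uses the hypothesis \eqref{E:generalization of estimate of Axii} on $B$ while the second requires the strong estimate. The remark immediately following Proposition~\ref{P:change B to C} flags this very point.
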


\begin{proof}
	Proposition~\ref{P:change B to C} is trivial for $j=0$ and we assume $j>0$ from now on.
	From the definition of $C_{k,i}^{(j)}$ in \eqref{E:definition of C}, we have 
	$$
	C_{k,i_0}^{(j)} = B_{k, i_0}^{(j)} + \sum_{i=0}^{i_0-1} B_{k,i}^{(j)} \cdot \eta_{(j),i_0-i}.
	$$
	To prove Proposition~\ref{P:change B to C}, it suffices to prove 
	\[
	v_p\big(B_{k,i}^{(j)}\eta_{(j),i_0-i} \big)\geq \Delta_{k,\frac 12 d_k^\new-i_0}-\tfrac{k-2}2 \big( \tfrac 12 d_k^\Iw-n\big),
	\]
	for every $0\leq i<i_0$. In fact, these inequalities imply $v_p(B_{k,i_0}^{(j)}-C_{k,i_0}^{(j)})\geq  \Delta_{k,\frac 12 d_k^\new-i_0}-\tfrac{k-2}2 \big( \tfrac 12 d_k^\Iw-n\big)$. Then the equivalence of the two inequalities in Proposition~\ref{P:change B to C} follows immediately.
	
	We fix such an $i$ and set $t=i_0-i$. We consider separately two cases:
\begin{enumerate}
\item Assume $i< m_{n-j}(k)$. In particular $m_{n-j}(k)\geq 1$ so Proposition~\ref{P:estimate of eta(j)}(1) applies.

Since $i<m_{n-j}(k)$ or equivalently, $i_0<t+m_{n-j}(k)$, for the $q_t$ defined in Proposition~\ref{P:estimate of eta(j)}(1), we have $q_t+t=\min\{m_n(k), m_{n-j}(k)+t \}>i_0$ as $m_n(k)>i_0$. It follows from the convexity of $\underline \Delta_k$ that $\Delta_{k,\frac 12 d_k^\new-(q_t+t)}-\Delta_{k,\frac 12 d_k^\new-q_t}\geq \Delta_{k,\frac 12 d_k^\new-i_0}-\Delta_{k,\frac 12 d_k^\new-i}$. By (\ref{E:vp of eta(j)}) we have $v_p\big(\eta_{(j),i_0-i} \big)=v_p\big(\eta_{(j),t} \big)\geq \Delta_{k,\frac 12 d_k^\new-(q_t+t)}-\Delta_{k,\frac 12 d_k^\new-q_t}$. Combining this with the assumption on $v_p\big( B_{k,i}^{(j)}\big)$ gives
		\begin{align*}
		v_p\big( B_{k,i}^{(j)} \eta_{(j), i_0-i}\big)
		\geq \ & \Delta_{k, \frac 12d_k^\new -i} - \tfrac{k-2}2\big( \tfrac 12d_k^\Iw-n\big) + \big(\Delta_{k,\frac 12d_k^\new - (q_t +t)} - \Delta_{k, \frac 12d_k^\new - q_t}\big)
		\\ \geq \ & \Delta_{k, \frac 12d_k^\new -i} - \tfrac{k-2}2\big( \tfrac 12d_k^\Iw-n\big) + \big(\Delta_{k,\frac 12d_k^\new -i_0} - \Delta_{k, \frac 12d_k^\new -i}\big)
		\\
		= \ &\Delta_{k, \frac 12d_k^\new -i_0} - \tfrac{k-2}2\big( \tfrac 12d_k^\Iw-n\big).
		\end{align*}
		
\item Assume $i\geq m_{n-j}(k)$. We want to write \eqref{E:vp of eta(j)} and \eqref{E:vp of eta(j) second version} uniformly as 
\[
v_p(\eta_{(j),i_0-i})=v_p(\eta_{(j),t}) \geq  \Delta_{k,\frac 12d_k^\new - (q_t +t)}-\Delta_{k,\frac 12d_k^\new - q_t}+\tfrac 12\big( (\tfrac 12d_k^\new - q_t)^2-(\tfrac 12d_k^\new -( q_t+t))^2\big).
\]
For this,  we just need to define $q_t=0$ if $m_{n-j}(k)=0$.

When $m_{n-j}(k) \geq 1$, we can show that $m_{n-j}(k) \leq m_n(k)-t$: indeed,  $m_{n-j}(k)+t=m_{n-j}(k)-i+i_0\leq i_0<m_n(k)$. Therefore, in either case, we have 
$$q_t+t =\min\{m_n(k), m_{n-j}(k)+i_0-i\} = m_{n-j}(k)+i_0-i \leq i_0.$$

On the other hand, the strong estimates \eqref{E:estimate of smaller minors} and \eqref{E:estimate of smaller minors when m n-ell (k)=0} can also be written uniformly as
        \[
        v_p(B_{k,i}^{(j)})\geq \Delta_{k, \frac 12d_k^\new -q_t} - \tfrac{k-2}2\big( \tfrac 12d_k^\Iw-n\big) -\tfrac 12\big((\tfrac 12d_k^\new - q_t)^2 - (\tfrac 12d_k^\new-i)^2\big). 
        \]
        So we have 
		\begin{align*}
		v_p\big( B_{k,i}^{(j)} \eta_{(j), i_0-i}\big) 
		\geq  \ &\Delta_{k, \tfrac 12d_k^\new -(q_t+t)}-\tfrac 12\big((\tfrac 12d_k^\new -(q_t+t))^2- (\tfrac 12d_k^\new - i)^2\big) - \tfrac{k-2}2\big( \tfrac 12d_k^\Iw-n\big)
		\\
		\geq \ &\Delta_{k, \tfrac 12d_k^\new -(q_t+t)}-\tfrac 12\big((\tfrac 12d_k^\new -(q_t+t))^2- (\tfrac 12d_k^\new - i_0)^2\big) - \tfrac{k-2}2\big( \tfrac 12d_k^\Iw-n\big)
		\\ 
		\stackrel{\eqref{E:Delta - Delta' geq half of diff square}}\geq  & \Delta_{k, \frac 12d_k^\new -i_0} - \tfrac{k-2}2\big( \tfrac 12d_k^\Iw-n\big).
		\end{align*}
	\end{enumerate}
	This completes the proof of the lemma.
\end{proof}

\subsection{Proof of Proposition~\ref{P:estimate of eta(j)}}
\label{S:proof of estimate of eta(j)}
	The proposition is trivial for $j=0$ and $j=1$. We assume $j\geq 2$ from now on.
	By the definition of $\eta_j$ in (\ref{E:definition of eta}) we can write
	\[
	\eta_j(w) = \prod_{\substack{k' \equiv k_\varepsilon \bmod(p-1)\\ k'\neq k}}\Big(1+ \frac{w-w_{k}}{w_k-w_{k'}}\Big)^{m_{n-j}(k')-m_n(k')} \text{~and hence~}
	\]
	\begin{align}
	\label{E:factorization of etaj}
	\frac{\eta_j(w)}{\eta_1(w)^j} \ &= \prod_{\substack{k' \equiv k_\varepsilon \bmod(p-1)\\ k'\neq k}}\Big(1+ \frac{w-w_{k}}{w_k-w_{k'}}\Big)^{m_{n-j}(k')-m_n(k') - j (m_{n-1}(k')-m_n(k'))}\\ \nonumber
	&= 1+\eta_{(j),1}(w-w_k)+\eta_{(j),2}(w-w_k)^2+\cdots.
	\end{align}
	Set $m_{n,j}(k'): = m_{n-j}(k')-m_n(k') - j (m_{n-1}(k')-m_n(k'))$. The term $\big(1+\tfrac {w-w_k}{w_k-w_{k'}} \big)^{m_{n,j}(k')}$ appearing in the product of (\ref{E:factorization of etaj}) is not $1$ only when the function $n' \mapsto m_{n'}(k')$ for $n' \in [n-j, n]$ fails to be linear, or equivalently, at least one of $d_{k'}^\ur$, $ d_{k'}^\Iw -d_{k'}^\ur$, or $\frac 12d_{k'}^\Iw $ belongs to $(n-j,n)$. We call such weights $k'$ \emph{bad weights}. By (\ref{E:factorization of etaj}), for $t\in \{1,\dots, m_n(k)-1 \}$ , $\eta_{(j),t}$ is the sum of terms of the form
	\begin{equation}
	\label{E:prod kalpha}
	\prod_{\alpha =1}^t \frac{1}{w_k-w_{k'_\alpha}},
	\end{equation}
	where $k'_\alpha$'s are weights satisfying the following constraints:
	\begin{itemize}
		\item if $m_{n,j}(k'_\alpha)>0$, the multiplicity of $k'_\alpha$ appearing in (\ref{E:prod kalpha}) is less or equal to $m_{n,j}(k'_\alpha)$;
		\item if $m_{n,j}(k'_\alpha)<0$, the term $\big(1+\tfrac {w-w_k}{w_k-w_{k'}} \big)^{m_{n,j}(k')}$ appearing in (\ref{E:factorization of etaj}) is considered as a Taylor expansion,  so there is no constraint on the multiplicity of $k'_\alpha$ in (\ref{E:prod kalpha}).
	\end{itemize}
	From the above discussion, we reduce the proof of Proposition~\ref{P:estimate of eta(j)} to the following:
	\begin{lemma}\label{L:estimate of sum of vp(wk-wkalpha) for kalpha bad weights}
Let $\calS=\{k_\alpha'\,|\,\alpha=1,\dots,t \}$ be a set of (not necessarily distinct) bad weights satisfying that, for every $\alpha \in \{1,\dots, t\}$ such that  $m_{n,j}(k_\alpha')>0$, the multiplicity of $k_\alpha'$ in $\calS$ is less or equal to $m_{n,j}(k_\alpha')$.
		\begin{enumerate}
			\item Under the assumption of Proposition~\ref{P:estimate of eta(j)}(1), we have 
			\begin{equation}
			\label{E:sum of vp wk-wk' for k' bad weights}
			\sum_{\alpha=1}^t v_p(w_k-w_{k_\alpha'})\leq \Delta_{k,\frac 12 d_k^\new-q_t}-\Delta_{k,\tfrac 12 d_k^\new-(q_t+t)}-\tfrac 12\big((\tfrac 12d_k^\new-q_t)^2-(\tfrac 12d_k^\new- (q_t+t))^2 \big).
			\end{equation}
			\item Under the assumption of Proposition~\ref{P:estimate of eta(j)}(2), we have
			\begin{equation}
			\label{E:sum of vp wk-wk' for k' bad weights alternative version}
			\sum_{\alpha=1}^t v_p(w_k-w_{k_\alpha'})\leq \Delta_{k,\frac 12 d_k^\new}-\Delta_{k,\frac 12 d_k^\new-t}-\tfrac 12\big((\tfrac 12d_k^\new)^2-(\tfrac 12d_k^\new- t)^2 \big).
			\end{equation}
		\end{enumerate}
	\end{lemma}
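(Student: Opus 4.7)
The plan is to reduce the bound in (1) to a telescoping sum along the ``slots'' $\ell \in \{\ell_0, \ell_0+1, \ldots, \ell_1-1\}$, where $\ell_0 := \tfrac 12 d_k^{\new} - (q_t+t)$ and $\ell_1 := \tfrac 12 d_k^{\new} - q_t$. The starting observation is that applying Proposition~\ref{P:Delta - Delta'} with $\ell = \ell' < \ell'' = \ell+1$ yields, for any bad weight $k'$ such that $d_{k'}^{\ur}$ or $d_{k'}^{\Iw} - d_{k'}^{\ur}$ lies in $[\tfrac 12 d_k^{\Iw} - \ell, \tfrac 12 d_k^{\Iw} + \ell]$, the one-slot bound
\[
v_p(w_k - w_{k'}) \;\leq\; \Delta_{k,\ell+1} - \Delta'_{k,\ell} - \ell - \tfrac12 \;\leq\; \Delta_{k,\ell+1} - \Delta_{k,\ell} - \ell - \tfrac12.
\]
Summing over $\ell_0 \leq \ell \leq \ell_1-1$ telescopes to $\Delta_{k,\ell_1} - \Delta_{k,\ell_0} - \tfrac12(\ell_1^2 - \ell_0^2)$, which is precisely the desired right-hand side. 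So the proof reduces to constructing an injective assignment $\alpha \mapsto \ell_\alpha$ of the $t$ elements of $\calS$ (counted with multiplicity) into the $t$ slots such that the above bound is applicable for $k' = k'_\alpha$ at slot $\ell_\alpha$.

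For the matching, I would classify each bad weight $k'$ by which of the three break points $d_{k'}^{\ur}$, $\tfrac12 d_{k'}^{\Iw}$, $d_{k'}^{\Iw} - d_{k'}^{\ur}$ falls in $(n-j, n)$, and compute $m_{n,j}(k')$ in each case by the explicit piecewise-linear formula for $n' \mapsto m_{n'}(k')$. For example, when $d_{k'}^{\ur}$ is the unique break point in $(n-j,n)$, a direct computation gives $m_{n,j}(k') = j - (n - d_{k'}^{\ur}) > 0$; one then assigns the (at most) $m_{n,j}(k')$ copies of $k'$ to consecutive slots starting from $\ell = |d_{k'}^{\ur} - \tfrac12 d_k^{\Iw}|$. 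The analogous case with $d_{k'}^{\Iw} - d_{k'}^{\ur} \in (n-j,n)$ is symmetric. Ordering all positive-multiplicity bad weights by the distance of their break point from $\tfrac12 d_k^{\Iw}$ and greedily packing from the smallest slot upwards gives a Hall-type injection, provided the allowed slots genuinely cover $[\ell_0, \ell_1-1]$; this coverage is exactly where the assumption $q_t \leq m_{n-j}(k)$ and the shape of the piecewise-linear function $m_{n'}(k')$ on $[n-j,n]$ enter.

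The hard part will be handling the ``type III'' weights, where only $\tfrac12 d_{k'}^{\Iw}$ lies in $(n-j, n)$. Such weights have $m_{n,j}(k') < 0$, so they carry no multiplicity constraint, but Proposition~\ref{P:Delta - Delta'} does not apply directly since neither $d_{k'}^{\ur}$ nor $d_{k'}^{\Iw} - d_{k'}^{\ur}$ need lie in the symmetric interval around $\tfrac12 d_k^{\Iw}$. The way out is that $d_{k'}^{\Iw} - d_{k'}^{\ur} = 2\cdot\tfrac12 d_{k'}^{\Iw} - d_{k'}^{\ur}$ is within distance $O(t)$ of $n$, hence within $O(t)$ of $\tfrac12 d_k^{\Iw}$ when $n$ is close to $\tfrac12 d_k^{\Iw}$, placing it in the required interval for a suitable slot; in the boundary case where several such weights compete for the same slot, Remark~\ref{R:two sides of wk'} guarantees at most two such weights on each side of $w_k$, which one absorbs into the bound using the refined version of Proposition~\ref{P:Delta - Delta'}.

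For (2), the assumption $m_{n-j}(k) = 0$ forces $q_t = 0$ so $\ell_1 = \tfrac12 d_k^{\new}$ and $\ell_0 = \tfrac12 d_k^{\new} - t$, and the same telescoping argument applies; the only additional point is that the topmost slots are close to the maximum of the convex function $\ell \mapsto \Delta_{k,\ell}$, so one must use the ghost duality $\Delta_{k,\ell} = \Delta_{k,-\ell}$ from \eqref{E:ghost duality alternative} to avoid spurious degeneracy at $\ell = \tfrac12 d_k^{\new}$. Overall the individual estimates are all consequences of Proposition~\ref{P:Delta - Delta'} and ghost duality; the genuine work lies in the combinatorial matching of bad-weight multiplicities to slots, with the type-III case requiring the sharpest form of the input.
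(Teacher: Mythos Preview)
Your telescoping-plus-matching strategy is a genuinely different route from the paper's, and the positive-multiplicity cases you describe are essentially correct. But the type-III handling has a real gap. Your claim that for such $k'$ the quantity $d_{k'}^{\Iw}-d_{k'}^{\ur}$ is ``within distance $O(t)$ of $n$'' is not justified: the distance from $\tfrac12 d_{k'}^{\Iw}$ to $d_{k'}^{\Iw}-d_{k'}^{\ur}$ is $\tfrac12 d_{k'}^{\new}$, which has nothing to do with $t$; and ``$n$ close to $\tfrac12 d_k^{\Iw}$'' is not part of the hypothesis (indeed $|n-\tfrac12 d_k^{\Iw}|=\tfrac12 d_k^{\new}-m_n(k)$ can be large). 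So you have no slot $\ell$ at which Proposition~\ref{P:Delta - Delta'} applies to a type-III weight, and since such weights carry no multiplicity cap, a single one can in principle occupy all $t$ terms of $\calS$. Remark~\ref{R:two sides of wk'} does not help here because it concerns weights $k'$ with $d_{k'}^{\ur}$ or $d_{k'}^{\Iw}-d_{k'}^{\ur}$ in the symmetric interval, which is exactly what fails for type-III.

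The paper avoids this problem entirely by not matching at all. First, an auxiliary reduction (Lemma~\ref{L:reduction of lemma on sum of vp wk-wk' bad weights}) peels off any bad weight whose $d_{k'}^{\ur}$ or $d_{k'}^{\Iw}-d_{k'}^{\ur}$ lands in $[n^*,d_k^{\Iw}-n^*)$ and handles it by a single one-slot application of Proposition~\ref{P:Delta - Delta'}. After that reduction, the paper proves directly that every type-III weight has $v_p(w_k-w_{k'})\le\gamma:=\lfloor\tfrac{\ln((p+1)(\frac12 d_k^{\new}-q_t))}{\ln p}+1\rfloor$, simply because $|k_\bullet-k'_\bullet|=|\tfrac12 d_k^{\Iw}-\tfrac12 d_{k'}^{\Iw}|$ is bounded by $j$ (or by $\tfrac12 d_k^{\new}-m_n(k)+j$) and this is at most $(p+1)(\tfrac12 d_k^{\new}-q_t)$. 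One then applies Proposition~\ref{P:Delta - Delta'} once with $\ell=\tfrac12 d_k^{\new}-(q_t+t)$ and $\ell'=\ell''=\tfrac12 d_k^{\new}-q_t$ to absorb $t\gamma$; if a single weight $k'$ has $v_p(w_k-w_{k'})>\gamma$ (necessarily of positive multiplicity, by uniqueness from Remark~\ref{R:two sides of wk'}), one uses the multiplicity bound $M\le m_{n,j}(k')$ and a second application with $\ell'=\tfrac12 d_k^{\new}-q_t-M$. For part~(2), Lemma~\ref{L:criterion to determine vp(wk-wk') leq gamma} shows every bad weight is already below $\gamma$, so no exceptional case arises. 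This is both shorter and sidesteps the obstruction you ran into.
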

The proof of this lemma will be given in \S\,\ref{S:proof of estimate of sum of vp(wk-wkalpha)} after the following reduction.   
	\begin{lemma}\label{L:reduction of lemma on sum of vp wk-wk' bad weights}Define $n^*: = n $ if $n \leq \frac 12d_k^\Iw$ and $n^* = d_k^\Iw - n$ if $n \geq \frac 12d_k^\Iw$. Equivalently, $n^*$ is the unique integer satisfying $n^*\leq \frac 12 d_k^\Iw$ and  $m_{n^*}(k) = m_n(k)$.
To prove (\ref{E:sum of vp wk-wk' for k' bad weights}) or \eqref{E:sum of vp wk-wk' for k' bad weights alternative version}, we can assume that for every bad weight $k'_\alpha \in \calS$, we have $d_{k_\alpha'}^\ur, \, d_{k_\alpha'}^\Iw-d_{k_\alpha'}^\ur \notin [n^\ast,d_k^\Iw-n^\ast)$.
	\end{lemma}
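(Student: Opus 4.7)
\textbf{Proof proposal for Lemma~\ref{L:reduction of lemma on sum of vp wk-wk' bad weights}.}
The plan is to argue by induction on the number of bad weights in $\calS$ whose $d_{k'_\alpha}^\ur$ or $d_{k'_\alpha}^\Iw - d_{k'_\alpha}^\ur$ lies in the forbidden interval $[n^\ast,\, d_k^\Iw-n^\ast)$, and at each step to peel off one such weight at the cost of at most $\gamma:=\bigl\lfloor \frac{\ln((p+1)\cdot \frac12 d_k^\new)}{\ln p}+1\bigr\rfloor$ in the left hand side. The starting observation is that both right-hand sides of \eqref{E:sum of vp wk-wk' for k' bad weights} and \eqref{E:sum of vp wk-wk' for k' bad weights alternative version} built from Proposition~\ref{P:Delta - Delta'} automatically carry a budget of $\gamma$ per unit of $t$, so each such peeled weight is absorbed without loss; iterating the reduction leaves only bad weights satisfying the conclusion of the lemma.

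First I would pick any bad weight $k'_{\alpha_0}\in \calS$ with, say, $d_{k'_{\alpha_0}}^\ur\in [n^\ast, d_k^\Iw-n^\ast)$ (the case of $d_{k'_{\alpha_0}}^\Iw-d_{k'_{\alpha_0}}^\ur$ being symmetric via the involution $d^\ur\leftrightarrow d^\Iw-d^\ur$ on weights, which does not affect $v_p(w_k-w_{k'_{\alpha_0}})$). Since $n^\ast=d_k^\ur+m_n(k)>d_k^\ur$, I split into subcases: (a) $d_{k'_{\alpha_0}}^\ur\in[n^\ast,\tfrac12 d_k^\Iw)\subseteq [d_k^\ur,\tfrac12 d_k^\Iw)$, whence Lemma~\ref{L:criterion to determine vp(wk-wk') leq gamma}(3) gives $v_p(w_k-w_{k'_{\alpha_0}})\leq\gamma$; (b) $d_{k'_{\alpha_0}}^\ur\in[\tfrac12 d_k^\Iw,\,d_k^\Iw-n^\ast)$, in which case $\tfrac12 d_{k'_{\alpha_0}}^\Iw\geq d_{k'_{\alpha_0}}^\ur\geq \tfrac12 d_k^\Iw$ forces $k'_{\alpha_0\bullet}\geq k_\bullet$; combining with the upper bound $d_{k'_{\alpha_0}}^\ur<d_k^\Iw-n^\ast\leq d_k^\Iw-d_k^\ur$ (so that $\tfrac12 d_{k'_{\alpha_0}}^\Iw\in[d_k^\ur,\,d_k^\Iw-d_k^\ur]$) puts us in condition (1) of Lemma~\ref{L:criterion to determine vp(wk-wk') leq gamma}, again yielding $v_p(w_k-w_{k'_{\alpha_0}})\leq\gamma$. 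If instead $k'_{\alpha_0\bullet}<k_\bullet$ in subcase (b), Lemma~\ref{L:criterion to determine vp(wk-wk') leq gamma}(2) gives the same bound.

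Next I would set $\calS':=\calS\setminus\{k'_{\alpha_0}\}$ and $t':=t-1$ and verify that $\calS'$ still satisfies the multiplicity hypothesis (removing a weight can only reduce the multiplicity of each surviving weight). For case (1) of Proposition~\ref{P:estimate of eta(j)}, I would set $q_{t'}:=\min\{m_n(k)-t',\,m_{n-j}(k)\}$, noting $q_{t'}\in\{q_t,\,q_t+1\}$, and use Proposition~\ref{P:Delta - Delta'} with the triple $(\tfrac12 d_k^\new-(q_t+t),\, \tfrac12 d_k^\new-(q_{t'}+t'),\, \tfrac12 d_k^\new-q_t)$ to check that the difference between the two right-hand sides of \eqref{E:sum of vp wk-wk' for k' bad weights} (for $t$ versus $t'$) is bounded below by $\gamma$; the hypothesis $(\ell,\ell',\ell'')\neq(0,1,1)$ of Proposition~\ref{P:Delta - Delta'} is automatic here because $m_n(k)\geq t\geq 1$ and $q_t\geq 0$. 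A parallel computation handles case (2) using the degenerate triple $(\tfrac12 d_k^\new-t,\tfrac12 d_k^\new-t',\tfrac12 d_k^\new)$. The inductive hypothesis applied to $\calS'$ combined with $v_p(w_k-w_{k'_{\alpha_0}})\leq\gamma$ then yields the desired estimate for $\calS$.

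The main obstacle I anticipate is the case $t=1$ in conjunction with the excluded triple $(\ell,\ell',\ell'')=(0,1,1)$ of Proposition~\ref{P:Delta - Delta'}; here the Proposition does not directly supply the $\gamma$-slack. I expect to handle this boundary situation by working directly with the recipe of ghost multiplicities: when $q_t=0$ or $q_t+t=\tfrac12 d_k^\new$, the relevant difference of $\Delta$'s can be computed explicitly and shown to accommodate $\gamma$ separately, using the monotonicity of $\deg g_{n'+1}^{(\varepsilon)}-\deg g_{n'}^{(\varepsilon)}$ from Definition-Proposition~\ref{DP:dimension of classical forms}(4).
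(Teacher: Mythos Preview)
Your inductive framework (remove one offending bad weight $k'_{\alpha_0}$ at a time and show that the estimate for $\calS$ follows from the estimate for $\calS'=\calS\setminus\{k'_{\alpha_0}\}$) is exactly the paper's strategy. The difference---and the gap---is in how you bound the contribution of the removed weight.

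You split the argument into two independent steps: (i) $v_p(w_k-w_{k'_{\alpha_0}})\le\gamma$ with $\gamma=\big\lfloor\frac{\ln((p+1)\cdot\frac12 d_k^\new)}{\ln p}+1\big\rfloor$ via Lemma~\ref{L:criterion to determine vp(wk-wk') leq gamma}, and then (ii) the step-down in the right-hand side, $(\mathrm{RHS}_t)-(\mathrm{RHS}_{t-1})$, is $\ge\gamma$. Step (ii) fails. A direct computation (in the sub-case $q_{t-1}=q_t$) gives
\[
(\mathrm{RHS}_t)-(\mathrm{RHS}_{t-1})=\Delta_{k,s}-\Delta_{k,s-1}-\tfrac12(s^2-(s-1)^2),\qquad s:=\tfrac12 d_k^\new-(q_t+t)+1,
\]
and $s$ can be as small as $2$ (take $m_n(k)=\tfrac12 d_k^\new-1$, $q_t+t=m_n(k)$), while $\gamma$ grows like $\log_p(\tfrac12 d_k^\new)$. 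So $(\mathrm{RHS}_t)-(\mathrm{RHS}_{t-1})$ is \emph{not} bounded below by $\gamma$; the triple you propose for Proposition~\ref{P:Delta - Delta'} does not produce this inequality either, since the $\gamma$-term there involves $\lfloor\frac{\ln((p+1)\ell'')}{\ln p}+1\rfloor$ with $\ell''=\tfrac12 d_k^\new-q_t$, not $\tfrac12 d_k^\new$.

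The paper avoids this mismatch by not separating the two steps. It applies Proposition~\ref{P:Delta - Delta'} once with $\ell=\ell'=s-1$, $\ell''=s$, and \emph{the bad weight $k'_\alpha$ itself} as the auxiliary weight: since $d_{k'_\alpha}^\ur$ or $d_{k'_\alpha}^\Iw-d_{k'_\alpha}^\ur$ lies in $[n^\ast,d_k^\Iw-n^\ast)\subseteq[\tfrac12 d_k^\Iw-(s-1),\tfrac12 d_k^\Iw+(s-1)]$ (using $\tfrac12 d_k^\Iw-n^\ast=\tfrac12 d_k^\new-m_n(k)\le s-1$), the hypothesis \eqref{E:dkur in pm l} holds, and the conclusion reads directly
\[
v_p(w_k-w_{k'_\alpha})\le\Delta_{k,s}-\Delta'_{k,s-1}-\tfrac12(s^2-(s-1)^2)\le(\mathrm{RHS}_t)-(\mathrm{RHS}_{t-1}).
\]
This is exactly the inequality needed for the induction. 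The excluded triple $(0,1,1)$ corresponds to $s=1$, which forces $n^\ast=\tfrac12 d_k^\Iw$ and hence $[n^\ast,d_k^\Iw-n^\ast)=\emptyset$, so there is nothing to remove and your worry about this boundary case is unfounded. (Incidentally, your subcase (b) argument that $d_{k'_{\alpha_0}}^\ur<d_k^\Iw-d_k^\ur$ implies $\tfrac12 d_{k'_{\alpha_0}}^\Iw\le d_k^\Iw-d_k^\ur$ is also incorrect, since $\tfrac12 d_{k'_{\alpha_0}}^\Iw>d_{k'_{\alpha_0}}^\ur$; but with the paper's direct approach no such case analysis is needed.)
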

\begin{proof}
Suppose that there exists some $\alpha\in \{1,\dots, t\}$ such that either $d_{k_\alpha'}^\ur$ or $d_{k_\alpha'}^\Iw-d_{k_\alpha'}^\ur$ belongs to $[n^\ast,d_k^\Iw-n^\ast)$. By induction, it suffices to prove that the estimate (\ref{E:sum of vp wk-wk' for k' bad weights}) or (\ref{E:sum of vp wk-wk' for k' bad weights alternative version}) for the set $\calS$ follows from the same estimate for the set $\calS':=\calS\setminus \{k'_\alpha \}$.

\smallskip
\underline{Case 1}: Keep the setup as in Proposition~\ref{P:estimate of eta(j)}(1). Recall $q_t = \min\{m_n(k)-t, m_{n-j}(k)\}$.
		
When $m_n(k)-t\geq m_{n-j}(k)$, we have $q_t=q_{t-1}=m_{n-j}(k)$ and $q_t+t\leq m_n(k)$. Comparing \eqref{E:sum of vp wk-wk' for k' bad weights} for $\calS$ and for $\calS'$, and setting $s:= \frac 12 d_k^\new - q_t-t+1$, we need to prove that
\begin{equation}
    \label{E:vp wk-wkt}
v_p(w_k-w_{k'_\alpha}) \leq \Delta_{k, s} - \Delta_{k, s-1} -\tfrac 12 (s^2-(s-1)^2).
\end{equation}
But by Lemma~\ref{L:useful facts in the proof of Proposition each summand of Lagrange lie above NP}(1) and the property of $n^*$, we have $\frac 12 d_k^\Iw-n^*=\frac 12 d_k^\new-m_{n^*}(k)=\frac 12 d_k^\new-m_n(k)\leq \frac 12 d_k^\new -(q_t+t) = s-1$. So $[n^\ast,d_k^\Iw-n^\ast)\subseteq [\frac 12d_k^\Iw-(s-1),\frac 12d_k^\Iw+(s-1)]$. Applying Proposition~\ref{P:Delta - Delta'} to $k_\alpha'$, $\ell=\ell'=s-1<\ell''=s$ exactly gives \eqref{E:vp wk-wkt}.
			
When $m_n(k)-t < m_{n-j}(k)$, we have $q_t=m_n(k)-t$ and $q_{t-1}=q_t+1$. In this case, we need to prove, setting $s' = \frac 12d_k^\new - t+1$,
\begin{equation}
\label{E:vp wk-wkt 1}
v_p(w_k-w_{k'_\alpha}) \leq \Delta_{k, s'} - \Delta_{k, s'-1}- \tfrac 12(s'^2-(s'-1)^2).
\end{equation}
Similarly, we observe $\frac 12 d_k^\Iw-n^*=\frac 12 d_k^\new-m_n(k)=\frac 12 d_k^\new-(q_t+t)\leq \frac 12 d_k^\new -q_t-1 = s-1$. We still have $[n^*, d_k^\Iw - n^*) \subseteq [\frac 12d_k^\Iw-(s'-1),\frac 12d_k^\Iw+(s'-1)]$.
Applying Proposition~\ref{P:Delta - Delta'} to $k_\alpha'$, $\ell=\ell'=s'-1<\ell''=s'$ proves \eqref{E:vp wk-wkt 1}.

\smallskip
\underline{Case 2}: Keep the setup as in Proposition~\ref{P:estimate of eta(j)}(2). Set $s'': = \frac 12 d_k^\new -t+1$; we need to show 
\begin{equation}
\label{E:vp wk-wkt 2}
v_p(w_k-w_{k'_\alpha}) \leq \Delta_{k, s''} - \Delta_{k, s''-1}- \tfrac 12(s''^2-(s''-1)^2).
\end{equation}
Again, $t<m_n(k)$ implies that $\frac 12 d_k^\Iw-n^\ast=\frac 12 d_k^\new-m_n(k)\leq \frac 12 d_k^\new-t = s''-1$. This gives $[n^*, d_k^\Iw-n^*)\subset [\frac 12 d_k^\Iw-(s''-1), \frac 12 d_k^\Iw+(s''-1)]$. Applying Proposition~\ref{P:Delta - Delta'} to $k_\alpha'$, $\ell=\ell'=s''-1<\ell''=s''$ proves \eqref{E:vp wk-wkt 2}.  
\end{proof}

\subsection{Proof of Lemma~\ref{L:estimate of sum of vp(wk-wkalpha) for kalpha bad weights}}
\label{S:proof of estimate of sum of vp(wk-wkalpha)}
By Lemma~\ref{L:reduction of lemma on sum of vp wk-wk' bad weights}, we assume that, for every bad weight $k'_\alpha \in \calS$, $d_{k_\alpha'}^\ur, \, d_{k_\alpha'}^\Iw-d_{k_\alpha'}^\ur \notin [n^\ast,d_k^\Iw-n^\ast)$. We further assume that $t \geq 1$, as there is nothing to prove when $t=0$. We separate two cases.

\smallskip
\underline{Case 1}: Keep the setup as in Proposition~\ref{P:estimate of eta(j)}(1). 
We first explain that $\tfrac 12 d_k^\new -q_t\geq 2$ (recall that $q_t = \min\{m_n(k)-t, m_{n-j}(k)\}$). Indeed, if $\tfrac 12 d_k^\new-q_t \in \{0, 1\}$, we must have $t=1$ and $m_n(k) -1  = \tfrac 12 d_k^\new -1= m_{n-j}(k)$. The first equality implies that $n =\frac 12 d_k^\Iw$.  But we have $j\geq 2$ by earlier assumption, which implies that $m_{n-j}(k) \leq \tfrac 12 d_k^\new -2$, contradicting with the second equality above. So we always have  $\tfrac 12 d_k^\new -q_t\geq 2$.

Set $\gamma\coloneqq \big\lfloor \frac{\ln ((p+1)(\frac 12 d_k^\new-q_t))}{\ln p}+1 \big\rfloor$. We next show that if some bad weight $k'_\alpha$ satisfies $\frac 12 d_{k'_\alpha}^\Iw\in (n-j,n)$, then $v_p(w_k-w_{k'_\alpha})\leq \gamma$. In fact, if $\frac 12 d_k^\Iw\leq n$, we have $\frac 12 d_k^\Iw\in (n-j,n)$ and hence $|k_\bullet-k'_{\alpha\bullet}|=|\frac 12 d_k^\Iw-\frac 12 d_{k'_\alpha}^\Iw|<j$. By Remark~\ref{R:equivalent characterization of m n-l (k) leq m n(k)} we have $\frac 12 d_k^\new-q_t\geq \frac 12 d_k^\new -m_{n-j}(k)\geq \frac j2$. Therefore $v_p(w_k-w_{k'_\alpha})=1+v_p(k_\bullet-k'_{\alpha\bullet})\leq 1+\lfloor \frac{\ln j}{\ln p} \rfloor\leq \gamma$. If $\frac 12 d_k^\Iw>n$, we have 
$\frac 12d_k^\new -q_t \geq \frac 12d_k^\new - m_{n-j}(k) = (\frac 12d_k^\new-m_n(k))+j$.  On the other hand, $|k_\bullet -k'_{\alpha\bullet}|=|\frac 12 d_k^\Iw-\frac 12 d_{k'_\alpha}^\Iw|\leq \frac 12 d_k^\Iw-n+|n-\frac 12 d_{k'_\alpha}^\Iw|\leq \frac 12 d_k^\new -m_n(k)+j$. So we also have $v_p(w_k-w_{k'_\alpha})\leq \gamma$.

\begin{enumerate}[(a)]
\item Assume $v_p(w_k-w_{k'_\alpha})\leq \gamma$ for all $\alpha\in \{1,\dots, t \}$. Since $\tfrac 12 d_k^\new -q_t\geq 2$, we can  apply Proposition~\ref{P:Delta - Delta'} to $\ell=\frac 12 d_k^\new -(q_t+t)<\ell'=\ell''=\frac 12 d_k^\new-q_t$, to deduce that
				\[
				\sum_{\alpha=1}^t v_p(w_k-w_{k_\alpha'})\leq t\cdot\gamma\leq \Delta_{k,\frac 12 d_k^\new-q_t}-\Delta_{k,\frac 12 d_k^\new-(q_t+t)}-\tfrac 12\big((\tfrac 12d_k^\new-q_t)^2-(\tfrac 12d_k^\new- (q_t+t))^2 \big),
				\]
				which gives (\ref{E:sum of vp wk-wk' for k' bad weights}) in this case.
\item Assume $v_p(w_k-w_{k'})\geq \gamma+1$ for some $k'\in \calS$. We can assume that the multiplicity of $k'$ in $\calS$ is $M>0$ and $k'_\alpha=k'$ for $\alpha\in \{t-M+1,\dots, t \}$. By the assumption at the beginning of the proof and the discussion just before (a), we know that $\frac 12 d_{k'}^\Iw\notin (n-j,n)$, and either $d_{k'}^\ur$ or $d_{k'}^\Iw-d_{k'}^\ur$ belongs to $(n-j,n^*)$. By Remark~\ref{R:two sides of wk'}, $k'$ must be the unique element in $\calS$ with the properties that $v_p(w_k-w_{k'})\geq \gamma+1$ and either $d_{k'}^\ur$ or $d_{k'}^\Iw-d_{k'}^\ur$ belongs to $\big(\frac 12 d_k^\Iw-(\frac 12 d_k^\new -q_t),\frac 12 d_k^\Iw+(\frac 12 d_k^\new -q_t)\big)$.
				
When $d_{k'}^\ur\in (n-j,n^\ast)$, we have $n-j<d_{k'}^\ur<n\leq \frac 12 d_{k'}^\Iw$ (because $\frac 12d_{k'}^\Iw \not\in (n-j,n)$) and hence $m_{n-j}(k')=0$, $m_n(k')=n-d_{k'}^\ur$, and $m_{n-1}(k')=m_n(k')-1$. It follows that $m_{n,j}(k')=d_{k'}^\ur-(n-j)>0$ and
				\begin{equation}
				\label{E: 1/2dkIw-dk'ur}
				\tfrac 12 d_k^\Iw-d_{k'}^\ur=\tfrac 12 d_k^\Iw-n+j-m_{n,j}(k')\leq \tfrac 12 d_k^\new -q_t-m_{n,j}(k').
				\end{equation}
				
				When $d_{k'}^\Iw-d_{k'}^\ur\in (n-j,n^\ast)$, we have $\frac 12 d_{k'}^\Iw\leq n-j<d_{k'}^\Iw-d_{k'}^\ur<n$  (again because $\frac 12d_{k'}^\Iw \not\in (n-j,n)$) and hence $m_{n-j}(k')=d_{k'}^\Iw-d_{k'}^\ur-(n-j)>0$ and $m_{n-1}(k')=m_n(k')=0$. It follows that $m_{n,j}(k')=d_{k'}^\Iw-d_{k'}^\ur-(n-j)$ and
				\begin{equation}
				\label{E: 1/2dkIw-(dk'Iw-dk'ur)}
				\tfrac 12 d_k^\Iw-(d_{k'}^\Iw-d_{k'}^\ur)=\tfrac 12 d_k^\Iw-n+j-m_{n,j}(k')\leq \tfrac 12 d_k^\new -q_t-m_{n,j}(k').
				\end{equation}
In summary, we always have $m_{n,j}(k')>0$ and by the condition of Lemma~\ref{L:estimate of sum of vp(wk-wkalpha) for kalpha bad weights} we deduce an inequality $M\leq m_{n,j}(k')$, which is crucial in the following argument. Indeed, by (\ref{E: 1/2dkIw-dk'ur}) and (\ref{E: 1/2dkIw-(dk'Iw-dk'ur)}), either $d_{k'}^\ur$ or $d_{k'}^\Iw-d_{k'}^\ur$ belongs to $[\frac 12 d_k^\Iw-(\frac 12 d_k^\new-q_t-M),\frac 12 d_k^\Iw+(\frac 12 d_k^\new-q_t-M)]$. Now we can apply Proposition~\ref{P:Delta - Delta'} to $k'$, $\ell=\frac 12 d_k^\new -(q_t+t)$, $\ell'=\frac 12 d_k^\new -q_t-M$, and $\ell''=\frac 12 d_k^\new-q_t$, to deduce that
				\begin{align*}
				\sum_{\alpha=1}^t v_p(w_k-w_{k'_\alpha})&\leq (t-M)\cdot \gamma+M\cdot v_p(w_k-w_{k'})\leq \Delta_{k,\ell''}-\Delta_{k,\ell}-\tfrac 12 (\ell''^2-\ell^2)\\
				&=\Delta_{k,\frac 12 d_k^\new-q_t}-\Delta_{k,\frac 12 d_k^\new-(q_t+t)}-\tfrac 12\big((\tfrac 12d_k^\new-q_t)^2-(\tfrac 12d_k^\new- (q_t+t))^2 \big),
				\end{align*}
				which gives (\ref{E:sum of vp wk-wk' for k' bad weights}) in this case.
			\end{enumerate}
\underline{Case 2}: Keep the setup as in Proposition~\ref{P:estimate of eta(j)}(2). Every bad weight $k_\alpha'\in \calS$ satisfies one of the following conditions: (a) $\tfrac 12 d_{k_\alpha'}^\Iw\in (n-j,n)\subset (n-j,d_k^\Iw-d_k^\ur)$; (b) either $d_{k_\alpha'}^\ur$ or $d_{k_\alpha'}^\Iw-d_{k_\alpha'}^\ur$ belongs to $(n-j,n^\ast)\subset (n-j, \tfrac 12 d_k^\Iw)$. Note that if $\tfrac 12 d_{k_\alpha'}^\Iw\in [d_k^\ur, d_k^\Iw-d_k^\ur)$, $k_\alpha'$ satisfies condition $(1)$ in Lemma~\ref{L:criterion to determine vp(wk-wk') leq gamma}; if one of the integers $d_{k_\alpha'}^\ur$ and $\tfrac 12 d_{k_\alpha'}^\Iw$ belongs to $(n-j, d_k^\ur)$ or if $d_{k_\alpha'}^\Iw-d_{k_\alpha'}^\ur$ belongs to $(n-j,\tfrac 12 d_k^\Iw)$, $k_\alpha'$ satisfies condition $(2)$ in Lemma~\ref{L:criterion to determine vp(wk-wk') leq gamma}; if $d_{k_\alpha'}^\ur\in [d_k^\ur, \tfrac 12 d_k^\Iw)$, $k_\alpha'$ satisfies condition $(3)$ in Lemma~\ref{L:criterion to determine vp(wk-wk') leq gamma}. By Lemma~\ref{L:criterion to determine vp(wk-wk') leq gamma}, if we set $\gamma\coloneqq \big\lfloor \frac{\ln ((p+1)(\frac 12 d_k^\new))}{\ln p}+1 \big\rfloor$,  we have $v_p(w_k-w_{k'_\alpha})\leq \gamma$ for all $\alpha\in \{1,\dots, t \}$. Note that if $\frac 12 d_k^\new =1$, we have $m_n(k)\leq 1$ and Proposition~\ref{P:estimate of eta(j)}(2) is trivial. So we can assume $\tfrac 12 d_k^\new\geq 2$ and then apply Proposition~\ref{P:Delta - Delta'} to $\ell=\frac 12 d_k^\new-t<\ell'=\ell''=\frac 12 d_k^\new$ and we get 
		\[
		\sum_{\alpha=1}^t v_p(w_k-w_{k_\alpha'})\leq t\cdot\gamma\leq \Delta_{k,\frac 12 d_k^\new}-\Delta_{k,\frac 12 d_k^\new-t}-\tfrac 12\big((\tfrac 12d_k^\new)^2-(\tfrac 12d_k^\new-t)^2 \big),
		\]
		which gives (\ref{E:sum of vp wk-wk' for k' bad weights alternative version}).

		Now we complete the proof of Lemma~\ref{L:estimate of sum of vp(wk-wkalpha) for kalpha bad weights}. \hfill $\Box$

\smallskip
With Lemma~\ref{L:estimate of sum of vp(wk-wkalpha) for kalpha bad weights} proved, we complete the proof of Proposition~\ref{P:estimate of eta(j)} and Theorem~\ref{T:local theorem}.

\section{Trianguline deformation space and crystalline slopes}
\label{Sec:Galois eigenvarieties}

In this section, we recall the trianguline deformation space defined by Breuil--Hellman--Schraen \cite{breuil-hellmann-schraen} in \S~\ref{S:trainguline deformation space} and Pa\v sk\= unas module in \S~\ref{S:Paskunas functor}.  We then compare the trianguline deformation space with the eigenvariety attached to Pa\v sk\= unas' universal deformation of representations of $\GL_2(\QQ_p)$ \cite{paskunas-functor} in \S~\ref{S:comparison with trianguline deformation}. This together with the known $p$-adic local Langlands correspondence for $\GL_2(\QQ_p)$ allows us to transport the local ghost theorem to results regarding slopes on trianguline deformation spaces (see Theorem~\ref{T:generalized BBE} below).

The argument in this section is relatively well known to experts, but some of the awkward arguments are inserted to treat central characters for completeness.


\begin{notation}
\label{N:weight space}
As in previous sections, let $p$ be an odd prime, and let $E, \calO, \FF$ be coefficient rings as in \S\,\ref{notation}.
For a formal $\calO$-scheme $\Spf(R)$, let $\Spf(R)^\rig$ denote the associated rigid analytic space over $E$.
We will later frequently write $E'$ to mean a finite extension of $E$, typically in the situation of referring to a point of $\Spf(R)^\rig$ over $E'$; we will freely do so without defining $E'$, and in such case, we use $\calO'$, $\varpi'$, and $\FF'$ denote the corresponding ring of integers, a uniformizer, and the residue field, respectively.

For a crystabelline representation $V$ of $\Gal_{\QQ_p}$ (with coefficients in $E'$), write $\DD_\pcrys(V)$ for the limit of the crystalline functor over $\QQ_p(\mu_{p^n})$ with $n$ sufficiently large.

We normalize the local class field theory so that the Artin map $\QQ_p^\times \to \Gal_{\QQ_p}^\mathrm{ab}$ sends $p$ to the \emph{geometric} Frobenius. In what follows, we will practically identify characters of $\QQ_p^\times$ (with values in $\calO^\times$ or $\FF^\times$) and characters of $\Gal_{\QQ_p}$.

We recall the following notations for local Galois representations: 
\begin{itemize}
\item For $R$ a $p$-adically complete ring and $\alpha \in R^\times$, write $\unr(\alpha): \Gal_{\QQ_p} \to R^\times$ for the unramified representation sending the geometric Frobenius element to $\alpha$.
\item Let $\omega_1: \Gal_{\QQ_p}\to \Gal(\QQ_p(\mu_p)/\QQ_p) \cong \FF_p^\times$ denote the \emph{first fundamental character}.
\item 
Let $\chi_\cycl: \QQ_p^\times \subset \Gal_{\QQ_p}^\mathrm{ab} \to \Gal(\QQ_p(\mu_{p^\infty})/\QQ_p) \cong \ZZ_p^\times$ denote the cyclotomic character; its reduction modulo $p$ is precisely $\omega_1$.
\end{itemize}

Recall $\Delta:= \FF_p^\times$,  the isomorphism $
\calO\llbracket (1+p\ZZ_p)^\times\rrbracket \cong  \calO\llbracket w \rrbracket$, and the universal character $\chi_\univ^{(\varepsilon)}: \Delta \times \ZZ_p^\times \to \calO\llbracket w \rrbracket^{(\varepsilon),\times}$ associated to a character $\varepsilon$ of $\Delta^2$ from \S\,\ref{S:arithmetic forms}(1).
For each $\varepsilon$, call $\calW^{(\varepsilon)}: = (\Spf \calO\llbracket w \rrbracket^{(\varepsilon)})^\rig$ the \emph{weight space labeled by $\varepsilon$}. 
Put $\calW: = \bigcup_{\varepsilon} \calW^{(\varepsilon)}$; it parameterizes continuous characters of $\Delta \times \ZZ_p^\times$. Write $\chi_\univ: \Delta \times \ZZ_p^\times \to \calO_\calW^\times$ for the universal character. 
Put $\calW_0: = (\Spf \calO\llbracket w \rrbracket)^\rig$, parameterizing continuous characters of $(1+p\ZZ_p)^\times$.

Let $\widetilde \calW : =( \Spf\calO\llbracket( \ZZ_p^\times)^2 \rrbracket)^\rig$ be the rigid analytic space parameterizing continuous characters of $(\ZZ_p^\times)^2$. There is a natural isomorphism
\begin{equation}
\label{E:W vs tilde W}
\begin{tikzcd}[row sep=0pt] \calW\times \calW_0
 \ar[r, "\cong"] &\widetilde \calW
\\
(\chi, \eta) \ar[r, mapsto] & \big(\ (\alpha, \delta) \mapsto \alpha \cdot \chi(\bar \delta, \alpha) \cdot \eta(\alpha \delta \omega(\bar \alpha\bar \delta)^{-1})\ \textrm{ for } \alpha,\delta \in \ZZ_p^\times\big).
& 
\end{tikzcd}
\end{equation}
\emph{Here, we used $\chi(\bar \delta, \alpha)$ as opposed to $\chi(\bar \alpha, \delta)$ because our later convention uses the lower triangular matrix local analytic Jacquet functor. The additional factor $\alpha$ at the beginning indicates a twist by cyclotomic character in our convention.}
Under this isomorphism, we may view $\calW$ as a subspace of $\widetilde \calW$ where the universal character is trivial on $\{1\} \times (1+p\ZZ_p)^\times$; and at the same time, we have a projection map $\pr_W: \widetilde \calW \to \calW$, along $\calW_0$.

Later, we often consider a rigid analytic space $\calX$ and the morphism $\id_\calX \times \pr_W: \calX \times \widetilde \calW \to \calX \times \calW$; we write $\pr_W$ for it when no confusion arises.
\end{notation}

\begin{notation}
\label{N:bar rp}
For the rest of this paper, we use $\bar r_p: \Gal_{\QQ_p} \to \GL_2(\FF)$ to denote a reducible and generic residual representation
$$\bar r_p  = \begin{pmatrix}
\unr(\bar \alpha_1)\omega_1^{a+b+1} & *\\ 0 & \unr(\bar \alpha_2)\omega_1^{b}
\end{pmatrix}: \Gal_{\QQ_p} \to \GL_2(\FF)$$ with $a \in \{1, \dots, p-4\}$, $b\in \{0, \dots, p-2\}$, and $\bar \alpha_1, \bar \alpha_2 \in \FF^\times$. We say $\bar r_p$ is \emph{split} if $*=0$ and \emph{nonsplit} if $*\neq 0$.  The genericity condition on $a$ ensures that there is a unique such nontrivial extension when $\bar r_p$ is nonsplit.

Write the associated Serre weight (a \emph{right} $\FF[\GL_2(\FF_p)]$-module) $\bbsigma = \sigma_{a,b} = \Sym^a \FF^{\oplus 2} \otimes \det ^b$. (It is the unique Serre weight for $\bar r_p$ when the extension is nonsplit.)

We occasionally use a companion representation
$$ \bar r'_p = 
\begin{pmatrix}
\unr(\bar \alpha_1)\omega_1^{a+b+1} & 0\\ *\neq 0 & \unr(\bar \alpha_2)\omega_1^{b}
\end{pmatrix}
$$
This will change the parameters $(a,b)$ to $(a',b') = (p-3-a, a+b+1)$. The corresponding Serre weight is $\bbsigma': = \sigma_{p-3-a, a+b+1}$.
\end{notation}

\subsection{Trianguline deformation spaces}\label{S:trainguline deformation space}

Let $\calT$ denote the rigid analytic space parameterizing continuous characters of $(\QQ_p^\times)^2$, or more precisely,
\begin{equation}
\label{E:calT}
\calT  = \big( \GG_m^{ \rig} \times (\Spf\ZZ_p\llbracket \ZZ_p^\times\rrbracket)^\rig \big)^2 \cong (\GG_m^\rig)^2 \times \widetilde \calW,
\end{equation}
where $\GG_m^{\rig} = \bigcup\limits_{n \in \ZZ_{\geq 1}} \Spm\big(\QQ_p\langle \frac u{p^n}, \frac{p^n}u\rangle \big)$ is the rigid analytic $\GG_m$. 
The point on $\calT$ associated to a character $(\delta_1, \delta_2): (\QQ_p^\times)^2 \to \CC_p^\times$ is $(\delta_1(p),\delta_2(p), \delta_1|_{\ZZ_p^\times}, \delta_2|_{\ZZ_p^\times})$.
There is a natural \emph{weight map} $\wt: \calT \to \widetilde \calW$.
 Define $\calT_\reg$ to be the Zariski open subspace of $\calT$, where neither $\delta_1 /\delta_2$ nor $\delta_2/\delta_1$ is 
a character of $\QQ_p^\times$ in the following list:
$$
x\mapsto x^n \textrm{ and } x\mapsto x^{n}\chi_\cycl(x) \textrm{ with }n \in \ZZ_{\geq 0}.
$$

Let $\bar r_p$ be as in Notation~\ref{N:bar rp}. Let $R_{\bar r_p}^\square$ denote the framed deformation ring of $\bar r_p$ parameterizing deformations of $\bar r_p$ into matrix representations of $\Gal_{\QQ_p}$ with coefficients in complete noetherian local $\calO$-algebras. Then the Krull dimension of $R_{\bar r_p}^\square$ is $9$. Let $V_\univ^\square$ denote the universal (matrix) representation over $R_{\bar r_p}^\square$.

Let $\calX_{\bar r_p}^\square$ denote the rigid analytic space over $E$ associated to the formal scheme $\Spf R_{\bar r_p}^\square$; it has dimension $8$. Write $\calV_\univ^\square$ for the associated universal representation over $\calX_{\bar r_p}^\square$. For a point $x \in \calX_{\bar r_p}^\square$ over $E'$, write $\calV_x$ for universal Galois representation of $\Gal_{\QQ_p}$ over $E'$ at $x$.

Following \cite[Definition~2.4]{breuil-hellmann-schraen}, we define the trianguline deformation space as follows.

\begin{definition}
\label{D:trianguline deformation space}
Let $U_{\bar r_p, \reg}^{\square, \tri}$ denote the set of closed points $(x,\delta_1, \delta_2) \in \calX_{\bar r_p}^\square \times \calT_\reg$ (with some residue field $E'$) such that  the associated $(\varphi, \Gamma)$-module $\DD_\rig^\dagger(\calV_x)$ sits in an exact sequence
\begin{equation}
\label{E:triangulation}
0 \to \calR_{E'}(\delta_1) \to \DD_\rig^\dagger(\calV_x) \to \calR_{E'}(\delta_2) \to 0,
\end{equation}
where $\calR_{E'}$ is the Robba ring for $\QQ_p$ with coefficients in $E'$; see \cite[\S\,6]{KPX} and \cite{liu} for the notation $\calR_{E'}(-)$ and related discussions on triangulations of $(\varphi, \Gamma)$-modules.

The \emph{trianguline deformation space of $\bar r_p$}, denoted by $\calX_{\bar r_p}^{\square, \tri}$, is the Zariski closure of $U_{\bar r_p, \reg}^{\square, \tri}$ inside the product $\calX_{\bar r_p}^\square \times \calT$.
\end{definition}

\begin{proposition}
\phantomsection
\label{P:properties of trianguline deformation space}
\begin{enumerate}
\item The space $\calX_{\bar r_p}^{\square, \tri}$ is a subspace of $\calX_{\bar r_p}^\square \times \calT$ consisting of points $(x, \delta_1, \delta_2)$ for which $\det(\calV_x)$ corresponds to $\delta_1 \delta_2$ under local class field theory.
Moreover, set
$\calX_{\bar r_p}^{\square, \tri, \circ}: = \calX_{\bar r_p}^{\square, \tri} \cap \big( \calX_{\bar r_p} ^\square\times (\GG_m^\rig)^2 \times \calW\big)$, then \eqref{E:W vs tilde W} induces an isomorphism
$$
\begin{tikzcd}[row sep = 0pt]
\calX_{\bar r_p}^{\square, \tri, \circ} \times \calW_0 \ar[r] & \calX_{\bar r_p}^{\square, \tri}
\\
\big(
(\calV_x, \delta_1, \delta_2), \eta\big) \ar[r, mapsto] & (\calV_x \otimes \eta, \delta_1 \otimes \eta,\delta_2\otimes \eta),
\end{tikzcd}
$$
which is compatible with projections to the factor $(\GG_m^\rig)^2$.
\item The set $U_{\bar r_p, \reg}^{\square, \tri}$ is the set of closed points of a Zariski open and dense subspace $\calU_{\bar r_p, \reg}^{\square, \tri}$ of $\calX_{\bar r_p}^{\square, \tri}$. The space $\calX_{\bar r_p}^{\square, \tri}$ is equidimensional of dimension $7$.
\end{enumerate}
\end{proposition}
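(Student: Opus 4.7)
The plan is to handle the two parts separately, with Part (1) being essentially bookkeeping that reduces to closedness and twisting, and Part (2) invoking the standard theory of trianguline varieties.

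For Part (1), the determinant condition is established by taking determinants of the exact sequence \eqref{E:triangulation}: on the Zariski dense subset $U_{\bar r_p,\reg}^{\square,\tri}$ one obtains $\det(\calV_x) = \delta_1\delta_2$ as characters of $\Gal_{\QQ_p}$ (via the local class field theory identification fixed in Notation~\ref{N:weight space}). Since this equality cuts out a Zariski closed subspace of $\calX_{\bar r_p}^\square \times \calT$, it necessarily holds on the Zariski closure $\calX_{\bar r_p}^{\square,\tri}$. For the product decomposition, I would define the twisting action of $\calW_0$ on $\calX_{\bar r_p}^{\square,\tri}$ by sending $((\calV_x, \delta_1, \delta_2), \eta)$ to $(\calV_x\otimes \eta, \delta_1\eta, \delta_2\eta)$, where $\eta$ is viewed simultaneously as a character of $(1+p\ZZ_p)^\times$ and (via local class field theory) of $\Gal_{\QQ_p}$; this preserves the triangulation and the determinant condition. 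The subspace $\calX_{\bar r_p}^{\square,\tri,\circ}$ is the fiber over $\calW \subseteq \widetilde\calW$, i.e. the section in \eqref{E:W vs tilde W} where the $\calW_0$-component is trivial; the claim that the twisting map is an isomorphism then follows formally from the canonical product decomposition \eqref{E:W vs tilde W} of $\widetilde\calW$.

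For Part (2), the Zariski openness of $\calU_{\bar r_p,\reg}^{\square,\tri}$ in $\calX_{\bar r_p}^{\square,\tri}$ follows from the definition of $\calT_\reg$: the regularity condition excludes a countable, locally finite union of Zariski closed subspaces of $\calT$, which pulls back to a Zariski closed subspace of $\calX_{\bar r_p}^{\square,\tri}$; density is automatic because $U_{\bar r_p,\reg}^{\square,\tri}$ is, by construction, Zariski dense in $\calX_{\bar r_p}^{\square,\tri}$. The equidimensionality and the dimension formula are the heart of the statement and are established by the following strategy (building on Kedlaya--Pottharst--Xiao \cite{KPX}, Liu \cite{liu}, and Breuil--Hellmann--Schraen \cite{breuil-hellmann-schraen}): at a point of $U_{\bar r_p,\reg}^{\square,\tri}$ the triangulation of $\DD_\rig^\dagger(\calV_x)$ is unique thanks to the regularity assumption, and the local structure of $\calX_{\bar r_p}^{\square,\tri}$ near such a point can be described via a trianguline deformation functor whose tangent space and obstructions are controlled by $(\varphi,\Gamma)$-cohomology groups of extensions between $\calR_{E'}(\delta_i)$. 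A direct computation using Tate's Euler characteristic formula for the Robba ring gives smoothness of dimension $7 = n^2 + \tfrac{n(n+1)}{2}$ for $n=2$, which matches $\dim \calX_{\bar r_p}^\square - 1 = 7$ (one dimension lost by the determinant constraint being replaced by the two-character datum).

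The main obstacle I expect is not the dimension count at regular points (which is a clean Galois cohomology computation via the Euler characteristic formula) but the propagation of equidimensionality from the smooth regular locus $\calU_{\bar r_p,\reg}^{\square,\tri}$ to its Zariski closure. This requires invoking global triangulation results \cite{KPX,liu}, which ensure that every irreducible component of $\calX_{\bar r_p}^{\square,\tri}$ meets $\calU_{\bar r_p,\reg}^{\square,\tri}$ (so no spurious low-dimensional components arise from the closure operation), together with a standard argument showing that the closure of each smooth irreducible component of dimension $7$ remains of dimension $7$ in the rigid analytic setting. Since the relevant results are by now standard in the literature, I would cite \cite{breuil-hellmann-schraen} for the final package and only sketch the reduction to the regular locus.
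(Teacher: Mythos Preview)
Your proposal is correct and matches the paper's approach: for (1) you argue exactly as the paper does (the determinant condition holds on the dense set $U_{\bar r_p,\reg}^{\square,\tri}$ and hence on its Zariski closure, and the twisting decomposition is formal from \eqref{E:W vs tilde W}), and for (2) you end up citing \cite{breuil-hellmann-schraen}, which is precisely what the paper does without further elaboration. One small caution: your argument that $\calU_{\bar r_p,\reg}^{\square,\tri}$ is open ``from the definition of $\calT_\reg$'' only shows that the preimage of $\calT_\reg$ is open; identifying this preimage with $\calU_{\bar r_p,\reg}^{\square,\tri}$ (i.e., showing that every point over $\calT_\reg$ in the closure actually carries a triangulation) is part of the content of \cite[Th\'eor\`eme~2.6]{breuil-hellmann-schraen}, so it is right to defer to that reference rather than claim it as immediate.
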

\begin{proof}
(1) obviously holds for points in $U_{\bar r_p,\reg}^{\square, \tri}$ and hence for $\calX_{\bar r_p}^{\square, \tri}$. (2) is proved in \cite[Th\'eor\`em~2.6]{breuil-hellmann-schraen}.
\end{proof}

The main theorem of this section is the following.
\begin{theorem}
\label{T:generalized BBE}
Assume that $p \geq 11$. Let $\bar r_p: \Gal_{\QQ_p} \to \GL_2(\FF)$ be a residual local Galois representation as in Notation~\ref{N:bar rp} with $2 \leq a \leq p-5$, and let $\bbsigma$ be the Serre weight therein. Let $\calX_{\bar r_p}^{\square, \tri}$ be the trianguline deformation space defined above.
For every $E'$-point $\underline x = (x, \delta_1, \delta_2)$ of $\calX_{\bar r_p}^{\square, \tri}$, we have
\begin{itemize}
\item[(a)] the character $\varepsilon : = \delta_2|_{\Delta} \times \delta_1|_\Delta\cdot \omega^{-1}$ is relevant to $\bbsigma$, and 
\item[(b)] the image of $\underline x$ in $\calW$ under $\pr_W$ is $w_\star: = (\delta_1\delta_2^{-1}\chi_\cycl^{-1})(\exp(p)) -1$.
\end{itemize}
Then the following statement holds.
\begin{enumerate}
\item If $v_p(\delta_1(p)) = -v_p(\delta_2(p))>0$, then $v_p(\delta_1(p))$ is equal to a slope appearing in the Newton polygon $\NP\big(G_{\bbsigma}^{(\varepsilon)}(w_\star, -)\big)$.
\item If $v_p(\delta_1(p)) = 0$, then either $\varepsilon = \omega^b \times \omega^{a+b}$, or $\varepsilon = \omega^{a+b+1} \times \omega^{b-1}$ and $\bar r_p|_{\rmI_{\QQ_p}}$ is split.
\item If $v_p(\delta_1(p)) = \frac {k-2}2$ and $\delta_1|_{\ZZ_p^\times} = \chi_\cycl^{k-1}\delta_2|_{\ZZ_p^\times}$ for some integer $k \geq 2$, then $\delta_1(p)=p^{k-2}\delta_2(p)$.
\end{enumerate}
Conversely, fix characters $\delta_1|_{\ZZ_p^\times}$ and $\delta_2|_{\ZZ_p^\times}$ such that $\varepsilon$ defined above is relevant to $\bbsigma$. Then every nonzero slope of $\NP\big(G_{\bbsigma}^{(\varepsilon)}(w_\star, -)\big)$ for $w_\star: = (\delta_1\delta_2^{-1}\chi_\cycl^{-1})(\exp(p)) -1$, appears as $v_p(\delta_1(p))$ at some closed point $\underline x = (x, \delta_1, \delta_2) \in \calX_{\bar r_p}^{\square, \tri}$ (for some continuous characters $\delta_1, \delta_2$ of $\QQ_p^\times$ extending the given $\delta_1|_{\ZZ_p^\times}$ and $\delta_2|_{\ZZ_p^\times}$).
\end{theorem}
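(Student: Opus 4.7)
The plan is to transport Theorem~\ref{T:local theorem} to the trianguline deformation space via the $p$-adic local Langlands correspondence and the comparison between $\calX_{\bar r_p}^{\square,\tri}$ and an eigenvariety built from a Pa\v sk\=unas module. First I would reduce to the reducible \emph{nonsplit} case. Let $\widetilde P$ be Pa\v sk\=unas' projective envelope attached to $\bar r_p$ (nonsplit case), and let $R_{\bar r_p}\cong \End(\widetilde P)$ be the universal deformation ring. By the Hu--Pa\v sk\=unas result cited after Theorem~\ref{T:local ghost theorem}, there exists $x\in\gothm_{R_{\bar r_p}}$ such that for every finite extension $\calO'$ of $\calO$ and every $x_\star\in \gothm_{\calO'}$, the quotient $\widetilde P_{\calO'}/(x-x_\star)\widetilde P_{\calO'}$ is a primitive $\calO'\llbracket\rmK_p\rrbracket$-projective augmented module of type $\bbsigma$. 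Since $\Matrix{p}{0}{0}{p}$ acts on $\widetilde P$ through a character that we may absorb by twisting, I may assume Hypothesis~\ref{H:b=0} holds throughout.

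Next I would invoke the forthcoming Theorem~\ref{T:Xtri = X}, which identifies $\calX_{\bar r_p}^{\square,\tri}$ with the eigenvariety obtained by applying Emerton's (lower-triangular) locally analytic Jacquet functor to $\widetilde P$. Under this identification, a closed point $\underline x=(x,\delta_1,\delta_2)$ of $\calX_{\bar r_p}^{\square,\tri}$ with $v_p(\delta_2(p))\neq 0$ corresponds to a system of Hecke eigenvalues occurring on the space of abstract overconvergent forms $\rmS^{\dagger,(\varepsilon)}_{\widetilde P_{\calO'}/(x-x_\star)}$, where $\varepsilon = \delta_2|_\Delta\times \delta_1|_\Delta\cdot\omega^{-1}$ is forced to be relevant to $\bbsigma$ because $\widetilde P$ has type $\bbsigma$, the evaluation point on the weight disk is $w_\star=(\delta_1\delta_2^{-1}\chi_\cycl^{-1})(\exp(p))-1$ (unwinding the isomorphism $\calO\llbracket(1+p\ZZ_p)^\times\rrbracket\cong\calO\llbracket w\rrbracket$), and $U_p$ acts by $\delta_2(p)^{-1}$ in our normalization (compare \S\,\ref{S:normalization}). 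Hence $v_p(\delta_1(p))=-v_p(\delta_2(p))$ is a slope of $\NP(C^{(\varepsilon)}_{\widetilde P_{\calO'}/(x-x_\star)}(w_\star,-))$. The crucial observation is that \emph{this characteristic power series is independent of the choice of $x_\star$}, so Theorem~\ref{T:local theorem} applied to each primitive module yields that the slopes coincide with those of $\NP(G^{(\varepsilon)}_{\bbsigma}(w_\star,-))$, proving statement (1). For the converse in the nonsplit case, running the same dictionary in reverse: each nonzero slope of $\NP(G^{(\varepsilon)}_{\bbsigma}(w_\star,-))$ is a $U_p$-slope of some overconvergent form for some $\widetilde P_{\calO'}/(x-x_\star)$, which produces a closed point on the eigenvariety and hence on $\calX_{\bar r_p}^{\square,\tri}$, with the prescribed $v_p(\delta_1(p))$.

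Statement (3) is a reformulation of the classicality boundary in Proposition~\ref{P:theta and AL}(1): if $v_p(\delta_1(p))=\tfrac{k-2}{2}$ with $\delta_1|_{\ZZ_p^\times}=\chi_\cycl^{k-1}\delta_2|_{\ZZ_p^\times}$, then the triangulation \eqref{E:triangulation} is of the critical type coming from the theta map, and the identification of the upper and lower $U_p$-operators forces $\delta_1(p)=p^{k-2}\delta_2(p)$. Statement (2), the slope-zero/ordinary case, follows from Proposition~\ref{P:simple ghost}(1), which ensures that $c_1^{(\varepsilon)}$ is a unit only when $\varepsilon=1\times\omega^a$ (after the untwisting that put us in Hypothesis~\ref{H:b=0}); undoing the twist by $b$ returns the relevant character $\omega^b\times\omega^{a+b}$ in statement (2).

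The main obstacle is the reducible \emph{split} case, where Theorem~\ref{T:local theorem} does not directly apply. I would handle this by invoking Proposition~\ref{P:ghost series identity}: the companion representation $\bar r_p'$ is reducible nonsplit generic with parameters $(a',b')=(p-3-a,a+b+1)$, and its ghost series $G^{(\varepsilon)}_{\bbsigma'}(w,t)$ differs from $G^{(\varepsilon)}_{\bbsigma}(w,t)$ only in the slope-zero part, and only for the two exceptional characters $\varepsilon\in\{\omega^b\times\omega^{a+b},\,\omega^{a+b+1}\times\omega^{b-1}\}$. Since a closed point $(x,\delta_1,\delta_2)\in \calX_{\bar r_p}^{\square,\tri}$ with $\bar r_p$ split but $\calV_x$ having irreducible (or nonsplit) generic fiber is \emph{also} a point of $\calX_{\bar r_p'}^{\square,\tri}$ (both deformation rings parameterize deformations of the same semisimplification along the open trianguline locus, cf.\ \S\,\ref{S:reduction to nonsplit case} referenced in the remark following Proposition~\ref{P:ghost series identity}), the nonzero slopes on $\calX_{\bar r_p}^{\square,\tri}$ are governed by $\NP(G^{(\varepsilon)}_{\bbsigma'}(w_\star,-))$ which, by Proposition~\ref{P:ghost series identity}(2), agrees with $\NP(G^{(\varepsilon)}_{\bbsigma}(w_\star,-))$ away from slope zero. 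This proves (1) and the converse in the split case, and simultaneously explains the extra possibility ``$\varepsilon=\omega^{a+b+1}\times\omega^{b-1}$ with $\bar r_p|_{\rmI_{\QQ_p}}$ split'' in statement (2): it corresponds precisely to the extra slope-zero segment appearing in $G^{(\varepsilon)}_{\bbsigma}$ but not $G^{(\varepsilon)}_{\bbsigma'}$ in the exceptional character case of Proposition~\ref{P:ghost series identity}(2). The careful bookkeeping of which ghost series controls which component of the split case is where I expect the main technical difficulty, but the Zariski density of regular trianguline points in $\calX_{\bar r_p}^{\square,\tri}$ (Proposition~\ref{P:properties of trianguline deformation space}(2)) together with continuity of Newton polygons will allow me to conclude by approximating arbitrary points by regular ones.
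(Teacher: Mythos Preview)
Your overall architecture matches the paper's: reduce to the nonsplit case, build the Pa\v sk\=unas module $\widetilde P^\square$, identify $\calX_{\bar r_p}^{\square,\tri}$ with the eigenvariety via Theorem~\ref{T:Xtri = X}, and apply the local ghost theorem to the specializations $\widetilde P_y^\square$. Statements (1), (2), and the converse follow this way essentially as you describe.

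The genuine gap is in your argument for statement (3). You invoke the theta map and ``critical type,'' but the theta map in Proposition~\ref{P:theta and AL}(1) controls slopes $\geq k-1$, not slope $\tfrac{k-2}{2}$; for $k\geq 2$ one has $\tfrac{k-2}{2}<k-1$, so the theta map says nothing here. The paper's argument is instead a multiplicity count: after normalizing so that $\delta_1(p)\delta_2(p)=1$, the hypothesis places $w_\star=w_k$ and asks whether every $U_p$-eigenvalue of slope $\tfrac{k-2}{2}$ on $\rmS^\Iw_{\widetilde P_y^\square,k}(\tilde\varepsilon_1)$ is exactly $\pm p^{(k-2)/2}$. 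By the local ghost theorem together with Proposition~\ref{P:ghost compatible with theta AL and p-stabilization}(5), slope $\tfrac{k-2}{2}$ has multiplicity exactly $d_k^\new(\varepsilon_1)$. Independently, Corollary~\ref{C:p-new slopes} (which comes from the $p$-stabilization identity \eqref{E:Up in terms of AL}, not from the theta map) shows that $\pm p^{(k-2)/2}$ each occur as genuine $U_p$-eigenvalues with multiplicity at least $\tfrac12 d_k^\new(\varepsilon_1)$. Comparing the two forces every slope-$\tfrac{k-2}{2}$ eigenvalue to be $\pm p^{(k-2)/2}$, whence $\delta_1(p)=p^{k-2}\delta_2(p)$.

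A smaller point: in the split case you pass to the companion $\bar r_p'$ with Serre weight $\bbsigma'$ and invoke Proposition~\ref{P:ghost series identity}, whereas the paper passes to the nonsplit extension $\bar r_p^{\mathrm{ns}}$ having the \emph{same} Serre weight $\bbsigma$, so no ghost-series comparison is needed. Your route works for irreducible $\calV_x$, but you should also handle reducible $\calV_x$ explicitly: the paper treats this case-by-case (cases (2a)--(2bii) in \S\ref{S:reduction to nonsplit case}), using Colmez's classification to show the slope must be $k-1$ and then directly exhibiting that slope in the ghost Newton polygon. Your appeal to Zariski density and continuity can be made to work for (1), but you would still need to check that the locus of irreducible $\calV_x$ is dense in $\calX_{\bar r_p}^{\square,\tri}$, which the paper does not state.
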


The proof of this theorem will occupy the rest of this section, and is concluded in \S\,\ref{S:proof of generalized BBE}. We quickly remark that case (1) corresponds to the case when $\calV_x$ is reducible, and case (3) mostly concerns the case when $\calV_x$ is semistable and noncrystalline (after a twist).

Temporarily admitting this theorem, we first deduce a couple of corollaries that partially answer a conjecture of Breuil--Buzzard--Emerton on crystalline slopes of Kisin's crystabelline deformation spaces and a conjecture of Gouv\^ea on slopes of crystalline deformation spaces.

\subsection{Kisin's crystabelline deformation space}
\label{S:Kisin's deformation}
Let $\bar r_p$, $R_{\bar r_p}^\square$, and $V_\univ^\square$ be as above.
Let $\underline \psi= \psi_1\times  \psi_2: (\ZZ_p^\times)^2 \to E^\times$ be a finite character (enlarging $E$ if needed to contain the image of $\underline \psi$), and let $\underline k = (k_1, k_2) \in \ZZ^2$ with $k_1<k_2$ be a pair of Hodge--Tate weights. (In our convention, $\chi_\cycl$ has Hodge--Tate weight $-1$.) In \cite{kisin-deform}, Kisin proved that there is a unique $\calO$-flat quotient $R_{\bar r_p}^{\square, \underline k, \underline \psi}$ of $R^\square_{\bar r_p}$, called \emph{the Kisin's crystabelline deformation ring}, such that every homomorphism $x^*: R_{\bar r_p}^{\square} \to E'$ factors through $R_{\bar r_p}^{\square, \underline k, \underline \psi}$ if and only if  $\calV_x$ is potentially crystalline with Hodge--Tate weights $(k_1, k_2)$ and the action of $\rmI_{\QQ_p}$ on $\DD_\mathrm{pcrys}(\calV_x)$ is isomorphic to $\psi_1 \oplus \psi_2$. (Here $\DD_\mathrm{pcrys}(-)$ is defined in Notation~\ref{N:weight space}.)  When $R_{\bar r_p}^{\square, \underline k, \underline \psi}$ is nonempty, each of its irreducible component has
Krull dimension $6$.  Moreover, the associated rigid analytic space $\calX_{\bar r_p}^{\square, \underline k, \underline \psi}:=\big(\Spf R_{\bar r_p}^{\square, \underline k, \underline \psi}\big)^\rig$ is smooth of dimension $5$ over $E$.


\begin{corollary}
\label{C:BBE slopes}
Assume that $p \geq 11$.
Let $\bar r_p: \Gal_{\QQ_p} \to \GL_2(\FF)$ be a residual local Galois representation as in Notation~\ref{N:bar rp} with $2 \leq a \leq p-5$, and let $\bbsigma$ be the Serre weight therein. Let $\underline \psi$ and $\underline k$ be as above, and let $x$ be an $E'$-point of $\calX_{\bar r_p}^{\square, \underline k, \underline \psi}$. Let $\alpha_x$ be an eigenvalue of the $\phi$-action on the subspace of $\DD_\mathrm{pcrys}(\calV_x)$ where $\Gal(\QQ_p(\mu_{p^\infty})/\QQ_p)$ acts through $\psi_1$. Write $w_\star: = (\psi_2\psi_1^{-1})(\exp(p))\cdot \exp(p(k_2-k_1-1)) -1$ (for the image of $x$ in $\calW$ under $\pr_W$).
Then the character $\varepsilon := \psi_2|_\Delta \cdot \omega^{-k_2} \times  \psi_1|_\Delta \cdot \omega^{-k_1-1}$ is relevant to $\bbsigma$, and 
\begin{enumerate}
\item if $k_2 - v_p(\alpha_x) \notin\{0, k_2-k_1\}$, then it is equal to a slope appearing in $\NP\big(G_{\bbsigma}^{(\varepsilon)}(w_{\star}, -)\big)$;
\item if $v_p(\alpha_x) \in \{k_1, k_2\}$, then $\calV_x$ is reducible; and
\item in the special case $\psi_1=\psi_2$, we have $v_p(\alpha_x) \neq \frac{k_2-k_1-1}2$.
\end{enumerate}

Conversely, every slope of $\NP\big(G_{\bbsigma}^{(\varepsilon)}(w_{\star}, -)\big)$  belonging to $(0, k_2-k_1)$ (but not equal to $\frac{k_2-k_1-1}2$ when $\psi_1 =\psi_2$) appears as the $k_2-v_p(\alpha_x)$ at some point $x \in \calX_{\bar r_p}^{\square, \underline k, \underline \psi}$.

\end{corollary}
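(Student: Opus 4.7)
The strategy is to realize each closed point of $\calX_{\bar r_p}^{\square, \underline k, \underline \psi}$, together with a choice of refinement, as a closed point of the trianguline deformation space $\calX_{\bar r_p}^{\square, \tri}$, and then to appeal to Theorem~\ref{T:generalized BBE}. For an $E'$-point $x$, the representation $\calV_x$ is potentially crystalline with Hodge--Tate weights $\{k_1, k_2\}$ and inertia type $\psi_1 \oplus \psi_2$, so the two-dimensional $(\phi, \Gal)$-module $\DD_\pcrys(\calV_x)$ has semisimple Frobenius and splits as the sum of its $\psi_1$- and $\psi_2$-isotypic lines. Choosing the $\phi$-eigenvalue $\alpha_x$ on the $\psi_1$-isotypic line, together with the complementary eigenvalue $\beta_x$ on the $\psi_2$-line, determines by the standard refinement recipe an explicit triangulation
\begin{equation*}
0 \to \calR_{E'}(\delta_1) \to \DD_\rig^\dagger(\calV_x) \to \calR_{E'}(\delta_2) \to 0,
\end{equation*}
whose characters $\delta_1, \delta_2$ are concretely computable from $(k_1, k_2, \psi_1, \psi_2, \alpha_x, \beta_x)$: their restrictions to $\Delta$ come from the inertia characters twisted by $\omega^{-k_i}$ (encoding the Hodge--Tate weights), and their values at $p$ encode $\alpha_x, \beta_x$ up to appropriate twists by powers of $p$, following exactly the template laid out in \S\,\ref{S:normalization}.

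With these formulas in hand, one verifies by a direct character computation that the $\varepsilon$ and $w_\star$ attached to the trianguline point $(x, \delta_1, \delta_2)$ by Theorem~\ref{T:generalized BBE} coincide with the expressions in the statement of the corollary; the relevance of $\varepsilon$ to $\bbsigma$ follows automatically because $\bar r_p$ carries $\bbsigma = \sigma_{a,b}$ as its Serre weight, which constrains the allowed inertia characters modulo $p$. Applying Theorem~\ref{T:generalized BBE}(1) gives that $v_p(\delta_1(p))$ is a slope of $\NP\big(G_{\bbsigma}^{(\varepsilon)}(w_\star, -)\big)$. Combined with $\delta_1(p)\delta_2(p) = \det\calV_x(p)$, the identity $v_p(\det\calV_x(p)) = k_1+k_2$ (the sum of the two Frobenius slopes on $\DD_\pcrys$), and the explicit form of $\delta_1(p)$ from the refinement, this translates into the statement that $k_2 - v_p(\alpha_x)$ is a slope of the ghost Newton polygon, proving (1).

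For part (2), $v_p(\alpha_x) \in \{k_1, k_2\}$ corresponds to $v_p(\delta_1(p)) \in \{0, k_2-k_1\}$: the first boundary case invokes Theorem~\ref{T:generalized BBE}(2) directly, and the second is handled by Atkin--Lehner symmetry (passing to the triangulation attached to the other eigenvalue $\beta_x$). In either scenario the triangulation is ordinary, and the constraints on $\varepsilon$ imposed by Theorem~\ref{T:generalized BBE}(2) force the triangulation to come from a genuine direct sum of Galois characters, so $\calV_x$ is reducible. For part (3), the hypothetical equality $v_p(\alpha_x) = (k_1+k_2+1)/2$ with $\psi_1 = \psi_2$, and with $k := k_2-k_1+1$, yields both $v_p(\delta_1(p)) = (k-2)/2$ and $\delta_1|_{\ZZ_p^\times} = \chi_\cycl^{k-1}\delta_2|_{\ZZ_p^\times}$; Theorem~\ref{T:generalized BBE}(3) then forces the relation $\delta_1(p) = p^{k-2}\delta_2(p)$, which is precisely the parameter relation defining the semistable non-crystalline locus in $\calX_{\bar r_p}^{\square, \tri}$. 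Such $(\varphi, \Gamma)$-modules are never potentially crystalline, contradicting $x \in \calX_{\bar r_p}^{\square, \underline k, \underline \psi}$.

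The converse direction inverts this construction. Given a slope $s \in (0, k_2-k_1)$ of $\NP(G_{\bbsigma}^{(\varepsilon)}(w_\star, -))$, not equal to $(k-2)/2$ when $\psi_1 = \psi_2$, the converse assertion of Theorem~\ref{T:generalized BBE} provides a trianguline point $(y, \delta_1, \delta_2)$ with $v_p(\delta_1(p)) = s$. The restrictions $\delta_i|_{\ZZ_p^\times}$, as determined by $\varepsilon$ and $w_\star$, encode exactly the Hodge--Tate weights $\{k_1, k_2\}$ and the inertia characters $\psi_1, \psi_2$; and the avoidance of $(k-2)/2$ in the $\psi_1 = \psi_2$ case rules out the semistable non-crystalline configuration of Theorem~\ref{T:generalized BBE}(3). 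Standard results on triangulations of regular parameter then force $\calV_y$ to be crystabelline with Hodge--Tate weights $\{k_1, k_2\}$ and inertia type $\psi_1 \oplus \psi_2$, whence $y \in \calX_{\bar r_p}^{\square, \underline k, \underline \psi}$ and the given slope is realized as $k_2 - v_p(\alpha_y)$. The main technical obstacle throughout will be the careful bookkeeping of normalization conventions (Hodge--Tate signs, which $\psi_i$ corresponds to which side of the triangulation, and the precise unramified twists in the formulas for $\delta_i(p)$) between Kisin's crystabelline deformation ring and the trianguline deformation space; once these are pinned down consistently with \S\,\ref{S:normalization}, the corollary reduces cleanly to Theorem~\ref{T:generalized BBE}.
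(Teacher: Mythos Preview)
Your strategy for part (1) and the converse matches the paper's: build a triangulation from the refinement $(\alpha_x,\psi_1)$, check that the resulting $(\varepsilon,w_\star)$ agree with the stated formulas, and invoke Theorem~\ref{T:generalized BBE} (respectively Colmez's classification $\mathscr{S}^{\mathrm{cris}}_+$ for the converse). For part (2), your detour through Theorem~\ref{T:generalized BBE}(2) is misguided: that statement only constrains $\varepsilon$, it does not say anything about $\calV_x$ being a direct sum. The paper instead uses the one-line weak-admissibility argument: if a Frobenius slope hits a Hodge--Tate weight, the corresponding eigenline is a weakly admissible subobject of $\DD_\pcrys(\calV_x)$, so $\calV_x$ is reducible.

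Part (3) has a genuine gap. You correctly extract from Theorem~\ref{T:generalized BBE}(3) the relation $\delta_1(p)=p^{k-2}\delta_2(p)$, hence $\delta_1/\delta_2 = x^{k-2}\chi_\cycl$; but you then assert that ``such $(\varphi,\Gamma)$-modules are never potentially crystalline''. This is false: the relation $\delta_1/\delta_2 = x^{k-2}\chi_\cycl$ is only the \emph{parameter} condition under which semistable non-crystalline extensions become available, and plenty of crystalline representations with $\alpha_x/\beta_x = p$ sit at exactly this parameter. So no contradiction with $x\in\calX_{\bar r_p}^{\square,\underline k,\underline\psi}$ arises from your argument. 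The paper instead runs a dimension count: if the locus $\calY\subset\calX_{\bar r_p}^{\square,\underline k,\underline\psi}$ where $v_p(\alpha_x)=(k_1+k_2+1)/2$ were nonempty, it would be an admissible open subdomain, hence $5$-dimensional, with tangent space $1+3+\dim\rmH^1_f(\Gal_{\QQ_p},\mathrm{Ad}(\calV_x))$ where the $\rmH^1_f$-direction is exactly the deformation of the Frobenius ratio $\alpha_x/\beta_x$. But Theorem~\ref{T:generalized BBE}(3) pins that ratio to $p$ on all of $\calY$, confining $\calY$ to a locus of strictly smaller dimension --- contradiction.
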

\begin{proof}

(1) Assume that $v_p(\alpha_x) \notin\{k_1, k_2\}$.
This essentially follows from Theorem~\ref{T:generalized BBE} because all crystabelline representations are trianguline. More precisely, let $x \in \calX_{\bar r_p}^{\square, \underline k,\underline \psi} (E')$ be a closed point. By possibly replacing $E'$ by a quadratic extension,
the action of crystalline Frobenius $\phi$ and $\Gal(\QQ_p(\mu_{p^\infty})/\QQ_p)$ on $\DD_\mathrm{pcrys}(\calV_x)$ have two (generalized) eigencharacters: $(\alpha_1, \psi_1)$ and $(\alpha_2, \psi_2)$, with $\psi_1, \psi_2$ in the data defining the deformation space and $\alpha_1, \alpha_2 \in E'^\times$. We assume that $(\alpha_1, \psi_1)$ is a genuine eigencharacter.  Define characters $\delta_i: \QQ_p^\times \to E'^\times$ with $i=1,2$ by
$$
\delta_i(p) = p^{-k_i}{\alpha_{3-i}},\quad  \delta_i|_{\ZZ_p^\times} = x^{-k_i}\psi_{3-i}.
$$
See \S\,\ref{S:normalization} for our convention which explains why we use $\alpha_{3-i}$ and $\psi_{3-i}$ here.
Standard facts of Berger's functor give rise to a triangulation
\begin{equation}
\label{E:triangulation Vx}
0 \to \calR_{E'}(\delta_1) \to \DD_\rig(\calV_x) \to \calR_{E'}(\delta_2) \to 0.
\end{equation}
(Indeed, if not, it must be that the eigenspace for $(\alpha_2, \psi_2)$ agrees with $\Fil^{k_2}\DD_\mathrm{pcrys}(\calV_x)$; then the admissibility condition for $\DD_\mathrm{pcrys}(\calV_x)$ forces $v_p(\alpha_1) = k_2$, contradicting  our assumption.)

Now, \eqref{E:triangulation Vx} upgrades $x$ to a point $(x, \delta_1, \delta_2)$ of $\calX_{\bar r_p}^{\square, \tri}$, for which $v_p(\delta_2(p)) = v_p(\alpha_1)-k_2$. (1) follows from Theorem~\ref{T:generalized BBE}, with
\begin{equation}
\label{E:w star for k1k2}
w_\star: = (\delta_1\delta_2^{-1}\chi_\cycl^{-1})(\exp(p))-1 =  (\psi_2\psi_1^{-1})(\exp(p))\exp(p(k_2-k_1-1)) -1.
\end{equation}

(2) If $v_p(\alpha_x) \in \{k_1, k_2\}$, the standard $p$-adic Hodge theory implies that $\calV_x$ is reducible.

(3) Assume that $\psi_1 = \psi_2$. Suppose that the subspace $\calY$ of $\calX_{\bar r_p}^{\square, \underline k, \underline \psi}$ where $v_p(\alpha_x) =\frac{k_2-k_1-1}2$ is nonempty. Then this is a smooth rigid analytic subdomain, in particular, $\dim \calY = 5$. 
This dimension can be also seen as follows: let $x$ be a closed point of $\calY$. The dimension of the tangent space of $\calX_{\bar r_p}^{\square, \underline k ,\underline \psi}$ at $x$ is equal to $1+3+\dim\rmH^1_f(\Gal_{\QQ_p}, \mathrm{Ad}(\calV_x))$, where $1$ comes from infinitesimal central twist of $\calV_x$ by an unramified character, $3$ comes from the framing variables, and the one-dimensional $\rmH^1_f(\Gal_{\QQ_p}, \mathrm{Ad}(\calV_x))$ corresponds to varying the ratio of two Frobenius eigenvalues.

However, for such $x \in \calY$, $\delta_1|_{\ZZ_p^\times} = \chi_\cycl^{k_2-k_1}\delta_2|_{\ZZ_p^\times}$. Theorem~\ref{T:generalized BBE}(3) implies that $\delta_1(p) = p^{k_2-k_1-1}\delta_2(p)$. This means that $\calY$ is confined in the subspace where the ratio of two Frobenius eigenvalues on $\DD_\pcrys(\calV_x)$ is precisely $p$. This contradicts with the earlier dimension computation of the tangent space at $x$. (3) is proved.

Conversely, given a slope of $\NP\big(G_{\bbsigma}^{(\varepsilon)}(w_{\star}, -)\big)$ belonging to $(0, k_2-k_1)$ (and not being equal to $\frac{k_2-k_1-1}2$ when $\psi_1=\psi_2$), Theorem~\ref{T:generalized BBE} defines a triangulation \eqref{E:triangulation Vx} with $\calV_x$ having the reduction $\bar r_p$. The slope condition implies that \eqref{E:triangulation Vx} belongs to the type $\mathscr{S}^\mathrm{cris}_+$ in \cite{colmez-trianguline}. So $\calV_x$ is crystabelline.
\end{proof}

\begin{remark}
\phantomsection
\begin{enumerate}
\item We omitted a full discussion when $\alpha_x \in \{k_1, k_2\}$, which is a standard exercise in $p$-adic Hodge theory.
\item 
(Possibly up to replacing $E$ by a degree $2$ extension when $\psi_1=\psi_2$), it is possible to embed $\calX_{\bar r_p}^{\square, \underline k, \underline \psi}$ into $\calX_{\bar r_p}^{\square, \tri}$ as a rigid analytic subspace, but this construction is a little messy to present, in the ordinary, critical, or Frobenius non-semisimple cases. We content ourselves with a pointwise description and leave the ``global" argument to interested readers.
\end{enumerate}
\end{remark}

The following answers positively a conjecture by Breuil--Buzzard--Emerton, and a conjecture of Gouv\^ea, when the residual Galois representation is reducible and generic. We refer to \S\,\ref{S:application A} and \S\,\ref{S:application B} for the discussion on their history, and Remarks~\ref{R:BBE} and \ref{R:k-1/p+1} for comments on previous related works.
\begin{corollary}
\label{C:BBE and gouvea k-1/p+1}
Assume that $p\geq 11$.
Let $\bar r_p: \Gal_{\QQ_p} \to \GL_2(\FF)$ be a residual local Galois representation as in Notation~\ref{N:bar rp} with $2 \leq a \leq p-5$. Let $\underline \psi$, $\underline k$, $x$, $\alpha_x$ be as in Corollary~\ref{C:BBE slopes}.  
\begin{enumerate}
\item 
If
$m$ denotes the minimal \emph{positive} integer such that $\psi_1\psi_2^{-1}$ is trivial on $(1+p^m\ZZ_p)^\times$, then
$$
v_p(\alpha_x) \in \begin{cases}
\big(\frac a2+\ZZ\big)\cup \ZZ & \textrm{ when }m=1,
\\ \frac{1}{(p-1)p^{m-1}} \ZZ & \textrm{ when }m \geq 2.
\end{cases}
$$
\item If $\psi_1 = \psi_2$, then 
$$v_p(\alpha_x) - k_1 \textrm{ or } k_2-v_p(\alpha_x) \textrm{ belongs to } \bigg[0,\, \Big\lfloor\frac{k_2-k_1-1-\min\{a+1, p-2-a\}}{p+1}\Big\rfloor \bigg].$$
\end{enumerate}
\end{corollary}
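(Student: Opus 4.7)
The plan is to deduce both parts from Corollary~\ref{C:BBE slopes}: after setting $\zeta := (\psi_2\psi_1^{-1})(\exp(p))$, so $w_\star = \zeta\cdot\exp(p(k_2-k_1-1))-1$, the corollary tells us that whenever $v_p(\alpha_x)\notin\{k_1,k_2\}$ the quantity $s := k_2-v_p(\alpha_x)\in(0,k_2-k_1)$ is a slope of $\NP\big(G_{\bbsigma}^{(\varepsilon)}(w_\star,-)\big)$. This reduces everything to combinatorial statements about the ghost slopes that are already available in the paper. The edge cases $v_p(\alpha_x)\in\{k_1,k_2\}$ are integers, so they satisfy (1) and also (2) (the left-hand side is $0$).

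For part~(1), I first observe that the definition of $m$ forces $\zeta$ to be a primitive $p^{m-1}$-st root of unity. When $m=1$, $\zeta=1$ and $w_\star$ equals the ghost zero $w_{k_2-k_1+1}$, so Proposition~\ref{P:near-steinberg equiv to nonvertex}(6) forces every slope of $\NP\big(G_{\bbsigma}^{(\varepsilon)}(w_\star,-)\big)$ into $\ZZ\cup(\tfrac a2+\ZZ)$, which is exactly the claimed set. When $m\geq 2$, I will check that for every weight $k'\equiv k_\varepsilon\pmod{p-1}$,
\[
v_p(w_\star-w_{k'}) \;=\; v_p\!\left(1-\zeta^{-1}\exp\!\big(p(k'-k_2+k_1-1)\big)\right) \;=\; \tfrac{1}{(p-1)p^{m-2}},
\]
since $\zeta^{-1}\exp(p\cdot(\text{integer}))$ is again a primitive $p^{m-1}$-st root of unity. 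Now Proposition~\ref{P:near-steinberg equiv to nonvertex}(5) shows every slope over a maximal near-Steinberg range sits in $\tfrac a2+\ZZ+\tfrac{1}{(p-1)p^{m-2}}\ZZ$, while a direct expansion $v_p\big(g_n^{(\varepsilon)}(w_\star)\big)=\tfrac{\deg g_n^{(\varepsilon)}}{(p-1)p^{m-2}}$ puts every vertex slope in $\tfrac{1}{(p-1)p^{m-2}}\ZZ$. Both sets lie in $\tfrac{1}{(p-1)p^{m-1}}\ZZ$, because $\tfrac{1}{(p-1)p^{m-2}}=\tfrac{p}{(p-1)p^{m-1}}$ and $\tfrac{a(p-1)}{2}\in\ZZ$. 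Since $k_2\in\ZZ$, the same containment holds for $v_p(\alpha_x)=k_2-s$.

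For part~(2), $\psi_1=\psi_2$ forces $\zeta=1$, so $w_\star=w_{k_0}$ with $k_0:=k_2-k_1+1$. Assume $v_p(\alpha_x)\notin\{k_1,k_2\}$ (otherwise the claim is trivial) and set $s:=k_2-v_p(\alpha_x)$. Corollary~\ref{C:BBE slopes}(3) rules out $v_p(\alpha_x)=(k_1+k_2+1)/2$, so $s\neq(k_0-2)/2$; but Proposition~\ref{P:ghost compatible with theta AL and p-stabilization}(5) says the middle $d_{k_0}^\new(\varepsilon_1)$ slopes of $\NP\big(G_{\bbsigma}^{(\varepsilon)}(w_{k_0},-)\big)$ are all exactly $(k_0-2)/2$, so $s$ must sit among the first $d_{k_0}^\ur(\varepsilon_1)$ or the last $d_{k_0}^\ur(\varepsilon_1)$ slopes. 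In the first case, Proposition~\ref{P:ghost compatible with theta AL and p-stabilization}(4) gives directly $k_2-v_p(\alpha_x)=s\leq\lfloor(k_0-1-\min\{a+1,p-2-a\})/(p+1)\rfloor$. In the second case, Proposition~\ref{P:ghost compatible with theta AL and p-stabilization}(3) pairs $s$ with a first-$d_{k_0}^\ur$ slope $s'$ satisfying $s+s'=k_0-1=k_2-k_1$, and the bound on $s'$ translates into the same bound on $v_p(\alpha_x)-k_1=(k_2-k_1)-s=s'$.

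The principal obstacle is the $m\geq 2$ analysis in part~(1): verifying that $v_p(w_\star-w_{k'})$ is genuinely independent of $k'$ (via the primitive-root-of-unity argument), and then combining Proposition~\ref{P:near-steinberg equiv to nonvertex}(5) with the vertex computation to capture \emph{all} slopes (both those inside and outside near-Steinberg ranges) inside $\tfrac{1}{(p-1)p^{m-1}}\ZZ$. Once that combinatorial check is in place, both parts of the corollary reduce to direct invocations of the ghost-slope structure already developed in Section~\ref{Sec:recollection of local ghost}.
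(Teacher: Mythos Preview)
Your proof is correct and follows the paper's strategy for part~(2) and for part~(1) with $m=1$. For part~(1) with $m\geq 2$ your route works but is more roundabout than the paper's, and contains one misstatement worth fixing.

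The claim ``$\zeta^{-1}\exp(p\cdot(\text{integer}))$ is again a primitive $p^{m-1}$-st root of unity'' is false: that element is generally not a root of unity at all. What you actually need (and what is true) is that for any $u\in 1+p\ZZ_p$ one has $v_p(1-\zeta^{-1}u)=v_p(1-\zeta^{-1})=\tfrac{1}{(p-1)p^{m-2}}$, simply because $v_p(1-\zeta^{-1})<1\leq v_p(\zeta^{-1}(u-1))$. With this correction your computation $v_p(w_\star-w_{k'})=\tfrac{1}{(p-1)p^{m-2}}$ stands.

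More to the point, once you know $v_p(g_n^{(\varepsilon)}(w_\star))=\deg g_n^{(\varepsilon)}\cdot\tfrac{1}{(p-1)p^{m-2}}$, the second-to-last line of Definition-Proposition~\ref{DP:dimension of classical forms}(4) (that $\deg g_n^{(\varepsilon)}-\deg g_{n-1}^{(\varepsilon)}$ is strictly increasing) shows \emph{every} point $(n,v_p(g_n^{(\varepsilon)}(w_\star)))$ is a vertex of the Newton polygon. Hence there are no near-Steinberg ranges at $w_\star$, and every slope is $\tfrac{\deg g_n^{(\varepsilon)}-\deg g_{n-1}^{(\varepsilon)}}{(p-1)p^{m-2}}\in\tfrac{1}{(p-1)p^{m-2}}\ZZ\subseteq\tfrac{1}{(p-1)p^{m-1}}\ZZ$. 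This is exactly the paper's argument; your separate invocation of Proposition~\ref{P:near-steinberg equiv to nonvertex} (you cite (5), but the relevant statement is (4)) is then vacuous, though not incorrect.
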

\begin{proof}
(1)
When $m=1$, this follows from Corollary~\ref{C:BBE slopes} 
and Proposition~\ref{P:near-steinberg equiv to nonvertex}(6). 
When $m\geq 2$, we have $v_p(w_\star) = \frac 1{(p-1)p^{m-1}}$, and the slopes of $\NP\big(G^{(\varepsilon)}_{\bbsigma}(w_{\star},-)\big)$ are precisely $v_p(w_\star) \cdot \big(\deg g_n^{(\varepsilon)} - \deg g_{n-1}^{(\varepsilon)} \big)$ for some $n \in \ZZ_{\geq 1}$ with multiplicity one, by the second last line of Definition-Proposition~\ref{DP:dimension of classical forms}(4). In this case, (1) follows from this and Corollary~\ref{C:BBE slopes}.

(2) If $\psi_1=\psi_2$, then $k_2-v_p(\alpha_1)$ is a slope of $\NP\big(G_{\bbsigma}^{(\varepsilon)}(w_{k_2-k_1+1}, -)\big)$ which is not $\frac{k_2-k_1-1}2$. By Proposition~\ref{P:ghost compatible with theta AL and p-stabilization}(3)(4), either $k_2 -v_p(\alpha_x)$ belongs to $\big[0,\, \big\lfloor\frac{k_2-k_1-1-\min\{a+1, p-2-a\}}{p+1}\big\rfloor \big]$, or $(k_2-k_1)-(k_2-v_p(\alpha_x)) = v_p(\alpha_x)-k_1$ belongs to this set.
\end{proof}

The rest of this section is devoted to proving Theorem~\ref{T:generalized BBE}, which is completed in \S\,\ref{S:proof of generalized BBE}.

\subsection{Reducing Theorem~\ref{T:generalized BBE} to the nonsplit case}
\label{S:reduction to nonsplit case}
We first show that Theorem~\ref{T:generalized BBE} for $\bar r_p$ nonsplit implies the theorem for $\bar r_p$ split. This is essentially because, at least pointwise for an irreducible trianguline representation, there are lattices for which the reductions are extensions of the two characters in either order.

To make this precise, we first note that the character $\varepsilon :=\delta_2|_\Delta \times \delta_1|_{\Delta}\cdot \omega^{-1}$ is always relevant to  $\bbsigma$ by considering the $\det \calV_x$. Next, by twisting all representations by $\omega \circ \omega_1^{-b}: \Gal_{\QQ_p} \to \FF_p^\times \to \calO^\times$, we may reduce to the case when $b=0$.

Now suppose that Theorem~\ref{T:generalized BBE} holds for nonsplit residual local Galois representations. Let $\bar r_p$ be a split residual local Galois representation as in Notation~\ref{N:bar rp} with $*=0$ and $b=0$. Then there is a unique nonsplit residual local Galois representation $\bar r_p^\mathrm{ns}$  which is an extension of $\unr(\bar \alpha_2)$ by $\unr(\bar \alpha_1) \omega^{a+1}$. Write $\bbsigma = \sigma_{a,0}$ as in Notation~\ref{N:bar rp}.

Let $\underline x = (x,\delta_1, \delta_2)$ be an $E'$-point of $\calU_{\bar r_p, \reg}^{\square, \tri}$ . (It is enough to verify Theorem~\ref{T:generalized BBE} for this Zariski open subspace $\calU_{\bar r_p, \reg}^{\square, \tri}$, because for every point $\underline x'$ of $\calX_{\bar r_p}^{\square, \tri}$, there is an affinoid subdomain containing $x$ on which $v_p(\delta_1(p))$ is constant and such subdomain intersects nontrivially with $\calU_{\bar r_p, \reg}^{\square, \tri}$ by Proposition~\ref{P:properties of trianguline deformation space}(2).)
We separate two cases.

(1) If $\calV_x$ is irreducible, then it is well known that, after possibly enlarging $E'$, $\calV_x$ admits an $\calO'$-lattice $\calV^\circ_x$ such that $
\calV_x^\circ / \varpi'\calV_x^\circ \simeq  \bar r_p^\mathrm{ns}$ (because there is a unique extension of the two characters in $\bar r_p$). It follows that $\underline x': = (\calV^\circ_x, \delta_1, \delta_2)$ defines a point on $\calU_{\bar r_p^\mathrm{ns}, \reg}^{\square, \tri}$. Theorem~\ref{T:generalized BBE} for $\underline x'$ implies Theorem~\ref{T:generalized BBE} for $\underline x$.

(2)
If $\calV_x$ is reducible, i.e. there exists an exact sequence $0 \to \calV_x^+ \to \calV_x \to \calV_x^- \to 0$ of representations of $\Gal_{\QQ_p}$. There are two possibilities: 
\begin{enumerate}
\item [(2a)] If $\delta_1(p) \in \calO'^\times$, then \eqref{E:triangulation} produces an exact sequence of Galois representations. In particular, $ \calR_{E'}(\delta_1)$ is isomorphic to either $\DD_\rig(\calV_x^+)$ or $\DD_\rig(\calV_x^-)$. This will imply that $\delta_2|_\Delta \times \delta_1|_\Delta \cdot \omega^{-1} = 1 \times \omega^a$ or $\omega^{a+1} \times \omega^{-1}$, directly verifying Theorem~\ref{T:generalized BBE}(2).

\item[(2b)] If $v_p(\delta_1(p)) >0$, this falls in the case of $\mathscr{S}_+^\mathrm{ord}$ per classification of trianguline  representations in \cite[\S\,1.2]{colmez-trianguline}. In particular, $v_p(\delta_1(p)) = w(\delta_1\delta_2^{-1}) \in \ZZ_{\geq 1}$, where $$ w(\delta_1\delta_2^{-1}): = \lim_{\substack{\gamma \in \ZZ_p^\times\\\gamma \to 1}}\frac{\log(\delta_1\delta_2^{-1})}{\log(\chi_\cycl(\gamma))}
$$ is the (negative of) generalized Hodge--Tate weight. (In \cite{colmez-trianguline}, Colmez calls $w(\delta_1\delta_2^{-1})$ the Hodge--Tate weight because in his convention the cyclotomic character has Hodge--Tate weight $1$.) Put $k: = w(\delta_1\delta_2^{-1})+1$. In this case, there is another triangulation
$$
0 \to t^{k-1}\calR_{E'}(\delta_2) \to \DD_\rig(\calV_x) \to t^{1-k}\calR_{E'}(\delta_1) \to 0,
$$
which produces precisely the exact sequence $0 \to \calV_x^+\to \calV_x \to \calV_x^- \to 0$. This in particularly shows that $v_p(\delta_1(p)) = k-1$ and that 
$$\varepsilon = \delta_2|_\Delta \times \delta_1|_\Delta \cdot \omega^{-1} =\omega^{a-k+2}  \times \omega^{k-2}.$$
In order to verify Theorem~\ref{T:generalized BBE}(1), we will show that, $k-1$ is a slope in $\NP\big( G_{\bbsigma}^{(\varepsilon)}(w_\star,-)\big)$, (by directly exhibiting such a slope).
There are two subcases we need to consider.

\item[(2bi)] If $\delta_1|_{(1+p\ZZ_p)^\times} =\delta_2|_{(1+p\ZZ_p)^\times}$, then $w_\star = (\delta_1\delta_2^{-1}\chi_\cycl^{-1})(\exp(p)) = w_k$. We invoke the compatibility of Atkin--Lehner involution  and $p$-stabilization with ghost series in Proposition~\ref{P:ghost compatible with theta AL and p-stabilization}(2)(3): the $d_k^\Iw(\omega^{a-k+2} \times1)$-th slope of $\NP\big(G_{\bbsigma}^{(\varepsilon)}(w_k, -)\big)$ is precisely $k-1$ minus the first slope of $\NP\big(G_{\bbsigma}^{(\varepsilon'')}(w_k, -)\big)$ with $s_{\varepsilon''}=k-2-a-(k-2-a)=0$. By Definition-Proposition~\ref{DP:dimension of classical forms}(4), the latter ghost slope is $0$, and thus the former ghost slope is $k-1$, i.e. $v_p(\delta_1(p))$ is a slope of $\NP\big(G_{\bbsigma}^{(\varepsilon)}(w_k, -)\big)$.

\item[(2bii)] If the minimal positive integer $m$ such that $\delta_1|_{(1+p^m\ZZ_p)^\times}= \delta_2|_{(1+p^m\ZZ_p)^\times}$ satisfies $m\geq 2$, then we are in the ``halo region"; in particular, $v_p(w_\star) = \frac{1}{p^{m-2}(p-1)}$.
In this case, Definition-Proposition~\ref{DP:dimension of classical forms}(4) implies that the $n$th slope of $\NP\big( G_{\bbsigma}^{(\varepsilon)}(w_\star, -)\big)$ is just $\frac{1}{p^{m-2}(p-1)} \big( \deg g_n^{(\varepsilon)} - \deg g_{n-1}^{(\varepsilon)}\big)$.
We compute this explicitly using the formulas in Definition-Proposition~\ref{DP:dimension of classical forms}(4) with $s_\varepsilon = \{k-a-2\}$,
\begin{itemize}
\item If $a+s_\varepsilon<p-1$, note that $p^{m-1}(k-1)-1\equiv k-2 \equiv a+s_\varepsilon \bmod(p-1)$. So for $N = \frac{p^{m-1}(k-1)-1-\{k-2\}}{p-1}+1$, we have $\bfe_{2N}^{(\varepsilon)} = e_2^*z^{p^{m-1}(k-1)-1}$. Moreover, in terms of \eqref{E:increment of degree a+s<p-1} with $n = 2N-1$, we have the congruence
$$
2N-1 - 2\{k-a-2\} \equiv 2(2+\{k-2\}) - 1 - 2\{k-a-2\} \equiv 2a+3 \pmod{2p}.
$$
So we use the ``otherwise case" to deduce that
$$
\deg g_{2N}^{(\varepsilon)} - \deg g_{2N-1}^{(\varepsilon)} = \deg \bfe_{2N}^{(\varepsilon)} - \Big\lfloor \frac{\deg \bfe_{2N}^{(\varepsilon)}}p\Big\rfloor = p^{m-2}(p-1)(k-1).
$$
So the $2N$th slope of $\NP\big(G_{\bbsigma}^{(\varepsilon)}(w_\star,-)\big)$ is $k-1$.
\item If $a+s_\varepsilon\geq p-1$, the argument is similar.  Again, put $N =\frac{p^{m-1}(k-1)-1-\{k-2\}}{p-1}+1$; in this case, we have $\bfe_{2N-1}^{(\varepsilon)} = e_2^*z^{p^{m-1}(k-1)-1}$. In terms of \eqref{E:increment of degree a+s>=p-1} with $n = 2N-2$, we note the similar congruence
\begin{align*}
2N-2 - 2\{k-2-a\} &\equiv 2(1+\{k-2\}) - 2\{k-2-a\}
\\
&\equiv 2a+2-2(p-1)\equiv 2a+4\pmod {2p}.
\end{align*}
So we use the ``otherwise case" again to deduce that
$$
\deg g_{2N-1}^{(\varepsilon)} - \deg g_{2N-2}^{(\varepsilon)} = \deg \bfe_{2N-1}^{(\varepsilon)} - \Big\lfloor \frac{\deg \bfe_{2N-1}^{(\varepsilon)}}p\Big\rfloor = p^{m-2}(p-1)(k-1).
$$
This means that the $(2N-1)$th slope of $\NP\big(G_{\bbsigma}^{(\varepsilon)}(w_\star,-)\big)$ is $k-1$.
\end{itemize}
\end{enumerate} 

Up to now, we have proved Theorem~\ref{T:generalized BBE}(1)--(3) for $\bar r_p$.  Conversely, if $\delta_1|_{\ZZ_p^\times}$ and $\delta_2|_{\ZZ_p^\times}$ are given as in Theorem~\ref{T:generalized BBE}. Put $w_\star: = (\delta_1\delta_2^{-1}\chi_\cycl^{-1})(\exp(p))-1$.
Let $\lambda$ be a slope of $\NP\big(G_{\bbsigma}^{(\varepsilon)}(w_\star, -)\big)$.
\begin{itemize}
\item [(1)'] If $\lambda>0$, then Theorem~\ref{T:generalized BBE} for the nonsplit representation $\bar r_p^\mathrm{ns}$ produces an $E'$-point $\underline x' =(x', \delta_1, \delta_2) \in \calX_{\bar r_p^\mathrm{ns}}^{\square, \tri}$ with $v_p(\delta_1(p)) = \lambda$. Reversing the argument in (1) gives the needed point of $\calX_{\bar r^\mathrm{ns}_p}^{\square, \tri}$. 
\item[(2)'] If $\lambda =0$, we must have $\varepsilon = 1\times \omega^a$. We construct a point on $\calX_{\bar r_p}^{\square, \tri}$ directly.  Lift $\bar \alpha_i \in \FF^\times$ for each $i=1,2$ to $\delta_i(p) \in \calO^\times$. Then $\calR_{E'}(\delta_1) \oplus \calR_{E'}(\delta_2)$ is the $(\varphi, \Gamma)$-module of $\delta_1 \oplus \delta_2$, which reduces to $\bar r_p$ automatically, with the correct slope and characters.
\end{itemize}
This completes the reduction of Theorem~\ref{T:generalized BBE} to the reducible nonsplit and generic case.

\begin{remark}
\phantomsection
\begin{enumerate}
\item Case (2bii) can be also deduced from an analogous compatibility of Atkin--Lehner involution for ghost series with wild characters. We leave that for interested readers.
\item
It is a very interesting question to ask whether the above correspondence of points between $\calU_{\bar r_p, \reg}^{\square, \tri}$ and $\calU_{\bar r_p^\mathrm{ns}, \reg}^{\square, \tri}$ can be made ``globally" at the level of rigid analytic spaces or even at the level of formal schemes.
\end{enumerate}
\end{remark}

\begin{assumption}
\label{A:assumption on bar rp}
In view of \S\,\ref{S:reduction to nonsplit case}, we assume that $\bar r_p$ is nonsplit for the rest of this section, i.e. $\bar r_p$ is a nontrivial extension of $\bar \chi_2: = \unr(\bar\alpha_2)\omega_1^b$ by $\bar\chi_2: = \unr(\bar\alpha_1) \omega_1^{a+b+1}$.
\end{assumption}
\subsection{Pa\v sk\= unas modules}
\label{S:Paskunas functor}
To relate the study of local ghost series with the trianguline deformation space, we make use of the Pa\v sk\= unas modules in \cite{paskunas-functor} for deformations of $p$-adic representations of $\GL_2(\QQ_p)$. As \cite{paskunas-functor} mainly considers the case with a fixed central character, some of our constructions later may be slightly awkward. Similar arguments to remove central character constraints can be found in \cite[Appendix~A]{breuil-ding} and \cite{six author2}. Let $\zeta: \Gal_{\QQ_p} \to \calO^\times$ be  a character that induces a character of $\QQ_p^\times$ by local class field theory.
\begin{itemize}
\item Let $\Mod_{\Gal_{\QQ_p}}^\pro$ be the category of profinite $\calO$-modules $V$ with continuous $\Gal_{\QQ_p}$-actions.
\item Let $\gothC$ be the category of profinite $\calO$-modules $M$ with continuous \emph{right} $\GL_2(\QQ_p)$-actions for which
\begin{itemize}
\item the right $\GL_2(\ZZ_p)$-action on $M$ extends to a right $\calO\llbracket \GL_2(\ZZ_p)\rrbracket$-module structure on $M$, and
\item for every vector $v$ in the Pontryagin dual $M^\vee: = \Hom_\calO(M, E/\calO)$ equipped with the induced left $\GL_2(\QQ_p)$-action, the left $\calO[\GL_2(\QQ_p)]$-submodule generated by $v$ is of finite length. 
\end{itemize}
\item Let $\gothC_\zeta$ be the subcategory of $\gothC$ consisting of objects on which $\QQ_p^\times$ acts by $\zeta$.

\end{itemize}
We chose to work with right $\calO\llbracket \GL_2(\QQ_p)\rrbracket$-actions on objects of $\gothC$ to match our definition of $\calO\llbracket \rmK_p\rrbracket$-projective augmented modules in Definition~\ref{D:primitive type}. This can be easily translated from references \cite{paskunas-functor,paskunas-BM,hu-paskunas,breuil-ding} by considering the inverse action.

There is a natural covariant \emph{modified Colmez functor}
$$
\check \bfV_\zeta: \gothC_\zeta\to \Mod_{\Gal_{\QQ_p}}^\pro,
$$
which is compatible with taking projective limits and whose evaluation on finite length objects $M$ is given by $\check \bfV_\zeta(M): = \bfV(M^\vee) ^\vee (\chi_\cycl\zeta)$, where $(-)^\vee = \Hom_\calO(-, E/\calO)$ is the Pontryagin duality and $\bfV(-)$ is the functor defined in \cite{colmez-functor}. In particular, for two characters $\bar \eta_1, \bar\eta_2:\QQ_p^\times \to \FF^\times$ such that $\bar\eta_1\bar\eta_2\bar \chi_\cycl^{-1} = \zeta \bmod \varpi$, $$\check \bfV_\zeta\Big( \Ind_{B(\QQ_p)}^{\GL_2(\QQ_p)}\big( \bar\eta_1 \otimes \bar\eta_2\bar \chi_\cycl^{-1}\big)^\vee\Big)\cong \bar\eta_1.$$
We note that for a different character $\zeta': \Gal_{\QQ_p} \to \calO^\times$, 
\begin{equation}
\label{E:Vzeta independent of zeta}
\check \bfV_{\zeta\zeta'}(M \otimes \zeta' \circ \det) \cong \check \bfV_{\zeta}(M) \otimes \zeta'.
\end{equation}

We focus on the case of Assumption~\ref{A:assumption on bar rp}. Take the earlier $\zeta$ to satisfy $\zeta \equiv \omega^{a+2b} \bmod \varpi$.

Let $\pi(\bar r_p)$ denote the smooth representation of $\GL_2(\QQ_p)$ over $\FF$ associated to $\bar r_p$ by the mod $p$ Langlands correspondence.  Explicitly, 
$\pi(\bar r_p)$ is the nontrivial extension $
\bar\pi_1 - \bar\pi_2$ with  $$
\bar\pi_1 = \Ind_{B(\QQ_p)}^{\GL_2(\QQ_p)}\big(\bar\chi_2 \otimes \bar\chi_1\bar\chi_\cycl^{-1}\big) \textrm{\quad and\quad } \bar\pi_2 =  \Ind_{B(\QQ_p)}^{\GL_2(\QQ_p)}\big(\bar\chi_1\otimes \bar\chi_2\bar\chi_\cycl^{-1}\big).
$$
In particular, we have
$$
\check \bfV_\zeta(\pi(\bar r_p)^\vee) \cong \check \bfV_\zeta(\bar\pi_2^\vee - \bar\pi_1^\vee) \cong (\bar\chi_1- \bar\chi_2 )\cong  \bar r_p.
$$
This is independent of the choice of $\zeta$ and agrees with \cite[\S\,8]{paskunas-functor}; yet \cite[\S\,6.1]{paskunas-BM} seems to have a minor error by swapping the $\bar\pi_1$ with $\bar\pi_2$, which is later corrected in \cite{hu-paskunas}.

Let $\boldsymbol{1}_\mathrm{tw}$ denote $\calO\llbracket u,v \rrbracket$ equipped with a $\QQ_p^\times$-action where $p$ acts by multiplication by $1+u$ and $a \in \ZZ_p^\times$ acts by multiplication by $(1+v)^{\log(a/\omega(\bar a))/p}$; such action extends to an action of $\Gal_{\QQ_p}$ via local class field theory.

As $\End_{\Gal_{\QQ_p}}(\bar r_p) \cong \FF$, the deformation problem of $\bar r_p$ is representable by a complete noetherian local $\calO$-algebra $R_{\bar r_p}$.  Let $R_{\bar r_p}^\zeta$ denote the quotient parameterizing the deformations of $\bar r_p$ with fixed determinant $\zeta$; let $\gothm_{R_{\bar r_p}^\zeta}$ denote its maximal ideal. Let $V_\univ^\zeta$ denote the universal deformation of $\bar r_p$ over $R_{\bar r_p}^\zeta$.  It is well known that there is a (noncanonical) isomorphism
$$
R_{\bar r_p}^\square \simeq R_{\bar r_p}^\zeta \widehat \otimes_\calO \calO\llbracket u, v, z_1, z_2, z_3\rrbracket
$$
(with $z_1, z_2, z_3$ framing variables),
so that the framed and unframed universal deformations of $\bar r_p$ satisfy:
$$
V_\univ^\zeta \widehat{\boxtimes}_\calO {\bf 1}_\mathrm{tw} \widehat \otimes_\calO \calO\llbracket z_1, z_2 , z_3\rrbracket \simeq V_\univ^\square.
$$

Following \cite[\S\,8]{paskunas-functor}, we have the following.

\begin{theorem}
\label{T:widetilde P projective}
Keep the notation as above. Let $\widetilde P_\zeta \twoheadrightarrow \bar{\pi}_1^\vee$ be a projective envelope of $\pi_1^\vee$ in $\gothC_\zeta$ and put $R_{\pi_1}^\zeta: = \End_{\gothC_\zeta}(\widetilde P_\zeta)$.
\begin{enumerate}
\item The $\check \bfV_\zeta(\widetilde P_\zeta)$ can be viewed as a $2$-dimensional representation of $\Gal_{\QQ_p}$ over $R_{\pi_1}^\zeta$ lifting $\bar r_p$; this induces an isomorphism $R_{\bar r_p}^{\zeta} \xrightarrow{\ \cong \ }R_{\pi_1}^\zeta$, and $\check \bfV_\zeta(\widetilde P_\zeta) \cong V_\univ^\zeta$.
\item Define the following object in $\gothC$: 
\begin{equation}
\label{E:widetilde P}
\widetilde{P}^\square: = \widetilde P_\zeta \widehat \boxtimes_{\calO}{\bf 1}_\mathrm{tw}\widehat \otimes_{\calO} \calO\llbracket z_1, z_2 , z_3\rrbracket,
\end{equation}
equipped with the diagonal right $\calO\llbracket z_1, z_2 ,z_3\rrbracket$-linear $\GL_2(\QQ_p)$-action (where $\GL_2(\QQ_p)$ acts on ${\bf 1}_\mathrm{tw}$ through the determinant). Then $\widetilde P^\square$ carries a natural $R_{\bar r_p}^\square$-action from the left that commutes with the right $\GL_2(\QQ_p)$-action. Moreover, $\widetilde{P}^\square$ does not depend on the choice of $\zeta$.
\item 
There exists $x \in \gothm_{R^\zeta_{\bar r_p}} \backslash \big(\gothm^2_{R^\zeta_{\bar r_p}}+(\varpi)\big)$ such that  $\widetilde P^\square$ is isomorphic to the projective envelope of $\Sym^a \FF^{\oplus 2} \otimes \det^b$ as a right $\calO\llbracket u, x, z_1, z_2 , z_3\rrbracket\llbracket \GL_2(\ZZ_p)\rrbracket$-module.
\end{enumerate}
\end{theorem}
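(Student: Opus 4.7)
The plan is to deduce all three parts from Pa\v sk\=unas' work \cite{paskunas-functor} together with the recipe \eqref{E:Vzeta independent of zeta} for varying the central character, with part (3) requiring input from \cite{hu-paskunas}.

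For part (1), the main task is to invoke Pa\v sk\=unas' analysis of the block containing $\bar\pi_1$. Since $\bar r_p$ is reducible nonsplit and generic (Assumption~\ref{A:assumption on bar rp}), the block of $\bar\pi_1$ in $\gothC_\zeta$ consists of finite length objects whose irreducible subquotients are among $\bar\pi_1$ and $\bar\pi_2$, and in \cite[\S\,8]{paskunas-functor} the projective envelope $\widetilde P_\zeta$ is shown to be topologically finitely generated with $R_{\pi_1}^\zeta:=\End_{\gothC_\zeta}(\widetilde P_\zeta)$ a complete noetherian local $\calO$-algebra. Applying $\check\bfV_\zeta$, which is exact on this block, produces a continuous two-dimensional $R_{\pi_1}^\zeta$-linear representation of $\Gal_{\QQ_p}$ lifting $\bar r_p$. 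Universality of $R^\zeta_{\bar r_p}$ gives a map $R^\zeta_{\bar r_p}\to R_{\pi_1}^\zeta$, and the reverse map comes from the fact that any deformation of $\bar r_p$ with determinant $\zeta$ can be realized via Colmez's functor on a deformation of $\pi(\bar r_p)^\vee$, which by projectivity of $\widetilde P_\zeta$ factors through $\widetilde P_\zeta$. Checking these two maps are mutually inverse on tangent spaces (using $\dim_\FF \Ext^1_{\gothC_\zeta}(\bar\pi_1^\vee,\bar\pi_i^\vee)$ computations from \cite{paskunas-functor}) and surjectivity yields the isomorphism $R^\zeta_{\bar r_p}\xrightarrow{\cong} R_{\pi_1}^\zeta$ and $\check\bfV_\zeta(\widetilde P_\zeta)\cong V^\zeta_\univ$.

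For part (2), the construction \eqref{E:widetilde P} is a tensor product of $\widetilde P_\zeta$ with a ``twisting object'' $\boldsymbol 1_\mathrm{tw}$ and the framing variables $\calO\llbracket z_1,z_2,z_3\rrbracket$. The diagonal $\GL_2(\QQ_p)$-action on $\widetilde P^\square$ is well-defined, and the $R^\square_{\bar r_p}$-action is obtained by combining the $R^\zeta_{\bar r_p}$-action on $\widetilde P_\zeta$ (from part (1)) with the tautological $\calO\llbracket u,v,z_1,z_2,z_3\rrbracket$-action on the other factor, matching the Cohen-type decomposition $R^\square_{\bar r_p}\cong R^\zeta_{\bar r_p}\widehat\otimes_\calO\calO\llbracket u,v,z_1,z_2,z_3\rrbracket$. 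Independence of $\zeta$ follows because replacing $\zeta$ by $\zeta\zeta'$ amounts to twisting $\widetilde P_\zeta$ by $\zeta'\circ\det$, and the compensating twist on $\boldsymbol 1_\mathrm{tw}$ cancels this in the completed tensor product; this is just the categorical incarnation of \eqref{E:Vzeta independent of zeta}.

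For part (3), I would analyze $\widetilde P^\square$ as a right $\calO\llbracket\GL_2(\ZZ_p)\rrbracket$-module. The socle filtration of $\widetilde P^\square/\gothm_{R_{\bar r_p}^\square}\widetilde P^\square$ in the category $\gothC$ reduces, by the genericity of $\bar r_p$, to a single Serre weight $\Sym^a\FF^{\oplus 2}\otimes\det^b$, so $\widetilde P^\square$ is a projective cover of this weight in the category of right $\calO\llbracket\GL_2(\ZZ_p)\rrbracket$-modules with an appropriate deformation structure. The existence of $x\in\gothm_{R^\zeta_{\bar r_p}}\setminus(\gothm^2+(\varpi))$ such that quotienting $\widetilde P^\square$ by any $(x-x_\star)$ yields a primitive $\calO\llbracket\rmK_p\rrbracket$-projective augmented module of type $\bbsigma$ is the content of \cite{hu-paskunas}: their construction produces a ``fake deformation parameter'' transverse to the crystabelline directions. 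The hardest step will be checking the freeness/projectivity over the enlarged regular parameter ring $\calO\llbracket u,x,z_1,z_2,z_3\rrbracket$, which I would handle by verifying that $\widetilde P^\square/\gothm\widetilde P^\square\cong\Proj(\Sym^a\FF^{\oplus 2}\otimes\det^b)$ as a right $\FF[\GL_2(\FF_p)]$-module and then applying a standard flatness/Nakayama argument together with the five-variable regularity of the parameter ring — this essentially reproduces \cite[Corollary~A.3]{hu-paskunas} in the present generic setup.
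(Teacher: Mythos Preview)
Your treatment of parts (1) and (2) matches the paper's approach: (1) is exactly \cite[Corollary~8.7]{paskunas-functor}, and (2) follows from (1) together with \eqref{E:Vzeta independent of zeta}.

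For part (3), your outline has a gap in the key technical step. The paper's argument proceeds as follows: first, \cite[Theorem~5.2]{paskunas-BM} (not \cite{hu-paskunas}) supplies an element $x\in\gothm_{R^\zeta_{\bar r_p}}$ with the crucial property that $x$ acts injectively on $\widetilde P_\zeta$ and $\widetilde P_\zeta/x\widetilde P_\zeta$ is itself the projective envelope of $\Sym^a\FF^{\oplus 2}\otimes\det^b$ in $\Mod^{\mathrm{fg}}_{\calO\llbracket\GL_2(\ZZ_p)\rrbracket,\zeta}$. Then \cite[Theorem~3.3(iii)]{hu-paskunas} gives the regularity $x\notin\gothm^2+(\varpi)$. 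The remaining work is to lift projectivity from $\widetilde P_\zeta/x\widetilde P_\zeta$ (over $\calO\llbracket\GL_2(\ZZ_p)\rrbracket_\zeta$) to $\widetilde P_\zeta$ (over $\calO\llbracket x\rrbracket\llbracket\GL_2(\ZZ_p)\rrbracket_\zeta$); this is done by twisting away the central character and applying the Tor spectral sequence
\[
\Tor_\bullet^{\calO\llbracket H\rrbracket}\Big(\Tor_\bullet^{\calO\llbracket x\rrbracket\llbracket H\rrbracket}\big(\widetilde P_\zeta\otimes\eta^{-1}\circ\det,\,\calO\llbracket H\rrbracket\big),\,\FF\Big)\Rightarrow \Tor_\bullet^{\calO\llbracket x\rrbracket\llbracket H\rrbracket}\big(\widetilde P_\zeta\otimes\eta^{-1}\circ\det,\,\FF\big),
\]
with $H=\GL_2(\ZZ_p)/(1+p\ZZ_p)^\times$.

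Your proposed ``Nakayama plus five-variable regularity'' argument does not quite capture this. Checking that $\widetilde P^\square/\gothm\widetilde P^\square\cong\Proj(\Sym^a\FF^{\oplus 2}\otimes\det^b)$ as an $\FF[\GL_2(\FF_p)]$-module identifies the cosocle but does not by itself give projectivity over $\calO\llbracket u,x,z_1,z_2,z_3\rrbracket\llbracket\GL_2(\ZZ_p)\rrbracket$; you would still need to verify $\Tor$-vanishing, and for that the essential input is the projectivity of the quotient $\widetilde P_\zeta/x\widetilde P_\zeta$ over the smaller ring, which your outline omits. Also, the citation ``\cite[Corollary~A.3]{hu-paskunas}'' does not correspond to anything the paper invokes.
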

\begin{proof}
(1) is \cite[Corollary~8.7]{paskunas-functor}. For (2), the left $R_{\bar r_p}^\square$-action comes from the isomorphism $R_{\bar r_p}^\zeta \cong R_{\pi_1}^\zeta$ proved in (1). The uniqueness follows from \eqref{E:Vzeta independent of zeta}. 

We now prove (3). For $A = \calO$ or $\calO\llbracket x\rrbracket$, let $\Mod_{A\llbracket \GL_2(\ZZ_p)\rrbracket, \zeta}^\mathrm{fg}$ denote the category of finitely generated right $A\llbracket\GL_2(\ZZ_p)\rrbracket$-modules with the scalar $\ZZ_p^\times$ acting by $\zeta$.
By \cite[Theorem~5.2]{paskunas-BM}, there exists $x \in \gothm_{R^\zeta_{\bar r_p}}$ such that $x: \widetilde P_\zeta \to \widetilde P_\zeta$ is injective and $\widetilde P_\zeta / x\widetilde P_\zeta$ is the projective envelope of $(\soc_{\GL_2(\ZZ_p)}\bar{\pi}_1)^\vee = \Sym^a \FF^{\oplus 2} \otimes \det^b$ in $\Mod_{\calO\llbracket \GL_2(\ZZ_p)\rrbracket, \zeta}^\mathrm{fg}$. In addition, \cite[Theorem~3.3(iii)]{hu-paskunas} proves that $x \notin \big(\gothm^2_{R^\zeta_{\bar r_p}}+(\varpi)\big)$.
It then remains to show that $\widetilde P_\zeta$ is projective in the $\Mod_{\calO\llbracket x \rrbracket\llbracket \GL_2(\ZZ_p)\rrbracket, \zeta}^\mathrm{fg}$, as the projectivity is preserved for tensor products of the form in \eqref{E:widetilde P}. (Note that the variable $v$ in $\widetilde P_\zeta$ measuring the central twist of $(1+p\ZZ_p)^\times$ is ``absorbed" into the projective envelope as an $\calO\llbracket\GL_2(\ZZ_p)\rrbracket$-module.)
Choose a character $\eta$ of $(1+p\ZZ_p)^\times$ such that $\zeta|_{(1+p\ZZ_p)^\times} = \eta^2$. Then it is enough to show that $\widetilde P_\zeta \otimes \eta^{-1}\circ \det$ is a projective right $\calO\llbracket x\rrbracket\llbracket H \rrbracket$-module with $H= \GL_2(\ZZ_p)/(1+p\ZZ_p)^\times$, or equivalently, 
$$
\Tor_{>0}^{\calO\llbracket x\rrbracket\llbracket H\rrbracket}(\widetilde P_\zeta \otimes \eta^{-1}\circ \det, \ \tau) = 0,
$$
for every simple $\calO\llbracket x\rrbracket\llbracket H\rrbracket$-module $\tau$ (i.e. Serre weights).
But this follows immediately from the spectral sequence 
$$
E^2_{\bullet, \bullet} =
\Tor_\bullet^{\calO\llbracket H\rrbracket}\Big( \Tor_\bullet^{\calO\llbracket  x \rrbracket\llbracket H\rrbracket} \big( \widetilde P_\zeta \otimes \eta^{-1}\circ \det, \ \calO\llbracket H\rrbracket\big), \ \tau \Big)\ \Rightarrow \ \Tor_\bullet^{\calO\llbracket x\rrbracket\llbracket H\rrbracket}\big(\widetilde P_\zeta \otimes \eta^{-1}\circ \det, \ \tau\big)
$$
and the properties of $\widetilde P_\zeta/x\widetilde P_\zeta$ above.
\end{proof}

\begin{remark}
\phantomsection
\begin{enumerate}
\item 
It is proved in \cite[Theorem~6.18]{six author2} that  $\widetilde P_\zeta \widehat \boxtimes_\calO {\bf 1}_\mathrm{tw}$ is isomorphic to the projective envelope of $\pi_1^\vee$ in $\gothC$.
\item
It is tempting to use the ``less heavy" tool of patched completed homology of Caraiani--Emerton--Gee--Geraghty--Pa\v sk\= unas--Shin in \cite{six author} and the globalization process therein, to reproduce the above construction instead of using the Pa\v sk\= unas module.  Unfortunately, we do not know how to implement this idea. The main difficulty is that, while \cite{six author} provides a ``minimal patching" in the sense that the patched module is of rank $1$ over the patched version of the local Galois deformation ring $R_\infty[1/p]$, to invoke our local ghost Theorem~\ref{T:local theorem}, we need the patched completed homology to be the projective envelope as an $S_\infty \llbracket \GL_2(\ZZ_p)\rrbracket$-module of a Serre weight. So we would need a certain mod-$p$-multiplicity-one assumption that compares $S_\infty$ with $R_\infty$, which does not seem to be available.
\end{enumerate}
\end{remark}

\subsection{Comparison with trianguline deformation space}
\label{S:comparison with trianguline deformation}
Continue to consider the $\bar r_p$ as above.
We apply Emerton's locally analytic Jacquet functor \cite{emerton-Jacquet} to $\widetilde P^\square \in \gothC$ and compare it with the trianguline deformation space $\calX_{\bar r_p}^{\square, \tri}$.  In a nutshell, we will prove that the reduced eigenvariety $\Eig(\widetilde P^\square)^\mathrm{red}$ associated to $\widetilde P^\square$ is isomorphic to $\calX_{\bar r_p}^{\square, \tri}$ and the $U_p$-action on $\Eig(\widetilde P^\square)$ corresponds to the universal character $\delta_2(p)^{-1}$ on $\calX_{\bar r_p}^{\square, \tri}$.

We first recall the formal part of the construction from \cite[\S\,3]{breuil-hellmann-schraen} and \cite[\S\,A.4]{breuil-ding}. Write $S^\square: = \calO\llbracket u,x,z_1, z_2 , z_3\rrbracket$, viewed as a natural subring of $R_{\bar r_p}^\square$, which induces a morphism
$$\pr^\square:
\calX_{\bar r_p}^\square \to \calS^\square: = \Spf(S^\square)^\rig.
$$
Consider the Schikhof dual of $\widetilde P^\square$:
$$
\Pi^\square: = \Hom_\calO^\cont\big( \widetilde P^\square, E\big).
$$

Applying the locally analytic Jacquet functor construction of Emerton \cite{emerton-Jacquet}, we obtain
\begin{equation}
\label{E:M infinity}
\calM^\square: =\swap^* \big(
J_{\bar B}\big((\Pi^\square)^{S^\square\textrm{-an}}\big)'_b\big)\cong \swap^* \big(J_{\bar B}\big((\Pi^\square)^{R_{\bar r_p}^\square\textrm{-an}}\big)'_b \big),
\end{equation}
which may be
viewed as a coherent sheaf
over the Stein space $\calX_{\bar r_p}^\square \times \calT$ that further induces a \emph{coherent} sheaf $\pr^\square_{*}\calM^\square$ over $\calS^\square  \times \calT$ (where $\calT =  (\GG_m^\rig)^2\times\widetilde \calW$ is defined in \eqref{E:calT}). Here,
\begin{itemize}
\item $(\Pi^\square)^{R_{\bar r_p}^\square\textrm{-an}} \subseteq (\Pi^\square)^{S^\square\textrm{-an}}$ are respectively locally $R_{\bar r_p}^\square$-analytic and $S^\square$-analytic vectors as defined in \cite[D\'efinition~3.2]{breuil-hellmann-schraen}, and they are equal by \cite[Proposition~3.8]{breuil-hellmann-schraen} as $\widetilde P^\square$ is finitely generated over $S^\square\llbracket \GL_2(\ZZ_p)\rrbracket$;
\item $J_{\bar B}(-)$ is the locally analytic Jacquet functor of Emerton defined in \cite{emerton-Jacquet} (with respect to the lower triangular matrices to match our computation with the setup in \S\,\ref{S:arithmetic forms}, which further agrees with \cite{buzzard}); 
\item $(-)'_b$ is the strong dual for Fr\'echet spaces; and
\item $\swap: \calT \to \calT$ is the morphism swapping two factors, i.e. sending $(\delta_1, \delta_2)\mapsto(\delta_2, \delta_1)$. (This is inserted because we used the locally analytic Jacquet functor relative to the lower triangular Borel subgroup, in contrast to \cite{breuil-hellmann-schraen} and \cite{breuil-ding} where the upper triangular Borel subgroup are used.)
\end{itemize} 

\begin{theorem}
\label{T:Xtri = X}
Let $\Eig(P^\square)$ denote the schematic support of $\calM^\square$ over $\calX_{\bar r_p}^\square \times \calT$. 
\begin{enumerate}
\item The space $\Eig(P^\square)$ is contained in the subspace of $\calX_{\bar r_p}^\square \times \calT$ consisting of points $(x,\delta_1,\delta_2)$ for which $\det(\calV_x)$ corresponds to $\delta_1\delta_2$ under the local class field theory. 
\item
The reduced subscheme of $\Eig(P^\square)$ is precisely the trianguline deformation space $\calX_{\bar r_p}^{\square, \tri}$ (Definition~\ref{D:trianguline deformation space}).
\end{enumerate} 
\end{theorem}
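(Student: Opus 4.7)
The plan is to prove the two parts separately but using the same underlying mechanism: pointwise the support of the Jacquet module recovers triangulations via Colmez's $p$-adic local Langlands correspondence, and a density plus equidimensionality argument upgrades this to a scheme-theoretic identification.

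For part (1), I would first exhibit the character by which the center $\QQ_p^\times \subset \GL_2(\QQ_p)$ acts on $\widetilde P^\square$. Since $\Matrix{p}{0}{0}{p}$ acts trivially on $\widetilde P_\zeta$ up to a twist by $\zeta$ and acts via $(1+u)^2$ on $\boldsymbol 1_\mathrm{tw}$, and analogously for $a \in \ZZ_p^\times$, the formula for the universal character of the center on $\widetilde P^\square$ is determined once one tracks the construction \eqref{E:widetilde P}. Combining this with Theorem~\ref{T:widetilde P projective}(1) and the compatibility $\check\bfV_\zeta(\widetilde P_\zeta) = V_\univ^\zeta$ together with \eqref{E:Vzeta independent of zeta}, one identifies the central character of $\widetilde P^\square$ with $\det(V_\univ^\square) \cdot \chi_\cycl^{-1}$ under local class field theory. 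On the other hand, for any $R_{\bar r_p}^\square$-submodule of $(\Pi^\square)^{R_{\bar r_p}^\square\mathrm{-an}}$ on which $\bar B(\QQ_p)$ acts through a character $\delta_1 \otimes \delta_2\chi_\cycl^{-1}$, the center must act by $\delta_1\delta_2\chi_\cycl^{-1}$. After applying $\swap^*$ and taking strong duals, any point of the schematic support $\Eig(P^\square)$ therefore satisfies $\det(\calV_x) \leftrightarrow \delta_1\delta_2$, which is (1).

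For part (2), I would follow the template laid out in \cite{breuil-hellmann-schraen} (and developed further in \cite{breuil-ding}) for the patched module. The first step is to produce a Zariski dense set of ``classical'' points in $\Eig(P^\square)$: these are points $(x,\delta_1,\delta_2)$ with $\delta_1,\delta_2$ locally algebraic, $(\delta_1/\delta_2)(p)$ of the correct ``small slope'' shape, and $(\delta_1,\delta_2) \in \calT_\reg$. At such a classical point, Emerton's adjunction between $J_{\bar B}$ and parabolic induction, together with the classicality criterion \cite[Theorem~4.4.5]{emerton-Jacquet}, shows that the corresponding Jacquet module eigenvector gives a $\GL_2(\QQ_p)$-equivariant map from a locally algebraic principal series into $\Pi^\square$. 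Colmez's $p$-adic local Langlands correspondence for $\GL_2(\QQ_p)$ applied to $\check\bfV_\zeta(\widetilde P^\square)|_x = \calV_x$ then forces $\calV_x$ to be crystabelline, and the eigencharacter $(\delta_1,\delta_2)$ to be precisely a refinement of $\DD_\mathrm{pcrys}(\calV_x)$ in the sense of \cite{colmez-trianguline}. Since refinements correspond bijectively with triangulations in this regular regime, each such classical point of $\Eig(P^\square)$ lies in $U_{\bar r_p,\reg}^{\square,\tri}$.

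The next step is a dimension count: I would show that $\Eig(P^\square)$ is equidimensional of dimension $7$ by combining a local model computation at a sufficiently generic classical point (the tangent space calculation is the same as in \cite[\S 3.4]{breuil-hellmann-schraen}, built from the crystabelline deformation ring which is smooth of dimension $5$ together with two variables from the refinement character) with the freeness-style statement that the Jacquet module is locally free of generic rank one near such points. Combined with Proposition~\ref{P:properties of trianguline deformation space}(2) (both spaces are $7$-dimensional and $\calX_{\bar r_p}^{\square,\tri}$ is equidimensional with $\calU_{\bar r_p,\reg}^{\square,\tri}$ Zariski dense), the inclusion of the Zariski dense set of classical points of $\Eig(P^\square)$ into $U_{\bar r_p,\reg}^{\square,\tri}$ forces $\Eig(P^\square)^\mathrm{red} \subseteq \calX_{\bar r_p}^{\square,\tri}$. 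Conversely, for every point of $U_{\bar r_p,\reg}^{\square,\tri}$, Colmez's functor run in the other direction produces, via the crystabelline classical points that are dense in $\calX_{\bar r_p}^{\square,\tri}$, a nonzero vector in the Jacquet module, so that $\calU_{\bar r_p,\reg}^{\square,\tri} \subseteq \Eig(P^\square)$. Taking Zariski closure and using equidimensionality gives the reverse inclusion, hence the desired equality of reduced schemes.

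The main obstacle I anticipate is the density/classicality portion: one needs both that classical points are Zariski dense in $\Eig(P^\square)$ and that at such points the Jacquet-module eigenvector is genuinely ``classical'' (i.e.\ comes from a locally algebraic principal series map). The density is a standard accumulation argument once one produces enough classical points, but producing them requires checking that for generic locally algebraic $(\delta_1,\delta_2)$ the specialization of $\calM^\square$ is nonzero; this in turn relies on the compatibility between the Paškūnas functor and the standard dictionary between crystabelline $(\varphi,\Gamma)$-modules and locally algebraic principal series, together with the small-slope classicality criterion of Emerton. The remaining steps (dimension count and the pointwise matching with triangulations) are then formal corollaries of Colmez's correspondence and the already-established equidimensionality of $\calX_{\bar r_p}^{\square,\tri}$.
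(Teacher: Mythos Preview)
Your proposal is essentially correct and follows the same template as the paper (which in turn defers to \cite{breuil-ding}), but there is one noteworthy difference in how the pointwise step is carried out.

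For part (2) you route the identification through \emph{classical} (crystabelline, small-slope) points: use Emerton's classicality criterion to show that a Jacquet-module eigenvector at such a point comes from a locally algebraic principal series, then invoke Colmez's correspondence to deduce that $\calV_x$ is crystabelline with the expected refinement, and finally run a density-plus-dimension argument. The paper instead exploits the full strength of the $p$-adic local Langlands correspondence for $\GL_2(\QQ_p)$: since $\Pi^\square[\gothp_x] = \pi(\calV_x)$ by Pa\v sk\=unas's theory, and since the locally analytic vectors of $\pi(\calV_x)$ are completely described by \cite{colmez, liu-xie-zhang}, one sees \emph{directly at every point} that a character $\delta_{2,x} \times \delta_{1,x}$ embeds into $J_{\bar B}(\pi(\calV_x)^{\an})$ if and only if $(\delta_{1,x},\delta_{2,x})$ gives a triangulation of $\calV_x$. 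This immediately yields the inclusion $\calU_{\bar r_p,\reg}^{\square,\tri} \hookrightarrow \Eig(P^\square)$ without passing through crystabelline representations or a separate dimension count; the remaining density assertion (that these points accumulate and are Zariski dense in $\Eig(P^\square)$) is then a standard eigenvariety argument. Your approach is the more general one that works beyond $\GL_2(\QQ_p)$; the paper's is shorter because the locally analytic vectors are already known explicitly in this case.
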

\begin{proof}
(1) is clear because (if $\zeta(p) = \zeta(1+p) =1$), the right actions of $\Matrix p00p$ and the diagonal $\ZZ_p^\times$ on $\widetilde P^\square$ are precisely given on ${\bf 1}_\mathrm{tw}$, which agrees with the $\calO\llbracket u,v\rrbracket$-action as described just before Theorem~\ref{T:widetilde P projective}.

(2) is proved at the beginning of \cite[Page~134]{breuil-ding} (except that we have the framing variables, and we used the lower triangular Borel subgroup for the locally analytic Jacquet functor). We summarize the gist for the benefit of the readers.

At an $E'$-point $x \in (\calV_x, \delta_{1,x}, \delta_{2,x}) \in \calX_{\bar r_p}^{\square} \times  \calT$, let $\gothp_x \subseteq R_{\bar r_p}^{\square}$ be the corresponding prime ideal. 
Then $\Pi^\square[\gothp_x] = \pi(\calV_x)$ is the $p$-adic Banach space representation over $E'$ attached to $\calV_x$.  So $x$ lies in $\calX_{\bar r_p}^{\square, \tri}$ if and only if there is a $(\QQ_p^\times)^2$-embedding 
$$\delta_{2,x} \times \delta_{1,x} \hookrightarrow J_{\bar B}\big( \Pi^{\square, R_{\bar r_p}^\square\textrm{-an}}[\gothp_x]\big) = J_{\bar B}(\pi(\calV_x)^\an).
$$
(Note that, comparing to \cite{breuil-ding} where $J_B(-)$ is used, the lower triangular locally analytic Jacquet functor has the effect of ``swapping" two factors.)
By the description of locally analytic vectors for $p$-adic local Langlands correspondence \cite{colmez, liu-xie-zhang} (and the full power of $p$-adic local Langlands correspondence), there is an embedding $\calU_{\bar r_p, \reg}^{\square, \tri} \hookrightarrow
\Eig(P^\square)$. Applying a typical construction of eigenvarieties shows that points in $\calU_{\bar r_p, \reg}^{\square, \tri}$ are also Zariski-dense and accumulating in $\Eig(P^\square)$. This completes the proof of that $\calX_{\bar r_p}^{\square, \tri}$ is isomorphic to the reduced subscheme of $\Eig(P^\square)$.
\end{proof}

\begin{remark}
In fact, one can prove that, in our case, $\Eig(P^\square) = \calX_{\bar r_p}^{\square, \tri}$.
\end{remark}

\subsection{Relating locally analytic Jacquet functor with local ghost theorem I}
\label{S:comparison Jacquet and local ghost 1}
We will deduce Theorem~\ref{T:generalized BBE} by applying local ghost Theorem~\ref{T:local theorem} to $\widetilde P^{\square}$ with all possible evaluations of the formal variables $u,x,z_1, z_2 , z_3$. For this, we need an intermediate step to relate the characteristic power series of abstract $p$-adic  forms in the local ghost theorem with the abstract construction of eigenvarieties in \S\,\ref{S:comparison with trianguline deformation}.
This is essentially explained in \cite[Proposition~4.2.36]{emerton-Jacquet}: one may compute the locally analytic Jacquet functor when $\widetilde P^\square$ is a finite projective $S^\square\llbracket \rmK_p\rrbracket$-module, using the eigenvariety machine of Buzzard.

Let $\mathfrak{d}_{\bar N}$ denote the \emph{right} ideal of $\calO\llbracket\Iw_p\rrbracket$ generated by $\big[\Matrix 10p1\big]-1$; then by Iwasawa decomposition, we may write
\begin{equation}
\label{E:distribution module}
\calO\llbracket\Iw_p\rrbracket / \mathfrak{d}_{\bar N} \cong \calD_0\Big(\Matrix 1{\ZZ_p}01;\, \calO\Big\llbracket\Big( \begin{smallmatrix}
\ZZ_p^\times \\ & \ZZ_p^\times
\end{smallmatrix}\Big)\Big\rrbracket\Big) =  \calD_0\big(\ZZ_p;\, \calO\llbracket (\ZZ_p^\times)^2\rrbracket\big),
\end{equation}
where the $\calD_0\big(\ZZ_p; -)$ is the space of measures on $\ZZ_p$, dual to $\calC^0(\ZZ_p;-)$. Here the induced left $\Iw_p$-action on the right hand side of \eqref{E:distribution module} extends to an action of $\bfM_1 = \Big( \begin{smallmatrix}
\ZZ_p & \ZZ_p \\ p\ZZ_p& \ZZ_p^\times
\end{smallmatrix}\Big)^{\det \neq 0}$ given by, for $\Matrix \alpha \beta \gamma \delta \in \bfM_1$ with $\alpha \delta - \beta\gamma = p^rd$ for $d \in \ZZ_p^\times$,
$$\big \langle
\Matrix \alpha \beta \gamma \delta \cdot \mu,\, h(z)\big\rangle = \Big\langle \mu,  \Big[\Big(\frac{d}{\gamma z+\delta},\, \gamma z+\delta\Big)\Big] \cdot h\Big(\frac{\alpha z+\beta}{\gamma z+\delta}\Big) \Big\rangle. 
$$
(After tensored with $\calO\llbracket w\rrbracket^{(\varepsilon)}$,) this is precisely dual to the right $\bfM_1$-action on $\calC^0\big(\ZZ_p; \calO\llbracket w\rrbracket^{(\varepsilon)}\big)$ given by \eqref{E:induced representation action extended}. We define the \emph{abstract $p$-adic distribution} associated to $\widetilde P^\square$ to be
 $$\rmS^\vee_{\widetilde P^\square, p\textrm{-adic}}:= \widetilde P^\square \widehat \otimes_{\calO\llbracket\Iw_p\rrbracket}\calD_0\big(\ZZ_p; \, \calO\llbracket (\ZZ_p^\times)^2\rrbracket\big),$$
equipped with the infinite product topology (which is automatically \emph{compact}). Then we have a tautological isomorphism (from the  tensor-hom adjunction)
\begin{equation}
\label{E:distribution and form duality}
\Hom_{S^\square\llbracket (\ZZ_p^\times)^2\rrbracket}\big( \rmS^\vee_{\widetilde P^\square, p\textrm{-adic}}, S^\square\llbracket w\rrbracket^{(\varepsilon)}\big) \cong \Hom_{S^\square\llbracket \Iw_p\rrbracket}\big(\widetilde P^\square, \, \calC^0\big(\ZZ_p; \, S^\square\llbracket w\rrbracket^{(\varepsilon)}\big)\big).
\end{equation}

\medskip
Define an $S^\square\llbracket (\ZZ_p^\times)^2\rrbracket$-linear operator $U_p^\vee$ on $\rmS^\vee_{\widetilde P^\square, p\textrm{-adic}}$
given by (choosing a coset decomposition $\Iw_p\Matrix {p^{-1}}001 \Iw_p = \coprod_{j=0}^{p-1} v_j\Iw_p$, e.g. $v_j = \Matrix {p^{-1}}0{j}1$ and $v_j ^{-1}= \Matrix p0{-jp}1$),
$$
U_p^\vee(x \otimes \mu) := \sum_{j=0}^{p-1} xv_j\otimes v_j^{-1}\mu \qquad \textrm{for }x \in \widetilde P^\square\textrm{ and }\mu \in \calD_0\big(\ZZ_p;\,\calO\llbracket (\ZZ_p^\times)^2\rrbracket\big).
$$

Applying an argument similar to \cite[\S\,2.10]{liu-truong-xiao-zhao} (or essentially Buzzard's original eigenvarieties machine in \cite{buzzard}), we may define a characteristic power series for the $S^\square\llbracket (\ZZ_p^\times)^2\rrbracket$-linear $U_p^\vee$-action on $\rmS_{\widetilde P^\square, p\textrm{-adic}}^\vee$:
$$
C_{\widetilde P^\square}(t) = 1+ c_1 t+c_2 t^2+\cdots \in S^\square\llbracket (\ZZ_p^\times)^2\rrbracket\llbracket t \rrbracket.
$$
Let $\widetilde \Spc(\widetilde P^\square)$ denote the hypersurface of $\calS^\square \times\widetilde \calW \times \GG_m^\rig$ cut out by $C_{\widetilde P^\square}(t)$.  Then the general Buzzard's eigenvariety machine of \cite{buzzard} outputs a coherent sheaf $\calN^{\square}$ on $\widetilde \Spc(\widetilde P^\square)$ corresponding to finite slope forms in $\rmS^\vee_{\widetilde P^\square, p\textrm{-adic}}$.
On the other hand, the left $R_{\bar r_p}^\square$-action on $\widetilde P^\square$ (extending the $S^\square$-action) induces an action of $R_{\bar r_p}^\square$ on the coherent sheaf $\calN^\square$. Let $\widetilde \Eig'(\widetilde P^\square)$ denote the rigid analytic space over $\widetilde \Spc(\widetilde P^\square)$ associated to the image of $R_{\bar r_p}^\square$ in the endomorphism algebra $\End_{\widetilde \Spc(\widetilde P^\square)}(\calN^\square)$; then we may ``upgrade" $\calN^\square$ to a coherent sheaf $\calM^{\square\prime}$ on $\widetilde \Spc(\widetilde P^\square)$ whose pushforward along $\calX_{\bar r_p}^\square \to \calS^\square$ is isomorphic to $\calN^\square$.
The following diagram summarizes the above construction.
$$
\begin{tikzcd}[row sep = 5pt]
\calM^{\square \prime} \ar[d, -]\\
\widetilde \Eig'(\widetilde P^\square)\ar[dd] \ar[dr] & \calN^\square \ar[d,-]\\
& \widetilde  \Spc(\widetilde P^\square) \ar[r] \ar[dd] & \GG_m^\rig\\
\calX_{\bar r_p}^\square \times \widetilde \calW \ar[dr]
\\
& S^\square \times \widetilde \calW
\end{tikzcd}
$$

In fact, $\calM^{\square\prime}$ is essentially the same as $\calM^\square$ of \eqref{E:M infinity} in the following sense. 
By Theorem~\ref{T:Xtri = X}(1), $\calM^\square$ is supported on the subspace
\begin{equation}
\label{E:Z}
\calZ = \big\{(x, \delta_1, \delta_2) \in \calX_{\bar r_p}^{\square} \times \calT\;\big|\; \det\calV_x(p) = \delta_1(p)\delta_2(p)\big\}.
\end{equation}
The natural map 
\begin{equation}
\label{E:cyclotomic twist}
\begin{tikzcd}[row sep =0pt]
\calX_{\bar r_p}^\square \times \calT \ar[r] &  \calX_{\bar r_p}^\square \times \widetilde \calW \times \GG_m^\rig
\\
(x, \delta_1, \delta_2) \ar[r, mapsto] &\big (x, \delta_2|_{\ZZ_p^\times},  \delta_1\chi_\cycl^{-1}|_{\ZZ_p^\times}, \delta_2(p)\big)
\end{tikzcd}
\end{equation}
induces an isomorphism $\iota: \calZ \xrightarrow{\cong} \calX_{\bar r_p}^\square \times \widetilde \calW \times \GG_m^\rig$. Then $\iota^*\calM^{\square\prime} \cong \calM^\square$; in particular, the reduced subscheme of $\widetilde \Eig'(\widetilde P^\square)$ is precisely $\calX_{\bar r_p}^{\square, \tri}$ by Theorem~\ref{T:Xtri = X}. Here we point out three subtleties in normalizations:
\begin{enumerate}
\item The $U_p^\vee$-operator is associated to the double coset $\Iw_p \Matrix{p^{-1}}001 \Iw_p$, and the zeros of $C_{\widetilde P^\square}(t)$ gives the \emph{reciprocal} of $U_p^\vee$-eigenvalues;
\item the swapping of $\delta_1$ and $\delta_2$ is caused by taking $J_{\bar B}(-)$ as opposed to $J_B(-)$; and
\item the additional twist of cyclotomic character is built-in for the theory of locally analytic Jacquet functors.
\end{enumerate}

\subsection{Relating locally analytic Jacquet functor with local ghost theorem II}
\label{S:comparison Jacquet and local ghost 2}
It remains to relate $C_{\widetilde P^\square}(t)$ and the slopes appearing in
 the local ghost Theorem~\ref{T:local theorem}.
For each homomorphism $y^*: S^\square = \calO\llbracket u,x,z_1, z_2 , z_3\rrbracket \to \calO'$, write $\widetilde P_y: = \widetilde P^\square\widehat \otimes_{S^\square, y^*} \calO'$. Then Theorem~\ref{T:widetilde P projective}(3) implies that $\widetilde P_y$ is a primitive $\calO'\llbracket \rmK_p\rrbracket$-projective augmented module of type $\bbsigma$ (the Serre weight determined in Notation~\ref{N:bar rp}), where the conditions (2) and (3) of Definition~\ref{D:primitive type} are clear from \eqref{E:widetilde P}.

For a character $\varepsilon$ of $\Delta^2$ relevant to $\bbsigma$, recall that there is a natural quotient map
\begin{equation}
\label{E:naive quotient}
\begin{tikzcd}[row sep =0pt]
\varepsilon^*: \calO\llbracket (\ZZ_p^\times)^2\rrbracket \ar[r] &  \calO\llbracket w\rrbracket^{(\varepsilon)}
\\
{[\alpha, \delta]} \ar[r, mapsto] & \varepsilon(\bar \alpha, \bar \delta) (1+w)^{\log(\delta/\omega(\bar \delta))/p}
\end{tikzcd}
\end{equation}
for $\alpha, \delta \in \ZZ_p^\times$. This quotient map is a twist of \eqref{E:W vs tilde W}. 
The homomorphism \eqref{E:naive quotient} together with $y^*$ defines an embedding
$$
y \otimes \varepsilon: \calW^{(\varepsilon)}_{\calO'} \hookrightarrow \calS^\square \times \widetilde \calW.
$$
The isomorphism~\eqref{E:distribution and form duality} then induces a canonical $\calO'\llbracket w\rrbracket$-linear isomorphism 
\begin{equation}
\label{E:comparison with jacquet functor}
\begin{tikzcd}[row sep = 0pt]
\rmS^\vee_{\widetilde P^\square,p\textrm{-adic}} \otimes_{S^\square\llbracket (\ZZ_p^\times)^2\rrbracket, (y\otimes \varepsilon)^*} \calO'\llbracket w\rrbracket^{(\varepsilon)}  \ar[r, phantom, "\cong"]
& \Hom_{\calO'\llbracket w\rrbracket^{(\varepsilon)}} \big( \rmS_{\widetilde P^\square_y, p\textrm{-adic}}^{(\varepsilon)}, \calO'\llbracket w\rrbracket^{(\varepsilon)}\big),
\end{tikzcd}
\end{equation}
which can be expressed in terms of a pairing: for $x \in \widetilde P^\square$, $\mu \in \calD_0\big(\ZZ_p;\,\calO'\llbracket w\rrbracket^{(\varepsilon)}\big)$, and $\varphi \in \rmS_{\widetilde P^\square_y, p\textrm{-adic}}^{(\varepsilon)}$,
$$
\big\langle \varphi, x \otimes \mu\big \rangle : = \langle \varphi(x), \mu\rangle.
$$ 
We deduce the compatibility of $U_p^\vee$-operator on the left hand side of \eqref{E:comparison with jacquet functor} and the dual of $U_p$-action on the right hand side easily as: with the notation as above and $v_j = \Matrix{p^{-1}}0j1$ for $j=0, \dots, p-1$,
\begin{align*}
\langle U_p(\varphi), x\otimes \mu\rangle =\ & \langle U_p(\varphi)(x), \mu\rangle = \Big\langle \sum_{j=0}^{p-1}\varphi(xv_j)|_{v_j^{-1}},\, \mu\Big\rangle = \Big\langle \sum_{j=0}^{p-1}\varphi(xv_j),\, v_j^{-1}\mu\Big\rangle\\
=\ & \Big\langle \varphi, \, \sum_{j=0}^{p-1}xv_j\otimes v_j^{-1}\mu\Big\rangle = \langle \varphi, \, U_p^\vee(x\otimes \mu)\rangle .  
\end{align*}

This in particular means that, under the map $(y \otimes \varepsilon)^*: S^\square\llbracket(\ZZ_p^\times)^2\rrbracket \to \calO'\llbracket w\rrbracket^{(\varepsilon)}$, we have an identity of characteristic power series:
\begin{equation}
\label{E:C of y is C of Hy}
(y\otimes \varepsilon)^* \big( C_{\widetilde P^\square}(t) \big) = C_{\widetilde P^\square_y}^{(\varepsilon)}(w,t).
\end{equation}
Writing $\Spc^{(\varepsilon)}(\widetilde P^\square_y)$ for the zero locus of $C_{\widetilde P^\square_y}^{(\varepsilon)}(w,t)$ inside $\calW^{(\varepsilon)} \times \GG_m^\rig$. Then $(y\otimes \varepsilon)^{-1}\big(\widetilde \Spc(\widetilde P^\square)\big) = \Spc^{(\varepsilon)}(\widetilde P^\square_y)$.

\subsection{Proof of Theorem~\ref{T:generalized BBE}}
\label{S:proof of generalized BBE}
Now, we conclude the proof of Theorem~\ref{T:generalized BBE}. By the discussion in \S\,\ref{S:reduction to nonsplit case}, we may assume that $\bar r_p$ is reducible nonsplit and very generic with $a \in \{2, \dots, p-5\}$ and $b=0$.
Let $\underline x = (x, \delta_1, \delta_2) \in \calX_{\bar r_p}^{\square, \tri}$ be an $E'$-point; 
set $w_\star: = (\delta_1\delta_2^{-1}\chi_\cycl^{-1})(\exp(p))-1$ and $\varepsilon = \delta_2|_\Delta \times \delta_1|_\Delta \cdot \omega^{-1}$, which is relevant to $\bbsigma$ as already shown in \S\,\ref{S:reduction to nonsplit case}.
we need to show that $-v_p(\delta_2(p))$ is equal to a slope appearing in $\NP\big( G_{\bbsigma}^{(\varepsilon)}(w_\star, -)\big)$.

The argument is summarized by the following diagram:
\begin{equation}
\label{E:diagram for bootstrapping}
\begin{tikzcd}[row sep = 5pt]
& \GG_m^\rig
\\
\!\!\!\!\underline x \in \calX_{\bar r_p}^{\square, \tri} \ar[ru, red, "\delta_2(p)"]
\ar[dr] \ar[dd] &\\
& \mathrm{Supp}(\pr_*^\square\calM^\square) \ar[r, "\cong"] \ar[uu, red, "\delta_2(p)"'] \ar[dd]& \widetilde \Spc(\widetilde P^\square) \ar[r, hookleftarrow]\ar[dd]& \Spc^{(\varepsilon)}(\widetilde P^\square_y) 
\ar[dd] 
\\
\!\!\!\!\!\! x \in \calX_{\bar r_p}^\square \ar[dd] 
\\
& \calS^\square \times \widetilde  \calW\ar[dl]\ar[r, "\eqref{E:cyclotomic twist}"] \ar[rr, bend right = 20pt, "\pr_W\textrm{ of }\eqref{E:W vs tilde W}"']& \calS^\square \times \widetilde  \calW\ar[r, hookleftarrow, "y\otimes \varepsilon"'] &\{y\} \times \calW^{(\varepsilon)}. 
\\
\!\!\!\!\!\! y\in \calS^\square
\end{tikzcd}
\end{equation}

By Proposition~\ref{P:properties of trianguline deformation space}(5), we may assume that $\delta_2|_{(1+p\ZZ_p)^\times}$ is trivial. 
Write $y$ for the image of $\underline x$ in $\calS^\square$ and let $y^*: S^\square \to E'$ be the induced map.
Then the image of $\underline x$ in $\mathrm{Supp}(\pr_*^\square\calM^\square)$ is precisely given by $(y, \delta_1, \delta_2)$. In particular, the map taking the value of $\delta_2(p)$ on $\calX_{\bar r_p}^{\square, \tri}$ factors through $\mathrm{Supp}(\pr_*^\square\calM^\square)$.

As explained in \S\,\ref{S:comparison Jacquet and local ghost 1}, the image of $\underline x$ in $\widetilde \Spc(\widetilde P^\square)$ admits a cyclotomic twist from \eqref{E:cyclotomic twist}; so it is $\underline x': = (y, \delta_2, \delta_1\chi_\cycl^{-1})$.  In particular, the image of $\underline x'$ in $\calS^\square \times \widetilde \calW$ is precisely $y \otimes \varepsilon(w_\star)$ with $w_\star = \delta_1\delta_2^{-1}\chi_\cycl^{-1}(\exp(p))-1$ and $\varepsilon = \delta_2|_\Delta \times \delta_1|_\Delta \cdot \omega^{-1}$. So $v_p(\delta_2(p))$ at $\underline x'$ can be seen on $\Spc^{(\varepsilon)}(\widetilde P_y^\square)$. By local ghost Theorem~\ref{T:local theorem}, $-v_p(\delta_2(p))$ is a slope of $\NP\big(G_{\bbsigma}^{(\varepsilon)}(w_\star,-)\big)$. Theorem~\ref{T:generalized BBE} except (3) is proved.

For Theorem~\ref{T:generalized BBE}(3), we may twist the point $x$ so that $\delta_1(p)\delta_2(p) = 1$; this translate to that $\Matrix p00p$ acts trivially on $\widetilde P^\square$. 
As argued above, it suffices to show that for the given $k$, all slopes $\frac{k-2}2$ appearing in $\NP\big(C_{\widetilde P^\square_y}^{(\varepsilon)}(w_k,-)\big)$ (with multiplicity $d_k^\new(\varepsilon_1)$ by Proposition~\ref{P:ghost compatible with theta AL and p-stabilization} and Theorem~\ref{T:local theorem}) genuinely come from the zeros $\pm p^{-(k-2)/2}$ of $C_{\widetilde P^\square_y}^{(\varepsilon)}(w_k,-)$.
Indeed, by Corollary~\ref{C:p-new slopes}, the multiplicities of $U_p$-eigenvalues $\pm p^{-(k-2)/2}$ on $\rmS_{\widetilde P^\square_y, k}^\Iw(\tilde \varepsilon_1)$ are $\frac 12d_k^\new(\varepsilon_1)$ each. Theorem~\ref{T:generalized BBE}(3) is proved.

Finally, we remark that ``conversely" part of Theorem~\ref{T:generalized BBE} is also clear from the above discussion: given any $\delta_1|_{\ZZ_p^\times}$ and $\delta_2|_{\ZZ_p^\times}$ with $\varepsilon$ and $w_\star$ defined therein. We can pick an \emph{arbitrary} evaluation $y^*: S^\square \to \calO'$. Then there exists a point $\tilde x \in \Spc^{(\varepsilon)}(\widetilde P_y^\square)$ with any given slope of $\NP(G^{(\varepsilon)}_{\bbsigma} (w_\star ,-))$ by local ghost theorem (Theorem~\ref{T:local theorem}). This then produces a point in $\mathrm{Supp}(\pr_*^\square\calM^\square)$ in the commutative diagram \eqref{E:diagram for bootstrapping} which can then be lifted to a desired point $\underline x \in \calX_{\bar r_p}^{\square, \tri}$.  This completes the proof of Theorem~\ref{T:generalized BBE}.

\section{Bootstrapping and ghost conjecture}
\label{Sec:bootstrapping}
In this section, we perform a bootstrapping argument to prove a global ghost conjecture (Theorem~\ref{T:global ghost}) when the residual Galois representation $\bar r$ is absolutely irreducible yet its restriction to $\Gal_{\QQ_p}$ is reducible and very generic ($2 \leq a\leq p-5$ and $p \geq 11$). The global ghost conjecture implies the following (with the help of \cite{bergdall-pollack3} and \cite{ren}) for the $\bar r$-localized space of modular forms: 
\begin{itemize}
\item a version of the Gouv\^ea--Mazur conjecture (Theorem~\ref{T:bar rp GM conjecture}),
\item
Gouv\^ea's conjecture on slope distributions (Theorem~\ref{T:bar rp Gouvea distribution}), and
\item a refined version of Coleman--Mazur--Buzzard--Kilford spectral halo conjecture (Theorem~\ref{T:refined halo}).
\end{itemize}
In fact, we  adopt an axiomatic approach to proving the global ghost conjecture, borrowing a setup from \cite{six author2}, \cite[\S\,5]{gee-newton}, and \cite[\S\,4.2]{dotto-le}; this allows our theorem to be applicable to the cohomology of general Shimura varieties associated to a group $G$ which is essentially $\GL_2(\QQ_p)$ at a $p$-adic place.

\medskip
In this section, let $\bar r_p$ be a residual local Galois representation as in Notation~\ref{N:bar rp}. Let $\bbsigma$ be as in Notation~\ref{N:bar rp}.

\subsection{Hecke actions}
\label{S:Hecke action}
Instead of developing the theory of Hecke actions for general $\rmK_p$-types as in \cite[\S\,4]{six author}, we focus on the simplest spherical case.

Recall that for a $\rmK_p$-projective augmented module $\widetilde \rmH$, a character $\varepsilon_1$ of $\Delta$, and $k \in \ZZ_{\geq 2}$, the space $\rmS_k^\ur(\varepsilon_1) =\Hom_{\calO\llbracket \rmK_p\rrbracket} \big( \widetilde \rmH, \calO[z]^{\leq k-2} \otimes \varepsilon_1 \circ \det\big)$ carries a $T_p$-operator as defined in \S\,\ref{S:arithmetic forms}(4).
We similarly define an operator $S_p$ on $\rmS_k^\ur(\varepsilon_1)$ given by, for $\varphi \in \rmS_k^\ur(\varepsilon_1)$ and $x \in \widetilde \rmH$,
$$
S_p(\varphi)(x) = \varphi\big(x \Matrix{p^{-1}}00{p^{-1}}\big).
$$
The action of $S_p$ is invertible and commutes with the $T_p$-operator. So $\rmS_k^\ur(\varepsilon_1)$ admits a $\calO[T_p, S_p^{\pm1 }]$-module structure.

Recall the associated Kisin's crystabelline deformation ring from  \S\,\ref{S:Kisin's deformation}.
Let $R_{\bar r_p}^{\square, 1-k, \varepsilon_1}$ be the quotient of $R_{\bar r_p}^\square$ parameterizing crystabelline representations with Hodge--Tate weights $\{1-k,0\}$ such that $\Gal(\overline \QQ_p/\QQ_p)$ acts on $\DD_\mathrm{pcrys}(-)$ by $\varepsilon_1$ (see Notation~\ref{N:weight space} for the definition of $\DD_\mathrm{pcrys}(-)$).
Let $\calV_{1-k}$ denote the universal representation on $\calX_{\bar r_p}^{\square, 1-k, \varepsilon_1}: = \big(\Spf R_{\bar r_p}^{\square, 1-k, \varepsilon_1}\big )^\rig$, then $\DD_\mathrm{pcrys}(\calV_{1-k})$ is locally free of rank two over $\calX_{\bar r_p}^{\square, 1-k, \varepsilon_1}$, equipped with a linear action of crystalline Frobenius $\phi$. In particular, our condition says that $\calV_{1-k} \otimes \varepsilon_1^{-1}$ is crystalline.

Define elements $s_p \in \calO\big(\calX_{\bar r_p}^{\square, 1-k, \varepsilon_1}\big)^\times$ and $t_p \in \calO\big(\calX_{\bar r_p}^{\square, 1-k, \varepsilon_1}\big)$ such that
$$
\mathrm{det}(\phi^{-1}) =p^{k-1} s_p\quad \textrm{and} \quad \mathrm{tr}(\phi^{-1}) = t_p	.
$$
Here we considered the trace of $\phi^{-1}$ because our associated Galois representation is the one that matches with the local Langlands correspondence of Harris--Taylor; see \S\,\ref{S:normalization} for details.
As both $s_p$ and $t_p$ take bounded values, we have $s_p \in R_{\bar r_p}^{\square, 1-k, \varepsilon_1}\big[\frac 1p\big]^\times$ and $t_p \in R_{\bar r_p}^{\square, 1-k, \varepsilon_1}\big[\frac 1p\big]$.

Following \cite[\S\,4]{six author}, we define a natural homomorphism
\begin{equation}
\label{E:eta}
\eta_k: \calO[T_p, S_p^{\pm1}] \to R_{\bar r_p}^{\square, 1-k, \varepsilon_1}\big[\tfrac 1p\big] \quad \textrm{given by} \quad 
\eta_k(T_p) = t_p, \textrm{ and } \eta_k(S_p) = s_p.
\end{equation}

\begin{definition}
\label{D:arithmetic modules}
Recall $\rmK_p = \GL_2(\ZZ_p)$, and the representation $\bar r_p$ from Notation~\ref{N:bar rp}.
For a Serre weight $\sigma_{a,b}$, write $\Proj_{\calO\llbracket \rmK_p\rrbracket}(\sigma_{a,b})$ for the projective envelope of $\sigma_{a,b}$ as an $\calO\llbracket \rmK_p\rrbracket$-module.

An \emph{$\calO\llbracket\rmK_p\rrbracket$-projective arithmetic module of type $\bar r_p$} is an $\calO\llbracket\rmK_p\rrbracket$-projective augmented module $\widetilde \rmH$ equipped with a continuous \emph{left} action of $R_{\bar r_p}^\square$ satisfying the following conditions.
\begin{enumerate}
\item The left $R_{\bar r_p}^\square$-action on $\widetilde \rmH$ commutes with the right $\GL_2(\QQ_p)$-action.

\item The induced $\rmK_p$-action makes $\widetilde \rmH$ a right $\calO\llbracket \rmK_p\rrbracket$-module isomorphic to
\begin{itemize}
\item  $\Proj_{\calO\llbracket \rmK_p\rrbracket}(\sigma_{a,b})^{\oplus m(\widetilde \rmH)}$ for some $m(\widetilde \rmH) \in \ZZ_{\geq 1}$, if $\bar r_p$ is nonsplit, or
\item $\Proj_{\calO\llbracket \rmK_p\rrbracket}(\sigma_{a,b})^{\oplus m'(\widetilde \rmH)} \oplus \Proj_{\calO\llbracket \rmK_p\rrbracket}(\sigma_{p-3-a,a+b+1})^{\oplus m''(\widetilde \rmH)}$ for some $m'(\widetilde \rmH), m''(\widetilde \rmH) \in \ZZ_{\geq 1}$, if $\bar r_p$ is split (writing $m(\widetilde \rmH) : = m'(\widetilde \rmH) + m''(\widetilde \rmH)$ in this case).
\end{itemize}
\item 
For every character $\varepsilon= \omega^{-s_\varepsilon+b}\times \omega^{a+s_\varepsilon+b}$ relevant to $\sigma_{a,b}$ and every $k = k_\varepsilon+ (p-1)k_\bullet$, the induced $R_{\bar r_p}^{\square}$-action on $\rmS_{\widetilde \rmH, k}^\ur(\varepsilon_1)$ factors through the quotient $R_{\bar r_p}^{\square, 1-k, \varepsilon_1}$. Moreover, the Hecke action of $\calO[T_p, S_p^{\pm 1}]$ on $\rmS_{\widetilde \rmH, k}^\ur(\varepsilon_1)$ defined in \S\,\ref{S:Hecke action} agrees with the composition
$$
\calO[T_p, S_p^{\pm1}] \xrightarrow{\eqref{E:eta}} R_{\bar r_p}^{\square, 1-k, \varepsilon_1}\big[\tfrac 1p\big] \to \End_E \big( \rmS_{\widetilde \rmH, k}^\ur(\varepsilon_1) \otimes_\calO E\big).
$$
\end{enumerate}

When $\bar r_p$ is nonsplit, we say that $\widetilde \rmH$ is \emph{primitive} if $m(\widetilde \rmH)=1$.

In either case, we call $m(\widetilde \rmH)$ the \emph{multiplicity} of $\widetilde \rmH$.
\end{definition}

\begin{remark}
\begin{enumerate}
\item 
In applications, all the $\calO\llbracket\rmK_p\rrbracket$-projective arithmetic modules we encounter are known to satisfy conditions analogous to Definition~\ref{D:arithmetic modules}(3) for all \emph{crystabelline} representations. (Such compatibility can be alternatively deduced by comparing to trianguline deformations.)  But formulating of such condition is slightly more subtle; we refer to for example \cite[Definition~1.5]{six author2} or \cite[\S\,4.2]{dotto-le}.
\item
Our definition is essentially different from and (in most cases) weaker than the notion of $\calO[\GL_2(\QQ_p)]$-modules $\calM_\infty$ with arithmetic actions (see for example, \cite{six author2, gee-newton, dotto-le}) in the following aspects: (a) their $\calM_\infty$ is a module of $R_\infty = R_{\bar r_p}^\square\llbracket z_1, \dots, z_g\rrbracket$ for some dummy variables; ours $\widetilde \rmH$ may be viewed as $\calM_\infty$ after evaluating $z_i$'s; (b) they typically require $\calM_\infty \widehat \otimes \Sym^{k-2}\calO^{\oplus 2}$ to be a maximal Cohen--Macaulay over $R_{\bar r_p}^{\square, 1-k, \varepsilon_1}\llbracket z_1, \dots, z_g\rrbracket$; we do not need this. 
\item 
When $\bar r_p$ is split, it may happen in practice that $m'(\widetilde \rmH) \neq m''(\widetilde \rmH)$.
\item We do not require primitive $\calO\llbracket\rmK_p\rrbracket$-projective arithmetic modules to satisfy the two additional conditions in Definition~\ref{D:primitive type}(2)(3), despite they typically do in practice.
\end{enumerate}
\end{remark}

\begin{example}[Quaternionic case]
\label{Ex:quaterionic case}
We illustrate by an example how our abstract setup appears naturally in the study of cohomology of Shimura varieties.

Fix an absolutely irreducible residual Galois representation $\bar r: \Gal_\QQ \to \GL_2(\FF)$ such that $\bar r|_{\Gal_{\QQ_p}} \simeq \bar r_p$ for a residual local representation that we consider in Notation~\ref{N:bar rp}.
Let $D$ be a quaternion algebra over $\QQ$ that is unramified at $p$; we fix an isomorphism $D\otimes \QQ_p \cong \rmM_2(\QQ_p)$. Set
$$
i(D): = \begin{cases}
1 & \textrm{ if }D \otimes_\QQ \RR\cong \rmM_2(\RR), \textrm{ which we call the \emph{indefinite} case};
\\
0 & \textrm{ if }D \otimes_\QQ \RR\cong\HH, \textrm{ which we call the \emph{definite} case} .
\end{cases}
$$
Fix an open compact subgroup $K^p \subseteq (D\otimes \AAA_f^p)^\times$ such that $K^p\rmK_p$ is \emph{neat}, i.e. $gD^\times g^{-1} \cap K^p \rmK_p = \{1\}$ for every $g \in( D\otimes\AAA_f)^\times$.
For any open compact subgroup $K'_p \subseteq \GL_2(\QQ_p)$, let $\Sh_{D^\times}(K^pK'_p)$ denote the associated (complex) Shimura variety, with $\CC$-points given by
$$
\Sh_{D^\times}(K^pK'_p)(\CC) = \begin{cases}
D^\times \backslash (D\otimes \AAA_f)^\times /K^pK'_p & \textrm{ when }i(D) =0 
\\
D^\times \backslash \gothH^\pm \times (D\otimes \AAA_f)^\times /K^pK'_p& \textrm{ when }i(D) =1 ,
\end{cases}
$$
where $\gothH^\pm: = \CC \backslash \RR$. (When $i(D)=1$, we take the Deligne homomorphisms to be the $\GL_2(\RR)$-conjugacy of $h: \mathbb{S}(\RR) \to \GL_2(\RR)$ given by $h(x + \tti y) = \Matrix xy{-y}x$.)
Then for $n\in \ZZ_{\geq 1}$, the tower of subgroups $\rmK_{p,n}: = \Big(
\begin{smallmatrix}
1+p^n\ZZ_p & p^n\ZZ_p
\\ p^n\ZZ_p &1+p^n\ZZ_p 
\end{smallmatrix}\Big) \subseteq \rmK_p$ defines a tower of Shimura varieties:
$$\cdots \to 
\Sh_{D^\times}(K^p\rmK_{p,n}) \to \cdots \to \Sh_{D^\times}(K^p\rmK_{p,1}) \to \Sh_{D^\times}(K^p\rmK_{p}).
$$

The \emph{$i(D)$th completed homology group localized at $\bar r$}
$$
\widetilde \rmH_{\infty, \bar r} : = \varprojlim_{n} \rmH_{i(D)}^\mathrm{Betti}\big( \Sh_{D^\times}(K^p\rmK_{p,n})(\CC), \calO\big)_{\gothm_{\bar r}}^{\mathrm{cplx}=1},
$$
where the subscript $\gothm_{\bar r}$ indicates localization at the maximal Hecke ideal at $\bar r$, and the superscript cplx=$1$ is meaningless when $i(D)=1$, and means to take the subspace where the complex conjugation acts by $1$ (so that we only take a one-dimensional subspace of the associated $2$-dimensional Galois representation).

This $\widetilde \rmH_{\infty, \bar r}$ is a $\rmK_p$-projective augmented module. Indeed, this is obvious if $i(D)=0$; when $i(D)=1$, this is because, for any open compact subgroup $K'_p \subseteq \GL_2(\QQ_p)$, the localization
\begin{equation}
\label{E:vanishing of homology}
\rmH_i^\mathrm{Betti}\big(\Sh_{D^\times}(K^pK'_p)(\CC), \FF\big)_{\gothm_{\bar r}} = 0 \textrm{ unless }i=1,
\end{equation}
and the projectivity of $\widetilde \rmH_{\infty, \bar r}$ follows from studying the usual Tor-spectral sequence. Moreover, $\widetilde \rmH_{\infty, \bar r}$ carries an action of $R_{\bar r}$, the Galois deformation ring of $\bar r$. To make this compatible with our setup of Definition~\ref{D:arithmetic modules}, we choose an isomorphism $R_{\bar r}^\square \cong R_{\bar r} \llbracket y_1, y_2, y_3\rrbracket$ and demand that $y_1, y_2, y_3$ act trivially on $\widetilde \rmH_{\infty, \bar r}$. This then induces a natural $R_{\bar r_p}^{\square}$-action on $\widetilde \rmH_{\infty, \bar r}$, upgrading $\widetilde \rmH_{\infty, \bar r}$ to an $\calO\llbracket\rmK_p\rrbracket$-projective arithmetic module of type $\bar r_p$, where the condition Definition~\ref{D:arithmetic modules}(3) is the usual local-global compatibility of automorphic forms on $D^\times$.

In this case, the spaces of abstract classical forms defined in \S\,\ref{S:arithmetic forms}(3) recover the usual \'etale cohomology groups: for $k \in \ZZ_{\geq 2}$ and characters $\varepsilon_1$ of $\Delta$ and $\psi$ of $\Delta^2$, we have
\begin{align*}
\rmS_{\widetilde \rmH_{\infty, \bar r}, k}^\ur(\varepsilon_1)\, &  \otimes_\calO E =
\Hom_{\calO\llbracket \rmK_p\rrbracket} \big( \widetilde \rmH_{\infty, \bar r}, \, E[z]^{\leq k-2} \otimes \varepsilon_1 \circ \det\big)\\ \cong\ & \rmH^{i(D)}_\mathrm{Betti}\big( \Sh_{D^\times}(K^p \rmK_p)(\CC),\, \Sym^{k-2}\calH \otimes \varepsilon_1 \circ \det \big)_{\gothm_{\bar r}}^{\mathrm{cplx}=1} \cong \big(\rmS^D_{k}(K^p\rmK_p)\otimes \varepsilon_1 \circ \det\big)_{\gothm_{\bar r}},
\\
\rmS_{\widetilde \rmH_{\infty, \bar r}, k}^\Iw(\psi)\,& \otimes_\calO E =
\Hom_{\calO\llbracket \Iw_p\rrbracket} \big( \widetilde \rmH_{\infty, \bar r}, \, E[z]^{\leq k-2} \otimes \psi \big)\\ \cong\ & \rmH^{i(D)}_\mathrm{Betti}\big( \Sh_{D^\times}(K^p \Iw_p)(\CC),\, \Sym^{k-2}\calH \otimes \psi \big)_{\gothm_{\bar r}}^{\mathrm{cplx}=1} \cong \rmS^D_{k}(K^p\Iw_p; \psi)_{\gothm_{\bar r}}.
\end{align*}
Here $\calH$ is the usual rank $2$ local system on $\Sh_{D^\times}(K^p K'_p)$ associated to the dual of standard representation of $K'_p \subset \rmK_p$ (and $\psi$ also makes use of the local system $\calH$ as opposed to the relative Tate modules);  $S^D_k(-)$ denotes the space of automorphic forms on $\Sh_{D^\times}$, and the isomorphisms are as Hecke modules. This example allows us to deduce results regarding classical modular forms or quaternionic automorphic forms from our abstract setup.
\end{example}

\begin{remark}
Similar constructions can be made for Shimura varieties associated to a more general group $G$ for which $G_{\QQ_p}^\mathrm{ad}$ admits a factor isomorphic to $\mathrm{PGL}_{2, \QQ_p}$ (after properly treating the central characters), as long as one can prove certain vanishing result similar to \eqref{E:vanishing of homology}. (Such techniques are available for example in \cite{caraiani-scholze}.)
\end{remark}

\begin{example}[Patched version] Another source of
$\calO\llbracket\rmK_p\rrbracket$-projective arithmetic modules is the patched completed homology of Caraiani--Emerton--Gee--Geraghty--Pa\v sk\= unas--Shin in \cite{six author}. More precisely, let $\calG_2$ be the group scheme over $\ZZ$ defined in \cite[\S\,2.1]{clozel-harris-taylor}, which contains $\GL_2 \times \GL_1$ as a subgroup of index $2$, and admits a natural homomorphism $\nu: \calG_2 \to \GL_1$.
Let $F$ be a CM field with maximal totally real subfield $F^+$,  $\bar r: \Gal_{F^+} \to \calG_2(\FF)$ a residual global representation, and $G$ a definite unitary group over $F^+$ satisfying the following list of properties:
\begin{enumerate}
\item $\bar r^{-1}(\GL_2(\FF) \times \FF^\times) = \Gal_F$, and write $\bar r|_{\Gal_F}$ for the representation $\bar r: \Gal_F \to \GL_2(\FF) \times \FF^\times \xrightarrow{\pr_1} \GL_2(\FF)$;
\item $\nu \circ \bar r = \bar \chi_\cycl^{-1}$, where $\bar \chi_\cycl$ is the reduction of the cyclotomic character;
\item there is a $p$-adic place $\gothp$ of $F^+$ which splits into $\tilde \gothp \tilde \gothp^c$ in $F$ such that $F_{\tilde \gothp}\cong F^+_\gothp \cong \QQ_p$ and $\bar r|_{\Gal_{F_{\tilde \gothp}}} \cong \bar r_p$, for the $\bar r_p$ we consider in Notation~\ref{N:bar rp};
\item $\bar r(\Gal_{F(\zeta_p)})$ is adequate in the sense of \cite[Definition~2.3]{thorne}; in particular, $\bar r$ is irreducible;
\item $\overline F^{\ker \mathrm{ad}\bar r|_{\Gal_F}}$ does not contain $F(\zeta_p)$.
\item
$G$ is an outer form of $\GL_2$ with $G\times_{F^+}F \cong \GL_{2,F}$;
\item if $v$ is a finite place of $F^+$, then $G$ is quasi-split at $v$;
\item if $v$ is an infinite place of $F^+$, then $G(F^+_v)\cong U_2(\RR)$, and
\item $\bar r$ is automorphic in the sense of \cite[Definition~5.3.1]{emerton-gee}.
\end{enumerate}


Fix an isomorphism $G(\calO_{F^+_{\gothp}}) \cong \GL_2(\ZZ_p)=\rmK_p$, and fix a neat open compact subgroup $K^\gothp \subseteq G(\AAA_{F^+, f}^{(\gothp)})$. As above, consider the subgroups $\rmK_{p,n}:= \Big(
\begin{smallmatrix}
1+p^n\ZZ_p & p^n\ZZ_p
\\ p^n\ZZ_p &1+p^n\ZZ_p 
\end{smallmatrix}\Big) \subseteq \rmK_p$ for each $n$.
With these global data, \cite{six author} constructed a patched completed homology $\widetilde \rmH_{\infty, \gothm_{\bar r}}$, that patches the usual completed homology
$$
\widetilde
\rmH_0 \big( G(\QQ) \backslash G(\AAA_f) / K^\gothp, \calO\big)_{\gothm_{\bar  r}} : = \varprojlim_{n \to \infty} \rmH_0\big( G(\QQ) \backslash G(\AAA_f) / K^\gothp\rmK_{p,n}, \calO\big)_{\gothm_{\bar  r}},
$$
where $\gothm_{\bar r} $ is the appropriate Hecke maximal ideal associated to $\bar r$.
The additional structure associated to $\widetilde \rmH_{\infty, \gothm_{\bar r}}$ is explained by the following diagram
\begin{equation}
\begin{tikzcd}
R_{\bar r_p}^{\square} \ar[r] & \varprojlim_{n} R_{\bar r, \mathsf{Q}_n}^{\square} / \gothm_{\mathsf{Q}_n}^n & \widetilde \rmH_{\infty, \gothm_{\bar r}} \ar[r, twoheadrightarrow] \arrow[out=-115,
  in=-65,
  loop,
  distance=1cm]
\arrow[out=155,
  in=-155,
  loop,
  distance=1.2cm] & \widetilde \rmH_y  
\arrow[out=-115,  in=-65,
  loop,
  distance=1cm]
\\
&& S_\infty \ar[ul] \ar[ull, dashed]\ar[r, twoheadrightarrow, "y^*"] & \calO'.
\end{tikzcd}
\end{equation}
\begin{itemize}
\item 
$S_\infty = \calO\llbracket z_1, \dots, z_h\rrbracket$ is the ring of formal power series formed by patching variables and framing variables; 
\item
$\widetilde \rmH_{\infty, \gothm_{\bar r}}$ is a projective right  $S_\infty\llbracket \rmK_p\rrbracket$-module isomorphic to\begin{itemize}
\item  $\Proj_{S_\infty\llbracket \rmK_p\rrbracket}(\sigma_{a,b})^{\oplus m(\bar r)}$ for some $m(\bar r) \in \ZZ_{\geq 1}$, if $\bar r_p$ is nonsplit, or
\item $\Proj_{S_\infty\llbracket \rmK_p\rrbracket}(\sigma_{a,b})^{\oplus m'(\bar r)} \oplus \Proj_{S_\infty\llbracket \rmK_p\rrbracket}(\sigma_{p-3-a,a+b+1})^{\oplus m''(\bar r)}$ for some $m'(\bar r), m''(\bar r) \in \ZZ_{\geq 1}$, if $\bar r_p$ is split;
\end{itemize} 
\item  the right $\rmK_p$-action on $\widetilde \rmH_{\infty, \gothm_{\bar r}}$ extends to a continuous right $\GL_2(\QQ_p)$-action;
\item the set $\mathsf{Q}_n$ denotes a collection of Taylor--Wiles primes of level $n$.
\item $\widetilde \rmH_{\infty, \gothm_{\bar r}}$ is essentially constructed as an inverse limit, carrying an action of the inverse limit of deformation rings $ R_{\bar r, \mathsf{Q}_n}^{\square}/ \gothm_{\mathsf{Q}_n}^n$, which commutes with the right $\GL_2(\QQ_p)$-action;
\item
the action of $S_\infty$ on $\widetilde \rmH_{\infty, \gothm_{\bar r}}$ factors through that of $\varprojlim_{n} R_{\bar r, \mathsf{Q}_n}^{\square} / \gothm_{\mathsf{Q}_n}^n$;
\item the local deformation ring $R_{\bar r_p}^\square $ naturally maps to $\varprojlim_{n} R_{\bar r, \mathsf{Q}_n}^{\square} / \gothm_{\mathsf{Q}_n}^n$ and acts on $\widetilde \rmH_{\infty, \gothm_{\bar r}}$;
\item 
one may lift the homomorphism $S_\infty \to \varprojlim_{n} R_{\bar r, \mathsf{Q}_n}^{\square} / \gothm_{\mathsf{Q}_n}^n$ to a homomorphism to $R_{\bar r_p}^\square$ (somewhat arbitrarily).
\end{itemize}
A main result of \cite[Theorem~4.1]{six author} says that, for any homomorphism $y^*: S_\infty \to \calO'$, $\widetilde \rmH_y: = \widetilde \rmH_{\infty, \gothm_{\bar r}} \widehat \otimes_{S_\infty} \calO'$ carries naturally a structure of $\calO\llbracket\rmK_p\rrbracket$-projective arithmetic module of type $\bar r_p \cdot \omega_1$ in the sense of Definition~\ref{D:arithmetic modules} by verifying the local-global compatibility condition (3). (The additional twist by cyclotomic character is due to the different half twist from local Langlands correspondence.)
\end{example}

Recall the residual representations $\bar r_p$ from Notation~\ref{N:bar rp}.
The main theorem of this paper is the following. 
\begin{theorem}
\label{T:global ghost}
Assume that $p\geq 11$.
Let $\bar r_p$ be a residual local Galois representation as in Notation~\ref{N:bar rp} with $ a \in \{2, \dots, p-5\}$.
Let $\widetilde \rmH$ be an $\calO\llbracket\rmK_p\rrbracket$-projective arithmetic module of type $\bar r_p$ and multiplicity $m(\widetilde \rmH)$ in the sense of Definition~\ref{D:arithmetic modules}. Fix a character $\varepsilon$ of $\Delta^2$ relevant to $\sigma_{a,b}$. Let $C_{\widetilde \rmH}^{(\varepsilon)}(w,t)$ denote the characteristic power series for the $U_p$-action on the space of abstract $p$-adic forms associated to $\widetilde \rmH$, as defined in \S\,\ref{S:arithmetic forms}(2). 

Then for every $w_\star \in \gothm_{\CC_p}$, the Newton polygon $\NP\big(C_{\widetilde \rmH}^{(\varepsilon)}(w_\star,-)\big)$ is the same as the Newton polygon $\NP\big(G^{(\varepsilon)}_{\bbsigma}(w_\star,-)\big)$, stretched in both $x$- and $y$-directions by $m(\widetilde \rmH)$, except that the slope zero part of $\NP\big(C_{\widetilde \rmH}^{(\varepsilon)}(w_\star,-)\big)$ is changed to 
\begin{itemize}
\item have length $m'(\widetilde \rmH)$ when $\bar r_p$ is split and $\varepsilon = \omega^b \times \omega^{a+b}$, and
\item have length $m''(\widetilde \rmH)$ when $\bar r_p$ is split and $\varepsilon = \omega^{a+b+1}\times \omega^{b-1}$.
\end{itemize}
\end{theorem}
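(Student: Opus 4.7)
\smallskip

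\noindent\textbf{Proof proposal for Theorem~\ref{T:global ghost}.}

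The plan is to bootstrap from the slope information on the trianguline deformation space (Theorem~\ref{T:generalized BBE}) to the abstract characteristic power series $C_{\widetilde \rmH}^{(\varepsilon)}(w,t)$ by a combination of eigenvariety constructions, global triangulation, dimension counts at integer weights, and Berkovich continuity.

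First I would reduce to the case when $\bar r_p$ is nonsplit. In the split case, the action of $R_{\bar r_p}^\square$ combined with the decomposition of $\widetilde \rmH$ as an $\calO\llbracket \rmK_p\rrbracket$-module into $\Proj(\sigma_{a,b})^{\oplus m'} \oplus \Proj(\sigma_{p-3-a,a+b+1})^{\oplus m''}$ breaks $\widetilde \rmH$ into two $\calO\llbracket\rmK_p\rrbracket$-projective arithmetic modules, each compatible with one Serre weight component. The ghost series for the two Serre weights are related by Proposition~\ref{P:ghost series identity}, which accounts precisely for the modification of the slope zero parts described in the statement. Thus it suffices to treat the primitive-type nonsplit case with the ghost series $G_{\bbsigma}^{(\varepsilon)}(w,t)$.

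Next, I would construct the eigenvariety attached to $\widetilde \rmH$ using the same Jacquet-functor/Buzzard eigenvariety machine as in \S\,\ref{S:comparison Jacquet and local ghost 1}, producing a rigid space $\Eig(\widetilde \rmH) \to \calW^{(\varepsilon)} \times \GG_m^\rig$ whose spectral hypersurface is cut out by $C_{\widetilde \rmH}^{(\varepsilon)}(w,t)$. At every finite slope classical point the left $R_{\bar r_p}^\square$-action produces (via the arithmetic condition of Definition~\ref{D:arithmetic modules}(3) and the local-global compatibility it encodes) a Galois representation whose slope equals $-v_p$ of the corresponding $U_p$-eigenvalue. Applying global triangulation \cite{KPX,liu} to families over the eigenvariety, and combining with Theorem~\ref{T:Xtri = X}, yields a morphism $\Eig(\widetilde \rmH) \to \calX_{\bar r_p}^{\square, \tri}$ compatible with weight and $U_p$-slope. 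Theorem~\ref{T:generalized BBE} then implies: for every point $(w_\star,\alpha) \in \Eig(\widetilde\rmH)(\CC_p)$, the value $v_p(\alpha^{-1})$ lies in the slope set of $\NP\big(G_{\bbsigma}^{(\varepsilon)}(w_\star,-)\big)$. Consequently every slope appearing in $\NP\big(C_{\widetilde \rmH}^{(\varepsilon)}(w_\star,-)\big)$ is already a slope of the ghost series.

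To upgrade this set-theoretic containment to equality of Newton polygons (with the stretching factor $m(\widetilde \rmH)$), I would argue first at classical integer weights $w_\star = w_k$ with $k \equiv k_\varepsilon \bmod (p-1)$. There the dimension identity \eqref{E:rank Sk proportional to m(H)} gives $\rank_\calO \rmS_{\widetilde \rmH,k}^\Iw(\tilde\varepsilon_1) = m(\widetilde\rmH)\cdot d_k^\Iw(\tilde\varepsilon_1)$, which equals $m(\widetilde\rmH)$ times the horizontal length of the classical portion of $\NP\big(G_{\bbsigma}^{(\varepsilon)}(w_k,-)\big)$ (with the slope $k-1$ contribution from theta maps handled by Proposition~\ref{P:theta and AL}(1) and Proposition~\ref{P:ghost compatible with theta AL and p-stabilization}(1)). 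Combined with the containment above, a counting argument forces each ghost slope $\lambda$ below $k-1$ to appear with multiplicity \emph{exactly} $m(\widetilde\rmH)$ times its multiplicity in the ghost polygon; the overconvergent part of $\NP\big(C_{\widetilde \rmH}^{(\varepsilon)}(w_k,-)\big)$ above slope $k-1$ is then obtained by induction on weight via the theta/Atkin--Lehner symmetries that are satisfied by both sides. Hence Theorem~\ref{T:global ghost} holds at every $w_\star = w_k$.

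Finally, to pass from classical weights to arbitrary $w_\star \in \gothm_{\CC_p}$, I would invoke the Berkovich-type compactness argument of Corollary~\ref{C:Berkovich argument}. For each fixed $n$ and $\delta>0$, the function $\sfw \mapsto \NP\big(C_{\widetilde\rmH}^{(\varepsilon)}(\sfw,-)\big)_{x = n\cdot m(\widetilde \rmH)}$ is continuous on $\Vtx_{n,\CC_p}^{(\varepsilon),\delta,\Berk}$ by Lemma~\ref{L:continuity of NP}, and the same is true for $m(\widetilde\rmH)\cdot v_p(g_n^{(\varepsilon)}(\sfw))$. They agree on the Zariski dense set of classical weights inside $\Vtx_n^{(\varepsilon),\delta}$ (since $w_k \in \Vtx_n^{(\varepsilon)}$ whenever $(n,v_p(g_n^{(\varepsilon)}(w_k)))$ is a vertex, by Proposition~\ref{P:near-steinberg equiv to nonvertex}), hence agree on the whole Berkovich vertex locus. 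Letting $\delta \to 0^+$ and $n$ vary proves that the vertices of the two Newton polygons coincide at every $w_\star$, which is equivalent to the desired identity $\NP\big(C_{\widetilde\rmH}^{(\varepsilon)}(w_\star,-)\big) = m(\widetilde\rmH)\cdot \NP\big(G_{\bbsigma}^{(\varepsilon)}(w_\star,-)\big)$ away from slope zero.

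The principal obstacle I anticipate is the careful bookkeeping of multiplicities in the dimension count at classical weights, particularly around near-Steinberg ranges where many ghost slopes collide at $\tfrac{k-2}{2}$: one must verify that the global triangulation morphism $\Eig(\widetilde\rmH) \to \calX_{\bar r_p}^{\square,\tri}$ indeed accounts for the full multiplicity $\tfrac{1}{2}d_k^{\new}(\varepsilon_1)$ of eigenvalues $\pm p^{(k-2)/2}$ predicted by Corollary~\ref{C:p-new slopes} and Theorem~\ref{T:generalized BBE}(3), rather than collapsing any of them. Secondary technical points requiring care are the compatibility of the $R_{\bar r_p}^\square$-action on $\widetilde \rmH$ with the universal Pa\v sk\= unas module $\widetilde P^\square$ when one reinterprets $\Eig(\widetilde \rmH)$ through the eigenvariety of $\widetilde P^\square$ (as in diagram~\eqref{E:diagram for bootstrapping}), and the treatment of the Hida (slope zero) piece in the split case, where Proposition~\ref{P:ghost series identity}\,(2) must be matched against the actual ranks $m'(\widetilde\rmH)$ and $m''(\widetilde\rmH)$ of the two Serre-weight summands.
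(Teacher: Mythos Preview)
Your overall architecture matches the paper's---build an eigenvariety over $\widetilde\rmH$, map it to $\calX_{\bar r_p}^{\square,\tri}$, and use Theorem~\ref{T:generalized BBE} to constrain the slopes---but there are two genuine gaps.

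\textbf{The reduction to the nonsplit case fails.} You claim the $R_{\bar r_p}^\square$-action together with the $\calO\llbracket\rmK_p\rrbracket$-decomposition breaks $\widetilde\rmH$ into two arithmetic modules. But the decomposition in Definition~\ref{D:arithmetic modules}(2) is only as an $\calO\llbracket\rmK_p\rrbracket$-module: neither the $\GL_2(\QQ_p)$-action nor the $R_{\bar r_p}^\square$-action has any reason to preserve the two summands, and in practice it does not. The paper does \emph{not} reduce to the nonsplit case; it runs the whole argument for a single undivided $\widetilde\rmH$ and only at the very end distinguishes cases when computing a degree, using Proposition~\ref{P:ghost series identity} to reconcile the two ghost series.

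\textbf{The multiplicity count at classical weights is the wrong mechanism.} You propose to pin down multiplicities at each $w_k$ with $k\equiv k_\varepsilon\bmod(p-1)$ by matching $\rank\rmS^\Iw_{\widetilde\rmH,k}$ against the ghost polygon, and you correctly flag that slope collisions (e.g.\ at $\tfrac{k-2}{2}$) obstruct this. The paper sidesteps this entirely. Instead of matching slopes one by one, it shows that for each $n$ the piece $\Spc^{(\varepsilon)}(\widetilde\rmH)_n^\delta$ of the spectral curve lying over $\Vtx_n^{(\varepsilon),\delta}$ (with slope at most the $n$th ghost slope) is \emph{finite and flat} over $\Vtx_n^{(\varepsilon),\delta}$: finiteness comes from Kiehl's theorem via the positive slope gap $\epsilon_\delta$ of Corollary~\ref{C:Berkovich argument}(2), flatness from Buzzard's \cite[Lemma~4.1]{buzzard}. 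Since $\Vtx_n^{(\varepsilon),\delta}$ is connected, the degree is constant, so it suffices to compute it at a \emph{single} weight. Here the paper makes a choice you did not: it takes $k$ with $n = d_k^\Iw(\varepsilon\cdot(1\times\omega^{2-k}))$ (generically $k\not\equiv k_\varepsilon$), where Proposition~\ref{P:simple ghost}(2) guarantees $w_k\in\Vtx_n^{(\varepsilon)}$ and the degree is simply $\rank_\calO\rmS_{\widetilde\rmH,k}^\Iw = m(\widetilde\rmH)\cdot n$ (with the adjustments for the split exceptional $\varepsilon$'s read off directly). No slope-by-slope bookkeeping is needed, and the near-Steinberg obstacle disappears.

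Your Berkovich continuity step is close in spirit to the paper's finite-flat argument, but you phrase it as equality of Newton polygon \emph{values} deduced from a dense set of classical points; the paper's version is cleaner because constancy of degree over a connected base needs only one sample point, not a density argument.
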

When $\bar r_p$ is split, the Newton polygon described in Theorem~\ref{T:global ghost} is the convex polygon whose slope multiset is the disjoint union of $m'(\widetilde \rmH)$ copies of slope multiset of $\NP\big(G_{\bbsigma}^{(\varepsilon)}(w_\star, -)\big)$ and $m''(\widetilde \rmH)$ copies of slope multiset of $\NP\big(G_{\bbsigma'}^{(\varepsilon)}(w_\star, -)\big)$, by Proposition~\ref{P:ghost series identity}.

In view of Example~\ref{Ex:quaterionic case}, Theorem~\ref{T:ghost intro} follows immediately from this theorem.

\begin{proof}
The proof is divided into two steps. We first show that at each point $w_\star \in \gothm_{\CC_p}$, all possible slopes of $\NP\big(C_{\widetilde \rmH}^{(\varepsilon)}(w_\star,-)\big)$ are contained in the set of slopes of the Newton polygon of the corresponding ghost series; this comes from ``embedding" the eigencurve into the trianguline deformation space (essentially following the standard classicality argument and the global triangulations \cite{KPX,liu}).  With this at hand, we can ``link" together the slopes at various $w_\star$ to determine the multiplicities of each slope appearing in $\NP\big(C_{\widetilde \rmH}^{(\varepsilon)}(w_\star,-)\big)$.

We fix a character $\varepsilon$ relevant to $\bbsigma$ (and hence relevant to $\bbsigma'$) throughout the entire proof.

{\bf Step I:} Let $\Spc^{(\varepsilon)}(\widetilde \rmH)$ denote the hypersurface in $\calW^{(\varepsilon)} \times \GG_m^\rig$ defined by $C^{(\varepsilon)}_{\widetilde \rmH}(w,t)$; it is the \emph{spectral  curve} in the sense of \cite{buzzard}. 
Applying the construction of \cite[\S\,5]{buzzard} to the algebra $R_{\bar r_p}^\square[U_p]$ acting on $\widetilde \rmH$, we obtain an \emph{eigencurve} 
$\Eig^{(\varepsilon)}(\widetilde \rmH)$ over $\Spc_{\widetilde \rmH}^{(\varepsilon)}$ (which also lives over  $\calX_{\bar r_p}^{\square}$). The following commutative diagram summarizes the relations between the spectral curve and the eigencurve.
$$
\begin{tikzcd}
\Eig^{(\varepsilon)}(\widetilde \rmH) \ar[r]  \ar[d] & \Spc^{(\varepsilon)}(\widetilde \rmH) \ar[d, hookrightarrow] \ar[dr, dashed, "\wt"]\\
\calX_{\bar r_p}^{\square} \times \calW^{(\varepsilon)} \times \GG_m^\rig \ar[r] & \calW^{(\varepsilon)} \times \GG_m^\rig \ar[r, twoheadrightarrow] & \calW^{(\varepsilon)}.
\end{tikzcd}
$$

Consider the following natural embedding
\begin{equation}
\label{E:iota varepsilon}
\begin{tikzcd}[row sep =0pt]
\iota^{(\varepsilon)}: 
\calX_{\bar r_p}^\square \times \calW^{(\varepsilon)} \times \GG_m^\rig \ar[r, hookrightarrow] &  \calX_{\bar r_p}^\square \times \calT
\\
(x, w_\star, a_p) \ar[r, mapsto] & (x,\delta_1, \delta_2),
\end{tikzcd}
\end{equation}
where $\delta_1$ and $\delta_2$ are continuous characters of $\QQ_p^\times$ uniquely determined by the conditions
\begin{itemize}
\item $
\delta_2(p) = a_p^{-1}$, $\delta_1(p)\delta_2(p)= \det(\calV_x)(p)$,
\item $\delta_1(\exp(p)) = \exp(p)(1+w_\star)$, $\delta_2(\exp(p)) =1$, and 
\item $\varepsilon = \delta_2|_\Delta \times \delta_1|_\Delta \cdot \omega^{-1}$.
\end{itemize}

We claim that $\iota^{(\varepsilon)}\big( \Eig^{(\varepsilon)}(\widetilde \rmH)^\mathrm{red}\big) \subseteq \calX_{\bar r_p}^{\square, \tri}$. This is a standard argument using the density of classical points; we only sketch the argument.

First we prove this for \emph{very classical points}: an $E'$-point $\underline x = (x, w_\star, a_p) \in \calX_{\bar r_p}^\square \times \calW^{(\varepsilon)}$ is called \emph{very classical} if $w_\star = w_k$ with $k= k_\varepsilon + (p-1)k_\bullet$, and if $v_p(a_p) < \frac{k-2}2$. For such a point, classicality result Proposition~\ref{P:theta and AL}(1) shows that the abstract $p$-adic $U_p$-eigenform associated to the point $\underline x$ belongs to $\rmS_k^\ur(\varepsilon_1)$. So condition Definition~\ref{D:arithmetic modules}(3) implies that $x$ in fact belongs to $\Spf(R_{\bar r_p}^{\square, 1-k, \varepsilon_1})^\rig$, which further implies that $\calV_x$ is crystalline, and the two characters $\delta_1$ and $\delta_2$ exactly upgrades it to a point in $\calX_{\bar r_p}^{\square, \tri}$, i.e. $\iota^{(\varepsilon)}(\underline x) \in \calX_{\bar r_p}^{\square, \tri}$.

It remains to show that very classical points are Zariski dense in each irreducible component of $\Eig^{(\varepsilon)}(\widetilde \rmH)$. As $\Spc^{(\varepsilon)}(\widetilde \rmH)$ is defined by Fredholm series, \cite[Theorem~4.2.2]{conrad} shows that every irreducible component of $\Spc^{(\varepsilon)}(\widetilde \rmH)$ is defined by a Fredholm series and hence is surjective onto $\calW$.
Fix an irreducible component $\calZ$ of $\Eig^{(\varepsilon)}(\widetilde \rmH)$ and pick a point $\underline x =(x, w_{k_\varepsilon}, a_p)$.   There exists an open affinoid neighborhood $U$ of $\underline x$ that maps surjectively to an open neighborhood $\wt(U)$ of $w_{k_\varepsilon} \in \calW^{(\varepsilon)}$ and that $v_p(\delta_2(p))$ is constant on $U$. Then there are infinitely many weights $w_k \in \wt(U)$ with $k \in \ZZ_{>2v_p(a_p)+2}$ and $k\equiv k_\varepsilon \bmod(p-1)$, and each point in $\wt^{-1}(w_k) \cap U$ is a very classical point. This means that very classical points are Zariski dense in $U$ and hence in $\calZ$. Taking Zariski closure proves that $\iota^{(\varepsilon)}\big( \Eig^{(\varepsilon)}(\widetilde \rmH)^\mathrm{red}\big) \subseteq \calX_{\bar r_p}^{\square, \tri}$.

As a corollary of this claim and Theorem~\ref{T:generalized BBE}, for each closed point $\underline x = (w_\star, a_p) \in \Spc^{(\varepsilon)}(\widetilde \rmH)$, $v_p(a_p)$ is always a slope of $\NP\big(G_{\bbsigma}^{(\varepsilon)}(w_\star, -)\big)$, with only one possible exception: $v_p(a_p)=0$, $\bar r_p$ is split, and $\varepsilon = \omega^{a+b+1}\times \omega^{b-1}$ (from Theorem~\ref{T:generalized BBE}(2)).  (Recall that $\NP\big(G_{\bbsigma}^{(\varepsilon)}(w_\star, -)\big)$ only accounts for slopes for the nonsplit $\bar r_p$.)

\medskip
{\bf Step II:}  
Write $\wt: \Spc^{(\varepsilon)}(\widetilde \rmH) \hookrightarrow \calW^{(\varepsilon)} \times \GG_m^\rig \to \calW^{(\varepsilon)}$ for the natural weight map. 
Recall from Proposition~\ref{P:near-steinberg equiv to nonvertex}(3) that, for each fixed $n \in \ZZ_{\geq 1}$, all elements $w_\star \in \calW^{(\varepsilon)}$ for which $(n, v_p(g_n^{(\varepsilon)}(w_\star)))$ is a vertex of $\NP\big(G^{(\varepsilon)}_{\bbsigma}(w_\star,-)\big)$ form a quasi-Stein open subspace of $\calW^{(\varepsilon)}$:
$$
\Vtx_n^{(\varepsilon)} = \bigcup_{\delta \in \QQ_{>0}, \,\delta\to 0^+} \Vtx_n^{(\varepsilon),\delta} \quad\textrm{with}
$$
$$
\Vtx_n^{(\varepsilon),\delta}: = \Bigg\{w_\star \in \gothm_{\CC_p}\; \Bigg|\;  \begin{array}{l}v_p(w_\star) \geq \delta,\textrm{ and for each $k= k_\varepsilon+(p-1)k_\bullet$}
\\
\textrm{such that }n \in \big(d_k^\ur(\varepsilon_1), d_k^\Iw(\tilde \varepsilon_1)-d_k^\ur(\varepsilon_1)\big),\textrm{ we have}\\
v_p(w_\star - w_k)  \leq \Delta^{(\varepsilon)}_{k, |\frac 12d_k^\Iw(\tilde \varepsilon_1)-n|+1} - \Delta^{(\varepsilon)}_{k, |\frac 12d_k^\Iw(\tilde \varepsilon_1)-n|} -\delta.
\end{array}\Bigg\}.
$$
By the compactness argument in Corollary~\ref{C:Berkovich argument}, for any $\delta>0$, there exists $\epsilon_\delta \in \QQ_{>0}$ such that for every point $w_\star\in \Vtx_n^{(\varepsilon), \delta}(\CC_p)$, the difference between the left and right slopes at $x=n$ of $\NP\big(G^{(\varepsilon)}_{\bbsigma}(w_\star,-)\big)$ is at least $\epsilon_\delta$. Thus the following two subspaces are the same:
\begin{small}
$$
\Spc^{(\varepsilon)}(\widetilde \rmH)_n^\delta:= 
\bigg\{(w_\star, a_p) \in \Spc^{(\varepsilon)}(\widetilde \rmH)\;\bigg|\;\begin{array}{l} w_\star \in\Vtx_n^{(\varepsilon)(\CC_p), \delta},\textrm{ and}\\  -v_p(a_p) \leq \textrm{$n$th slope of }\NP\big(G^{(\varepsilon)}_{\bbsigma}(w_\star,-)\big)\end{array} \!\bigg\},
$$
$$
\Spc^{(\varepsilon)}(\widetilde \rmH)_n^{\delta,+}:= 
\bigg\{(w_\star, a_p) \in \Spc^{(\varepsilon)}(\widetilde \rmH)\;\bigg|\;\begin{array}{l} w_\star \in\Vtx_n^{(\varepsilon)(\CC_p), \delta},\textrm{ and}\\  -v_p(a_p) \leq\epsilon_\delta+ \textrm{$n$th slope of }\NP\big(G^{(\varepsilon)}_{\bbsigma}(w_\star,-)\big)\end{array} \!\bigg\}.
$$
\end{small}
By (the proof of) Kiehl's finiteness theorem, this implies that $\wt_*(\calO_{\Spc^{(\varepsilon)}(\widetilde \rmH)_n^\delta})$ is finite over $\Vtx_n^{(\varepsilon),\delta}$. Yet, $\Spc^{(\varepsilon)}(\widetilde \rmH)_n^\delta$ is flat over $\Vtx_n^{(\varepsilon),\delta}$ by \cite[Lemma~4.1]{buzzard} and $\Vtx_n^{(\varepsilon),\delta}$ is irreducible.
 So $\Spc^{(\varepsilon)}(\widetilde \rmH)_n^\delta$ has constant degree over $\Vtx_n^{(\varepsilon),\delta}$. Letting $\delta \to 0^+$ (while $\epsilon_\delta \to 0^+$), we deduce that $\Spc^{(\varepsilon)}(\widetilde \rmH)_n= \bigcup_{\delta \to 0^+}\Spc^{(\varepsilon)}(\widetilde \rmH)_n^\delta$ is finite and flat of constant degree over $\Vtx_n^{(\varepsilon)}$.

It remains to compute this degree for each $n$. We have proved in Proposition~\ref{P:simple ghost}(2) that for each $k$ such that $n = d_k^\Iw(\varepsilon\cdot (1\times \omega^{2-k}))$, $\big(n, v_p(g_n^{(\varepsilon)}(w_k))\big)$ is a vertex of $\NP\big(G_{\bbsigma}^{(\varepsilon)}(w_\star,-)\big)$; in particular, $w_k \in \Vtx_n^{(\varepsilon)}$. In this case, \S\,\ref{S:arithmetic forms}(7) (applied separately to $\Proj_{\calO\llbracket \rmK_p\rrbracket}(\sigma_{a,b})$ and to $\Proj_{\calO\llbracket \rmK_p\rrbracket}(\sigma_{p-1-a,a+b+1})$ if $\bar r_p$ is split) implies that
\begin{align*}
&\deg \big( \Spc^{(\varepsilon)}(\widetilde \rmH)_n \big/\Vtx_n^{(\varepsilon)} \big)  = \rank_\calO \rmS_{\widetilde \rmH,k}^\Iw(\varepsilon\cdot (1\times \omega^{2-k}))
\\
 =\ &\begin{cases}
m(\widetilde \rmH)\cdot n & \textrm{ when }\bar r_p \textrm{ is non-split},
\\
m(\widetilde \rmH)\cdot n & \textrm{ when }\bar r_p\textrm{ is split and }\varepsilon \notin\{ \omega^b \times \omega^{a+b}, \omega^{a+b+1}\times \omega^{b-1}\},
\\
m(\widetilde \rmH)\cdot (n-1) + m'(\widetilde \rmH) & \textrm{ when }\bar r_p\textrm{ is split and }\varepsilon = \omega^b \times \omega^{a+b},
\\
m(\widetilde \rmH)\cdot n + m''(\widetilde \rmH) & \textrm{ when }\bar r_p\textrm{ is split and }\varepsilon =\omega^{a+b+1}\times \omega^{b-1}.
\end{cases}
\end{align*}
Here we implicitly used Proposition~\ref{P:ghost series identity} to identify the ghost series for $\bbsigma$ and for $\bbsigma'$.  In particular, the first slope of $\NP(G_{\bbsigma}^{(\varepsilon)}(w_\star, -))$ is zero if $\varepsilon = \omega^b \times \omega^{a+b}$ and is nonzero if $\varepsilon =\omega^{a+b+1}\times \omega^{b-1}$; hence the slight variant description above.
We also point out that when $\bar r_p$ is split and $\varepsilon = \omega^{a+b+1} \times \omega^{b-1}$, applying the same argument above using $\bbsigma'$ in places of $\bbsigma$, we deduce that the slope zero part of $\Spc^{(\varepsilon)}(\widetilde \rmH)$ has degree $m''(\widetilde \rmH)$ over $\calW^{(\varepsilon)}$.

From this, we immediately deduce the slopes of $\NP\big( C^{(\varepsilon)}_{\widetilde \rmH}(w_\star,-)\big)$ at each point $w_\star \in \gothm_{\CC_p}$ are exactly $m(\widetilde \rmH)$ disjoint copies of the multiset of the slopes of $\NP\big( G^{(\varepsilon)}_{\bbsigma}(w_\star,-)\big)$, except that the slope zero part of $\NP\big( C^{(\varepsilon)}_{\widetilde \rmH}(w_\star,-)\big)$
\begin{itemize}
\item has length $m'(\widetilde \rmH)$ when $\bar r_p$ is split and $\varepsilon = \omega^b \times \omega^{a+b+1}$, and
\item has length $m''(\widetilde \rmH)$ when $\bar r_p$ is split and $\varepsilon = \omega^{a+b+1} \times \omega^{b-1}$.
\end{itemize}
Theorem~\ref{T:global ghost} is proved.
\end{proof}

\begin{remark}
\label{R:reducible global representation}
(1) The construction of the spectral curve in Step I using Buzzard's eigenvariety machine in Step I agrees with Emerton's construction, as explained in the proof of \cite[Proposition~4.2.36]{emerton-Jacquet}.

(2) We expect that our method of proof can be generalized to the case of $\bar r$-localized space of modular forms when the global residual Galois representation $\bar r$ is reducible.
In this case, the corresponding $\widetilde \rmH$ is no longer projective as an $\calO\llbracket \rmK_p\rrbracket$-module, causing some trouble. We leave this to interested readers.
\end{remark}

In what follows, we give three applications: Gouv\^ea--Mazur conjecture, Gouv\^ea's distribution conjecture, and a refined spectral halo theorem. We refer to \S\,\ref{S:aplication D GM conjecture}, \S\,\ref{S:aplication E Gouvea slope distribution}, and \S\,\ref{S:application F refined halo}, respectively, for a discussion on the history of these conjectures. Here, we give directly their statements and proofs.
These applications share the following setup.
\begin{notation}
\label{N:notation for applications}
For the rest of this section, assume that $p\geq 11$.
Let $\bar r_p$ be a residual Galois representation as in Notation~\ref{N:bar rp} with $a \in \{2, \dots, p-5\}$. Let $\bbsigma$ as therein. Let $\widetilde \rmH$ be an $\calO\llbracket\rmK_p\rrbracket$-projective arithmetic module of type $\bar r_p$ and multiplicity $m(\widetilde \rmH)$.

Fix a character $\varepsilon$ of $\Delta^2$ relevant to $\bbsigma$. For each $k \in \ZZ_{\geq 2}$, let \begin{equation}
\label{E:sequence of slopes}\alpha_1^{(\varepsilon)}(k), \alpha_2^{(\varepsilon)}(k), \dots
\end{equation} denote the list of $U_p$-slopes on $\rmS_k^{\dagger,(\varepsilon)}$ counted with multiplicity, which contains the $U_p$-slopes on $\rmS_k^\Iw(\varepsilon\cdot (1\times \omega^{2-k}))$ as the first $d_k^\Iw(\varepsilon\cdot (1\times \omega^{2-k}))$ terms.

\end{notation}

\begin{theorem}[$\bar r_p$-version of Gouv\^ea--Mazur conjecture]
\label{T:bar rp GM conjecture}
Keep the notation and assumptions in Notation~\ref{N:notation for applications}. Let $m \in \ZZ_{\geq 4}$. For weights $k_1, k_2 \geq m-2 $ such that $v_p(k_1-k_2)\geq m$, the sequence of $U_p$-slopes \eqref{E:sequence of slopes} for $k_1$ and for $k_2$ agree up to slope $m-4$.
\end{theorem}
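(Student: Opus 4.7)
The plan is to transfer the question via Theorem~\ref{T:global ghost} to the corresponding statement about the ghost Newton polygons $\NP(G_{\bbsigma}^{(\varepsilon)}(w_{k_i},-))$ for $i=1,2$, and then invoke the combinatorial analysis of Ren \cite{ren}. By Theorem~\ref{T:global ghost}, the sequence $\alpha_1^{(\varepsilon)}(k),\alpha_2^{(\varepsilon)}(k),\ldots$ is obtained from the slopes of $\NP(G_{\bbsigma}^{(\varepsilon)}(w_k,-))$ by duplicating each slope $m(\widetilde\rmH)$ times, with a single possible exception: in the residually split case, the length of the slope-zero segment is replaced by $m'(\widetilde\rmH)$ or $m''(\widetilde\rmH)$ according to whether $\varepsilon=\omega^b\times\omega^{a+b}$ or $\varepsilon=\omega^{a+b+1}\times\omega^{b-1}$. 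In either situation the modification to the slope-zero part depends only on $\bar r_p$, $\varepsilon$, and $\widetilde\rmH$ (not on $k$), so it cancels when comparing $k_1$ and $k_2$. Hence it suffices to show that, under the stated hypotheses, the multisets of slopes of $\NP(G_{\bbsigma}^{(\varepsilon)}(w_{k_1},-))$ and $\NP(G_{\bbsigma}^{(\varepsilon)}(w_{k_2},-))$ that are $\leq m-4$ coincide.

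Next, since $w_k=\exp(p(k-2))-1$ and $\exp(p(k_2-2))\in 1+p\ZZ_p$, a direct computation gives
\[
v_p(w_{k_1}-w_{k_2})\;=\;1+v_p(k_1-k_2)\;\geq\;m+1.
\]
By the ultrametric inequality, any weight $k$ satisfying $v_p(w_{k_1}-w_k)\le m$ must also satisfy $v_p(w_{k_1}-w_k)=v_p(w_{k_2}-w_k)$. Consequently, for every $n\geq 1$,
\[
v_p\bigl(g_n^{(\varepsilon)}(w_{k_1})\bigr)-v_p\bigl(g_n^{(\varepsilon)}(w_{k_2})\bigr)\;=\;\sum_{k\in\calE_n} m_n^{(\varepsilon)}(k)\bigl(v_p(w_{k_1}-w_k)-v_p(w_{k_2}-w_k)\bigr),
\]
where $\calE_n$ is the (small) set of \emph{exceptional} weights $k=k_\varepsilon+(p-1)k_\bullet$ with $m_n^{(\varepsilon)}(k)\neq 0$ and $v_p(w_{k_i}-w_k)\geq m+1$ for some $i\in\{1,2\}$. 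My plan is then to invoke the combinatorial analysis of Ren to show that within the slope range $[0,m-4]$ this difference never affects the shape of the polygon: more precisely, whenever $(n,v_p(g_n^{(\varepsilon)}(w_{k_i})))$ lies on the lower convex hull at height $\leq m-4$, the index $n$ lies outside every near-Steinberg range $\nS_{w_{k_i},k}^{(\varepsilon)}$ attached to an exceptional $k\in\calE_n$. Applying Proposition~\ref{P:near-steinberg equiv to nonvertex} then identifies the vertices, and hence the slopes up to $m-4$, of the two Newton polygons.

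\textbf{Expected main difficulty.} The delicate combinatorial core is the last claim: one must quantitatively rule out that a near-Steinberg range $\nS^{(\varepsilon)}_{w_{k_i},k}$ coming from an exceptional weight $k$ (one with $v_p(w_{k_i}-w_k)\geq m+1$) can extend far enough into the low-slope region to affect a slope $\leq m-4$. By Proposition~\ref{P:Delta - Delta'} the increments satisfy $\Delta_{k,\ell}^{(\varepsilon)}-\Delta_{k,\ell-1}^{(\varepsilon)}\gtrsim\tfrac{p-1}{2}\ell$ asymptotically, so a near-Steinberg range of large amplitude requires $v_p(w_{k_i}-w_k)$ correspondingly large; combined with the bound on the ghost slope (from Proposition~\ref{P:ghost compatible with theta AL and p-stabilization}(4) and the degree estimates in Definition-Proposition~\ref{DP:dimension of classical forms}(4)) this forces the amplitude of any relevant exceptional near-Steinberg range to be bounded by a function of $m$, eventually yielding the desired agreement of Newton polygons in the slope range $[0,m-4]$. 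The hypothesis $k_1,k_2\geq m-2$ is precisely what is needed to ensure we lie in the ``interior'' of the weight disk so that boundary irregularities of the ghost multiplicities at small weights do not interfere with this estimate; this is the combinatorial heart of Ren's argument \cite{ren} and the main substantive step of the proof.
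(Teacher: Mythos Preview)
Your proposal is correct and follows essentially the same approach as the paper: reduce via Theorem~\ref{T:global ghost} to the corresponding statement for ghost Newton polygons, then invoke Ren's result \cite{ren} (specifically \cite[Theorem~1.4]{ren}) for the ghost series. Your additional care with the slope-zero part in the split case and the computation $v_p(w_{k_1}-w_{k_2})\geq m+1$ are useful elaborations, but the paper simply absorbs all of the combinatorial work you sketch in the ``Expected main difficulty'' into the citation of Ren rather than reproducing any of it.
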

\begin{proof}
By Theorem~\ref{T:global ghost}, the sequence \eqref{E:sequence of slopes} (except for possibly the first several zeros) is precisely the slopes of $\NP\big( G^{(\varepsilon)}_{\bbsigma}(w_k,-)\big)$ with multiplicity $m(\widetilde \rmH)$.  This then follows from \cite[Theorem~1.4]{ren}, which proved the corresponding statement for the ghost slopes.
\end{proof}

\begin{theorem}
[$\bar r_p$-version of Gouv\^ea's slope distribution conjecture]
\label{T:bar rp Gouvea distribution}
Keep the notations and assumptions in Notation~\ref{N:notation for applications}. 
For each $k = k_\varepsilon+(p-1)k_\bullet$, write $\mu_k$ denote the uniform probability measure for the multiset $$\bigg\{\frac{\alpha_1^{(\varepsilon)}(k)}{k-1},\ \frac{\alpha_2^{(\varepsilon)}(k)}{k-1},\ \dots,\ \frac{\alpha_{d_k^\Iw(\tilde \varepsilon_1)}^{(\varepsilon)}(k)}{k-1} \bigg\} \subset [0,1].
$$
\renewcommand{\arraystretch}{1.3}
\begin{enumerate}
\item We have the dimension formula
\begin{center}
\begin{tabular}{|c|c|c|}
\hline
& $d_{k, \widetilde \rmH}^\ur(\varepsilon_1)$ & $d_{k, \widetilde \rmH}^\Iw(\tilde \varepsilon_1)$\\
\hline
$\bar r_p$ split and $\varepsilon = \omega^b \times \omega^{a+b}$  &   $m(\widetilde \rmH)\cdot d_k^\ur(\varepsilon_1) - m''(\widetilde \rmH)$ & $m(\widetilde \rmH)\cdot d_k^\Iw(\tilde \varepsilon_1) - 2m''(\widetilde \rmH)$ \\
\hline
$\bar r_p$ split and $\varepsilon = \omega^{a+b+1} \times \omega^{b-1}$ & $m(\widetilde \rmH)\cdot d_k^\ur(\varepsilon_1)+m''(\widetilde \rmH)$    & $m(\widetilde \rmH)\cdot d_k^\Iw(\tilde \varepsilon_1)+2m''(\widetilde \rmH)$ \\
\hline
otherwise & $m(\widetilde \rmH)\cdot d_k^\ur(\varepsilon_1)$ &$m(\widetilde \rmH) \cdot d_k^\Iw(\tilde \varepsilon_1)$
\\
\hline
\end{tabular}
\end{center}
We have the following estimates:
$$
\alpha_i(k) = \begin{cases}
\frac{p-1}2 \cdot \frac{i}{m(\widetilde \rmH)} + O(\log k) & \textrm{ when $1 \leq i \leq  d_{k, \widetilde \rmH}^\ur(\varepsilon_1)$},
\\
\frac{k-2}2 & \textrm{ when $d_{k, \widetilde \rmH}^\ur(\varepsilon_1)< i \leq d_{k, \widetilde \rmH}^\Iw(\tilde \varepsilon_1) - d_{k, \widetilde \rmH}^\ur(\varepsilon_1)$},
\\
\frac{p-1}2 \cdot \frac{i}{m(\widetilde \rmH)} + O(\log k) & \textrm{ when $d_{k, \widetilde \rmH}^\Iw(\tilde \varepsilon_1) - d_{k, \widetilde \rmH}^\ur(\varepsilon_1) < i \leq d_{k, \widetilde \rmH}^\Iw(\tilde \varepsilon_1)$}.
\end{cases}
$$
\item As $k= k_\varepsilon + (p-1)k_\bullet$ with $k_\bullet \to \infty$, the measure $\mu_k$ weakly converges to the probability measure
$$
\frac{1}{p+1}\delta_{[0, \frac 1{p+1}]} + \frac{1}{p+1}\delta_{[ \frac p{p+1}, 1]} + \frac{p-1}{p+1}\delta_{\frac 12},
$$
where $\delta_{[a,b]}$ denotes the uniform probability measure on the interval $[a,b]$, and $\delta_{\frac 12}$ is the Dirac measure at $\frac 12$.
\end{enumerate}
\end{theorem}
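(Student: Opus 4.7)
The strategy is to reduce both statements to combinatorial facts about the ghost series $G_{\bbsigma}^{(\varepsilon)}(w,t)$ via Theorem~\ref{T:global ghost}, and then invoke the analogue of Gouv\^ea's distribution conjecture for abstract ghost series proved in \cite{bergdall-pollack3}.

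For the dimension formulas in (1), I would decompose $\widetilde \rmH$ into its $\rmK_p$-isotypic components. When $\bar r_p$ is nonsplit, $\widetilde \rmH \simeq \Proj_{\calO\llbracket \rmK_p\rrbracket}(\bbsigma)^{\oplus m(\widetilde \rmH)}$, so \eqref{E:rank Sk proportional to m(H)} gives $d_{k,\widetilde \rmH}^\ur(\varepsilon_1)=m(\widetilde \rmH)\,d_k^\ur(\varepsilon_1)$ and similarly for $d_{k,\widetilde \rmH}^\Iw$. When $\bar r_p$ is split, applying \eqref{E:rank Sk proportional to m(H)} separately to each of the $m'(\widetilde \rmH)$ copies of $\Proj_{\calO\llbracket \rmK_p\rrbracket}(\bbsigma)$ and the $m''(\widetilde \rmH)$ copies of $\Proj_{\calO\llbracket \rmK_p\rrbracket}(\bbsigma')$, together with the identifications implicit in the proof of Proposition~\ref{P:ghost series identity} (that $\delta_\varepsilon$ differs by $\pm 1$ exactly in the two exceptional cases $\varepsilon=\omega^b\times \omega^{a+b}$ and $\varepsilon=\omega^{a+b+1}\times \omega^{b-1}$), yields the tabulated $\pm m''(\widetilde \rmH)$ corrections. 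For the pointwise slope asymptotics, Theorem~\ref{T:global ghost} identifies the multiset $\{\alpha_i^{(\varepsilon)}(k)\}$ with the multiset of slopes of $\NP(G_{\bbsigma}^{(\varepsilon)}(w_k,-))$ each repeated $m(\widetilde \rmH)$ times, modulo the slope-zero correction. The middle-range identity $\alpha_i(k)=\tfrac{k-2}2$ is then exactly Proposition~\ref{P:ghost compatible with theta AL and p-stabilization}(5). For the outer range, I would combine Definition-Proposition~\ref{DP:dimension of classical forms}(4), which identifies $\deg g_{n+1}-\deg g_n$ with the Hodge slope $\lambda_{n+1}=\tfrac{p-1}2\cdot\tfrac{n}{m(\widetilde \rmH)}+O(1)$ (after accounting for the $m(\widetilde \rmH)$-fold repetition) up to an error $\boldsymbol\delta\in\{0,1\}$, with the near-Steinberg analysis (Proposition~\ref{P:near-steinberg equiv to nonvertex}) and the estimate in Proposition~\ref{P:Delta - Delta'}, which together bound the deviation of ghost slopes from the linear pattern by $O(\log k)$.

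Statement (2) follows by substituting these asymptotics and computing limits of the three contributions. By (1) and Definition-Proposition~\ref{DP:dimension of classical forms}(3)(5), the relative size of the middle region satisfies $\tfrac{d_{k,\widetilde \rmH}^\Iw-2d_{k,\widetilde \rmH}^\ur}{d_{k,\widetilde \rmH}^\Iw}\to \tfrac{p-1}{p+1}$ as $k_\bullet\to\infty$, and each normalized slope there equals $\tfrac{k-2}{2(k-1)}\to\tfrac12$, producing the Dirac mass $\tfrac{p-1}{p+1}\delta_{1/2}$. The lower region has relative size $\to \tfrac1{p+1}$, and on it $\alpha_i(k)/(k-1)=\tfrac{p-1}{2m(\widetilde \rmH)(k-1)}\cdot i+O(\log k/k)$ with $i$ running over $\{1,\dots,d_{k,\widetilde \rmH}^\ur\}\sim[1,\tfrac{2m(\widetilde \rmH)k_\bullet}{p+1}]$; since $k-1\sim (p-1)k_\bullet$, these ratios equidistribute on $[0,\tfrac1{p+1}]$, giving $\tfrac1{p+1}\delta_{[0,1/(p+1)]}$. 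The upper region is handled symmetrically via the pairing $\alpha_i(k)+\alpha_{d_{k,\widetilde \rmH}^\Iw+1-i}(k)=k-1$ coming from Atkin--Lehner (Proposition~\ref{P:theta and AL}(2) combined with Proposition~\ref{P:ghost compatible with theta AL and p-stabilization}(3)), producing $\tfrac1{p+1}\delta_{[p/(p+1),1]}$. The main (and essentially only) obstacle is the careful bookkeeping of the split-case corrections: one must verify that the $\pm m''(\widetilde \rmH)$ shifts in the dimension formulas do not perturb the asymptotic density of the three regions, which holds because $m''(\widetilde \rmH)$ is a fixed constant independent of $k$, hence contributes $O(1/k_\bullet)$ to all relative proportions.
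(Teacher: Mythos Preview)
Your proposal is correct and takes essentially the same approach as the paper: reduce to the ghost series via Theorem~\ref{T:global ghost}, then invoke the abstract ghost series results of \cite{bergdall-pollack3}. The paper's proof is much terser than your sketch---it simply observes that $G_{\bbsigma}^{(\varepsilon)}(w,t)$ is an abstract ghost series in the sense of Bergdall--Pollack with the appropriate linear growth parameters (computed from Definition-Proposition~\ref{DP:dimension of classical forms}) and then cites \cite[Theorem~3.1 and Corollary~3.2]{bergdall-pollack3} directly, rather than re-deriving the $O(\log k)$ estimate and the limiting measure by hand as you outline. Your extra detail (the isotypic decomposition for the dimension table, the direct computation of region proportions in (2)) is correct bookkeeping, though your derivation of the $O(\log k)$ bound from Definition-Proposition~\ref{DP:dimension of classical forms}(4) conflates boundary slopes $\deg g_{n+1}-\deg g_n$ with slopes at $w_k$; this is exactly the content packaged in \cite{bergdall-pollack3}, so the cleanest route is to cite it rather than reprove it.
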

\begin{proof}
By Theorem~\ref{T:global ghost}, the sequence \eqref{E:sequence of slopes} is precisely the slopes of $\NP\big( G^{(\varepsilon)}_{\bbsigma}(w_k,-)\big)$ with multiplicity $m(\widetilde \rmH)$ (except when $\bar r_p$ is split and $\varepsilon = \omega^b \times \omega^{a+b}$ or $\omega^{a+b+1}\times \omega^{b-1}$, the multiplicity of the slope zero part are precisely $m'(\tilde \rmH)$ and $m''(\tilde \rmH)$, respectively). The power series $G^{(\varepsilon)}_{\bbsigma}(w,t)$ is an abstract ghost series in the sense of \cite{bergdall-pollack2} with
$$
A = \frac{2m(\widetilde \rmH)}{p+1} \quad \textrm{and}\quad B = \frac{2(p-1)\cdot m(\widetilde \rmH)}{p+1}
$$ 
by Definition-Proposition~\ref{DP:dimension of classical forms} (and \S\,\ref{S:arithmetic forms}(7)).  With this, the theorem follow from \cite[Theorem~3.1 and Corollary~3.2]{bergdall-pollack3}.
\end{proof}

\begin{theorem}[Refined spectral halo conjecture]
\label{T:refined halo}
Keep the notations and assumptions in Notation~\ref{N:notation for applications}. Let $\wt: \calW^{(\varepsilon)} \times \GG_m^\rig \to \calW^{(\varepsilon)}$ be the projection to weight space, and let $\Spc^{(\varepsilon)}(\widetilde \rmH)$ denote the zero locus of $C^{(\varepsilon)}_{\widetilde \rmH}(w,t)$ in $\calW^{(\varepsilon)} \times \GG_m^\rig$. Set
$$
\calW^{(\varepsilon)}_{(0,1)} = \big \{w_\star \in \calW^{(\varepsilon)}\; \big|\; v_p(w_\star) \in (0,1)\big\} \quad \textrm{and} \quad \Spc^{(\varepsilon)}_{(0,1)}(\widetilde \rmH) = \Spc^{(\varepsilon)}(\widetilde \rmH) \cap \wt^{-1}(\calW^{(\varepsilon)}_{(0,1)}).
$$ Then $\Spc^{(\varepsilon)}_{(0,1)}(\widetilde \rmH)$ is a disjoint union $Y_0 \bigsqcup Y_1 \bigsqcup Y_2\bigsqcup \cdots$ such that
\begin{enumerate}
\item $Y_0$ is non-empty only when $\bar r_p$ is split and $\varepsilon = \omega^{a+b+1}\times \omega^{b-1}$, in which case, for each point $(w_\star, a_p) \in Y_0$, $v_p(a_p) =0$, and $\deg\big(Y_0 / \calW^{(\varepsilon)}_{(0,1)}\big) = m''(\widetilde \rmH)$.
\item for each point $(w_\star, a_p) \in Y_n$ with $n\geq 1$, $v_p(a_p) = (\deg g_n^{(\varepsilon)}-\deg g_{n-1}^{(\varepsilon)}) \cdot v_p(w_\star)$, and
\item the weight map $\wt : Y_n \to \calW^{(\varepsilon)}_{(0,1)}$ is finite and flat of degree $m(\widetilde \rmH)$, except when $\bar r_p$ is split, $\varepsilon = \omega^b \times \omega^{a+b}$, and $n=1$, in which case $\deg\big (Y_1/\calW^{(\varepsilon)}_{(0,1)}\big) = m'(\widetilde \rmH)$.
\end{enumerate}
\end{theorem}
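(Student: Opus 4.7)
The plan is to reduce the statement to an explicit analysis of the ghost Newton polygon on the boundary annulus. By Theorem~\ref{T:global ghost}, for every $w_\star \in \gothm_{\CC_p}$ the slope multiset of $\NP\big(C_{\widetilde \rmH}^{(\varepsilon)}(w_\star,-)\big)$ equals $m(\widetilde \rmH)$ copies of the slope multiset of $\NP\big(G^{(\varepsilon)}_{\bbsigma}(w_\star,-)\big)$, with the precise slope-zero modification recorded there; when $\bar r_p$ is split, Proposition~\ref{P:ghost series identity} further relates $G^{(\varepsilon)}_{\bbsigma}$ to $G^{(\varepsilon)}_{\bbsigma'}$, which will ultimately account for the exceptional degrees in (1) and (3). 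It therefore suffices to determine $\NP\big(G^{(\varepsilon)}_{\bbsigma}(w_\star,-)\big)$ explicitly for $w_\star$ in the annulus $\calW^{(\varepsilon)}_{(0,1)}$.

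The key halo computation is that for any $w_\star$ with $v_p(w_\star) \in (0,1)$ and any ghost zero $w_k = \exp(p(k-2)) - 1$ of $g_n^{(\varepsilon)}(w)$, one has $v_p(w_k) \geq 1$ (with the convention $v_p(0) = +\infty$), hence $v_p(w_\star - w_k) = v_p(w_\star)$. Summing over the $\deg g_n^{(\varepsilon)}$ factors of $g_n^{(\varepsilon)}$ gives
\[
v_p\big(g_n^{(\varepsilon)}(w_\star)\big) \;=\; \deg g_n^{(\varepsilon)} \cdot v_p(w_\star),
\]
so the $n$-th slope of $\NP\big(G^{(\varepsilon)}_{\bbsigma}(w_\star,-)\big)$ equals $\big(\deg g_n^{(\varepsilon)} - \deg g_{n-1}^{(\varepsilon)}\big) v_p(w_\star)$. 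By the last assertion of Definition-Proposition~\ref{DP:dimension of classical forms}(4), these differences are strictly increasing in $n$, so the slopes are pairwise distinct and every point $\big(n, v_p(g_n^{(\varepsilon)}(w_\star))\big)$ is a genuine vertex of the Newton polygon; equivalently, $\calW^{(\varepsilon)}_{(0,1)}$ sits inside every $\Vtx_n^{(\varepsilon)}$ of Proposition~\ref{P:near-steinberg equiv to nonvertex}(3).

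Given this uniform vertex behavior on the annulus, I will invoke the finite flatness machinery from Step II of the proof of Theorem~\ref{T:global ghost}: combining Kiehl's finiteness theorem with \cite[Lemma~4.1]{buzzard}, the locus $\Spc^{(\varepsilon)}(\widetilde \rmH)_{\leq n}$ of points $(w_\star, a_p) \in \Spc^{(\varepsilon)}_{(0,1)}(\widetilde \rmH)$ with $-v_p(a_p)$ at most the $n$-th ghost slope is finite flat of constant degree over the connected annulus $\calW^{(\varepsilon)}_{(0,1)}$. Setting $Y_n := \Spc^{(\varepsilon)}(\widetilde \rmH)_{\leq n} \setminus \Spc^{(\varepsilon)}(\widetilde \rmH)_{\leq n-1}$ for $n \geq 1$ (and $Y_0$ for the residual slope-zero locus when present) produces the required disjoint decomposition and gives (2) directly. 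The degree claims in (1) and (3) then follow from Theorem~\ref{T:global ghost}: in the non-split or generic split case every $Y_n$ for $n \geq 1$ acquires the full multiplicity $m(\widetilde \rmH)$ and $Y_0$ is empty, while in the split case the identities $g_n^{(\bbsigma,\varepsilon)}(w) = g_{n-1}^{(\bbsigma',\varepsilon)}(w)$ (resp.\ $g_n^{(\bbsigma',\varepsilon)}(w) = g_{n-1}^{(\bbsigma,\varepsilon)}(w)$) provided by Proposition~\ref{P:ghost series identity} for $\varepsilon = \omega^b\times\omega^{a+b}$ (resp.\ $\varepsilon = \omega^{a+b+1}\times\omega^{b-1}$) show that the $m'(\widetilde \rmH)$ copies of $\bbsigma$-ghost slopes and $m''(\widetilde \rmH)$ copies of $\bbsigma'$-ghost slopes interleave so as to force the exceptional degree $m'(\widetilde \rmH)$ on $Y_1$ (resp.\ $m''(\widetilde \rmH)$ on $Y_0$) at the slope-zero boundary, with all other $Y_n$ retaining degree $m'(\widetilde \rmH) + m''(\widetilde \rmH) = m(\widetilde \rmH)$.

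I do not anticipate any substantive obstacle: the proof is essentially a synthesis of Theorem~\ref{T:global ghost}, the boundary-halo formula $v_p(g_n^{(\varepsilon)}(w_\star)) = \deg g_n^{(\varepsilon)} \cdot v_p(w_\star)$, and the finite flatness argument already developed in Step II of the proof of Theorem~\ref{T:global ghost} and in Corollary~\ref{C:Berkovich argument}. The only care required is in the split case, where one must correctly interleave the two ghost series for $\bbsigma$ and $\bbsigma'$ using Proposition~\ref{P:ghost series identity}; this is a purely combinatorial bookkeeping.
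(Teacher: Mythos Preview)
Your proposal is correct and follows essentially the same route as the paper: invoke Theorem~\ref{T:global ghost}, use the halo identity $v_p(g_n^{(\varepsilon)}(w_\star)) = \deg g_n^{(\varepsilon)} \cdot v_p(w_\star)$ on the annulus, appeal to the strict monotonicity of $\deg g_n^{(\varepsilon)} - \deg g_{n-1}^{(\varepsilon)}$ from Definition-Proposition~\ref{DP:dimension of classical forms}(4), and then separate the spectral curve by the ratio $v_p(a_p)/v_p(w_\star)$. The paper's proof is terser---it simply says one may ``distribute'' the points by this ratio and declares the theorem clear---whereas you spell out the finite-flatness step via Kiehl and \cite[Lemma~4.1]{buzzard} and track the split-case bookkeeping through Proposition~\ref{P:ghost series identity} explicitly; but this is only a difference in level of detail, not in substance.
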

\begin{proof}
By Theorem~\ref{T:global ghost}, the sequence \eqref{E:sequence of slopes} is precisely the slopes of $\NP\big( G^{(\varepsilon)}_{\bbsigma}(w_k,-)\big)$ with multiplicity $m(\widetilde \rmH)$ (except when $\bar r_p$ is split and $\varepsilon = \omega^b \times \omega^{a+b}$ or $\omega^{a+b+1}\times \omega^{b-1}$, the multiplicity of the slope zero part are precisely $m'(\tilde \rmH)$ and $m''(\tilde \rmH)$, respectively).
But when $v_p(w_\star) \in (0,1)$, we have $v_p(g_n^{(\varepsilon)}(w_\star)) = \deg g_n^{(\varepsilon)} \cdot v_p(w_\star)$. Moreover, Definition-Proposition~\ref{DP:dimension of classical forms}(4) says that the differences $\deg g_n^{(\varepsilon)} -\deg g_{n-1}^{(\varepsilon)}$ is strictly increasing in $n$. It follows that we may ``distribute" the points $(w_\star, a_p) \in \Spc^{(\varepsilon)}_{(0,1)}(\widetilde \rmH)$ by the ratio $v_p(a_p) / v_p(w_\star)$ into the disjoint spaces $Y_n$ as described in (1) and (2). The theorem is clear.
\end{proof}

\section{Irreducible components of eigencurves}
\label{Sec:irreducible components}

In this section, we prove the finiteness of irreducible components of the spectral curve associated to an $\calO\llbracket\rmK_p\rrbracket$-projective arithmetic module $\widetilde \rmH$ of type $\bar r_p$.
In particular, this applies to the case of eigencurves associated to overconvergent modular forms (with appropriate Hecke maximal ideal localization) and provides some positive theoretical evidence towards a question asked by Coleman and Mazur in their seminal paper \cite[page 4]{coleman-mazur}, under our reducible nonsplit and very generic condition.

We will separate the discussion for the ordinary part and the non-ordinary part.

\begin{notation}
\label{N:setup irreducible components}
Let $\bar r_p = \begin{pmatrix}
\unr(\bar \alpha_1)\omega_1^{a+b+1} & *\\ 0 & \unr(\bar \alpha_2)\omega_1^{b}
\end{pmatrix}$ and $\bbsigma = \sigma_{a,b} = \Sym^a\FF^{\oplus 2} \otimes \det^b$ be as in Notation~\ref{N:bar rp} and let $\widetilde \rmH$ be an $\calO\llbracket\rmK_p\rrbracket$-projective arithmetic module of type $\bar r_p$ and multiplicity $m(\widetilde \rmH)$ as defined in Definition~\ref{D:arithmetic modules}.

For a character $\varepsilon$ of $\Delta^2$ relevant to $\bbsigma$, define the non-ordinary part of the ghost series to be
$$
G^{(\varepsilon)}_{\bbsigma, \nord}(w,t): = \begin{cases}
\big(G^{(\omega^b\times \omega^{a+b})}_{\bbsigma}(w,t)-1\big) / t &\textrm{ if }\varepsilon = \omega^b \times \omega^{a+b},\\
G^{(\varepsilon)}_{\bbsigma}(w,t) & \textrm{ otherwise.}
\end{cases}
$$
Note that Definition-Proposition~\ref{DP:dimension of classical forms}(4) says that $\deg g_n^{(\varepsilon)} = 0$ only happens when $n =1$ and $\varepsilon = \omega^b \times \omega^{a+b}$. By Proposition~\ref{P:ghost series identity}(4), for $\bbsigma'= \sigma_{p-3-a, a+b+1}$, we have an equality of power series $G^{(\varepsilon)}_{\bbsigma', \nord}(w,t) = G^{(\varepsilon)}_{\bbsigma, \nord}(w,t)$.
\end{notation}

The following is the main subject of our study.
\begin{definition}
Fix a rational number $\lambda \in (0,1) \cap \QQ$. Put $\calW_{\geq \lambda}: = \Spm E\langle w / p^\lambda\rangle$.  Recall from Notation~\ref{N:Berkovich notation} that a Fredholm series over $\calW_{\geq \lambda}$ is a power series $F(w,t) \in E\langle w/p^\lambda\rangle \llbracket t\rrbracket$ such that $f(w,0) = 1$ and $F(w,t)$ converges over $\calW_{\geq \lambda} \times \AAA^{1,\rig}$. We say $F$ is nontrivial if $F \neq 1$.
\begin{enumerate}
\item Let $\calZ(F)$ denote its zero in $\calW_{\geq \lambda} \times \AAA^{1,\rig}$, as a rigid analytic subvariety.
\item 
We say $F(w,t)$ is \emph{of ghost type $\bbsigma$ and $\varepsilon$}, if for every $w_\star \in \calW_{\geq \lambda}(\CC_p)$, $\NP(F(w_\star, -))$ is the same as $\NP\big(G^{(\varepsilon)}_{\bbsigma, \nord}(w_\star, -)\big)$, but stretched in the $x$- and $y$-directions by some $m(F) \in \ZZ_{\geq 1}$. This $m(F)$ is called the \emph{multiplicity} of $F$. We also call the subvariety $\calZ(F)$ of \emph{ghost type $\bbsigma$ and $\varepsilon$}.
(In fact, any power series $F(w,t) = 1+f_1(w)t+ \cdots \in E\langle w/p^\lambda\rangle \llbracket t\rrbracket$ satisfying the same Newton polygon condition for ghost type $\bbsigma$ and $\varepsilon$ is automatically a Fredholm series.)
\end{enumerate}
\end{definition}
We emphasize that the condition $\lambda \in (0,1)\cap \QQ$ implies that $\calW_{\geq \lambda}$ contains some ``halo region", namely some part that Theorem~\ref{T:refined halo} applies (even though our argument does not use Theorem~\ref{T:refined halo} logically).

The following lemma factors out the slope zero part of the characteristic power series.
\begin{lemma}
\label{L:ordinary factorization}
Let $\bar r_p$, $\varepsilon$, and $\widetilde \rmH$ be as in Notation~\ref{N:setup irreducible components} with $a\in \{2, \dots, p-5\}$ and $p \geq 11$. Let $C_{\widetilde \rmH}^{(\varepsilon)}(w,t) = 1+\sum\limits_{n \geq 1}c_n^{(\varepsilon)}(w)t^n \in \calO\llbracket w,t\rrbracket$ denote the characteristic power series of $U_p$-action on the abstract overconvergent forms associated to $\widetilde \rmH$. Then there is a factorization in $\calO\llbracket w,t\rrbracket$:
\begin{equation}
\label{E:factor hida part}C_{\widetilde \rmH}^{(\varepsilon)}(w,t) = C_{\widetilde \rmH, \ord}^{(\varepsilon)}(w,t) \cdot C_{\widetilde \rmH, \nord}^{(\varepsilon)}(w,t),
\end{equation}
such that $C_{\widetilde \rmH, \nord}^{(\varepsilon)}(w,t)$ is a Fredholm series of ghost type $\bbsigma$ and $\varepsilon$ with multiplicity $m(\widetilde \rmH)$ and $C_{\widetilde \rmH, \ord}^{(\varepsilon)}(w,t)$ is a polynomial
\begin{itemize}
\item  of degree $m(\widetilde \rmH)$ when $\varepsilon = \omega^b \times \omega^{a+b}$ and $\bar r_p$ is nonsplit,
\item of degree $m'(\widetilde \rmH)$ when $\varepsilon = \omega^b \times \omega^{a+b}$ and $\bar r_p$ is split,
\item of degree $m''(\widetilde \rmH)$ when $\varepsilon = \omega^{a+b+1} \times \omega^{b-1}$ and $\bar r_p$ is split, and 
\item of degree $0$ otherwise.
\end{itemize}
Moreover, the constant term of $C_{\widetilde \rmH, \ord}^{(\varepsilon)}(w,t)$ is $1$ and the top degree  coefficient of $C_{\widetilde \rmH, \ord}^{(\varepsilon)}(w,t)$ belongs to $\calO\llbracket w\rrbracket^\times$.
\end{lemma}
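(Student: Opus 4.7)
The plan is to extract from $C_{\widetilde \rmH}^{(\varepsilon)}(w,t)$ the polynomial factor corresponding to the slope-zero part of the spectral curve, using a Hida-style ordinary projector combined with Theorem~\ref{T:global ghost}.

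First I would read off the target degree $d$ of $C_{\widetilde \rmH,\ord}^{(\varepsilon)}$ from Theorem~\ref{T:global ghost} together with Proposition~\ref{P:ghost series identity} and Definition-Proposition~\ref{DP:dimension of classical forms}(4); these imply that $\deg g_n^{(\varepsilon)} = 0$ occurs only at $n=1$ with $\varepsilon = \omega^b\times \omega^{a+b}$ in the ghost series, so the four stated values of $d$ are precisely the slope-zero length of $\NP(C_{\widetilde \rmH}^{(\varepsilon)}(w_\star,-))$ at every $w_\star \in \gothm_{\CC_p}$, and the $(d{+}1)$-th slope is always strictly positive. Since $(d,0)$ is then a vertex of this Newton polygon for every $w_\star$, we have $v_p(c_d^{(\varepsilon)}(w_\star))=0$ for all such $w_\star$; in particular $c_d^{(\varepsilon)}(0)\in \calO^\times$, whence $c_d^{(\varepsilon)}(w)\in \calO\llbracket w\rrbracket^\times$.

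Next I would construct the factorization via Hida's ordinary projector. Regarding $U_p$ as a compact operator on the Banach $\calO\llbracket w\rrbracket$-module $\rmS^{(\varepsilon)}_{\widetilde \rmH,p\textrm{-adic}}$, the Berkovich compactness argument of Corollary~\ref{C:Berkovich argument}, applied to the locus where the $d$-th and $(d{+}1)$-th ghost slopes are separated by some $\epsilon_\delta>0$, produces a uniform strictly positive lower bound on the $(d{+}1)$-th slope of $\NP(C_{\widetilde \rmH}^{(\varepsilon)}(w_\star,-))$ on any affinoid subdomain of $\calW^{(\varepsilon)}$. By the standard theory of slope decompositions of compact operators (cf.\ \cite{buzzard}), the sequence $U_p^{n!}$ then converges in operator norm to an idempotent $e^{\mathrm{ord}}$, and $e^{\mathrm{ord}}\cdot \rmS^{(\varepsilon)}_{\widetilde \rmH,p\textrm{-adic}}$ is a finite projective $\calO\llbracket w\rrbracket$-module of constant rank $d$ on which $U_p$ acts invertibly. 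Taking $C_{\widetilde \rmH,\ord}^{(\varepsilon)}(w,t)$ to be the characteristic polynomial of $U_p$ on this summand yields a polynomial of degree $d$ with constant term $1$ and unit top coefficient, and taking $C_{\widetilde \rmH,\nord}^{(\varepsilon)}(w,t)$ to be the characteristic Fredholm series on the complementary summand $(1-e^{\mathrm{ord}})\cdot \rmS^{(\varepsilon)}_{\widetilde \rmH,p\textrm{-adic}}$ produces the desired factorization \eqref{E:factor hida part}. Alternatively, one can bypass the explicit construction of $e^{\mathrm{ord}}$ by invoking \cite[Theorem~4.2.2]{conrad} together with the fact, established in the proof of Theorem~\ref{T:global ghost}, that the slope-zero locus of $\Spc^{(\varepsilon)}(\widetilde \rmH)$ is finite and flat of constant degree $d$ over $\calW^{(\varepsilon)}$; the polynomial defining this component, after normalization to have constant term $1$, is $C_{\widetilde \rmH,\ord}^{(\varepsilon)}$.

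Finally I would verify the Newton-polygon condition on the non-ordinary factor: for every $w_\star\in \gothm_{\CC_p}$, the slopes of $\NP(C_{\widetilde \rmH,\nord}^{(\varepsilon)}(w_\star,-))$ are exactly the strictly positive slopes of $\NP(C_{\widetilde \rmH}^{(\varepsilon)}(w_\star,-))$, which by Theorem~\ref{T:global ghost} (together with Proposition~\ref{P:ghost series identity}(2) in the residually split case, identifying $G_{\bbsigma,\nord}^{(\varepsilon)}=G_{\bbsigma',\nord}^{(\varepsilon)}$) coincide with $m(\widetilde \rmH)$ copies of the slopes of $\NP(G_{\bbsigma,\nord}^{(\varepsilon)}(w_\star,-))$, confirming the ghost-type property with multiplicity $m(\widetilde \rmH)$. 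The main obstacle will be making the Hida-projector step rigorous \emph{integrally} over $\calO\llbracket w\rrbracket$, rather than merely rationally over $\calO\llbracket w\rrbracket[1/p]$: one must check that the local affinoid convergence of $U_p^{n!}$ glues to an idempotent preserving the $\calO\llbracket w\rrbracket$-structure and having finite projective image. The uniform slope gap from Corollary~\ref{C:Berkovich argument} is the essential input, but the gluing and integrality of the resulting projector is the delicate point.
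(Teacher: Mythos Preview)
Your proposal is correct and amounts to an expanded version of the paper's one-line proof, ``This follows from Theorem~\ref{T:global ghost} and the Weierstrass Preparation Theorem'': what the paper calls Weierstrass preparation is precisely the slope-$0$ factorization of the Fredholm series $C_{\widetilde\rmH}^{(\varepsilon)}$ that you implement via the Hida projector or, more directly and sidestepping your integrality worry, via the spectral-curve description already established in the proof of Theorem~\ref{T:global ghost}. The appeal to Corollary~\ref{C:Berkovich argument} is more than is needed, since the constant rank $d$ of the ordinary part and the unit property $c_d^{(\varepsilon)}\in\calO\llbracket w\rrbracket^\times$ follow pointwise from Theorem~\ref{T:global ghost}, after which integrality of both factors is automatic: $C_{\widetilde\rmH,\ord}^{(\varepsilon)}=\prod_i(1-\alpha_i t)$ has coefficients bounded by $1$ on $\calW^{(\varepsilon)}$ (as $|\alpha_i|=1$), hence in $\calO\llbracket w\rrbracket$, and then $C_{\widetilde\rmH,\nord}^{(\varepsilon)}=C_{\widetilde\rmH}^{(\varepsilon)}/C_{\widetilde\rmH,\ord}^{(\varepsilon)}\in\calO\llbracket w\rrbracket\llbracket t\rrbracket$ since the divisor has constant term $1$.
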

\begin{proof}
This follows from Theorem~\ref{T:global ghost} and the Weierstrass Preparation Theorem.
\end{proof}

\begin{remark}
In fact, Lemma~\ref{L:ordinary factorization} holds under a weaker assumption such as $1 \leq a \leq p-4$ and $p \geq 5$.
\end{remark}

\begin{proposition}
Let $F(w,t)\in E\langle w/p^\lambda\rangle\llbracket t\rrbracket$ be a nontrivial Fredholm series. Then there exists a unique nonempty set of positive integers $\{n_i\}$ and nonempty  finite set of distinct irreducible nontrivial Fredholm series $\{P_i\}$ such that $F = \prod P_i^{n_i}$. Moreover, the irreducible components of $\calZ(F)$ endowed with their reduced structures are the $\calZ(P_i)$'s.
\end{proposition}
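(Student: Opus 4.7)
The plan follows the classical approach combining Weierstrass preparation with unique factorization in polynomial rings over a PID, along the lines of Conrad \cite{conrad}.

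First I would reduce the question to polynomial factorization. Since $F(w,0)=1$, the Fredholm series $F$ is $t$-distinguished. For each integer $n\geq 1$, restrict $F$ to the affinoid subdomain $\calW_{\geq \lambda} \times \Spm E\langle t/p^n\rangle$. Applying Weierstrass preparation in the variable $t$ yields a unique factorization
\begin{equation*}
F\big|_{|t|\leq p^n} = U_n(w,t) \cdot R_n(w,t),
\end{equation*}
where $U_n$ is a unit in $E\langle w/p^\lambda\rangle\langle t/p^n\rangle$ and $R_n(w,t) \in E\langle w/p^\lambda\rangle[t]$ is a polynomial in $t$ of some degree $d_n$, namely the number of slopes of $\NP(F(w_\star,-))$ that do not exceed $n$ (for $w_\star$ the Gauss point, say). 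Since the Tate algebra $E\langle w/p^\lambda\rangle$ in one variable is a PID, the polynomial ring $E\langle w/p^\lambda\rangle[t]$ is a UFD, so $R_n$ factors uniquely as $R_n = \prod_j R_{n,j}^{e_{n,j}}$ with pairwise non-associate irreducible $R_{n,j}$.

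Next I would pass to the limit. The uniqueness of Weierstrass preparation implies that for $m \leq n$, the polynomial $R_m$ divides $R_n$ up to a unit, so $\{R_{m,j}\} \subseteq \{R_{n,j}\}$ up to association. Collecting the stabilized set of irreducible factors across all $n$ gives a collection $\{P_i\}_{i \in I}$ of irreducible polynomial Fredholm series, together with multiplicities $n_i = \lim_n e_{n,i}$, and we obtain $F = \prod_{i} P_i^{n_i}$ as a product converging uniformly on every bounded subset of $\AAA^{1,\rig}$ in $t$. Uniqueness of the factorization is inherited from the UFD uniqueness at each truncation level. For the identification with irreducible components of $\calZ(F)$: by \cite[Theorem~4.2.2]{conrad}, every irreducible component of $\calZ(F)$ is the zero locus of a Fredholm factor of $F$, which must coincide with some $\calZ(P_i)$; conversely, each $\calZ(P_i)$ is irreducible, since a nontrivial decomposition would yield a nontrivial factorization of $P_i$ in $E\langle w/p^\lambda\rangle[t]$.

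The main obstacle is proving that the set $\{P_i\}$ is \emph{finite} rather than merely countable. A general Fredholm series of the form $\prod_k(1 - c_k t)$ with $c_k \to 0$ has countably many distinct irreducible polynomial factors, so finiteness must come from additional structure of $F$ or of the base. The plan is to use a compactness/Noetherianity argument: the Berkovich analytification $\calW_{\geq \lambda}^{\Berk}$ is quasi-compact (as the base is affinoid), and one needs to show that the slopes of $\NP(F(\sfw,-))$ vary tamely enough in $\sfw \in \calW_{\geq \lambda}^{\Berk}$ that only finitely many distinct irreducible polynomial factors can appear. Concretely, one approach is to apply the continuity of Newton polygons in Lemma~\ref{L:continuity of NP} together with a compactness argument in the spirit of Corollary~\ref{C:Berkovich argument}, showing that if infinitely many distinct $P_i$ existed, then over any bounded slope range the corresponding zero loci would accumulate at some Berkovich point, violating local finiteness of $\calZ(F) \to \calW_{\geq \lambda}$. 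Making this accumulation argument precise, and handling the escape of slopes to infinity, will be the key technical step.
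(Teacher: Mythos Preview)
The paper's own proof is a one-line citation: ``This is \cite[Theorem~1.3.7]{coleman-mazur} and \cite[Corollary~4.2.3]{conrad}.'' Your sketch (Weierstrass preparation over bounded disks in $t$, UFD factorization over the PID $E\langle w/p^\lambda\rangle$, passage to the limit, and identification of components via \cite[Theorem~4.2.2]{conrad}) is exactly the argument underlying those references, so the structural part of your proposal is correct and matches the literature.

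However, you have correctly put your finger on a genuine issue with the \emph{statement} rather than with your proof. The word ``finite'' in the proposition is a slip: the cited result \cite[Theorem~1.3.7]{coleman-mazur} gives a factorization into a possibly \emph{countably infinite} family of irreducible Fredholm series, and your counterexample $\prod_{k}(1-c_k t)$ with $c_k\to 0$ and the $c_k$ pairwise distinct shows that finiteness genuinely fails for general Fredholm series over $E\langle w/p^\lambda\rangle$. Your proposed compactness argument cannot succeed, because no such argument can prove a false statement; the slopes in your counterexample escape to infinity without any accumulation, so the ``escape of slopes'' case you flagged is precisely the obstruction, and it is unavoidable.

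The resolution is that the paper does not actually need finiteness at this stage. The proposition is only used to supply the factorization $F=\prod P_i^{n_i}$ and the bijection between irreducible components of $\calZ(F)$ and the $\calZ(P_i)$; finiteness of the set $\{P_i\}$ for the specific Fredholm series of interest is then deduced in Theorem~\ref{T:irreducible factors}, where the ghost-type hypothesis forces each $P_i$ to carry a positive multiplicity $m(P_i)$ with $\sum m(P_i)=m(F)<\infty$. So you should drop the attempt to prove finiteness in general, read ``finite'' in the proposition as ``at most countable,'' and note that the infinite product converges in the Fredholm topology.
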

\begin{proof}
This is \cite[Theorem~1.3.7]{coleman-mazur} and \cite[Corollary~4.2.3]{conrad}.
\end{proof}

The main theorem of this section is the following (which holds under the weaker conditions $p \geq 5$ and $1 \leq a\leq p-4$).
\begin{theorem}
\label{T:irreducible factors}
Let $F(w,t) \in E\langle w/p^\lambda\rangle \llbracket t\rrbracket$ be a nontrivial Fredholm series of ghost type $\bbsigma$ and $\varepsilon$ with multiplicity $m(F)$. Then any Fredholm series $H(w,t)$ dividing $F(w,t)$ is of ghost type $\bbsigma$ and $\varepsilon$ with some multiplicity $m(H)\leq m(F)$.
\end{theorem}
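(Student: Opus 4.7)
The plan is to reduce to the case that $H$ is an irreducible Fredholm factor of $F$, define for each $w_\star$ and each ghost slope a multiplicity $\mu_n(w_\star) \in \{0,1,\dots,m(F)\}$, establish that each $\mu_n$ is locally constant by means of the vertex structure of the ghost Newton polygon, and finally bridge these multiplicities across different values of $n$ using the halo decomposition from Theorem~\ref{T:refined halo}.

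By the Coleman--Mazur/Conrad unique factorization of Fredholm series, write $F = \prod_i H_i^{e_i}$ as a product of distinct irreducible Fredholm series. Since Newton polygons concatenate under products of Fredholm series, it suffices to prove each $H_i$ is itself of ghost type with some multiplicity $m(H_i)$; any Fredholm divisor $H = \prod_i H_i^{f_i}$ with $f_i \leq e_i$ will then be of ghost type with multiplicity $\sum_i f_i m(H_i) \leq m(F)$. Assume therefore $H$ is irreducible. For each $w_\star \in \calW_{\geq \lambda}(\CC_p)$ and each $n \geq 1$, let $\mu_n(w_\star)$ denote the multiplicity of the $n$-th slope $\Lambda_n(w_\star)$ of $\NP\bigl(G^{(\varepsilon)}_{\bbsigma,\nord}(w_\star,-)\bigr)$ appearing as a slope of $\NP(H(w_\star,-))$. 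Since $H \mid F$ and $F$ is of ghost type with multiplicity $m(F)$, one has $0 \leq \mu_n(w_\star) \leq m(F)$, and the goal reduces to showing that $\mu_n(w_\star) = m(H)$ takes a common value independent of both $n$ and $w_\star$.

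To establish local constancy, fix $n$. By Proposition~\ref{P:near-steinberg equiv to nonvertex}(3), the locus $\Vtx_n^{(\varepsilon)} \cap \calW_{\geq \lambda}$ is a quasi-Stein subdomain obtained as the complement in an affinoid disk of finitely many open disks, hence connected. Corollary~\ref{C:Berkovich argument}(2) supplies a uniform positive gap $\epsilon_\delta > 0$ between $\Lambda_n$ and $\Lambda_{n+1}$ on each sub-affinoid $\Vtx_n^{(\varepsilon),\delta} \cap \calW_{\geq \lambda}$. Consequently, by Weierstrass theory applied slope-by-slope, $\calZ(F)$ decomposes admissibly over such sub-affinoids into a finite flat cover of degree $n m(F)$ (the first-$n$-slope part) together with its complement, and $\calZ(H)$ inherits a compatible decomposition. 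By Lemma~\ref{L:continuity of NP} together with Kiehl's finiteness theorem, the degree of the first-$n$-slope part of $\calZ(H)$ — equal to $\mu_1(w_\star) + \cdots + \mu_n(w_\star)$ — is locally constant, and hence constant on the connected domain $\Vtx_n^{(\varepsilon)} \cap \calW_{\geq \lambda}$. Taking successive differences shows each $\mu_n$ is a constant integer on $\Vtx_n^{(\varepsilon)} \cap \calW_{\geq \lambda}$; and since any two vertex domains $\Vtx_n^{(\varepsilon)}$ and $\Vtx_{n'}^{(\varepsilon)}$ overlap (each being the complement of finitely many disks in an affinoid disk), these local constants patch to well-defined integers $\mu_n \leq m(F)$.

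The hard part will be showing that $\mu_n$ is independent of $n$. For this one enters the halo annulus $\{v_p(w_\star) \in (\lambda,1)\} \subset \calW_{\geq \lambda}$, where the ghost-type hypothesis on $F$ yields (as in the proof of Theorem~\ref{T:refined halo}) a clean disjoint decomposition $\calZ(F) \cap \text{halo} = \bigsqcup_{n \geq 1} Y_n^F$ into clopens, each finite flat of degree $m(F)$ over the halo, with strictly increasing slope ratios $\deg g_n^{(\varepsilon)} - \deg g_{n-1}^{(\varepsilon)}$ separating them. Here $\calZ(H) \cap Y_n^F$ is finite flat of degree $\mu_n$ over the halo. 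The principal technical obstacle is to combine the irreducibility of $\calZ(H)$ with the explicit halo behavior of reciprocal roots $v_p(u_n(w_\star)) = -(\deg g_n^{(\varepsilon)} - \deg g_{n-1}^{(\varepsilon)}) v_p(w_\star)$ to rule out ``partial'' Fredholm factorizations with non-uniform $\mu_n$: any proper non-empty subset $S \subsetneq \ZZ_{\geq 1}$ chosen with $\mu_n$ varying in $n \in S$ would force the corresponding elementary symmetric functions of the $u_n(w)$ to produce coefficient growth at $w = 0$ incompatible with membership in $E\langle w / p^\lambda\rangle$, and hence cannot extend to a genuine Fredholm series on $\calW_{\geq \lambda}$. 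Combining this obstruction with the local constancy from the previous step forces $\mu_n = m(H)$ uniformly in $n$, completing the proof.
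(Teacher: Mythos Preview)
Your first two steps are essentially correct and match the paper's Step~I: you correctly identify that the cumulative multiplicity $\mu_1(w_\star) + \cdots + \mu_n(w_\star)$ is locally constant on the connected vertex domain $\Vtx_n^{(\varepsilon)} \cap \calW_{\geq \lambda}$, using the gap estimate from Corollary~\ref{C:Berkovich argument}(2) and continuity of Newton polygons. This gives well-defined integers $m(H,n) := \mu_1 + \cdots + \mu_n$.

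The genuine gap is in your third step. Your proposed halo argument does not work. Over the halo annulus, $\calZ(H)$ indeed decomposes into pieces $Y_n^H$ of degrees $\mu_n$, but irreducibility of $H$ only says $\calZ(H)$ is globally irreducible over $\calW_{\geq\lambda}$; it imposes no constraint forcing the $\mu_n$ to be equal. Your claimed ``coefficient growth at $w=0$ incompatible with membership in $E\langle w/p^\lambda\rangle$'' is backwards: $H$ is \emph{already assumed} to be a Fredholm series on $\calW_{\geq\lambda}$, so whatever its halo behavior is, it extends to the center by hypothesis. There is no obstruction to construct here. An irreducible Fredholm series could a priori have halo pieces of varying degrees that reconnect through the center of the disk.

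The paper's actual argument for this step (its Steps~II and III) is completely different and uses the harmonicity of $p$-adic valuations on the Berkovich disk (Definition-Lemma~\ref{DL:nondecreasing}). For each $n$, one takes the unique weight $k$ with $\tfrac12 d_k^\Iw = n$ and works at the Gaussian point $\eta_{w_k, \Delta_{k,1}-\Delta_{k,0}}$. The point is that $x=n$ is a vertex of the ghost Newton polygon just outside the disk of this radius but ceases to be one just inside (while $x=n\pm 1$ remain vertices). Applying harmonicity to the coefficient $h_{m(H,n+1)}$ at this Gaussian point yields a linear relation among the slope derivatives of $g_{n-1}, g_n, g_{n+1}$, and the explicit nonlinearity $2m_n(k) - m_{n+1}(k) - m_{n-1}(k) = 2$ of ghost multiplicities at $k$ forces $m(H,n+1) - m(H,n) = m(H,n) - m(H,n-1)$. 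This is the missing mechanism linking consecutive $\mu_n$'s, and it lives near specific classical weights rather than at the halo boundary.
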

The proof of Theorem~\ref{T:irreducible factors} will occupy the rest of this section. We note the following.

\begin{corollary}
\label{C:irreducible components}
Let $\bar r_p$, $\varepsilon$, and $\widetilde \rmH$ be as in Lemma~\ref{L:ordinary factorization}, and in particular $a\in \{2, \dots, p-5\}$ and $p \geq 11$. Then $\Spc^{(\varepsilon)}(\widetilde \rmH) = \Spc^{(\varepsilon)}_\ord(\widetilde \rmH) \bigsqcup \Spc^{(\varepsilon)}_\nord(\widetilde \rmH)$ is a disjoint union of the slope zero subspace and the positive slope subspace.
\begin{enumerate}
\item The ordinary subspace $\Spc^{(\varepsilon)}_\ord(\widetilde \rmH)$ is nonempty only when $\varepsilon = \omega^b\times \omega^{a+b}$, or when $\varepsilon = \omega^{a+b+1}\times \omega^{b-1}$ and $\bar r_p$ is split; in this case, $\wt: \Spc^{(\varepsilon)}_\ord(\widetilde \rmH) \to \calW^{(\varepsilon)}$ is finite and flat; its degree is 
$
\begin{cases}
m(\widetilde \rmH), & \textrm{if $\bar r_p$ is nonsplit and $\varepsilon =\omega^b \times \omega^{a+b}$},
\\
m'(\widetilde \rmH), & \textrm{if $\bar r_p$ is split and $\varepsilon =\omega^b \times \omega^{a+b}$},
\\
m''(\widetilde \rmH), & \textrm{if $\bar r_p$ is split and $\varepsilon =\omega^{a+b+1} \times \omega^{b-1}$}.
\end{cases}
$
\item The non-ordinary subspace $\Spc_\nord^{(\varepsilon)}(\widetilde \rmH)$ has finitely many irreducible components and every irreducible component is of ghost type $\bbsigma$ and $\varepsilon$, and the total multiplicity is $m(\widetilde \rmH)$. In particular, if $m(\widetilde \rmH)=1$, $\Spc_\nord^{(\varepsilon)}(\widetilde \rmH)$ is irreducible.
\end{enumerate}
\end{corollary}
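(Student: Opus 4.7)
The plan is to deduce both parts of the corollary by combining the Weierstrass-type factorization in Lemma~\ref{L:ordinary factorization} with the characterization of Fredholm divisors in Theorem~\ref{T:irreducible factors}. First, I would apply Lemma~\ref{L:ordinary factorization} to write
\[
C_{\widetilde\rmH}^{(\varepsilon)}(w,t) \;=\; C_{\widetilde\rmH,\ord}^{(\varepsilon)}(w,t)\cdot C_{\widetilde\rmH,\nord}^{(\varepsilon)}(w,t),
\]
where the first factor is a polynomial in $t$ whose constant term is $1$ and whose top-degree coefficient lies in $\calO\llbracket w\rrbracket^\times$, and the second factor is a Fredholm series whose Newton polygon at every $w_\star\in\gothm_{\CC_p}$ has only strictly positive slopes (by Theorem~\ref{T:global ghost}). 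The two vanishing loci in $\calW^{(\varepsilon)}\times\GG_m^\rig$ are therefore disjoint, which produces the decomposition $\Spc^{(\varepsilon)}(\widetilde\rmH)=\Spc^{(\varepsilon)}_\ord(\widetilde\rmH)\sqcup\Spc^{(\varepsilon)}_\nord(\widetilde\rmH)$. Part (1) is then immediate: the zero locus in $\calW^{(\varepsilon)}\times\GG_m^\rig$ of a $t$-polynomial with unit constant term and unit top coefficient is finite and flat over $\calW^{(\varepsilon)}$ of degree equal to its $t$-degree, and the degree-tabulation in Lemma~\ref{L:ordinary factorization} recovers the four cases and the non-emptyness criterion stated in the corollary.

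For part (2), I would fix any $\lambda\in(0,1)\cap\QQ$ and view $F(w,t):=C_{\widetilde\rmH,\nord}^{(\varepsilon)}(w,t)\in E\langle w/p^\lambda\rangle\llbracket t\rrbracket$. By Theorem~\ref{T:global ghost} together with Notation~\ref{N:setup irreducible components}, $F$ is of ghost type $\bbsigma$ and $\varepsilon$ with multiplicity $m(\widetilde\rmH)$ after restriction to $\calW_{\geq\lambda}$. On the other hand, by \cite[Theorem~1.3.7]{coleman-mazur} (or \cite[Corollary~4.2.3]{conrad}) $C_{\widetilde\rmH,\nord}^{(\varepsilon)}$ admits a unique factorization
\[
C_{\widetilde\rmH,\nord}^{(\varepsilon)}(w,t)\;=\;\prod_i Q_i(w,t)^{n_i}
\]
into distinct irreducible Fredholm factors over $\calW^{(\varepsilon)}$, and the irreducible components of $\Spc^{(\varepsilon)}_\nord(\widetilde\rmH)$ with their reduced structure are exactly the $\calZ(Q_i)$. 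Restricting each $Q_i$ to $\calW_{\geq\lambda}$ gives a divisor of $F$ in $E\langle w/p^\lambda\rangle\llbracket t\rrbracket$, so Theorem~\ref{T:irreducible factors} applies and shows that $Q_i|_{\calW_{\geq\lambda}}$ is itself of ghost type $\bbsigma$ and $\varepsilon$ with some positive multiplicity $m(Q_i)\in\ZZ_{\geq 1}$.

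Comparing Newton polygons at a point $w_\star\in\calW_{\geq\lambda}(\CC_p)$ on the two sides of the factorization and using the definition of the ghost-type multiplicity (which is additive under products of Fredholm series, each of whose slopes at $w_\star$ is $m(Q_i)$ copies of the ghost slopes), I obtain the identity
\[
\sum_{i} n_i\,m(Q_i)\;=\;m(\widetilde\rmH).
\]
This immediately bounds the number of distinct $Q_i$ by $m(\widetilde\rmH)$, yielding the finiteness of irreducible components of $\Spc^{(\varepsilon)}_\nord(\widetilde\rmH)$, and it shows that each such component $\calZ(Q_i)$ is of ghost type $\bbsigma$ and $\varepsilon$ (as a statement over the halo $\calW_{\geq\lambda}$, uniformly in the choice of $\lambda$). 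In the special case $m(\widetilde\rmH)=1$, the displayed identity forces a single factor with $n=m(Q)=1$, so $\Spc_\nord^{(\varepsilon)}(\widetilde\rmH)$ is irreducible.

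The point requiring the most care will be the passage between the factorization of $C_{\widetilde\rmH,\nord}^{(\varepsilon)}$ over the full open disk $\calW^{(\varepsilon)}$, where Conrad's uniqueness is formulated, and its restriction to the halo annulus $\calW_{\geq\lambda}$, where the notion of ghost type and Theorem~\ref{T:irreducible factors} live; in particular one must allow for the possibility that an irreducible $Q_i$ over $\calW^{(\varepsilon)}$ splits further over $\calW_{\geq\lambda}$. However, Theorem~\ref{T:irreducible factors} applies to \emph{any} Fredholm divisor of $F$, so $Q_i|_{\calW_{\geq\lambda}}$ (irreducible or not) still has ghost type with a well-defined positive multiplicity, and this multiplicity contributes additively to $m(\widetilde\rmH)$, which is precisely what the argument needs.
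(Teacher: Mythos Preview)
Your proposal is correct and follows essentially the same route as the paper: the paper's own proof simply says that the factorization in Lemma~\ref{L:ordinary factorization} gives the decomposition and that (2) follows from Theorem~\ref{T:irreducible factors} immediately, and you have unpacked exactly those two steps (including the additivity of ghost-type multiplicity under products and the passage between $\calW^{(\varepsilon)}$ and $\calW_{\geq\lambda}$) in detail.
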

\begin{proof}
The factorization in Lemma~\ref{L:ordinary factorization} gives the decomposition $\Spc^{(\varepsilon)}(\widetilde \rmH) = \Spc^{(\varepsilon)}_\ord(\widetilde \rmH) \bigsqcup \Spc^{(\varepsilon)}_\nord(\widetilde \rmH)$, and (2) follows from Theorem~\ref{T:irreducible factors} immediately.
\end{proof}
Further specializing Corollary~\ref{C:irreducible components} to the case of modular forms proves Theorem~\ref{T:irreducible components intro}.

\begin{remark}
\phantomsection
\begin{enumerate}
\item While Theorem~\ref{T:irreducible factors} works for $a\in \{1, \dots, p-4\}$, Corollary~\ref{C:irreducible components} holds under the slightly more restrictive assumption that  $a\in \{2, \dots, p-5\}$ and $p \geq 11$, which is needed because of Theorem~\ref{T:global ghost}.

\item 
A philosophical implication of Theorem~\ref{T:irreducible factors} and Corollary~\ref{C:irreducible components} is that \emph{the non-ordinary part of the spectral curve shares certain ``rigidity" or ``finiteness" similar to that of the ordinary part.}
\item It is clear from Corollary~\ref{C:irreducible components} that if $\bar r_p$ is nonsplit and $m(\widetilde \rmH)=1$, then $\Spc_\nord^{(\varepsilon)}(\widetilde \rmH)$ is irreducible.  It is natural to ask: when $\bar r_p$ is split and $m(\widetilde\rmH)=2$, can one prove that $\Spc_\nord^{(\varepsilon)}(\widetilde \rmH)$ is irreducible?

In general, suppose that we are in an automorphic setting with all tame local conditions being ``primitive" (e.g. having $\ell$-adic Breuil--M\'ezard multiplicity one), does it imply that $\Spc_\nord^{(\varepsilon)}(\widetilde \rmH)$ is irreducible?
\end{enumerate}
\end{remark}

\begin{notation}
\label{N:berkovich space}
Fix $\lambda \in (0,1) \cap \QQ$ for the rest of this section. 

For a rigid analytic space $Z$ over $\QQ_p$, write $\overline Z$ for the base change to $\CC_p$, and $\overline Z^\Berk$ for the Berkovich space associated to $\overline Z$.
For a closed point $w_\star \in \overline\calW$ and $r \in \QQ_{>0}$, 
write the closed disk of radius $p^{-r}$ centered at $w_\star$ as
$$\bfD(w_\star, r): = \big\{w \in \overline\calW(\CC_p)\; \big|\; v_p(w-w_\star) \geq r\big\}.$$

In what follows, it will be technically more convenient to make use of Berkovich spaces. For a closed point $w_\star \in \overline \calW$ and $r \in \QQ_{>0}$, write $\eta_{w_\star,r}$ to denote the Gaussian point associated to the disk $\overline \bfD(w_\star, r)$ on $\overline \calW^\Berk$.

We also recall from Notation~\ref{N:Berkovich notation} the Newton polygons at Berkovich points, the continuity of Newton polygon as the Berkovich points vary (Lemma~\ref{L:continuity of NP}), and the Berkovich subspace of $\overline \calW$ where $x=n$ is a vertex of $\NP(G_{\bbsigma}^{(\varepsilon)}(w, -))$ (Corollary~\ref{C:Berkovich argument}).
\end{notation}

The following standard harmonicity fact is key to our proof of Theorem~\ref{T:irreducible factors}; see for example \cite[Proposition~11.1.2]{kedlaya}.

\begin{definition-lemma}
\label{DL:nondecreasing}
Use $\breve \calO$  to denote the completion of the maximal unramified extension of $\calO$ with fraction field $\breve E$ and residual field $\overline \FF$.
Let $f(w) \in E\langle  w / p^\lambda\rangle $ be a power series, $w_\star \in\overline \calW_{\geq \lambda}(\CC_p)$ a closed point, and $\mu \in (\lambda, \infty) \cap \ZZ$.
Define the following slope derivatives: for $\bar \alpha \in \overline \FF$ (fixing a lift $\alpha\in \calO_{\breve E}$ of $\bar \alpha$)
\begin{equation}
\label{E:direction derivates}
\begin{split}
V_{w_\star, \mu}^+(f): = \lim_{\epsilon \to 0^+}  \epsilon^{-1} \cdot  \big( v_p\big(f(\eta_{w_\star, \mu-\epsilon})\big) - v_p\big(f(\eta_{w_\star, \mu})\big)\big), \quad 
\\
V_{w_\star, \mu}^{\bar \alpha}(f): = \lim_{\epsilon \to 0^+} \epsilon^{-1} \cdot  \big( v_p\big(f(\eta_{w_\star+\alpha p^\mu, \mu+\epsilon})\big) - v_p\big( f(\eta_{w_\star, \mu})\big) \big).
\end{split}
\end{equation}
In other words, $V^+_{w_\star, \mu}$ (resp. $V^{\bar \alpha}_{w_\star, \mu}$) measures the rate of change of the $p$-adic valuations of $f$ when we move from the Gaussian point $\eta_{w_\star, \mu}$ towards a larger radius (resp. towards a smaller radius in the disk centered at $w_\star+\alpha p^\mu$.)  Each of $V^{\bar \alpha}_{w_\star, \mu}(f)$ does not depend on the choice of the lift $\alpha$, and, for fixed $f$, $w_\star$, and $\mu$, there are only finitely many nonzero $V^{\bar \alpha}_{w_\star, \mu}(f)$'s.

Then we have
\begin{equation}
\label{E:harmonicity}
V_{w_\star, \mu}^+(f) + \sum_{\bar \alpha \in \overline \FF} V_{w_\star, \mu}^{\bar \alpha}(f) =0.
\end{equation}

Such definition and harmonicity \eqref{E:harmonicity} extends in a natural way to rational functions of the form $f(w)/g(w)$ with $f(w),g(w) \in E\langle w/p^\lambda\rangle$ by setting $V^?_{w_\star, \mu}(f/g) : = V^?_{w_\star, \mu}(f) -  V^?_{w_\star, \mu}(g)$ with $?  = +$ or $\bar \alpha \in \overline\FF$ (whenever the limits exist).
\end{definition-lemma}

\subsection{Proof of Theorem~\ref{T:irreducible factors}}
In this entire proof, we fix a character $\varepsilon$ relevant to $\bbsigma$ and suppress all superscripts $(\varepsilon)$.
Assume that $F(w,t) = H(w,t) \cdot H'(w,t)$ for Fredholm series $H, H' \in E\langle w/p^\lambda\rangle \llbracket t\rrbracket$. 
Then for any Berkovich point $\sfw \in\overline \calW_{\geq \lambda}^\Berk$, the slopes in $\NP(H(\sfw,-))$ (resp. $\NP(H'(\sfw,-))$) form a subset of slopes of $\NP(F(\sfw,-))$, which is the same as the set of slopes of $\NP\big(G_{\bbsigma, \nord}(\sfw,-)\big)$. 
Put 
$$
F(w,t) = 1+f_1(w)t+\cdots, \  H(w,t)= 1+h_1(w)t+\cdots, \  \textrm{and}\  H'(w,t)= 1+h'_1(w)t+\cdots.
$$

Recall from Corollary~\ref{C:Berkovich argument} that for each fixed $n \in \ZZ_{\geq 1}$, all elements $\sfw \in \overline\calW_{\geq \lambda}^\Berk$ for which $(n, v_p(g_n(\sfw)))$ is a vertex of $\NP\big(G_{\bbsigma, \nord}(\sfw,-)\big)$ form a Berkovich subspace:
$$
\overline\Vtx_{n, \geq \lambda}^\Berk: = \overline\calW_{\geq \lambda} ^\Berk \Big\backslash \bigcup_{k} \overline\bfD\big( w_k,\, \Delta_{k, |\frac 12d_k^\Iw(\tilde \varepsilon_1)-n|+1} - \Delta_{k, |\frac 12d_k^\Iw(\tilde \varepsilon_1)-n|}\big)^\Berk,
$$
where the union is taken over all $k =k_\varepsilon + (p-1)k_\bullet$ such that $n \in \big(d_k^\ur(\varepsilon_1), d_k^\Iw(\tilde \varepsilon_1)-d_k^\ur(\varepsilon_1)\big)$. The Berkovich space $\overline\Vtx^{\Berk}_{n, \geq\lambda}$ is clearly connected.

In what follows, we write $\slp_n(\sfw)$ for the $n$th slope in $\NP\big(G_{\bbsigma, \nord}(\sfw,-)\big)$.
The proof is divided into three steps.

\underline{\bf Step I}: For each $n$,  we will prove that the total multiplicity of the $n$ smallest slopes of $\NP\big( G_{\bbsigma, \nord}(\sfw,-)\big)$ in $\NP( H(\sfw, -))$ is constant in $\sfw  \in \overline\Vtx_{n, \geq \lambda}^\Berk$; write $m(H,n)$ for this constant. We define $m(H',n)$ for $H'$ similarly. It is clear that $m(H,n)+m(H',n) =n \cdot m(F)$.

It suffices to show that the total multiplicity $\mathrm{totmult}_n(\sfw)$ of those slopes in $\NP(H(\sfw, -))$ that are less than or equal to $\slp_n(\sfw)$, is a locally constant function on $\sfw \in\overline\Vtx^{\Berk}_{n, \geq\lambda}$. We proceed by induction on $n$ and start from the trivial case $n=0$. Now suppose that the claim is proved for smaller $n$'s. For $\sfw  \in \overline\Vtx_{n, \geq \lambda}^\Berk$, suppose $\mathrm{totmult}_n(\sfw)=m$, which is obviously less than or equal to $n\cdot m(F)$. Since $(n, v_p(g_n(\sfw)))$ is a vertex of $\NP\big(G_{\bbsigma, \nord}(\sfw,-)\big)$, the slope difference $\mu:=\slp_{n+1}(\sfw)-\slp_n(\sfw)>0$. On the other hand, $\sfw \mapsto \NP\big( G_{\bbsigma, \nord}(\sfw, -)\big)$ and $\sfw \mapsto \NP\big( H(\sfw, -)\big)$ are continuous for the Berkovich topology by Lemma~\ref{L:continuity of NP}(1). We may choose an open neighborhood $\calU$ of $\sfw$ in $\overline\Vtx^{\Berk}_{n, \geq\lambda}$ such that for every $\sfw' \in \calU$, we have
$$
\big|\NP(H(\sfw, -))_{x = i} - \NP(H(\sfw',-))_{x=i} \big| < \tfrac \mu 4 \textrm{\quad for } i = m-1,m, m+1, \textrm{ and}
$$
$$
\big|\NP(G_{\bbsigma, \nord}(\sfw, -))_{x = j} - \NP(G_{\bbsigma, \nord}(\sfw',-))_{x=j} \big| < \tfrac \mu 4 \textrm{\quad for }j =n-1,n,n+1.
$$
Then we have
$$
\slp_{n+1}(\sfw') > \slp_{n+1}(\sfw)-\tfrac \mu 2 > \slp_n(\sfw)+ \tfrac \mu 2 > \slp_n(\sfw'), \quad \textrm{and}
$$
\begin{align*}
&\big|\NP(H(\sfw',-))_{x=m} - \NP(H(\sfw',-))_{x=m-1}\big| 
\\
< \ &\big|\NP(H(\sfw,-))_{x=m} - \NP(H(\sfw,-))_{x=m-1}\big| +\tfrac \mu 4 \cdot2 = \slp_n(\sfw) + \tfrac \mu 2 < \slp_{n+1}(\sfw').
\end{align*}
\begin{align*}
&\big|\NP(H(\sfw',-))_{x=m+1} - \NP(H(\sfw',-))_{x=m}\big| 
\\
>\ &\big|\NP(H(\sfw,-))_{x=m+1} - \NP(H(\sfw,-))_{x=m}\big| -\tfrac \mu 4 \cdot 2 = \slp_{n+1}(\sfw) - \tfrac \mu 2 > \slp_n(\sfw').
\end{align*}
From this, we deduce that $\mathrm{totmult}_n(\sfw') =m$ for every $\sfw' \in \calU$. Yet $ \overline\Vtx_{n, \geq \lambda}^\Berk$ is connected; so $\mathrm{totmult}_n(-)$ is constant.

\medskip
\underline{\bf Step II}: The following claim is key to our proof; it should be straightforward, but some work is needed to rule out pathological cases.  For each integer $n \geq 1$, Definition-Proposition~\ref{DP:dimension of classical forms}(2) implies that there is a unique weight $k = k_\varepsilon + (p-1)(n + \delta_\varepsilon-1)$ such that $k \equiv k_\varepsilon \bmod (p-1)$ and $\frac 12 d_k^\Iw(\tilde \varepsilon_1) = n$.

{\bf Claim}: for every $\epsilon \in (0, \frac 12)$ and every $\alpha \in \calO_{\CC_p}$, 
\begin{enumerate}
\item the point $\eta_{w_k, \Delta_{k,1}-\Delta_{k,0}-\epsilon}$ belongs to the subspaces $\overline\Vtx_{n, \geq \lambda}^\Berk$ and $\overline\Vtx_{n+1, \geq \lambda}^\Berk$ of $\overline \calW_{\geq \lambda}$, 
\item the point $\eta_{w_k+\alpha p^{\Delta_{k,1}-\Delta_{k,0}}, \Delta_{k,1}-\Delta_{k,0}+\epsilon}$ does not belong to the subspaces $\overline\Vtx_{n, \geq \lambda}^\Berk$, and

\item the point $\eta_{w_k+\alpha p^{\Delta_{k,1}-\Delta_{k,0}}, \Delta_{k,1}-\Delta_{k,0}+\epsilon}$ belongs to the subspaces $\overline\Vtx_{n+1, \geq \lambda}^\Berk$ and $\overline\Vtx_{n-1, \geq \lambda}^\Berk$.
\end{enumerate}

Proof: 
By Proposition~\ref{P:near-steinberg equiv to nonvertex}(3), one of the disks removed to get $\overline\Vtx^\Berk_{n, \geq\lambda}$ is $\overline\bfD(w_k,\Delta_{k,1}-\Delta_{k,0})^\Berk$, so (2) is proved.  Moreover, the point $\eta_{w_k, \Delta_{k,1}-\Delta_{k,0}-\epsilon}$ is not removed for this disk when considered for whether it belongs to $\overline\Vtx^\Berk_{n, \geq\lambda}$.

Similarly, to get $\overline\Vtx_{n\pm1, \geq \lambda}^\Berk$, we need to remove the disk $\overline \bfD(w_k, \Delta_{k,2}-\Delta_{k,1})^\Berk$. But by \cite[Lemmas~5.6 and 5.8]{liu-truong-xiao-zhao}, we have $\Delta_{k, 2}-\Delta_{k,1} \geq \Delta_{k,1}-\Delta_{k,0}+1$; so none of the points in (1) and (3) belong to this disk $\overline \bfD(w_k, \Delta_{k,2}-\Delta_{k,1})^\Berk$.

It then suffices to explain that the points in (1) and (3) are not contained in any other disks removed to get $\overline\Vtx^\Berk_{n-s, \geq \lambda}$ with $s \in \{\pm 1,0\}$.

Now, take any $k' = k_\varepsilon + (p-1)k'_\bullet \neq k$ and any $s \in \{ \pm 1, 0\}$. The condition $\frac 12 d_k^\Iw(\tilde \varepsilon_1) = n$ can be rewritten (via Definition-Proposition~\ref{DP:dimension of classical forms}) as
$$(n-s) - 
\tfrac 12 d_{k'}^\Iw(\tilde \varepsilon_1) = k_\bullet - k'_\bullet -s.
$$
By Proposition~\ref{P:near-steinberg equiv to nonvertex}(3), the corresponding disk removed from $\overline\calW_{\geq \lambda}$ to get $\overline\Vtx^\Berk_{n-s, \geq \lambda}$ is precisely $\overline\bfD(w_{k'},\Delta_{k', |k_\bullet-k'_\bullet-s|+1} - \Delta_{k', |k_\bullet-k'_\bullet-s|})^\Berk$. 

Suppose for contrary that $\overline\bfD(w_{k'},\Delta_{k', |k_\bullet-k'_\bullet-s|+1} - \Delta_{k', |k_\bullet-k'_\bullet-s|})^\Berk$ contains one of the points in (1) and (3) for some $s \in \{\pm 1,0\}$.
Then we have
\begin{itemize}
\item (for the radii) $ \Delta_{k,1}-\Delta_{k,0}+\epsilon\geq\Delta_{k', |k_\bullet-k'_\bullet-s|+1} - \Delta_{k', |k_\bullet-k'_\bullet-s|} $, and
\item (for the centers) $v_p(w_{k'} - w_k) \geq \min\big\{ \Delta_{k', |k_\bullet-k'_\bullet-s|+1} - \Delta_{k', |k_\bullet-k'_\bullet-s|}, \ \Delta_{k,1}-\Delta_{k,0}-\epsilon\big\}
$.
\end{itemize}
Yet the differences $\Delta_{k', |k_\bullet-k'_\bullet-s|+1} - \Delta_{k', |k_\bullet-k'_\bullet-s|}$ and $\Delta_{k,1}-\Delta_{k,0}$ belong to $\frac 12\ZZ$ by Proposition~\ref{P:near-steinberg equiv to nonvertex}(6), and $v_p(w_{k'}-w_k) \in \ZZ$. The condition $\epsilon \in (0,\frac 12)$ guarantees that the two inequalities above still hold after setting $\epsilon=0$ by integrality. In particular,
\begin{equation}
\label{E:vpwk'-wk}
v_p(w_{k'}-w_k)\geq \Delta_{k', |k_\bullet-k'_\bullet-s|+1} - \Delta_{k', |k_\bullet-k'_\bullet-s|}.
\end{equation}

This inequality implies that $n-s \in \overline \nS_{w_{k'}, k}$ by Definition~\ref{D:near-steinberg range}, and thus $\nS_{w_{k'},k}$ contains at least one of $\{n-2, n-1, \dots, n+2\}$. This would imply by Proposition~\ref{P:near-steinberg equiv to nonvertex}(5) that at least one of $(0, \Delta_{k, 0})$,  $(1, \Delta_{k, 1})$, or $(2, \Delta_{k,2})$ is not a vertex of $\underline\Delta_{ k}$; this contradicts with  \cite[Lemmas~5.6 and 5.8]{liu-truong-xiao-zhao} (which says that the ``first" $p-1$ points on $\underline\Delta_{ k}$ are vertices). This completes the proof of the Claim in Step II.

\medskip
\underline{\bf Step III}: Write $m(H): = m(H,1)$ and $m(H') :=m(H',1)$. We will prove inductively that $m(H,n) = n \cdot m(H)$ and $m(H',n) =n\cdot m(H')$.
The inductive base is clear. Suppose that $m(H,i) = i \cdot m(H)$ and $m(H',i)= i\cdot m(H')$ holds for $i=1, \dots, n$ (with $n \geq 1$). We will prove this for $i=n+1$.  For this $n$, take the weight $k$ as in Step II.

By Step II(1), $\eta_{w_k, \Delta_{k,1}-\Delta_{k,0}-\epsilon} $ belongs to both $\overline\Vtx_{n, \geq \lambda}^\Berk$ and $\overline\Vtx_{n+1, \geq \lambda}^\Berk$ for all $\epsilon \in (0,\frac 12)$. By Step I and the inductive hypothesis, we have
\[
|h_{m(H,n+1)}(\eta_{w_k, \Delta_{k,1}-\Delta_{k,0}-\epsilon})| = \Big|g_{n}^{m(H)}(\eta_{w_k, \Delta_{k,1}-\Delta_{k,0}-\epsilon}) \cdot \Big( \frac{g_{n+1}}{g_n}\Big)^{m(H,n+1)-m(H,n)}(\eta_{w_k, \Delta_{k,1}-\Delta_{k,0}-\epsilon})\Big|.
\]
By continuity, the above equality holds for $\epsilon=0$ as well.
So in particular, for the slope derivatives at $\eta_{w_k, \Delta_{k,1}-\Delta_{k,0}}$ defined in \eqref{E:direction derivates}, we have
\begin{equation}
\label{E:Vplus of h}
V_{w_k, \Delta_{k,1}-\Delta_{k,0}}^+(h_{m(H,n+1)})=  V_{w_k, \Delta_{k,1}-\Delta_{k,0}}^+\Big(g_{n}^{m(H)} \cdot  \Big(\frac{g_{n+1}}{g_{n}}\Big)^{m(H, n+1)-m(H,n)}\Big).
\end{equation}

On the other hand, by Step II(2)(3), for every $\alpha \in \calO_{\CC_p}$ and any $\epsilon \in [0,\frac 12)$, the point $\eta_{w_k+\alpha p^{\Delta_{k,1}-\Delta_{k,0}}, \Delta_{k,1}-\Delta_{k,0}+\epsilon}$ is contained in $\overline\Vtx_{n, \geq \lambda}^\Berk$ and $\overline\Vtx_{n-2, \geq \lambda}^\Berk$ but not in $\overline\Vtx_{n-1, \geq \lambda}^\Berk$. It follows that the Newton polygon of $G_{\bbsigma,\nord}(\sfw,-)$ at each of those points is a straight line of width $2$ from $n-1$ to $n+1$. We therefore deduce that for $\bar \alpha \in \overline \FF$,
\begin{equation}
\label{E:Valpha of hn}
V_{w_k, \Delta_{k,1}-\Delta_{k,0}}^{\bar \alpha}(h_{m(H,n+1)})= V_{w_k, \Delta_{k,1}-\Delta_{k,0}}^{\bar \alpha}\Big(g_{n-1}^{m(H)}\cdot \Big( \frac{g_{n+1}}{g_{n-1}}\Big)^{(m(H, n+1)-m(H,n-1))/2}\Big).
\end{equation}

Taking the sum of \eqref{E:Vplus of h} and \eqref{E:Valpha of hn} for all $\bar\alpha \in \overline \FF$ and using the harmonicity equality \eqref{E:harmonicity} (for $h_{m(H, n+1)}$ in the first equality and for $g_{n+1}$ and $g_{n-1}$ in the third equality), we deduce that
\begin{align*}
0 \stackrel{\eqref{E:harmonicity}}= \ & V^+_{w_k, \Delta_{k,1}-\Delta_{k,0}}(h_{m(H,n+1)}) + \sum_{\bar \alpha \in \overline \FF} V^{\bar \alpha}_{w_k, \Delta_{k,1}-\Delta_{k,0}}(h_{m(H,n+1)})\\
=\ \ & 
V_{w_k, \Delta_{k,1}-\Delta_{k,0}}^+\Big(g_{n}^{m(H)}\cdot \Big( \frac{g_{n+1}}{g_{n}}\Big)^{m(H, n+1)-m(H,n)}\Big) 
\\ 
& +\sum_{\bar \alpha \in \overline \FF} V_{w_k, \Delta_{k,1}-\Delta_{k,0}}^{\bar \alpha}\Big(g_{n-1}^{m(H)}\cdot \Big( \frac{g_{n+1}}{g_{n-1}}\Big)^{(m(H, n+1)-m(H,n-1))/2}\Big)
\\
\stackrel{\eqref{E:harmonicity}}= \ & V_{w_k, \Delta_{k,1}-\Delta_{k,0}}^+\Big(\Big(\frac{g_{n+1}g_{n-1}}{g_{n}^2}\Big)^{(m(H,n+1)-m(H,n)-m(H))/2}\Big).
\end{align*}
(The third equality also makes use of $m(H,n)-m(H,n-1)=m(H)$ on the exponents of $g_{n+1}$ and $g_{n-1}$.)

To show that $m(H,n+1) = (n+1)\cdot m(H)$, or equivalently $m(H,n+1)-m(H,n) =m(H)$, it then suffices to show that
\begin{equation}
\label{E:Vplus of gn-1 vs gn and gn-2}
2V_{w_k, \Delta_{k,1}-\Delta_{k,0}}^+(g_{n}) \neq V_{w_k, \Delta_{k,1}-\Delta_{k,0}}^+(g_{n+1})+V_{w_k, \Delta_{k,1}-\Delta_{k,0}}^+(g_{n-1}).
\end{equation}
By definition, for $i \in \{n-1,n,n+1\}$, we have
\begin{equation}
\label{E:Vplus as sum over zeros}
V_{w_k, \Delta_{k,1}-\Delta_{k,0}}^+(g_i) = \sum_{v_p(w_{k'}-w_k) \geq \Delta_{k,1}-\Delta_{k,0}} m_i(k')
\end{equation}
is the sum of ghost zero multiplicities for those weights $k'= k_\varepsilon +(p-1)k'_\bullet$ such that $v_p(w_{k'}-w_k) \geq \Delta_{k,1}-\Delta_{k,0}$.
Note that the function $i \mapsto m_i(k')$ is linear over $i \in \{n-1,n,n+1\}$ except when $i$ is equal to $\frac 12d_{k'}^\Iw$, $d_{k'}^\Iw-d_{k'}^\ur$, and $d_{k'}^\ur$.
We claim that this exactly happens when $k' = k$, and therefore (as $2m_n(k)-m_{n+1}(k)-m_{n-1}(k) = 2$,) we deduce that 
$$2V_{w_k, \Delta_{k,1}-\Delta_{k,0}}^+(g_{n}) - V_{w_k, \Delta_{k,1}-\Delta_{k,0}}^+(g_{n+1})-V_{w_k, \Delta_{k,1}-\Delta_{k,0}}^+(g_{n-1}) = 2.
$$

To see the claim, we note that, by the definition of near-Steinberg range in Definition~\ref{D:near-steinberg range}, the condition $v_p(w_{k'}-w_k) \geq \Delta_{k,1}-\Delta_{k,0}$ implies that $n-1$ belongs to the near-Steinberg range for $(w_{k'}, k)$.  Yet Proposition~\ref{P:near-steinberg equiv to nonvertex}(1) (for $L_{w_{k'},k} \geq 1$) implies that the condition $v_p(k'_\bullet-k_\bullet) \geq \Delta_{k,1}-\Delta_{k,0}$ excludes the case that $i=d_{k'}^\Iw-d_{k'}^\ur$ or $i=d_{k'}^\ur$. So the only $k'$ that appears in the sum of \eqref{E:Vplus as sum over zeros} and that $i\mapsto m_i(k')$ is not linear is when $k' = k$.
This proves the claim and thus 
\eqref{E:Vplus of gn-1 vs gn and gn-2}, which concludes the inductive proof of Step III.

\appendix

\section{Some linear algebra and $p$-adic analysis}

\begin{notation}
Let $n$ be a positive integer.
Write $\underline n = \{1,\dots, n\}$.
For a subset $I$ of $\underline n$, write $I^\sfc: = \underline n - I$, and write $\sgn (I, \underline n)$ for the sign of the permutation from $\underline n$ (in increasing order) to the ordered disjoint union $I \sqcup I^\sfc$, where both $I$ and $I^\sfc$ are ordered increasingly.

Write $\underline \infty = \ZZ_{\geq 1}$.
Let $R$ be a ring. For $m$ and $n$ positive integers or infinity, write $\rmM_{m \times n}(R)$ for the space of matrices of size $m\times n$, with entries in $R$. Let $A \in \rmM_{m\times n}(R)$. For $i \in \underline m$ and $j \in \underline n$,  write $A_{i,j}$ for the $(i,j)$-entry of $A$; for two subsets $I \subseteq \underline m$ and $J\subseteq \underline n$, write $A(I\times J)$ for the submatrix of $A$ whose rows are from $I$ and whose columns are from $J$, where indices are in increasing order.
\end{notation}

\begin{lemma}
\label{L:cofactor expansion formula}
\phantomsection
\begin{enumerate}
\item If $I$ is a subset of $\underline n$ and $I'$ is a subset of $I^\sfc$, put $I'': = I \sqcup I'$, then
$$
\sgn(I, \underline n) \sgn (I''- I, I^\sfc) = \sgn(I'', \underline n) \sgn(I, I'').
$$
\item Let $m  \leq n$ and let $A \in \rmM_{n \times n}(R)$ be a matrix. Then we have
$$
\sum_{\substack{I \subseteq \underline n\\ \#I = m}} \sum_{\substack{J \subseteq \underline n\\ \#J = m}}\sgn(I, \underline n) \sgn(J, \underline n)\cdot  \det (A(I \times J))\cdot \det(A(I^\sfc\times  J^\sfc)) = \binom nm \det(A).
$$
\item Let $A, B \in \rmM_{n\times n}(R)$ be two matrices. We have
$$
\det(A+B) = \sum_{\substack{I, J \subseteq \underline n\\ \#I = \# J}} \sgn (I, \underline n) \sgn (J, \underline n) \cdot \det ( A(I \times J)) \cdot \det ( B(I^\sfc \times J^\sfc)).
$$
\end{enumerate}
\end{lemma}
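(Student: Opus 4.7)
The plan is to prove the three parts in order, treating part (1) as the key sign-bookkeeping lemma and deriving parts (2) and (3) from it together with the classical Laplace expansion by complementary minors. For part (1), I would interpret both sides as the signature of a single permutation $\pi$ that rearranges $\underline n$ (listed in increasing order) into the ordered concatenation $I \sqcup (I''-I) \sqcup (I'')^\sfc$, where each block is in increasing order. The left-hand side computes $\sgn(\pi)$ by first moving $I$ to the front (factor $\sgn(I, \underline n)$) and then moving $I''-I$ to the front of the residual block $I^\sfc$ (factor $\sgn(I''-I, I^\sfc)$). The right-hand side computes the same $\sgn(\pi)$ by first moving $I''$ to the front (factor $\sgn(I'', \underline n)$) and then splitting $I'' = I \sqcup (I''-I)$ within (factor $\sgn(I, I'')$). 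Both products therefore equal $\sgn(\pi)$, which is the identity.

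For part (2), I would invoke the classical Laplace expansion along any fixed subset of rows $I \subseteq \underline n$ with $\#I = m$:
$$\det(A) = \sum_{\substack{J \subseteq \underline n\\ \#J = m}} \sgn(I, \underline n)\sgn(J, \underline n)\, \det(A(I \times J))\, \det(A(I^\sfc \times J^\sfc)).$$
Summing this identity over all $I \subseteq \underline n$ with $\#I = m$ yields the stated double sum on the left, and $\binom{n}{m}\det(A)$ on the right. For part (3), I would use the multilinearity of $\det$ in the rows: writing the $i$-th row of $A+B$ as the sum of the $i$-th rows of $A$ and of $B$, expansion gives $\det(A+B) = \sum_{I \subseteq \underline n} \det(M_I)$, where $M_I$ is the matrix whose $i$-th row equals the $i$-th row of $A$ for $i \in I$ and of $B$ for $i \in I^\sfc$. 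Applying the same Laplace expansion to $M_I$ along the rows $I$ and using $M_I(I \times J) = A(I \times J)$ together with $M_I(I^\sfc \times J^\sfc) = B(I^\sfc \times J^\sfc)$ yields the claimed formula; only pairs $(I,J)$ with $\#I = \#J$ contribute, since the Laplace terms vanish otherwise.

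The argument is essentially formal linear algebra and no real obstacle is anticipated. The most delicate point is the sign bookkeeping in part (1), which is precisely the content underpinning the classical Laplace expansion used in parts (2) and (3); once part (1) is phrased as a decomposition of a single rearrangement permutation, parts (2) and (3) follow by summing over $I$ and by row-multilinearity, respectively.
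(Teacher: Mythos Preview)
Your proposal is correct and matches the paper's approach essentially verbatim for parts (1) and (2): the paper likewise views both sides of (1) as the sign of the single permutation taking $\underline n$ to $I \sqcup I' \sqcup I''^{\sfc}$, decomposed in two ways, and for (2) it fixes $I$, invokes the Laplace expansion by complementary minors, and sums over the $\binom{n}{m}$ choices. For part (3) the paper simply cites a reference (\cite[Equation~(1)]{marcus}), whereas you supply the standard direct argument via row-multilinearity followed by Laplace expansion along the rows in $I$; this is exactly the kind of proof one would expect that reference to contain, so there is no real divergence.
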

\begin{proof}
(1) Consider the permutation $\sigma$ that first sends $\underline n$ to the ordered disjoint union $I\sqcup I^\sfc$, and then sends $I^\sfc$ to the disjoint union $ I'\sqcup I''^\sfc$. This permutation has sign $\sgn(I, \underline n)\sgn(I', I^\sfc)$ and sends $\underline n$ to the ordered disjoint union $I\sqcup I'\sqcup I''^\sfc$. On the other hand, $\sigma$ can be rewritten as first sending $\underline n$ to $I''\sqcup I''^\sfc$ and then sending $I''$ to $I\sqcup I'$. Thus, this permutation also has sign $\sgn(I'',\underline n)\sgn(I,I'')$. (1) is proved.

(2) For a fixed $I$, the sum on the left is equal to $\det(A)$ by standard cofactor expansion of the determinant. The number of choices of such $I$ is $\binom nm$. (2) follows.

(3) This elementary formula can be found for example in \cite[Equation (1)]{marcus}.
\end{proof}

\begin{lemma}
\label{L:det of ABC}
Let $R$ be a topological ring, and let $n$ be positive integers and $m$ a positive integer greater than or equal to $n$, or infinity.
\begin{enumerate}
\item Let $A \in \rmM_{n \times m}(R)$ and $B \in \rmM_{m \times n}(R)$ be matrices such that the product $AB$ converges. Then we have
$$
\det(AB)= 
\sum_{\underline \lambda \subseteq \underline{m}, \, \#\underline \lambda =n}\det\big(A(\underline n \times \underline \lambda )\big) \cdot  \det\big(B(\underline \lambda \times \underline n)\big),
$$
where the sum is over all subsets $\underline \lambda$ of $\underline m$ of cardinality $n$.
\item Let $A \in \rmM_{n \times m}(R)$, $B \in \rmM_{m \times m}(R)$, and $C \in\rmM_{m \times n}(R)$ be matrices such that the product $ABC$ converges.  Then we have
$$
\det(ABC) =\sum_{\substack{\underline \lambda, \underline \eta \subseteq \underline m\\\#\underline \lambda= \#\underline \eta = n}} \det\big(A(\underline n \times \underline \lambda)\big) \cdot \det\big( B(\underline \lambda \times \underline \eta) \big) \cdot \det\big(C(\underline \eta\times \underline n)\big).
$$
\end{enumerate}

\end{lemma}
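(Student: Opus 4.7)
\textbf{Proof proposal for Lemma~\ref{L:det of ABC}.} The plan is to first establish part~(1), the (possibly infinite) Cauchy--Binet formula, and then deduce part~(2) by iterating it. For part~(1) in the case when $m$ is finite, the identity is the classical Cauchy--Binet formula, which is standard and follows from the Leibniz expansion: writing
\[
\det(AB) = \sum_{\sigma \in S_n} \sgn(\sigma) \prod_{i=1}^n \Big(\sum_{k=1}^m A_{i,k} B_{k,\sigma(i)}\Big),
\]
expanding the product and interchanging the sum gives a sum over all tuples $(k_1, \dots, k_n) \in \underline m^n$; tuples with a repeated index contribute zero by the antisymmetry of $\sgn(\sigma)$, and grouping the remaining tuples according to the underlying set $\underline \lambda = \{k_1<\cdots<k_n\}$ and reinternalizing the permutation inside yields the stated identity.

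For $m=\infty$, the convergence of $AB$ means by definition that, for each $i,j\in\underline n$, the series $(AB)_{i,j} = \sum_k A_{i,k}B_{k,j}$ converges in $R$ (with respect to its topology). First I would argue that for any such $R$ arising in our applications (namely $\calO\langle w/p\rangle$ with its natural Banach topology, or any complete topological ring), the same Leibniz-expansion manipulation as in the finite case is legitimate: the interchange of the finite product $\prod_{i=1}^n$ with the convergent series is justified by a standard Cauchy-product / multilinearity argument, which in turn justifies reorganizing the resulting sum over $\underline\lambda \subseteq \underline m$ of size $n$. Alternatively, one may truncate $A$ and $B$ to their first $N$ columns and first $N$ rows respectively (for $N \geq n$), apply the finite Cauchy--Binet formula to the resulting $n \times N$ and $N \times n$ matrices, and then let $N \to \infty$; the left-hand side converges to $\det(AB)$ by continuity of the determinant in each entry (a polynomial expression), and the right-hand side converges term-by-term to the infinite sum, establishing part~(1).

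For part~(2), set $D := AB \in \rmM_{n\times m}(R)$, which is well-defined by the convergence of $ABC$. Applying part~(1) to the product $DC$ gives
\[
\det(ABC) = \det(DC) = \sum_{\underline \eta \subseteq \underline m,\, \#\underline \eta = n} \det\big(D(\underline n \times \underline \eta)\big) \cdot \det\big(C(\underline \eta \times \underline n)\big).
\]
Now for each fixed $\underline \eta$, the submatrix $D(\underline n \times \underline \eta)$ is precisely the product of $A \in \rmM_{n\times m}(R)$ with the $m \times n$ matrix $B(\underline m \times \underline \eta)$; applying part~(1) again to this product yields
\[
\det\big(D(\underline n \times \underline \eta)\big) = \sum_{\underline \lambda \subseteq \underline m,\, \#\underline \lambda = n} \det\big(A(\underline n \times \underline \lambda)\big) \cdot \det\big(B(\underline \lambda \times \underline \eta)\big).
\]
Substituting this into the previous display gives the desired formula in~(2).

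The main obstacle is the bookkeeping around convergence when $m = \infty$, in particular ensuring that the rearrangements of the various double sums are absolutely convergent in $R$; in practice this is harmless because all applications of this lemma occur with $R = \calO\langle w/p\rangle$ (or similar Banach $\calO$-algebras), where the decay of entries of $Y$, $\rmU_\bfC$, and $Y^{-1}$ established in Lemma~\ref{L:estimate of Y} and Proposition~\ref{P:halo estimate} provides uniform $p$-adic estimates that make every sum involved absolutely convergent, and convergence commutes freely with the finite Leibniz-type manipulations above.
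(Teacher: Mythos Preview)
Your proposal is correct and follows essentially the same approach as the paper: the Leibniz expansion with cancellation of repeated indices for part~(1), and two applications of part~(1) for part~(2). The only cosmetic difference is that the paper groups as $A\cdot(BC)$ and applies (1) first to get a sum over $\underline\lambda$, then to each $(BC)(\underline\lambda\times\underline n)$, whereas you group as $(AB)\cdot C$; these are mirror images of one another.
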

\begin{proof}
(1) 
By a direct computation we have 
\begin{align}
\nonumber
\det(AB)=\ & \sum_{\sigma\in S_n}\sgn(\sigma)\cdot\prod_{i=1}^n (AB)_{\sigma(i),i}=\sum_{\sigma\in S_n}\sgn(\sigma)\cdot \prod_{i=1}^n\Big(\sum_{\lambda_i\in \underline m} A_{\sigma(i),\lambda_i}B_{\lambda_i,i} \Big)
\\
\label{AE:det(AB) expansion}
=\ & \sum_{\lambda_1, \dots, \lambda_n \in \underline m} \sum_{\sigma \in S_n} \sgn(\sigma)\cdot  \prod_{i=1}^nA_{i, \lambda_{\sigma^{-1}(i)}}B_{\lambda_i,i}.
\end{align}
Consider the multiset $\underline \lambda$ constructed from $\lambda_1, \dots, \lambda_n$. We may first sum over all such possible multiset $\underline \lambda$ of size $n$ and then sum over all numberings of elements of $\underline \lambda$ into $\lambda_1, \dots, \lambda_n$.  This way, if some $\lambda_i = \lambda_j$ for $i \neq j$, then in the sum \eqref{AE:det(AB) expansion} above, the term associated to $\sigma$ and the term associated to $\sigma(ij)$ are the same and hence got canceled because of the different sign.
It follows that, $\eqref{AE:det(AB) expansion}$ is equal to
$$
\det(AB) = \sum_{\underline\lambda \subseteq \underline m} \sum_{\tau \in S_n} \sum_{\sigma \in S_n} \sgn(\sigma)\prod_{i=1}^n A_{i, \lambda_{\tau(\sigma^{-1}(i))}} B_{\lambda_{\tau(i)},i},
$$
where the sum is over all subsets $\underline \lambda$ (as opposed to multisets) of $\underline m$ of size $n$ and the elements in $\underline \lambda$ is ordered so that $\lambda_1 < \cdots < \lambda_n$.  Reorganizing, this is further equal to
\begin{align*}
\det(AB) = \ &
\sum_{\underline\lambda\subseteq \underline m} \sum_{\tau \in S_n} \sum_{\sigma \in S_n} \Big(\sgn(\tau\sigma^{-1}) \prod_{i=1}^n A_{i, \lambda_{\tau\sigma^{-1}(i)}}\Big) \cdot \Big( \sgn(\tau)\prod_{i=1}^nB_{\lambda_{\tau(i)},i} \Big) \\
=\ & \sum_{\underline \lambda \subseteq \underline{m}}\det\big(A(\underline n \times \underline \lambda )\big) \cdot  \det\big(B(\underline \lambda \times \underline n)\big).
\end{align*}

(2) Applying  (1) to the product $A \cdot (BC)$ gives
$$
\det(ABC) = \sum_{\underline \lambda \subseteq \underline m, \, |\#\underline \lambda = n} \det(A(\underline n \times \underline \lambda)) \cdot \det \big( (BC)(\underline \lambda \times \underline n)\big).
$$
Then apply (1) to each of $(BC)(\underline \lambda \times \underline n)$ gives
$$
\det(ABC) =\sum_{\substack{\underline \lambda, \underline \eta \subset \underline m\\\#\underline \lambda= \#\underline \eta = n}} \det\big(A(\underline n \times \underline \lambda)\big) \cdot \det\big( B(\underline \lambda \times \underline \eta) \big) \cdot \det\big(C(\underline \eta\times \underline n)\big).
\qedhere $$
\end{proof}

\begin{notation}
	For $n=\sum\limits_{i\geq 0}n_ip^i\in\ZZ_{ \geq 0}$ with $n_i\in \{0,\dots,p-1 \}$, set $\Dig(n)=\sum\limits_{i\geq 0}n_i$. 
\end{notation}

\begin{lemma}
\label{L:p-adic valuation of n!}
For any $n \in \ZZ_{\geq 0}$, we have
\begin{enumerate}
\item $v_p(n!)=\frac{n-\Dig(n)}{p-1}$;
\item $v_p(n!)=\lfloor n/p\rfloor+v_p(\lfloor n/p\rfloor! )$; and
\item when $n \geq p$, $v_p(n!)\geq pv_p(\lfloor n/p\rfloor! )+1$.
\end{enumerate}
\end{lemma}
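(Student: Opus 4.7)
The plan is to prove the three parts in sequence, using Legendre's classical formula as the common engine.

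For part (1), I would invoke Legendre's formula $v_p(n!) = \sum_{i \geq 1} \lfloor n/p^i \rfloor$ and write $n = \sum_{i \geq 0} n_i p^i$ in its base-$p$ expansion. Then $\lfloor n/p^k \rfloor = \sum_{i \geq k} n_i p^{i-k}$, so summing over $k \geq 1$ gives
\[
v_p(n!) = \sum_{i \geq 1} n_i (1 + p + \cdots + p^{i-1}) = \sum_{i \geq 1} n_i \cdot \frac{p^i - 1}{p-1} = \frac{n - \Dig(n)}{p-1},
\]
after recognizing that $n - n_0 = \sum_{i \geq 1} n_i p^i$ and $\Dig(n) - n_0 = \sum_{i \geq 1} n_i$.

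For part (2), I would split Legendre's sum directly as
\[
v_p(n!) = \lfloor n/p \rfloor + \sum_{i \geq 2} \lfloor n/p^i \rfloor = \lfloor n/p \rfloor + \sum_{j \geq 1} \lfloor \lfloor n/p\rfloor / p^j \rfloor = \lfloor n/p \rfloor + v_p(\lfloor n/p \rfloor !),
\]
where the middle equality uses $\lfloor \lfloor n/p \rfloor / p^j \rfloor = \lfloor n/p^{j+1} \rfloor$, a standard identity on nested floor functions. This bypasses any need to compare digit sums.

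For part (3), with $n \geq p$ (so $m := \lfloor n/p \rfloor \geq 1$), part (1) applied to $m$ gives $(p-1) v_p(m!) = m - \Dig(m) \leq m - 1$, since the base-$p$ digit sum of a positive integer is at least $1$. Therefore $m \geq (p-1) v_p(m!) + 1$. Substituting into part (2) yields
\[
v_p(n!) = m + v_p(m!) \geq (p-1) v_p(m!) + 1 + v_p(m!) = p \cdot v_p(m!) + 1,
\]
which is the desired inequality.

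The whole lemma is essentially a one-page exercise; there is no real obstacle, just bookkeeping with digits and floor functions. The only minor care needed is in part (3), to verify that $\Dig(\lfloor n/p \rfloor) \geq 1$, which is immediate from $n \geq p \Rightarrow \lfloor n/p \rfloor \geq 1$.
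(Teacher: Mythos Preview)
Your proof is correct and essentially the same as the paper's. The only minor difference is in part (2): the paper derives it from (1) via the observation $\Dig(n)=\Dig(\lfloor n/p\rfloor)+n_0$, whereas you go directly through Legendre's formula and the nested-floor identity; both are standard and equally short.
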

\begin{proof}
(1) is well known. For (2) and (3), write $n = mp + b$ with $m = \lfloor n/p\rfloor$ and $b \in \{0, \dots, p-1\}$. Then $\Dig(n) = \Dig(m)+b$. Then (1) implies that
\begin{align*}
v_p(n!) \ &= \tfrac{(pm+b)-(\Dig(m)+b)}{p-1} = m+\tfrac{m-\Dig(m)}{p-1} = \lfloor n/p\rfloor+v_p(\lfloor n/p\rfloor! ), \quad \textrm{and}
\\
v_p(n!) \ &= \tfrac{(pm+b)-(\Dig(m)+b)}{p-1} \geq p \tfrac{m-\Dig(m)}{p-1} + \Dig(m) \geq pv_p(\lfloor n/p\rfloor! ) + 1 \quad \textrm{if }m \geq 1. \qedhere
\end{align*}
\end{proof}

\begin{lemma}\label{L:p-adic valuation of m!/n! is bounded by maximal valuation of integers between n+1 and m and some number involving m,n}
Let $m,n$ be two positive integers such that $m-n\geq 2$. Then we have 
	\[
	v_p\Big(\frac{m!}{n!} \Big)\leq \gamma+\Big\lfloor \frac{m-n-2}{p-1} \Big\rfloor , \text{~with~}\gamma=\max\{v_p(i)\,|\,i=n+1,\dots, m \}.
	\]
\end{lemma}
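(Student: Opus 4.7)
\medskip
\noindent\textbf{Proof plan for Lemma~A.6.}

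The plan is to proceed by induction on $\gamma$. The base case $\gamma=0$ means no element of $[n+1,m]$ is divisible by $p$, so $v_p(m!/n!)=0$; since $m-n\geq 2$, the right hand side $\lfloor(m-n-2)/(p-1)\rfloor$ is non-negative, and the inequality holds trivially.

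For the inductive step, assume $\gamma\geq 1$ and the statement is known for all smaller maxima. Let $pa_1<pa_2<\cdots<pa_k$ be the multiples of $p$ in $[n+1,m]$; the definition of $\gamma$ forces $k\geq 1$, and since any two consecutive multiples of $p$ differ by exactly $p$, the integers $a_1,\dots,a_k$ are consecutive, i.e.\ $a_j=a_1+(j-1)$. Then
\[
v_p\bigl(m!/n!\bigr)=\sum_{i=n+1}^{m}v_p(i)=\sum_{j=1}^{k}\bigl(1+v_p(a_j)\bigr)=k+v_p\bigl(a_k!/(a_1-1)!\bigr).
\]
The max of $v_p(a_j)$ is $\gamma-1$. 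If $k\geq 2$, applying the inductive hypothesis to the pair $(a_k,a_1-1)$ (which satisfies $a_k-(a_1-1)=k\geq 2$) gives
\[
v_p\bigl(a_k!/(a_1-1)!\bigr)\leq (\gamma-1)+\Big\lfloor\tfrac{k-2}{p-1}\Big\rfloor.
\]
Therefore it suffices to establish
\[
(k-1)+\Big\lfloor\tfrac{k-2}{p-1}\Big\rfloor\leq \Big\lfloor\tfrac{m-n-2}{p-1}\Big\rfloor.
\]
The bound $pa_k-pa_1\geq (k-1)p$ together with $pa_1\geq n+1$ and $pa_k\leq m$ yields $m-n\geq (k-1)p+1$, hence $m-n-2\geq (p-1)(k-1)+(k-2)$, which after dividing by $p-1$ gives the required floor inequality. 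If instead $k=1$, then $v_p(a_1)\leq \gamma-1$, so $v_p(m!/n!)\leq 1+(\gamma-1)=\gamma$, and $m-n\geq 2$ makes the right hand side at least $\gamma$.

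The main content is the arithmetic inequality $m-n-2\geq (p-1)(k-1)+(k-2)$ coming from the spacing of multiples of $p$; nothing else in the argument is delicate, so I do not anticipate a serious obstacle beyond bookkeeping.
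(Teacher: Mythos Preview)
Your induction on $\gamma$ is correct and gives a genuinely different proof from the paper's. The paper uses Legendre's digit-sum formula $v_p(m!/n!)=(m-n+\Dig(n)-\Dig(m))/(p-1)$ from Lemma~\ref{L:p-adic valuation of n!}(1) and then argues, via the $p$-adic expansions of $m$ and $n$, that $(p-1)\gamma \geq \Dig(n)-\Dig(m)+2$: since $m_i=n_i$ for $i>\gamma$ and $m_\gamma\geq n_\gamma+1$, the only way the inequality could fail is if every $n_i-m_i$ ($i<\gamma$) equals $p-1$ and $n_\gamma-m_\gamma=-1$, which forces $m-n=1$. This is short but relies on the digit-sum identity. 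Your approach is more self-contained: by stripping off one factor of $p$ per inductive step you reduce to a smaller $\gamma$, and the spacing bound $m-n\geq (k-1)p+1$ does all the real work. Both are clean; yours has the advantage of not invoking Legendre's formula at all.

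One small bookkeeping point: when $a_1=1$ (which occurs exactly when $n\leq p-1\leq m-1$), the pair $(a_k,a_1-1)=(a_k,0)$ falls outside the lemma's hypothesis that $n$ be a positive integer. This is harmless---the statement and your induction extend verbatim to $n\geq 0$, since $0!=1$ and the base case $\gamma=0$ still reads $v_p(m!)=0$ when $m<p$---but you should say so explicitly before invoking the inductive hypothesis.
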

\begin{proof}
By Lemma~\ref{L:p-adic valuation of n!}(1),
$$
v_p\Big(\frac{m!}{n!} \Big) = \frac{m - n + \Dig(n) - \Dig(m)}{p-1}.
$$
It suffices to show that $(p-1)\gamma \geq \Dig(n) - \Dig(m) +2$. If we write $m = m_0+m_1p+\cdots$ and $n = n_0+n_1p+\cdots$ in their $p$-adic expansions. The definition of $\gamma$ implies that $m_i = n_i$ when $i \geq \gamma+1$ and $m_\gamma \geq n_\gamma+1$. We are left to prove that 
\begin{equation}
\label{E:p-1 gamma geq}
(p-1)\gamma \geq 2+ \sum\limits_{i=0}^{\gamma} (n_i-m_i)
\end{equation}
Since $n_i-m_i \leq p-1$ for every $i =0, \dots, \gamma-1$ and $n_\gamma -m_\gamma \leq -1$, \eqref{E:p-1 gamma geq} already holds, except in the worst scenario where all inequalities above holds. Yet in this case, we are forced to have $m - n = 1$, which contradicts our assumption. The Lemma is then proved.
\end{proof}

Recall from Notation~\ref{N:D(m,n)},  for two nonnegative integers $m,n$, write $m = m_0+pm_1+\cdots$ and $n = n_0+pn_1+\cdots $ for their $p$-adic expansions (so that each $m_i$ and $n_i$ belong to $\{0, \dots, p-1\}$).
Let $D(m,n)$ denote the number of indices $i\geq 0$ such that $n_{i+1} > m_i$.
\begin{lemma}
\label{L:elementary D}
Let $m,n$ be two nonnegative integers.
\begin{enumerate}
\item We have $D(m+1,n)+1 \geq D(m,n)$ and $D(m,n)+1 \geq D(m,n+c)$ for any $c\in \{1, \dots, p\}$.
\item Assume that $m \geq \lfloor \frac np \rfloor$. Then we have
		\[
		v_p\Big(\binom{m}{m-\lfloor \frac np \rfloor} \Big)\geq D(m,n).
		\]
		\item We have an equality
		$$
		\binom zm\binom zn =  \sum_{j \geq \max\{m,n\}}^{m+n}\binom{j}{j-m,j-n, m+n-j}\binom z{j},
		$$
		where $\binom{j}{j-m,j-n,m+n-j}$ is the generalized binomial coefficient;
		\item For two nonnegative integers $s$ and $t$ such that $\max\{s,t \}\leq m\leq s+t$, we have
		\begin{equation}\label{E:fine halo bound inequality}
		s-m + \Big\lfloor\frac np\Big\rfloor+\max \Big\{t+v_p \Big( \frac{t!}{n!}\Big), 0\Big\} +v_p \Big( \binom m{m-s, m-t, s+t-m} \Big) \geq D(m,n).
\end{equation}
	\end{enumerate}
\end{lemma}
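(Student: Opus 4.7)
The plan is to prove the four parts sequentially. Parts (1), (2), and (3) reduce to elementary arguments on base-$p$ digits and a standard subset-counting identity. For (1), I will analyze carry propagation: when $n$ is replaced by $n+c$ with $c\in\{1,\dots,p\}$, let $K$ be the largest index at which the digits change; because $c\le p$ one has $n'_K=n_K+1$, while each modified lower digit $n'_i$ with $i<K$ satisfies $n'_i\le n_i$ (either unchanged, or reset from $p-1$ to $0$ by carry propagation). Consequently $D(m,n+c)$ gains at most one contribution (at index $K-1$, from $n'_K>n_K$) and can only lose elsewhere, yielding $D(m,n+c)\le D(m,n)+1$; the symmetric analysis applied to $m\mapsto m+1$ yields $D(m+1,n)+1\ge D(m,n)$. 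For (2), I will invoke Kummer's theorem, identifying $v_p\binom{m}{m-L}$ (with $L:=\lfloor n/p\rfloor$) as the number of carries in the base-$p$ addition $L+(m-L)=m$: at each index $i$ with $L_i>m_i$, the congruence $L_i+(m-L)_i+\gamma_i\equiv m_i\pmod{p}$, combined with $(m-L)_i,\gamma_i\ge 0$, forces the unreduced sum to be at least $p$ and hence to produce a carry out. For (3), both sides are polynomials in $z$, so it suffices to verify the identity when $z=N$ is a nonnegative integer; interpret $\binom{N}{m}\binom{N}{n}$ as counting ordered pairs $(A,B)$ of subsets of an $N$-set with $|A|=m$, $|B|=n$, and partition by $j=|A\cup B|\in[\max(m,n),m+n]$, distributing the $j$-element union among $A\setminus B$, $B\setminus A$, $A\cap B$ of sizes $j-n,j-m,m+n-j$.

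The main obstacle is part (4). Since the map $k\mapsto k+v_p(k!)$ is strictly increasing and takes the value $L+v_p(L!)$ at $k=L$, the sign condition $t+v_p(t!/n!)\ge 0$ is equivalent to $t\ge L$, and I will split the proof along this dichotomy. When $t\ge L$, the $\max$ equals $t+v_p(t!/n!)$; expanding $v_p(n!)=L+v_p(L!)$ via Lemma~\ref{L:p-adic valuation of n!}(2) rewrites the left-hand side of (4) as
\[
(s+t-m)+v_p\binom{m}{L}+v_p\Bigl(\tfrac{t!\,(m-L)!}{(m-s)!\,(m-t)!\,(s+t-m)!}\Bigr).
\]
The first summand is nonnegative, the second dominates $D(m,n)$ by part (2), and the rational factor in the third summand equals $\binom{t}{m-s}\cdot(m-L)(m-L-1)\cdots(m-t+1)$, a product of $t-L\ge 0$ nonnegative integers, hence a $p$-adic integer.

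When $t<L$, the $\max$ vanishes and the inequality becomes $v_p\binom{m}{m-s,m-t,s+t-m}\ge(m-s)+D(m,n)-L$. The key elementary input is $L\ge D(m,n)$: each index contributing to $D(m,n)$ produces a digit $L_i\ge 1$, so $L\ge\sum_i p^i\ge D(m,n)$. This already settles the subcase $m-s\le L-D(m,n)$, where the right-hand side is nonpositive. The remaining subcase forces $s$ to lie in the narrow window $s\in(m-L,\,m-L+D(m,n))$; there a digit-by-digit analysis of the three-summand carries in $(m-s)+(m-t)+(s+t-m)=m$, compared against the positions where $L_i>m_i$ and using $t\le L-1$ to ensure that the high digits of $m-t$ are large enough to force additional carries at precisely those positions, will close the bound. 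The intricate coordination of these three base-$p$ carry chains in the second subcase is where I expect the bulk of the effort to lie.
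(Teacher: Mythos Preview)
Parts (1)--(3) and the subcase $t\ge L:=\lfloor n/p\rfloor$ of part (4) are correct and essentially match the paper's arguments (your combinatorial proof of (3) via subset-counting is a valid alternative to the paper's algebraic manipulation).

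The gap is in part (4), subcase $t<L$. Your proposed ``digit-by-digit analysis of the three-summand carries'' is not carried out, and there is no reason to expect it to close cleanly: you are trying to force carries in the addition $(m-s)+(m-t)+(s+t-m)=m$ at the specific positions where $L_i>m_i$, but the digits of $m-s$, $m-t$, $s+t-m$ bear no direct relation to those positions, and the ``narrow window'' on $s$ you identify does not obviously produce the required structure. The paper bypasses this entirely with a reduction to the case already handled: set $\ell:=L-t>0$ and $n':=n-p\ell$, so that $\lfloor n'/p\rfloor=t$. Your own argument for the subcase $t\ge L$ (applied now to $m,n',s,t$) gives
\[
(s+t-m)+v_p\binom{m}{m-s,\,m-t,\,s+t-m}\ \ge\ D(m,n'),
\]
and since $s+t-m=(s-m+L)-\ell$, the desired inequality $s-m+L+v_p\binom{m}{m-s,m-t,s+t-m}\ge D(m,n)$ follows once you know $D(m,n')+\ell\ge D(m,n)$. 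But that is immediate from part (1): going from $n'$ to $n$ is $\ell$ successive additions of $p$, and each step raises $D(m,\,\cdot\,)$ by at most $1$. This two-line reduction replaces the carry-chain analysis you anticipated as ``the bulk of the effort.''
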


\begin{proof}
(1) Let $m=\sum\limits_{i\geq 0}m_ip^i$ and $m+1=\sum\limits_{i\geq 0}m_i'p^i$ be the $p$-adic expansion of $m$ and $m+1$ respectively. If we set $j=\max\{i\geq 0\,| \,m'_i\neq 0 \}$, we have 
		$$
		m_i'= 
		\begin{cases}
		0, & \text{~if~}i<j,\\
		m_j+1, & \text{~if~} i=j,\\
		m_i, & \text{~if~} i>j. 
		\end{cases}
		$$
So we have $m_i\geq m_i'$ for all $i\neq j$ and hence $D(m+1,n)+1\geq D(m,n)$. The second inequality can be proved by a similar argument by considering the $p$-adic expansions of $n$ and $n+c$;

(2) Let $m=\sum\limits_{i\geq 0}m_ip^i$ and $n=\sum\limits_{i\geq 0}n_ip^i$ be the $p$-adic expansions respectively. Then $\lfloor \frac np\rfloor=\sum\limits_{i\geq 0}n_{i+1}p^i$ is the $p$-adic expansion of $\lfloor \frac np\rfloor$. The inequality follows from the well-known fact that $v_p\big(\binom{m}{m-\lfloor \frac np \rfloor} \big)$ is equal to the number of carries when adding $m-\lfloor \frac np\rfloor$ and $\lfloor \frac np\rfloor$ in base $p$;

(3) Without loss of generality, we can assume $m\geq n$. By a direct computation, we have $\binom zm \binom{z-m}n=\binom{m+n}n\binom z{m+n}$. Combining with equality $(3.5.3)$ in \cite{liu-wan-xiao}, we get
		\begin{align*}
		&\binom zm\binom zn=\binom zm\sum_{i=0}^n \binom{z-m}{n-i}\binom mi=\sum_{i=0}^n \binom mi \binom zm\binom{z-m}{n-i}\\
		=&\sum_{i=0}^n\binom mi\binom{m+n-i}{m}\binom z{m+n-i}\xlongequal{j=m+n-i}\sum_{j \geq \max\{m,n\}}^{m+n}\binom{j}{j-m,j-n, m+n-j}\binom z{j};
		\end{align*}

(4) By Lemma~\ref{L:p-adic valuation of n!}(2),
$t +v_p(\frac{t!}{n!}) = 0$ when $t = \lfloor n/p\rfloor$. Hence
\[
t+v_p\Big(\frac{t!}{n!}\Big)\ \begin{cases}
\geq 0, &\text{~if~} t\geq \lfloor \frac np\rfloor,\\
<0, &\text{~if~} t<\lfloor\frac np\rfloor.
\end{cases}
\]
This suggests to divide our discussion into two cases:
\begin{enumerate}
\item[(a)] When $t\geq \lfloor\frac np\rfloor$ (and hence $m\geq \lfloor \frac np\rfloor$), it suffices to prove that
			$$
			s+t-m +v_p\Big(\frac{t!}{\lfloor n/p\rfloor!}\Big) + v_p \Big( \binom m{m-s, m-t, s+t-m} \Big) \geq D(m,n).
			$$
			This follows from the binomial identity
			$$
			\frac{t!}{\lfloor n/p\rfloor!}\binom m{m-s, m-t, s+t-m} = \binom m{m-\lfloor n/p\rfloor} \binom t{m-s} \cdot\frac{(m-\lfloor n/p\rfloor)!}{(m-t)!},
			$$
and the inequalities $v_p\big( \binom m{m-\lfloor\frac np\rfloor}\big) \geq D(m,n)$ and $s+t-m\geq 0$;
\item[(b)] When $t< \lfloor \frac np\rfloor$, the inequality 
\eqref{E:fine halo bound inequality} is equivalent to
			\begin{equation}
			\label{E:expression >=D}
			s-m + \Big\lfloor\frac np\Big\rfloor+v_p \Big( \binom m{m-s, m-t, s+t-m} \Big) \geq D(m,n).
			\end{equation}
			Set $\ell : = \lfloor\frac np\rfloor - t$ and $n' = n - p\ell$. Then $\lfloor \frac{n'}p\rfloor=t$ and we can apply case $(a)$ to $m,n',s$ and $t$, and get the inequality
			\[
			s+t-m+v_p \Big( \binom m{m-s, m-t, s+t-m} \Big) \geq D(m,n').
			\]
It then suffices to prove $D(m,n')+\ell\geq D(m,n)$. But this follows from (1).\qedhere
\end{enumerate}
\end{proof}

For the following, recall some definition from Notation~\ref{N:definition of D tuple}. Fix a character $\varepsilon = \omega^{s_\varepsilon} \times \omega^{a+s_\varepsilon}$.
For a positive integer $\lambda$, write $\deg \bfe_\lambda^{(\varepsilon)} = \lambda_0+ p\lambda_1+\cdots$ in its $p$-adic expansion. For a positive integer $n$, $\alpha \in \{0, \dots, p-1\}$ and $j \in \ZZ_{\geq 0}$, define
$$
D_{\leq \alpha}^{(\varepsilon)}(\underline n, j): = \#\{ \lambda \in \{1, \dots, n\}\, |\, \lambda_j \leq \alpha\}.
$$
When $\alpha =0$, we write $D_{=0}^{(\varepsilon)}(\underline n,j)$ instead.
\begin{lemma}
\label{L:D independent of j}
Fox a positive integer $n$. Write $\deg \bfe_n^{(\varepsilon)}  = \alpha_0+ \alpha_1p+\cdots$ in its $p$-adic expansion.
\begin{enumerate}
\item 
For every $j \geq 0$, we have $D^{(\varepsilon)}_{=0}(\underline n,j) \leq D^{(\varepsilon)}_{=0}(\underline n,j+1)$.
\item  If either $\alpha_j\neq 0$, $\alpha_{j+1} = p-1$ or $\alpha_j= \alpha_{j+1}=0$, then $D^{(\varepsilon)}_{=0}(\underline n,j) = D^{(\varepsilon)}_{=0}(\underline n,j+1)$.
\item Assume $\alpha_1=p-1$. For any $\alpha\leq \alpha_0$, we have $D^{(\varepsilon)}_{\leq \alpha}(\underline n,0)=D^{(\varepsilon)}_{\leq \alpha}(\underline n, 1)$.
	\end{enumerate}
\end{lemma}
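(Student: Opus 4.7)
My plan is to exploit the digit-swap involution $\psi_j\colon \ZZ_{\geq 0} \to \ZZ_{\geq 0}$ that exchanges the $p$-adic digits at positions $j$ and $j+1$. Since $p \equiv 1 \pmod{p-1}$, digit-swapping preserves the residue class modulo $p-1$, so $\psi_j$ preserves the set $D = \{N \in \ZZ_{\geq 0} : N \equiv s_\varepsilon \text{ or } \{a+s_\varepsilon\} \pmod{p-1}\}$. The degrees $\deg\bfe_1^{(\varepsilon)} < \deg\bfe_2^{(\varepsilon)} < \cdots$ enumerate $D$ in increasing order, so $\{\deg\bfe_\lambda^{(\varepsilon)} : 1 \leq \lambda \leq n\} = D \cap [0, M]$ with $M := \deg\bfe_n^{(\varepsilon)}$, and $D_{=0}^{(\varepsilon)}(\underline n, j) = |S_j|$ where $S_j := \{N \in D \cap [0, M] : j\text{-th digit of } N \text{ is zero}\}$.

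For $(1)$: if $N$ has $j$-th digit zero, a direct computation gives $\psi_j(N) - N = -n_{j+1} p^j(p-1) \leq 0$, so $\psi_j(N) \leq N \leq M$. Hence $\psi_j$ restricts to an injection $S_j \hookrightarrow S_{j+1}$, which yields $|S_j| \leq |S_{j+1}|$ at once.

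For $(2)$: I would aim to upgrade this injection to a bijection, i.e., to show $\psi_j(M') \leq M$ for every $M' \in S_{j+1}$. Writing $M' = A' p^{j+2} + m'_j p^j + R'$ and $M = A_M p^{j+2} + \alpha_{j+1} p^{j+1} + \alpha_j p^j + R_M$, one computes $\psi_j(M') = A' p^{j+2} + m'_j p^{j+1} + R'$. The constraint $M' \leq M$ forces $A' \leq A_M$; the case $A' < A_M$ is automatic, so the only delicate sub-case is $A' = A_M$ with $m'_j = \alpha_{j+1}$, where the requirement reduces to $R' \leq \alpha_j p^j + R_M$. Under the hypothesis $\alpha_j \neq 0$ and $\alpha_{j+1} = p-1$, this holds because $R' < p^j \leq \alpha_j p^j$; under $\alpha_j = \alpha_{j+1} = 0$, the constraint $M' \leq M$ together with $A' = A_M$ already forces $m'_j = 0$ and $R' \leq R_M$, whence $\psi_j(M') = M'$. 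In both hypotheses the sub-case $m'_j > \alpha_{j+1}$ is excluded from $S_{j+1}$, so no additional obstruction arises. The hard part is organizing the case analysis so that the two listed digit hypotheses precisely eliminate the failure mode.

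For $(3)$: I would apply the same strategy with $\psi_0$ swapping digits $0$ and $1$, aiming to biject $\{N \in D \cap [0, M] : 0\text{-th digit of } N \leq \alpha\}$ with the analogous set at position $1$. Writing $N = r_0 + r_1 p + Q p^2$ with $r_1 \leq \alpha$ and $K := \lfloor M/p^2 \rfloor$, the cases $Q \neq K$ are symmetric in $N$ and $\psi_0(N)$. When $Q = K$, the hypothesis $\alpha_1 = p-1$ guarantees that both $N \leq M$ and $\psi_0(N) \leq M$ hold whenever $r_1 \leq p-2$; the boundary case $r_1 = p-1$ is reached only when $\alpha = \alpha_0 = p-1$, where all inequalities hold trivially. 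The bijection follows, giving the claimed equality.
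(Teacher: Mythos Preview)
Your proof is correct and takes essentially the same approach as the paper. Your digit-swap involution $\psi_j$ coincides with the paper's map $\eta_j\colon m \mapsto m - m_{j+1}(p^{j+1}-p^j)$ on elements with $j$-th digit zero, and for (3) both you and the paper use the swap of digits $0$ and $1$; your case analysis for (2) is in fact more explicit than the paper's one-line ``the latter implication holds under either assumption.''
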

\begin{proof}
Let $\Omega$ denote the set of nonnegative integers which are congruent to $s_{\varepsilon}$ or $a+s_\varepsilon$ modulo $p-1$. For every $j\geq 0$ and $\alpha\in \{0,\dots, p-1 \}$, we define 
\[
\Omega_{\leq \alpha}(j)=\{m\in \Omega\,| \,\text{the~}j\text{th digit in the~}p\text{-adic expansion of~}m \text{~is} \leq \alpha \}
\]
and $\Omega_{\leq \alpha}(\underline n, j)=\{m\in \Omega_{\leq \alpha}(j)\, |\,m\leq \deg \bfe_n \}$. Then we have $D_{\leq \alpha}(\underline n,j)=\# \Omega_{\leq \alpha}(\underline n,j)$. When $\alpha=0$ we write $\Omega_{\leq 0}(j)=\Omega_{=0}(j)$ and $\Omega_{\leq 0}(\underline n, j)=\Omega_{=0}(\underline n, 0)$. 
	
We define a bijection $\eta_j: \Omega_{=0}(j)\rightarrow \Omega_{=0}(j+1)$ as follows. Write an element $m\in \Omega_{=0}(j)$ in its $p$-adic expansion $m=m_0+m_1p+m_2p^2+\cdots$, define
$$
\eta_j(m):=\sum_{i=0}^{j-1}m_ip^i+m_{j+1}p^j+\sum_{i\geq j+2}m_ip^i=m-m_{j+1}(p^{j+1}-p^j).
$$
Since $\eta_j(m)\leq m$, the bijection $\eta_j$ induces an injection $\Omega_{=0}(\underline n, j)\rightarrow \Omega_{=0}(\underline n,j+1)$, which implies that $D_{=0}(\underline n,j)\leq D_{=0}(\underline n, j+1)$. The equality holds if and only if for any $m\in \Omega_{=0}(j+1)$, $\eta_j(m)\leq \deg\bfe_n$ implies $m\leq \deg\bfe_n$. The latter implication holds under either assumption of $(2)$. This proves $(1)$ and $(2)$.
	
	Under the assumption in $(3)$, it is straightforward to verify that the map
\[
\begin{tikzcd}[row sep = 0pt]
\Omega_{\leq \alpha}(\underline n,0)\ar[r] & \Omega_{\leq \alpha}(\underline n, 1)
\\
m=m_0+pm_1+p^2m_2+\cdots \ar[r, mapsto] & m':= m_1+pm_0+p^2m_2+\cdots
\end{tikzcd}
\]
is a bijection. So we have $D_{\leq \alpha}(\underline n,0)=D_{\leq\alpha}(\underline n,1)$. 
\end{proof}

\section{Errata for \cite{liu-truong-xiao-zhao}}

We include two errata for \cite{liu-truong-xiao-zhao} here.

\begin{enumerate}
    \item There is a typo in \cite[Proposition~4.18(1)]{liu-truong-xiao-zhao}: the second sentence should be `For every $\ell\geq 1$, the $(d+\ell)$th slope of $\NP(G^{(\varepsilon)}(w_{k_0}, -))$ is $k_0-1$ plus the  $\ell$th slope of $\NP(G^{(\varepsilon')}(w_{2-k_0}, -))$.' More precisely, the last term should be $\NP(G^{(\varepsilon')}(w_{2-k_0}, -))$ instead of $\NP(G^{(\varepsilon')}(w_{k_0}, -))$. The notations in the proof are correct;
    \item In \cite[Corollary~5.10]{liu-truong-xiao-zhao}, the claimed inequality (5.10.1) does not hold for $(\ell,\ell',\ell'')=(0,1,1)$. We give the corrected statement in Proposition~\ref{P:Delta - Delta'} of this paper.
\end{enumerate}

\end{document}